\documentclass[12pt,a4paper]{book}
\usepackage{amsthm,amsfonts,amsmath,amscd,amssymb,latexsym}    
\usepackage[english,ngerman]{babel}
\usepackage{hyperref}
\usepackage{mathrsfs}
\usepackage{epsfig,graphicx,color}     
\usepackage{color}   
\usepackage{nomencl,makeidx}
\usepackage[all]{xy}  
\usepackage[titles]{tocloft}  

\usepackage{amsmath,amsthm,amsfonts,amsbsy,amsgen,amscd,amstext,amssymb}
\usepackage{dsfont}
\usepackage{latexsym}
\usepackage{bbm}
\usepackage{mathrsfs}
\usepackage{eufrak}
\usepackage{appendix}
\usepackage[all]{xy}
\usepackage{fancyhdr}
\usepackage{graphicx, psfrag, epsfig}
\usepackage{bbm}
\usepackage[left=1.45in,right=1.45in]{geometry}


\usepackage[titles]{tocloft}
\setlength{\cftbeforechapskip}{1.5ex}
\setlength{\cftbeforesecskip}{0.1ex}

\sloppy

\date{}

\numberwithin{equation}{chapter}

\newtheorem{PARA}{}[section] 
\newtheorem{theorem}[PARA]{Theorem} 
\newtheorem{corollary}[PARA]{Corollary} 
\newtheorem{claim}[PARA]{Claim}
\newtheorem{lemma}[PARA]{Lemma} 
\newtheorem{proposition}[PARA]{Proposition} 
\newtheorem{definition}[PARA]{Definition}    
\newtheorem{remark}[PARA]{Remark}

\newcommand{\sB}{\mathscr{B}}

\newcommand{\sE}{\mathscr{E}}    
    
\newcommand{\sG}{\mathscr{G}}

\newcommand{\sJ}{\mathscr{J}}    
    
\newcommand{\sL}{\mathscr{L}}    
\newcommand{\sM}{\mathscr{M}}

\newcommand{\sP}{\mathscr{P}}    
    
\newcommand{\sR}{\mathscr{R}}    
\newcommand{\sS}{\mathscr{S}}    
    
\newcommand{\sU}{\mathscr{U}}    
\newcommand{\sV}{\mathscr{V}}    
\newcommand{\sW}{\mathscr{W}}

\newcommand{\cA}{\mathcal{A}}    
    
\newcommand{\cC}{\mathcal{C}}    
    
\newcommand{\cF}{\mathcal{F}}

\newcommand{\cH}{\mathcal{H}}

\newcommand{\cJ}{\mathcal{J}}    
    
\newcommand{\cL}{\mathcal{L}}    
\newcommand{\cM}{\mathcal{M}}    
    
\newcommand{\cP}{\mathcal{P}}

\newcommand{\cS}{\mathcal{S}}    
    
\newcommand{\cU}{\mathcal{U}}    
    
\newcommand{\cW}{\mathcal{W}}


\newcommand{\C}{{\mathbb{C}}}    
\newcommand{\D}{{\mathbb{D}}}

\newcommand{\N}{{\mathbb{N}}}    
 
\newcommand{\R}{{\mathbb{R}}}

\newcommand{\Z}{{\mathbb{Z}}}    

\newcommand{\CF}{{\mathrm{CF}}}    
\newcommand{\HF}{{\mathrm{HF}}}

\newcommand{\Dom}{{\mathrm{Dom}}}
\newcommand{\tOmega}{{\widetilde{\Omega}}}

\newcommand{\tA}{{\widetilde A}}    
\newcommand{\tB}{{\widetilde B}}

\newcommand{\tD}{{\widetilde D}}

\newcommand{\tF}{{\widetilde F}}  
\newcommand{\tG}{{\widetilde G}}  
\newcommand{\tH}{{\widetilde H}}

\newcommand{\tJ}{{\widetilde J}} 
\newcommand{\tq}{\tilde{q}}
\newcommand{\tK}{{\widetilde K}}    
\newcommand{\tL}{{\widetilde L}}    
\newcommand{\tM}{{\widetilde M}}    
\newcommand{\tN}{{\widetilde N}}

\newcommand{\tT}{{\widetilde T}}    
\newcommand{\tU}{{\widetilde U}} 

\newcommand{\tV}{{\widetilde V}}

\newcommand{\tX}{{\widetilde X}}  
\newcommand{\tPsi}{{\widetilde \Psi}}

\newcommand{\tf}{{\widetilde f}}

\newcommand{\te}{{\widetilde e}}
\newcommand{\tnabla}{{\widetilde \nabla}}
    
\newcommand{\ts}{{\widetilde s}}    
\newcommand{\tp}{{\widetilde p}}    
\newcommand{\tu}{{\widetilde u}}    
\newcommand{\tv}{{\widetilde v}}    
\newcommand{\tx}{{\widetilde x}}

\newcommand{\tPhi}{{\widetilde\Phi}}    
    
\newcommand{\tpsi}{{\widetilde\psi}}

\newcommand{\txi}{{\widetilde\xi}}    
\newcommand{\teta}{{\widetilde\eta}}
\newcommand{\tom}{{\widetilde\om}}

\newcommand{\hx}{{\hat{x}}}
\newcommand{\hy}{{\hat{y}}}
\newcommand{\std}{{\text{std}}}
\renewcommand{\i}{{\mathbf{i}}}

\newcommand{\dbar}{{\bar\partial}}    
\newcommand{\ti}{\;\;\makebox[0pt]{$\top$}\makebox[0pt]{$\cap$}\;\;}
\newcommand{\one}    
{{{\mathchoice \mathrm{ 1\mskip-4mu l} \mathrm{ 1\mskip-4mu l}    
\mathrm{ 1\mskip-4.5mu l} \mathrm{ 1\mskip-5mu l}}}}    

%
\def\slashii#1{\setbox0=\hbox{$#1$}             
\dimen0=\wd0                                 
\setbox1=\hbox{\sl/} \dimen1=\wd1            
\ifdim\dimen0>\dimen1                        
\rlap{\hbox to \dimen0{\hfil\sl/\hfil}}   
#1                                        
\else                                        
\rlap{\hbox to \dimen1{\hfil$#1$\hfil}}   
\hbox{\sl/}                               
\fi}                                         %
%
\def\slashiii#1{\setbox0=\hbox{$#1$}#1\hskip-\wd0\hbox to\wd0{\hss\sl/\/\hss}}
%


    
%
    
\newcommand{\dvol}{\mathrm{ dvol}}

\newcommand{\loc}{{\mathrm{loc}}}             
             
\newcommand{\ev}{{\mathrm{ev}}}

\newcommand{\id}{\mathrm{ id}}      
\newcommand{\Id}{\mathrm{ Id}}    
\newcommand{\im}{\mathrm{ im }}       
\renewcommand{\Im}{\mathrm{ Im\,}}

\newcommand{\Hreg}{\cH_{\mathrm{reg}}}   
\newcommand{\Jreg}{\cJ_{\mathrm{reg}}}   
\newcommand{\HJreg}{\mathcal{HJ}_{\mathrm{reg}}}   
     
\newcommand{\Freg}{{\mathscr{F}_{\mathrm{reg}}}}

\newcommand{\Aut}{\mathrm{ Aut}}          
\newcommand{\Diff}{\mathrm{ Diff}}        
\newcommand{\End}{\mathrm{ End}}          
\newcommand{\eps}{{\varepsilon}}    
\newcommand{\om}{{\omega}}    
\newcommand{\Om}{{\Omega}}    
\newcommand{\Cinf}{C^{\infty}}

\newcommand{\inner}[2]{\bigl\langle #1, #2\bigr\rangle}

\def\NABLA#1{{\mathop{\nabla\kern-.5ex\lower1ex\hbox{$#1$}}}}    
\def\Nabla#1{\nabla\kern-.5ex{}_{#1}}    
\def\Tabla#1{\Tilde\nabla\kern-.5ex{}_{#1}}    
\def\abs#1{\mathopen|#1\mathclose|}    
\def\Abs#1{\left|#1\right|}    
\def\norm#1{\mathopen\|#1\mathclose\|}    

\renewcommand{\Tilde}{\widetilde}

\newcommand{\p}{{\partial}}


\makeatletter
\newcommand{\clap}[1]{\hbox to 0pt{\hss #1\hss}}%
\newcommand{\ligne}[1]
  { \hbox to \hsize{\vbox{\centering #1}} }
\newcommand{\haut}[3]{%
  \hbox to \hsize{%
    \rlap{\vtop{\raggedright #1}}%
    \hss
    \clap{\vtop{\centering #2}}%
    \hss
    \llap{\vtop{\raggedleft #3}}}}%
\newcommand{\bas}[3]{%
  \hbox to \hsize{%
    \rlap{\vbox{\raggedright #1}}%
    \hss
    \clap{\vbox{\centering #2}}%
    \hss
    \llap{\vbox{\raggedleft #3}}}}%

\renewcommand{\maketitle}[1]{%
  \thispagestyle{empty}\vbox to \vsize{%
 \begin{center}
\normalfont\Large DISS. ETH NO. 22118
  \end{center}   
    \vfill
    \ligne{\LARGE \@title}
    \vspace{5mm}
    \ligne{\Large \@author}
    \vspace{1cm}
    \vfill
    \vfill
    \bas{}{\@date}{}
    }%
  \cleardoublepage
  }
  \renewcommand{\date}[1]{\def\@date{#1}}
  \renewcommand{\author}[1]{\def\@author{#1}}
  \renewcommand{\title}[1]{\def\@title{#1}}


\makeatletter
  \title{\vspace{10pt}
         \vspace{10pt}
         \normalfont PhD Thesis \\
         \vspace{10pt}
         \vspace{10pt}
         \textbf{A Hardy Space Approach to Lagrangian Floer gluing}}
  \author{ \vspace{10pt}
           \vspace{10pt}
            \vspace{10pt}
          \vspace{10pt}
          \vspace{10pt}
          \vspace{10pt}
          \vspace{10pt}
          \vspace{10pt}
          \vspace{10pt}
          Tatjana \textsc{Sim\v{c}evi\'c}\\
          \vspace{10pt}
          \vspace{10pt}
          ETH Zurich \\
          \vspace{10pt}
          \vspace{10pt}
          \vspace{10pt}
          \vspace{10pt}
          \vspace{10pt}}
  \date{October 2014}

\makeatletter \newcommand \listoftodos{\section*{Todo list} \@starttoc{tdo}}
  \newcommand\l@todo[2]
    {\par\noindent \textit{#2}, \parbox{10cm}{#1}\par} \makeatother

\makenomenclature


\begin{document}

 \pagestyle{empty} 
  \maketitle


%
%

\chapter*{Acknowledgement}

First and foremost I would like to express my deep gratitude 
to my advisor, Professor Dietmar Salamon, for sharing his 
enthusiasm and insight into the subject, as well as for 
his guidance and help during the whole doctorate. 

\medskip 

I would like to thank the co-examiners Professor Paul 
Biran and Professor Urs Frauenfelder for their 
interest in my work. 
\medskip 

I thank to my colleagues from the ETH for the enjoyable 
working atmosphere and pleasant moments spent together. 
This work was partially supported 
by the Swiss National Science Foundation Grant 
200021-127136 and I would like to thank them for their financial 
support. 

\medskip 

Above all I would like to thank to my parents Velibor 
and Stojka, my brother Dalibor and my boyfriend Sa\v{s}a 
for their encouragement, support and understanding. 


\chapter*{Abstract}

\indent We develop a new approach to Lagrangian-Floer 
gluing. The construction of the gluing map is based on 
the intersection theory in some Hilbert manifold of paths $\sP $. 
We consider some moduli spaces of perturbed holomorphic 
curves whose domains are either strips or more general 
Riemann surfaces with strip-like ends. These moduli spaces 
can be injectively immersed, by taking the restriction to 
non-Lagrangian boundary, into the Hilbert manifold $\sP$. 

\indent Then we restrict our attention to some subsets 
$ \sM^{\infty} ( \sU ) $ and $ \sM^T(\sU) $ of the 
aforementioned moduli spaces of perturbed holomorphic strips. 
These moduli spaces consist of small energy curves whose non-Lagrangian 
boundary is conatined in the neighbourhood $\sU$
of a fixed Hamiltonian path $x$. Monotonicity results 
will gurantee that the elements of these moduli spaces are contained 
in some neighbourhood of the Hamiltonian path $x$. This will allow us 
to carry over the main analysis in suitable local coordinate charts.

\indent The moduli spaces $ \sM^T(\sU) $ and $ \sM^{\infty} (\sU) $ 
turn out to be embedded submanifolds of the Hilbert manifold of paths $\sP$. 
We prove that $ \sM^T(\sU) $ converges in the $C^1 $ topology 
toward  $ \sM^{\infty} (\sU) $. As an application of this convergence 
result we prove various gluing theorems. 
We explain the construction of Lagrangian-Floer homology and prove 
that the square of the boundary map is equal to zero. Here 
we restrict our discussion to the monotone case with minimal 
Maslov number at least three. We also prove that the homology 
is independent of the Hamiltonian and almost complex 
structure used in its definition. We include the exposition 
of the Lagrangian-Floer-Donaldson functor and Seidel homomorphism.

 \selectlanguage{english}

\newpage
\setcounter{page}{-1}
\pagenumbering{roman}
\tableofcontents
 
\thispagestyle{empty}
\pagestyle{fancy} 
\fancyhf{}
\pagenumbering{arabic}

\renewcommand{\chaptermark}[1]{\markboth{\chaptername \ \thechapter.\ #1}{}} 
\renewcommand{\sectionmark}[1]{\markright{#1}}
\fancypagestyle{plain} { 
  \fancyhead{} 
  \renewcommand{\headrulewidth}{0 pt} 
}

\fancyhead[RO]{\bfseries \small \rightmark\qquad\thepage} 
\fancyhead[LE]{\bfseries \small \thepage\qquad\leftmark} 

\makeatletter
\def\cleardoublepage{\clearpage\if@twoside \ifodd\c@page\else
    \hbox{}
    \thispagestyle{plain}
    \newpage
    \if@twocolumn\hbox{}\newpage\fi\fi\fi}
\makeatother \clearpage{\pagestyle{plain}\cleardoublepage}

\chapter{Introduction}

\indent Floer homology was developed in 1980's by A.Floer 
in the series of papers \cite{F1,F2,F3,F4}
and today it has various application in low dimensional 
topology and symplectic and contact topology. Three of its main 
technical ingredients are Gromov-compactness, transversality-Fredholm theory
and gluing. In this thesis we shall not discuss compactness and transversality, 
but we shall develop a new approach to gluing. 

\indent When we say gluing, we always think of it as the opposite of 
Gromov-compactness. Particularly, this means that two holomorphic curves (strips), 
that intersect in the appropriate sense can be ``glued'' together 
if they can be approximated by some nearby genuine holomorphic \
curve. Floer's idea was first to use cut-off functions and to construct 
a so called pregluing curve, which in general doesn't have to be 
holomorphic and then using the implicit function theorem to 
establish the existence of an actual nearby holomorphic curve. 
Our approach is based on the intersection theory in Hilbert manifolds. 
More precisely, we consider two different moduli spaces of pairs of 
holomorphic curves whose domains are either half-infinite strips or more general 
 truncated Riemann surfaces with strip-like ends. These Hilbert manifolds 
 are infinite dimensional and they can be injectively immersed 
 into some Hilbert manifold of paths $ \sP $. 
 The images of these moduli spaces represent Hardy submanifolds of $\sP$, 
 mentioned in the title. The existence of a nearby holomorphic curve is provided 
by the existence of a unique intersection point of these 
two submanifolds. This approach is discussed in more details below. 

\indent In chapter \ref{ch:hardy_floer} we explain how the Hardy 
space gluing theory developed in this thesis is used in the construction 
of Floer homology. In this chapter the exposition is restricted to monotone 
 Lagrangian submanifold with minimal Maslov number at least three.
 The rest of the thesis doesn't assume any restrictions on the 
 Lagrangian submanifolds and could potentially be used in more 
 general cases. In \cite{OH} besides the same assumption on 
 the minimal Maslov number was also assumed that the image $ \Im(\pi_1(L_i)) \subset 
 \pi_1(M) $ is a torsion subgroup, what allowed them to work with 
 $ \Z_2 $ coefficients. We will not assume this condition and we don't impose 
 any restrictions on the monotonicity factors, 
 which forces us to work with Novikov rings. For the reader who is not 
 familiar with Lagrangian Floer homology
 we shortly describe its construction and then we describe how 
 the technical part of our new approach 
 to gluing is applied in the relevant construction. 
 
 \indent The Floer chain complex 
 is a vector space over some Novikov ring $ \Lambda $
 generated by Hamiltonian paths. 
 The boundary operator $ \partial $ is defined by counting (mod $2$) the number of 
 elements of some zero-dimensional moduli space. This moduli space consists 
 of perturbed holomorphic strips that connect two Hamiltonian paths. 
 As the first application of our gluing approach we prove that 
 the square of the boundary map is equal zero. This allows us to define Lagrangian 
 Floer homology $ HF(L_0,L_1;H,J) $. 
 
 \indent The proof of the identity $ \partial^2 = 0 $ is based on the study 
 of the $2-$dimensional moduli space of index two perturbed holomorphic 
 strips that connect two Hamiltonian paths $ x$ and $z$. 
 This moduli space $ \cM^2(x,z;H,J) $ allows free $ \R $ action by translation. 
 The quotient space $ \widehat{\cM}^2 = \cM^2(x,Z;H,J)/ \R$ will in general not be 
 compact, but it can be compactified by adding the zero dimensional product 
 space consisting of broken trajectories. 
 The aim of the gluing theorem is to identify an end of $ \widehat{\cM}^2 $ 
 with a broken trajectory. More precisely, the purpose of the Floer gluing theorem 
 is to construct a diffeomorphism of an interval $ (T_0, +\infty) $ and 
 some open subset of $ \widehat{\cM}^2  $, such that compactification of the interval, 
 i.e. adding infinity, corresponds to adding the broken trajectory. 
 As a broken trajectory we mean a pair $ (u,v) $, 
 where $ u $ is a trajectory connecting $ x$ and $y$, whereas $v$ 
 connects $y$ and $z$. Gluing map assigns to each $ T \in ( T_0, \infty) $ 
 a holomorphic curve $ u_T \in \cM^2(x,z;H,J) $ such that after shifting 
 by $ T $ on the left $ u_T $ converges to $u$, and on the right to $v$. 
 Construction of the curve $u_T$ is the moment when our new approach 
 provides an alternative method. 
 
 \indent  The curve $ u_T $ is obtained as the isolated intersection point 
 of two Hilbert manifolds $ \cM^T (y, \cU) $ and $ \cM^-(x) \times \cM^+ (z) $. 
 The manifold $ \cM^T (y, \cU) $ consists of perturbed holomorphic curves
 with small energy whose domain is a strip $ [-T,T] \times [0,1] $ 
 and which are the boundary $ \{\pm T\} \times [0,1] $ contained in the neighborhood $ \cU $ 
 of a Hamiltonian path $y$, whereas the other two boundary components $ [-T,T] \times \{0,1\} $\
 lie on Lagrangian submanifolds. The other moduli space $\cM^-(x) \times \cM^+ (z) $
 consists of pairs of half infinite strips with fixed energy
 converging to $x$ and $z$ respectively. 
 We prove that these moduli spaces are indeed infinite dimensional 
 submanifolds of some Hilbert manifold of $W^{2,2} $ strips. 
 We also consider another Hilbert manifold $ \cM^{\infty}(y, \cU) $ 
 consisting of pairs of half infinite holomorphic curves which have sufficiently small
 energy and are at the boundary contained in $ \cU$. The bound on the energy 
 and the neighborhood of $y $ - $\cU $ are chosen in such a way that the 
 elements of $ \cM^{\infty}(y,\cU) $ as well as $ \cM^T(y, \cU) $ 
 are confined to a small neighborhood of $y$. 
 In chapter \ref{chp:mon} we prove some monotonicity results 
 which guarantee that the non-Lagrangian (free) boundary and the energy 
 of a perturbed holomorphic curve control its diameter.  
 Thus, these monotonicity results imply that
 the elements of these moduli spaces are contained in 
 a small neighborhood of a Hamiltonian path $y$ and therefore
 the main analysis can be carried out in Euclidean space 
 using appropriate coordinate charts.
 
 We also prove that the manifolds $  \cM^T(y, \cU) $
 and $ \cM^{\infty}(y,\cU) $ can be embedded by taking the restriction to the boundary 
 into some Hilbert manifold of paths $ \sP \times \sP $. Their images $ \cW^T $ and $ \cW^{\infty} $ 
 are the nonlinear Hardy spaces of the title, they are exactly those paths that can be extended 
 holomorphically to the corresponding strips. One of the most important result of the thesis is 
 that $ \cW^T $ converge to  $\cW^{\infty} $ in $C^1 $ topology. If we now 
 go back to the construction of the map $ u_T $ in the neighborhood of 
 the broken trajectory $ (u,v) $, we notice that a pair of paths $ (u(0),v(0))\in \sP\times \sP $
 is an isolated intersection point of $ \cW^{\infty} $ and $ \cM^-(x) \times \cM^+ (z) $. 
 As $ \cW^T $ converge towards $\cW^{\infty} $ this implies that for $ T $ sufficiently 
 large there will be a unique isolated intersection point of $ \cW^{T} $ and $ \cM^-(x) \times \cM^+ (z) $, 
 which we denote by $u_T$.
 
 \indent Another application of our gluing approach is 
 in the construction of the Floer connection homomorphism. Namely, for 
 two regular pairs $ (H^{\alpha}, J^{\alpha} ) $ 
 and $ (H^{\beta}, J^{\beta} ) $ we can consider homotopy connecting these 
 two pairs and the corresponding time dependent Floer equation. 
 The mod two count of solutions of such an equation defines homomorphism 
 of the corresponding chain complexes $ \Phi^{\beta \alpha}. $ We prove 
 that this homomorphism intertwines the Floer boundary 
 operator and hence defines a homomorphism of the corresponding 
 homology groups. The proof is again based on the study 
 of some $1-$dimensional moduli space and the identification 
 of its ends with the images of some gluing maps. 
 
 \indent The homomorphisms $ \Phi^{\beta \alpha} $ 
 are independent of the choice of the homotopy connecting the 
 regular pairs $ (H^{\alpha}, J^{\alpha} ) $ 
 and $ (H^{\beta}, J^{\beta} ) $. Thus two homomorphisms 
 $ \Phi^{\beta \alpha}_1 $ and $ \Phi^{\beta \alpha}_0$ 
 defined using two different homotopies are chain homotopy equivalent, 
 and hence induce the same map on the homology level. 
 
 \indent Finally we prove that two composable morphisms satisfy the composition 
 rule under the catenation of homotopies. More precisely three regular pairs, 
 $ (H^{\alpha}, J^{\alpha} ) $, $ (H^{\beta}, J^{\beta} ) $ and $ (H^{\gamma}, J^{\gamma} ) $
 and the corresponding homomorphisms $ \Phi^{\beta \alpha} $ and $ \Phi^{\gamma \beta } $ 
 satisfy the composition rule $  \Phi^{\gamma \beta } \circ \Phi^{\beta \alpha} =
 \Phi^{\gamma \alpha } $ and $ \Phi^{\alpha \alpha} = \Id $. Thus, this implies that 
 the Lagrangian Floer homology doesn't depend on the choice of the Hamiltonian and 
 the almost complex structure. This is where the presence of Novikov ring 
 makes the anlysis much more difficult and the proof requires 
 the construction of infinitely many morphisms $ \Phi_{\nu}^{ \gamma\alpha} $ 
 such that the composition  $  \Phi^{\gamma \beta } \circ \Phi^{\beta \alpha}$ 
 corresponds to the limit $ \lim\limits_{\nu\rightarrow \infty} \Phi^{\gamma \alpha}_{\nu} $
 and $\Phi^{\gamma \alpha } = \Phi^{\gamma \alpha }_0 $.
 Each two consecutive morphisms $ \Phi_{\nu}^{ \gamma \alpha}$ 
 are chain homotopic equivalent. 
 
 \indent The above properties of the homomorphisms $ \Phi^{\beta \alpha} $ 
 and the corresponding gluing theorems are done not just in the 
 case of strips, but also in the case of more general Riemann surfaces 
 with finitely many half-infinite strip-like ends. These results give rise 
 to a Lagrangian Floer-Donaldson functor from the category of 
 Lagrangian pairs to the category of vector spaces over the 
 Novikov ring $ \Lambda$ with $ \Z / 2\Z $ coefficients. 
 The objects are finite tuples of pairs of monotone Lagrangian submanifolds and the 
 morphisms, called \emph{string cobordisms}, are 2-manifolds with boundary and 
 finitely many strip-like ends and a monotone Lagrangian submanifold for each boundary 
 component. The Lagrangian Floer-Donaldson functor assigns to each 
 tuple of Lagrangian pairs the tensor product of its Floer homology groups 
 and to each string cobordism a morphism on Floer homology. 
 The last chapter of the thesis also includes the exposition of 
 the Lagrangian Seidel homomorophism. 
 
 \indent The thesis is organized as follows. In the second chapter 
 we prove the monotonicity lemma for holomorphic curves with Lagrangian 
 boundary conditions. As a corollary we prove some results which 
 guarantee that holomorphic strips with small energy and some 
 condition on the non-Lagrangian boundary are localized near 
 Lagrangian intersection point and hence can be studied in 
  suitable local coordinates. We also prove exponential decay 
 of finite energy holomorphic strips with Lagranian boundries 
 in the case of tame almost complex structure and clean intersection 
 of Lagrangian submanifolds. \\
 \indent In the third chapter we prove some linear elliptic estimates which will
 be crucial for the proofs of the main theorems in the fourth chapter. 
 We also prove elliptic regularity and surjectivity of some 
 specific linearized operator whose domain represent $W^{2,2} $ maps 
 on strips. Additional difficulties that don't occur in standard 
 Floer theory appear because of the presence of truncated surfaces, i.e. domains 
 with corners. The corresponding linearized operator will not be Fredholm 
 in this case, but it will still be surjective. 
 In the appendix of this chapter we discuss the abstract 
 interpolation theory, as it has various applications through 
 the whole thesis. \\
 \indent The fourth chapter contains the exposition of the
 Hilbert manifold setup necessary for the statement of the main theorems. 
 In the appendix of this chapter we discuss further 
 the interpolation Lions-Magenes \cite{LM} spaces which represent 
 the model space of the aforementioned Hilbert manifold of paths $ \sP $. 
 This chapter contains the statement and the proofs of 
 the main theorems. We prove that $ \cW^T $ converges 
 in $C^1 $ topology to $ \cW^{\infty} $ and we prove that 
 they are embedded submanifolds of the Hilbert path space $ \sP\times \sP$. 
 We also prove that $\sM^{\pm} (x) $, consisting of perturbed 
 holomorphic strips that converge to $x$, is a Hilbert submanifold 
 of some Hilbert manifold of strips $\sB$ and that it can be injectively 
 immersed into the path manifold $\sP$. \\ 
 \indent The last chapter is joint work with Prof. D. Salamon 
 and it contains various applications of the results from the 
 third chapter. We explain how the Floer gluing theorems can be reduced to 
 intersection theory in the path space $\sP $. We prove that the 
 square of the boundary map is equal zero and we prove the 
 aforementioned properties of the homomorphism $ \Phi^{\beta \alpha} $. 
 We include the exposition of the Floer-Donaldson category.

 \chapter{Monotonicity for holomorphic curves}\label{chp:mon}

\section{Main results}
It is well known that the monotonicity lemma holds 
for minimal surfaces $ u : \Omega \rightarrow M $.
\emph{Monotonicity} means that the area 
of the piece of the surface $u(\Omega)$, cut from $ u(\Omega) $
by a small ball of radius $r$ and centered on the surface,
is bounded bellow by $c \cdot r^2$. 
It is well known that this holds for holomorphic curves 
in the case that the boundary $ u(\partial \Omega ) $ isn't 
contained in the ball $ B_r(u(z))$. 
We shall prove that monotonicity holds also if 
 $$ 
 u( \partial \Omega ) \cap B_r( u(z_0))\neq \emptyset 
 $$
It suffices that the piece of a boundary within 
the ball lies on Lagrangian submanifolds which intersect 
cleanly. We illustrate this phenomenon in Figure \ref{fig_1}. \\

\begin{figure}[htp]
  \centering
 \scalebox{0.4}{\input{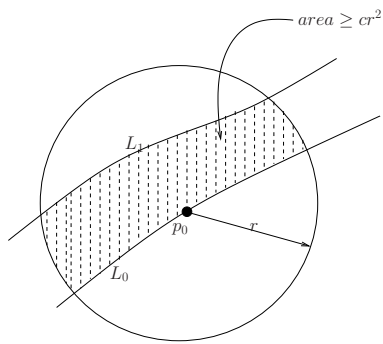}}
 \caption{Monotonicity for curves with Lagrangian boundary}\label{fig_1}
\end{figure}
Throughout this chapter we shall assume  the following:
\begin{itemize}
 \item[ (H)]  $ (M, \omega ) $ is a symplectic manifold 
without boundary and $ L_0 , L_1 \subset M $ are Lagrangian submanifolds 
without boundary that are closed subsets of $M$. The intersection 
$ \Lambda = L_0 \cap L_1 $ is compact and $N \subset M$ is a compact 
neighborhood of $ \Lambda $. 
\end{itemize}

The main theorem of this chapter is the following 
monotonicity theorem for holomorphic curves with 
Lagrangian boundary conditions. 

\begin{theorem}[Monotonicity lemma] \label{main_thm}
 Assume ~(H) and let $ J $ be an 
 $\omega -$tame almost complex structure. Suppose 
 that the intersection $ \Lambda = L_0 \cap L_1 $ is clean. 
 Equip $ M $ with the 
metric $ \langle \xi, \eta \rangle = \frac{1}{2} ( \omega (\xi, J\eta) + \omega (\eta,J \xi)). $ 
There exist positive constants $c_0$ and $r_0$ such that the following holds.
Let $ \Omega \subset \R \times [0,1] $ be a bounded open set, $ (s_0, t_0 ) \in \Omega $, 
$ 0 < r < r_0 $ . If a $ J-$holomorphic curve 
$ u : \Omega \rightarrow N $ which extends continuously 
to $ \overline{\Omega} $ satisfies the following
\begin{equation}\label{eq:lagbc}
\begin{split}
 u(s,0 ) \in L_0 , \text{ for all } s\in \R \text{ with } (s, 0 ) \in \Omega , \\
 u(s,1) \in L_1 ,   \text{ for all } s \in \R \text{ with } (s,1) \in \Omega
\end{split}
\end{equation}
 and 
 \begin{equation}\label{eq:mon_cond}
 u ( \overline{\Omega} \setminus \Omega ) \cap \overline{B_r(p_0) } = \emptyset 
 ,
 \end{equation}
 where $ p_0 = u(s_0, t_0) $. Then 
\begin{align}
  \displaystyle\int\limits_{u^{-1} ( B_r(p_0))} u^* \omega  \geq c_0 r^2. 
\end{align}
\end{theorem}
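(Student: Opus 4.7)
The plan is to prove the theorem by the classical combination of a coarea-type differential inequality for
\[
a(r) := \int_{u^{-1}(B_r(p_0))} u^*\omega
\]
with an isoperimetric inequality, both adapted so that parts of the curve's boundary are allowed on the Lagrangians. I would begin with a reduction to a local model. Because $\Lambda = L_0\cap L_1$ is compact and clean, the standard normal form for cleanly intersecting Lagrangians produces, uniformly in $q\in \Lambda$, a Darboux chart on a ball of fixed radius in which $M$ is identified with a neighbourhood of the zero section in $T^*L_0$, $L_0$ is the zero section, and $L_1$ is the conormal bundle of $\Lambda \subset L_0$. In such coordinates the tautological Liouville form $\lambda=\sum p_i\,dq_i$ is a primitive of $\omega$ whose restriction to both $L_0$ and $L_1$ vanishes. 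Away from $\Lambda$, the two Lagrangians are separated in $N$ by a uniform distance, so Weinstein's neighbourhood theorem gives a single-Lagrangian Darboux chart, and the ordinary monotonicity lemma (with one Lagrangian boundary condition, or none, if $p_0$ is away from $L_0\cup L_1$) covers this case. It therefore suffices to work in the clean-intersection chart, shrinking $r_0$ so that $g,\omega,J$ are uniformly comparable to their Euclidean values at $p_0$.

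Inside the chart, $J$-holomorphicity gives $u^*\omega = \tfrac12|du|^2\,ds\wedge dt\ge 0$, so $a(r)$ is monotone and vanishes at $r=0$. Set $\rho(z):=\mathrm{dist}(u(z),p_0)$ and $\Gamma(r):=u^{-1}(\partial B_r(p_0))\cap\Omega$; the hypothesis \eqref{eq:mon_cond} forces the endpoints of $\Gamma(r)$ to lie on the Lagrangian part $L_0\cup L_1$ of $\partial\Omega$, not on its ``free'' boundary. Let $L(r)$ denote the length, counted with multiplicity, of $u(\Gamma(r))$ in $M$. The coarea formula, combined with the pointwise bound $|\nabla_z\rho|\le |du|$, yields
\[
a'(r)\;\ge\;\tfrac12\int_{\Gamma(r)}|du|\,d\mathcal{H}^1\;=\;\tfrac12\,L(r).
\]

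The core step is the isoperimetric inequality $a(r)\le C_0\, L(r)^2$ for $0<r<r_0$. I close up the curve $u(\Gamma(r))$ in the Darboux chart by short arcs on $L_0$, $L_1$, and $\Lambda$, joining consecutive endpoints along $L_i\cap B_r(p_0)$. Because $\omega=d\lambda$ in the chart and $\lambda$ vanishes on both Lagrangians by the normal form, Stokes' theorem identifies $a(r)$ with $\int_{\widehat\gamma}\lambda$, where $\widehat\gamma$ is the resulting piecewise-smooth closed loop in the chart. The loop sits in a set of diameter $O(r)$, and since $\lambda$ is linear in the standard symplectic coordinates, the classical Euclidean isoperimetric inequality applied to the projection onto each $(q_i,p_i)$-plane gives $\bigl|\int_{\widehat\gamma}\lambda\bigr|\le C\,\ell(\widehat\gamma)^2$; the normal form allows the closing arcs to be chosen of total length comparable to $L(r)$, which yields the quadratic bound. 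Combining with the coarea inequality gives $\sqrt{a(r)}\le C_2\,a'(r)$, which integrates with $a(0)=0$ to $a(r)\ge c_0\, r^2$ on $(0,r_0)$.

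The main obstacle I anticipate is the isoperimetric step, specifically the consistent construction of closing arcs: $\Gamma(r)$ can have many components, its endpoints lie on $L_0\cup L_1$ in a possibly intricate pattern, and the length of the closing arcs must remain controlled by $L(r)$ uniformly as $p_0$ varies over $N$ and its distance to $\Lambda$ ranges from $0$ to $r_0$. The clean-intersection normal form together with the compactness of $\Lambda$ and $N$ supplies the uniform geometric constants, while the pairing of endpoints is carried out by following $\partial B_r(p_0)\cap L_i$ between consecutive hits of $\Gamma(r)$ and, near the singular locus, routing through short arcs in $\Lambda$.
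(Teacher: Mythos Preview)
Your architecture matches the paper's exactly: a differential inequality $a'(r)\ge c\,L(r)$ (the paper obtains it by a gradient-flow reparametrization rather than coarea, but this is cosmetic), an isoperimetric bound $a(r)\le C\,L(r)^2$ in a Darboux chart adapted to the clean intersection, and integration of $(\sqrt{a})'\ge c$. The one substantive difference is how the isoperimetric step is proved. The paper does \emph{not} close up the arcs. Instead it shows directly, by a Fourier-series computation in the linear model $L_0=\R^n\times\{0\}$, $L_1=\R^d\times\{0\}\times\R^{n-d}$ (Lemma~\ref{lem:mon_lem1}), that any arc $\gamma$ with $\gamma(i)\in L_i$ satisfies $|A(\gamma)|\le\tfrac1\pi L(\gamma)^2$ where $A(\gamma)=\tfrac12\int\omega(\dot\gamma,\gamma)$; since this action vanishes on arcs lying in either linear Lagrangian, Stokes' theorem gives $\int_{\Omega_r}u^*\omega\le\tfrac1\pi L(r)^2$ with no closing construction (Corollary~\ref{cor:mon_cor1}, Lemma~\ref{lem:mon_lem5}).

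Your closing-arc alternative is also correct, but the assertion that ``the closing arcs can be chosen of total length comparable to $L(r)$'' is precisely the point that needs an argument; without it the closing arcs are a priori only of length $O(r)$, which would not suffice. In the linear model, if $u(\gamma_j(0))=(a,b,0,0)\in L_0$ and $u(\gamma_j(1))=(a',0,0,c')\in L_1$, the broken segment $(a',0,0,c')\to(a',0,0,0)\to(a,0,0,0)\to(a,b,0,0)$ lies in $L_1\cup\Lambda\cup L_0$ and has length $|c'|+|a'-a|+|b|\le\sqrt3\,|u(\gamma_j(1))-u(\gamma_j(0))|\le\sqrt3\,\ell(u(\gamma_j))$; the same-Lagrangian case is handled by a straight segment. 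This is the missing estimate that makes your approach go through. The paper's Fourier method gives sharper constants and bypasses the closing combinatorics entirely; your method is more geometric and avoids the eigenfunction computation, at the cost of the bookkeeping near $\Lambda$ that you correctly identify as the obstacle.
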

\begin{proof}
 See Section \ref{sec:proofs}. 
\end{proof}

The following standard monotonicity lemma 
(see for example \cite{Hu}) 
for holomorphic curves without 
boundary is a corollary of Theorem \ref{main_thm}. 

\begin{corollary}
 Let $ (M , \omega)$  be a symplectic manifold and $ N \subset M $ compact.
 Let $J$  be an $ \omega -$tame almost complex structure and
 equip $M $ with the metric 
 $ g = \frac{1}{2} ( \omega ( \cdot, J \cdot) + \omega ( J \cdot, \cdot) ) $. 
 Then there are constants $\epsilon_0, c_0 >0$ such that the following holds. 
 If $ \Omega \subset \C $ is an open and bounded set, $ (s_0, t_0) \in \Omega $, $ 0 < r < \epsilon_0 $ and
 \begin{itemize}
  \item $u: \Omega \rightarrow N $ is a smooth $J-$ holomorphic curve. 
  \item $ u : \overline{\Omega} \rightarrow N $ is continuous.
  \item $ u ( \overline{\Omega} \setminus \Omega)\cap \overline{B_r( u(s_0,t_0))} = \emptyset $ for $ r \leq \epsilon_0 $ 
 \end{itemize}
 Then 
 \begin{align}
  \int\limits_{u^{-1} ( B_r( u( s_0,t_0))} u^* \omega \geq c_0 r^2 
 \end{align}
\end{corollary}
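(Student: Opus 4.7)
The plan is to deduce the corollary from Theorem \ref{main_thm} by reparametrizing the domain so that it embeds into the open strip $\R\times(0,1)$, where the Lagrangian boundary conditions become vacuous. To invoke Theorem \ref{main_thm} I first produce auxiliary data. Since every symplectic manifold admits compact Lagrangian submanifolds (for example, small product tori inside a Darboux chart), I pick $L_0,L_1\subset M$ intersecting cleanly with $\Lambda=L_0\cap L_1$ compact, and enlarge the given $N$ to a compact neighborhood $N'$ of $\Lambda$ with $N\subset N'$. Applying Theorem \ref{main_thm} to $(M,\omega,L_0,L_1,N',J)$ yields constants $c_0,r_0>0$, and I set $\epsilon_0:=r_0$, retaining the same $c_0$.

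Given $u:\Omega\to N$ and $0<r<\epsilon_0$ as in the corollary, I next rescale. Because $\Omega\subset\C$ is bounded, there is an affine map $\phi(z)=\lambda z+w_0$ with $\lambda>0$ chosen so small that $\Omega'':=\phi^{-1}(\Omega)\subset\R\times(1/4,3/4)$. Define $v:=u\circ\phi:\Omega''\to N'$; since $\phi$ is holomorphic, $v$ is $J$-holomorphic and extends continuously to $\overline{\Omega''}$. Setting $(s_0'',t_0''):=\phi^{-1}(s_0,t_0)$, so that $v(s_0'',t_0'')=u(s_0,t_0)=:p_0$, the hypothesis on $u$ transfers directly to $v(\overline{\Omega''}\setminus\Omega'')\cap\overline{B_r(p_0)}=\emptyset$, because $\phi$ restricts to a homeomorphism $\overline{\Omega''}\to\overline{\Omega}$. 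Crucially, $\overline{\Omega''}\cap(\R\times\{0,1\})=\emptyset$, so the Lagrangian conditions (\ref{eq:lagbc}) for $v$ are vacuously satisfied. Hence Theorem \ref{main_thm} applies and yields
\begin{equation*}
\int_{v^{-1}(B_r(p_0))} v^*\omega \;\geq\; c_0 r^2.
\end{equation*}

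Finally, the change-of-variables formula for the pullback of a two-form under the biholomorphism $\phi$ gives $\int_{v^{-1}(B_r(p_0))}v^*\omega=\int_{u^{-1}(B_r(p_0))}u^*\omega$, which is the desired lower bound. Conceptually, the corollary is just the ``interior'' special case of Theorem \ref{main_thm}: no genuinely new analysis is required, only an affine reparametrization to push the domain off the boundary of the strip. The mildly delicate step is producing auxiliary Lagrangians in an abstract $(M,\omega)$, handled by local Darboux charts; this highlights that the full analytic burden of monotonicity--including the contribution of the Lagrangian boundary--has already been absorbed into the main theorem.
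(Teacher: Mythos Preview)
Your proof is correct and follows essentially the same approach as the paper, which reads in full: ``After rescaling we may assume $\Omega \subset \R \times [0,1]$. Then the boundary conditions are vacuous, so the assertion follows from Theorem~\ref{main_thm}.'' You have supplied the details the paper leaves implicit, namely the explicit affine rescaling into the open strip and the production of auxiliary Lagrangians so that hypothesis~(H) is formally satisfied. One small simplification: if you take $L_0$ and $L_1$ disjoint (two small tori in separate Darboux balls, say), then $\Lambda=\emptyset$ and any compact set is vacuously a neighborhood of $\Lambda$, so no enlargement $N'\supset N$ is needed.
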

\begin{proof}
After rescaling we may assume $\Omega \subset \R \times [0,1] $. Then 
 the boundary conditions are vacuous, so the assertion follows from Theorem 
 \ref{main_thm}. 
\end{proof}

\subsection*{Extension to the case of $t-$dependent
almost complex structures}
Let $ J_t $ be a smooth family of $ \omega -$tame 
almost complex structures and let $ I \subset \R $ be an interval. 
We consider $ J_t- $ holomorphic curves 
with Lagrangian boundary conditions, 
i.e. solutions $ u : I \times [0,1] \rightarrow N \subset  M $
of the following 
boundary value problem 
\begin{align}\label{eq:jhol}
 \partial_s u + J_t(u) \partial_t u = 0, \;\; u(s,i) \in L_i, i = 0,1.
\end{align}
Such holomorphic curves were studied by Floer \cite{F1, F2,F3}
and were used to define the Floer homology of Lagrangian intersection. 
The need of introducing $t-$dependent almost complex structure comes 
from tansversality issues. See for example  \cite{FHS}, 
where the construction leading to transversality involved such 
almost complex structures. 
Denote with $ E(u) $ the energy of such a holomorphic curve 
\begin{align}
 E(u) = \int\limits_{I \times [0,1] } u^* \omega = \int\limits_{I \times [0,1] } \abs{\partial_s u }_{J_t}^2 ds dt.
\end{align}

A set $ \Lambda_0 \subset L_0 \cap L_1 $ is called 
an {\bf isolated set} of intersections if there 
is an open neighborhood $ V \subset M $ of $ \Lambda_0 $ with 
compact closure such that $ \overline{V} \cap L_0 \cap L_1= \Lambda_0 $. 
In particular $ \Lambda_0 $ is compact. Any such $ V $ is called an 
{\bf isolating neighborhood } of $ \Lambda_0 $.  
Our second main result states that the energy of $ u $ 
and the non-Lagrangian boundary 
$\left. u\right|_{\partial I \times [0,1] } $
control the diameter of $ u$. 

\begin{theorem}\label{thm:mon1}
Assume ~(H). Suppose that $ \Lambda_0 \subset L_0 \cap L_1 $ 
is an isolated set of intersections. 
Let $ U $ be an open neighborhood of $\Lambda_0$  and 
let $ V $ be an isolated neighborhood of $ \Lambda_0 $ 
such that $ \overline{V} \subset U $. 
There exists $\hbar $ such that the following holds.
For any interval $ I=[a,b] \subset \R $  if a solution 
$ u : I \times [0,1] \rightarrow N \subset M $ of \eqref{eq:jhol}, satisfies  
$$
E(u) < \hbar , \;\;  and \; \; u|_{\partial I \times[0,1] } \in V ,
$$ then 
$$ 
u(s,t)\; \in U , \;\;\forall (s,t) \in I \times [0,1].
$$
\end{theorem}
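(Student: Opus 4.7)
The plan is to argue by contradiction, combining the isolating property of $V$ with the monotonicity Theorem~\ref{main_thm}. The underlying intuition is simple: a point of $u$ sitting outside $U$ is forced to be far from the non-Lagrangian boundary $u|_{\{a,b\}\times[0,1]}\subset V$, and monotonicity converts this ``free'' disc of radius $\sim r$ around such a point into a definite chunk of $\omega$-energy $\geq c_0 r^2$. Choosing $\hbar$ strictly below any such chunk then rules out escape.

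Concretely, first I would set
\[
d := \dist(\overline{V},\, M \setminus U),
\]
which is strictly positive since $\overline{V}$ is compact, $M\setminus U$ is closed, and the two are disjoint. Next I would invoke Theorem~\ref{main_thm} to obtain constants $c_0, r_0 > 0$, promoted to a version uniform in $t\in[0,1]$ for the family $\{J_t\}$, and then put
\[
r := \tfrac{1}{2}\min(r_0, d), \qquad \hbar := c_0 r^2.
\]

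Now suppose for contradiction that a solution $u:[a,b]\times[0,1]\to N$ of \eqref{eq:jhol} with $E(u)<\hbar$ and $u(\{a,b\}\times[0,1])\subset V$ admits a point $(s_0,t_0)$ with $p_0:=u(s_0,t_0)\notin U$. Since $p_0\notin V$, the point $(s_0,t_0)$ cannot lie on $\{a,b\}\times[0,1]$, so it belongs to the set $\Omega:=(a,b)\times[0,1]$, which is open in $\R\times[0,1]$ with $\overline{\Omega}\setminus\Omega=\{a,b\}\times[0,1]$. By definition of $d$ we have $\dist(p_0,V)\geq d>r$, so $\overline{B_r(p_0)}\cap V=\emptyset$; this gives \eqref{eq:mon_cond}, while the Lagrangian boundary conditions \eqref{eq:lagbc} come from \eqref{eq:jhol} and $r<r_0$ holds by construction. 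Applying monotonicity would then yield
\[
\hbar \;>\; E(u) \;\geq\; \int_{u^{-1}(B_r(p_0))} u^*\omega \;\geq\; c_0 r^2 \;=\; \hbar,
\]
a contradiction. Hence $u(s,t)\in U$ for every $(s,t)\in I\times[0,1]$, and any $\hbar\leq c_0 r^2$ works.

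The only real obstacle I anticipate is the mismatch that Theorem~\ref{main_thm} is stated for a single $\omega$-tame $J$, whereas \eqref{eq:jhol} involves a family $J_t$. The standard workaround is that the proof of Theorem~\ref{main_thm} is local in $M$ and its constants $c_0$, $r_0$ depend continuously on the almost complex structure; together with compactness of $[0,1]$ and continuity of $t\mapsto J_t$, this lets one shrink $r_0$ and adjust $c_0$ so that the estimate holds uniformly for all $J_t$. Modulo that adaptation, the argument reduces to the three-line contradiction above.
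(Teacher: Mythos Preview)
There is a genuine gap: Theorem~\ref{main_thm}, which you invoke, assumes in addition to~(H) that the \emph{full} intersection $\Lambda=L_0\cap L_1$ is clean, whereas Theorem~\ref{thm:mon1} makes no such assumption --- it only asks that $\Lambda_0\subset L_0\cap L_1$ be an isolated set. The cleanness enters the proof of Theorem~\ref{main_thm} through the isoperimetric inequality (Lemma~\ref{lem:mon_lem5}), which relies on Darboux charts simultaneously straightening $L_0$ and $L_1$ near intersection points. Your escape point $p_0\notin U$ may well lie near some $q\in(L_0\cap L_1)\setminus\Lambda_0$ where the intersection is not clean; the ball $B_r(p_0)$ then meets both Lagrangians in a region where Theorem~\ref{main_thm} simply does not apply. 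The second obstacle you flag --- the $t$-dependence of $J_t$ --- is also more serious than your workaround suggests: a $J_t$-holomorphic strip is not $J$-holomorphic for any single $J$, so ``making the constants uniform in $t$'' is not enough; one would have to rerun the proof of Theorem~\ref{main_thm} with a moving structure, not merely track constants.

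The paper dispatches both problems simultaneously by passing to the product $\tM=\C\times M$ with Lagrangians $\tL_i=\R\times\{i\}\times L_i$ and the single almost complex structure $\tJ$ given blockwise by $J_{\mathrm{std}}$ on $\C$ and $J_t$ on $M$ (here $t$ is now a coordinate on $\tM$, so $\tJ$ is a genuine almost complex structure). Then $\tL_0\cap\tL_1=\emptyset$, so the clean-intersection hypothesis is vacuous, and the graph $\tu(s,t)=(s+it,u(s,t))$ is honestly $\tJ$-holomorphic. The cost is that $E(\tu)$ picks up an additive contribution from the $\C$-factor proportional to the length of the $s$-interval, so one cannot apply monotonicity on the whole strip; instead the paper works on short pieces $[b-\delta,b]\times[0,1]$ (respectively near $a$) and uses the mean value inequality to localise the paths $u(s,\cdot)$ in the interior.
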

\begin{proof}
 See Section \ref{sec:proofs}. 
\end{proof}

As a corollary of Theorem \ref{thm:mon1} we prove 
an analogous result for perturbed holomorphic strips. 
Let 
$$
[0,1]\times M\to\R:(t,p)\mapsto H(t,p)=H_t(p)
$$ 
be a time dependent Hamiltonian function. 
Denote by $\phi_t:\Om_t\to M$ the Hamiltonian
isotopy generated by $H$ via
$$
\p_t \phi_t(p) = X_{H_t} ( \phi_t(x)), \;\; \phi_0 = \Id
$$
Thus $\Om_t\subset M$ is the open set of 
all points $p_0\in M$ such that the solution 
of the initial value problem 
$\dot x(s)=X_{H_s}(x(s))$, $x(0)=p_0$,
exists on the interval $[0,t]$.

\begin{theorem}\label{thm:MON}
Assume~(H) and let $J=\{J_t\}_{0\le t\le1}\in\cJ(M,\om)$.
Suppose $\Lambda\subset L_0\cap\phi_1^{-1}(L_1)$ is an isolated 
set of intersections, let $U\subset M$ be an open 
neighborhood of $\Lambda$, and let $V\subset M$ be an 
isolating neighborhood of $\Lambda$ such that 
$\overline{V}\subset U$.  Fix a compact set $K\subset M$.
There is a constant $\hbar>0$ such that the following holds.
If $T>0$ and $u:[-T,T]\times[0,1]\to K$ satisfies
\begin{equation}\label{eq:floer1}
\p_su+J_t(u)\left(\p_tu-X_{H_t}(u)\right)=0,\qquad
u(s,0)\in L_0,\qquad u(s,1)\in L_1,
\end{equation}
and 
\begin{equation}\label{eq:EuV}
E_H(u) = \int_{-T}^T\int_0^1\abs{\p_su}^2\,dtds <\hbar,\qquad
u(\{\pm T\},t)\subset\phi_t(V)\;\forall t\in[0,1],
\end{equation}
then $u(s,t)\in\phi_t(U\cap\Om_t)$ for all $s\in[-T,T]$
and all $t\in [0,1]$. 
\end{theorem}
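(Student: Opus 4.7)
\emph{Proof proposal.} My plan is to reduce Theorem~\ref{thm:MON} to Theorem~\ref{thm:mon1} via the gauge transformation
\[
\tu(s,t):=\phi_t^{-1}(u(s,t)),
\]
which removes the Hamiltonian perturbation. This substitution makes sense wherever $u(s,t)\in\phi_t(\Om_t)$; the boundary hypothesis $u(\pm T,t)\in\phi_t(V)$ provides it near $s=\pm T$, and a continuity argument outlined at the end extends it to the whole strip.

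Assuming $\tu$ is defined, differentiating $u=\phi_t\circ\tu$ gives $\p_su=d\phi_t(\tu)\,\p_s\tu$ and $\p_tu-X_{H_t}(u)=d\phi_t(\tu)\,\p_t\tu$. Substituting into~\eqref{eq:floer1} and multiplying by $d\phi_t(\tu)^{-1}$ on the left yields
\[
\p_s\tu+\widetilde J_t(\tu)\,\p_t\tu=0,\qquad \widetilde J_t:=\phi_t^*J_t.
\]
Because $\phi_t^*\om=\om$, the family $\widetilde J_t$ is $\om$-tame with associated metric $\phi_t^*g_{J_t}$, so $|\p_su|_{J_t}=|\p_s\tu|_{\widetilde J_t}$ and therefore $E(\tu)=E_H(u)$. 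The boundary data transform cleanly: $\tu(s,0)=u(s,0)\in L_0$ (since $\phi_0=\Id$), $\tu(s,1)\in\phi_1^{-1}(L_1)$, and $\tu(\pm T,t)\in V$. Thus $\tu$ is a $\widetilde J$-holomorphic strip in the sense of~\eqref{eq:jhol} with Lagrangian pair $(L_0,\phi_1^{-1}(L_1))$---which satisfies~(H) near $\Lambda$---isolated intersection set $\Lambda$, and isolating neighborhood $V\subset U$. Taking $\hbar$ no larger than the constant supplied by Theorem~\ref{thm:mon1} for this data forces $\tu(s,t)\in U$, whence $u(s,t)=\phi_t(\tu(s,t))\in\phi_t(U\cap\Om_t)$.

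The principal technical obstacle is the global well-definedness of $\tu$. I would fix an open $U'$ with $\overline{V}\subset U'$, $\overline{U'}\subset U\cap\bigcap_{t\in[0,1]}\Om_t$, and $\mathrm{dist}(\overline{V},\p U')\ge\delta>0$, and run a standard open--closed continuity argument on the set of $s\in[-T,T]$ up to which $\tu$ extends with values in $U'$. Openness is immediate. For closedness I would invoke the monotonicity lemma~\ref{main_thm} directly: if $\tu$ reached $\p U'$ at some interior point $(s^*,t^*)$, the ball $B_{\delta/2}(\tu(s^*,t^*))$ would be disjoint from $\tu(\{\pm T\}\times[0,1])\subset\overline V$, so Theorem~\ref{main_thm} would give local energy at least $c_0\delta^2/4$, contradicting $E(\tu)<\hbar$ provided $\hbar$ is also chosen smaller than $c_0\delta^2/4$. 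Thus $\tu$ stays strictly inside $U'\subset\bigcap_t\Om_t$, the substitution is globally valid, and the reduction to Theorem~\ref{thm:mon1} completes the proof.
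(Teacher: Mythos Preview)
Your overall strategy---reduce to Theorem~\ref{thm:mon1} via the naturality substitution $\tu=\phi_t^{-1}\circ u$---is exactly the paper's, and the verification that $\tu$ solves the unperturbed equation with the transformed data is correct. The difference lies in how you handle the global well-definedness of $\tu$, and there your continuity argument has a genuine gap.

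In your closedness step you apply Theorem~\ref{main_thm} at the escape point $(s^*,t^*)$ and argue that $B_{\delta/2}(\tu(s^*,t^*))$ misses $\tu(\{\pm T\}\times[0,1])$. But at this stage of the open--closed argument $\tu$ is only known to be defined on $[-T,s^*+\eps]\times[0,1]$ for some small $\eps>0$ (using $\overline{U'}\subset\bigcap_t\Om_t$); you do not yet know that $\tu(T,\cdot)$ exists. The non-Lagrangian boundary of the available domain therefore includes $\{s^*+\eps\}\times[0,1]$, and $\tu(s^*+\eps,t^*)$ lies inside $B_{\delta/2}(\tu(s^*,t^*))$ for small $\eps$, so condition~\eqref{eq:mon_cond} fails. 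There is also a secondary mismatch: Theorem~\ref{main_thm} is stated for a single $\om$-tame $J$ and clean intersection, whereas $\tu$ is $\widetilde J_t$-holomorphic for a $t$-dependent family and $\Lambda$ is merely isolated; one would need the product trick from the proof of Theorem~\ref{thm:mon1} rather than Theorem~\ref{main_thm} directly.

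The paper sidesteps the whole issue by a preliminary Step~1: shrink $U$ so that $\overline U\subset\Om_1$ is compact, then replace $H$ by a compactly supported $\widehat H$ agreeing with $H$ on $[0,1]\times K\cup\bigcup_t\{t\}\times\phi_t(\overline U)$. The new flow $\widehat\phi_t$ is globally defined ($\Om_t=M$), and on the relevant sets $\widehat\phi_t=\phi_t$, so the hypotheses and conclusion are unchanged while $\tu:=\widehat\phi_t^{-1}\circ u$ is now defined on the entire strip with no continuation argument needed. After that, Theorem~\ref{thm:mon1} applies verbatim to the tuple $(M,\om,L_0,\phi_1^{-1}(L_1),\widetilde J,\Lambda,U,V)$. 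This cutoff trick is the missing idea; once you insert it, your proof becomes the paper's.
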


\begin{proof}
This theorem was proved in~\ref{thm:mon1} for $H=0$.
We reduce the general case to the case $H=0$ in two steps.

\medskip\noindent{\bf Step~1.}
{\it It suffices to assume that the Hamiltonian
$H:[0,1]\times M\to\R$ has compact support
(and hence $\Om_t=M$ for $0\le t\le 1$).}
\medskip\noindent

Shrinking $U$, if necessary, we may assume that
$\overline{U}$ is compact and
$$
\overline{V}\subset U\subset\overline{U}\subset \Om_1.
$$
Then the set
$$
\widehat{K} := [0,1]\times K
\cup\bigcup_{0\le t\le 1}\{t\}\times\phi_t(\overline{U})
$$
is a compact subset of $[0,1]\times M$.  Replace 
the Hamiltonian function $H$ 
by a function $\widehat{H}:[0,1]\times M\to\R$ with compact 
support that agrees with $H$ on a neighborhood 
of $\widehat{K}$. Denote by $\widehat{\phi}_t$ the Hamiltonian
isotopy generated by $\widehat{H}$.  
Then $\widehat{\phi}_t(U)=\phi_t(U\cap\Om_t)$
and $\widehat{\phi}_t(V)=\phi_t(V)$ for $0\le t\le 1$.
Hence a function $u:[-T,T]\times[0,1]\to K$ 
satisfies~\eqref{eq:floer1} and~\eqref{eq:EuV} 
if and only if it satisfies the same conditions
with $H$ and $\phi_t$ replaced by $\widehat{H}$ 
and $\widehat{\phi}_t$. This proves Step~1. 

\medskip\noindent{\bf Step~2.}
{\it Theorem~\ref{thm:MON} holds 
when $H$ has compact support.}

\medskip\noindent
 Theorem \ref{thm:mon1} implies the statement of 
 \ref{thm:MON} in the case $H=0$.
Under the assumption of Step~2 the assertion 
of Theorem~\ref{thm:MON} with $H\ne 0$ reduces 
to the assertion with $H=0$ by naturality. 
More precisely, define
$$
\tJ_t := \phi_t^*J_t,\qquad
\tL_0:=L_0,\qquad
\tL_1:=\phi_1^{-1}(L_1),\qquad
\tK := \bigcup_{t\in[0,1]}\phi_t^{-1}(K).
$$
Then $\tK$ is a compact subset of $M$
and $\tL_0,\tL_1\subset M$ satisfy~(H).
Hence the tuple 
\begin{equation}\label{eq:tilde}
(M,\om,\tL_0,\tL_1,\tH=0,\tJ,
\widetilde{\Lambda}=\Lambda,\tU=U,\tV=V,\tK)
\end{equation}
satisfies the hypotheses of Theorem~\ref{thm:MON}
with the Hamiltonian function equal to zero. 
Hence, by Theorem ~\ref{thm:mon1}, there exists a 
constant $\hbar>0$ such that the result holds for this tuple.

Now let $u:[-T,T]\times[0,1]\to M$ be a smooth function
that satisfies~\eqref{eq:floer1} and~\eqref{eq:EuV}
(with the above constant $\hbar$) and define 
$\tu:[-T,T]\times[0,1]\to M$ by
$$
\tu(s,t) := \phi_t^{-1}(u(s,t)).
$$
Then $\tu$ satisfies~\eqref{eq:floer1} and~\eqref{eq:EuV}
for the tuple~\eqref{eq:tilde}.  
Hence 
$$
\tu([-T,T]\times[0,1])\subset\tU=U
$$ 
and hence $u(s,t)\in\phi_t(U)$ for all $s$ and $t$.
This proves Theorem~\ref{thm:MON}.
\end{proof}

\begin{theorem}\label{thm:mon2}
Assume ~(H) and suppose that the intersection $ \Lambda = L_0 \cap L_1 $ 
is clean. 
Let $ g $ be some Riemannian metric on $ M $ 
and let $ d $ be the distance induced by $ g$. 
For every $ \epsilon> 0 $ there exists $ \hbar > 0 $ 
such that the following holds. 
For any interval $ I = [a,b]\subset \R $  if a 
solution $ u: I \times [0,1] \rightarrow N $ of 
\eqref{eq:jhol} satisfies the following:
\begin{itemize}
 \item There exist $ x, \; y \in \Lambda= L_0 \cap L_1 $ such that 
 $$ 
\sup\limits_t d( u( a, t ) , x ) < \frac{\epsilon}{12} \text { and }
 \sup\limits_t d(u(b, t ) , y ) < \frac{\epsilon}{12} .
$$
\item $ E(u) < \hbar, $
\end{itemize}
 then 
\begin{equation}
u(s,t ) \in B_{\epsilon } ( x ) \cap  B_{\epsilon } ( y )
, \; \; \text{ for all } (s,t) \in I \times [0,1].
\end{equation}
\end{theorem}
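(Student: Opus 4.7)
The plan is to deduce Theorem~\ref{thm:mon2} from Theorem~\ref{thm:mon1} and the monotonicity lemma Theorem~\ref{main_thm}, via a contradiction argument. Theorem~\ref{thm:mon1} supplies the crude localization of $u$ into a tube about $\Lambda$, while Theorem~\ref{main_thm} provides the quantitative energy lower bound used to rule out points of $u$ that lie far from both $x$ and $y$. The metric $g$ in the statement and the $J$-tame metric $g_J=\tfrac12(\omega(\cdot,J\cdot)+\omega(J\cdot,\cdot))$ underlying Theorem~\ref{main_thm} are equivalent on the compact neighborhood $N$, so the constants match after adjustment.

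Given $\epsilon>0$, I first fix an open isolating neighborhood $V$ of $\Lambda$ containing the $\tfrac{\epsilon}{12}$-tube $\{p\in M:d(p,\Lambda)<\tfrac{\epsilon}{12}\}$, together with an open neighborhood $U\supset\overline V$ of $\Lambda$ whose closure sits inside the $r_0$-tube about $\Lambda$, where $r_0$ is the constant of Theorem~\ref{main_thm}. Since $x,y\in\Lambda$, the hypotheses $\sup_t d(u(a,t),x)<\tfrac{\epsilon}{12}$ and $\sup_t d(u(b,t),y)<\tfrac{\epsilon}{12}$ give $u(\{a,b\}\times[0,1])\subset V$, and Theorem~\ref{thm:mon1} then supplies $\hbar_1>0$ such that $E(u)<\hbar_1$ forces $u(I\times[0,1])\subset U$.

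Suppose for contradiction that some $(s_0,t_0)$ has $d(u(s_0,t_0),x)\geq\epsilon$ (the case with $y$ is symmetric). By continuity and the boundary bound at $s=a$, the intermediate value theorem along a continuous path from $(a,t_0)$ to $(s_0,t_0)$ in $I\times[0,1]$ produces a point $(s_*,t_*)$ with $d(u(s_*,t_*),x)=\epsilon$. Setting $r:=\tfrac{11\epsilon}{12}$, the closed ball $\overline{B_r(u(s_*,t_*))}$ misses the left non-Lagrangian boundary $u(\{a\}\times[0,1])\subset B_{\epsilon/12}(x)$. If moreover $d(u(s_*,t_*),y)\geq\epsilon$, this ball also misses the right non-Lagrangian boundary $u(\{b\}\times[0,1])\subset B_{\epsilon/12}(y)$, and Theorem~\ref{main_thm} yields
\[
E(u)\;\geq\;\int_{u^{-1}(B_r(u(s_*,t_*)))}u^*\omega\;\geq\;c_0r^2\;=\;c_0\bigl(\tfrac{11\epsilon}{12}\bigr)^2,
\]
contradicting any $\hbar<c_0\bigl(\tfrac{11\epsilon}{12}\bigr)^2$.

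The main obstacle is the remaining subcase $d(u(s_*,t_*),y)<\epsilon$, in which the ball argument at $u(s_*,t_*)$ is blocked by the right non-Lagrangian boundary; the triangle inequality then forces $d(x,y)<2\epsilon$, so the lens $B_\epsilon(x)\cap B_\epsilon(y)$ is a non-empty but possibly thin neighborhood of the $\Lambda$-arc joining $x$ and $y$. To handle this regime, I plan to bootstrap: shrink $V$ and $U$ further, and invoke the clean intersection structure together with the exponential decay of finite energy holomorphic strips with Lagrangian boundaries (alluded to in the introduction of the chapter) to control the tangential drift of $u$ along $\Lambda$, thereby forcing its image into the arc-neighborhood $B_\epsilon(x)\cap B_\epsilon(y)$. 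Setting $\hbar$ equal to the minimum of $\hbar_1$, the monotonicity threshold $c_0\bigl(\tfrac{11\epsilon}{12}\bigr)^2$, and the constants from the bootstrap then closes the proof.
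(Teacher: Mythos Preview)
Your outline has two genuine gaps.

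First, you invoke Theorem~\ref{main_thm} directly, but that theorem is stated for a single $\omega$-tame $J$, whereas solutions of~\eqref{eq:jhol} are $J_t$-holomorphic. The paper handles this exactly as in the proof of Theorem~\ref{thm:mon1} (which you already quote): one lifts to the product $\tM=\C\times M$ with Lagrangians $\tL_i=\R\times\{i\}\times L_i$ and the almost complex structure $\tJ$ built from $J_t$ and the standard structure on $\C$, and applies Theorem~\ref{main_thm} to the graph $\tu(s,t)=(s+it,u(s,t))$. Without this lift your Step~4 is not justified. (There is also the constraint $r<r_0$ in Theorem~\ref{main_thm}; your $r=\tfrac{11\epsilon}{12}$ need not satisfy it, though this is easily fixed by reducing to small $\epsilon$.)

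Second, and more seriously, your ``remaining subcase'' $d(u(s_*,t_*),y)<\epsilon$ is the heart of the matter, and the proposed bootstrap does not close it. Exponential decay (Corollary~\ref{cor:exp_dec}) yields a point $c\in\Lambda$ with $u(D_{r_1}(I))\subset B_\delta(c)$, but it gives no control on the short strips $[a,a+r_1]\times[0,1]$ and $[b-r_1,b]\times[0,1]$, and it does not force $c$ into the lens $B_\epsilon(x)\cap B_\epsilon(y)$. The paper's proof proceeds by contradiction with sequences $\hbar_n\to 0$, uses Corollary~\ref{cor:exp_dec} to produce $c_n\in\Lambda$ controlling $u_n$ on $D_{r_1}(I_n)$, and then runs a three-case geometric analysis on the positions of $y_n$ and $c_n$ relative to $p_n=u_n(s_n,t_n)\notin B_\epsilon(x_n)$ to manufacture an intermediate point $q_n=u_n(\ts_n,\tilde t_n)$ whose $\tfrac{\epsilon}{12}$-ball avoids both the non-Lagrangian boundary $u_n(\partial I_n\times[0,1])$ and the ``middle'' $u_n(D_{r_1}(I_n))$. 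This forces $\ts_n$ to lie in a strip of width $r_1$ adjacent to $\partial I_n$, and on that short strip the lifted curve $\tu_n$ has symplectic area at most $r_1+E(u_n)$; the choice $r_1=c_0r^2/2$ then contradicts the monotonicity bound $c_0r^2$ from Theorem~\ref{main_thm} at $\tq_n$. Your proposal supplies neither the intermediate point $q_n$ nor the short-strip area estimate; without them you cannot separate your ball from the right boundary when $u(s_*,t_*)$ happens to be near $y$.
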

\begin{proof}
 See section \ref{sec:proofs}. 
\end{proof}

{\bf Outline of the chapter:}
The proofs of Theorems \ref{main_thm}, 
\ref{thm:mon1} and \ref{thm:mon2} are based on 
some properties of holomorphic maps such as 
isoperimetric inequality and exponential decay. 
In the next sections we discuss in more details 
these properties. In section \ref{sec:isoperimetric} 
we prove isoperimetric inequality in $ \R^{2n} $. 
Then we prove that the isoperimetric inequality holds also 
for short curves in a symplectic manifold
$M $ with Lagrangian boundary conditions.
We prove this using local Darboux charts that are adjusted to the 
clean intersection. Isoperimetric inequality is a crucial 
part of the proof of Theorem \ref{main_thm} as well as 
of the exponential decay. Theorems \ref{thm:mon1} and \ref{thm:mon2} 
are corollaries of the Theorem \ref{main_thm} and 
the exponential decay.

\section{The isoperimetric inequality}\label{sec:isoperimetric}

The isoperimetric inequality is an inequality involving the length 
of a closed curve and the area enclosed by this curves. 
It is often expressed by saying that among all curves of given length 
the circle encloses the greatest area. 
In other words, if $ \gamma \subset \R^2 $ is simple closed curve, $ A(\gamma) $
is the enclosed area and $ L(\gamma) $ is its length then 
\begin{align*}
 A(\gamma) \leq \frac{1}{4\pi} L(\gamma)^2 
\end{align*}
and the equality holds if and only if $ \gamma $ is a circle. 
The same holds in the case that $ \gamma $ is a curve 
with endpoints on some lines through the origin. 
Again, the maximal area of the curvilinear triangle
bounded by $ \gamma $ and the lines through the origin 
is in the case that $ \gamma $ is a piece of a circle. \\

We first prove the isoperimetric inequality 
in $ \R^{2n} $ for smooth curves with endpoints
on Lagrangian planes. 
Let $ \omega $ be the standard symplectic form in $ \R^{2n} $ 
and let $ \left |\cdot\right | $ denotes the standard Euclidean norm. We define 
the action, length and energy of a curve $ \gamma $ as follows. 

\begin{align}
 & A(\gamma) =\frac{1}{2} \int_0^1
\omega(\dot{\gamma}(t),\gamma(t)) dt \; 
,\;\;\notag \\ 
& L(\gamma) := \int_0^1 \abs{\dot{\gamma}(t)} dt, 
\; \;\notag \\
&E(\gamma) = \frac{1}{2} \int_0^1 \abs{\dot{\gamma}(t)}^2 dt. 
\end{align}
\index{ $A(\gamma)$}
\index{ $ E(\gamma)$} 
\index{$L(\gamma)$} 
\nomenclature{$A(\gamma)$}{Symplectic action of a curve $\gamma$ in the Euclidean space 
\nomrefeqpage}
\nomenclature{$E(\gamma)$}{Energy of a curve $\gamma$ in the Euclidean space 
\nomrefeqpage}
\nomenclature{$L(\gamma)$}{Length of a curve $\gamma$ in the Euclidean space 
\nomrefeqpage}
We prove the isoperimetric inequality for curves 
with Lagrangian boundary conditions using Fourier series as in \cite{MS}, 
where the analogous property was proved for closed curves. 

\begin{lemma}\label{lem:mon_lem1}
For all smooth curves $ \gamma : [0,1] \rightarrow \R^{2n} $ 
with $ \gamma(0)\in \R^n \times \{ 0\}= L_0 $ and 
$ \gamma(1) \in \R^d \times \{0\} \times \R^{n-d}= L_1 $ 
we have that 
\begin{equation} 
\abs{A(\gamma)} \leq \frac{1}{\pi} L(\gamma) ^2 .
\end{equation}
\end{lemma}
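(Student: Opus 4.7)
The inequality is parametrization invariant (both $A(\gamma)$ and $L(\gamma)$ are unchanged by orientation-preserving reparametrization of $[0,1]$), so I would first reparametrize $\gamma$ to have constant speed. This makes $2E(\gamma)=L(\gamma)^2$ where $E(\gamma):=\tfrac12\int_0^1\abs{\dot\gamma(t)}^2\,dt$, reducing the problem to proving the energy-based estimate $\abs{A(\gamma)}\leq\tfrac{2}{\pi}E(\gamma)$.

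Write $\gamma=(x_1,\ldots,x_n,y_1,\ldots,y_n)$ with the standard symplectic form $\omega=\sum_j dx_j\wedge dy_j$. Both $E(\gamma)$ and $A(\gamma)=\tfrac12\int_0^1(\dot x\cdot y-\dot y\cdot x)\,dt$ split as sums over the coordinate pairs $(x_j,y_j)$, so it suffices to prove the bound pair by pair. The boundary conditions split the indices into two types: for $j=1,\ldots,d$ the Lagrangian $L_1$ constrains only the $y_j$-coordinate, giving $y_j(0)=y_j(1)=0$ with $x_j$ unconstrained at the endpoints; for $j=d+1,\ldots,n$ the Lagrangian $L_1$ constrains the $x_j$-coordinate instead, giving $y_j(0)=0$ and $x_j(1)=0$. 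I would then expand each pair in the Fourier basis adapted to its boundary conditions: in the first case the sine series $y_j(t)=\sum_{k\geq 1}b^j_k\sin(\pi kt)$ together with the cosine series $x_j(t)=a^j_0+\sum_{k\geq 1}a^j_k\cos(\pi kt)$; in the second case the half-integer series $y_j(t)=\sum_{k\geq 0}d^j_k\sin(\pi(k+\tfrac12)t)$ and $x_j(t)=\sum_{k\geq 0}c^j_k\cos(\pi(k+\tfrac12)t)$, which are precisely the eigenfunctions of $-\partial_t^2$ with the correct mixed boundary conditions.

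Termwise differentiation combined with the orthogonality relations for these bases on $[0,1]$ yields closed-form expressions: the energy contribution $E^{(j)}$ becomes a weighted $\ell^2$-sum of the squared coefficients with weights $\pi^2k^2/4$ (resp.\ $\pi^2(k+\tfrac12)^2/4$), while the action contribution $A^{(j)}$ becomes the bilinear sum $-\tfrac{\pi}{2}\sum_k k\,a^j_kb^j_k$ (resp.\ $-\tfrac{\pi}{2}\sum_k(k+\tfrac12)c^j_kd^j_k$). Applying the elementary inequality $2\abs{\alpha\beta}\leq\alpha^2+\beta^2$ coefficient-by-coefficient yields $\abs{A^{(j)}}\leq\tfrac{1}{\pi}E^{(j)}$ in the first case and $\abs{A^{(j)}}\leq\tfrac{2}{\pi}E^{(j)}$ in the second, the worse constant being saturated by the lowest mode $k=0$ of frequency $\pi/2$. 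Summing over $j$ produces $\abs{A(\gamma)}\leq\tfrac{2}{\pi}E(\gamma)$, and combining with the first step gives the desired inequality.

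The main obstacle is identifying the correct Fourier basis in the mixed boundary case $j\geq d+1$. The half-integer frequencies can be motivated geometrically by reflecting $\gamma$ successively across $L_0$ (via $(x,y)\mapsto(x,-y)$) and across $L_1$: the two reflections commute, and iterating them extends $\gamma$ to a continuous closed curve of period $4$ on $\R$; the Fourier modes of this extension that are compatible with the resulting $\Z/2\times\Z/2$ symmetry acting on the last $n-d$ complex coordinates descend to the stated half-integer expansions on $[0,1]$. Once the bases are in place the argument is a routine Parseval-type computation, and the quarter circle $\gamma(t)=(\cos(\pi t/2),\sin(\pi t/2))$ in the case $d=0$, $n=1$ shows that the constant $1/\pi$ is sharp.
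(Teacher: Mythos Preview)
Your proof is correct and follows essentially the same strategy as the paper: reduce to the energy estimate $\abs{A(\gamma)}\le\tfrac{2}{\pi}E(\gamma)$ by constant-speed reparametrization, split into the two types of coordinate pairs, and expand each in the Fourier basis adapted to its boundary conditions (integer frequencies for $j\le d$, half-integer for $j>d$). The only cosmetic difference is that the paper works with complex coordinates $z_\nu=x_\nu+iy_\nu$ and the exponentials $e^{i\pi m t}$, $e^{i(\pi/2+m\pi)t}$ with real coefficients, which makes both $A$ and $E$ diagonal in the same basis and avoids the cross-term step $2\abs{\alpha\beta}\le\alpha^2+\beta^2$; your real sine/cosine formulation is equivalent and your reflection-to-period-$4$ remark and sharpness example are nice additions.
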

\begin{proof}
 Let 
$\gamma :[0,1]\rightarrow  \mathbb{C}^n,\;
\gamma=(z_1(t), z_2(t),...z_n(t))= ( x_1(t), \cdots, x_n(t), y_1(t), \cdots, y_n(t))\;,\;0\leq t \leq 1 $, where
$$
z_j(t)=x_j(t)+ i y_j(t)\;, \;\; 1\leq j\leq n .
$$ 
Because of the boundary conditions 
$ \gamma(0)\in \mathbb{R}^n \times {0} , \;\;
\gamma(1)\in \mathbb{R}^d \times{0}\oplus i({0}\times
\mathbb{R}^{n-d}),$ we have that for
$ z_{\nu}, \;\; 1\leq \nu \leq d $, 
$ z_{\nu} (0)\in \mathbb{R},
z_{\nu} (1)\in \mathbb{R} $. 
It follows that writing $ z_{\nu} $ using the Fourier series 
we have 
$$ 
z_{\nu}(t)= \sum\limits_{m=-\infty}^{+\infty} c_{\nu, m} e^{\pi
i mt} , \;\; 1\leq \nu \leq d,\;\; c_{\nu,m} \in \mathbb{R}.
$$

\noindent Let $ \gamma_1 $ be the first $d-$coordinates 
of the curve $ \gamma$ i.e. 
$\gamma_1(t)=(z_1(t), z_2(t),...z_d(t), 0,\cdots, 0) $. Then

\begin{equation}\label{gamma_1}
\gamma_1(t)= \sum\limits_{m=-\infty}^{+\infty} v_m e^{\pi
i mt} , \;\; 0\leq t \leq 1,\;\;v_m=(c_{1,m},c_{2,m},....c_{d,m}, 0, \cdots, 0). 
\end{equation}

\noindent 
Similarly for $d+1\leq \nu\leq n ,\;\;z_{\nu}(0)\in
\mathbb{R} ,z_{\nu}(1)\in i \mathbb{R} $ it follows that

$$ 
z_{\nu}(t)= \displaystyle \sum\limits_{m=-\infty}^{+\infty} c_{\nu, m}
e^{i(\frac{\pi}{2} +  m\pi)t} , \;\; d+ 1\leq \nu \leq n,\;\;
c_{\nu,m} \in \mathbb{R} 
$$
 Let $\gamma_2(t)=(0, \cdots, 0, z_{d+1}(t), z_{d+2}(t),...z_n(t)) $.Then
 
\begin{equation}\label{gamma_2}
\gamma_2(t)= \sum\limits_{m=-\infty}^{+\infty} v_m
e^{i(\frac{\pi}{2} +  m\pi)t} , \;\; 0\leq t \leq 1,\;\;
v_m=(0, \cdots, 0, c_{d+1,m},c_{d+2,m},....c_{n,m})
\end{equation}
First, notice that it is enough to prove that
 \begin{equation} \label{ch1.1_eqe}
 \abs{ A(\gamma)}  \leq \frac{2}{\pi} E(\gamma) 
 \end{equation}
Since, if $\gamma $ is naturally parametrized 
then 
$ E(\gamma) = \frac{1}{2} L(\gamma)^2 $. 
If $\gamma $ isn't parametrized by arc length, 
reparametrize it i.e. take a function 
$ \alpha :[0,1] \rightarrow [0,1] $ such that 
$ \abs{\frac{d}{dt} \gamma(\alpha(t)) }_e = L(\gamma ) $. 
Then it follows from \eqref{ch1.1_eqe} that 
$$ 
\abs{A(\gamma) } = \abs{A (\gamma \circ \alpha ) }
 \leq \frac{2}{\pi} E ( \gamma \circ \alpha) = 
\frac{1}{\pi} L(\gamma)^2 
$$
  It is enough to prove the inequality 
separately for $ \gamma_1 $ and $\gamma_2 $ as 
$\abs{ A ( (\gamma_1, \gamma_2 ))} \leq \abs{A(\gamma_1)} 
+ \abs{A (\gamma_2)} $
and 
$ E((\gamma_1, \gamma_2)) = E(\gamma_1) + E(\gamma_2) $.
Now the proof of Lemma \ref{lem:mon_lem1} follows directly from 
Lemma \ref{lem:mon_lem2}.
\end{proof}

\begin{lemma}\label{lem:mon_lem2}
 Let $\gamma_1$ and $\gamma_2$ be paths as in \eqref{gamma_1} 
 and \eqref{gamma_2}. Then
\begin{equation}
  \abs{\textsl{A}(\gamma_1)}\leq \frac{1}{\pi}E(\gamma_1) ,
\qquad
  \abs{\textsl{A}(\gamma_2)}\leq \frac{2}{\pi}E(\gamma_2).
\end{equation}
\end{lemma}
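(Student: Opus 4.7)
The plan is to rewrite both $A$ and $E$ as absolutely convergent sums over the real Fourier coefficients $c_{\nu,m}$ and then reduce the two inequalities to the pointwise comparisons $|m|\le m^2$ on $\Z$ and $|m+\tfrac12|\le 2(m+\tfrac12)^2$ on $\Z$. Using the standard complex description $\omega(u,v)=\Im\sum_\nu\overline{u_\nu}v_\nu$ of the symplectic form on $\C^n$, I would rewrite
\[
A(\gamma) = \tfrac{1}{2}\Im\!\int_0^1\!\sum_\nu \overline{\dot z_\nu(t)}\,z_\nu(t)\,dt,
\qquad
E(\gamma) = \tfrac{1}{2}\int_0^1\!\sum_\nu |\dot z_\nu(t)|^2\,dt,
\]
so that after inserting the given Fourier series each quantity becomes a double sum whose coefficients are the scalar integrals $I_{mk}:=\int_0^1 e^{i\pi(m-k)t}\,dt$.

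For $\gamma_1$, these integrals evaluate to $1$ when $m=k$, vanish when $m-k$ is a nonzero even integer, and equal $\frac{2i}{\pi(m-k)}$ — in particular purely imaginary — when $m-k$ is odd. The key observation is that the off-diagonal contributions always land in the wrong component: in the computation of $E(\gamma_1)$ they are purely imaginary and cancel in pairs under the symmetry $(m,k)\leftrightarrow (k,m)$ (as they must, since $|\dot z_\nu|^2$ is real), while in the computation of $A(\gamma_1)$ they are purely real and are therefore killed by the outer $\Im$. What survives are the Parseval-type identities
\[
A(\gamma_1) = -\tfrac{\pi}{2}\sum_{\nu=1}^d\sum_{m\in\Z} m\,c_{\nu,m}^2,
\qquad
E(\gamma_1) = \tfrac{\pi^2}{2}\sum_{\nu=1}^d\sum_{m\in\Z} m^2\,c_{\nu,m}^2,
\]
and $|A(\gamma_1)|\le\tfrac{1}{\pi}E(\gamma_1)$ now follows by termwise comparison via $|m|\le m^2$.

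The treatment of $\gamma_2$ is structurally identical once the integer frequencies $m$ are replaced by the half-integer frequencies $m+\tfrac12$; the same parity argument for the cross terms yields
\[
A(\gamma_2) = -\tfrac{\pi}{2}\sum_{\nu=d+1}^n\sum_{m\in\Z}(m+\tfrac12)\,c_{\nu,m}^2,
\qquad
E(\gamma_2) = \tfrac{\pi^2}{2}\sum_{\nu=d+1}^n\sum_{m\in\Z}(m+\tfrac12)^2\,c_{\nu,m}^2.
\]
Since $|m+\tfrac12|\ge\tfrac12$ for every $m\in\Z$, one has $|m+\tfrac12|\le 2(m+\tfrac12)^2$ with equality at $m=0,-1$, and this unavoidable factor of $2$ is precisely why the second constant is $\tfrac{2}{\pi}$ rather than $\tfrac{1}{\pi}$. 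The one subtlety I expect to demand care is the lack of orthogonality: the exponentials $e^{i\pi mt}$ have period $2$ and are therefore \emph{not} orthonormal on $[0,1]$, so one cannot simply invoke Parseval and discard cross terms; they must be evaluated explicitly and dispatched by the parity argument above. Once this bookkeeping is in place, the Lagrangian boundary conditions enter only through the crucial fact that all $c_{\nu,m}$ are real, which is what makes the diagonal sums positive and permits the termwise estimate.
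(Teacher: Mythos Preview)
Your proposal is correct and arrives at exactly the same Parseval-type identities
\[
A(\gamma_1)=-\tfrac{\pi}{2}\sum_m m|v_m|^2,\quad
E(\gamma_1)=\tfrac{\pi^2}{2}\sum_m m^2|v_m|^2
\]
(and their half-integer analogues) as the paper, followed by the identical termwise estimates $|m|\le m^2$ and $|m+\tfrac12|\le 2(m+\tfrac12)^2$. The only difference is packaging: the paper works with the real vectors $v_m\in\R^d\times\{0\}$ and kills the cross terms by invoking the Lagrangian condition $\omega(v_m,v_n)=0$ directly before computing the $\cos/\sin$ integrals $A_{mn},B_{mn}$, whereas you work coordinatewise in $\C$ and kill the cross terms by observing that the off-diagonal integrals $I_{mk}=\int_0^1 e^{i\pi(m-k)t}dt$ are purely imaginary and hence land in the wrong component after multiplying by the real coefficients $c_{\nu,m}$. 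Both mechanisms are equivalent manifestations of the realness of the Fourier coefficients, and the computations are essentially the same.
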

\begin{proof}
Let $\gamma_1(t)$ be as in \eqref{gamma_1} , and 
$$
\textsl{A}(\gamma_1)= -\frac{1}{2}\int_0^1 \omega(\gamma_1(t), \dot{\gamma_1}(t) ) dt
=  -\frac{1}{2} \int_0^1 \sum\limits_{m,n=-\infty}^{+\infty}
\omega(v_m e^{im\pi t}, in\pi v_n e^{in\pi t})dt.
$$ 
Let  
\begin{align*}
J_{mn}& = \int_0^1\omega(v_m e^{im \pi t},iv_n e^{in \pi t}) dt
= \frac{1}{\pi}\int_0^{\pi}\omega(v_m e^{im s},iv_n e^{ins})ds \\
 &= \frac{1}{\pi} \Big ( \underbrace{\int_0^{\pi} \cos(ms) \cos(ns) ds}_{A_{mn}} + 
 \underbrace{\int_0^{\pi} \sin(ms) \sin(ns)ds }_{B_{mn}} \Big ) \langle v_m, v_n \rangle.
\end{align*}
The last equality holds as $ \omega (v_m, v_n) = 0 $ , because $ v_m ,\; v_n \in \R^d $.
We compute the terms $ A_{mn}, \; B_{mn} $:
\[ A_{mn}=\begin{cases}
            \frac{\pi}{2}, &\text{m=n or m=-n}\\
            0, &\text{otherwise}
            \end{cases} \hspace{2cm}
B_{mn}=\begin{cases}
            \frac{\pi}{2}, &\text{m=n} \\
            \frac{-\pi}{2}, &\text{ m=-n}\\
            0, &\text{otherwise}.
            \end{cases} \]
Thus we get:
\[ J_{mn}=\begin{cases}
            |v_n|^2 , \;\; m=n\\
		0, \;\;otherwise.
            \end{cases} \]

It follows that
\begin{align*}
 \textsl{A}(\gamma_1)= -\frac{1}{2} \int_0^1 \sum\limits_{m,n=-\infty}^{+\infty}
\omega(v_m e^{im\pi t}, in\pi v_n e^{in\pi t})dt
= -\frac{\pi}{2} \sum\limits_{n=-\infty}^{+\infty} n
|v_n|^2.
\end{align*}
Similarly, we compute the energy $ E(\gamma_1) $:
\begin{align*}
 \textsl{E}(\gamma_1)&
 = \frac{1}{2}\int_0^1 \omega(\dot{\gamma_1}(t),i\dot{\gamma_1}(t) ) dt
 = \;\; \frac{1}{2} \int_0^1 \sum\limits_{m,n=-\infty}^{+\infty}
\omega(v_m m \pi i e^{im\pi t}, -\pi v_n n e^{in\pi t})dt \\
&= \frac{\pi^2}{2} \sum\limits_{m,n=-\infty}^{+\infty} mn \int_0^1
\omega(v_me^{im\pi t}, v_n i e^{in\pi t})dt\\
&=\frac{\pi^2}{2} \sum\limits_{n=-\infty}^{+\infty} n^2|v_n|^2
\end{align*}
 Now it is obvious that 
 $$
 |\textsl{A}(\gamma_1)|\leq
\frac{\pi}{2}\sum_{n}|n\|v_n|^2 
\leq \frac{1}{\pi} E(\gamma_1)
$$
Analogously we obtain for $ \gamma_2 $ as in \eqref{gamma_2}

\begin{align*}
 \abs{A(\gamma_2)}& =\frac{\pi}{2}
 \sum\limits_{n=-\infty}^{+\infty} \abs{n+ \frac{1}{2}}
 \abs{v_n}^2 \leq \frac{2}{\pi} \frac{\pi^2}{2} 
 \sum\limits_{n=-\infty}^{+\infty}
 \abs{n+\frac{1}{2}}^2\abs{v_n}^2 = \frac{2}{\pi}E(\gamma_2) 
\end{align*}

\end{proof}
\begin{remark}
 Notice that the isoperimetric inequality for curves with 
 end points in Lagrangian plane $ L $ 
 follows from Lemma \ref{lem:mon_lem2}. 
 Namely, we can take $ d=n $, i.e. $ L_0 = L_1 = L$, then
 it follows from lemma \ref{lem:mon_lem2}
 $$ 
A( \gamma ) \leq \frac{1}{\pi} E(\gamma) \leq \frac{1}{2\pi} L(\gamma)^2 .
$$
 In the case that $ \gamma $ is a closed curve we can take 
 a plane $ L_0 $ that divides $ \gamma $ into two pieces $ \gamma_i , \; i=0,1 $ 
 of equal length. For each curve $ \gamma_i $ the isoperimetric inequality holds 
 $$
 A(\gamma_i) \leq \frac{1}{2\pi} L(\gamma_i)^2 = \frac{1}{8\pi} L(\gamma)^2 .
$$
 Summing the previous inequalities for $ i=0,1 $ we obtain 
 
\begin{align}
  A(\gamma) \leq \frac{1}{4\pi} L(\gamma)^2. 
 \end{align}

\end{remark}

The next corollary easily follows from lemma \ref{lem:mon_lem1}. 

\begin{corollary}\label{cor:mon_cor1}
 Let  $Q\subset \R \times [0,1] $ be a submanifold with corners
 such that all the corners are contained in $ \R \times \{0,1\} $. 
 Then any smooth map 
 $$ 
 u: (Q, Q \cap (\R \times \{0\}), Q \cap (\R \times \{1\}) ) 
 \rightarrow (\R^{2n}, L_0, L_1 )= ( \R^{2n}, \R^n \times \{0\}, \R^d \times \{0\} \times \R^{n-d}), 
 $$
 satisfies the following: 
\begin{equation}
\int\limits_Q u^* \omega_{std} 
\leq \frac{1}{\pi}\cdot L( \left. 
u \right|_{u^{-1} (u( \partial Q ) \setminus (L_0 \cup L_1 ))} )^2
\end{equation}
where $ L$ denotes the Euclidean length.
\end{corollary}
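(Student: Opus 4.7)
The plan is a Stokes–plus–isoperimetric argument: reduce the area integral to a line integral along $\partial Q$ using a primitive of $\omega_{std}$, show that this primitive pulls back to zero on the Lagrangian portions of $\partial Q$, and then bound the remaining contribution from the free boundary component by component using Lemma \ref{lem:mon_lem1}.

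Concretely, introduce the standard Liouville primitive
\[
\lambda := \tfrac{1}{2}\sum_{i=1}^{n}\bigl(x_i\,dy_i - y_i\,dx_i\bigr),
\qquad d\lambda = \omega_{std},
\]
so that by Stokes' theorem for manifolds with corners,
\[
\int_Q u^*\omega_{std} = \int_{\partial Q} u^*\lambda.
\]
Decompose $\partial Q$ into the Lagrangian pieces $\partial_0 Q := \partial Q\cap(\R\times\{0\})$, $\partial_1 Q := \partial Q\cap(\R\times\{1\})$, and the free part $\partial_f Q$. The hypothesis that all corners lie in $\R\times\{0,1\}$ ensures that $\partial_f Q$ is a finite disjoint union of smooth arcs (and possibly closed loops), whose endpoints—when present—land in $L_0\cup L_1$. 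A direct inspection shows $\lambda$ pulls back to zero on each of $L_0=\R^n\times\{0\}$ and $L_1=\R^d\times\{0\}\times\R^{n-d}$: on each plane every coordinate appearing in $\lambda$ is either constantly zero or its differential vanishes when restricted. Hence the integrals over $\partial_0 Q$ and $\partial_1 Q$ drop out, and writing $\gamma_\alpha$ for the components of $\partial_f Q$ (reparametrized by $[0,1]$) with Euclidean lengths $L_\alpha$, one obtains
\[
\int_Q u^*\omega_{std} = \int_{\partial_f Q} u^*\lambda = -\sum_\alpha A(u\circ\gamma_\alpha),
\]
since a direct computation gives $\int \gamma^*\lambda = -A(\gamma)$ with $A$ as in the definition preceding Lemma \ref{lem:mon_lem1}.

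For each $\gamma_\alpha$ whose endpoints lie one on $L_0$ and one on $L_1$, Lemma \ref{lem:mon_lem1} yields $\abs{A(u\circ\gamma_\alpha)}\le L_\alpha^2/\pi$ directly. For an arc with both endpoints on a single Lagrangian $L_i$, or for a closed loop component, I would double $\gamma_\alpha$ through the anti-symplectic involution of $\R^{2n}$ that fixes $L_i$ (negating the coordinates transverse to $L_i$, giving $R_i^*\lambda=-\lambda$); the concatenated curve $\widetilde{\gamma}_\alpha$ is closed, of length $2L_\alpha$ and action $2A(u\circ\gamma_\alpha)$, so the closed-loop isoperimetric inequality $\abs{A}\le L^2/(4\pi)$ (which follows from the same Fourier computation as in Lemma \ref{lem:mon_lem2}, with full-period exponentials in place of the half-period ones) produces the slightly sharper bound $\abs{A(u\circ\gamma_\alpha)}\le L_\alpha^2/(2\pi)$, in any event dominated by $L_\alpha^2/\pi$. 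Summing over $\alpha$ and using $\sum_\alpha L_\alpha^2\le\bigl(\sum_\alpha L_\alpha\bigr)^2 = L^2$ yields
\[
\Abs{\int_Q u^*\omega_{std}} \le \sum_\alpha\frac{L_\alpha^2}{\pi} \le \frac{L^2}{\pi},
\]
which is stronger than the asserted inequality.

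The one mildly delicate point is the possibility of free arcs whose endpoints both lie on the same Lagrangian plane (or of closed components with no boundary at all), since Lemma \ref{lem:mon_lem1} literally addresses only the mixed endpoint case; the anti-symplectic reflection is tailor-made for this and only costs an innocuous factor of $2$. Everything else is a straightforward Stokes and bookkeeping.
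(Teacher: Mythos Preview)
Your proof is correct and follows essentially the same route as the paper: apply Stokes with the standard primitive $\lambda$, observe that $\lambda$ vanishes on $L_0$ and $L_1$ so only the free boundary components contribute, bound each component's action by $\tfrac{1}{\pi}L_\alpha^2$, and sum using $\sum L_\alpha^2\le(\sum L_\alpha)^2$. The only cosmetic difference is how you handle the components that are closed or have both endpoints on the same $L_i$: you reflect through the anti-symplectic involution fixing $L_i$ to reduce to the closed-curve isoperimetric inequality, whereas the paper invokes the Remark after Lemma~\ref{lem:mon_lem2} (take $d=n$ so $L_0=L_1$, respectively split a closed curve in half by a Lagrangian plane). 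Both tricks yield the same bound $\abs{A}\le \tfrac{1}{2\pi}L^2$ for these cases.
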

\begin{proof} 
First notice that all boundary curves 
$ u( \partial Q ) $ are
of the following kind
\begin{itemize}
\item[1.] closed
\item[2.] Have both endpoints in $L_0$ or $ L_1$.
\item[3.] Have one boundary point in $L_0 $ and the other in $L_1$
\item[4.] Are contained in $ L_0 $ or $L_1$.
\end{itemize} 
Using Stoke's theorem we have:
\begin{align*}
\int\limits_Q u^* \omega_{std} & 
= \int\limits_Q u^* d\lambda
= \int\limits_{\partial Q} u^* \lambda 
= \sum\limits_{i} \int\limits_{\gamma_i} \lambda \\ 
&= \sum\limits_i \int_0^1 \lambda (\gamma_i(t)) (\dot{\gamma}_i(t) ) dt 
= \sum\limits_i \frac{1}{2} \int_0^1 \omega_{std} (\dot{\gamma}_i(t),\gamma_i(t))  dt.
\end{align*}
Notice that for $\gamma \subset L_i, \; i=0,1 $  the integral 
$ \int\limits_0^1 \omega_{std} (\dot{\gamma}(t), \gamma(t) ) = 0 $,
as $ \omega_{std}|_{L_i} \equiv 0, \; i=0,1 $. 
Thus,
$$
\int\limits_Q u^* \omega_{std} 
= \sum\limits_{i\in I} \frac{1}{2} \int\limits_0^1 \omega_{std}(\dot{\gamma}_i(t),\gamma_i(t) ) dt
= \sum\limits_{i\in I} A(\gamma_i) ,
$$ 
where $ I $ are the curves of type $ 1-3 $. 
In lemma \ref{lem:mon_lem1} we have proved that
$$ 
\abs{A(\gamma)} \leq \frac{1}{\pi} L(\gamma)^2 ,
$$ 
for any curve of the type 1-3. Thus we have : 
 $$ 
 \int\limits_Q u^*\omega_{std}
 \leq \frac{1}{\pi} \sum\limits_{i\in I} L(\gamma_i)^2 
 \leq \frac{1}{\pi} \Big ( \sum\limits_{i\in I} L(\gamma_i) \Big  )^2.
 $$
\end{proof}

\subsection{Isoperimetric inequality in symplectic manifolds}
The isoperimetric inequality holds not only for curves 
in Euclidean space, but also for short closed 
curves $ \gamma $ on symplectic manifolds or for 
short curves with Lagrangian boundary conditions. 
In both cases the 
area should be understood as a symplectic area.
In the case of a path with Lagrangian boundaries 
it should be understood as the area of a curvilinear triangle with one side 
$ \gamma $ and 
the other two sides on the Lagrangian submanifolds. 
There are many curvilienar triangles with one side $\gamma$ 
and the other two on Lagrangian submanifolds, but if $\gamma$ 
is sufficiently short all of them will have the same symplectic area. 
Throughout this section we shall assume the follwing:

\begin{itemize}
 \item[ ~(H1)] $L_0, L_1\subset M $ are Lagrangian 
 submanifolds of a symplectic manifold $(M, \omega) $ 
 and the intersection $ \Lambda = L_0 \cap L_1 $ is clean, i.e. 
 $$ T_p \Lambda = T_p L_0 \cap T_p L_1 , \;\; \forall p \in \Lambda $$
 We also assume that $ d = \text{dim}(\Lambda) $. 
\end{itemize}

Locally the clean intersection looks particularly nice. 
Namely for any point $ p \in \Lambda $ there exists a local symplectic chart
which maps $ L_0 $ into $ \R^n \times \{0\} $ and $ L_1 $ into 
$ \R^d \times \{0\} \times \R^{n-d} $. We first construct such 
Darboux charts adapted to clean intersections. 

\begin{lemma}[{\bf Darboux charts for clean intersection}]\label{lem:dar-clean}
Assume ~(H1). Then for any $ p\in L_0 \cap L_1=\Lambda $ there exists a neighborhood 
$ U(p) $ and a symplectomorphism 
$ \phi: U(p) \rightarrow  V(0) \subset \R^{2n} $ such that 

 \begin{equation}\label{eq_locsymp}
\begin{split}
  &\phi( U(p) \cap \Lambda ) \subset \R^d\times \{0\} , \\
 \;\;  &\phi ( U(p) \cap L_0 ) \subset \R^n \times \{0\} , \\
\;\; & \phi ( U(p)\cap L_1 ) \subset \R^d \times \{0\} \times \R^{n-d}  .
\end{split}
 \end{equation}
   
\end{lemma}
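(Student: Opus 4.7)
The plan is to normalize $(L_0, L_1)$ near $p$ in two stages by applying Moser's trick twice: first Weinstein's Lagrangian neighborhood theorem to straighten $L_0$, then a second (Hamiltonian) isotopy to straighten $L_1$ while keeping $L_0$ fixed. Cleanness of the intersection is precisely what allows the second step to preserve the result of the first. The starting point is a purely linear observation at $p$: since $T_p\Lambda = T_pL_0\cap T_pL_1$ is isotropic of dimension $d$, the symplectic reduction $(T_p\Lambda)^{\omega}/T_p\Lambda$ is a symplectic vector space of dimension $2(n-d)$ in which the images of $T_pL_0$ and $T_pL_1$ descend to a pair of transverse Lagrangians. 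Choosing a standard symplectic basis of the reduction adapted to this transverse pair and lifting by a basis of $T_p\Lambda$ together with an isotropic complement produces a linear symplectic isomorphism $\psi_p\colon(T_pM,\omega_p)\to(\R^{2n},\omega_{\mathrm{std}})$ sending $T_pL_0$ to $\R^n\times\{0\}$, $T_pL_1$ to $\R^d\times\{0\}\times\R^{n-d}$, and hence $T_p\Lambda$ to $\R^d\times\{0\}$.

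Next, Weinstein's Lagrangian neighborhood theorem, prescribing the differential $\psi_p$ at $p$, yields a symplectomorphism $\phi_0$ from a neighborhood of $p$ onto a neighborhood of $0$ in $\R^{2n}$ that sends $L_0$ to $\R^n\times\{0\}$ and satisfies $d\phi_0|_p=\psi_p$. In this chart $L_1$ becomes a Lagrangian $L_1'$ through $0$ with tangent plane $L_1^{\mathrm{lin}}:=\R^d\times\{0\}\times\R^{n-d}$, still meeting $L_0':=\R^n\times\{0\}$ cleanly along $\Lambda':=\phi_0(\Lambda)$. Choosing the obvious linear Lagrangian complement to $L_1^{\mathrm{lin}}$ (spanned by the remaining coordinate directions) identifies $(\R^{2n},\omega_{\mathrm{std}})$ symplectically with $T^*L_1^{\mathrm{lin}}$, and a direct computation sends $L_0'$ exactly onto the conormal bundle $N^*(\R^d\times\{0\})$ inside $T^*L_1^{\mathrm{lin}}$. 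In this picture $L_1'$ is $C^1$-close to the zero section near $0$ and appears as the graph of a closed $1$-form $\alpha$ on some neighborhood of $0$ in $L_1^{\mathrm{lin}}$. Cleanness of $L_1'\cap L_0'=\Lambda'$ together with $T_0\Lambda'=\R^d\times\{0\}$ forces $\alpha$ to vanish identically along $\R^d\times\{0\}$ near the origin. Shrinking the neighborhood, write $\alpha=df$ with $f|_{\R^d\times\{0\}}\equiv 0$. Then $H:=f\circ\pi$, with $\pi\colon T^*L_1^{\mathrm{lin}}\to L_1^{\mathrm{lin}}$ the canonical projection, is an autonomous Hamiltonian whose flow $\rho_t$ satisfies $\rho_t(L_1^{\mathrm{lin}})=\mathrm{graph}(t\alpha)$; in particular $\rho_1(L_1^{\mathrm{lin}})=L_1'$. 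Moreover $H$ vanishes on $L_0'=N^*(\R^d\times\{0\})$, so $X_H$ is tangent to the Lagrangian $L_0'$ and $\rho_t(L_0')=L_0'$ for every $t$. The composition $\phi:=\rho_1^{-1}\circ\phi_0$ is then the required Darboux chart.

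The main obstacle is the last step: producing a Hamiltonian isotopy that straightens $L_1$ without disturbing the straightening of $L_0$ achieved in Step~2. What makes this possible is precisely the cleanness hypothesis, which forces the defining $1$-form $\alpha$ to vanish on the whole intersection $\Lambda'$ and not merely to first order at $0$. This allows one to choose a primitive $f$ of $\alpha$ vanishing on $\R^d\times\{0\}$, hence a generating Hamiltonian vanishing on all of $L_0'$ and a Moser flow fixing $L_0'$ setwise. Without cleanness no such primitive exists in general, the generating Hamiltonian would be nonzero on $L_0'$, and the flow would destroy the normalization of $L_0$ from Step~2.
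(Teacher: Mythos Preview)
Your overall strategy is correct and the proof goes through, but one intermediate assertion is false as stated and deserves correction. You claim that cleanness forces $\alpha$ to vanish identically along $\R^d\times\{0\}$. It does not: take $d=1$, $n=2$, and $\alpha=d(x^2z/2)=xz\,dx+(x^2/2)\,dz$ on $L_1^{\mathrm{lin}}=\R_x\times\R_z$. Then $\alpha(0)=0$, $D\alpha(0)=0$, and one checks that $\mathrm{graph}(\alpha)$ meets $N^*(\R\times\{0\})$ cleanly along a $1$-manifold, yet $\alpha|_{z=0}=(x^2/2)\,dz\ne 0$. What cleanness actually gives you is that the \emph{pullback} $i^*\alpha$ to $\R^d\times\{0\}$ vanishes: since $\pi(\Lambda')\subset\{z=0\}$ is a $d$-dimensional submanifold of the $d$-dimensional $\R^d\times\{0\}$, it is open there, and the intersection condition forces the $dx$-components $a_i(x,0)=0$. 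This weaker statement is exactly what you need: $i^*\alpha=0$ means any primitive $f$ is constant on $\R^d\times\{0\}$, so after subtracting a constant $f|_{\R^d\times\{0\}}=0$, hence $H=f\circ\pi$ vanishes on $L_0'=N^*(\R^d\times\{0\})$ and $X_H$ is tangent to $L_0'$. The rest of your argument is fine.

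Compared with the paper, both proofs begin with the Lagrangian neighbourhood theorem to flatten $L_0$ and then produce the same final map (a fibrewise translation in $T^*L_1^{\mathrm{lin}}$). The paper first straightens $\Lambda$ inside $L_0$ by a choice of coordinates on $L_0$, writes $L_1=\{(x,f(x,z),g(x,z),z)\}$ with $f(x,0)=g(x,0)=0$, and then simply writes down $\Psi(x,y,u,z)=(x,y+f,u+g,z)$ and checks it is symplectic using the Lagrangian condition on $L_1$. Your Hamiltonian-flow argument is the same map in disguise but is more conceptual: it explains \emph{why} the map is symplectic and why it preserves $L_0'$ without any matrix computation, and it does not require arranging $\Lambda$ to be a coordinate subspace beforehand (this comes out automatically at the end as $L_0'\cap L_1^{\mathrm{lin}}$). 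The paper's approach is shorter and more elementary; yours makes the role of cleanness more transparent.
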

\begin{proof}
 Let $ p \in \Lambda $, from the Lagrangian neighborhood
 theorem we know that there exists a neighborhood of
 $ L_0, U(L_0) $ and a symplectomorphism 
$ \psi : U( L_0 ) \rightarrow  V( L_0 ) \subset T^*L_0 $ 
such that $ \psi (L_0 )= L_0 $. 
Choose local  symplectic coordinates in a neighborhood $ V(p) $ of
$ \psi(p)=p \in T^* L_0$ , i.e. a symplectomorphism 
$ \psi_1 : V(p) \rightarrow U(0) \subset \R^{2n} $,
such that in these coordinates $ \Lambda $, $ L_0 $ and
$ L_1 $ are given as follows 

 \[\begin{cases}
  \Lambda : \;\; ( x, 0, 0,\cdots, 0 ) , \;\;  x = ( x_1, x_2, \cdots, x_d) \\ 
    L_0 : \;\; (x, y, 0,\cdots,0 ), 
   \; \;  x = ( x_1, x_2, \cdots, x_d ) , 
    \; y= ( y_1, \cdots, y_{n-d} )\\
    L_1: \;\; (x, f(x, z), g(x,z), z), \;
  x= (x_1, \cdots, x_d), \; z= ( z_1, \cdots, z_{n-d} )
   \end{cases}
  \]
and 
$ f: \R^n \rightarrow \R^{n-d} , \;g: \R^{n} \rightarrow \R^d  $ 
are differentiable maps with $ f(x,0) = g(x,0) = 0 $. 
Let $ \Psi : \R^{2n} \rightarrow \R^{2n} $ be given by 
\begin{align}
 \Psi ( x, y, u, z) = ( x , y + f(x,z), u + g(x,z) , z) , 
\;\; x,u \in \R^d, \; y,z \in \R^{n-d}. 
\end{align}
Notice that $ \Psi (x, y, 0,0) = (x,y,0,0)\subset L_0 $
 and $ \Psi(x,0,0,z) = ( x, f(x,z), g(x,z) , z) \subset L_1 $,
 The mapping $ \Psi $ is a diffeomorphism locally 
 as its derivative is given by
\[
 d\Psi(x,y,u,z) = \left (\begin{matrix}
      1 \;& f_x\; & g_x\; & 0\; \\
      0 \;& 1 \;& 0\; & 0 \;\\
      0 \;& 0\; & 1\; & 0\; \\
      0 \; & f_z \;&g_z\; & 1 \;
  \end{matrix}\right )
\]
and the determinant of $ d \Psi  $ is $ 1 $. 
The mapping $ \Psi $ is a symplectomorphism if and 
only if $ f_z^T = f_z ,$
$ g_x^T= g_x $ and $ f_x + g_z^T = 0 $. 
From the fact that $ L_1 $ is Lagrangian
follows that $\Psi $ is also a symplectomorphism. 
Now the desired symplectomorphism 
$ \phi = \Psi^{-1} \circ\psi_1 \circ  
\psi : U(p) \rightarrow \R^ {2n} $ 
and satisfies the properties (\ref{eq_locsymp}). 
\end{proof}

The previous lemma follows also as a corollary of 
a more general result in the thesis of M.Pozniak (\cite{Po}). 
M.Pozniak proves the analog of Weinstein's neighborhood theorem for clean 
intersection. More precisely he proves that a neighborhood $ U(\Lambda ) $ is 
symplectomorphic to the neighborhood of $ \Lambda \subset T^* L_0 $ 
and such symplectomorphism maps $ L_0 $ into the zero section and $ L_1 $ into 
the conormal bundle of $ \Lambda $. Here we actually need a weaker result that 
concerns only the local representation of clean intersection. 
\begin{remark}
 Let $ \phi $ be a local Darboux chart constructed 
 in Lemma \ref{lem:dar-clean}, then the $ 1-$form 
 $\lambda = \phi^* \sum_{i=1}^n x_i dy_i $ satisfies 
 $ \omega = d \lambda $ and $ \lambda $ vanishes on $ TL_0 $ and 
 $ T L_1 $. 
\end{remark}

\begin{definition}[{\bf Darboux radius}]\label{def:darb_rad}
 Assume ~(H) and ~(H1) and fix a Riemannian metric $g $ on $M$. 
 A constant $ \delta > 0 $ is called a {\bf Darboux radius } 
 for $ ( M, L_0,L_1, \Lambda ) $ if for every $ p \in \Lambda $ 
 there exists a coordinate chart $ \phi : B_{\delta} (p) \rightarrow \R^{2n} $ 
 such that 
 \begin{itemize}
  \item $\phi^* \omega_{std} = \left. \omega\right|_{B_{\delta} (p)} $
  \item $\phi ( L_0 \cap B_{\delta} ( p)) = \R^n \times \{0\} \cap \phi ( B_{\delta} (p)) $
  \item  $\phi ( L_1\cap B_{\delta} ( p)) = \R^d \times \{0\}\times \R^{n-d} \cap \phi ( B_{\delta} (p)) $
  \item $ L_0 \cap B_{r} ( p) $ and $ L_1 \cap B_{r} (p) $ are connected for all $ 0< r \leq \delta $. 
 \end{itemize}
 Such a coordinate chart we call Darboux coordinate chart for 
 clean intersection. 
\end{definition}
\begin{remark}
 By Lemma \ref{lem:dar-clean} the set of Darboux radiuses is nonempty. 
 Moreover it follows from the definition if $ \delta  $ is a Darboux 
 radius so is every positive number $ r < \delta $. 
\end{remark}

\begin{lemma}\label{lem:mon_lem3}
Assume ~(H) and ~(H1).
 For every $ \delta > 0 $ there exists $ \rho > 0 $
such that the following holds.
Let $ \gamma: [0,1] \rightarrow N $ be a smooth curve 
with $ \gamma(i) \in L_i, \; i=0,1 $ . If $\ell( \gamma) < \rho $ then
$$ 
\gamma \subset B_{\delta} (x_0 ) 
$$ 
for some $x_0 \in \Lambda $. 
\end{lemma}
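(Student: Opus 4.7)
My plan is a straightforward compactness and contradiction argument. Fix a Riemannian metric $g$ on $M$ with induced distance $d$, and assume for contradiction that there is a $\delta>0$ for which no such $\rho$ exists. Then I can choose a sequence of smooth curves $\gamma_n\colon[0,1]\to N$ with $\gamma_n(0)\in L_0$, $\gamma_n(1)\in L_1$, $\ell(\gamma_n)\to 0$, and such that for every $x_0\in\Lambda$ the image of $\gamma_n$ fails to lie in $B_\delta(x_0)$.

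Next I would exploit compactness. By hypothesis~(H), $N$ is compact and $L_0,L_1$ are closed in $M$, so the intersections $L_0\cap N$ and $L_1\cap N$ are compact. Therefore, after passing to a subsequence, the endpoints satisfy $\gamma_n(0)\to p_0\in L_0\cap N$ and $\gamma_n(1)\to p_1\in L_1\cap N$. The bound $d(\gamma_n(0),\gamma_n(1))\le \ell(\gamma_n)\to 0$ forces $p_0=p_1$, and since this common limit lies in both $L_0$ and $L_1$ it belongs to $\Lambda$. Call this point $x_\ast$.

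Now I would use the triangle inequality together with the length bound to localise the entire curve near $x_\ast$. For every $t\in[0,1]$,
\begin{equation*}
d\bigl(\gamma_n(t),x_\ast\bigr)\le d\bigl(\gamma_n(t),\gamma_n(0)\bigr)+d\bigl(\gamma_n(0),x_\ast\bigr)\le \ell(\gamma_n)+d\bigl(\gamma_n(0),x_\ast\bigr),
\end{equation*}
and the right hand side tends to $0$ uniformly in $t$. Hence for $n$ sufficiently large $\gamma_n\subset B_\delta(x_\ast)$ with $x_\ast\in\Lambda$, contradicting the choice of the sequence. This contradiction produces the required $\rho$.

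The statement is not truly deep, and the main subtlety I expect is only bookkeeping: namely that one must be careful to extract the limit in the compact set $L_0\cap N$ (rather than in $L_0$, which is only closed, not compact), and that the hypotheses~(H) on $N$ are exactly what make this possible. Notably, the clean intersection assumption~(H1) is not needed for this particular lemma; it enters only in the subsequent applications (Darboux charts for clean intersection and the isoperimetric inequality on $M$), where one wants to transport the short curve into a single Darboux chart once it has been localised by this lemma.
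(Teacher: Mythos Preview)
Your proof is correct, but it takes a different route from the paper. The paper gives a direct construction of $\rho$: it defines the compact sets $L_{i,\delta/2} := \{x \in L_i \cap N : d(x,\Lambda) \ge \delta/2\}$ for $i=0,1$, and chooses $\rho>0$ smaller than both $d(L_0,\,L_{1,\delta/2})$ and $d(L_1,\,L_{0,\delta/2})$ (and smaller than $\delta/2$). If $\ell(\gamma)<\rho$ then $d(\gamma(0),\gamma(1))<\rho$, so at least one endpoint, say $\gamma(0)$, must lie within $\delta/2$ of $\Lambda$; choosing $x_0\in\Lambda$ with $d(\gamma(0),x_0)<\delta/2$ then gives $\gamma\subset B_\delta(x_0)$. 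Your sequential-compactness contradiction argument is equally valid and perhaps cleaner to read; the paper's approach has the small advantage of being constructive, giving an explicit $\rho$ in terms of distances between compact sets. Your observation that the clean-intersection hypothesis~(H1) is not actually used in this lemma is correct --- the paper's own proof does not invoke it either.
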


\begin{proof} 
Let $ \delta $ be an arbitrary positive number. 
Observe the following sets:
 \begin{align*}
 L_{0, \delta/2} := \{ x\in L_0 \cap N: d(x, \Lambda) \geq \delta/2 \}  \\
 L_{1, \delta/2} := \{ x\in L_1 \cap N: d(x, \Lambda)\geq \delta /2\}
 \end{align*}
Obviously 
$ L_{0, \delta/2} $ and $L_{1, \delta/2}$ are compact subsets
of $ L_0\cap N $ and $ L_1\cap N $. 
Let $ \rho >0 $ be a number that satisfies 
$$
d ( L_0, L_{1, \delta / 2} )\geq \rho  , \; \; d ( L_1, L_{0, \delta / 2} )\geq \rho .
$$
Obviously such a positive number exists 
and $ \rho < \delta / 2 $. Now if  $\ell(\gamma) < \rho $, 
it cannot happen that $ \gamma(0)\in L_{0, \delta/2} $ and 
$\gamma(1) \in L_{1, \delta /2} $ , as the distance between 
these sets is bigger than $ \rho $. 
Thus, for example $d(\gamma(0) , \Lambda ) < \delta/2 $, 
i.e there exists $ x_0 \in \Lambda $ such that 
$ d(x_0 , \gamma(0) ) \leq \delta/2 $. 
Then $ \gamma \subset B_{\delta} (x_0)  $.
\end{proof}

\begin{lemma}\label{mon:lem3a}
 Assume ~(H) and ~(H1). Let $ \delta > 0 $ be a Darboux radius 
 and let $ x_0 \in \Lambda $. Let $ \gamma : [0,1] \rightarrow B_{\delta} ( x_0 ) $ 
 be a smooth curve such that $ \gamma(i) \in L_i, \; i=0,1 $. 
 Denote 
 $$
 Q = \{ z\in \C : \abs{z} \leq 1, \; \text{ Re } ( z) > 0, \; \text{Im} ( z) > 0 \} , 
 $$
 Then there exist $ u : Q \rightarrow B_{\delta} ( x_0 ) $ that satisfies 
  $$ 
  u ( e^{\frac{\pi}{2} it} ) = \gamma(t),  \;\;  u( Q \cap \R) \subset L_0 , \;\;
  u ( Q \cap i\R ) \subset L_1  
  $$
\end{lemma}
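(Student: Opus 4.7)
The plan is to push the entire construction into a Darboux chart around $x_0$, where both $L_0$ and $L_1$ become linear coordinate subspaces, and then define $u$ as a radial contraction of $\gamma$ toward $x_0$.

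Concretely, I would first invoke Definition~\ref{def:darb_rad} to obtain a Darboux chart $\phi: B_\delta(x_0) \to V(0) \subset \R^{2n}$ with $\phi(x_0) = 0$. After a harmless further shrinking of $\delta$ and, if necessary, composition with a diffeomorphism of $\R^{2n}$ that fixes the two coordinate subspaces $\R^n \times \{0\}$ and $\R^d \times \{0\} \times \R^{n-d}$, I may assume that $V(0)$ is a Euclidean ball centered at the origin, hence star-shaped at $0$. Setting $\tilde\gamma := \phi \circ \gamma$, the chart properties give $\tilde\gamma(0) \in \R^n \times \{0\}$ and $\tilde\gamma(1) \in \R^d \times \{0\} \times \R^{n-d}$.

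Next, I would define $\tilde u: Q \to \R^{2n}$ in polar coordinates $z = r e^{i\theta}$, $r \in [0,1]$, $\theta \in [0, \pi/2]$, by the cone formula
\begin{equation*}
\tilde u(re^{i\theta}) := r\,\tilde\gamma(2\theta/\pi),
\end{equation*}
extended continuously by $\tilde u(0) := 0$. The three boundary conditions are then immediate. On the arc $r = 1$ one recovers $\tilde\gamma$, so $\phi^{-1}\circ\tilde u(e^{i\pi t/2}) = \gamma(t)$. On the ray $\theta = 0$ one has $\tilde u(r) = r\tilde\gamma(0) \in \R^n \times \{0\}$ because that subspace is linear and $\tilde\gamma(0)$ lies in it; similarly on $\theta = \pi/2$ one gets $r\tilde\gamma(1) \in \R^d \times \{0\} \times \R^{n-d}$. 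Pulling back by $\phi^{-1}$ gives the desired boundary behavior on $L_0$ and $L_1$.

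The only point that is not automatic is the containment $\tilde u(Q) \subset V(0)$, so that $u := \phi^{-1} \circ \tilde u$ actually maps $Q$ into $B_\delta(x_0)$. This is exactly where the star-shapedness of $V(0)$ at the origin is used: since $\tilde\gamma([0,1]) \subset V(0)$, each segment $\{s\tilde\gamma(t) : s \in [0,1]\}$ stays in $V(0)$, and hence so does the whole cone. Thus the main mild obstacle — and essentially the only geometric choice — is to take a Darboux chart with star-shaped image; all the rest follows from the linear-subspace description of $L_0$ and $L_1$ in the chart provided by Lemma~\ref{lem:dar-clean}.
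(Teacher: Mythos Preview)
Your cone construction is a genuinely different route from the paper's. The paper argues abstractly: it first chooses paths $\alpha_i : [0,1] \to L_i \cap B_\delta(x_0)$ from $x_0$ to $\gamma(i)$ --- these exist because the fourth clause in Definition~\ref{def:darb_rad} makes each $L_i \cap B_\delta(x_0)$ connected --- and uses them to prescribe $u$ on the two radial edges of $Q$; then it fills in the resulting closed curve using only that the geodesic ball $B_\delta(x_0)$ is contractible. No convexity or star-shapedness in the chart image is needed for the filling step.

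Your explicit cone is tidier when it applies, but the passage where you arrange $V(0)$ to be star-shaped is not as harmless as you suggest. Shrinking $\delta$ is not available: $\gamma$ is handed to you inside $B_\delta(x_0)$ for the given $\delta$, and a smaller ball may no longer contain it. The diffeomorphism fix is more promising but needs an argument you have not given: such a diffeomorphism would have to carry $V(0)\cap(\R^n\times\{0\})$ and $V(0)\cap(\R^d\times\{0\}\times\R^{n-d})$ onto balls in those respective planes, yet the Darboux-radius axioms only guarantee that these slices are \emph{connected} open subsets of $\R^n$, not that they are themselves diffeomorphic to balls. Without that, even your straight segments $r\mapsto r\tilde\gamma(i)$ on the radial edges can exit $V(0)$, so $\phi^{-1}\circ\tilde u$ may fail to be defined. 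The paper's approach sidesteps all of this precisely by allowing the boundary paths $\alpha_i$ to be arbitrary paths in $L_i\cap B_\delta(x_0)$ rather than straight lines in a chart.
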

\begin{proof}
 Let $ \alpha_i : [0,1] \rightarrow L_i \cap B_{\delta} (x_0)$ 
 be such that $ \alpha_i(0) = x_0 $  and $ \alpha_i(1) = \gamma(i) $. Such
 paths $ \alpha_i $ exist as $ \delta > 0 $ is a Darboux radius. 
 The loop that we obtain concatenating $ \alpha_i, i=0,1 $ and 
 $ \gamma $ is contractible as it is contained in a convex neighborhood 
 of a point. Thus there exists a desired map $ u$. 
\end{proof}

\begin{definition}\label{def:mon1}
 Assume ~(H) and ~(H1). Choose $ \delta > 0 $ so small that $ 3 \delta $ is still 
 a Darboux radius. Let $ \rho > 0 $ be the constant of Lemma 
 \ref{lem:mon_lem3} for such a $ \delta $. Let 
  $ \gamma : [0,1] \rightarrow M $
 be a smooth path satisfying 
 $$
 \gamma(i) \in L_i, \;\; i=0,1 , \ell(\gamma) < \rho, \;\; \gamma ( [0,1] ) \subset N 
 $$
Choose $ x_0 \in \Lambda $ such that $ \gamma( [0,1] ) \subset B_{\delta} ( x_0 ) $
(see Lemma \ref{lem:mon_lem3}) and let $ u : Q \rightarrow B_{\delta} (x_0) $ be 
as in Lemma \ref{mon:lem3a}. The 
 {\bf local symplectic action} of $ \gamma $ is a real number  
  \begin{align}
  a(\gamma):= \int_{Q} u^* \omega. 
 \end{align}
\end{definition} 
\nomenclature{$a(\gamma)$}{Symplectic action of acurve $\gamma$ on a symplectic manifold  
\nomrefeqpage}
\begin{claim}\label{cl:mon1}
 The local symplectic action is well defined, i.e. 
 it doesn't depend on the choice of $ x_0 $ and $ u $ 
 in the definition \ref{def:mon1}.
\end{claim}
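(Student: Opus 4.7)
The plan is to show that the two integrals arising from any two valid choices $(x_0,u)$ and $(x_0',u')$ in Definition~\ref{def:mon1} both equal the same intrinsic expression in $\gamma$, by applying Stokes's theorem with a primitive of $\omega$ that vanishes on the two Lagrangians. The key structural observation is that in the Darboux chart of Lemma~\ref{lem:dar-clean} the canonical primitive $\sum_{i=1}^n x_i\,dy_i$ pulls back to zero on both $L_0$ and $L_1$: on $\R^n\times\{0\}$ every $y_i$ vanishes, and on $\R^d\times\{0\}\times\R^{n-d}$ each summand $x_i\,dy_i$ vanishes because either $i\le d$ (so $dy_i=0$) or $i>d$ (so $x_i=0$).

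First I would produce a single Darboux chart that contains both $u(Q)$ and $u'(Q)$. Since $\gamma(0)\in B_\delta(x_0)\cap B_\delta(x_0')$, the triangle inequality gives $d(x_0,x_0')<2\delta$, and hence $B_\delta(x_0')\subset B_{3\delta}(x_0)$; in particular $u'(Q)\subset B_{3\delta}(x_0)$. This is exactly the reason $\delta$ is chosen in Definition~\ref{def:mon1} so that $3\delta$ is still a Darboux radius. Let $\phi:B_{3\delta}(x_0)\to\R^{2n}$ be the corresponding Darboux chart for the clean intersection, and set $\lambda:=\phi^{*}\!\bigl(\sum_{i=1}^{n}x_i\,dy_i\bigr)$, a one-form on $B_{3\delta}(x_0)$ with $d\lambda=\omega$ and $\lambda|_{TL_0}=\lambda|_{TL_1}=0$.

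Next I would apply Stokes's theorem with this common $\lambda$ to both $u$ and $u'$. The boundary $\partial Q$ decomposes into three pieces: the circular arc $\{e^{\pi it/2}:t\in[0,1]\}$ on which $u$ and $u'$ both equal $\gamma$, the segment $Q\cap\R$ which maps into $L_0$, and the segment $Q\cap i\R$ which maps into $L_1$. On the last two pieces $u^{*}\lambda$ and $(u')^{*}\lambda$ vanish identically, so
\begin{equation*}
\int_{Q}u^{*}\omega\;=\;\int_{\partial Q}u^{*}\lambda\;=\;\int_{0}^{1}\lambda_{\gamma(t)}\bigl(\dot\gamma(t)\bigr)\,dt\;=\;\int_{Q}(u')^{*}\omega,
\end{equation*}
which proves simultaneously that $a(\gamma)$ depends neither on the filling $u$ nor on the base point $x_0$.

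The main (and only real) obstacle is the geometric step of exhibiting a single Darboux chart that contains both images; once the $3\delta$ buffer in Definition~\ref{def:mon1} is exploited, everything else is a direct application of Stokes together with the vanishing of the standard Darboux primitive on the coordinate Lagrangians. A minor subtlety is that in principle the value of the arc integral could depend on the chart $\phi$, but since we use the \emph{same} $\lambda$ for both $u$ and $u'$ this issue does not enter the proof of Claim~\ref{cl:mon1}.
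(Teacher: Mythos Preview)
Your proof is correct and follows essentially the same approach as the paper: both use the triangle inequality to place the two choices inside a single Darboux chart, pull back the standard primitive $\sum x_i\,dy_i$ (which vanishes on both model Lagrangians), and apply Stokes so that only the arc integral $\int_0^1\lambda(\dot\gamma)\,dt$ survives. Your handling of the radii is in fact slightly more careful than the paper's, explicitly using the $3\delta$ buffer to ensure $u'(Q)\subset B_{3\delta}(x_0)$.
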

\begin{proof}
 Let $ x_1 \in \Lambda $ be another point such that 
 $ \gamma( [0,1] ) \subset B_{\delta} ( x_1) $ and let 
 $u' : Q \rightarrow B_{\delta} ( x_1) $ satisfy 
 $ u' ( e^{i\pi/2 t } ) = \gamma(t) $. 
 Then $ d( x_0, x_1 ) < d ( x_0, \gamma(t)) + d ( \gamma(t), x_1) < 2 \delta $. 
 Thus $ x_0 $ and $ x_1 $ are contained in a single Darboux chart. 
 There exists a $ 1-$form $ \lambda $ on $ B_{2\delta} ( x_0) $ such 
 that $ d \lambda = \omega $. Such $ 1-$form $ \lambda $ 
 has additional property that it vanishes on Lagrangian submanifolds 
 $ L_0 $ and $ L_1 $. 
 Then 
 $$ 
 \int_{Q} u^* \omega= \int_{Q} u^* d\lambda
 = \int_{\partial Q } u^* \lambda = \int_0^1 \lambda ( \dot{\gamma}(t)) dt. 
 $$
 Similarly we have $ \int_{Q} (u')^* \omega= \int_0^1 \lambda ( \dot{\gamma}(t)) dt $. 
\end{proof}

\begin{remark}
 We see from the proof of claim \ref{cl:mon1} that it is possible to define 
 $$ a( \gamma) = \int_{\gamma} \lambda ,$$
 where $ \lambda \in \Omega^1 ( B_{\delta} ( x_0) ) $ is $ 1 -$form 
 which vanishes on Lagrangian submanifolds and $ d \lambda = \left.\omega \right|_{B_{\delta} ( x_0) } $. 
\end{remark}

Now we can prove the isoperimetric inequality 
for short curves with Lagrangian boundary conditions.

\begin{lemma}[{\bf Isoperimetric inequality}]\label{lem:mon_lem4}
Assume ~(H) and ~(H1). 
 There exist positive constants 
 $\rho_0 $ and $c$ such that for every 
 $ \gamma: [0,1] \rightarrow N , \; \gamma(i) \in L_i, i=0,1 $ the following holds:  \\
 If $ \ell( \gamma ) < \rho_0 $ then 
\begin{equation}
 a(\gamma) \leq c \ell(\gamma)^2. 
\end{equation}

\end{lemma}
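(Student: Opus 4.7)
The plan is to reduce the inequality to the Euclidean statement of Lemma~\ref{lem:mon_lem1} via a Darboux chart adapted to the clean intersection, and to use compactness of $\Lambda$ to make the resulting constant uniform.

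First, fix $\delta>0$ so small that $3\delta$ is still a Darboux radius, and let $\rho>0$ be the associated constant from Lemma~\ref{lem:mon_lem3}. I will take $\rho_0\le\rho$ (further shrunk below). Given a curve $\gamma:[0,1]\to N$ with $\gamma(i)\in L_i$ and $\ell(\gamma)<\rho_0$, Lemma~\ref{lem:mon_lem3} guarantees a point $x_0\in\Lambda$ such that $\gamma([0,1])\subset B_\delta(x_0)$. Choose a Darboux chart $\phi:B_{3\delta}(x_0)\to\R^{2n}$ as in Definition~\ref{def:darb_rad}, mapping $L_0$ and $L_1$ to the standard Lagrangian planes $\R^n\times\{0\}$ and $\R^d\times\{0\}\times\R^{n-d}$. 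Let $\tga:=\phi\circ\gamma$. By Lemma~\ref{mon:lem3a} the curve $\gamma$ extends to a map $u:Q\to B_\delta(x_0)$ with the prescribed boundary behaviour, and $\phi\circ u$ is a map into $\R^{2n}$ whose image of the circular arc is $\tga$. Then, because $\phi$ is a symplectomorphism,
\begin{equation*}
a(\gamma)=\int_Q u^*\om=\int_Q(\phi\circ u)^*\om_{\mathrm{std}},
\end{equation*}
which by Stokes' theorem (using the standard primitive $\lambda_{\mathrm{std}}=\sum x_i\,dy_i$, which vanishes on the two coordinate Lagrangians) equals the Euclidean symplectic action $A(\tga)$ of the curve $\tga$.

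Now apply the Euclidean isoperimetric inequality of Lemma~\ref{lem:mon_lem1} to $\tga$: one obtains
\begin{equation*}
\abs{a(\gamma)}=\abs{A(\tga)}\le\frac{1}{\pi}L(\tga)^2,
\end{equation*}
where $L(\tga)$ is the Euclidean length in the chart. It remains to compare the Euclidean length $L(\tga)$ with the Riemannian length $\ell(\gamma)$. On the compact set $\overline{B_{2\delta}(x_0)}$ the two norms are equivalent, so there is a constant $C_{x_0}$ with $L(\tga)\le C_{x_0}\,\ell(\gamma)$.

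The only delicate point is to make the constants $\delta$ and $C_{x_0}$ independent of $x_0\in\Lambda$. This is where I use the hypothesis~(H), which provides that $\Lambda$ is compact. Using compactness, the Darboux radius can be chosen uniformly in $x_0\in\Lambda$ (for each $x_0$ the Darboux construction gives an admissible radius, and by continuity of the chart data this radius can be taken locally constant; a finite subcover yields a positive uniform lower bound), and similarly the metric comparison constant $C:=\sup_{x_0\in\Lambda}C_{x_0}$ is finite. Setting $c:=C^2/\pi$ and choosing $\rho_0$ according to this uniform Darboux radius completes the argument. The only real obstacle is this uniformity step, and once that is in place the estimate is immediate.
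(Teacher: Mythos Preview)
Your proof is correct and follows essentially the same route as the paper: localize via Lemma~\ref{lem:mon_lem3} into a Darboux chart adapted to the clean intersection, identify $a(\gamma)$ with the Euclidean action $A(\phi\circ\gamma)$ using that $\phi$ is symplectic, apply Lemma~\ref{lem:mon_lem1}, and compare lengths via a bound on $d\phi$. The only stylistic difference is that the paper handles uniformity by fixing at the outset a finite collection of Darboux charts $\phi_i:B_{\delta}(x_i)\to\R^{2n}$ covering $\Lambda$ (with a single bound $\|d\phi_i\|\le c'$), so that any short $\gamma$ lands in one of these fixed charts; you instead take a chart centered at the point $x_0$ produced by Lemma~\ref{lem:mon_lem3} and invoke compactness afterwards to bound $\sup_{x_0}C_{x_0}$, which amounts to the same finite-cover argument.
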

\nomenclature{$\ell(\gamma)$}{The length of a curve $\gamma$ 
\nomrefpage}
\begin{proof} Let $ \delta $ be a Darboux radius. 
 We can cover $\Lambda $ with finitely many Darboux charts 
 as in Definition \ref{def:darb_rad}
 $ \phi_i : B_{\delta/2 } ( x_i ) \rightarrow \R^{2n} , 
\; x_i \in \Lambda$ such that  $ \|d\phi_i(x) \| \leq c' $ 
for some constant $c'$ and $ \forall x \in B_{\delta }(x_i) $ 
and for all $i $.
 Now take such 
  $ \rho_0 $ so that the claim of the lemma \ref{lem:mon_lem3}
is satisfied with the constant $ \delta_0 = \delta / 2$.
 It follows that if $ \ell( \gamma) < \rho_0 $ then
 $ \gamma \subset B_{\delta/2 } (x ) \subset B_{\delta} (x_i )$ 
for some $ i $.
 As $\gamma$ is contained in one Darboux chart 
we can use the fact that isoperimetric 
inequality holds in $ \R^{2n} $ and
 that the mapping $ \phi= \phi_i $ preserves 
the $ a(\gamma) $ and changes the length only up to some constant.
 More precisely we have
 
\begin{align*}
 a(\gamma) = \int_{Q} u_{\gamma} ^* \omega = \int_{Q} ( \phi \circ u_{\gamma} )^* w_{std} 
= A ( \phi \circ \gamma) \leq \frac{1}{\pi} L(\phi \circ \gamma ) ^2 
\end{align*}
Here the last inequality follows from Lemma \ref{lem:mon_lem1}. 
On the other hand, $ L(\phi \circ \gamma)  \leq c' \ell(\gamma) $,
thus we easily get
$$
 a(\gamma )\leq\frac{1}{\pi}(c')^2 \ell(\gamma)^2 = c \ell(\gamma)^2. 
$$
\end{proof}
 
The analog of the Corollary \ref{cor:mon_cor1}
holds for maps $ u : Q \rightarrow N $,
 provided that the image of $ u$ has small diameter. 

\begin{lemma}\label{lem:mon_lem5}
 Assume ~(H) and ~(H1). 
 Then there exist constants $ r_1 $ and $c$  
 such that the following holds. 
 Let $ Q \subset \R\times [0,1] $ be a compact 2-dimensional submanifold
 with corners such that the corners of $ Q $ are contained in $ \R \times \{0,1 \} $. 
 Denote $ \Gamma = \overline{ \partial Q \setminus \R \times \{ 0,1 \} } $. 
 Thus $ \Gamma $ is a compact 1-dimensional manifold with boundary and 
 $ \partial \Gamma \subset \R \times \{0, 1\} $. Let  
$$ u : (Q, Q \cap \R\times \{0\} , Q \cap \R\times \{1\}) \rightarrow (N, L_0, L_1 ) , 
$$
be a smooth map such that $ diam( u(Q) ) \leq r_1 $ then 
$$
 \int\limits_Q u^* \omega \leq c \ell^2 ( \left.u \right|_{\Gamma} ) ) .
$$
 \end{lemma}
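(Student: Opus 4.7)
The strategy is to reduce the problem to the already-proven Euclidean Corollary \ref{cor:mon_cor1} by covering $N$ with finitely many Darboux-type charts and using the small-diameter hypothesis to fit $u(Q)$ entirely inside one of them.

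First, using Definition \ref{def:darb_rad} and Lemma \ref{lem:dar-clean}, fix a Darboux radius $\delta > 0$ for the clean intersection and cover $\Lambda$ by finitely many Darboux charts $\phi_j : B_\delta(x_j) \to \R^{2n}$ with $x_j \in \Lambda$, sending $L_0$ and $L_1$ onto portions of $\R^n \times \{0\}$ and $\R^d \times \{0\} \times \R^{n-d}$ respectively. Clean intersection implies that the closed sets $(L_0 \cap N)\setminus B_{\delta/2}(\Lambda)$ and $(L_1 \cap N)\setminus B_{\delta/2}(\Lambda)$ are disjoint in $M$, so each can be covered by finitely many Darboux charts sending the corresponding Lagrangian to $\R^n \times \{0\}$ and, after shrinking the chart radii, disjoint from the other Lagrangian. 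Cover whatever remains of $N$ by arbitrary Darboux charts. By compactness, every chart $\phi$ in this finite atlas admits a uniform bound $\|d\phi\| \leq c_1$ on its domain.

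Next choose $r_1 > 0$ so small that every subset of $N$ of diameter at most $r_1$ is contained in a single chart of the covering. Given $u : Q \to N$ with $\mathrm{diam}(u(Q)) \leq r_1$, pick such a chart $\phi$ and set $v := \phi \circ u : Q \to \R^{2n}$. Since $\phi$ is a symplectomorphism, $v^*\omega_{\mathrm{std}} = u^*\omega$. Depending on which of the three chart types we used, $v$ either satisfies the full hypothesis of Corollary \ref{cor:mon_cor1} (both Lagrangians present in the chart), or a degenerate version in which $u$ meets only one or neither Lagrangian; in the degenerate cases the corresponding pieces of $\partial Q$ either are empty or map away from the standard Lagrangian subspaces of $\R^{2n}$, and the Stokes-type proof of Corollary \ref{cor:mon_cor1} goes through unchanged. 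In every case
$$
\int_Q u^*\omega \;=\; \int_Q v^*\omega_{\mathrm{std}} \;\leq\; \frac{1}{\pi}\, L\bigl(v|_\Gamma\bigr)^2 \;\leq\; \frac{c_1^2}{\pi}\, \ell\bigl(u|_\Gamma\bigr)^2,
$$
where the last step uses $L(\phi \circ \gamma) \leq c_1\, \ell(\gamma)$. This proves the lemma with $c := c_1^2/\pi$.

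The main obstacle is the covering step: one must simultaneously guarantee that whenever $u(Q)$ meets both Lagrangians it lies inside a chart adapted to the clean intersection, and that the Lipschitz constants of all charts in the atlas are uniformly bounded. The clean-intersection assumption is essential here, since by the argument of Lemma \ref{lem:mon_lem3} any small subset of $N$ meeting both $L_0$ and $L_1$ must sit close to $\Lambda$, and Lemma \ref{lem:dar-clean} then provides the simultaneous linearization of the two Lagrangians required for the Euclidean isoperimetric bound of Corollary \ref{cor:mon_cor1} to apply.
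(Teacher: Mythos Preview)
Your proposal is correct and follows essentially the same route as the paper: reduce to Corollary~\ref{cor:mon_cor1} via a suitable Darboux-type chart, with the type of chart depending on which of the Lagrangians $u(Q)$ meets. The paper organizes this as an explicit three-case split on whether $Q$ touches $\R\times\{0\}$, $\R\times\{1\}$, or neither, whereas you package the same distinction into a finite atlas plus a Lebesgue-number choice of $r_1$; the underlying content is identical.

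One small point worth tightening: your Lebesgue-number step only guarantees that $u(Q)$ sits in \emph{some} chart of the atlas, not automatically one of the correct type. For the argument to go through you need that the ``standard'' Darboux charts can be chosen disjoint from $L_0\cup L_1$ and that the single-Lagrangian charts for $L_i$ can be chosen disjoint from $L_{1-i}$; then whichever chart the Lebesgue number hands you is automatically adapted to exactly the Lagrangians that $u(Q)$ meets. You gesture at this in your final paragraph via Lemma~\ref{lem:mon_lem3}, which is indeed the right input, and the paper makes the same use of it in its case~1).
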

\begin{proof}
We distinguish the following cases 
\begin{itemize} 
 \item [1)] Suppose first that $ Q \cap \R \times \{0\} \neq \emptyset $ 
and $ Q \cap \R \times \{ 1\} \neq \emptyset $. Let $ z_i \in Q \cap \R \times \{ i \} $. Let $ \delta > 0 $ be 
so small that $ 2 \delta $ is still a Darboux radius. Let $ \rho > 0 $ 
be the corresponding constant as in Lemma \ref{lem:mon_lem3} and 
take $ r_1 < \min \{ \delta, \rho\} $. 
As $ d ( u(z_0), u(z_1)) \leq r_1 < \rho $ it follows from 
Lemma \ref{lem:mon_lem3} that there exists $ x_0 \in \Lambda $ such that 
$ u(z_i) \in B_{\delta} ( x_0) $. Thus it follows that 
$ u ( Q) \subset B_{\delta + r_1} ( x_0) \subset B_{2\delta} ( x_0) $. 
We use the Darboux chart to map $ u( Q) $ into $ \R^{2n} $. 
\item[2)] Suppose now that $ Q \cap \R \times \{ 0 \} \neq \emptyset $ and 
$ Q \cap \R \times \{1\}= \emptyset $. The case $  Q \cap \R \times \{ 0 \} = \emptyset $ 
and $ Q \cap \R \times \{1\}\neq \emptyset $ is analog. In this case we can use the 
Lagrangian- Darboux local chart to map $ u( Q) $ into $ \R^{2n} $. 
\item[3)] If $ \R \times \{0,1\} \cap Q= \emptyset $ we can use the standard Darboux chart 
to map $ u( Q) $ into $ \R^{2n} $. 
\end{itemize}
In any of the three mentioned cases the value of $\int_Q u^* \omega $ 
won't change using the symplectomorphism and the length 
will change only up to a constant. 
Hence the assertion follows from the Corollary
\ref{cor:mon_cor1}.

\begin{figure}[htp]
\centering
 \scalebox{0.4}{\input{dom_mon.pstex_t}}
 \caption{Domain $ Q $.}
 \label{fig5}
\end{figure}

\end{proof}

\section{Exponential decay}

It is well known that holomorphic curves with bounded 
energy decay exponentially. Still, in the existing literature 
we weren't able to find the right reference for the case of 
holomorphic curves with boundary on Lagrangians that intersect cleanly
and in the case that $ J_t $ is only tame almost complex structure. 
For the case of transverse intersection of Lagrangian submanifolds
and compatible almost complex structure we refer 
to \cite{RS1}. For the sake of completeness we include the missing case 
in this section. 

The setup is the same as in the introduction and 
we shall prove that bounded energy
 solutions of \eqref{eq:jhol} decay exponentially. 
 In the case of infinite strip this means that
$$ 
p = \lim\limits_{s\rightarrow \infty} u(s,t) 
$$ 
exists and is an intersection point 
$ p \in \Lambda= L_0 \cap L_1 $. 
The convergence will be exponential with all the derivatives.
In the case of finite strips $ u : I \times [0,1] \rightarrow N $
we shall see that $ u(s,t) $ is close to some point 
in $ \Lambda $ 
for those $s $ far away from  $ \partial I $. 
\\

To state this more precisely, for any interval 
$I= [a,b)$  or $I= [a,b] $ denote with 
$ d ( s, \partial I ) $ the distance 
between $ s $ and the boundary of $ I $. 
Let 
\begin{equation}\label{eq:dr_I}
 D_r(I) := \Big \{ (s,t) \in I : 
\; d(s, \partial I ) \geq r , \;t \in [0,1] \Big \}. 
\end{equation}

Notice that in the case $ I= [a,b]$ we have that
\[D_r ( [a,b]) = \begin{cases} 
                  [a+r, b-r] \times [0,1] , \;\; r \leq (b-a) /2 \\
                  \emptyset , \;\; r > ( b-a) / 2
                \end{cases} 
\]
for $ I = [a, +\infty) $, we have that $D_r(I) = [a+r, +\infty ) \times [0,1] $. 
Fix a Riemannian metric $ g $ on $ M$ and denote by 
$ d $ be the distance induced by $ g $.  \\

\begin{proposition}\label{pr:exp-dec}
Assume ~(H) and ~(H1).
There exists $ \mu> 0 $ such that for all 
$ 0 < r_0  $  there exist $ \delta> 0 $ and 
$ c_i, \;i=0,1,2 $ with the following properties:
Assume that  $ u : I \times [0,1] \rightarrow N $ 
is a solution of \eqref{eq:jhol} with 
$$ 
E(u)< \delta ,
$$ 
then $ u $ satisfies the following:
\begin{align*}
   & i) \;\; E(u|_{D_r(I) } ) \leq c_0 e^{-2\mu r} E(u) ,  \\
  & ii) \;\; \abs{\partial_s u}_g \leq  c_1 e^{-\mu r } \sqrt{E(u)},
  \text { for all }\; (s,t) \in D_{r+ r_0} (I),   \\
   & iii) \;\;  \sup\limits_{(s_1, t_1), ( s_2, t_2 ) \in D_{r+ r_0}(I) } 
   d(u(s_1, t_1),u(s_2, t_2 )) < c_2  e^{-\mu r } \sqrt{E(u)}
\end{align*}
for all $ r\geq r_0 $.
\end{proposition}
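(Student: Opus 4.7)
The overall strategy is to reduce to local analysis in a Darboux chart adapted to the clean intersection (Lemma \ref{lem:dar-clean}), establish a second-order differential inequality for the energy density, and solve it via the maximum principle. The pointwise and diameter bounds then follow from standard bootstrapping.

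First I would localize: for $\delta$ sufficiently small depending on $r_0$, the image $u(D_{r_0}(I))$ lies in a single small Darboux chart around a point of $\Lambda$. This follows from the monotonicity lemma: if some $(s,t) \in D_{r_0}(I)$ satisfied $d(u(s,t), \Lambda) \geq r$ with $r \leq r_0$, Theorem \ref{main_thm} applied to a small ball around $u(s,t)$ would force $E(u) \geq c_0 r^2$, contradicting smallness of $E(u)$ for $r$ bounded below. After this reduction, one may work in $\R^{2n}$ with $L_0 = \R^n \times \{0\}$, $L_1 = \R^d \times \{0\} \times \R^{n-d}$, and a primitive $\lambda$ of $\omega$ that vanishes on both Lagrangians.

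The heart of the argument is the following second-order estimate for the energy density $\epsilon(s) := \tfrac{1}{2}\int_0^1 |\partial_s u(s,t)|^2\,dt$:
$$\epsilon''(s) \geq 4\mu^2 \epsilon(s) \qquad \text{on } [a + r_0, b - r_0],$$
with $\mu > 0$ depending only on $J$ and the clean intersection geometry. I would derive it by combining four ingredients: Stokes' theorem, giving $\int_{s_1}^{s_2} 2\epsilon\,d\sigma = a(u(s_1,\cdot)) - a(u(s_2,\cdot))$; the isoperimetric inequality (Lemma \ref{lem:mon_lem4}) bounding $|a(u(s,\cdot))| \leq c\,\ell(u(s,\cdot))^2$; Cauchy--Schwarz together with the Floer equation and tameness of $J_t$, giving $\ell(u(s,\cdot))^2 \leq C \int_0^1 |\partial_s u|^2\,dt = 2C\epsilon(s)$; and a Bochner/Weitzenb\"ock identity showing that $|\partial_s u|^2$ is weakly subharmonic up to lower-order corrections.

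Once this differential inequality is in hand, the remaining steps are routine. The maximum principle for $-\partial_s^2 + 4\mu^2$ gives $\epsilon(s) \leq C e^{-2\mu r} E(u)$ for $s \in D_{r+r_0}(I)$, and integrating yields (i). The pointwise bound (ii) follows from the mean value inequality for the almost-subharmonic function $|\partial_s u|^2$ on balls of radius $r_0/2$, centered in $D_{r+r_0}(I) \subset D_{r + r_0/2}(I)$. Finally, (iii) follows by integrating $|du| \leq C|\partial_s u|$ along a path connecting any two points of $D_{r+r_0}(I)$ and invoking (ii): the integrand decays like $e^{-\mu d(\sigma, \partial I)}\sqrt{E(u)}$ in the $s$-variable, so the total distance is bounded by $c_2 e^{-\mu r}\sqrt{E(u)}$ independently of the length of the path.

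The main obstacle is proving the second-order inequality in this setting. Tameness (as opposed to compatibility) of $J_t$ prevents the metric $g = \tfrac{1}{2}(\omega(\cdot, J_t\cdot) + \omega(J_t\cdot,\cdot))$ from being K\"ahler, so the Bochner identity acquires extra zeroth-order terms involving $\nabla J_t$ that must be absorbed. Clean intersection (rather than transverse) means the asymptotic operator on $[0,1]$-paths has a nontrivial kernel, spanned by constant paths with values in $T\Lambda$; the exponent $\mu$ must therefore be taken as the smallest \emph{positive} eigenvalue, and one must verify that the kernel direction does not obstruct the estimate, essentially because it corresponds to $s$-independent motion within $\Lambda$ and thus contributes at subleading order to $\partial_s u$. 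Both effects are controllable but require careful local computation in the Darboux chart of Lemma \ref{lem:dar-clean}.
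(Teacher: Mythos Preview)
Your overall architecture---localize, prove a differential inequality for the energy, then bootstrap via a mean value inequality---matches the paper. But the core differential inequality you aim for is not the one the paper proves, and your derivation sketch does not actually produce the one you state.

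The paper works with $e(r):=E(u|_{D_r(I)})$ as a function of the truncation parameter $r$ and proves the \emph{first-order} inequality $e(r)\le -c'\,\dot e(r)$. This comes directly from three of your four ingredients: Stokes gives $e(r)=a(u(b-r,\cdot))-a(u(a+r,\cdot))$, the isoperimetric inequality (Lemma~\ref{lem:mon_lem4}) bounds each action by the squared length, and the Floer equation bounds length by $\int_0^1|\p_s u|^2$, which is exactly $-\dot e(r)$. Solving $e\le -c'\dot e$ yields $e(r)\le c_0 e^{-2\mu r}E(u)$. No Darboux-chart reduction is carried out in the proof itself; the isoperimetric inequality already encapsulates that. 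The Bochner-type estimate (your fourth ingredient) is used separately, as Lemma~\ref{lem:mon_lem6}, to feed Wehrheim's mean value inequality (Proposition~\ref{pr:meanv}) and obtain~(ii); it plays no role in the energy-decay step. Parts~(ii) and~(iii) are then obtained essentially as you describe.

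By contrast you claim the \emph{second-order} inequality $\epsilon''(s)\ge 4\mu^2\epsilon(s)$ for $\epsilon(s)=\tfrac12\int_0^1|\p_s u|^2$. The ingredients you list do not yield this: Stokes plus isoperimetric plus Cauchy--Schwarz give precisely the first-order relation above, not a convexity bound on $\epsilon$. The genuine route to $\epsilon''\ge c\,\epsilon$ (as in \cite{RS1,RS3}) goes through $\epsilon''(s)\ge\|\nabla_s\p_s u\|^2-\text{(lower order)}$ together with a spectral gap $\|A\xi\|\ge\mu\|\xi\|$ for the asymptotic operator $A=J_t\nabla_t$ on paths with Lagrangian boundary conditions. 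In the clean-intersection case $A$ has nontrivial kernel $T\Lambda$, and your remark that this ``corresponds to $s$-independent motion'' is not enough: one must actually project off the kernel and control the projection, which is additional work you have not outlined. The paper's first-order route sidesteps this entirely, because the isoperimetric inequality for curves with endpoints on cleanly intersecting Lagrangians (Lemma~\ref{lem:mon_lem4}) has already absorbed the clean-intersection geometry.
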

\begin{proof}
 See subsection \ref{subsec:exp}. 
\end{proof}

Direct corollaries of the previous proposition are the following:
\begin{corollary}\label{ch1_cor0.6}
Assume ~(H) and ~(H1). 
 There exists a positive constant 
 $ \mu $ such that the following holds. 
 Assume that $ u : \R^{\pm} \times [0,1] \rightarrow N $ 
 is a solution of \eqref{eq:jhol} with finite energy, 
 $E(u) < +\infty$. Then
\begin{itemize}
 \item[i)] $u $ converges toward some point 
 $p\in \Lambda $, i.e. the limit
 $$ 
 \lim\limits_{s\rightarrow \infty } u(s,t) 
 = p\in \Lambda = \; L_0\cap L_1 .
 $$
\item[ii)] The convergence is exponential, 
i.e. there exist positive constants $ r_0, \; d_0 $ and 
$ d_1 $ that depend on $ E(u) $ such that
\begin{align}
\abs{ \partial_s u (s,t)} 
\leq d_0 e^{-\mu \abs{s} }  \notag \\
 d( u(s,t), p) \leq d_1 e^{-\mu \abs{s}}  
\end{align}
for all $ \abs{s} \geq r_0 $ and all $ t \in [0,1] $. 
\end{itemize}
\end{corollary}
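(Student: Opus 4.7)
The plan is to deduce the corollary directly from Proposition~\ref{pr:exp-dec} by applying it to a sufficiently far-out half-strip where the remaining energy is below the threshold $\delta$. Without loss of generality I treat the case $I=[0,\infty)$; the case $\R^-\times[0,1]$ is symmetric. Let $\mu$ and $r_0$ (say $r_0=1$) be as in Proposition~\ref{pr:exp-dec}, and let $\delta$ be the associated threshold. Since $E(u)<\infty$, I can choose $s_0\ge0$ so large that
\begin{equation*}
E\bigl(u|_{[s_0,\infty)\times[0,1]}\bigr)<\delta.
\end{equation*}
Then the shifted map $\tu(s,t):=u(s+s_0,t)$ on $[0,\infty)\times[0,1]$ satisfies the hypotheses of Proposition~\ref{pr:exp-dec}, and I apply parts~(ii) and~(iii) to $\tu$.

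Part~(ii) for $\tu$ immediately yields
\begin{equation*}
\abs{\p_s u(s,t)}_g=\abs{\p_s\tu(s-s_0,t)}_g
\le c_1\, e^{-\mu(s-s_0-r_0)}\sqrt{E(u)}
=d_0\,e^{-\mu s}
\end{equation*}
for $s\ge s_0+2r_0$, with $d_0:=c_1\, e^{\mu(s_0+r_0)}\sqrt{E(u)}$. This is the first asserted exponential bound, with threshold $s\ge r_0':=s_0+2r_0$. For the convergence statement I invoke part~(iii): for every pair of points $(s_1,t_1),(s_2,t_2)\in D_{r+r_0}([0,\infty))$ one has $d(\tu(s_1,t_1),\tu(s_2,t_2))<c_2\, e^{-\mu r}\sqrt{E(u)}$. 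Fixing $t_1=t_2=t$ and letting $s_1,s_2\to\infty$ shows that $\{u(s,t)\}_{s}$ is Cauchy in $N$ (compact, hence complete); fixing $s_1=s_2=s$ and varying $t_1,t_2$ shows that the Cauchy limit is independent of $t$. Call this common limit $p$.

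The limit $p$ lies in $\Lambda$: since $u(s,0)\in L_0$ for all $s$ and $L_0$ is closed, $p=\lim_{s\to\infty}u(s,0)\in L_0$, and similarly $p\in L_1$, so $p\in L_0\cap L_1=\Lambda$. Finally, letting $s_2\to\infty$ in part~(iii) (with $s_1=s$, $t_1=t_2=t$) gives
\begin{equation*}
d(u(s,t),p)\le c_2\, e^{-\mu(s-s_0-r_0)}\sqrt{E(u)}=d_1\,e^{-\mu s}
\end{equation*}
for $s\ge s_0+2r_0$, with $d_1:=c_2\,e^{\mu(s_0+r_0)}\sqrt{E(u)}$, which is the remaining exponential bound.

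There is no real obstacle here: the entire content of the corollary is packaged into Proposition~\ref{pr:exp-dec}, and the only nontrivial observation is the one allowing passage from \emph{oscillation} estimates on finite sub-strips to the \emph{existence of a $t$-independent limit}—this is secured by part~(iii), which controls oscillation simultaneously in $s$ and $t$. The only bookkeeping issue is to absorb the shift $s_0$ (which depends on $E(u)$) into the constants $d_0,d_1,r_0$ appearing in the corollary, which is purely cosmetic.
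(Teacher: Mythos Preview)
Your proof is correct and follows exactly the approach the paper intends: the paper presents this corollary as a direct consequence of Proposition~\ref{pr:exp-dec} without spelling out any details, and you have supplied precisely those details---shifting by $s_0$ so that the tail energy drops below the threshold $\delta$, then reading off the exponential bounds from parts~(ii) and~(iii), and identifying the limit point in $\Lambda$ via closedness of $L_0$ and $L_1$. One cosmetic remark: in the displayed bounds you write $\sqrt{E(u)}$ where strictly it is $\sqrt{E(\tu)}$, but since $E(\tu)\le E(u)$ this only strengthens the inequality.
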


\begin{corollary}\label{cor:exp_dec}
 For every $ \delta > 0 $ and for every $ r_1 > 0 $ 
there exist $ \hbar > 0 $ such 
that the following holds for any holomorphic curve
 $ u : I \times [0,1] \rightarrow N \subset M $. 
 If $ E(u) < \hbar $ then there exists
 $ x_0 \in \Lambda $ such that 
 \begin{align}
  d(u(s,t), x_0 ) < \delta, \;\; ( s,t) \in D_{r_1} ( I )
 \end{align}

\end{corollary}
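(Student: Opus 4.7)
The plan is to combine the exponential-decay estimates of Proposition~\ref{pr:exp-dec} with the short-curve confinement of Lemma~\ref{lem:mon_lem3}. The intuition is that small energy forces $\partial_s u$ to be exponentially small on the interior region $D_{r_1}(I)$; via the Floer equation this in turn makes every $t$-slice there a very short curve with endpoints on $L_0$ and $L_1$, hence trapped near a single intersection point by Lemma~\ref{lem:mon_lem3}; and item (iii) of the proposition then guarantees that the same intersection point works for the whole region.

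Concretely, first I would apply Lemma~\ref{lem:mon_lem3} with $\delta/2$ in place of $\delta$ to produce a length threshold $\rho>0$ below which any path from $L_0$ to $L_1$ is contained in some ball $B_{\delta/2}(x_0)$ with $x_0\in\Lambda$. Then I would invoke Proposition~\ref{pr:exp-dec} with $r_0:=r_1/2$, yielding constants $\mu>0$, $\delta_0>0$, $c_1, c_2>0$ such that items (ii) and (iii) of the proposition hold on $D_{r+r_0}(I)$ for every $r\geq r_0$; in particular, taking $r=r_1/2$, these estimates are valid on all of $D_{r_1}(I)$.

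The core step is the slice argument. Assume $E(u)<\hbar\leq\delta_0$ and that $D_{r_1}(I)$ is nonempty (otherwise the claim is vacuous); pick any $s^*$ with $d(s^*,\partial I)\geq r_1$. The equation~\eqref{eq:jhol} gives $\partial_t u = J_t(u)\partial_s u$, and since $J$ is uniformly bounded on the compact set $N$ there is a constant $C_J$ with $|\partial_t u|_g\leq C_J|\partial_s u|_g$. Combining this with item (ii) and integrating in $t$, the slice $\gamma_{s^*}(t):=u(s^*,t)$ satisfies
\[
\ell(\gamma_{s^*})\leq C_J c_1 e^{-\mu r_1/2}\sqrt{E(u)}.
\]
Choosing $\hbar$ small enough that $C_J c_1 e^{-\mu r_1/2}\sqrt{\hbar}<\rho$, Lemma~\ref{lem:mon_lem3} supplies $x_0\in\Lambda$ with $\gamma_{s^*}\subset B_{\delta/2}(x_0)$.

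Finally, item (iii) of Proposition~\ref{pr:exp-dec} controls the variation of $u$ on the entire region: for any $(s,t)\in D_{r_1}(I)$,
\[
d(u(s,t), u(s^*,0)) < c_2 e^{-\mu r_1/2}\sqrt{E(u)}.
\]
Shrinking $\hbar$ once more so that $c_2 e^{-\mu r_1/2}\sqrt{\hbar}<\delta/2$, the triangle inequality gives $d(u(s,t),x_0)\leq d(u(s,t),u(s^*,0))+d(u(s^*,0),x_0)<\delta/2+\delta/2=\delta$, which finishes the argument. I do not foresee any real obstacle: all the analytic content is absorbed in Proposition~\ref{pr:exp-dec} and Lemma~\ref{lem:mon_lem3}, and the proof amounts to bookkeeping the smallness conditions on $\hbar$ so they are compatible and below $\delta_0$.
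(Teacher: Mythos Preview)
Your proof is correct and follows essentially the same strategy as the paper: invoke Proposition~\ref{pr:exp-dec} to make the image of $u$ on $D_{r_1}(I)$ have small diameter, use Lemma~\ref{lem:mon_lem3} on a single slice to locate a point $x_0\in\Lambda$, and finish with the triangle inequality. The only cosmetic difference is that you bound the slice length via item~(ii) and the relation $\partial_t u = J_t(u)\partial_s u$, whereas the paper skips this and simply reads off $d(u(s,0),u(s,1))<\rho$ directly from item~(iii); both routes feed the same input into Lemma~\ref{lem:mon_lem3}.
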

\begin{proof}
 From proposition \ref{pr:exp-dec} we have that 
 $$ 
d  ( u( s_1, t_1 ), u(s_2,t_2)) < c e^{ - \mu r_1} \sqrt{ E(u)}
$$
 for all $ (s_i,t_i) \in D_{r_1} (I), \; i=0,1 $.
 Taking the energy sufficiently small 
 the right hand side of the previous inequality
 can be made arbitrary small.
 Suppose that $  c e^{ - \mu r_1} \sqrt{ E(u)} < \rho $, where 
 $ \rho > 0 $ is taken so small that the inequality
 $ d (u(s,0),u(s,1)) < \rho , \;\; (s,0) \in D_{r_1} (I)$ 
 implies that there exists $ x_0 \in \Lambda $ such that 
 $ d( x_0, u(s,0)) < \delta/2 $.
 Such $ \rho $ exists from lemma \ref{lem:mon_lem3}. 
 We may assume w.l.o.g. that $ \rho < \delta / 2 $. 
   Now we have 
 $$
 d( u(s_1,t_1), x_0 ) < 
d( u(s,0), x_0) + d ( u(s,0),u(s_1,t_1)) < \delta / 2 + \rho < \delta .
 $$
 and the previous inequality holds for all $ ( s_1,t_1) \in D_{r_1} (I) $. 
\end{proof}

 \begin{proposition}\label{prop:mon3}
 There exists a positive constant
 $ \mu $ such that the following holds. 
 Assume that
 $ u :I \times [0,1] \rightarrow M $ 
 is a solution of \eqref{eq:jhol}
 with finite energy $ E(u) < +\infty $.
 Then there exist constants $ c_k, \mu$ and $ r_k$, 
 which depend on $ E(u) $ such that
 \begin{equation}
 \|\partial_s u \|_{C^k ( D_r ( I))} \leq c_k e^{-\mu r } 
 \end{equation}
 for all $ r \geq r_k $.
 \end{proposition}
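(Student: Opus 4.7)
The plan is to bootstrap from the $L^2$ energy decay statement already obtained in Proposition \ref{pr:exp-dec}(i) and the pointwise decay in Proposition \ref{pr:exp-dec}(ii) to $C^k$ decay, using the fact that $\xi := \partial_s u$ satisfies a linear elliptic equation with totally real boundary conditions, together with interior/boundary elliptic regularity estimates on unit-sized strips.

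First, I would localize. Since $E(u) < +\infty$ (in particular, $E(u|_{D_r(I)})\to 0$ as $r\to\infty$), Corollary \ref{cor:exp_dec} provides, for $r$ larger than some $r_k$, a point $x_0 \in \Lambda$ such that $u(D_r(I))\subset B_\delta(x_0)$ for a Darboux radius $\delta$. In the corresponding Darboux chart of Lemma \ref{lem:dar-clean}, the Lagrangians become the linear subspaces $L_0=\R^n\times\{0\}$ and $L_1=\R^d\times\{0\}\times\R^{n-d}$, and $u$ becomes a map into $\R^{2n}$ confined to a fixed compact set. Consequently, all derivatives of $J_t(u)$ and $X_{H_t}(u)$ encountered below have uniformly bounded $C^k$ norms, independent of the base point in $D_r(I)$. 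Differentiating the Floer equation~\eqref{eq:jhol} in $s$, the section $\xi=\partial_s u$ satisfies a linear equation of the form
$$
\partial_s \xi + J_t(u)\,\partial_t \xi + A(s,t)\,\xi = 0,
$$
with linearized boundary conditions $\xi(s,0)\in T_{u(s,0)}L_0$ and $\xi(s,1)\in T_{u(s,1)}L_1$, which in the chart are the constant Lagrangian subspaces above.

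Next, for each $(s_0,t_0)$ with $d(s_0,\partial I)\ge r+1$, I would apply standard interior/boundary elliptic regularity for the Cauchy--Riemann operator with totally real boundary conditions on the unit strip $R_1(s_0)=[s_0-1,s_0+1]\times[0,1]$, yielding, for each $k\ge 0$, a uniform estimate
$$
\|\xi\|_{C^k([s_0-\tfrac12,s_0+\tfrac12]\times[0,1])} \le C_k \,\|\xi\|_{L^2(R_1(s_0))}.
$$
By Proposition \ref{pr:exp-dec}(i), $\|\xi\|_{L^2(R_1(s_0))}^2 \le E(u|_{D_{r}(I)})\le c_0\,e^{-2\mu r}E(u)$, so taking the supremum over $s_0$ gives $\|\partial_s u\|_{C^k(D_{r+2}(I))}\le c_k'\,e^{-\mu r}\sqrt{E(u)}$, which after relabeling $r$ is the desired inequality. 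For $k\ge 1$ one iterates: apply the elliptic estimate to the equation differentiated $k$ times, the right-hand side lying in $C^{k-1}$ by the previous step of the induction.

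The main obstacle is securing uniform constants $C_k$ that do not deteriorate with the base point $(s_0,t_0)$; this is precisely where the preliminary localization is crucial, since the linearized operator has coefficients expressed in terms of $u$ and its derivatives, and confinement to a fixed Darboux chart together with the already-established $C^0$ control on $\xi$ gives uniform bounds on the coefficients. The boundary elliptic regularity itself is standard once the boundary conditions are put in the linear Lagrangian form supplied by the Darboux chart of Lemma \ref{lem:dar-clean}, so no further delicate analysis is needed beyond the chart construction and the induction on $k$.
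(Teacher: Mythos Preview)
The paper does not supply a proof of this proposition; it is stated without argument immediately after Corollary~\ref{cor:exp_dec}, presumably because it follows by standard elliptic bootstrapping from Proposition~\ref{pr:exp-dec}. Your outline is exactly this standard argument and is correct in substance.

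Two small corrections. First, equation~\eqref{eq:jhol} is the unperturbed equation $\partial_s u + J_t(u)\partial_t u = 0$, so there is no $X_{H_t}$ term. Second, your sentence ``all derivatives of $J_t(u)$ \ldots have uniformly bounded $C^k$ norms'' is premature as written, since $\partial^k(J_t\circ u)$ involves derivatives of $u$ up to order $k$, which is what you are trying to control. But you correctly resolve this by induction in the next paragraph: at stage $k$ the induction hypothesis gives $\xi=\partial_s u$ in $C^{k-1}$, hence also $\partial_t u = J_t(u)\xi$ in $C^{k-1}$, so $u\in C^k$ on $D_{r_{k-1}}(I)$ with uniform bounds; this controls both the principal coefficient $J_t(u)$ and the zeroth-order coefficient $A(s,t)=(dJ_t(u)\,\cdot\,)\partial_t u$ in $C^{k-1}$, and the unit-strip elliptic estimate with totally real boundary conditions then promotes the $L^2$ bound on $\xi$ (from Proposition~\ref{pr:exp-dec}(i)) to a $C^k$ bound on a slightly smaller strip, with a constant depending only on these coefficient bounds. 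So the bootstrap closes.
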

 The main ingredients of the proof of Proposition 
 \ref{pr:exp-dec} are the isoperimetric inequality and
 the mean value inequality. 
 We have proved the isoperimetric inequality in this setup 
 and it is left to prove the mean value inequality what we do in 
 the next subsection. 
 \subsection{The mean value inequality}
 The mean value inequality claims that a value of a certain function at a point 
 can be estimated from above by its mean 
 value. 
 $$ 
f(p) \leq \frac{c}{Vol(B_r(p))} \int\limits_{B_r(p)} f(x) dx .
$$
For example subharmonic functions satisfy the mean value inequality. 
We shall construct a function which is not subharmonic, but whose 
Laplacian can be estimated from below by a polynomial of degree 2. 
Such a function will satisfy a generalized mean value inequality. 
Before defining the desired function, we construct an appropriate family of metrics 
adapted to the Lagrangians. 
\begin{lemma}\label{mon_lem7}
There exists a smooth family of metrics 
$ g_t $ such that the following holds
\begin{itemize}
 \item [1)] $ L_i $ are totally geodesic with respect to $ g_i $ for $ i=0,1 $. 
 \item[2)] $g_t $ is compatible with $ J_t $ for all $ t \in [0,1] $.  
 \item[3)] $ J_i(p) T_pL_i $ is orthogonal complement of $ T_p L_i $ for 
 all $ p \in L_i $ and for $ i =0,1 $. 
\end{itemize}
\end{lemma}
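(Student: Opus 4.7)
The plan is to produce the family $g_t$ in two independent steps: first construct the endpoint metrics $g_0$ and $g_1$ individually satisfying conditions (1)--(3) at their respective endpoints, and then interpolate them through metrics that are $J_t$-invariant for every $t$.

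For the construction of $g_i$ (with $i \in \{0, 1\}$), I would proceed as follows. Along $L_i$, the tameness of $J_i$ together with the Lagrangian condition yields the splitting $TM|_{L_i} = TL_i \oplus J_i TL_i$. Pick any Riemannian metric $h_i$ on $L_i$ and prescribe the fibrewise inner product on $TM|_{L_i}$ by declaring this splitting to be orthogonal (forcing (3)), setting $\left.g_i\right|_{TL_i} = h_i$, and imposing $g_i(J_iv, J_iw) = g_i(v,w)$ (forcing $J_i$-invariance). To extend to a tubular neighborhood of $L_i$ so that (1) also holds, use Fermi-type coordinates based on the exponential map of the canonical metric $g_{J_i}(v,w) = \tfrac{1}{2}(\omega(v, J_iw) + \omega(w, J_iv))$ in the normal direction $J_i TL_i$, and propagate the fibrewise data by parallel transport along normal geodesics. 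The resulting metric has tangential block independent of the normal coordinate to first order, which kills the second fundamental form on $L_i$. A partition of unity then glues this near-$L_i$ metric to the canonical metric $g_{J_i}$ on the complement of a thinner tubular neighborhood; since the cutoff equals one near $L_i$, conditions (1) and (3) are unaffected, and $J_i$-invariance is preserved because both metrics being combined are $J_i$-invariant.

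For the interpolation, define
$$
g_t \;:=\; (1-t)\cdot\tfrac{1}{2}\bigl(g_0 + J_t^* g_0\bigr) \;+\; t\cdot\tfrac{1}{2}\bigl(g_1 + J_t^* g_1\bigr),
$$
where $(J_t^* g)(v, w) := g(J_tv, J_tw)$. The symmetrization $\tfrac{1}{2}(g + J_t^* g)$ of a positive-definite symmetric bilinear form is again positive-definite and $J_t$-invariant, so each $g_t$ is a $J_t$-invariant Riemannian metric, giving (2) for all $t \in [0, 1]$. At $t = 0$, the $J_0$-invariance of $g_0$ yields $\tfrac{1}{2}(g_0 + J_0^* g_0) = g_0$, so $g_0$ is recovered and conditions (1) and (3) persist; the case $t = 1$ is symmetric.

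The main obstacle is the simultaneous realization of (1), (2), and (3) within the individual construction: being totally geodesic is a first-jet condition on $g_i$, whereas $J_i$-invariance and the orthogonal-complement condition are pointwise. The key is that both types of conditions are determined by the $1$-jet of $g_i$ along $L_i$; choosing the fibrewise data on $L_i$ to be $J_i$-invariant from the start, and propagating by a $J_i$-invariant connection, ensures that these $1$-jet data are simultaneously $J_i$-invariant and have vanishing second fundamental form, after which the rest of the argument reduces to a standard globalization.
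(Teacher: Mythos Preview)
Your proposal is correct and follows essentially the same approach as the paper: construct $J_i$-compatible metrics $g_i$ making $L_i$ totally geodesic with $J_iTL_i$ as orthogonal complement (the paper cites Frauenfelder~\cite{Fr} for this, you sketch it directly), then interpolate and $J_t$-symmetrize. Your interpolation formula is in fact identical to the paper's $g_t = \tfrac{1}{2}(\tilde g_t + J_t^T \tilde g_t J_t)$ with $\tilde g_t = (1-t)g_0 + t g_1$, once one expands by linearity.
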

\begin{proof}
  In \cite{Fr} is constructed a metric $ g_0 $ such 
that 
\begin{itemize}
 \item [i)] $ L_0 $ is totally geodesic with respect to $ g_0 $. 
 \item[ii)] $g_0 $ is compatible with $ J_0 $ 
 \item[iii)] $ J_0(p) T_pL_0 $ is orthogonal complement of $ T_p L_0 $. 
\end{itemize}
Analogously one can construct 
$ g_1 $ such that $ g_1 $ and $ L_1 $ satisfy the same 
conditions as $ g_0 $ and $ L_0 $. Then the metric $ \tilde{g}_t = ( 1-t) g_0 + t g_1 $ 
satisfies $1 ) $ and $ 3) $, but not $ 2 ) $. Finally taking  
$ g_t = \frac{1}{2} ( \tilde{g}_t  + J_t^T \tilde{g}_t J_t) $ 
we obtain a family of metrics $g_t $ that satisfy also $ 2) $. 
\end{proof}
Let $ g_t $ be a smooth family of metrics as in Lemma \ref{mon_lem7}
and let $ u : I \times [0,1] \to M $ be a $J_t $ holomorphic curve, 
i.e. a solution of \eqref{eq:jhol}. 
We define a smooth function $ e : I \times [0,1] \rightarrow \R $ as follows
\begin{align}\label{eq:func_e}
 e : I \times [0,1] \rightarrow \R, \;\; e(s,t) = g_t( \partial_s u(s,t), \partial_s u (s,t) ) = \abs{\partial_s u}^2_{t}.
\end{align}
Denote with 
$$ 
H_r(s,t) = B_r(s,t) \cap ( I \times [0,1] ), 
$$
where $ B_r(s,t) $ denotes a ball of radius $ r $ centered at the 
point $ (s,t)$. 
\begin{proposition}\label{pr:meanv}
 Let $ e $ be as in \eqref{eq:func_e}. 
 There exist positive constants $ \tilde{\mu} $ 
 and $ C $ such that for all $ r < \frac{1}{2} $ if 
 $$ 
 \int\limits_{H_r(s,t)} e(\xi, \tau ) d\xi d\tau < \tilde{\mu} ,
 $$
then  
\begin{equation}\label{eq:pr_meanv}
 e(s,t) = |\partial_s u|_t
 \leq C ( 1 + \frac{1}{r^2})\int\limits_{H_r(s,t)} e(\xi,\tau) d\xi d\tau,
\end{equation}
for all 
$ (s,t) \in D_r(I) 
= \{ (s,t) \in I \times [0,1] : \;  d(s, \partial I ) \geq r, \; t \in [0,1] \}$. 
\end{proposition}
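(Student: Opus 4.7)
The plan is to follow the standard template for mean value inequalities of holomorphic curves (Bochner inequality plus a capping/reflection trick at the Lagrangian boundary), exploiting crucially the fact that the family of metrics $g_t$ constructed in Lemma \ref{mon_lem7} is tailored to the geometry of $L_0$ and $L_1$. The key analytic statement to prove is a differential inequality of the form
\begin{equation*}
-\Delta e \;\le\; c_1\,e^{2} + c_2\,e \qquad\text{on } I\times[0,1],
\end{equation*}
combined with Neumann-type conditions $\p_t e(s,0)\ge -c_3 e(s,0)$ and $\p_t e(s,1)\le c_3 e(s,1)$ at the boundary. Once these are established, a reflection (or equivalently Moser iteration/Heinz trick) reduces the statement to the standard scalar mean value inequality $f(z)\le C r^{-2}\int_{B_r(z)} f$ for nonnegative functions satisfying $-\Delta f\le c f^2$ with small $L^1$ norm, which yields \eqref{eq:pr_meanv}.

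For the interior inequality I would compute $\Delta e$ directly from \eqref{eq:jhol}. Writing $v:=\p_s u$, the linearized equation gives $\p_s v+J_t(u)\p_t v+(\nabla_v J_t)\p_t u=0$, and a Bochner-type computation using the compatibility $g_t=\om(\,\cdot\,,J_t\,\cdot\,)/2+\om(J_t\,\cdot\,,\,\cdot\,)/2$ together with the $\p_s u+J_t\p_t u=0$ relation produces $\Delta e$ as a sum of $|\nabla v|^2$ (good sign), curvature terms of $M$ bounded by $C\,|v|^4$, torsion terms from $\nabla J_t$ and $\p_t J_t$ bounded by $C\,|v|^2(1+|v|^2)$, and terms coming from $\p_t g_t$ that are controlled by $C\,|v|^2$. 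After absorbing the $|\nabla v|^2$ term this yields the desired interior inequality.

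For the boundary behaviour I would use Lemma \ref{mon_lem7}: since $L_0$ is totally geodesic with respect to $g_0$ and $\p_s u(s,0)\in T_{u(s,0)}L_0$, the second fundamental form of $L_0$ does not contribute at $t=0$; a direct differentiation of $e(s,t)=g_t(\p_s u,\p_s u)$ in $t$ at $t=0$, combined with $\p_t u(s,0)=-J_0(u)\p_s u(s,0)\in J_0 TL_0$ (which is $g_0$-orthogonal to $TL_0$) and the holomorphic equation, leaves only lower-order terms from $\p_t g_t$. This gives the Neumann-type bound at $t=0$; the analogous argument with $g_1$ works at $t=1$. The reflection argument then extends $e$ to a slightly larger rectangle satisfying a comparable inequality, and Moser/Heinz iteration delivers the pointwise bound with the $(1+1/r^2)$ coefficient on the right-hand side (the $1$ absorbing a bounded linear $c_2 e$ contribution when one rescales so that $r\le 1/2$).

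The main obstacle I expect is the boundary analysis: unlike the transverse compatible case treated in \cite{RS1}, here one has to carry both a $t$-dependent family of metrics and the clean (rather than transverse) nature of $\Lambda$. In particular one must verify that the totally geodesic property at the endpoints $t=0,1$ suffices even though $g_t$ is only compatible for $t\in(0,1)$, and that the reflection construction produces an extension whose differential inequality has constants depending only on a compact neighborhood of $\Lambda$ inside $N$. The smallness threshold $\tilde\mu$ is then chosen so that the $L^1$ norm of $e$ over $H_r(s,t)$ is below the universal threshold needed to run the Heinz iteration.
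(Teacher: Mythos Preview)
Your proposal is correct and follows essentially the same route as the paper: establish the Bochner inequality $\Delta e\ge -C_1(e+e^2)$ and the Neumann bound $|\p_t e|_{t=0,1}\le C_2 e$ using the properties of the adapted metrics $g_t$ from Lemma~\ref{mon_lem7}, then feed these into a mean value inequality. The only difference is the last step: the paper invokes Wehrheim's black-box result (Theorem~\ref{thm:kath_meanv} from~\cite{We}), which is a Moser-type iteration directly on half-balls with a Neumann condition, whereas you propose a reflection followed by the interior Heinz argument. Both work; Wehrheim's theorem simply packages the boundary iteration once and for all, so no reflection is needed.

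One minor correction to your discussion of obstacles: by Lemma~\ref{mon_lem7} the metric $g_t$ is compatible with $J_t$ for \emph{all} $t\in[0,1]$, not just the interior, so there is no issue at the endpoints. Also, the clean-intersection hypothesis on $\Lambda$ plays no role in this proposition; it enters only in the isoperimetric inequality (Lemma~\ref{lem:mon_lem4}) and the Darboux charts. The boundary computation here uses only the totally geodesic and $J_iTL_i\perp TL_i$ properties at the individual Lagrangians, exactly as you outline.
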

\begin{proof}
In Lemma \ref{lem:mon_lem6} we prove that the function 
$e$ satisfies the following inequalities : 
$$\triangle e = \partial^2_s e + \partial^2_t e \geq -C_1( e + e^2), $$
and the normal derivative
$\displaystyle \left | \frac{\partial  e}{\partial t} \right |_{t=0,1} \leq C_2 \cdot e  ,$
for some positive constants $ C_1 $ and $ C_2 $. Thus the 
Claim follows from Theorem \ref{thm:kath_meanv} which was 
proved in \cite{We}. 
\end{proof}

Denote the intersection of an Euclidean ball 
with the half space by
$$ 
\mathbb{D}_r(x)= B_r(x) \cap \mathbb{H}^n \;\;,\; 
\mathbb{H}^n:= \{ (x_0,\bar{x}): x_0 \in [0,+\infty) ,
\; \bar{x}\in \mathbb{R}^{n-1}\}
$$
\begin{theorem}\label{thm:kath_meanv}\cite{We}
 For every $n\geq 2$ there exists a constant $ D $ and for all
$a,\;b$ there exist $\mu(a,b) >0 $ such that the following holds:
Consider ( partial) ball $\mathbb{D}_r(y)\subset \mathbb{H}^n$ for some
$r\geq 0$ and $y\in \mathbb{H}^n$. Suppose that $e\in C^2 ( \mathbb{D}_r(y),
[0,+\infty )) $ satisfies for some $A_0,\;A_1,\;B_0,\; B_1 \geq 0
$
\begin{equation}\label{eq:con_meanv}
\left\{
\begin{array}{ll}
\triangle e \geq -(A_0 +A_1 e + a e^{\frac{n+2}{n}})\;,\\
\frac{\partial}{\partial \nu}|_{\partial \mathbb{H}}\; e \leq B_0
 + B_1 e + b e^{\frac{n+1}{n}},
\end{array} \right.
\end{equation}
and 
$$ 
\displaystyle \int\limits_{\mathbb{D}_r(y)} e \leq \mu(a,b) 
$$
\noindent Then
$$ 
e(y)\leq D\Big ( A_0 + B_0 r + ( A_1^{\frac{n}{2}} + B_1^n + r^{-n})
\int_{\mathbb{D}_r(y)} e\Big ). 
$$
\end{theorem}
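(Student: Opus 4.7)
The plan is to adapt the classical Heinz--Sacks--Uhlenbeck rescaling argument to the partial-ball (half-space) setting. First I would reduce to the homogeneous case $A_0 = B_0 = 0$: let $h$ solve the linear problem $-\Delta h = A_0$ on $\mathbb{D}_r(y)$ with Neumann data $\partial_\nu h = B_0$ on $\partial \mathbb{H}$ and, say, zero Dirichlet data on the spherical part of the boundary. Standard linear elliptic theory gives $\|h\|_{L^\infty} \leq D'(A_0 + B_0 r)$, and then $\te := e - h$ satisfies the same type of hypotheses with $A_0, B_0$ dropped (at the cost of absorbing $h$ into slightly enlarged $A_1, B_1$ and a larger small-energy constant). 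It therefore suffices to prove the bound with $A_0 = B_0 = 0$.

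Next I would argue by contradiction: suppose there is a sequence of triples $(e_k, y_k, r_k)$ satisfying the hypotheses for which $e_k(y_k) > k\,(A_1^{n/2} + B_1^n + r_k^{-n}) \int_{\mathbb{D}_{r_k}(y_k)} e_k$. Consider the weighted function $\phi_k(x) := (r_k - |x - y_k|)^2 \, e_k(x)$, which vanishes on the spherical boundary and attains its maximum at some point $x_k$ (possibly on $\partial \mathbb{H}$). Set
\[
\lambda_k := \sqrt{e_k(x_k)}, \qquad \rho_k := \tfrac{1}{2}\bigl(r_k - |x_k - y_k|\bigr).
\]
A short computation using the maximality of $\phi_k$ and the failure of the estimate forces $\lambda_k \rho_k \to \infty$, and ensures that $e_k \leq 4 e_k(x_k)$ on the half-ball of radius $\rho_k$ around $x_k$.

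Now rescale: put $v_k(z) := \lambda_k^{-2}\, e_k\bigl(x_k + \lambda_k^{-1} z\bigr)$, defined on a domain $\Omega_k \subset \mathbb{H}^n$ whose inradius tends to infinity. By construction $v_k(0) = 1$, $v_k \leq 4$ on $B_{\lambda_k \rho_k}(0) \cap \Omega_k$, and the scale-invariant bounds
\[
\Delta v_k \geq -\bigl(\lambda_k^{-2} A_1 v_k + a\, v_k^{(n+2)/n}\bigr),\qquad
\partial_\nu v_k \leq \lambda_k^{-1} B_1 v_k + b\, v_k^{(n+1)/n}
\]
hold, while $\int v_k = \int e_k \leq \mu(a,b)$. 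Local elliptic $L^p$ estimates and Schauder theory allow us to pass to a $C^1_{\loc}$ limit $v_\infty \geq 0$ on all of $\mathbb{H}^n$ (or $\R^n$ if $x_k$ escapes from $\partial\mathbb{H}$) satisfying $\Delta v_\infty \geq -a\, v_\infty^{(n+2)/n}$, the analogous Neumann condition with coefficient $b$, and $v_\infty(0) = 1$, with total mass $\leq \mu(a,b)$.

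The final step, and the main obstacle, is to show that if $\mu(a,b)$ is chosen sufficiently small then $v_\infty \equiv 0$, contradicting $v_\infty(0)=1$. This is a Moser iteration: multiply the differential inequality by $v_\infty^{p-1}\eta^2$ for a cut-off $\eta$, integrate by parts, and use Sobolev embedding $W^{1,2} \hookrightarrow L^{2n/(n-2)}$. The nonlinear bulk term $a\, v_\infty^{(n+2)/n}$ is scaling-critical for this embedding, so it can be absorbed into the left-hand side precisely when $\|v_\infty\|_{L^1}$ is small enough; similarly the boundary term $b\, v_\infty^{(n+1)/n}$ is critical for the trace embedding $W^{1,2} \hookrightarrow L^{2(n-1)/(n-2)}(\partial\mathbb{H})$ and is absorbed for the same reason. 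Calibrating $\mu(a,b)$ to these two critical absorptions, and handling $\partial \mathbb{H}$ (either by reflection or by boundary-adapted cut-offs), are the delicate points; once this is done, iterating $p \to \infty$ yields $v_\infty \equiv 0$, completing the contradiction.
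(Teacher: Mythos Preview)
The paper does not prove this theorem: it is quoted verbatim from Wehrheim~\cite{We} and used as a black box in the proof of Proposition~\ref{pr:meanv}. So there is no ``paper's own proof'' to compare against; you are attempting to reconstruct the cited result.

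Your overall strategy (reduce to $A_0=B_0=0$, then control $e(y)$ via a smallness/absorption argument using the critical exponents) is the right shape, and the final Moser iteration is indeed the heart of the matter. But the blow-up/compactness detour in the middle has a genuine gap. You only assume a \emph{one-sided} differential inequality $\Delta v_k \ge -g_k$; this does not give you access to elliptic $L^p$ or Schauder estimates, which require control of $\Delta v_k$ as a function (or at least a two-sided bound). With only $v_k\le 4$ and $\Delta v_k\ge -C$ you cannot extract a $C^1_{\loc}$-convergent subsequence, so the assertion $v_\infty(0)=1$ is not justified.

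The fix is to drop the rescaling/compactness step entirely and run the Moser iteration directly on $e$ over the nested family of (partial) balls $\mathbb{D}_{\rho}(y)$, $0<\rho\le r$: multiply the inequality by $e^{p-1}\eta^2$, integrate by parts, pick up the boundary term via the Neumann inequality, and use the interior and trace Sobolev embeddings. The exponents $\tfrac{n+2}{n}$ and $\tfrac{n+1}{n}$ are exactly critical for these embeddings, so the nonlinear terms are absorbed precisely when $\int_{\mathbb{D}_r(y)} e$ is below a threshold depending on $a,b$; iterating $p\to\infty$ then yields the stated $L^\infty$ bound. This is essentially how~\cite{We} proceeds. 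Your Heinz-type point-selection and rescaling are not needed here and, as written, are where the argument breaks.
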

We prove that the function $ e $ as in \eqref{eq:func_e} 
satisfies the conditions \eqref{eq:con_meanv}. 
\begin{lemma}\label{lem:mon_lem6}
 Let $ e $ be as in \eqref{eq:func_e}. There exist positive 
 constants $ C_1 $ and $ C_2 $ such that 
\begin{itemize}
 \item[i)] $ \triangle e = \partial^2_s e + \partial^2_t e \geq -C_1( e + e^2).$
\item[ii)] $\displaystyle \left | \frac{\partial  e}{\partial t} \right |_{t=0,1} \leq C_2 \cdot e  .$
\end{itemize}

\end{lemma}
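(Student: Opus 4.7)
The plan is to establish (i) by a standard but careful Bochner-type computation adapted to the fact that the metric $g_t$ is $t$-dependent and $J_t$ need not be integrable, and to establish (ii) by combining the three structural properties of $g_t$ from Lemma~\ref{mon_lem7} at $t=0,1$.

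For (i), I would work with the Levi-Civita connection $\nabla^t$ of $g_t$ and exploit the identity $|\partial_su|_t=|\partial_tu|_t$ coming from compatibility of $g_t$ with $J_t$ together with the equation $\partial_su=-J_t(u)\partial_tu$. Differentiating $e = g_t(\partial_s u,\partial_s u)$ twice in $s$ and $t$, the product rule produces (a) terms of the shape $|\nabla^t_s\partial_s u|_t^2+|\nabla^t_t\partial_s u|_t^2\ge 0$, (b) curvature terms from commuting $\nabla^t_s\nabla^t_t=\nabla^t_t\nabla^t_s+R(\partial_su,\partial_tu)$ applied to $\partial_s u$, which after Cauchy--Schwarz are bounded by a constant times $|\partial_su|_t^2\,|\partial_tu|_t^2=e^2$ because the curvature of $g_t$ is bounded on the compact set $N$, and (c) lower-order terms coming from $\partial_tg_t$, $\partial_tJ_t$, $\nabla J_t$, which are all bounded on $N$ and produce a contribution $\le C\cdot e$. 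Combining these yields $\Delta e\ge -C_1(e+e^2)$, which is exactly the inequality in (i).

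The proof of (ii) is where the specific geometric setup becomes essential, and I would argue as follows at $t=0$ (the case $t=1$ is identical). Since $u(s,0)\in L_0$, differentiating along $s$ gives $\partial_s u(s,0)\in T_{u(s,0)}L_0$; the holomorphic equation then yields $\partial_t u(s,0)=-J_0(u)\partial_s u(s,0)$, and property (3) of $g_t$ forces this vector to lie in the orthogonal complement of $T_{u(s,0)}L_0$. In particular $g_0\bigl(\partial_tu,\partial_su\bigr)\equiv 0$ along $\{t=0\}$. Differentiating $e$ in $t$ at $t=0$,
\begin{equation*}
\partial_t e\big|_{t=0}=(\partial_tg_t)(\partial_su,\partial_su)\big|_{t=0}+2g_0\bigl(\nabla^0_t\partial_su,\partial_su\bigr)\big|_{t=0},
\end{equation*}
and I would use torsion-freeness to replace $\nabla^0_t\partial_su$ by $\nabla^0_s\partial_tu$, then differentiate the identity $g_0(\partial_tu,\partial_su)=0$ in $s$ to obtain
\begin{equation*}
g_0\bigl(\nabla^0_s\partial_tu,\partial_su\bigr)\big|_{t=0}=-g_0\bigl(\partial_tu,\nabla^0_s\partial_su\bigr)\big|_{t=0}.
\end{equation*}
Now $s\mapsto u(s,0)$ is a curve in $L_0$, so its acceleration $\nabla^0_s\partial_su|_{t=0}$ is tangent to $L_0$ because $L_0$ is totally geodesic for $g_0$ (property (1)), while $\partial_tu|_{t=0}$ is normal to $L_0$; hence the right-hand side vanishes. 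The only surviving contribution is $(\partial_tg_t)(\partial_su,\partial_su)|_{t=0}$, which on the compact set $N$ is bounded by $C_2\,e$, giving (ii).

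The main obstacle, and the conceptual heart of the lemma, is (ii): one needs the three conditions on $g_t$ to conspire simultaneously so that the a priori worst term $2g_0(\nabla^0_s\partial_tu,\partial_su)|_{t=0}$ vanishes identically rather than merely being $O(e)$. That is precisely why the family $g_t$ was constructed in Lemma~\ref{mon_lem7} to be compatible with $J_t$, to make $J_iT L_i$ the $g_i$-orthogonal complement of $TL_i$, and to make $L_i$ totally geodesic for $g_i$; any one of these properties alone is insufficient. The proof of (i), by contrast, is a routine but bookkeeping-heavy computation whose output only has to be \emph{some} bound of the form $-C_1(e+e^2)$ in order to feed into Theorem~\ref{thm:kath_meanv}.
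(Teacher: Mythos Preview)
Your proposal is correct and follows essentially the same approach as the paper. For~(i) the paper carries out exactly the Bochner-type expansion you describe, with one caveat: the ``lower-order'' contributions from $\nabla J_t$ (specifically terms arising from $\nabla_s\bigl((\nabla_\eta J_t)\xi\bigr)$ and the like) actually produce $e^2$ terms as well as $e$ terms, not just $e$ as you state, so your categorization~(c) is slightly too optimistic; this does not affect the final inequality since you are aiming for $-C_1(e+e^2)$ anyway. For~(ii) your argument is a mild reorganization of the paper's: the paper expands $g_0(\nabla_t\xi,\xi)=g_0\bigl(\nabla_s(J_0(u)\xi),\xi\bigr)=g_0(J_0\nabla_s\xi,\xi)+g_0\bigl((\nabla_sJ_0)\xi,\xi\bigr)$ and kills the two summands separately (the first by totally-geodesic plus orthogonality, the second by skew-adjointness of $\nabla_sJ$), whereas you differentiate the orthogonality relation $g_0(\partial_tu,\partial_su)=0$ directly and kill everything in one stroke. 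Your route is marginally slicker in that it never needs the skew-adjointness observation, but the geometric content---all three properties of $g_t$ from Lemma~\ref{mon_lem7} are needed---is identical.
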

\begin{proof}
 Let $\xi(s,t)=\partial_s u(s,t) $ and $ \eta(s,t)=
\partial_t u(s,t) $. Let $ \nabla^t $ 
be Levi-Civita connection of the metric $ g_t $.
In order to make the notation less cumbersome, 
we shall write further $ \nabla $ instead of $ \nabla^t $.
Because of the compatibility we have $
\abs{\xi}_{t}= \abs{\eta}_{t} $ and as the connection is
Levi- Civita we have $ \nabla_t \xi = \nabla_s \eta $. 
Next, we have the following
\begin{align*}
\partial_s e(s,t) = 2g_{t} ( \nabla_s \xi, \xi), \;\; \;\;\;
\partial_t e(s,t) = 2g_{t} ( \nabla_t \xi, \xi) + (\partial_tg_{t}) ( \xi,\xi)
\end{align*}
\noindent and also
\begin{align*}
 \partial^2_s e(s,t)&=2g_{t} ( \nabla_s \xi, \nabla_s \xi ) + 
 2g_{t} ( \nabla_s \nabla_s \xi, \xi )= a_1(s,t) + a_2(s,t) \\
\partial^2_t e(s,t)&= 2g_{t} ( \nabla_t \xi, \nabla_t \xi ) 
+ 2g_{t} ( \nabla_t \nabla_t \xi, \xi ) + 
(\partial^2_t g_{t} ) (\xi, \xi) 
+2 \partial_t g_{t} ( \nabla_t \xi, \xi)\\
& = \sum\limits_{i=1}^4 b_i(s,t)
\end{align*}
\noindent The functions $ a_i(s,t) $ and $ b_i(s,t) $ correspond
to the summands on the left side of the equalities respectively. 
Notice that there exist a constant $ \bar{c}_1 $ such that 
\begin{align*}
\abs{b_3}   \leq \bar{c}_1|\xi|^2_{t} , \;\;
\abs{b_4}  \leq \bar{c}_1 ( \epsilon^2 \abs{\nabla_t \xi}^2_{t} +
\frac{1}{\epsilon^2} \abs{\xi}^2_{t} )
\end{align*}
\noindent In order to estimate $ a_2 $ and $ b_2 $ notice that:
 \begin{align*}
 \nabla_s\nabla_s \xi + \nabla_t\nabla_t\xi 
 & = \nabla_s(\nabla_s \xi +\nabla_t
\eta)+ \nabla_t\nabla_s \eta - \nabla_s \nabla_t\eta \\&
= \nabla_s (\nabla_s \xi +\nabla_t\eta) - R(\xi, \eta)\eta 
\end{align*} 
And we have
\begin{align*}
 \nabla_s \xi + \nabla_t \eta &= \nabla_s( -J_{t}(u)
\eta) + \nabla_t(J_{t}(u) \xi ) \\
&= -\nabla_s(J_{t}(u)) \eta-J_{t}(u) \nabla_s \eta
+\nabla_t(J_{t}(u) ) \xi + J_{t}(u) \nabla_t \xi \\
&= \nabla_t(J_{t}(u) ) \xi -\nabla_s(J_{t}(u)) \eta \\
&= (\partial_t J_{t} )  \xi + (\nabla_{\eta} J_{t} ) \xi -(\nabla_{\xi} J_{t}) \eta  
\end{align*}
It follows
\begin{align*}
\nabla_s (\nabla_s \xi + \nabla_t\eta ) &= \nabla_s \Big ( ( \partial_t J_{t} )  \xi + 
(\nabla_{\eta} J_{t} ) \xi - (\nabla_{\xi} J_{t}) \eta  \Big ) \\ 
&= (\nabla_s \partial_t J_{t} ( u ) ) \xi + \partial_t J_{t} ( u ) \nabla_s \xi + 
\nabla_s ( \nabla_{\eta} J_{t} ( u ) ) \xi + (\nabla_{\eta} J_{t } ) \nabla_s \xi \\
&  - \nabla_s ( \nabla_{\xi} J_{t} ) \eta  - ( \nabla_{\xi} J_{t} ) \nabla_s \eta  \\ &= \sum\limits_{i=1}^ 6 s_i (s,t ) 
\end{align*}
\noindent Where the terms $s_i$ correspond
to the summands in the preceding row respectively.
We write $ S_i = \langle s_i, \xi \rangle_{s,t} $. 
There exists a constant $\bar{c} $, which depends 
on $g_t$ the almost complex structure $ J_t $ and its derivatives, 
such that:
\begin{align*}
  & \abs{S_1}  \leq \bar{c}  \Big (  \abs{\xi}^2_{t} + \abs{\xi}^4_{t} \Big ), \;\;\; \abs{S_2} \leq \bar{c} \Big (\frac{1}{\epsilon^2} \abs{\xi}^2_{t}+ 
\epsilon^2 \abs{\nabla_s\xi}^2_{t}\Big ),  \\  &
 \abs{S_3} \leq \bar{c} \Big (  \abs{\xi}^2_{t} + \abs{\xi}^4_{t} \Big ) + \bar{c} \Big ( \epsilon^2 \abs{\nabla_t \xi } ^2 + 
\frac{1}{\epsilon^2} \abs{\xi}^4 \Big ),\; \;\;\;\abs{S_4} \leq \bar{c}\Big ( \frac{1}{\epsilon^2 }\abs{\xi}^4_{t}+ 
\epsilon^2 \abs{\nabla_t\xi}^2_{t}\Big ) \\  &  \abs{ S_5} \leq \bar{c} \Big (  \abs{\xi}^2_{t} + \abs{\xi}^4_{t} \Big ) + 
\bar{c} \Big ( \epsilon^2 \abs{\nabla_s \xi } ^2 + \frac{1}{\epsilon^2} \abs{\xi}^4 \Big ), \; \;\;\;\abs{S_6} \leq 
\bar{c} \Big (\frac{1}{\epsilon^2} \abs{\xi}^4_{t}+ \epsilon^2 \abs{\nabla_t\xi}^2_{t}\Big).
\end{align*}
 \noindent For $ \epsilon $ sufficiently small we get that 
 $$ 
 S= \abs{a_2} + \abs{b_2} + \abs{b_3} + \abs{b_4} 
 \leq 2 \abs{\nabla_s \xi }_{t} ^2 
 + 2\abs{\nabla_t \xi }_{t} ^2 + 
 C_1 ( \abs{\xi}_{t}^2 + \abs{\xi}_{t}^4 ) ,
 $$ 
 for some $ C_1 > 0 $. Thus
 \begin{align}
 \triangle e  &= 
 \sum\limits_{i=1}^2 (a_i + b_i ) + b_3 + b_4 \geq a_1 + b_1 -
 S \notag \\ 
 & \geq - C_1( \abs{\xi}_{t}^2 + \abs{\xi}_{t}^4 ) = -C_1 ( e + e^2 ).
 \end{align}
We will prove now the
second inequality in lemma \ref{lem:mon_lem6}. 
\begin{align}
  \frac{\partial e}{\partial \nu} = \frac{\partial e}{\partial
t}\Big |_{t=0,1}= \Big (2 \overbrace{ g_t( \nabla_t \xi,
 \xi )}^{A} + \overbrace{\partial_t g_t (\xi,  \xi)}^{B}\Big ) \Big |_{t=0,1} 
\end{align}
\begin{align*}
A&= g_t (\nabla_t\xi,\xi )= g_t( \nabla_s \eta, \xi ) =  g_t( \nabla_s(
J_{t} (u) \xi) , \xi )\\
& = g_t(J_t(u) \nabla_s \xi, \xi ) +
g_t((\nabla_{s} J_t(u))\xi,  \xi )
\end{align*}
Both terms in the last equality vanish at 
the time $ t=0,1 $. The first one vanishes as
$ L_i , \; i=0,1 $ are totally geodesic for $ g_i $ , 
thus $ \nabla_s \xi \in T_p L_i, \; p= u(s,i) $ 
and 
$ J_i T_p L_i $ 
is orthogonal complement of $ T_p L_i $. 
The second term vanishes as $ \nabla_s J_t $ is 
skew adjoint, what follows by differentiating
$ g_t(J_t(u) v, w) =  - g_t( v, J_t(u) w )$ in the direction of 
$\partial_su$. 
Thus, we have that
\begin{align*}
\displaystyle
\left | \frac{\partial e}{\partial \nu} \right | = 
\left | \left . \frac{\partial e}{\partial t }\right|_{t=0,1}\right |= 
\left |B\right|= \left| \partial_t g_t (\xi,  \xi)\right| \leq C_2 \abs{\xi}^2_t. 
\end{align*}
\end{proof}

\subsection{Exponential convergence}\label{subsec:exp}

In order to prove the mean value inequality
 we have used special metric,
 but the inequality \eqref{eq:pr_meanv} holds no 
matter which metric we use to define the norm of $ \partial_s u $. 
Until now we have mentioned three different metrics, 
one was just some Riemannian metric $ g $ on M, 
the other metric is given by pairing $ \omega $ and $ J_t $, i.e. 
$\langle \xi, \eta \rangle_{J_t}
= \frac{1}{2} \Big (\omega ( \xi, J_t ( \eta )) + 
\omega ( \eta, J_t \xi )\Big) $   
and the third one is the metric constructed in Lemma \ref{mon_lem7}. 
Denote with $\abs{\cdot}_g, \; \abs{\cdot}_{J_t}$  
and $ \abs{\cdot }_t $ the norms that correspond to these metrics. 
As $ N $ is compact all these metrics are equivalent, i.e. 
there exist positive constants $ K_1 $ and $ K_2$  such that 
\begin{align*}
 \frac{1}{K_1} \abs{\xi}_{J_t} 
 \leq \abs{\xi}_{t} \leq K_1 \abs{\xi}_{J_t} , \quad
  \frac{1}{K_2} \abs{\xi}_{g} 
  \leq \abs{\xi}_{t} \leq K_2 \abs{\xi}_{g}, 
\end{align*}
for all $ t \in [0,1] $. 
If the energy
$ E(u)  = 
\int\limits_{I \times [0,1] } \abs{\partial_su }_{J_t} ds dt$ 
is sufficiently small, more precisely if 
$ E(u) \leq \mu_1 = \frac{1}{K_1^2} \tilde{\mu} $, 
where $\tilde{\mu} $ is the constant from 
Proposition \ref{pr:meanv} then:

\begin{align}\label{eq:mon_pom1}
 \abs{\partial_s u(s,t)}^2_g &  
 \leq K_2^2 \abs{\partial_s u}^2_{t} 
 \leq  C \cdot K_2^2 \Big (1 + \frac{1}{r^2} \Big ) 
 \int\limits_{H_r ( s, t)} \abs{\partial_su }^2_t ds dt \notag \\ 
 &= \tilde{c} \Big (1 + \frac{1}{r^2} \Big ) 
 \int\limits_{H_r ( s, t)}\abs{\partial_su }^2_t ds dt
 \leq \tilde{c} K_1^2 \Big (1 + \frac{1}{r^2} \Big ) 
 \int\limits_{H_r ( s, t)} \abs{\partial_su }^2_{J_t} ds dt \notag \\ 
 & \leq c' \Big (1 + \frac{1}{r^2} \Big ) E(u|_{H_r(s,t)}), 
\end{align}
for $ (s,t) \in D_r ( I ) $. 
From here it follows that the 
length of the curve $ \gamma_s = u(s, \cdot) : [0,1]
 \rightarrow N , \; d(s, \partial I ) \geq r $ is small, 
 provided that the energy $ E(u) $ is small. 
\begin{proof}[ Proof of the proposition \ref{pr:exp-dec}]
Let $ I= [a,b] $ with $ a,b \in \R$ 
(the proof is the same for infinite interval).
Fix $ r_0> 0$, suppose w.l.g. that $ r_0 < 1/2$. 
Let $ \rho_0 $ be the constant from the Lemma \ref{lem:mon_lem4}.
Choose $ \delta $ so small that $ E(u) < \delta $ 
implies that the length $ \ell(\gamma_s) = \ell ( u(s, \cdot)) \leq \rho_0 $
for all $ s, \; d( s, \partial I )\geq r_0 $.
From the equation \eqref{eq:mon_pom1},
it follows that it is enough that
$ E(u) < \min \{ \frac{\rho_0^2}{c'( 1 + 1/r_0^2)},
\frac{\tilde{\mu}}{ K_1^2} \}= \delta $. 
Define
$$ 
e(r) := E(u|_{D_r(I)} ). 
$$
For $ r\geq r_0 $ we have:
\begin{align*}
 e(r) = \iint\limits_{D_r(I)} u^* \omega
 = \iint \limits_{D_r(I)} u^* d\lambda = a( u(b-r, \cdot) ) - a ( u( a+ r, \cdot )),
\end{align*}
this holds as all the curves $ \gamma_s= u(s, \cdot) $ 
are contained in the neighborhood 
where the symplectic form is exact 
$ \omega= d\lambda $ and $ \lambda $ 
vanishes on $ L_0 $ and $ L_1 $. Also,
\begin{align*}
 \dot{e}(r) = 
 -\int\limits_0^1 \abs{\partial_s u(b-r, t)}_{J_t}^2 dt 
 - \int\limits_0^1 \abs{\partial_s u(a+r, t)}_{J_t}^2 dt .
\end{align*}
Using the isoperimetric inequality, Lemma \ref{lem:mon_lem3}, 
and the previous two equalities we get 
\begin{equation}\label{eq_eder}
 \begin{split}
 e(r)& = a( u(b-r, \cdot) ) - a ( u( a+ r, \cdot ))  \\
	&\leq c ( \ell( \gamma_{a+r} )^2 + \ell(\gamma_{b-r})^2 ) \\
	&\leq c \Big (\int\limits_0^1 \abs{\partial_t u(a+r, t)}_{g}^2 dt 
	+ \int\limits_0^1 \abs{\partial_t u(b-r, t)}_{g}^2 dt \Big ) \\
	& \leq c' \Big ( \int\limits_0^1 \abs{\partial_s u(b-r, t)}_{J_t}^2 dt
	+ \int\limits_0^1 \abs{\partial_s u(-a+r, t)}_{J_t}^2 dt )  \\
	&= -c' \dot{e}(r),
\end{split}
\end{equation}
for $r \geq r_0 $. From the inequality \eqref{eq_eder}
follows part $ i) $ 
of the Proposition \ref{pr:exp-dec}
\begin{equation}
 e(r) \leq e(r_0) e^{-2\mu( r-r_0)} \leq c_0 e^{-2\mu r} E(u),
\end{equation}
where $ c_0 = e^{2 \mu r_0}$ and $ \mu = \frac{1}{2c'} $.
Take a point $ (s,t) \in D_{r+ r_0} (I) $ 
and a ball around that point,
$ B_{r_0} ( s,t)\subset D_r(I) $. From the inequality
(\ref{eq:mon_pom1}) we get: 
\begin{align}
 \abs{\partial_s u(s,t)}^2_g &  \leq c' ( 1 + \frac{1}{r_0^2} ) E(u|_{H_{r_0}(s,t)} ) \notag \\
			&\leq c' ( 1 + \frac{1}{r_0^2} ) E(u|_{D_r}) = \tilde{c}_1(r_0)  e(r) \notag\\
                        & \leq \tilde{c}_1(r_0) e^{-2\mu r} E(u) 
\end{align}
From here it follows $ ii)$ as we have 
 \begin{align*}
  \abs{\partial_s u(s,t)}_g \leq c_1 e^{-\mu r} \sqrt{E(u)}. 
 \end{align*}
Notice that the following inequality also holds 

\begin{align}\label{ch1_eq3.5}
  \abs{\partial_s u(s,t)}^2_g &  
  \leq c' ( 1 + \frac{1}{r_0^2} ) E(u|_{H_{r_0}(s,t)} ) 
  \leq \tilde{c}\cdot  e( d(s,\partial I ) - r_0) 				\notag \\
			& \leq c  e^{-2d(s,\partial I ) } E(u),
\end{align}
where the constant $ c$ depends on $ r_0$. 
Using the inequality \eqref{ch1_eq3.5} we can prove part $iii) $.
 Let $ (s_i,t_i)\in D_{r+r_0} (I) , \; i=1,2 $. 
 Suppose that $ d(s_1, \partial I ) = s_1-a $ and $ d(s_2, \partial I) =
 b-s_2 $. We have that 
\begin{align}
 d(u(s_1,t_1), u(s_2,t_2))\leq
 \overbrace{\int\limits_{s_1}^{s_2} \abs{\partial_s u(s, t_1)}_g ds}^{I_1} 
+ \overbrace{\int\limits_{t_1}^{t_2} \abs{\partial_t u(s_2, t}_g dt}^{I_2} \notag  .
\end{align}
and
\begin{align}\label{eq:i1}
 I_1& = \int\limits_{s_1}^{\frac{a+b}{2}} 
 \abs{\partial_s u(s, t_1)}_g ds + 
 \int\limits_{\frac{a+b}{2}}^{s_2} \abs{\partial_s u(s, t_1)}_g ds \notag \\
   & \leq c \sqrt{E(u)} \Big ( \int\limits_{s_1}^{\frac{a+b}{2}} e^{-\mu ( s-a)} ds 
   + \int\limits_{\frac{a+b}{2}}^{s_2} e^{-\mu(b-s)}ds \Big ) \notag \\
 & \leq c \sqrt{E(u)} \Big ( \frac{e^{-\mu (s_1-a)}}{\mu}
 - \frac{e^{-\mu (\frac{b-a}{2})}}{\mu} +
 \frac{e^{-\mu (b- s_2)}}{\mu} - 
 \frac{e^{-\mu (\frac{b-a}{2})}}{\mu} \Big )\notag \\
 & \leq c \sqrt{E(u)} \Big ( \frac{e^{-\mu (s_1-a)}}{\mu}
 + \frac{e^{-\mu (b- s_2)}}{\mu} \Big ) \leq \tilde{c} \sqrt{E(u)} e^{-\mu r}.
\end{align}
The first inequality in \eqref{eq:i1} follows from 
the inequality \eqref{ch1_eq3.5}. 
Similarly we have that $ I_2 $ satisfies the following. 
\begin{align}
 I_2 &= \tilde{c} \int\limits_{t_1}^{t_2} 
 \abs{\partial_s u(s_2, t)}_g dt 
 \leq c \sqrt{E(u)} \int\limits_{t_1}^{t_2} e^{-\mu(b-s_2)} dt \notag \\
 & \leq c  \sqrt{E(u)} e^{-\mu(b-s_2)}  \leq c  \sqrt{E(u)} e^{-\mu r} 
\end{align}
With the previous inequalities 
we have finished the proof of part $ iii) $ of
Proposition \ref{pr:exp-dec}. 

\end{proof}
\section{Proof of the main theorems}\label{sec:proofs}

In this section we prove the Theorems \ref{main_thm}, \ref{thm:mon1}
and \ref{thm:mon2}. The proof of the Theorem \ref{main_thm}
is based on the fact that the isoperimetric inequality holds also for curves 
with Lagrangian boundary conditions what we have proved 
in Lemma \ref{lem:mon_lem4} and \ref{lem:mon_lem5}. 

\begin{proof}[\bf Proof of the theorem \ref{main_thm}]
Let $ g $ be the metric on $M $ obtained by pairing $ \omega $ and  $J$ 
i.e. $g( \xi, \eta )= \frac{1}{2} (\omega ( \xi, J\eta ) + \omega ( \eta, J\xi )) $. Let 
 $ r_0 > 0 $ be such that 
 $$
 r_0 = \text{min} ( \inf\limits_{p\in N} inj ( M, p ), r_1 ) 
 ,$$
 where $ r_1 $ is the constant from Lemma \ref{lem:mon_lem5}. 
 
 There exists a constant $ c_1 $ such that 
 for all $ p \in M $  and $ v, \hat{v} \in T_p M  $ with $
 \| v \| < r_0 $ we have 

\begin{align}\label{eq:const_co}
  c_1\|d\exp_p ( v)\hat{v} \|\geq \|\hat{v} \|.
\end{align}

Let $ \Omega \subset S = \R \times [0,1] $ be a bounded open set
and let $u : \Omega \rightarrow M $ 
be a holomorphic curve that extends continuously to $ \overline{\Omega} $ 
and satisfies \eqref{eq:mon_cond} and \eqref{eq:lagbc}.

For $ (s_0, t_0)\in \Omega $ denote with $p_0 = u(s_0,t_0) $. 
We define the function $ f : \Omega \rightarrow \R $ as 
$$ 
f(s,t) = d( u(s,t),p_0 ).
$$
Let 
$$ \Omega_r = f^{-1} ( [0,r)) = \{ (s,t)\in \Omega | \: d(u(s,t),p_0) < r \} 
$$ 
and let 
$$ \Gamma_r = f^{-1} ( \{ r \} ) = \{ (s,t)\in \Omega| \; d(u(s,t),p_0) = r \} 
.$$
Note that $ \Gamma_0 = \{ (s,t) | \; u(s,t) = p_0\} $ is a finite set of points
(for the proof of this fact have a look at \cite{McDS2} for example).  
 Notice that 
$ f(s,t) = \rho(u(s,t)) $, where $\rho: M \rightarrow \R $ is given by 
$$
\rho(p) = d(p_0,p)= \|\exp_{p_0}^{-1}p \|.
$$
and the last equality holds for points $p$ that satisfy $ d( p, p_0 ) < r_0 $. 
Notice that the function $ f $ is smooth on the set $ \Omega_{r_0} \setminus \Gamma_0 $. 
Let $ v=\exp_{p_0}^{-1} p$, then we have 

\begin{align}\label{eq_cons_dr}
 \abs{d\rho(p) ( \hat{v} ) } &= 
\displaystyle\frac{\abs{\langle d (\exp_{p_0}^{-1})( p) ( \hat{v} ), 
\exp_{p_0}^{-1}p\rangle}}{\sqrt{\langle \exp_{p_0}^{-1} p, 
\exp_{p_0}^{-1} p \rangle }} \notag\\
 &\leq \frac{\| d \exp_{p_0} ^{-1} p ( \hat{v})\| \|v\| }{\|v\|}\notag\\
 &= \| (d\exp_{p_0}v )^{-1} ( \hat{v} )\| \notag \\
 & \leq c_1 \|\hat{v}\| 
\end{align}

where the last inequality follows from \eqref{eq:const_co}. 
From the inequality \eqref{eq_cons_dr} it follows that 

\begin{align}\label{eq:pom_mon1}
 \left |\frac{\partial{f}}{\partial s}\right| \leq c_1 \|\partial_s u \|, \qquad
 \left|\frac{\partial{f}}{\partial t} \right| \leq c_1 \| \partial_t u \|.
\end{align}

and hence 

\begin{align}\label{eq:pom_mon}
 \sqrt{\left |\frac{\partial{f}}{\partial s}\right|^2 +
\left |\frac{\partial{f}}{\partial t}\right |^2}
\leq \sqrt{2} c_1 \| \partial_s u \|
\end{align}

Finally define the function $a(r) $ as follows 
\begin{equation}\label{eq:sym_area}
 a(r) := \int_{\Omega_r} u^* \omega.
\end{equation}
{\bf Step 1:} Let $r_0 $ be as above
and let $r < r_0 $ be a regular value of the function $f$. 
Then the function 
$a$ is differentiable at the point $r$ and furthermore 
\begin{equation}\label{eq:der_syma}
 a'(r) \geq \frac{1}{\sqrt{2}c_1}\ell( \left. u\right|_{\Gamma_r} ) . 
\end{equation}
\begin{proof}
For $ \delta > 0 $ sufficiently small the interval $ [r,r+ \delta ] $ 
consists entirely of regular values of the function $ f $ and we have 
$$
a ( r+\delta) - a(r)= \int\limits_{f^{-1} ( [r,r+ \delta ] )} u^* \omega ,
$$
We reparametrize the set $ f^{-1} ( [r,r+\delta ] )$ 
 by using the gradient flow of 
$f$. Define the rescaled gradient flow $ \phi $  by 

\begin{align*}
 \phi: \Gamma_r \times (r-\delta ,r+ \delta ) \rightarrow f^{-1} ((r-\delta ,r+ \delta) ) , \\
 \phi( \cdot, r )= \mathrm{Id}_{\Gamma_r}, \;\; \partial_{\lambda} \phi( \cdot, \lambda ) 
 = \frac{\nabla f }{\|\nabla f\|^2} ( \phi( \cdot, \lambda ) ). 
\end{align*}
Then 
$f( \phi(\cdot, \lambda ) ) = \lambda $.
Here the gradient and the norm of $ \nabla f $ 
are understood with respect to the standard metric on $ \R^{2n} $. 

Suppose first that $ \Gamma_r $ doesn't intersect the 
boundary $ \partial S $. 
Parametrize $ \Gamma_r $ by a
smooth curve $ \gamma_r : [0,1] \rightarrow \Gamma_r $ and define 
$ \psi : [0,1] \times [r,r+ \delta] \rightarrow \Omega $ 
$$
\psi( \tau, \lambda ) := \phi( \gamma_r(\tau), \lambda ) . $$

Orient $ \Gamma_r $ such that the orientations 
of $ \R^2  $ and $ T_{(s,t)} \Gamma_r \oplus \R \nabla f ( s,t) $
agree. Assume without loss of generality that $ \gamma_r $ 
is an orientation preserving diffeomorphism. 
Then 

\begin{align}\label{eq:mon_1}
 a(r+ \delta ) - a(r) &= \iint_{[0,1] \times [r,r+ \delta]}\psi^* u^* \omega\notag  \\
                     &= \int_r^{r+\delta}\int_0^1 u^* \omega ( \partial_{\tau} \psi,
                       \partial_{\lambda}\psi ) d\tau d\lambda \notag \\
                     &= \int_r^{r+ \delta} \int_0^1 \omega ( du(\psi) ( \partial_{\tau} \psi ), 
		      du(\psi) \partial_{\lambda}\psi ) \notag \\
                     &= \int_r^{r+ \delta} \int_0^1\| du( \psi) \partial_{\tau} 
	      \psi\| \| du(\psi) \partial_{\lambda} \psi\| d\tau d \lambda.  
\end{align}

Here the last equality holds because $ \partial_{\tau} \psi $ and $ \partial_{\lambda} \psi $ 
form a positive basis and $ u $ is $ J -$holomorphic. 
On the other hand we have 
\begin{align}\label{eq:mon_2}
 \| du(\psi) \partial_{\lambda} \psi \| &= \frac{\| du( \psi ) \nabla f ( \psi) \|}{\|\nabla f ( \psi ) \|^2}\notag\\
 &= \frac{\|\partial_s u \partial_s f + \partial_t u \partial_t f\|}{\abs{\partial_s f}^2 + \abs{\partial_t f}^2}\notag\\
 & = \frac{\|\partial_s u \|}{\sqrt{\abs{\partial_s f}^2 + \abs{\partial_t f}^2 }}\notag \\
 & \geq \frac{1}{\sqrt{2} c_1 }. 
 \end{align}

The penultimate equality holds as $u$ is $ J-$holomorphic 
and hence $ \partial_t u= J \partial_s u $ 
and the last inequality holds from the inequality \eqref{eq:pom_mon}. 
Substituting \eqref{eq:mon_2} in \eqref{eq:mon_1} we obtain 
\begin{align}
 a(r+\delta) - a(r)&\geq  \frac{1}{\sqrt{2} c_1} \int_r^{r+\delta} \int_0^1 
 \| du( \psi) \partial_{\tau} \psi\|d\tau d\lambda \notag \\
 & \geq \frac{1}{\sqrt{2} c_1} \int_r^{r+\delta} \ell ( \left.u \right|_{\Gamma_{\lambda}}) d\lambda 
\end{align}
Similarly 
$$ a ( r ) - a ( r- \delta ) \geq \frac{1}{ \sqrt{2} c_1} \int_{r-\delta }^r \ell ( \left.u \right|_{\Gamma_{\lambda}}) d\lambda  
$$
Dividing  by $\delta > 0 $ and taking limit $ \delta \rightarrow 0 $ we obtain 
\begin{align}\label{ch2_eq0.14}
 a'(r) \geq \frac{1}{\sqrt{2} c_1} \ell ( \left. u\right|_{\Gamma_r} ). 
\end{align}

In the case  $\Gamma_r \cap \partial S \neq \emptyset$ we have that 
it either happens one of the situations in Figures \ref{fig3} or \ref{fig4} 
or it happens a mixture of these two cases. 
In the case as in Figure \ref{fig3} we cannot follow the flow for all 
time $ \tau \in [0, 1] $ but only for some time period 
$ I_{\delta}= [\epsilon_{\delta}, T_{\delta}] \subset [0,1] $, 
more precisely 
\begin{align*}
 I_{\delta} = \{ \tau \in [0,1] \Big|\; \lambda \mapsto \phi ( \gamma_r(\tau), \lambda ) 
							    \text{ exists on } [0,\delta] \} 
\end{align*}
The condition that the flow doesn't exist can be violated only 
for $ \tau $ close to $0 $ and $ 1 $. 

\begin{figure}[htp]
\centering
 \scalebox{0.7}{\input{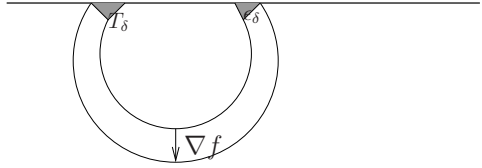}}
 \caption{It is not possible to follow the gradient flow for all times $t \in [0,1]$}
 \label{fig3}
\end{figure}
\begin{align}
 a(r+\delta) - a(r)\geq \frac{1}{\sqrt{2} c_1} 
 \int_r^{r+\delta} \int_{I_{\delta}} \| du( \psi) \partial_{\tau} \psi\|d\tau d\lambda
\end{align}
But still in the limit when $ \delta\rightarrow 0 $ we obtain that
$a'(r) \geq \frac{1}{\sqrt{2} c_1} \ell( \left.u \right|_{\Gamma_r} ) $
as $ I_{\delta} \rightarrow [0,1], \delta \rightarrow 0 $. 
In the second case as at the Figure \ref{fig4} we have that 
following the gradient flow we don't capture the whole area. 
In this case it still holds that 

$$
 a(r+\delta) - a(r)\geq \frac{1}{\sqrt{2} c_1} 
 \int_r^{r+\delta} \int_{0}^{1} \| du( \psi) \partial_{\tau} \psi\|d\tau d\lambda.
$$

Hence we obtain that $ a'(r) \geq \frac{1}{\sqrt{2} c_1} \ell( \left. u\right|_{\Gamma_r} ) $.  

\begin{figure}[htp]
 \centering
 \scalebox{0.7}{\input{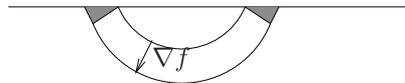}}
 \caption{Following the gradient flow one doesn't
  capture the whole area}
  \label{fig4}
\end{figure}

\end{proof}

{\bf Step 2 :} Let $ a(r) $ be defined as in \eqref{eq:sym_area}. 
There exists a constant $ c_0 > 0 $ such that 
  \begin{equation}
   a(r) = area ( u |_{\Omega_r}) \geq c_0 r^2 ,
  \end{equation}
for all $ r \leq r_0 $. \\
{\bf Proof:} In Lemma \ref{lem:mon_lem5}
we have proved that that the symplectic area is controlled 
by the length square:
 $$
 a( t ) \leq c \ell^2(u|_{\Gamma_t} ) .
$$
Thus we have that 
\begin{equation}\label{ch2_eq0.18}
 \ell(u|_{\Gamma_t } ) \geq \frac{1}{\sqrt{c}} \sqrt{a(t)}. 
\end{equation}
Substituting the inequality \eqref{ch2_eq0.18} in \eqref{eq:der_syma} we get
\begin{align}\label{eq:der_area}
 ( \sqrt{a(t)} ) ' \geq \frac{1}{2\sqrt{2}\sqrt{c} c_1}= \sqrt{c_0}, 
 \end{align}
for all regular values $ t \in [0, r_0 ]$.
From Sard's theorem we have that the set 
of singular values is compact and zero measure. 
The function $ \sqrt{a(t) } $ is monotone increasing, 
and hence differentiable almost everywhere. 
The set of regular values is open and full measure and 
hence can be written as a countable union of 
disjoint intervals $ I_n = ( \alpha_n, \beta_n) $. 
Integrating the inequality \eqref{eq:der_area} on
each interval $ I_n $ we get

\begin{align*}
 \sqrt{ a ( \beta_n)} - \sqrt{ a( \alpha_n)}  \leq \sqrt{c_0} ( \beta_n - \alpha_n ) = \sqrt{c_0}\mathcal{L} ( I_n ). 
\end{align*}

As 
\begin{align}
 \sqrt{a(r)} & \geq 
\sum\limits_{n=1}^{+\infty} ( \sqrt{a( \beta_n)} - 
\sqrt{ a ( \alpha_n)}) \geq\sqrt{ c_0} \sum\limits_n ( \beta_n - \alpha_n) \\ 
 & \geq\sqrt{c_0} \mathcal{L} ( \bigsqcup\limits_n I_n ) = \sqrt{c_0} r
\end{align}

Hence, we get
\begin{equation}
 a(r) \geq c_0 r^2
\end{equation}

for all $ r\leq r_0 $. 
\end{proof}

\medskip

\begin{proof}[{\bf  Proof of theorem \ref{thm:mon1}}]
Let $ W $ be an open neighborhood of 
$ (\Lambda \cap N)\setminus \Lambda_0 $.
such that $ \overline{W} \cap \overline{V} = \emptyset $. 
Let $ \tM = \R^2 \times M = \C \times M $.
We define compact sets $ \tA, \tB, \tN \subset \tM$ 
and closed $  \tL_0, \tL_1\subset  \tM$ as 
follows
\begin{equation}
 \begin{split}
 & \tA = [-1,1]\times [0,1] \times \overline{V}, \\
 &\tB = [-1,1]\times [0,1] \times ( ( N \setminus U ) \cup \overline{W} )\\
 &\tN = [-1,1]\times [0,1] \times N.\\
 &\tL_0 = \R \times \{0\} \times L_0, \qquad \tL_1 = \R \times \{1\} \times L_1
 \end{split}
\end{equation}
The tuple $ (\tM, \tN, \widetilde{L}_0, \widetilde{L}_1) $
 satisfies the assumptions of the Theorem \ref{main_thm}.
Let $ r_0 $ and $ c_0 $ be the corresponding constants, as in 
Theorem \ref{main_thm}. We choose positive constants $ \delta, \hbar $ 
and $ \epsilon $ such that the following holds
\begin{itemize}
\item[(i)]
$0<\rho<d(\tA,\tB)/2$ and $\rho<r_0$.
\item[(ii)] 
$0<\delta<1$ and $\delta < \frac{c_0\rho^2}{4}$.
\item[(iii)]
If $\gamma:[0,1]\to N$ is a smooth curve such that
$\gamma(0)\in L_0$ and $\gamma(1)\in L_1$ and 
$$
L(\gamma):=\int_0^1
\sqrt{\omega(\dot\gamma(t),J_t\dot\gamma(t))}\,dt<\epsilon
$$
then $\gamma([0,1])\subset V\cup W$. 
\item[(iv)]
If $u:[a,b]\times[0,1]\to M$ is a $ J_t $ 
holomorphic curve with energy $E(u)<\hbar$ 
and $\frac{b-a}{2}>\delta$ then 
the path $u_s:[0,1]\to M$ defined by 
$u_s(t):=u(s,t)$ has length $L(u_s)<\epsilon$
for $a+\delta\le s\le b-\delta$. 
\item[(v)]
$\hbar < \frac{c_0\rho^2}{2}$.
\end{itemize}
The existence of a constant $ \epsilon > 0 $ 
in $ \mathrm{(iii)} $ follows from Lemma \ref{lem:mon_lem3}. 
The existence of a constant $\hbar $ as in $\mathrm{(iv )} $ 
follows from  mean value inequality, Proposition \ref{pr:meanv}.

We claim that the assertion of Theorem~\ref{thm:mon1} 
holds with the above constant~$\hbar$. To see this, let 
$u:[a,b]\times[0,1]\to M$ be a $J_t $ holomorphic curve.
Assume first that $\delta < \frac{b-a}{2} $.
Then by (iv) we have $L(u(s, \cdot))<\eps$ for 
$s \in (a+\delta, b- \delta )$. 
Hence by (iii) we have
$
u([a+\delta,b-\delta]\times[0,1])\subset V\cup W.
$
We claim that 
\begin{equation}\label{eq:uV}
u([a+\delta,b-\delta]\times[0,1])\subset V.
\end{equation}
Suppose this is not the case.  Since $V\cap W=\emptyset$ it would then follow
that $u([a+\delta,b-\delta]\times[0,1])\subset W$.
Define $\tu:[b-\delta,b]\times[0,1]\to\tM$ by 
\begin{equation}\label{eq:tu}
\tu(s,t) := (-b+s+\im t,u(s,t)).
\end{equation}
Then $\tu$ takes values in $\tN$ and 
$\tu(b-\delta,t)\in\tB$ and $\tu(b,t)\in\tA$ 
for all $t\in[0,1]$. $\tu $ is also a $ \tJ $ holomorphic curve, 
where $ \tJ $ is given by 
\[
\tilde{J} =
\begin{pmatrix}
0& -1& 0\\
1 & 0& 0 \\
0 & 0& J_t
\end{pmatrix}
\]

Since $d(\tA,\tB)>2\rho$, by (i),
there is an element $(s_0,t_0)\in(b-\delta,b)\times[0,1]$
such that $\tp_0:=\tu(s_0,t_0)$ satisfies
$$
d(\tp_0,\tA)>\rho,\qquad d(\tp_0,\tB)>\rho.
$$
Since $\rho<r_0$, by~(i), it follows from 
Theorem~\ref{main_thm} that
\begin{equation}\label{eq:Ec0rho}
E(u;[b-\delta,b]\times[0,1]) +\delta
= E(\tu;[b-\delta,b]\times[0,1])
\ge c_0\rho^2.
\end{equation}
Hence
\begin{equation}\label{eq:hbar}
\hbar > E(u;[b-\delta,b]\times[0,1])
\ge c_0\rho^2-\delta > \frac{c_0\rho^2}{2}.
\end{equation}
This contradicts~(v).  Thus we have proved~\eqref{eq:uV}.

Next we claim that 
\begin{equation}\label{eq:uU}
u([a,b]\times[0,1])\subset U.
\end{equation}
If this does not hold we obtain a contradiction as above. 
Namely, there is a point $(s_0,t_0)\in[a,b]\times[0,1]$ 
such that 
$$
u(s_0,t_0)\notin U.  
$$
By~\eqref{eq:uV}, we must have
$s_0\in (a, a+ \delta) \cup ( b-\delta, b)$. 
 Suppose without loss of generality
that $s_0>b-\delta$ and define $\tu:[b-\delta,b]\times[0,1]\to\tN$ 
by~\eqref{eq:tu}. Then $\tu$ takes values in $\tN$ and
$$
\tp_0:=\tu(s_0,t_0)\in\tB,\qquad
\tu(b-\delta,t),\tu(b,t)\in\tA,
$$
for all $t\in[0,1]$. Since $d(\tA,\tB)>\rho$ and $\rho<r_0$ 
by~(i), it follows again from Theorem~\ref{main_thm} that
$u$ satisfies~\eqref{eq:Ec0rho} and~\eqref{eq:hbar},
in contradiction to~(v).  Thus we have proved~\eqref{eq:uU}
in the case $\delta< \frac{b-a}{2}$. 

If $\delta \geq \frac{b-a}{2}$ the map $\tu:[a,b]\times[0,1]\to\tM$ defined
by~\eqref{eq:tu} takes values in $\tN$, has energy 
$E(\tu)=b-a + E(u) \leq 2\delta+\hbar < c_0\rho^2$,
and maps the set $\{a,b\}\times[0,1]$ to $\tA$.
Hence it follows again from Theorem~\ref{main_thm}
that its image cannot intersect $\tB$ and so $u$ satisfies~\eqref{eq:uU}.
This proves Theorem~\ref{thm:mon1}.

\end{proof}

\begin{proof}[{\bf Proof of the theorem \ref{thm:mon2} }] 
Suppose that the claim isn't truth, i.e. suppose that 
there exists $ \epsilon > 0 $ and sequences 
$ \hbar_n \rightarrow 0 $ and $ I_n =[a_n, b_n ] $
and $J_t $ holomorphic curves $u_n:I_n \times [0,1] $  and points 
$x_n, y_n \in \Lambda $ such that 

$$ 
E(u_n ) < \hbar_n , \; \sup\limits_t d ( u_n ( a_n, t ), x_n ) < \epsilon/12 , 
\;\sup\limits_t d ( u_n ( b_n, t ), y_n ) < \epsilon/12
$$ 
but there exist $ (s_n, t_n ) \in I_n \times [0,1] $ 
such that $ p_n = u_n (s_n, t_n ) \notin B_{\epsilon} (x_n ) $. 
We make the same construction as in the proof of Theorem
\ref{thm:mon1}, i.e. we observe $ \tM = \C \times  M $, 
$ \tN = [-1,1]\times [0,1] \times N $ 
with Lagrangian submanifolds 
\begin{equation*}
\widetilde{L}_0 = \R\times \{0\} \times L_0, \qquad \widetilde{L}_1 = (\R \times \{1\} )\times L_1.
\end{equation*}
The tuple $ (\tM, \tN, \widetilde{L}_0, \widetilde{L}_1) $ 
satisfies the assumptions of the theorem \ref{main_thm}.
Let $ r_0 $ and $ c_0 $ be the constants as in Theorem \ref{main_thm}.
Let $ K_2 $ be positive constant such that 
\begin{align}
 \frac{1}{K_2} \abs{\xi}_g \leq \abs{\xi}_{J_t} \leq K_2 \abs{\xi}_g.
\end{align}
Take $r = \min \{ r_0 , \frac{\epsilon}{24K_2}\}$
and $ r_1= c_0 \frac{r^2}{2} $. 

We shall construct a new sequence $ q_n= u_n ( \ts_n, \tilde{t}_n)$ such that
$ B_{\frac{\epsilon}{12} } ( q_n ) $ contains no boundary points 
$ u_n|_{\partial I_n  \times [0, 1]} $ as well as the points 
$u_n(s,t) , \; (s,t ) \in D_{r_1}(I_n)$, where 
$ D_{r_1} ( I_n ) $ is defined as in \eqref{eq:dr_I}. 
From Corollary \ref{cor:exp_dec} follows that starting from some 
$n_0 $, for $ n \geq n_0 $ there exists 
$c_n\in \Lambda $ such that 
$$
\sup\limits_{s,t\in D_{r_1}( I_n) } d ( u_n(s,t), c_n) < \frac{\epsilon}{12}.
$$ 
One of the following cases occurs

\begin{itemize}
\item[1)] $ y_n, c_n \notin B_{\epsilon/ 6} (p_n) $. 
Then  $ q_n = p_n , \ts_n = s_n, \; \tilde{t}_n = t_n$ and  
$ B_{\epsilon/12} (q_n)$ 
doesn't contain points $u_n(s,t )$ for $ (s,t) \in D_{r_1}(I_n) $ and
$ u_n(s,t ), \; (s,t) \in \partial I_n \times [0,1] $

\item[2)] $y_n, c_n \in B_{\epsilon/ 6} (p_n) \cup B_{\epsilon/6} ( x_n ) $. 
Then $ B_{\epsilon/4} ( p_n ) \cup B_{\epsilon/4} ( x_n ) $ 
contains all points $ u_n(s,t) , \; (s,t) \in D_{r_1}(I_n) $ 
and all boundary points $ u|_{ \partial I_n \times [0,1] } $. 
In this case there exists a point $q_n = u_n ( \ts_n , \tilde{t}_n ) $ 
such that  $ d ( q_n, x_n )\geq \frac{\epsilon}{3} $  and 
$ d(q_n , p_n ) = \epsilon / 2 $ as at the Figure \ref{fig2a}.

\begin{figure}[htp]
\centering
 \scalebox{0.8}{\input{mon1.pstex_t}}
 \caption{}\label{fig2a}
\end{figure}

Such a point exists as the image of $u$ contains 
points from $ B_{\frac{\epsilon}{12}} (x_n ) $ 
and also $ p_n $, thus the ball centered at 
$ p_n $ with radius $ \frac{ \epsilon}{2}$ 
will intersect its image. 
Obviously  $ B_{\epsilon/12}(q_n)  $ contains no 
boundary points and no points $ u(s,t) , \; (s,t ) \in D_{r_1}(I_n) $.

\item[3)] If for example $ y_n \in B_{\epsilon/ 6} (p_n) $ and 
$ c_n \notin B_{\epsilon/ 6} (p_n) \cup B_{\epsilon/6} ( x_n ) $ 
( the reverse is equivalent), 
then choose again points $ (s'_n, t'_n ), \; q'_n =  u_n (s'_n, t'_n ) $
 such that $ d(q'_n, p_n ) = \frac{5 \epsilon}{12} $ and 
$  d(q'_n, x_n ) \geq \frac{\epsilon}{2} $. Then we have two cases \\ 
\indent a) If $c_n \notin B_{\epsilon/6} (q'_n )$ then $ q_n = q'_n $, 
$\ts_n = s_n' $ and $ \tilde{t}_n = t_n' $.  \\
\indent b) If $c_n \in B_{\epsilon/6} (q'_n )$, 
then choose points $(s''_n, t''_n ) , \; q''_n = u_n ( s''_n, t''_n )$
 such that $ d(q''_n, q'_n )=\frac{\epsilon}{3} $ and
 $ d(q''_n, x_n)\geq \epsilon / 6$. 
Then $ B_{\epsilon/ 12} ( q''_n )$ contains no 
boundary points as well as points $ u(s,t) , \; (s,t ) \in D_{r_1}(I_n) $ 
and $ q_n = q''_n $  and $ \ts_n = s_n'' , \; \tilde{t}_n = t_n''$.


\end{itemize}

In the case $ \ts_n \in ( a_n, a_n + r_1 ) $ we define 
$ \tu_n : [ a_n, a_n + r_1 ] \times [0,1] \rightarrow \tN $ 
by $ \tu_n (s,t) = ( - a_n + s + \im t , u_n (s,t)) $, otherwise 
if $ \ts_n \in ( b_n - r_1, b_n )  $ we define 
$ \tu_n : [ b_n - r_1, b_n] \times [0,1] \rightarrow \tN $ 
by $ \tu_n (s,t) = ( - b_n + s + \im t, u_n (s,t)) $. 
Suppose that $ \ts_n \in ( a_n , a_n + r_1 ) $, the other case is analog. 
We define $ \tq_n = ( - a_n + \ts_n + \im \tilde{t}_n, q_n )$.  Let $ s = a_n $ 
or $ s= a_n + r_1 $ 
then the distance 
$$
 d( \tilde{q}_n, \tu_n (s,t) ) 
= \inf \{ l(\gamma) : \; \gamma :[0,1] \rightarrow\tM ,
\; \gamma(0) =\tilde{q}_n,\; \gamma(1)= \tu_n (s,t) \} .
$$
Any such curve $\gamma$ can be written as
 $ \gamma(t)= (\gamma_1(t), \gamma_2(t))\in \C \times M.$ Then

\begin{align*}
\ell(\gamma) &\geq \frac{1}{2} \Big ( \int\limits_0^1 \abs{\dot{\gamma}_1 (t) }_e dt 
+ \int\limits_0^1 \abs{\dot{\gamma}_2 (t) }_{J_t} dt \Big ) \\ 
& \geq \frac{1}{2} \int\limits_0^1 \abs{\dot{\gamma}_2 (t) }_{J_t} dt 
  \geq \frac{1}{2K_2} \int\limits_0^1 \abs{\dot{\gamma}_2 (t) }_g dt \\
 & \geq \frac{1}{2K_2} d ( u_n(s,t) , q_n ) = \frac{\epsilon}{24K_2}\geq r  
\end{align*}
Thus, we see that 
$\tilde{u}_n^{-1} ( B_{r} ( \tilde{q_n} ) )  $ does not intersect 
the set $ \{ a_n \} \times [0,1] \cup \{ a_n + r_1 \} \times [0,1] $. 
From the monotonicity theorem for $\tilde{J}$ 
holomorphic curves, Theorem \ref{main_thm}, we have 
$$
A= Area ( \tilde{u}_n |_{\tilde{u}_n^{-1} ( B_{r} ( \tilde{q_n} ))} ) \geq c_0 r^2 .
$$ 
On the other hand 
\begin{align*}
A & = \int\limits_{\tilde{u}_n^{-1} ( B_{r} ( \tilde{q}_n ))} \tilde{u}_n^* \tilde{\omega} 
= \int\limits_{\tilde{u}_n^{-1} ( B_{r} ( \tilde{q_n} ))} \omega_{std}
 + \int\limits_{\tilde{u}_n^{-1} ( B_{r} ( \tilde{p_n} ))} u_n^*\omega \\ 
& \leq  \int_0 ^1 \int_{a_n}^{a_n + r_1} \omega_{std} + E(u_n)\\
&  \leq r_1 + E(u_n). 
\end{align*}
Therefore we have that 
$$
 c_0 r^2 \leq A \leq \frac{c_0 r^2}{2} + E(u_n) ,
$$
what is a contradiction as $ \lim\limits_{n\rightarrow + \infty } E(u_n) = 0 .$
\end{proof}

 \chapter{Fredholm theory on truncated surfaces}

This chapter is entirely devoted to the study of various properties
of some linear operators on truncated surfaces. These 
surfaces are essentially similar to half-infinite or finite strips.
We shall consider first the linearized operator $ D_A $ 
of the form 
$$ 
D_A \xi = \p_s \xi + A \xi, 
$$ 
where the operator $A$ is bijective and self-adjoint 
and $s$ independent. In order to establish
Fredholm properties on truncated surfaces we decided 
to work with Hilbert spaces unlike most authors do. 
Thus, the domain of the operator $D_A$ will be some $W^{2,2} $
space on strip with some boundary conditions. This approach 
has various advantages. One of the most important facts 
is that the trace spaces of such Hilbert spaces have particularly 
nice form and they can be described in terms of the 
domain of some power of the operator $A$. 
The operator $D_A$ will not be Fredholm if we 
take its domain to be the mentioned Hilbert space, 
as it will have infinite dimensional kernel, but reducing 
its domain by fixing some boundary conditions which can be expressed in terms of positive and 
negative eigenvector spaces of the operator $A$ we can prove that 
it is actually bijective. We allow later that the operator $A$ 
depends on time $s$, thus we consider the linearized operator 
$ D$ of the form 
$$ 
D\xi = \p_s \xi + A(s) \xi.
$$
The operator $A(s) $ is such that the difference $ D- D_A = K $ 
is a compact operator. Thus the operator $D$ as a compact perturbation of the 
operator $D_A$ inherits most of its properties. Our final goal 
will be to prove surjectivity of the operator $D$.

\section{Linear estimates in abstract setting}\label{SEC:lin_est}

\begin{PARA}[\bf{Hilbert triple and the operator in the time independent case}]\label{para:bij_lin}\rm
In this section we prove some linear elliptic estimates 
that will be crucial for the proof of Theorem \ref{thm:main_thm3}.
and \ref{thm:main_thm2}. We will not specify Hilbert spaces 
in which we work, as we shall allow later various applications 
of the results of this chapter. 
The approach is similar to  \cite{KM} and \cite{RS4}. 
\\
Consider the following three Hilbert spaces 
$ H^2 \subset H^1 \subset H^0 $ such that 
each inclusion is compact, dense and continuous. 
Throughout this section we shall assume the following 
hypothesis
\begin{description}
\item[(HA)] 
 Let $ A: H^1 \rightarrow H^{0}$ be a linear, bijective, 
 and self-adjoint operator with the domain of 
 $A^2$ equal to $H^2$, where 
 $$
  \Dom(A^2)=\{ \xi \in H^1| A(\xi) \in H^1 \}. 
  $$
\end{description}
If $ A$ satisfies ~(HA), then there exist positive constants $ C_j, \; j=1,2$ 
such that the following inequality holds for all $ \xi\in H^j $
 \begin{equation}\label{eq_eqnormA}
  \frac{1}{C_j}\| \xi\|_{H^j} \leq \| A\xi\|_{H^{j-1}} \leq C_j\| \xi\|_{H^{j}}, \;\;\; j=1,2. 
 \end{equation}
 The right side of the previous inequality follows 
 from Hellinger-Toplitz theorem, and the left side 
 from the open mapping theorem. 
\end{PARA}

\begin{PARA}[{\bf Intended Application}]\label{para:int_app}\rm
We shall apply the results from this section to the 
following three Hilbert spaces
\begin{equation}\label{eq:hilspaces}
\begin{split}
 & H^0 = L^2 ( [0,1] ) \\
 & H^1= H^1_{bc} ([0,1])= \left \{ \xi \in H^1 ([0,1], \R^{2n}) \;\bigg | 
                 \; \xi(i) \in \R^n \times \{0\}, \;\; i=0,1 \right \}  
     \end{split}
 \end{equation}
The Hilbert space $ H^1 $ will be actually the domain of some operator 
$A$ of the form $ A= J_0 \p_t + S(t) $. In that case 
the space $ H^2 $ will be just the domain of $A^2$. 
In the proof of Theorem \ref{thm:main_thm3}, the Hilbert space $ H^2 $ 
will be 
\begin{equation}\label{eq:h2}
 H^2 = H^2_{bc} ( [0,1] ) = \left \{ \xi \in H^2 ( [0,1], \R^{2n} ) \; \bigg |
  \begin{array}{l}
         \;\xi(i)  \in \R^n \times \{0\},\\ 
          \partial_t \xi(i) \in \{0\} \times \R^n, \;\; i=0,1
      \end{array}
      \right \}.
      \end{equation}
Another typical example of such triplet is also a Gelfand triple 
 $ V\subset H \subset V^* $. 
 \end{PARA}

\begin{PARA}[{\bf Intermediate Hilbert spaces and projections }]\label{rem:lines}\rm
 Assume ~(HA). Notice that each $ \xi \in H^0 $ 
 can be uniquely written in the form $ \xi = \sum\limits_{i} x_{i} e_{\lambda_i} $,
 where $ e_{\lambda_i} $ are the eigenvectors of the operator $A$ 
 forming an orthonormal basis of $ H^0$. 
 Let 
 $$ 
 E^{1/2}:= [ H^1, H^0] _{1/2}, \quad
 E^{3/2}:= [H^2,H^1]_{1/2} 
 $$
 (for more details on interpolation spaces 
 we refer to Appendix \ref{SEC:app}).
 As $ H^2 = \text{Dom}(A^2) $ it follows that 
\begin{equation}\label{eq:h3/2}
 E^{3/2} = \text{Dom} ( \abs{A}^{3/2}) =
 \Big \{ \xi = \sum\limits_{i} x_{i} e_{\lambda_i} \Big 
| \; \sum\limits_{i}\abs{\lambda_i}^3 \abs{x_{i}}^2 < + \infty \Big \}.
\end{equation}
and analogously can be described $ E^{1/2} $
\begin{equation}\label{eq:h1/2}
 E^{1/2} = \text{Dom} ( \abs{A}^{1/2}) =
 \Big \{ \xi = \sum\limits_{i} x_{i} e_{\lambda_i} \Big 
| \; \sum\limits_{i}\abs{\lambda_i} \abs{x_{i}}^2 < + \infty \Big \}.
\end{equation}
For $ \xi \in E^{3/2} $ we denote by $ \|\xi\|_{3/2} $ 
the norm of $ \xi= \sum\limits_{i} x_{i} e_{\lambda_i} $, 
we have 
$$
\|\xi\|_{3/2}^2 := \sum_{i}  \abs{\lambda_i}^3 \abs{x_{i}}^2.
$$
Let $ E^{\pm}\subset H^0 $ be positive and negative eigenvector spaces
 $$ 
E^{\pm}:=\bigg \{ \xi =
 \sum\limits_{\pm\lambda_i> 0} x_{i} e_{\lambda_i}\in H^0 \bigg\}. 
$$
 Then the Hilbert space $ E^{3/2}$ decomposes as
 \begin{equation}\label{eq:dec}
    E^{3/2} = (E^+\cap E^{3/2}) \oplus (E^-\cap E^{3/2} ) 
 \end{equation}
 Each $ \xi \in E^{3/2} $ can be written uniquely in the form 
$ \xi = \xi^+ + \xi^- $, where $ \xi^{\pm} \in E^{\pm}\cap E^{3/2} $.
 Denote with $ \pi^{\pm} $ the projections 
\begin{equation}\label{eq:proj}
  \pi^{\pm} : E^{3/2}\rightarrow E ^{\pm}\cap E^{3/2}.
\end{equation}
It is crucial that $ \text{Dom}(A^2) = H^2 $, otherwise 
we would not have the decomposition of $E^{3/2} $ 
into its positive and negative subspaces and the projection would not 
be well defined. 
We have an analogous projection 
\begin{equation}
 \pi^{\pm}: E^{1/2} \rightarrow E^{\pm} \cap E^{1/2}.
\end{equation}

\end{PARA}

\begin{PARA}[{\bf Hilbert spaces with strip-like domains}]\label{para:hilsp}\rm
Let $I $ be an interval in $ \overline{\R} $
and let $ H^2\subset H^1 \subset H^0 $ be as in \ref{para:bij_lin}.
Observe the following three Hilbert spaces
\begin{align*}
\cW^0(I) &=  L^2( I, H^0) \\
 \cW^1(I) &= L^2 (I, H^1) \cap W^{1,2} ( I, H^0) \\
  \cW^2(I)&= L^2 ( I, H^2) \cap W^{1,2} ( I, H^1) \cap W^{2,2} ( I, H^0) 
\end{align*}
Equip $\cW^0(I), \cW^1(I) $  and $ \cW^2(I) $ with the following norms
\begin{align*}
 \|\xi\|_{\cW^0(I)}^2 &:= \int_I \|\xi\|_{H^0}^2 ds \\
 \|\xi\|_{\cW^1(I)}^2 &:= \int_I \|\xi\|_{H^1}^2 ds + \int_I \| \p_s \xi\|_{H^0}^2 ds \\
 \|\xi\|_{\cW^2(I)}^2 &:= \int_I \|\xi\|_{H^2}^2 ds + \int_I \| \p_s \xi\|_{H^1}^2 ds + \int_I \| \p^2_s \xi \|_{H^0} ds.  
\end{align*}
As the inclusion $ H^i \hookrightarrow H^{i-1}, \;\; i=1,2$ 
is continuous it follows that there exist constants $c', c'' $ such that 
$$
\| \xi \|_{\cW^0(I)} \leq c' \| \xi \|_{\cW^1 (I)} \leq c'' \| \xi \|_{\cW^2 (I)} 
$$
holds for all $ I \subset \overline{\R} $. 
\end{PARA}
\begin{PARA}\label{para:hil_striplike}
Notice that in the case $ H^i, \; i=0,1 $ 
are as in \eqref{eq:hilspaces}, then the Hilbert space $ \cW^1 (I) $ 
is isometric to the following space
\begin{equation}\label{eq:H1bc}
H^1_{bc} (I \times [0,1] ) :=  
\left\{ \xi \in W^{1,2} (I \times  [0,1], \R^{2n}) \; \bigg |
\xi(s,i) \in \R^n \times \{0\} , i=0,1 \right \}.
\end{equation}
In the case that $ H^2 = \Dom(A)^2 $ the space 
$ \cW^2(I) $ is isometric to the following Hilbert space 
\begin{equation}\label{eq:h2bc}
H^2_{bc} ( I \times [0,1] ) :=
\left\{\xi \in H^2 ( I \times [0,1], \R^{2n} ) \,\bigg|\,
\begin{array}{ll}
\xi(s,i) \in \R^n \times \{0\} , i=0,1 \\
A \xi(s,i) \in \R^n \times \{0\} , i=0,1
\end{array}\right\}.
\end{equation}
In particular, if the space $ H^2 $ is given by \eqref{eq:h2} then 
the space $\cW^2(I) $ is 
\begin{equation}\label{eq:H2bc}
W^{2,2}_{bc} ( I \times [0,1] ) :=
\left\{\xi \in H^2 ( I \times [0,1], \R^{2n} ) \,\bigg|\,
\begin{array}{ll}
\xi(s,i) \in \R^n \times \{0\} , i=0,1 \\
\p_t \xi(s,i) \in \{0\} \times \R^n , i=0,1
\end{array}\right\}.
\end{equation}
The Hilbert space $ \sW^0(I) $ is just the standard 
$ L^2( I \times [0,1],\R^{2n} ) $. 
\end{PARA}
Observe the following linear operator
\begin{equation}\label{eq:DA}
\begin{split}
 & D_A: \cW^i(I) \rightarrow \cW^{i-1} ( I ), \;\;\; i=1,2\\
 & D_A(\xi) = \partial_{s}\xi(s, t)  + A  \xi(s, t),
   \end{split}
\end{equation}
where $A$ satisfies the assumption ~(HA).

\begin{theorem}\label{thm:main_inq}
Let $i=1 $ or $i=2 $ and let $ D_A $ be defined as in \eqref{eq:DA}.
Let  $E^{i-1/2} , E^{\pm} $ and $ \pi^{\pm} $ 
be as  in \ref{rem:lines} and let $ \cW^i(I)$ 
be as in \ref{para:hilsp}. 
\begin{itemize}
 \item[i)] 
 There exists a constant $c_i > 0 $ 
 such that for any interval $ I= [a,b] $ 
 and for all $ \xi \in \cW^i ([a,b] ) $ 
 the following inequality holds 
  \begin{equation}\label{eq:inq_inj}
   \| \xi \|_{\cW^i([a,b])} \leq c_i \left ( \| D_A \xi \|_{\cW^{i-1}([a,b])} + 
     \|\pi^+( \xi(a)) \|_{i-\frac{1}{2}} + \| \pi^- ( \xi(b))\|_{i-\frac{1}{2}} \right )
  \end{equation}
  Furthermore the mapping 
 \begin{equation}
  \begin{split}
   &F: \cW^i([a,b]) \to \cW^{i-1}([a,b]) \times (E^+\cap E^{i-\frac{1}{2}}) \times (E^-\cap E^{i-\frac{1}{2}}) \\
   &F( \xi ) = \left ( D_A \xi, \pi^+ ( \xi(a)), \pi^- ( \xi(b)) \right )
  \end{split}
  \end{equation}
  is bijective. 
 \item[ii)] The maps
 \begin{equation}
  \begin{split}
 & F^{\pm} : \cW^{i}( \R^{\pm} ) \rightarrow \cW^{i-1}(\R^{\pm} ) \times E^{\pm}\cap E^{i-\frac{1}{2}}\\
  & F^{\pm} ( \xi ) = ( D_A \xi , \pi^{\pm} ( \xi (0, \cdot ))
  \end{split}
\end{equation}
are bijective. 
There exists a constant $ c_i>0 $ 
such that for all $ \xi \in \cW^i( \R^{\pm})$ 
the following inequality holds 
\begin{equation}\label{eq_inj}
\|\xi\|_{\cW^i(\R^{\pm} )} \leq c_i\bigg( \|D_A \xi \|_{ \cW^{i-1}(\R^{\pm} )} + 
\| \pi_{\pm} (\xi  (0, \cdot))\|_{i-\frac{1}{2}} \bigg).
\end{equation}

\end{itemize}
\end{theorem}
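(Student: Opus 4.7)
The natural approach is to diagonalize the problem using the spectral decomposition of $A$, reducing \eqref{eq:DA} to a countable family of uncoupled scalar ODEs that can be solved explicitly. Write $\xi(s) = \sum_i x_i(s)\, e_{\lambda_i}$ and $D_A\xi(s) = \sum_i y_i(s)\, e_{\lambda_i}$; then $D_A\xi = y$ becomes the family of ODEs
\begin{equation*}
\dot x_i(s) + \lambda_i x_i(s) = y_i(s), \qquad i \in \mathbb{N}.
\end{equation*}
The parallel between the role of $E^\pm$ and the sign of $\lambda_i$ is the key structural observation: on a bounded interval $[a,b]$ the homogeneous mode $e^{-\lambda_i s}$ decays toward the end $b$ when $\lambda_i>0$ and toward $a$ when $\lambda_i<0$, so the natural boundary data are $\pi^+(\xi(a))$ and $\pi^-(\xi(b))$ respectively; on $\R^+$ only the $\lambda_i>0$ modes require initial data at $0$, the others being determined by the integrability requirement at $+\infty$ (and symmetrically on $\R^-$).

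\textbf{Explicit solution formulas.} Given data $(y, \xi^+_a, \xi^-_b)$ with $y = \sum y_i e_{\lambda_i} \in \cW^{i-1}([a,b])$, $\xi^+_a = \sum_{\lambda_i>0} \alpha_i e_{\lambda_i} \in E^+\cap E^{i-1/2}$, $\xi^-_b = \sum_{\lambda_i<0} \beta_i e_{\lambda_i} \in E^-\cap E^{i-1/2}$, set
\begin{equation*}
x_i(s) = \begin{cases} e^{-\lambda_i(s-a)}\alpha_i + \displaystyle\int_a^s e^{-\lambda_i(s-\tau)} y_i(\tau)\, d\tau, & \lambda_i>0,\\[2mm]
e^{-\lambda_i(s-b)}\beta_i - \displaystyle\int_s^b e^{-\lambda_i(s-\tau)} y_i(\tau)\, d\tau, & \lambda_i<0,\end{cases}
\end{equation*}
with the analogous one-sided formulas on $\R^\pm$. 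This gives a candidate right inverse to $F$ (resp.\ $F^\pm$). Injectivity is immediate: if $D_A\xi=0$ with vanishing boundary projections, each $x_i$ is a pure exponential whose value at the prescribed endpoint is zero, hence $x_i\equiv 0$.

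\textbf{Norm estimates.} The estimate \eqref{eq:inq_inj} amounts to controlling, mode by mode, the three quantities
\begin{equation*}
|\lambda_i|^{2(i-1)}\|x_i\|_{L^2}^2,\quad |\lambda_i|^{2(i-2)}\|\dot x_i\|_{L^2}^2,\quad \text{(and for } i=2\text{)}\ \|\ddot x_i\|_{L^2}^2,
\end{equation*}
by $\|y_i\|_{L^2}^2$-terms plus the boundary contribution $|\lambda_i|^{2i-1}|\alpha_i|^2$ or $|\lambda_i|^{2i-1}|\beta_i|^2$. The decaying-exponential parts satisfy $\int_a^b e^{-2|\lambda_i|s} ds \le (2|\lambda_i|)^{-1}$, and for the Duhamel integrals one uses Young's convolution inequality applied to the kernel $e^{-\lambda_i s}\mathbf{1}_{\{s>0\}}$ (with $L^1$-norm $1/|\lambda_i|$) against $y_i$. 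Summing the mode-wise estimates and invoking the spectral characterizations \eqref{eq:h3/2}, \eqref{eq:h1/2} of $E^{3/2}$ and $E^{1/2}$ yields \eqref{eq:inq_inj} with a constant independent of $[a,b]$.

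\textbf{Main obstacle.} The principal subtlety is the identification of the trace space: one must verify that for every $\xi\in\cW^i(I)$ the trace $\xi(a)$ lies in $E^{i-1/2}$ with a continuous embedding whose norm does not depend on $I$. This is an abstract Lions--Magenes interpolation-trace statement for the vector-valued Sobolev space $\cW^i = L^2(I,H^i) \cap \cdots \cap W^{i,2}(I,H^0)$, and it is exactly what makes the right-hand side of \eqref{eq:inq_inj} the \emph{sharp} norm on the image; the paper refers to the appendix for this material. Once this trace theorem is granted, surjectivity follows by checking that the candidate $\xi=\sum x_i\, e_{\lambda_i}$ constructed above indeed lies in $\cW^i$ with the required norm bound, and the bijectivity assertion in (i) together with the identical $\R^\pm$-argument in (ii) is complete.
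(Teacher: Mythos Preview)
Your proposal is correct and your spectral-decomposition strategy is exactly what the paper uses to prove surjectivity (its Step~3): the explicit Duhamel formulas, the $L^1$-kernel bound $\|K_\lambda\|_{L^1}=1/|\lambda|$, and Young's inequality appear there verbatim. Where you differ from the paper is in the proof of the estimate~\eqref{eq:inq_inj} itself.

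The paper obtains \eqref{eq:inq_inj} for $i=1$ not mode-by-mode but via a single energy identity: expanding $\|D_A\xi\|_{H^0}^2$ and integrating over $[a,b]$ gives
\[
\int_a^b\bigl(\|\partial_s\xi\|_{H^0}^2+\|A\xi\|_{H^0}^2\bigr)\,ds
= \|D_A\xi\|_{\cW^0}^2 - \langle\xi(b),A\xi(b)\rangle + \langle\xi(a),A\xi(a)\rangle,
\]
and the boundary terms split into the positive and negative spectral parts, yielding \eqref{eq:inq_inj} immediately with no convolution estimates at all. The case $i=2$ is then bootstrapped by substituting $A\xi$ into the $i=1$ inequality and using that $A$ commutes with $D_A$. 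This integration-by-parts argument is shorter and makes uniformity in $[a,b]$ transparent. Your approach instead proves the estimate and surjectivity simultaneously by bounding the explicit right inverse; this is more computational but has the virtue of being a single unified argument. Both routes are valid, and indeed the paper ends up carrying out your mode-by-mode computations anyway in its surjectivity step.
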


\begin{proof} 
We prove the theorem in the next four steps. \\
\medskip
\noindent {\bf Step 1. Proof of the inequality \eqref{eq:inq_inj} in the case $i=1 $.}\\
\medskip 
\begin{equation}\label{eq:help1}
\begin{split}
 \int_a^{b} \| D_A \xi \|^2_{H^0}
  &=  \int_a^{b} \langle \partial_s \xi + A \xi, \partial_s \xi + A \xi \rangle_{H^0} ds \\
  & = \int_a^{b} \Big (\| \partial_s \xi \|^2_{H^0} + \| A \xi\|^2_{H^0}\Big ) ds + 
  \int_a^ {b} \partial_s \langle \xi, A \xi \rangle_{H^0} ds.
  \end{split}
 \end{equation} 
 Thus $ L=\int_a^{b} ( \| \partial_s \xi\|^2_{H^0} + \| A \xi\|^2_{H^0} ) ds $ satisfies
 \begin{align*}
  L = \int_a^{b} \| D_A \xi \|^2_{H^0} ds  - \langle \xi(b) , A \xi(b)\rangle_{H^0} +  \langle \xi(a), A \xi(a)\rangle_{H^0}
\end{align*}
From the inequality \eqref{eq_eqnormA} and 
the previous equality we obtain
\begin{align}\label{eq:H1est}
 \| \xi\|_{\cW^1([a,b])}^2 
 &\leq C_1^2 L = C_1^2 \Big ( \| D_A\xi\|^2_{\cW^0([a,b])} -  \langle \xi(b), A\xi(b)\rangle_{H^0}  +  \langle \xi(a), A \xi(a)\rangle_{H^0}  \Big )\notag \\ 
 & \leq    c_1 \Big (  \| D_A\xi\|^2_{\cW^0([a,b])} + \| \pi^+ ( \xi(a,\cdot))\|_{1/2}^2  + \| \pi^- ( \xi(b,\cdot))\|_{1/2}^2\Big )  .
 \end{align}
In the previous inequality $ \| \cdot\|_{1/2} $ norm of a  
$ \xi (a, \cdot) = \sum_{\lambda} x_{\lambda} e_{\lambda} $ 
 is given by 
 $$
  \| \xi(a, \cdot) \|_{1/2}^2 = \sum_{\lambda} \abs{\lambda} \abs{x_{\lambda}}^2 
  $$
  thus  $ \|\pi^+(\xi(a, \cdot))\|^2_{1/2}= \sum_{\lambda>0} \lambda \abs{x_{\lambda}}^2 $ and analogously is given 
  the norm of $ \|\pi^- (\xi(a, \cdot))\|_{1/2} $. \\
  \medskip 
  
{\bf Step 2. Proof of the inequality \eqref{eq:inq_inj} in the case $i=2 .$ }\\

\medskip 
 To shorten the notation we shall write 
 $ \| \cdot\|_{\cW^i }, \; i=0,1,2 $ for $ \|\cdot\|_{\cW^i([a,b] ) } $. 
 Substituting $ A\xi $ in the inequality \eqref{eq:H1est} we obtain  
 
\begin{align}\label{eq:aw1}
 \| A \xi\|_{\cW^1}^2 &  \leq c_1 \Big ( \| D_A ( A (\xi)) \|^2_{\cW^0} + \| \pi^+ ( A ( \xi(a))\|_{1/2}  + \| \pi^- ( A \xi(b))\|_{1/2}^2\Big ) \notag \\ 
    		    &\leq c_1 \Big ( \| A(D_A ( \xi))\|^2_{\cW^0} +  \| \pi^+ ( A ( \xi(a))\|_{1/2}  + \| \pi^- ( A \xi(b))\|_{1/2}^2\Big ) \notag \\	
                              &\leq c_1' \Big ( \| D_A\xi\|^2_{\cW^1} + \| \pi^+ ( \xi(a,\cdot))\|^2_{3/2} + \| \pi^- ( \xi(b, \cdot))\|_{3/2}^2  \Big ).
\end{align}
The last inequality follows from \eqref{eq_eqnormA} and the following 
observation 
$$ 
\| A \eta \|_{\cW^0}^2 = \int_a^b  \| A \eta\|_{H^0}^2 ds \leq C_1^2 \int_a^b \| \eta \|^2_{H^1} ds \leq C_1^2 \| \eta \|^2_{\cW^1} . 
$$
As the embedding $ H^1\hookrightarrow H^0 $ is continuous 
and  \eqref{eq_eqnormA}  holds we have 
$$
\| \xi \|_{H^0} \leq c' \| \xi\|_{H^1} \leq c' C_1 \| A \xi \|_{H^0} \leq c'' \| A \xi \|_{H^1}
$$
Integrating the previous inequality on interval $ [a,b] $
and using \eqref{eq:aw1} we obtain 
\begin{equation}\label{eq:help0}
\| \xi \|_{\cW^0}^2 \leq c_2' \Big ( \| D_A\xi\|^2_{\cW^1} + \| \pi^+ ( \xi(a,\cdot))\|^2_{3/2} +
\| \pi^- ( \xi(b))\|_{3/2}^2  \Big ).
\end{equation}
We have that 
$ \| \partial_s \xi \|^2_{H^1} \leq  2 (\| D_A \xi\|^2_{H^1} + \| A \xi\|^2_{H^1} )$. 
Integrating this inequality we obtain that 
\begin{equation}\label{eq:help2}
\begin{split}
\int_a^b \|\p_s \xi \|^2_{H^1}  ds  &\leq  2 \Big ( \int_a^b \| D_A \xi \|^2_{H^1} ds + \int_a^b \| A \xi \|^2_{H^1} ds\Big ) \\
 						       &\leq c_3 \Big ( \|D_A \xi \|^2_{\cW^1} + \| A\xi \|^2_{\cW^1 } 	\Big )\\
						       & \leq c_4 \Big ( \| D_A \xi \|^2_{\cW^1} + \| \pi^+ ( \xi(a,\cdot))\|^2_{3/2} + \| \pi^- ( \xi(b, \cdot))\|_{3/2}^2  \Big ).
\end{split}
\end{equation}
The last inequality of  \eqref{eq:help2} follows from \eqref{eq:aw1}. 
Finally, the following inequality also holds
$$ 
\| \p^2_s\xi\|^2_{H^0} \leq 2 \Big ( \| \p_s ( D_A \xi ) \|^2_{H^0} + \| \p_s ( A\xi)\|^2_{H^0} \Big ).
$$
Integrating this inequality on the interval $ [a,b] $ we obtain 
\begin{align}\label{eq:help3}
\int_a^b \| \p^2_s \xi \|_{H^0} ds  & \leq 2 \Big ( \| D_A \xi \|^2_{\cW^1} + \| A \xi \|^2_{\cW^1} \Big ) \notag \\
                                                  & \leq c_5  \Big ( \| D_A \xi \|^2_{\cW^1}  + \| \pi^+ ( \xi(a,\cdot))\|^2_{3/2} + \| \pi^- ( \xi(b, \cdot))\|_{3/2}^2  \Big ). 
\end{align}
The inequality \eqref{eq:help3} follows from \eqref{eq:aw1}. 
Summing the inequalities \eqref{eq:help0}, \eqref{eq:help2} and 
\eqref{eq:help3}, we obtain
\begin{align}
 \|\xi\|_{\cW^2([a,b])} \leq c_2 \Big ( \|D_A \xi \|_{\cW^1([a,b]) } 
 +\|\pi^+ ( \xi(a))\|_{3/2} + \| \pi^- ( \xi(b))\|_{3/2} \Big ).  
\end{align}
Thus we have proved the inequality \eqref{eq:inq_inj}.
This inequality implies that the mapping $ F $ 
is injective and has closed range. 
We still have to prove that it is surjective. \\
{\bf Step 3. Surjectivity of the operator $F$.} \\
\medskip
We prove surjectivity in the case $i= 2$. The proof of surjectivity 
in the case $i=1 $ is analogous. 

Let $ \eta \in \cW^1([a,b]) $ and $ \zeta^{\pm} \in E^{\pm}\cap E^{3/2} $.
We prove the existence of $ \xi \in \cW^2([a,b] )$ which satisfies:
\begin{equation}\label{eq_da}
 D_A \xi = \partial_{s} \xi(s, \cdot)  + A \xi(s, \cdot)  = \eta, 
 \;\; \pi^+ ( \xi (a, t)) = \zeta^+, \; \pi^- ( \xi (b,t)) = \zeta^-
\end{equation}

Let $ \xi = \sum\limits_{\lambda} \xi_{\lambda} ( s, \cdot) $ and 
$ \eta= \sum\limits_{\lambda} \eta_{\lambda}(s, t) $ and 
$ \zeta^{\pm} = \sum\limits_{\pm \lambda> 0} \zeta_{\lambda} $,
where $ \xi_{\lambda } $, $\eta_{\lambda} $ and $ \zeta_{\lambda} $ 
are the eigenvectors of the operator $A$. 
In order to find the solution of \eqref{eq_da} we need
to solve the following equations
\begin{equation}\label{eq:ksi}
\begin{split}
 &\partial_{s} \xi_{\lambda} ( s, \cdot) + \lambda \xi_{\lambda} ( s, \cdot) =
 \eta_{\lambda} ( s, \cdot ) , \;\; \text{for all} \;\; \lambda \\
 &\xi_{\lambda} (a, \cdot) = \zeta_{\lambda} ( \cdot), \;\; \lambda > 0\\
 & \xi_{\lambda} (b, \cdot)= \zeta_{\lambda} ( \cdot) , \;\; \lambda < 0
 \end{split}
 \end{equation}
The solutions $ \xi_{\lambda} $ of the previous equation is given by
\begin{equation}\label{eq:xilamb}
 \begin{split}
 \xi_{\lambda} (s, t) = \int\limits_{a}^{s} \eta_{\lambda} (y, t) e^{-\lambda ( s - y)} dy + 
 e^{-\lambda (s -a)} \zeta_{\lambda} (t) , \;\;\lambda > 0\\
\xi_{\lambda} (s, t) = e^{\lambda (b -s)} \zeta_{\lambda} (t)  
- \int\limits_{s}^{b} \eta_{\lambda} (y, t) e^{\lambda (  y-s)} dy   , \;\;\lambda < 0.
\end{split}
 \end{equation}
It is left to prove that $ \xi \in \cW^2( [a,b]) $ what is, because of \eqref{eq_eqnormA}, 
equivalent to the following 
\begin{align*}
 \sum\limits_{\lambda} \lambda^4 \int\limits_a^b \|\xi_{\lambda} \|_{H^0}^2 ds < + \infty\\
 \sum\limits_{\lambda} \lambda^2 \int\limits_a^b \|\p_s\xi_{\lambda} \|_{H^0}^2 ds < + \infty\\
 \sum\limits_{\lambda}  \int\limits_a^b \|\p_s^2\xi_{\lambda} \|_{H^0}^2 ds < + \infty
\end{align*}
Remember that $ \eta \in \cW^1( [a,b] ) $ and $ \zeta \in E $ 
what is equivalent to the following
\begin{equation}\label{eq:pomoc2}
 \begin{split} 
 & \sum\limits_{\lambda} \lambda^2 \int_a^b \|\eta_{\lambda}\|^2_{H^0} ds < + \infty \text{ and } 
  \sum\limits_{\lambda}  \int_a^b \|\partial_s \eta_{\lambda}\|^2_{H^0} ds < + \infty \\
&   \sum\limits_{\lambda} \abs{\lambda}^3 \|\zeta_{\lambda} \|_{H^0}^2 < + \infty
 \end{split}
 \end{equation}
 As $ \xi_{\lambda} $ satisfies the equation \eqref{eq:ksi} 
 and $ \eta_{\lambda} $ satisfy \eqref{eq:pomoc2} it is enough to 
 prove 
 \begin{equation}\label{eq:ksi1}
  \sum\limits_{\lambda} \lambda^4 \|\xi_{\lambda} \|^2_{\cW^0([a,b])} = \sum\limits_{\lambda} \lambda^4 \int_a^b \|\xi_{\lambda}\|^2_{H^0} ds < + \infty.
 \end{equation}
 Write $ \xi_{\lambda} = v_{\lambda} + w_{\lambda} $, where 
 \begin{align*}
  v_{\lambda}(s,t) = \int_a^s \eta_{\lambda} (y,t) e^{-\lambda ( s-y) } dy, \; \lambda> 0 \\
  w_{\lambda}(s,t) = e^{-\lambda ( s- a)} \zeta_{\lambda}, \; \lambda > 0
 \end{align*}
and analogously for $ \lambda < 0 $. Notice that $ v_{\lambda} = K_{\lambda} * \chi_{[a,s)} \cdot \eta_{\lambda} $, 
where 
\[ K_{\lambda} (s,t) = \begin{cases}
                        e^{-\lambda s} , \;\; s\geq 0 \\
                        0, s < 0 
                       \end{cases}
\]
for $ \lambda > 0 $. Obviously $ \| K_{\lambda} \|_{L^1} = \frac{1}{\lambda} $. 
Denote by $ d\mu_s(\lambda) = e^{-\lambda(s-y)} \chi_{[a,s)} (y) dy $. 
Then 
$$
\int_a^b d \mu_s(y) \leq \frac{1}{\lambda}, \;\;\forall s\in (a,b). 
$$
We can apply Jensen's inequality 
$$
\|v_{\lambda}(s,\cdot)\|_{H^0} \leq \int_a^b e^{-\lambda(s-y)} \chi_{[a,s)} \| \eta_{\lambda}\|_{H^0} dy = K_{\lambda} * f_{\lambda},
$$
where $ f_{\lambda} = \chi_{[a,s)}\|\eta_{\lambda}\|_{H^0} $. 
From Young's inequality we obtain 
$$
\| v_{\lambda}\|_{\cW^0([a,b])} \leq \| K_{\lambda}\|_{L^1} \|\eta_{\lambda} \|_{\cW^0([a,b])}
\leq \frac{1}{\lambda} \|\eta_{\lambda} \|_{\cW^0([a,b])}.
$$
Thus it follows from \eqref{eq:pomoc2} that 
\begin{equation}\label{eq:pomoc3}
\sum\limits_{\lambda} \lambda^4 \int_a^b\|v_{\lambda}\|^2_{H^0} ds 
					\leq \sum\limits_{\lambda}\lambda^2 \int_a^b \|\eta_{\lambda}\|^2_{H^0} ds< +\infty.
\end{equation}
On the other hand $ \|w_{\lambda}\|^2_{\cW^0([a,b])} \leq \frac{1}{2\lambda} \|\zeta_{\lambda}\|^2_{\cW^0([a,b])} $, 
thus it follows from \eqref{eq:pomoc2} that 
\begin{equation}\label{eq:pomoc4}
\sum\limits_{\lambda} \lambda^4 \|w_{\lambda}\|^2_{\cW^0([a,b])} < +\infty. 
\end{equation}
From \eqref{eq:pomoc3} and \eqref{eq:pomoc4} follows that $\xi $ satisfies 
\eqref{eq:ksi1}. 
This proves $ i)$. \\
\medskip 
{\bf Step 4. Proof of $ii)$.} 
The proof of part $ii) $ is analogous to the proof 
of part $i) $. 
We just explain the differences. Let for example $ a=0 $ 
and $ b=+\infty $. In this case in order to prove 
the inequality \eqref{eq_inj} we repeat the same 
procedure as in the proof of the inequality \eqref{eq:inq_inj}
just with $ b= + \infty $. Notice that in this case 
$$ \int _0^{+\infty} \p_s \langle \xi, A \xi \rangle_{H^0} = 
- \inner{\xi(0)}{ A \xi(0)} = 
- \| \pi^+ ( \xi(0))\|_{1/2} + \|\pi^- ( \xi(0)) \|_{1/2}.$$
The proof of the surjectivity of the maps 
$ F^{\pm} $ is also analog to the proof of surjectivity 
of the mapping $ F$, thus we use again eigenspace decomposition 
of the function $ \xi, \zeta $ and $ \eta $. The solution $ \xi $ 
of the boundary value problem 
$$ 
\p_s \xi + A \xi = \eta , \pi^+ ( \xi(0))= \zeta^+ 
$$
is given analogously to the equation \eqref{eq:xilamb}. 
The eigenvectors $ \xi_{\lambda} , \lambda > 0 $ are given 
as in \eqref{eq:xilamb} in the case $ a = 0 $, whereas $ \xi_{\lambda}, \lambda < 0 $
are given by 
$$ 
\xi_{\lambda} (s, t) = - \int\limits_{s}^{+\infty} \eta_{\lambda} (y, t) e^{\lambda (  y-s)} dy.
$$ 
The rest of the proof is word by word the same. 
\end{proof}

\section{Elliptic regularity}\label{sec:cor_setup}
In this section we shall prove some corollaries of the Theorem 
\ref{thm:main_inq} for the specific choice of the linear 
operator $A$ and its domain. 
 \begin{PARA}[{\bf The time independent case}] \label{PARA:Cor_time_independent}\rm
 Let $H^1= H^1_{bc}([0,1] ) $  and $ H^0 = L^2([0,1], \R^{2n} )$ 
 be as in \eqref{eq:hilspaces} and suppose that 
 the operator $A:H^1 \to H^0 $ has the following form
 \begin{equation}\label{eq:opA}
 A= J_0\p_t + S(t) :H^1_{bc}([0,1])\to L^2([0,1]) 
 \end{equation}
 where $J_0 $ is the standard complex structure. 
 We assume that the operator $A$ is bijective and self-adjoint. 
 Let $ E^{\pm}\subset L^2([0,1]) $ be generated by 
 positive and negative eigenvectors as in \ref{rem:lines} 
 corresponding to the above operator $A$. 
 Let $I=[a,b] $ or $ I = \R^{\pm} $ and let 
 \begin{align}\label{eq:h1_bc}
 H^1_{bc} ( I \times [0,1] )  &:= \{\xi \in W^{1,2} ( I \times [0,1] ) \Big| \xi(s,i) \in \R^n \times \{0\} \}\notag \\
 &= W^{1,2} ( I, H^0 ) \cap L^2( I, H^1)
 \end{align}
 Let $ \cW^1_{\mp} (I) $ be its 
 subspace defined as follows 
 \begin{equation}\label{eq:w1mp}
 \begin{split}
  \cW^1_{\mp} ( I) &:= \{ \xi \in H^1_{bc} ( I \times [0,1] ) \Big | \xi(a) \in E^-, \; \xi(b) \in E^+ \}\\
		    &=\{ \xi \in W^{1,2} ( I, H^0) \cap L^2 ( I, H^1) \Big | \xi(a) \in E^-, \; \xi(b) \in E^+ \}
  \end{split}
 \end{equation}
 Analogously in the case $ I = \R^{\pm} $ we can define $\cW^1_{\mp} ( \R^{\pm} ) $ 
 as 
 $$
 \cW^1_{\mp} ( \R^{\pm}) := \{ \xi \in H^1_{bc} ( \R^{\pm} \times [0,1] ) \bigg | \xi(0) \in E^{\mp} \}. 
 $$
 Let 
\begin{equation}\label{eq:h2_bc}
\begin{split}
  H^2_{bc} ( I \times [0,1])  &= \left \{\xi \in W^{2,2} (I\times [0,1], \R^{2n})\bigg|
  \begin{array}{l}
   \xi(s,i) \in \R^n \times \{0\},\; i=0,1 \\
   A\xi (s,i)\in \R^n \times \{0\} , \; i=0,1 
  \end{array}
\right\}\\
&= W^{2,2}( I, H^0) \cap W^{1,2} ( I, H^1) \cap  L^2 ( I, \Dom(A^2)) 
\end{split}
 \end{equation}
 Analogously we define a Hilbert subspace $ \cW^2_{\mp} (I) $ as 
 follows 
 \begin{equation}\label{eq:w2pm}
 \cW^2_{\mp}(I) := \{ \xi \in  H^2_{bc} ( I \times [0,1] ) \Big | \xi(a) \in E^-, \;\; \xi(b) \in E^+ \} ,
 \end{equation}
 and in a similar way we define in the case $ I=\R^{\pm} $ the space $ \cW^2_{\mp}(I) $. 
 From Theorem \ref{thm:main_inq} we derive some useful corollaries. 
 The next corollary follows directly from the mentioned theorem. 
 \end{PARA}
 \begin{corollary}[{\bf Bijective linearized operator}] \label{cor:bij_lin_op}
  Let $I=[a,b] $ or $ I = \R^{\pm} $ and let $A$ 
  be as in \eqref{eq:opA}. Let $ \cW^{i}_{\mp} (I) $,$ i=1,2 $
  be defined as above. Denote with $D_A $ 
  the following linear operator
  \begin{equation*}
  \begin{split}
   & D_A :\cW^1_{\mp} (I)\to L^2( I\times [0,1] )\\
   & D_A \xi = \p_s \xi + A \xi.
   \end{split}
  \end{equation*}
  The operator $D_A $ is bijective and similarly 
  $$ D_A : \cW^2_{\mp} (I) \to H^1_{bc} ( I \times [0,1] ) $$ 
  is bijective. 
 \end{corollary}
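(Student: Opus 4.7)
My plan is to obtain the corollary as a direct restriction of the bijective maps constructed in Theorem \ref{thm:main_inq}. First I would fix the identifications: with $H^0 = L^2([0,1],\R^{2n})$, $H^1 = H^1_{bc}([0,1])$, and $H^2 = \Dom(A^2)$, the hypothesis (HA) is in force by assumption on $A$, and by paragraph \ref{para:hil_striplike} the abstract spaces $\cW^i(I)$ of paragraph \ref{para:hilsp} coincide isometrically with $H^i_{bc}(I \times [0,1])$ for $i=1,2$. The boundary trace at $s = a$ or $s = b$ of an element of $\cW^i(I)$ then lies in the interpolation space $E^{i-1/2}$ described in \ref{rem:lines}, so the spectral projections $\pi^{\pm}$ act on these traces and the decomposition \eqref{eq:dec} is available.

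Next I would rewrite the side conditions in the definitions \eqref{eq:w1mp} and \eqref{eq:w2pm} in the language of these projections: since $E^{\pm}\subset H^0$ are orthogonal spectral subspaces, the condition $\xi(a)\in E^-$ is equivalent to $\pi^+(\xi(a))=0$, and $\xi(b)\in E^+$ is equivalent to $\pi^-(\xi(b))=0$. Therefore
\[
\cW^i_{\mp}(I) = \bigl\{\xi\in\cW^i(I)\,:\,\pi^+(\xi(a))=0,\ \pi^-(\xi(b))=0\bigr\},
\]
which is precisely the preimage, under the bijection $F$ of Theorem \ref{thm:main_inq}(i), of $\cW^{i-1}(I)\times\{0\}\times\{0\}$.

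The conclusion is now immediate. Restricting $F$ to this preimage gives a linear bijection onto $\cW^{i-1}(I)$, and this restriction is by definition $D_A\colon\cW^i_{\mp}(I)\to\cW^{i-1}(I)$. For $i=1$ the target is $L^2(I\times[0,1])$ and for $i=2$ it is $H^1_{bc}(I\times[0,1])$, yielding both claims when $I=[a,b]$. The case $I=\R^{\pm}$ is handled identically by applying Theorem \ref{thm:main_inq}(ii): the single boundary condition $\xi(0)\in E^{\mp}$ reads $\pi^{\pm}(\xi(0))=0$, so $D_A$ on $\cW^i_{\mp}(\R^{\pm})$ is the restriction of the bijection $F^{\pm}$ to the kernel of its second component, and is therefore a bijection onto $\cW^{i-1}(\R^{\pm})$.

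Because the statement is advertised as following directly from Theorem \ref{thm:main_inq}, there is no genuine obstacle here; the only step that requires attention is the bookkeeping identifying the subspace conditions on $\xi(a)$ and $\xi(b)$ in \eqref{eq:w1mp}, \eqref{eq:w2pm} with the vanishing of the spectral projections that appear as boundary data in the definitions of $F$ and $F^{\pm}$. Once this identification is made, both bijectivity statements are simply the restrictions of those already established bijections to the corresponding coordinate hyperplane in the target.
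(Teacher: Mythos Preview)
Your proposal is correct and matches the paper's approach exactly: the paper simply states that the corollary follows directly from Theorem~\ref{thm:main_inq}, and you have spelled out precisely the identification (vanishing of the spectral projections $\pi^+(\xi(a))$ and $\pi^-(\xi(b))$) that makes $D_A$ on $\cW^i_{\mp}(I)$ the restriction of the bijection $F$ (respectively $F^{\pm}$) to the preimage of $\cW^{i-1}(I)\times\{0\}\times\{0\}$.
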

 \begin{theorem} \label{thm:reg} 
 Let $ I=[a,b] $. Then the following hold
 \begin{itemize}
  \item[i)]{\bf($ W^{1,2} $ regularity)} Let $ \xi, \eta \in L^2(I \times [0,1] ) $ satisfy the following equality 
 \begin{equation}
  \int_0^1 \inner{\xi,}{D_A^*\zeta}_{H^0}ds = \int_0^1 \inner{\eta}{\zeta}_{H^0} ds, \;\; \forall \zeta\in \cW'(I),  
 \end{equation}
 where 
 $$ \cW'(I) = \{ \xi\in L^2( I, H^1) \cap W^{1,2} ( I, H^0) \Big | \zeta(a) \in E^+ , \;\; \zeta(b) \in E^- \} 
 $$
 and $ D_A^*\zeta = -\p_s \zeta  + A \zeta .$
 Then $ \xi \in \cW^1_{\mp}(I) $ and $ D_A\xi = \eta $. 
 \item[ii)] {\bf ($W^{2,2} $ regularity)}  Let $ \xi \in \cW^1_{\mp} (I) $ satisfy $ D_A \xi = \eta $ 
 for some $ \eta \in H^1_{bc} ( I \times [0,1] ) $. Then $ \xi \in \cW^2_{\mp} ( I) $. 
 \item[iii)]{\bf  ($ W^{1,p} $ regularity, $p> 2 $)} Let  $ \xi \in \cW^1_{\mp} (I) $ be such that $ D_A \xi = \eta \in L^p (I \times [0,1] ) $. 
Then $ \xi \in W^{1,p}_{bc} ( I \times [0,1] ) := \{ \xi \in W^{1,p} ( I \times [0,1] ) \Big | \xi(s,i) \in \R^n \times \{0\} , \; i=0,1 \} $.
 \end{itemize}

 \end{theorem}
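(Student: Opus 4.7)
My plan is to handle the three parts in sequence, bootstrapping from the bijectivity established in Corollary~\ref{cor:bij_lin_op}.

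For \textbf{(i)}, the idea is duality: first use Corollary~\ref{cor:bij_lin_op} to produce a strong solution $\xi_0\in\cW^1_{\mp}(I)$ of $D_A\xi_0=\eta$, and then show $\xi=\xi_0$. For any test $\zeta\in\cW'(I)$, integration by parts in $s$ yields
\begin{equation*}
\int_a^b\langle D_A\xi_0,\zeta\rangle_{H^0}\,ds
=\int_a^b\langle\xi_0,D_A^*\zeta\rangle_{H^0}\,ds
+\langle\xi_0(b),\zeta(b)\rangle_{H^0}-\langle\xi_0(a),\zeta(a)\rangle_{H^0}.
\end{equation*}
Because $A$ is self-adjoint, the eigenspaces $E^+$ and $E^-$ are $H^0$-orthogonal, so the boundary terms vanish: $\xi_0(b)\in E^+$ is paired with $\zeta(b)\in E^-$ and $\xi_0(a)\in E^-$ with $\zeta(a)\in E^+$. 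Combined with the hypothesis on $\xi$ this gives $\int\langle\xi-\xi_0,D_A^*\zeta\rangle_{H^0}\,ds=0$ for every $\zeta\in\cW'(I)$. The time-reversal $s\mapsto a+b-s$ identifies $D_A^*$ on $\cW'(I)$ with $D_A$ on $\cW^1_{\mp}(I)$, so Corollary~\ref{cor:bij_lin_op} implies $D_A^*\cW'(I)=L^2(I\times[0,1])$, and hence $\xi=\xi_0\in\cW^1_{\mp}(I)$.

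For \textbf{(ii)}, the plan is even shorter. Corollary~\ref{cor:bij_lin_op} also asserts that $D_A\colon\cW^2_{\mp}(I)\to H^1_{bc}(I\times[0,1])$ is bijective, so there exists $\xi_0\in\cW^2_{\mp}(I)$ with $D_A\xi_0=\eta$. Since $\xi$ and $\xi_0$ both lie in $\cW^1_{\mp}(I)$ and map to the same element under $D_A$, which is injective there, they coincide; thus $\xi\in\cW^2_{\mp}(I)$.

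Part \textbf{(iii)} will be the main obstacle, since it sits outside the Hilbert framework of Theorem~\ref{thm:main_inq}. The plan is to rewrite $D_A\xi=\p_s\xi+J_0\p_t\xi+S(t)\xi$ as a zeroth-order perturbation of the Cauchy-Riemann operator $\dbar=\tfrac12(\p_s+J_0\p_t)$ on the strip $I\times[0,1]$ with the totally real Lagrangian boundary condition $\xi(s,i)\in\R^n\times\{0\}$. Near an interior point I would invoke the classical Calder\'on-Zygmund $L^p$ estimate for $\dbar$; near a $t$-boundary component I would use Schwarz reflection across $\R^n\times\{0\}$ to reduce to the interior case on an enlarged strip; near the $s$-endpoints $s=a,b$ I would combine this with a cutoff argument, exploiting that $\xi$ is already known to lie in $\cW^1_{\mp}(I)\subset W^{1,2}$ so that its trace at the endpoints is well-behaved. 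A partition-of-unity assembly of these local estimates then yields $\xi\in W^{1,p}_{bc}(I\times[0,1])$, with the lower-order term $S(t)\xi$ absorbed via a Sobolev-embedding bootstrap from the known $W^{1,2}$ regularity. The delicate point is the behavior at the four corners $(a,0),(a,1),(b,0),(b,1)$, where the spectral $s$-boundary projections onto $E^{\pm}$ meet the Lagrangian $t$-boundary; there one must combine the reflection in $t$ with a localization in $s$ that only uses the already-established $H^1$ control at the endpoints, rather than any $L^p$ trace theory.
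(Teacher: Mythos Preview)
Your arguments for parts~(i) and~(ii) are essentially the paper's proof; you have simply made the integration-by-parts step and the orthogonality $E^+\perp E^-$ in $H^0$ explicit, whereas the paper states the identity $\int\langle\txi,D_A^*\zeta\rangle=\int\langle D_A\txi,\zeta\rangle$ without comment and then invokes the bijectivity of $D_A^*$ on $\cW'(I)$ (proved ``analogously'' rather than via your time-reversal).

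For part~(iii) you take a genuinely different route from the paper, and yours has an unresolved step. The paper does \emph{not} localize or treat corners at all. Instead it extends $\eta$ by zero to $\teta\in L^p(\R\times[0,1])$, invokes Lemma~\ref{lem:lp_est} (bijectivity of $D_A:W^{1,p}_{bc}(\R\times[0,1])\to L^p(\R\times[0,1])$) to obtain a global $\txi\in W^{1,p}_{bc}(\R\times[0,1])$ with $D_A\txi=\teta$, and then observes from the eigenvector decomposition that on $(-\infty,a]\times[0,1]$, where $D_A\txi=0$, the decaying solution must have $\txi(a)\in E^-$ (positive-eigenvalue modes would blow up as $s\to-\infty$), and similarly $\txi(b)\in E^+$. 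Hence $\txi|_I\in\cW^1_{\mp}(I)$ solves $D_A\txi=\eta$ on $I$, and uniqueness from Corollary~\ref{cor:bij_lin_op} gives $\xi=\txi|_I\in W^{1,p}_{bc}(I\times[0,1])$. This global extension trick completely sidesteps the corner analysis you flag as delicate. Your local Calder\'on--Zygmund plus reflection approach could in principle be carried out, but you would need to actually prove the corner estimate rather than merely indicate the ingredients, and the paper's method is both shorter and avoids the issue entirely.
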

\begin{proof}$ i) $
Let $ \txi\in \cW^1_{\mp}(I) $ be a unique solution of 
 the equation $ D_A \txi = \eta $. From Corollary \ref{cor:bij_lin_op} 
 we have that such $ \txi $ exists and is unique. Notice that 
 for all $ \zeta \in \cW'(I) $ we have 
 $$
 \int_0^1 \inner{\txi}{D_A^* \zeta}_{H^0}ds = \int_0^1 \inner{D_A\txi}{\zeta}_{H^0}ds = \int_0^1 \inner{\eta}{\zeta}_{H^0}.
 $$
 Thus we have that 
 $$
 \int_0^1 \inner{\txi - \xi}{D_A^* \zeta}_{H^0}=0 \; \forall \zeta \in \cW'(I).
 $$
 One can prove analogously as in Corollary \ref{cor:bij_lin_op} that 
 the operator $ D_A^*: \cW'(I) \to L^2 ( I \times [0,1] ) $ is bijective, 
 thus we have that $ \xi = \txi$.\\
 \medskip 
 $ii) $ Let $ \txi \in \cW^2_{\mp} (I) $ be a unique solution 
 of the equation $ D_A \xi = \eta $. From Corollary \ref{cor:bij_lin_op} 
 it follows that such $ \txi $ exists and is unique. Notice that the difference 
 $ \xi' = \txi - \xi $ satisfies the equation $ D_A \xi' = 0 $ and $ \xi' \in \cW^1_{\mp} (I)$. 
 Thus, it follows from Corollary \ref{cor:bij_lin_op} that $ \txi = \xi $. \\
 \medskip 
$iii)$ Let $ \teta $ be the extension of $ \eta $ on the whole of $ \R \times [0,1] $. 
 \[\teta = \begin{cases}
            \eta (s,t), \;\; s\in I \\
            0, \;\;  s\notin I
           \end{cases}
 \]
 It follows from Lemma \ref{lem:lp_est} that the linear operator 
 $$ 
 D_A : W^{1,p}_{bc} ( \R \times [0,1] ) \to L^p ( \R \times [0,1] ) 
 $$
 is bijective. 
 Thus there exists a unique $ \txi \in W^{1,p}_{bc} ( \R \times [0,1] ) $ 
 such that $ D_A \txi = \teta $. Now in the case that $ I = [a,b] $ for example 
 it follows using eigenvector decomposition that $ \txi(a) \in E^- $, as $ D_A\txi=0 $ 
 on the interval $ (-\infty, a] \times [0,1] $ and analogously 
 $ \txi(b) \in E^+ $. As  this implies that $ \txi \in \cW^1_{\mp} (I) $
 and $ D_A ( \txi - \xi ) = 0 $, we have from Corollary \ref{cor:bij_lin_op} 
 that $ \txi= \xi \in W^{1,p}_{bc} ( I \times [0,1] ) $. 
 
 \end{proof}

\begin{PARA}[{\bf The time dependent case}]\label{para:bij_lim}\rm
In the previous two sections we have examined linearized operators of 
the form $ D_{A} = \p_s + A  $, where the operator $ A $ was bijective, 
self-adjoint and time independent. 
Now we allow the operator $A$ to depend on time- $s$ as well,  
but for simplicity we assume that the operators $A(s)$ 
have the following form
$$
A(s)= J_0 \p_t + S(s,t),
$$
where $ J_0 $ is the standard complex structure and 
$S\in W^{1,2} ( I \times [0,1], \R^{2n\times 2n} ) $. 
Let $ H^0 = L^2([0,1])$ and suppose that $ H^1:=\Dom(A(s)) $ and $H^2:= \Dom(A(s)^2) $ 
are $s$ independent. In other words the operators $ A(s) $ can 
be written as the sum of some time independent operator $A$ which satisfies 
~(HA) and some matrix valued function $R$ of $W^{1,2} $ class. 
Thus, 
$$ D \xi =  \p_s \xi + A(s) \xi = \p_s \xi + A \xi + R(s,t) \xi ,$$
where $ R \in W^{1,2} ( I \times [0,1], \R^{2n\times 2n} )$. 
Let $I $ be either an interval of the form $ I= [a,b] $ 
or $ I= \R^{\pm} $ and let $ H^i_{bc} ( I \times [0,1] ), \; i=1,2 $ be 
as in \eqref{eq:h1_bc} and  \eqref{eq:h2_bc} respectively and let $ H^0_{bc} ( I \times [0,1] )$ 
be just the standard $ L^2(I\times [0,1] ) $. Observe the 
linear operator 
\begin{equation}\label{eq:opD}
\begin{split}
  &D : H^i_{bc} ( I \times [0,1] )\to H^{i-1}_{bc} ( I \times [0,1] ) ,\;\; i=1,2\\
  &D \xi= \p_s \xi + J_0 \p_t\xi + S(s,t)\xi  = \p_s\xi  + A(s)\xi = \p_s \xi + A \xi + R(s,\cdot)\xi 
\end{split}
\end{equation}
\begin{description}
 \item[(H1)] In the case $ I = \R^{\pm} $ we additionally assume that 
 the 
 $$
 \lim\limits_{s\to \pm \infty} \|S(s,t) -S^{\pm} (t)\|_{C^1([s, \infty) \times [0,1] ) }=0,
 $$
 where $ S^{\pm}:[0,1] \to \R^{2n\times 2n}$ 
 are smooth functions and that the limit operators 
 $$ 
 A^{\pm} = J_0 \p_t + S^{\pm} (t):H^1\to H^0 
 $$
 satisfy ~(HA). 
\end{description}

\end{PARA}
The analogous statement as in Theorem \ref{thm:reg} 
holds in the case of time-dependent operator $A$. We formulate 
and prove the analogous theorem. 
\begin{theorem}[{\bf Regularity }]\label{thm:reg1}
 Let $ I= [a,b] $ and let the linear operator 
 $D$ be as in \eqref{eq:opD}. 
 Then the following statements hold 
 \begin{itemize}
  \item[i)] Let $ \xi, \eta \in L^2 ( I \times [0,1] ) $ and suppose that 
  the following equality 
  $$\int_I \inner{\xi}{D^*\zeta}_{H^0} ds = \int_I\inner{\eta}{\zeta}_{H^0} ds, $$
  holds for all $ \xi \in \cW'(I) = \{ \xi\in L^2( I, H^1) \cap W^{1,2} ( I, H^0) \Big | \zeta(a) \in E^+ , \;\; \zeta(b) \in E^- \} $, 
  where $ D^* = D_A^* + R^T = -\p_s + A + R^T $. 
  Then $ \xi $ is a strong solution of the equation $ D \xi = \eta $ and $ \xi \in \cW^1_{\mp} (I) $. 
  \item[ii)] Let $ \xi \in \cW^1_{\mp} (I) $, $ \eta \in H^1_{bc} ( I \times [0,1] ) $ satisfy 
  $ D \xi = \eta $. Then $ \xi \in \cW^2_{\mp}(I) $. 
  \item[iii)]  Let $ \xi \in \cW^1_{\mp} (I) $ and $ \eta \in L^p( I \times [0,1) , \; p> 2$
  satisfy the equation $ D \xi = \eta $. Then $ \xi \in W^{1,p}_{bc} ( I \times [0,1] ) $. 
 \end{itemize}
\end{theorem}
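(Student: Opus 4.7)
The plan is to reduce Theorem \ref{thm:reg1} to Theorem \ref{thm:reg} by treating the $s$-dependent part as a lower-order perturbation. Writing $D = D_A + R$, where $D_A = \p_s + A$ with $A = J_0\p_t + S_0(t)$ is the reference time-independent bijective operator from \ref{PARA:Cor_time_independent} and $R(s,t) = S(s,t) - S_0(t)$ is multiplication by a matrix in $W^{1,2}(I\times[0,1])$, we rewrite $D\xi = \eta$ as $D_A\xi = \eta - R\xi$. The 2-dimensional Sobolev embedding $W^{1,2}(I\times[0,1])\hookrightarrow L^p$ for every $p<\infty$ makes $R\xi$ as regular as $\xi$ permits. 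Dually, the adjoint satisfies $D^*\zeta = D_A^*\zeta + R^T\zeta$, so integration identities transfer between the two operators.

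For part (i), testing the hypothesis against $\zeta\in\cW'(I)$ and moving the $R^T$-term to the right gives
\begin{equation*}
\int_I\inner{\xi}{D_A^*\zeta}_{H^0}\,ds = \int_I\inner{\eta - R\xi}{\zeta}_{H^0}\,ds.
\end{equation*}
A short bootstrap handles the integrability: $R\xi$ is a~priori only in some $L^q$ with $q<2$, but the same argument as in Theorem \ref{thm:reg}(i) (whose proof only used bijectivity of $D_A$) applies to $L^q$ data, upgrades $\xi$ to $W^{1,q}_{bc}$, hence to $L^p$ for all $p<\infty$, and now $R\xi\in L^2$; then Theorem \ref{thm:reg}(i) proper yields $\xi\in\cW^1_{\mp}(I)$ with $D_A\xi = \eta - R\xi$, i.e.\ $D\xi = \eta$. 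For part (iii), given $\xi\in\cW^1_{\mp}(I)\subset L^r$ for every $r<\infty$ and $\eta\in L^p$, the product $R\xi$ lies in $L^p$ by H\"older applied to $R\in L^r$ and $\xi\in L^{s}$ with $1/r + 1/s = 1/p$. Hence $D_A\xi = \eta - R\xi \in L^p$, and Theorem \ref{thm:reg}(iii) gives $\xi\in W^{1,p}_{bc}(I\times[0,1])$.

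Part (ii) is the most delicate and requires a two-step bootstrap. First, since $\eta\in H^1_{bc}$ embeds into $L^p$ for every $p<\infty$, part (iii) upgrades $\xi$ to $W^{1,p}_{bc}$ for some $p>2$, whence $\xi\in L^\infty$ by Morrey's embedding. Now we estimate $\p(R\xi) = (\p R)\xi + R(\p\xi)$ in $L^2$: the first summand lies in $L^2$ since $\p R\in L^2$ and $\xi\in L^\infty$; the second lies in $L^r$ for some $r>2$, hence in $L^2$ on the bounded strip, using $R\in L^q$ for $q$ large and $\p\xi\in L^p$ with $p>2$. Therefore $\eta - R\xi\in W^{1,2}(I\times[0,1])$, and combining with the equation $A\xi = \eta - \p_s\xi - R\xi$ one verifies the boundary trace condition $A\xi(s,i)\in\R^n\times\{0\}$ defining $H^1_{bc}$, so that the right-hand side actually lies in $H^1_{bc}(I\times[0,1])$. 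Theorem \ref{thm:reg}(ii) applied to $D_A\xi = \eta - R\xi$ then yields $\xi\in\cW^2_{\mp}(I)$.

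The main obstacle will be the boundary-condition bookkeeping in part (ii): even though $\eta$ and $\xi$ satisfy the Lagrangian boundary conditions at $t=0,1$, the matrix $S(s,t)$ need not preserve the Lagrangian splitting, so $R\xi$ a~priori misses the boundary conditions required for membership in $H^1_{bc}$. One must use the \emph{equation} $D\xi=\eta$ along the boundary to show that the correct trace conditions propagate; this is where the compatibility between the time-independent part $A$ of the operator and the perturbation $R$, combined with the boundary conditions inherent in the definitions of $\cW^1_{\mp}$ and $\cW^2_{\mp}$, becomes essential. Everything else is routine application of the time-independent case together with standard $2$-dimensional Sobolev embeddings.
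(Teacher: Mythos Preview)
Your approach is the same as the paper's: rewrite $D\xi=\eta$ as $D_A\xi=\eta-R\xi$ and invoke the corresponding part of Theorem~\ref{thm:reg}, bootstrapping through $W^{1,p}$ in part~(ii). The boundary-condition obstacle you anticipate in part~(ii) is not actually present: the standing hypothesis in~\ref{para:bij_lim} that $\Dom(A(s)^2)=H^2$ is $s$-independent forces $R(s,i)$ to preserve $\R^n\times\{0\}$ at $t=0,1$, so $R\xi\in H^1_{bc}$ follows automatically from $\xi\in W^{1,p}_{bc}$ without appealing to the equation along the boundary.
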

\begin{proof} 
 $i) $ Notice that 
 \begin{align*}
  &\int_I \inner{\xi}{D^* \zeta}_{H^0} ds = \int_I \inner{\xi}{D_A^* \zeta + R^T \zeta}_{H^0} ds \\
  &= \int_{I} \inner{\xi}{D_A^* \zeta}_{H^0} ds + \int_I \inner{R\xi}{\zeta}_{H^0} ds. 
 \end{align*}
 Thus we have that 
 \begin{align*}
  \int_I\inner{\xi}{D_A^*\zeta}_{H^0} ds = \int_I\inner{\eta- R \xi}{\zeta}_{H^0}ds = \int_I \inner{\eta'}{\zeta}_{H^0}ds,
 \end{align*}
 and $ \eta' = \eta- R \xi\in L^2(I \times [0,1] ) $. Thus it 
 follows from Theorem \ref{thm:reg}, that $ \xi \in \cW^1_{\mp} (I) $ 
 and it satisfies the equality $ D_A \xi = \eta' $. Thus $ D(\xi) = D_A \xi + R \xi = \eta $.\\
 \medskip 
 $ ii) $ As $ D \xi = D_A \xi + R \xi = \eta $, and both $ \xi $ and $ R $ are 
 $ W^{1,2} $ functions their product will be an $ L^p $ function for any $p > 1 $. 
 Thus, $ D_A \xi = \eta - R \xi \in L^p ( I \times [0,1] ) $, $p > 2 $. 
 From Theorem \ref{thm:reg} it follows that $ \xi \in W^{1,p}_{bc} ( I\times [0,1] ) $. 
 This implies that the product $ R \cdot \xi \in W^{1,2}_{bc} ( I \times [0,1] )$ 
 and $ D_A \xi = \eta' \in W^{1,2}_{bc} ( I \times [0,1] ) $. Again, from 
 Theorem \ref{thm:reg} we have that $ \xi \in \cW^2_{\mp} (I) $. 
 The proof of part $iii) $ is analogous to the proof of $ ii) $ 
 and we shall not repeat it. 
\end{proof}

\begin{theorem}[{\bf Estimates}]\label{thm:main_inq1}
 Let $H^i, \; i=0,1,2 $ be as in \ref{para:bij_lim}
 and let $D$ be as in \eqref{eq:opD}. 
 Let $ A = J_0 \p_t + S_1 (t) : H^1 \to H^0 $ 
 and $ B =  J_0 \p_t + S_2 (t) : H^1 \to H^0 $ satisfy ~(HA). 
 Let $ E^{\pm}_A, E^{\pm}_B$ be Hilbert spaces generated by positive 
 and negative eigenvectors of the operators $ A$ and $ B $ 
 respectively as in \ref{rem:lines}. 
 Denote with $\pi^{\pm}_A $ and $  \pi^{\pm}_B $  
 the corresponding projections, as in \ref{rem:lines}. 
 \begin{itemize}
  \item [i)] Let $ I = [a,b] $. Then there exist a constant $ c > 0 $ 
  and a compact operator 
  $$
  K: H^2_{bc} ( I \times [0,1] )\to H^1_{bc} ( I \times [0,1] ) 
  $$
  such that the following inequality holds for all 
  $ \xi \in H^2_{bc} ( I \times [0,1] ) $. 
  \begin{equation}\label{eq:main_ingD}
   \| \xi \|_{2,2} \leq c \Big ( \| D \xi \|_{1,2} + \| K \xi \|_{1,2} + \| \pi_A^+ ( \xi(a)) \|_{3/2} + \| \pi^-_B (\xi(b))\|_{3/2} \Big ).
  \end{equation}
  \item[ii)] Let $ I = \R^{\pm} $ and assume ~(H1). 
  Then there exist a constant $ c > 0 $ and a compact 
  operator $ K: H^2_{bc} ( \R^{\pm} \times [0,1] )\to H^1_{bc} ( \R^{\pm} \times [0,1] ) $ 
  such that the following inequality holds 
  for all $ \xi \in H^2_{bc} ( \R^{\pm} \times [0,1] ) $. 
  \begin{equation}\label{eq:pp0}
   \| \xi \|_{2,2} \leq c \Big ( \|  D \xi \|_{1,2} + \|K \xi \|_{1,2} + \| \pi^{\pm}_A ( \xi (0)) \|_{3/2} \Big ). 
  \end{equation}
 \end{itemize}
\end{theorem}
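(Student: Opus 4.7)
The plan is to reduce the time-dependent operator $D=\partial_s+A(s)$ to the bijective time-independent operators $D_A$ and $D_B$ of Corollary~\ref{cor:bij_lin_op} via a partition of unity in the $s$-variable, treating the commutator terms and the perturbations $A(s)-A$, $A(s)-B$ as a compact remainder.

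For part (i), I would fix a smooth cutoff $\alpha : [a,b]\to[0,1]$ with $\alpha\equiv1$ on a neighborhood of $a$ and $\alpha\equiv0$ on a neighborhood of $b$. Since $(\alpha\xi)(b)=0$, Theorem~\ref{thm:main_inq}(i) applied to $\alpha\xi$ with reference operator $A$ yields
\begin{equation*}
\|\alpha\xi\|_{2,2}\;\le\;c\bigl(\|D_A(\alpha\xi)\|_{1,2}+\|\pi_A^+(\xi(a))\|_{3/2}\bigr).
\end{equation*}
Expanding $D_A(\alpha\xi)=\alpha D\xi+\alpha'\xi-\alpha R_A\xi$ with $R_A(s):=A(s)-A$ gives
\begin{equation*}
\|\alpha\xi\|_{2,2}\;\le\;c\bigl(\|D\xi\|_{1,2}+\|\alpha R_A\xi\|_{1,2}+\|\alpha'\xi\|_{1,2}+\|\pi_A^+(\xi(a))\|_{3/2}\bigr).
\end{equation*}
An analogous estimate for $(1-\alpha)\xi$ with $D_B$ and $R_B(s):=A(s)-B$, combined with the triangle inequality $\|\xi\|_{2,2}\le\|\alpha\xi\|_{2,2}+\|(1-\alpha)\xi\|_{2,2}$, delivers \eqref{eq:main_ingD} with
\begin{equation*}
K\xi\;:=\;\alpha R_A\xi\;+\;(1-\alpha)R_B\xi\;+\;\alpha'\xi\;-\;(1-\alpha)'\xi.
\end{equation*}
Since $R_A,R_B=S(s,t)-S_{1,2}(t)$ are $W^{1,2}$ matrix-valued multiplications and $I\times[0,1]$ is bounded, multiplication by $R_A,R_B$ is bounded from $H^2_{bc}(I\times[0,1])$ into itself (by two-dimensional Sobolev product estimates), and the Rellich embedding $H^2_{bc}(I\times[0,1])\hookrightarrow H^1_{bc}(I\times[0,1])$ is compact; hence $K$ is compact.

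For part (ii), take $I=\R^{+}$ (the case $\R^{-}$ is symmetric). Fix $T>0$ large and choose $\alpha:\R^{+}\to[0,1]$ with $\alpha\equiv1$ on $[0,T]$ and $\alpha\equiv0$ on $[2T,\infty)$. Apply Theorem~\ref{thm:main_inq}(ii) to $(1-\alpha)\xi$ with the limit operator $A^{+}$: by hypothesis~(H1), $\|A(s)-A^{+}\|_{C^1}\to 0$ as $s\to\infty$, so for $T$ sufficiently large the perturbation term $\|(1-\alpha)(A(s)-A^{+})\xi\|_{1,2}$ is bounded by $\tfrac{1}{2c}\|\xi\|_{2,2}$ and can be absorbed into the left-hand side. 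For the piece $\alpha\xi$, supported in $[0,2T]$, invoke part~(i) on the bounded interval $[0,2T]$ (taking $B=A^{+}$) to obtain control by a compact operator $K_0$ on $H^2_{bc}([0,2T]\times[0,1])$. Pre-composing $K_0$ with the restriction map, together with the commutator term $\alpha'\xi$ which is supported in $[T,2T]$ (also compact by Rellich on this overlap), yields the required compact $K$.

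The main obstacle is part (ii): the full Rellich embedding $H^2_{bc}(\R^{+}\times[0,1])\hookrightarrow H^1_{bc}(\R^{+}\times[0,1])$ is \emph{not} compact, so $K$ must be explicitly localized to a bounded $s$-interval, with the tail contribution absorbed using~(H1) rather than bounded by $\|K\xi\|_{1,2}$. Making this absorption rigorous---propagating the decay of $A(s)-A^{+}$ through the $W^{1,2}$ norm of the product $(A(s)-A^{+})\xi$ (where both derivatives in $s$ and $t$ of $\xi\in H^2_{bc}$ enter) and simultaneously keeping the $\|\cdot\|_{3/2}$ trace terms under control---is the delicate technical point.
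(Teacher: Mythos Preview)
Your approach is essentially the same as the paper's: reduce to the time-independent estimate of Theorem~\ref{thm:main_inq} via a cutoff in $s$, and treat the perturbation $A(s)-A$ (respectively $A(s)-B$, $A(s)-A^{+}$) together with the commutator $\alpha'\xi$ as a compact remainder on bounded intervals, absorbing the tail contribution in~(ii) using~(H1). The paper organizes this into four steps---first the special cases $A=B$ on $[a,b]$ and $A=A^{\pm}$ on $\R^{\pm}$, then the general cases by a further cutoff---whereas you do each part in one pass; this is a harmless streamlining.

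There is one genuine slip in your compactness argument for part~(i). You claim that multiplication by $R_A\in W^{1,2}(I\times[0,1])$ is bounded from $H^2_{bc}$ \emph{into itself}, and then invoke the compact Rellich embedding $H^2_{bc}\hookrightarrow H^1_{bc}$. But in two dimensions $W^{1,2}$ does not embed in $L^{\infty}$, so for $R\in W^{1,2}$ and $\xi\in H^2$ the term $R\,\partial^2\xi$ need not lie in $L^2$; the factorization $H^2_{bc}\to H^2_{bc}\to H^1_{bc}$ fails. The correct route (and this is what the paper does) is to estimate $\|R\xi\|_{1,2}$ directly: by the product rule and H\"older,
\[
\|R\xi\|_{1,2}\;\lesssim\;\|R\|_{1,2}\bigl(\|\xi\|_{L^{\infty}}+\|\xi\|_{W^{1,4}}\bigr),
\]
and the embeddings $H^2_{bc}\hookrightarrow L^{\infty}$ and $H^2_{bc}\hookrightarrow W^{1,4}$ are compact on the bounded domain $I\times[0,1]$. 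Thus $K:H^2_{bc}\to H^1_{bc}$ is indeed compact, but via the factorization $H^2_{bc}\hookrightarrow L^{\infty}\cap W^{1,4}\to H^1_{bc}$, not the one you wrote. With this correction your argument goes through.
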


\begin{proof}
We prove this theorem in the next four steps.\\
\medskip 

\noindent {\bf Step 1.} {\bf Proof of the inequality \eqref{eq:main_ingD} in the case that $ A= B $.} \\
\medskip 
 Denote with $D_{A}$ linear operator 
 $ D_{A} = \p_s + A= J_0 \p_t + S_1(t)$. 
 From Theorem \ref{thm:main_inq} we have the following: 
 \begin{align}\label{eq:lin_fin}
   \| \xi \|_{2,2} & \leq c \Big ( \|D_{A} \xi \|_{1,2} + \| \pi^+_A ( \xi(a))\|_{3/2} + \|\pi^-_A (\xi(b))\|_{3/2} \Big ) \notag \\
		  & \leq c \Big ( \| D\xi \|_{1,2} + \| ( D - D_A ) \xi \|_{1,2} + \| \pi^+_A ( \xi(a))\|_{3/2} + \|\pi^-_A (\xi(b))\|_{3/2} \Big ) \notag  \\
		  & \leq c \Big ( \| D\xi \|_{1,2} + \| ( S - S_1)\xi \|_{1,2} + \| \pi^+_A ( \xi(a))\|_{3/2} + \|\pi^-_A (\xi(b))\|_{3/2} \Big )\notag \\
		  & \leq c \Big (\| D\xi \|_{1,2} + \|R \xi \|_{1,2} + \| \pi^+_A ( \xi(a))\|_{3/2} + \|\pi^-_A (\xi(b))\|_{3/2} \Big ). 
 \end{align}
 Notice that the difference $ S(s,t) - S_1(t) =R(s,t)$ and the matrix valued function 
 $R \in W^{1,2} ( I \times [0,1] ) $. 
 The operator $K(\xi ) := R\xi $ is a compact operator. 
 This follows by the following observation 
\begin{align*}
\| K(\xi )\|_{1,2} = \| R\xi \|_{1,2} &\leq \| dR \xi \|_{L^2} + \|R d \xi \|_{L^2} + \| R \xi \|_{L^2} \\
& \leq \| dR \|_{L^2} \| \xi \|_{L^{\infty}}  + \|R\|_{L^4} \|d\xi \|_{L^4} + \|R\|_{L^4} \| \xi \|_{L^4} \\
&\leq \|R\|_{1,2} \| \xi \|_{L^{\infty}} + c\|R\|_{1,2} \| \xi \|_{1,4} + c\|R\|_{1,2} \| \xi\|_{L^4} 
\end{align*}
As the embedding $ W^{2,2}( I \times [0,1] ) \hookrightarrow W^{1,4}( I \times [0,1] ) $ 
is compact as well as the embedding $W^{2,2}( I \times [0,1] ) \hookrightarrow L^{\infty} $, 
we have that the operator $K $ is compact. \\
\medskip

\noindent {\bf Step 2. Proof of the inequality \eqref{eq:pp0} in the case that $ A^{\pm} = A $. }\\
\medskip
 We do the proof in the case of positive half-infinite strips. 
 The case of negative strips is analogous. 
 Consider the following linear maps 
\begin{align*}
  F , F_{A}:
  H^2_{bc} ( \R^{+} \times [0,1] , \R^{2n} ) 
  \rightarrow H^1_{bc} ( \R^{+} \times [0,1] ) \times E^{+}_A , \\
  F(\xi)= ( D \xi, \pi^+ (\xi(0))), \;\; F_{A} ( \xi) = ( D_{A} \xi,\pi^+(\xi(0))) . 
\end{align*}
From Theorem \ref{thm:main_inq}
it follows that the map $ F_{A} $ is bijective and it satisfies 
the estimate:
\begin{align}\label{eq:linz_inf}
 \| \xi\|_{2,2} \leq c \big ( \| D_{A} ( \xi ) \|_{1,2} + \|\pi^{+}_A( \xi(0) )\|_{3/2} \Big ). 
\end{align}
 The operator $ F $ is just a compact perturbation of 
the operator $ F_{A} $ what can be proved as follows.
 Convergence $ S(s,t)\stackrel{C^1}{\longrightarrow} S_1(t) $
implies that for $ s_0 $ sufficiently large we have
$$ 
\|S(s,t) - S_1(t)\|_{C^1 ( [s_0, + \infty) \times [0,1] )} \leq \frac{1}{4c} , 
$$
where $ c $ is the constant of the inequality \eqref{eq:linz_inf}.
Let $ \beta : \R \rightarrow [0,1] $ be a smooth cut-off function 
with
\[\beta (s)=\begin{cases}
            1, s\leq s_0 , \\
            0, s\geq s_1 >> s_0
           \end{cases}
\]
and $ \| \beta \|_{C^1} \leq 2 $. 
From \eqref{eq:linz_inf} we obtain
\begin{align}\label{eq:lin_D}
\| \xi\|_{2,2} &\leq c \big ( \| D \xi \|_{1,2} + \| ( D - D_{A} ) \xi \|_{1,2} + \|\pi^{+}_A( \xi(0)) \|_{3/2} \Big )\notag \\
            & \leq c \Big ( \| D \xi \|_{1,2} + \| (S(s,t) - S_1(t)) \xi\|_{1,2} + \| \pi^{+}_A(\xi(0)) \|_{3/2} \Big ) \notag \\
            & \leq c \Big (  \| D \xi \|_{1,2} + \|( S(s,t)- S_1(t) ) \beta \xi \|_{1,2}  \notag \\
            & \qquad + \|( S(s,t) - S_1(t) )( 1-\beta) \xi\|_{1,2} +  \|\pi^{+}( \xi(0))\|_{3/2} \Big ) \notag \\
            & \leq 2c \Big ( \| D \xi \|_{1,2} 
               + \|K \xi\|_{W^{1,2} ([0, s_1] \times [0,1] ) } 
              + \|\pi^{+}_A ( \xi(0))\|_{3/2}   \Big ). 
\end{align}
Here the operator $K $ is given as multiplication 
by $ (S- S_1) \beta $ which has compact support and is $ W^{1,2} $ 
function. \\
\medskip
\noindent {\bf Step 3. Proof of \eqref{eq:main_ingD} in general case.} \\
In Step $ 1)$ we have proved the following inequality 
\begin{equation}\label{eq:pm1}
 \| \xi \|_{2,2} \leq c \Big ( \| D \xi \|_{2,2}  + \|K \xi \|_{1,2} + \| \pi^+_A ( \xi(a))\|_{3/2} + \|\pi^-_A ( \xi(b))\|_{3/2} \Big )
\end{equation}
and the same inequality follows when the operator $A$ is 
substituted with the operator $B $ on the right side 
of the inequality. Let $ \beta : [a,b] \to [0,1] $ be a smooth cut-off function 
such that 
\[\beta (s)=\begin{cases}
            1,\;\; s\leq  a + \frac{ b-a}{4} , \\
            0, \;\;s\geq  b - \frac{ b-a}{4}
           \end{cases}
 \]
 Apply the inequality \eqref{eq:pm1} to $ \beta \xi $, and the 
 same type of the inequality just with $A$ substituted with $B$ to $ (1-\beta) \xi $. 
 Thus the following two inequalities hold
 \begin{equation}\label{eq:pp1}
  \| \beta \xi \|_{2,2} \leq c \Big ( \| D ( \beta \xi ) \|_{1,2} + \| \beta K \xi \|_{1,2} + \|\pi^+_A ( \xi(a))\|_{3/2} \Big )
 \end{equation}
 and analogously we have 
 \begin{equation}\label{eq:pp2}
  \| ( 1- \beta ) \xi \|_{2,2} \leq c \Big ( \| D ( ( 1- \beta ) \xi ) \|_{1,2} + \| ( 1- \beta )K \xi \|_{1,2} + \| \pi^-_B( \xi (b) ) \|_{3/2} \Big ). 
 \end{equation}
Summing the inequalities \eqref{eq:pp1} and \eqref{eq:pp2} we obtain 
\begin{align}
 \| \xi \| \leq c \Big ( & \| \beta D \xi \|_{1,2} + \| ( 1- \beta ) D \xi \|_{1,2} + \| \dot{\beta} \xi \|_{1,2} + \|K \xi \|_{1,2}  \notag \\ & +  \|\pi^+_A ( \xi(a))\|_{3/2}  + \| \pi^-_B ( \xi (b) ) \|_{3/2} \Big )\notag \\
   \leq c \Big (  & \| D \xi \|_{1,2} + \| \xi \|_{1,2} + \| K\xi \|_{1,2} +
   \|\pi^+_A ( \xi(a))\|_{3/2}  + \| \pi^-_B ( \xi (b) ) \|_{3/2} \Big )
\end{align}
As, also the embedding $ H^2_{bc} ( I \times [0,1] ) \to H^1_{bc} ( I \times [0,1] ) $ 
is compact, the claim follows. \\
{\bf Step 4: Proof of \eqref{eq:pp0} in general.} \\
We prove the inequality in the case of positive half strips. 
The proof in the case of negative strips is analogous. Let $ \beta : \R^+ \to [0,1] $ 
be a smooth cut-off function with the properties: 
\[\beta (s) = \begin{cases}
               1, s \geq s_1 \\
               0, s\leq s_0
              \end{cases}
\]
Applying the results of Step $2)$ to $ \beta \xi $ and the limit operator $ A^+ $ we obtain 
\begin{equation}\label{eq:pp3}
 \begin{split}
  \| \beta \xi \|_{2,2} &\leq c \Big ( \| D ( \beta \xi )) \|_{1,2} + \| \beta K \xi \|_{1,2} + \| \pi^+_{A^+} ( \beta \xi (0)) \|_{3/2} \Big ) \\
  & \leq  c \Big ( \| \beta D \xi \|_{1,2} + \| \dot{\beta} \xi \|_{1,2} + \|K \xi \|_{1,2} \Big ) \\
  & \leq c \Big ( \| D \xi \|_{1,2} + \|K' \xi \|_{1,2} \Big )
 \end{split}
\end{equation}
where $ K' $ is compact operator. 
Apply next the inequality \eqref{eq:main_ingD} to $ (1-\beta ) \xi $ 
and the operator $A$ on compact interval $ [ 0, s_1] \times [0,1] $. 
We have 
\begin{equation}\label{eq:pp4}
 \begin{split}
  \| ( 1-\beta ) \xi \|_{2,2} &\leq c \Big (  \| D ( ( 1-\beta )\xi \|_{1,2} + 
  \| ( 1- \beta )K \xi \|_{1,2} + \| \pi^+_A ( \xi (0)) \|_{3/2}  \Big ) \\
  &\leq c \Big ( \| ( 1- \beta ) D \xi \|_{1,2} + \| K\xi\|_{1,2} + \| (1-\beta) \xi\|_{1,2} \| \pi^+_A ( \xi (0)) \|_{3/2} \Big ) \\ 
  & \leq c \Big ( \| D \xi \|_{1,2} +  \|K' \xi\|_{1,2} +\| \pi^+_A ( \xi (0)) \|_{3/2} \Big )
 \end{split}
\end{equation}
Summing the inequalities \eqref{eq:pp3} and \eqref{eq:pp4} we obtain the inequality 
\eqref{eq:pp0}.
\end{proof}

\begin{PARA}[{\bf Closed Image}]\label{para:cl_im}\rm 
Let $D$ be the operator as in \eqref{eq:opD}. 
We prove that $D$ has closed image. 

\begin{lemma}\label{lem:closed_img}
Let $i=1$ or $i=2 $ and let $ I = [a,b] $ or $ I = \R^{\pm}$. 
Let $D$ be as in \eqref{eq:opD}. In the case of infinite strips 
$ I = \R^{\pm} $ we assume ~(H1). Then the image of the 
operator $D$ is closed. 
\end{lemma}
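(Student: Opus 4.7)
The plan is to write $D = D_A + M_R$, where $D_A = \p_s + A$ is the time-independent linearized operator of~\eqref{eq:DA} and $M_R$ denotes multiplication by the matrix-valued function $R(s,t) := S(s,t) - S_1(t) \in W^{1,2}(I \times [0,1])$. I then show that (i)~$D_A$ is surjective onto $H^{i-1}_{bc}(I \times [0,1])$, so in particular has closed image with zero cokernel, and (ii)~$M_R$, regarded as a bounded operator $H^i_{bc}(I \times [0,1]) \to H^{i-1}_{bc}(I \times [0,1])$, is compact. Together, these two facts exhibit $D$ as a compact perturbation of a surjective operator; the stability of the lower semi-Fredholm class under compact perturbations then yields at once that $D$ has closed image.

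Surjectivity of $D_A$ is immediate from Corollary~\ref{cor:bij_lin_op}: the restriction $D_A \colon \cW^i_{\mp}(I) \to H^{i-1}_{bc}(I \times [0,1])$ is bijective, and since $\cW^i_{\mp}(I)$ is a closed subspace of $H^i_{bc}(I \times [0,1])$, surjectivity on the larger domain is automatic.

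Compactness of $M_R$ splits into two cases. When $I=[a,b]$ is bounded, the Rellich-Kondrachov theorem together with the two-dimensional Sobolev embedding $W^{1,2} \hookrightarrow L^p$ for every $p < \infty$ yields, via H\"older estimates applied to $(\nabla R)\xi + R\nabla \xi$, that multiplication by any $W^{1,2}$ function sends weakly convergent sequences in $H^i_{bc}$ to strongly convergent ones in $H^{i-1}_{bc}$. When $I = \R^\pm$, this local compactness must be combined with the decay hypothesis~(H1): choosing $A = A^{\pm}$ makes $R(s,\cdot) \to 0$ in $C^1([0,1])$ as $|s| \to \infty$. Given a bounded sequence $\xi_n$ in $H^i_{bc}$ and $\eps>0$, one fixes $N$ so large that $\sup_{|s|\ge N}\|R(s,\cdot)\|_{C^1} < \eps$; the contribution of $R$ on the bounded region $|s|\le N$ is compact by the first case, while the tail contribution has operator norm $O(\eps)$. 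A diagonal extraction then produces a convergent subsequence of $M_R\xi_n$ in $H^{i-1}_{bc}$.

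The main technical point is the compactness of $M_R$ on half-infinite strips, where local Rellich estimates alone are not sufficient and one must genuinely exploit the asymptotic flatness encoded in~(H1). Once this is in hand, the conclusion is mechanical: $D_A$ is lower semi-Fredholm with zero cokernel, and adding the compact operator $M_R$ leaves it in the same class, so $D$ has closed image. Note that the same argument applies uniformly for both $i=1$ and $i=2$, since Corollary~\ref{cor:bij_lin_op} provides the required bijectivity in both regularity settings.
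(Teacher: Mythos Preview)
Your argument is correct and takes a somewhat different (and arguably more direct) route than the paper.

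The paper does not work with $D_A$ on the full domain $H^i_{bc}(I\times[0,1])$. Instead it \emph{restricts} $D$ to the closed subspace $\sV:=\cW^i_{\mp}(I)$ (elements whose traces at the ends lie in the appropriate spectral subspaces). On $\sV$, Corollary~\ref{cor:bij_lin_op} makes $D_A$ bijective, hence genuinely Fredholm of index zero, and then $D|_{\sV}=D_A|_{\sV}+M_R$ is Fredholm as a compact perturbation. This gives that $Y_0:=D(\sV)$ is closed and of finite codimension in $Y:=H^{i-1}_{bc}$. The paper then finishes with a short algebraic step: since $Y_0\subset D(H^i_{bc})\subset Y$ and $Y/Y_0$ is finite-dimensional, the image $D(H^i_{bc})$ is the preimage of a (necessarily closed) finite-dimensional subspace under the quotient map, hence closed.

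Your route bypasses the restriction-to-$\sV$ and the subsequent lifting argument entirely. You observe that $D_A$ on the full domain is already surjective (since it is bijective on the smaller domain $\cW^i_{\mp}(I)$), hence lower semi-Fredholm with zero cokernel, and then invoke the stability of the lower semi-Fredholm class under compact perturbations. This is cleaner, at the cost of citing a slightly less commonly quoted (but equally standard) perturbation theorem than the usual Fredholm stability. Your treatment of the compactness of $M_R$ on $\R^\pm$ via the $C^1$-decay in~(H1) combined with local Rellich compactness is the same mechanism the paper uses in the proof of Theorem~\ref{thm:main_inq1} (Step~2), and is correct in both regularity settings $i=1,2$.
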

\begin{proof}
 We prove Lemma \ref{lem:closed_img} in the case $ I =[a,b] $
 and $i=2 $. The proof in the case $ I = \R^{\pm} $ and 
 in the case $i=1 $ is analogous. 
 Let $A = J_0 \p_t + S(t) : H^1 \to H^0 $
and $ E^{\pm} $ be as in \ref{PARA:Cor_time_independent}
and let $ \sV = \cW^2_{\pm} (I)$ be defined as 
in \eqref{eq:w2pm}. \\
 \medskip
 {\bf Step 1.} The restriction of the operator $D$ to $\sV $ 
 is a Fredholm operator. \\
 Let $ D_A = \p_s + A $. Then it follows from 
the corollary \ref{cor:bij_lin_op} that 
the  restriction of operator $ D_A $ to $\sV $ is a
bijective operator. Thus particularly the restriction of 
the operator $D_A $ to $ \sV $ is a Fredholm operator of index $0$. 
On the other hand the operator $ D = D_A + ( S(s,t) - S_1(t)) = D_A + K $,
where $ K : H^2_{bc} ( I \times [0,1] ) \to H^1_{bc} ( I \times [0,1] ) $ 
is a compact operator. 
Thus the operator $D$ is a compact 
perturbation of a Fredholm operator, hence it is also a compact operator
of the same index. 

 {\bf Step 2.} The operator $ D : H^2_{bc} ( I\times [0,1] ) \to H^1_{bc} ( I \times [0,1] ) $
 has closed image. \\
 \medskip
 Let $ X = H^2_{bc} ( I \times [0,1] ) $, $ Y = H^1_{bc} ( I \times [0,1] ) $ 
 and let $ \sV \subset X $ be as in Step 1. Denote with $ Y_0 $ the image 
 of $ \sV $ via $D$, i.e. $ Y_0 = D(\sV) $. Then it follows 
 from Step 1 that $ Y_0 \subset Y $ is closed and finite codimension subspace. 
 We need to prove that $Y_1= D( X) $ is also closed. 
 Notice that $ Y_0 \subset Y_1 \subset Y$. Observe natural projection $ \text{pr}: Y_1 \to Y/ Y_0 $. 
 As $ Y / Y_0 $ is finite dimensional space and $ \text{p} ( Y_1) $ is a vector subspace
 it follows that $ \text{pr}(Y_1) $ is finite dimensional and hence also closed in $ Y / Y_0 $. 
 Thus $ Y_1 $ =  $ \text{pr}^{-1} ( \text{pr}( Y_1)) $ is closed in $Y$. 
 \end{proof}
We prove in Section \ref{SEC:ELreg} that $ D$ is actually surjective. 

\end{PARA}

\section{Unique continuation and surjectivity}\label{SEC:ELreg}

\begin{PARA}[{\bf Elliptic regularity at the corner}] \label{para:dence_img}\rm

Here we shall prove elliptic regularity at the corner 
which is reduced using reflection argument to the elliptic 
regularity at the boundary. As a corollary we prove that 
the operator $ D $ as in \eqref{eq:opD} has dense image and as 
a corollary we prove that it is also surjective. 

Let $ \epsilon \in \overline{R}$ be positive and define  
\begin{equation}\label{eq:om}
 \Omega= [0,\epsilon) \times [0,1], \quad \tOmega= ( - \epsilon, \epsilon) \times [0,1] 
\end{equation}
Denote by $ C^{\infty}_{c, bc} ( \tOmega ) $ the following set 
 \begin{equation}\label{eq:cinf_bc}
  C^{\infty}_{c, bc }(\tOmega) =  
 \left \{ \phi \in C^{\infty}_c (\tOmega, \R^{2n}) |
 \begin{array}{l}
  \phi(s,i)\in \R^n \times \{0\}, i=0,1
 \end{array}
\right \}
 \end{equation}
 Similarly we define the set $ C^{\infty}_{c,bc} ( \Omega) $ by
 \begin{equation*}
  C^{\infty}_{c, bc }(\Omega) =  
 \left \{ \phi \in C^{\infty}_c (\Omega, \R^{2n})  |
 \begin{array}{l}
  \phi(s,i)\in \R^n \times \{0\}, i=0,1
 \end{array}
\right \}
 \end{equation*}
Notice that a function $ \phi \in C^{\infty}_{c, bc }(\Omega) $ 
doesn't vanish on $ \{ 0\} \times [0,1] $. 

A direct corollary of Lemma B.4.9
 in \cite{MS} is the following: 
\begin{claim}\label{ch4_cor2.2}
If a function $ u \in L^2_{loc} (\tOmega, \R^{2n} ) $ 
satisfies the following equality
\begin{equation}\label{ch4_eq2.3}
\int\limits_{\widetilde{\Omega}} \langle \partial_s \phi + J_0\partial_t \phi, u \rangle = \int\limits_{\widetilde{\Omega}} \langle \phi , v\rangle ,
\;\; \forall \phi \in C^{\infty}_{c, bc} (\widetilde{\Omega}, \R^{2n} ) 
\end{equation}
where $v \in L^2_{loc} ( \widetilde{\Omega})$. 
Then the following holds
\begin{itemize}
\item[1)] $ u \in H^{1}_{loc} (\widetilde{\Omega}, \R^{2n}) $ and $ u(s,i) \in \R^n \times \{0\} $  for $ i=0,1 $. 
\item[2)] $ -\partial_s u + J_0 \partial_t u = v $
\end{itemize} 
\end{claim}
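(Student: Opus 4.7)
The strategy is to reduce the claim, which is a boundary elliptic regularity statement for the Cauchy--Riemann operator with Lagrangian boundary conditions on both sides of $\tOmega$, to the purely interior regularity statement already recorded as Lemma B.4.9 of~\cite{MS}. The reduction is by a standard Schwarz-type reflection across each of the two components $t=0$ and $t=1$ of $\p\tOmega$, tailored to the Lagrangian $\R^n\times\{0\}$. I will carry out the argument at $t=0$; the argument at $t=1$ is identical.

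Write $u=(u_1,u_2)$ and $v=(v_1,v_2)$ with $u_i,v_i$ taking values in $\R^n$, and fix $0<\eps'<\eps$. Choose a strip $\Omega^\flat := (-\eps,\eps)\times(-\eps',\eps')$ that sits across the boundary $t=0$, and define the reflected function $\widehat u:\Omega^\flat\to\R^{2n}$ by
\begin{equation*}
\widehat u(s,t) := \begin{cases} (u_1(s,t),\,u_2(s,t)), & t\ge 0,\\ (u_1(s,-t),\,-u_2(s,-t)),& t<0,\end{cases}
\end{equation*}
so that $u_1$ is extended evenly and $u_2$ oddly across $t=0$ (this is the reflection determined by the Lagrangian $\R^n\times\{0\}$); define $\widehat v$ analogously. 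Since $u,v\in L^2_{\loc}$ on $\tOmega$, the extensions $\widehat u,\widehat v$ lie in $L^2_{\loc}(\Omega^\flat)$.

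The next step, which is the main point of the argument, is to verify the weak Cauchy--Riemann identity for $\widehat u$ and $\widehat v$ against arbitrary (not necessarily boundary-adapted) test functions $\psi\in C^\infty_c(\Omega^\flat,\R^{2n})$. Given such $\psi=(\psi_1,\psi_2)$, define its symmetrization
\begin{equation*}
\phi(s,t) := \bigl(\psi_1(s,t)+\psi_1(s,-t),\; \psi_2(s,t)-\psi_2(s,-t)\bigr),\qquad (s,t)\in\tOmega.
\end{equation*}
By construction $\phi$ has compact support in $(-\eps,\eps)\times[0,\eps')$ and satisfies $\phi_2(s,0)=0$; choosing $\eps'<1$ and extending $\phi$ by zero to $\tOmega$ yields an element of $C^\infty_{c,bc}(\tOmega)$ as defined in~\eqref{eq:cinf_bc}. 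A direct change of variables $t\mapsto -t$ in the integrals over $\{t<0\}$, together with the definition of $\widehat u,\widehat v$, shows that
\begin{equation*}
\int_{\Omega^\flat}\inner{\p_s\psi+J_0\p_t\psi}{\widehat u} = \int_{\tOmega}\inner{\p_s\phi+J_0\p_t\phi}{u},\qquad
\int_{\Omega^\flat}\inner{\psi}{\widehat v}=\int_{\tOmega}\inner{\phi}{v};
\end{equation*}
here the sign arising from $\p_t$ in the reflected region is exactly compensated by the sign flip in the second component of the reflection, which is why one must reflect $u_1$ evenly and $u_2$ oddly. Combining these two identities with the hypothesis~\eqref{ch4_eq2.3} applied to $\phi$ yields
\begin{equation*}
\int_{\Omega^\flat}\inner{\p_s\psi+J_0\p_t\psi}{\widehat u}=\int_{\Omega^\flat}\inner{\psi}{\widehat v}\qquad\forall\,\psi\in C^\infty_c(\Omega^\flat,\R^{2n}).
\end{equation*}

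Now apply Lemma B.4.9 of~\cite{MS} on the open set $\Omega^\flat$: it gives $\widehat u\in H^1_{\loc}(\Omega^\flat)$ with $-\p_s\widehat u+J_0\p_t\widehat u=\widehat v$ in the strong sense on $\Omega^\flat$. Restricting to $t\ge 0$ yields $u\in H^1_{\loc}$ near $\{t=0\}$ with $-\p_s u+J_0\p_t u=v$ there. The trace of $u_2$ on $\{t=0\}$ is then the trace of the $H^1$-function $\widehat u_2$, which, being odd across $\{t=0\}$, must vanish; hence $u(s,0)\in\R^n\times\{0\}$. Performing the same construction at $t=1$ produces $H^1_{\loc}$-regularity and the boundary condition $u(s,1)\in\R^n\times\{0\}$, and the two local statements combine (the interior of $\tOmega$ is covered directly by the cited Lemma B.4.9 without any reflection) to give $u\in H^1_{\loc}(\tOmega)$ and the strong equation on all of $\tOmega$. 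The only part requiring care is the verification of the extended weak identity in the middle paragraph; once the reflection rule matching the Lagrangian is correctly identified, the rest is a routine bookkeeping of signs and cut-off supports.
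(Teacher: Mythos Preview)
Your argument is correct, but note that the paper does not actually supply a proof of this claim: it simply records it as ``a direct corollary of Lemma~B.4.9 in~\cite{MS}'' and moves on. In other words, the paper treats the boundary regularity statement on $\tOmega$ as already contained in (or immediate from) the cited lemma, without writing out any reflection.

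Your route is nonetheless valid and self-contained: you reduce the two-sided Lagrangian boundary problem on $\tOmega$ to interior regularity by the Schwarz reflection adapted to $\R^n\times\{0\}$ (even in the first $n$ components, odd in the last $n$), and then read off the trace condition from oddness of $\widehat u_2$. The sign bookkeeping you describe---$RJ_0R=-J_0$ for $R=\mathrm{diag}(1,-1)$---is exactly what makes the reflected weak identity match. One small remark: you describe Lemma~B.4.9 as ``purely interior''; depending on the edition, that lemma in \cite{MS} may already include the totally real boundary condition, which is why the paper can cite it directly. Either way your argument stands. It is also worth noting that the paper uses precisely this style of reflection (but across $s=0$, with the conjugation $\overline{\,\cdot\,}$) in the proof of the next result, Lemma~\ref{lem:vanish_bdy}, so your approach is entirely in the spirit of the surrounding text.
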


\begin{lemma}\label{lem:vanish_bdy}
Let $ \Omega $ be as in \eqref{eq:om}.
Suppose that $\eta \in L^2_{loc}( \Omega) $ satisfies 
\begin{equation}\label{ch4_eq2.2}
\int\limits_{\Omega} \langle \partial_s \phi + J_0 \partial_t \phi , \eta \rangle = 
 \int\limits_{\Omega} - \langle \zeta, \phi \rangle
\end{equation}
for all $\phi\in  C^{\infty}_{c, bc }(\Omega)$ and some  $\zeta \in L^2_{loc}(\Omega) $. 
Then the following holds
\begin{itemize}
 \item [1)] $ \eta \in H^1_{loc}(\Omega),\;\; \eta(s,0),\eta(s,1)\in\R^{n}\times 0 $  
 and $\eta(0,t)=0,$ for a.e.  $t\in [0,1]$.
\item[2)] $ -\partial_s \eta + J_0 \partial_t \eta = -\zeta $. 
\end{itemize}
 
\end{lemma}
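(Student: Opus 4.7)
Plan: Prove the lemma in two stages. First, upgrade $\eta$ to $H^1_{loc}$ regularity on $\Omega$ (including up to the portion of the boundary at $s=0$), together with the $t$-boundary conditions and the strong PDE, by a reflection-extension argument that reduces everything to Claim \ref{ch4_cor2.2}. Second, deduce the trace vanishing $\eta(0,\cdot)\equiv 0$ by integration by parts in the original weak identity \eqref{ch4_eq2.2}.

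For the reflection, I will choose a matrix $R\in\R^{2n\times 2n}$ that anticommutes with $J_0$ and preserves the Lagrangian $\R^n\times\{0\}$; the concrete choice $R=\diag(I_n,-I_n)$ satisfies both, since one checks $RJ_0+J_0R=0$ and $R(\R^n\times\{0\})=\R^n\times\{0\}$. Define the reflected extensions
\[
\tilde\eta(s,t):=R\eta(-s,t),\qquad \tilde\zeta(s,t):=-R\zeta(-s,t)\quad\text{for }s<0,
\]
so that both are in $L^2_{loc}(\tOmega)$. The key step is to verify that $\tilde\eta$ satisfies the weak equation of Claim \ref{ch4_cor2.2} on $\tOmega$ with right-hand side $-\tilde\zeta$. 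Given $\psi\in C^\infty_{c,bc}(\tOmega)$, consider the auxiliary test function $\Phi(s,t):=\psi(s,t)-R\psi(-s,t)$ on $\Omega$; the $R$-invariance of the Lagrangian shows that $\Phi\in C^\infty_{c,bc}(\Omega)$. Apply \eqref{ch4_eq2.2} to $\Phi$, perform the change of variables $s\mapsto-s$ on the integrals over $\{s<0\}$, and use the identity $(\partial_s+J_0\partial_t)[R\psi(-s,t)]=-R\,[(\partial_s\psi+J_0\partial_t\psi)(-s,t)]$ --- which is precisely where $\{R,J_0\}=0$ enters --- to repackage the resulting identity as $\int_{\tOmega}\langle(\partial_s+J_0\partial_t)\psi,\tilde\eta\rangle=-\int_{\tOmega}\langle\tilde\zeta,\psi\rangle$. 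Claim \ref{ch4_cor2.2} then gives $\tilde\eta\in H^1_{loc}(\tOmega)$, $\tilde\eta(s,i)\in\R^n\times\{0\}$ for $i=0,1$, and the a.e.\ PDE $-\partial_s\tilde\eta+J_0\partial_t\tilde\eta=-\tilde\zeta$. Restricting to $s\geq0$ (noting that $\{0\}\times[0,1]$ lies strictly in the interior of $\tOmega$, so no regularity is lost at the reflecting hyperplane) yields $\eta\in H^1_{loc}(\Omega)$, the $t$-boundary conditions in (1), and the strong PDE (2).

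For the remaining trace vanishing at $s=0$, integrate the original identity \eqref{ch4_eq2.2} by parts using the $H^1$-regularity just established: the interior term $\int_\Omega\langle\phi,-\partial_s\eta+J_0\partial_t\eta\rangle$ cancels with $-\int_\Omega\langle\zeta,\phi\rangle$ by the PDE; the boundary term at $s=\epsilon$ vanishes by compact support of $\phi$; and the $t=0,1$ boundary contributions $\int_0^\epsilon[\langle\phi,J_0\eta\rangle]_{t=0}^{t=1}\,ds$ vanish because $\phi(s,i)\in\R^n\times\{0\}$ and $J_0\eta(s,i)\in\{0\}\times\R^n$ are mutually orthogonal. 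What remains is the identity
\[
\int_0^1\langle\phi(0,t),\eta(0,t)\rangle\,dt=0\qquad\text{for every }\phi\in C^\infty_{c,bc}(\Omega).
\]
Since the set $\{\phi(0,\cdot):\phi\in C^\infty_{c,bc}(\Omega)\}$ contains $C_c^\infty((0,1),\R^{2n})$ (realized by multiplying any such $g$ by a cutoff in $s$ with value $1$ at $s=0$), it is dense in $L^2([0,1],\R^{2n})$, and hence $\eta(0,t)=0$ for a.e.\ $t\in[0,1]$.

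The main obstacle is the careful algebraic and sign bookkeeping in the change-of-variables step that transfers the weak equation from $\Omega$ to $\tOmega$; the whole argument hinges on having chosen a reflection matrix that simultaneously anticommutes with $J_0$ (so that the operator $\partial_s+J_0\partial_t$ transforms cleanly under $s\mapsto -s$) and preserves the Lagrangian $\R^n\times\{0\}$ (so that the auxiliary test function $\Phi$ remains admissible).
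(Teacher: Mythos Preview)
Your proof is correct. The reflection you use (even extension of $\eta$ via $R=\diag(I_n,-I_n)$, odd extension of $\zeta$) coincides with the paper's ``extension II'', and your change-of-variables computation transferring the weak identity to $\tOmega$ is accurate; Claim~\ref{ch4_cor2.2} then gives $H^1_{loc}$-regularity up to $s=0$, the $t$-boundary conditions, and the strong PDE, exactly as in the paper. Where you diverge is in obtaining $\eta(0,\cdot)=0$: the paper performs a \emph{second} reflection (odd $\eta$, even $\zeta$) and, by an approximation argument respecting the antisymmetry, deduces $\eta(0,t)\in\{0\}\times\R^n$; combined with $\eta(0,t)\in\R^n\times\{0\}$ from the first reflection this forces $\eta(0,t)=0$. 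Your route instead exploits the regularity already obtained to integrate the original identity \eqref{ch4_eq2.2} by parts, isolating the boundary term at $s=0$ and testing it against a dense class. This is more direct and avoids the second reflection entirely; the paper's two-reflection argument, on the other hand, never needs to invoke the trace theorem or integration by parts and keeps everything at the level of Claim~\ref{ch4_cor2.2}. Both are sound; yours is somewhat more economical.
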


\begin{proof}[Proof of Lemma \ref{lem:vanish_bdy}]
 The proof follows from the Claim \ref{ch4_cor2.2}
 and a reflection argument. Let $ \eta $ and $\zeta$ satisfy the 
 equation \eqref{ch4_eq2.2}. We shall extend both $ \xi $ 
 and $ \eta $ to $\widetilde{\Omega} $ in two different ways. 

{\bf I)} Let $\widetilde{\eta}, \tilde{\zeta} $ be odd and even
 extensions of $\eta$ and $\zeta$.  
\[  
\widetilde{\eta}(s,t)=\begin{cases}
	           \eta (s,t),  \;\;s \geq 0  \\ \\
		    -\overline{\eta(-s,t)} \;\; s < 0 
		\hspace{2cm}
\end{cases}
 \tilde{\zeta}(s,t)=\begin{cases} 
		\zeta(s,t) , \;\;s\geq 0\\ \\
		\overline{\zeta(-s,t)},  \;\;s< 0 
\end{cases}
\]
										
Here $\overline{\zeta}$ represents the image of $\zeta$ 
made by symmetry with respect to the plane $x_1=x_2=\cdots =x_n=0$,
or equivalently
 
 \[
\mathbf{\overline{\zeta}} =
\begin{pmatrix}
1 & 0\\
0 & -1
\end{pmatrix} \cdot \zeta
\] 

We shall prove that the extended functions
$\widetilde{\eta}$ and $\tilde{\zeta}$ satisfy the equation 
~\eqref{ch4_eq2.3}, i.e. for all 
$ \phi \in C^{\infty}_{c,bc}(\widetilde{\Omega}) $ 
the following holds:

\begin{equation}\label{eq2}
\int\limits_{\widetilde{\Omega}} \langle \partial_s \phi + J_0\partial_t \phi,\widetilde{\eta} \rangle = - \int\limits_{\widetilde{\Omega}} \langle \phi , \tilde{\zeta}\rangle 
\end{equation}

Let $ \phi\in C^{\infty}_{c, bc} (\widetilde{\Omega}) $ 
be an arbitrary function.
Define functions $ \phi_0$ and $\phi_1$ in the following way
\begin{align*}
\phi_0(s,t) =\frac{1}{2}\Big ( \phi(s,t)+ \overline{\phi(-s,t)} \Big), \;\;\;\;
\phi_1(s,t) =\frac{1}{2}\Big ( \phi(s,t)- \overline{\phi(-s,t)} \Big)
\end{align*}
Obviously $ \phi_0(-s,t)= \overline{\phi_0(s,t)} $, $ \phi_1(-s,t)= 
-\overline{\phi_1(s,t)} $ and $ \phi= \phi_0 + \phi_1 $. 
It also holds
\begin{align*}
\partial_s \phi_0(-s,t) =-\overline{\partial_s \phi_0(s,t)},\;\;
 \partial_t \phi_0(-s,t) = \overline{\partial_t \phi_0(s,t)}.
\end{align*}

Hence, we have that $\partial_s \phi_0(-s,t)+ J_0\partial_t\phi_0(-s,t)
= -(\overline{\partial_s \phi_0(s,t)+ J_0\partial_t\phi_0(s,t)})$. 
For $\phi_1$ the reverse holds i.e.
$$ 
\partial_s \phi_1(-s,t)+ J_0\partial_t\phi_1(-s,t)
= \overline{\partial_s \phi_1(s,t)+ J_0\partial_t\phi_1(s,t)}. 
$$ 
Now it is easy to see that 
 $$
 \int\limits_{\widetilde{\Omega}} 
 \langle \partial_s \phi_1 + J_0\partial_t \phi_1,\widetilde{\eta} \rangle
 =0=\int\limits_{\widetilde{\Omega}} \langle \phi_1,\tilde{\zeta} \rangle $$
 and also
 
 \begin{align}\label{eq3}
 \int\limits_{\widetilde{\Omega}} 
 \langle \partial_s \phi_0 + J_0\partial_t \phi_0,\widetilde{\eta} \rangle 
 &= 2\int\limits_{\Omega} \langle \partial_s \phi_0 +
 J_0\partial_t \phi_0,\eta \rangle\notag\\
 -\int\limits_{\widetilde{\Omega}} \langle  \phi_0 ,\tilde{\zeta} \rangle
 &= -2 \int\limits_{\Omega} \langle  \phi_0 ,\zeta \rangle
 \end{align}
 
 By assumption \eqref{ch4_eq2.2} the integrals 
 on the right side of \eqref{eq3} are equal, it follows
 
 \begin{equation}
 \int\limits_{\widetilde{\Omega}} \langle \partial_s \phi + 
 J_0\partial_t \phi,\widetilde{\eta} \rangle =
 - \int\limits_{\widetilde{\Omega}} \langle \phi , \tilde{\zeta}\rangle .
\end{equation}
Thus, we have proved the equation \eqref{eq2}. 
From corollary \ref{ch4_cor2.2} we have that 
$\widetilde{\eta}\in H^1_{bc}(\widetilde{\Omega},\R^{2n} )$
as $ \eta = \widetilde{\eta}|_{\Omega} $ we have that $\eta $ also satisfies

$$ 
\eta(s,i)\in \R^n \times \{0\} ,  \; \eta \in H^1_{loc} ( \Omega ) .
$$

We prove that $ \widetilde{\eta}(0,t)=\eta(0,t) \in 0\times \R^{n} $ 
for almost every $t.$ Choose a sequence of smooth functions
$\eta_i(s,t) $ that converges to $\widetilde{\eta} $ 
on every compact subset of $\widetilde{\Omega} $ in $ W^{1,2} $ norm. 
Notice that $\widetilde{\eta}=
\frac{1}{2}\Big ( \widetilde{\eta}(s,t) -\overline{\widetilde{\eta}(-s,t)} \Big ) $.
Then the sequence
$ h_i(s,t)= \frac{1}{2} \Big( \eta_i(s,t) -\overline{ \eta_i(-s,t) } \Big) $ 
also converges to $\widetilde{\eta} $ in $ W^{1,2} $ norm.
We also have that 
 $ h_i(0,t)= \frac{1}{2} \Big ( \eta_i(0,t) -\overline{\eta_i(0,t)} \Big )
 \in \{0\} \times \R^{n}.$
 As $ h_i(0,t) \stackrel{L^2}{\longrightarrow} \eta(0,t) = \widetilde{\eta}(0,t) $
 it follows that $ \eta(0,t) \in \{0\} \times \R^{n} $ for almost every $t$. 
\\
 {\bf II)} We will extend now $ \eta $ and $\zeta$ 
 reverse than in the case {\bf I)}, i.e. $\eta $ even
 and $\zeta$ odd. 
 Define $\widetilde{\eta} $ and $\tilde{\zeta}$ as
\[  
\widetilde{\eta}(s,t)=\begin{cases}
	           \eta (s,t), \;\; if \;\;s \geq 0  \\ \\
		    \overline{\eta(-s,t)}\;\; if\;\; s < 0 
		\hspace{2cm}
\end{cases}
 \tilde{\zeta}(s,t)=\begin{cases} 
		\zeta(s,t) , \;\; if \;\;s\geq 0\\ \\
		-\overline{\zeta(-s,t)}, \; \; if \;\;s< 0.
\end{cases}
\]
	
 Extended functions $\widetilde{\eta}$ and $\tilde{\zeta} $ 
 satisfy \eqref{eq2}. In order to prove that 
 one can use the same decomposition of function $\phi$. 
 This time the integral with $\phi_1$ will be doubled 
 and the integral with $\phi_0$ will vanish. 
 Therefore we conclude from the Claim \ref{ch4_cor2.2} that 
 $\widetilde{\eta}\in W^{1,2}_{loc} (\widetilde{\Omega}) $. 
 In the same way as in {\bf (I)} we prove that 
 $$ \widetilde{\eta}(0,t)=\eta(0,t) \in \R^n \times\{0\} \; \text{for a.e. } t. $$  
 
 Thus from {\bf I)} and from what we have just proved 
 $$ \eta(0,t)\in (\R^{n}\times \{0\} ) \cap (\{ 0\} \times \R^n )= 0 $$ 
 for almost every $t\in [0,1].$
 \end{proof}
 
\medskip 
\end{PARA}

\begin{PARA}[{\bf Unique continuation}]\label{para:unq_con}\rm
Let $ I = [0, \epsilon ) $, where $ \epsilon $ is 
possibly infinite and let $ D $ be an operator as in \eqref{eq:opD}.
In this paragraph we observe the mapping 
$$
\xi \mapsto ( D\xi, \xi(0))= ( \p_s \xi + J_0 \p_t \xi + S \xi, \xi (0)) 
$$
In the case $ \eps= +\infty $ we suppose $~(H1) $. 
This mapping is injective, 
i.e. any $ \xi $ with $ \xi(0, \cdot) = 0 $ and $ D\xi = 0 $
has to vanish everywhere $ \xi\equiv 0 $. 
This follows from Agmon and Nirenberg trick 
and we shall not discuss the details of the proof here. 
It is proved by careful study of the function $ \ln(\|\xi\|) $ 
and can be seen in  \cite{RS3}. 

\begin{lemma}\label{lem:unq_con}
Let $i=1 $ or $i= 2$ and let $ D$ be an operator 
as in \eqref{eq:opD}. Then the mapping 
 \begin{equation}
  \begin{split}
   & H^i_{bc} ( [0, \epsilon) \times [0,1] ) \to H^{i-1}_{bc} ( [0, \epsilon) \times [0,1] )\times H^{i-1/2}_{bc}\\
   & \xi \mapsto ( D\xi, \xi(0))
  \end{split}
 \end{equation}
is injective. The analog holds for the operator 
$D^*= - \p_s + J_0 \p_t + S(s,t)^T $. 
Namely, the mapping 
 \begin{equation}
  \begin{split}
   & H^{i}_{bc} ( [0, \epsilon) \times [0,1] ) \to H^{i-1}_{bc} ( [0, \epsilon) \times [0,1] )\times H^{i-1/2}_{bc}\\
   & \xi \mapsto ( D^*\xi, \xi(0))
  \end{split}
 \end{equation}
is injective.
\end{lemma}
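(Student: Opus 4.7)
The plan is to apply the classical Agmon--Nirenberg technique from~\cite{RS3}. By linearity it suffices to show that any $\xi\in H^i_{bc}$ with $D\xi=0$ and $\xi(0,\cdot)=0$ must vanish identically. Theorem~\ref{thm:reg1}~(ii) lets me bootstrap $\xi$ to $H^2_{bc}$ and work exclusively in the $i=2$ setting, so $s\mapsto \xi_s:=\xi(s,\cdot)$ is continuous in $H^1$ and satisfies $\p_s\xi_s=-A(s)\xi_s$ pointwise in $H^0$, where $A(s)=J_0\p_t+S(s,\cdot)$. First I would decompose $A(s)=A_{\rm sym}(s)+B(s)$ into its symmetric and antisymmetric parts with respect to the $L^2$ inner product on $H^0$. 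Here $J_0\p_t$ is self-adjoint on $H^1_{bc}$, since the boundary term in integration by parts vanishes ($J_0$ maps $\R^n\times\{0\}$ onto its orthogonal complement $\{0\}\times\R^n$), and $B(s)$ comes only from the antisymmetric part of $S(s,\cdot)$.

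Next I would introduce the squared norm
\[
\phi(s):=\|\xi_s\|_{H^0}^2,
\]
compute $\phi'(s)=-2\langle\xi_s,A_{\rm sym}(s)\xi_s\rangle_{H^0}$ using the self-adjointness of $A_{\rm sym}(s)$, and differentiate once more using the equation for $\p_s\xi_s$. After rearranging and applying Cauchy--Schwarz this yields the Agmon--Nirenberg log-convexity inequality
\[
\phi(s)\phi''(s)-|\phi'(s)|^2\ge -C\bigl(\phi(s)^2+\phi(s)|\phi'(s)|\bigr),
\]
with $C$ depending on $\|S\|_{W^{1,2}}$ and on $B$. The standard consequence is that $\ln\phi$ is convex up to bounded error, so the assumption $\phi(0)=0$ forces $\phi\equiv 0$ on a neighborhood of $0$. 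Since $\{s\in I:\xi_s=0\}$ is both open (by the same local argument applied at any zero) and closed (by continuity), it coincides with all of $I$, giving $\xi\equiv 0$. The corresponding statement for $D^*$ follows by the same argument: $D^*\xi=0$ reads $\p_s\xi=A(s)^{\rm T}\xi$, which has an identical structural form and an analogous symmetric/antisymmetric decomposition, so the Agmon--Nirenberg estimate applies verbatim.

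The main obstacle I foresee is the rigorous justification of $\phi''$ when $\xi$ lies only in $H^2_{bc}$ and $S$ only in $W^{1,2}$. In particular the term $\langle\xi_s,(\p_s A_{\rm sym})\xi_s\rangle_{H^0}$ must be absorbed into the right-hand side without invoking a stronger norm of $\xi$. This is where the interpolation machinery of Section~\ref{SEC:lin_est} enters: $(\p_s S)\xi$ has to be controlled in $H^0$ by $\phi$ together with a lower-order error, using that on each slice multiplication by $S(s,\cdot)$ and by $\p_s S(s,\cdot)$ is bounded on $L^2([0,1])$ via the one-dimensional Sobolev embedding $H^{1/2}\hookrightarrow L^q$ for all finite $q$. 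In the half-infinite case $\eps=+\infty$ the uniform control required at infinity is furnished by hypothesis~(H1).
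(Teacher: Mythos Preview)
Your proposal is correct and follows the same approach as the paper: both invoke the Agmon--Nirenberg unique continuation argument from~\cite{RS3} via the log-convexity of $s\mapsto\|\xi(s,\cdot)\|_{H^0}^2$. The paper's proof in fact gives no details at all beyond the citation (``The proof is verbatim the same as the proof of Lemma~3.3 in~\cite{RS3} and we shall not repeat it here''), so your sketch actually supplies more than the paper does; one small caveat is that Theorem~\ref{thm:reg1}~(ii) is stated for closed intervals with the $E^\pm$ boundary conditions, so the bootstrap to $H^2_{bc}$ is better justified by interior elliptic regularity on $(0,\eps)\times[0,1]$ combined with the vanishing trace at $s=0$.
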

\begin{proof}
The proof is verbatim the same as the proof of  Lemma 3.3 
in \cite{RS3} and we shall not repeat it here. 
\end{proof}
 
\end{PARA}

\begin{corollary}\label{lem:inq_D}
Let $ D $ be an operator of the form \eqref{eq:opD}. 
\begin{itemize} 
 \item[i)] Let $ I = [a,b] $. There exists a constant $c > 0 $ such 
 that the following inequality holds for all $ \xi \in H^2_{bc} ( I \times [0,1]) $. 
 \begin{equation}
 \| \xi \|_{2,2} \leq c \left ( \| D \xi \|_{1,2} + \| \xi(a,\cdot)\|_{3/2} + \| \xi (b, \cdot) \|_{3/2} \right )
\end{equation}
\item[ii)] Suppose that $ I = \R^{\pm} $ and assume ~(H1). Then there 
exist positive constant $c $ such that the following inequality holds
for all $ \xi \in H^2_{bc} ( \R^{\pm} \times [0,1] ) $. 
\begin{equation}\label{eq:main_lin}
 \|\xi\|_{2,2} \leq  c \Big ( \|D \xi \|_{1,2} + \| \xi(0, \cdot) \|_{3/2} \Big )
\end{equation}
\end{itemize}
\end{corollary}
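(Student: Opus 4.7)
The plan is to deduce Corollary \ref{lem:inq_D} from Theorem \ref{thm:main_inq1} in two maneuvers: (a) dominating the eigenspace projections by the full boundary values, and (b) absorbing the compact perturbation term on the right hand side via a standard weak compactness argument using the unique continuation lemma \ref{lem:unq_con}.

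For step (a), recall from \ref{rem:lines} that the norm on $E^{3/2}$ is given spectrally by $\|\sum x_i e_{\lambda_i}\|_{3/2}^2=\sum|\lambda_i|^3|x_i|^2$, so the decomposition $E^{3/2}=(E^+\cap E^{3/2})\oplus(E^-\cap E^{3/2})$ in \eqref{eq:dec} is orthogonal with respect to $\|\cdot\|_{3/2}$. Hence $\|\pi^{\pm}\eta\|_{3/2}\le\|\eta\|_{3/2}$ for every $\eta\in E^{3/2}$, and applying this to the boundary values of $\xi$ we see that Theorem \ref{thm:main_inq1} already yields an estimate of the form
\[
\|\xi\|_{2,2}\le c\bigl(\|D\xi\|_{1,2}+\|K\xi\|_{1,2}+\|\xi(a,\cdot)\|_{3/2}+\|\xi(b,\cdot)\|_{3/2}\bigr)
\]
(and analogously in the half-infinite case), with $K$ a compact operator from $H^2_{bc}(I\times[0,1])$ to $H^1_{bc}(I\times[0,1])$. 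Everything is reduced to removing the compact term.

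For step (b), suppose by contradiction that the inequality in part i) fails. Then there is a sequence $\{\xi_n\}\subset H^2_{bc}([a,b]\times[0,1])$ with $\|\xi_n\|_{2,2}=1$ and
\[
\|D\xi_n\|_{1,2}+\|\xi_n(a,\cdot)\|_{3/2}+\|\xi_n(b,\cdot)\|_{3/2}\longrightarrow 0.
\]
By reflexivity, after passing to a subsequence we may assume $\xi_n\rightharpoonup\xi$ weakly in $H^2_{bc}$. The operator $D$ and the endpoint trace maps are bounded linear, hence weakly continuous, and uniqueness of weak limits yields $D\xi=0$ and $\xi(a,\cdot)=\xi(b,\cdot)=0$. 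Applying the unique continuation statement of Lemma \ref{lem:unq_con} (with $[0,\epsilon)$ replaced by $[a,b]$ after a translation) forces $\xi\equiv 0$. Since $K$ is compact, weak convergence $\xi_n\rightharpoonup 0$ implies $K\xi_n\to 0$ strongly in $H^1_{bc}$. Inserting this into the estimate from Theorem \ref{thm:main_inq1} gives
\[
1=\|\xi_n\|_{2,2}\le c\bigl(\|D\xi_n\|_{1,2}+\|K\xi_n\|_{1,2}+\|\xi_n(a)\|_{3/2}+\|\xi_n(b)\|_{3/2}\bigr)\longrightarrow 0,
\]
a contradiction. This proves part i).

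Part ii) is handled by the same argument on $\R^{\pm}$: weak $H^2_{bc}$-compactness again provides a weak limit $\xi$ with $D\xi=0$ and $\xi(0,\cdot)=0$; the hypothesis (H1) is exactly what is needed for Lemma \ref{lem:unq_con} to apply on the half-line and deliver $\xi\equiv 0$; and the compactness of the operator $K$ on $\R^{\pm}\times[0,1]$ is guaranteed by its explicit form $(S-S_1)\beta$ with $\beta$ of compact support, as produced in Step~4 of the proof of Theorem \ref{thm:main_inq1}. The only step that requires care, and which I expect to be the main technical point, is verifying that compactness of $K$ together with the weak convergence $\xi_n\rightharpoonup 0$ still yields $K\xi_n\to 0$ strongly in the unbounded-domain setting; this is fine because $K$ factors through functions supported in a fixed bounded strip, on which the Rellich embedding applies.
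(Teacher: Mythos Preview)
Your proposal is correct and follows essentially the same approach as the paper. The only difference is packaging: the paper observes that the estimate with the compact term $K$ forces the map $\xi\mapsto(D\xi,\xi|_{\partial I})$ to have closed range and finite-dimensional kernel, then invokes Lemma~\ref{lem:unq_con} for injectivity and the open mapping theorem to drop $K$, whereas you spell out the equivalent weak-compactness contradiction argument directly; these are two phrasings of the same standard semi-Fredholm mechanism.
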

\begin{proof}
We shall prove part $ii) $, the proof of $ i) $ is analogous. 
It follows directly from the inequality \eqref{eq:pp0} 
in Theorem \ref{thm:main_inq1} that
\begin{equation}\label{eq:hlp1}
 \| \xi \|_{2,2} \leq c \Big (  \| D \xi \|_{1,2} + \| K\xi \|_{1,2} + \| \xi (0)\|_{3/2}\Big ),
\end{equation}
as $ \| \pi^+ (\xi(0))\|_{3/2} \leq \| \xi(0)\|_{3/2} $. 
Remember also that the operator 
$  H^2_{bc} ( \R^+ \times [0,1] ) \ni \xi\mapsto 
 K \xi \in H^1_{bc} ( \R^+\times [0,1] ) $ is compact.
From the inequality \eqref{eq:hlp1} it follows that the 
operator $ \xi \mapsto ( D \xi, \xi(0)) $ has closed image and finite 
dimensional kernel. By Lemma \ref{lem:unq_con} it follows that 
it is bijective onto its image. 
From the open mapping theorem it follows 
that its inverse is bounded and we can omit the middle term, i.e. $ \| K \xi \|_{1,2} $
of the inequality 
\eqref{eq:hlp1}. Thus we have proved the required inequality. 
\end{proof}

\begin{corollary}\label{cor:surj}
Let $ I= [a,b] $ or $ I = \R^{\pm} $. 
Suppose that the operator 
$ D:H^i_{bc}( I \times [0,1] ) \to H^{i-1}_{bc} ( I \times [0,1] ), \; i=1,2 $ 
 has the form \eqref{eq:opD}, in the case $ I = \R^{\pm} $ 
 we assume ~(H1). Then the operator $D$ is surjective. 
  \end{corollary}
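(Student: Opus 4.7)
The plan is to combine the closed image property of Lemma~\ref{lem:closed_img} with a duality argument: since $\mathrm{Im}(D)$ is closed in $H^{i-1}_{bc}(I\times[0,1])$, it suffices to show that no nonzero element of the target is orthogonal to the image. I will first treat the case $i=1$ in detail; the case $i=2$ will follow from it by an elliptic bootstrap.

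For $i=1$, suppose $\eta\in L^2(I\times[0,1])$ satisfies $\int\langle D\xi,\eta\rangle=0$ for every $\xi\in H^1_{bc}(I\times[0,1])$. Rewritten, this becomes
\begin{equation*}
\int_{I\times[0,1]}\langle \partial_s\phi+J_0\partial_t\phi,\eta\rangle
=-\int_{I\times[0,1]}\langle\phi,S^T\eta\rangle
\end{equation*}
for all such $\phi$, and in particular for $\phi\in C^\infty_{c,bc}(\Omega)$ where $\Omega$ is (a translate of) the set in~\eqref{eq:om} modelling a neighborhood of the left $s$-boundary $\{s=a\}$. Applying Lemma~\ref{lem:vanish_bdy} there — and symmetrically at $s=b$ when $I=[a,b]$, or only at $s=0$ when $I=\R^{\pm}$ — I would conclude that $\eta\in H^1_{loc}$, satisfies the Lagrangian boundary conditions $\eta(s,i)\in\R^n\times\{0\}$, vanishes identically at the finite $s$-boundary, and solves $D^{*}\eta=0$ strongly, where $D^{*}=-\partial_s+J_0\partial_t+S^T$. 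Since $D^{*}$ has the same structural form as $D$, the unique continuation principle of Lemma~\ref{lem:unq_con} applies to $D^{*}$ and, combined with $\eta(a,\cdot)\equiv 0$, forces $\eta\equiv 0$. This shows $\mathrm{Im}(D)$ is dense and hence equal to $L^2$.

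For $i=2$, I would argue by bootstrapping from $i=1$: given $\eta\in H^1_{bc}\subset L^2$, the case just proved yields $\xi\in H^1_{bc}$ with $D\xi=\eta$, and it remains to promote $\xi$ to $H^2_{bc}$. The equation $A(s)\xi=\eta-\partial_s\xi$ and differentiation in $s$ produce $Dw=\partial_s\eta-\dot S\,\xi\in L^2$ for $w=\partial_s\xi$, so that applying the reflection/regularity techniques used in the proof of Lemma~\ref{lem:vanish_bdy} (across both the Lagrangian $t$-boundary and the interior in $s$) upgrades $\xi$ to $W^{2,2}$ together with the required corner condition $A\xi(s,i)\in\R^{n}\times\{0\}$; no $s$-boundary condition is imposed in $H^2_{bc}$, so the bootstrap closes. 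The main technical obstacle throughout is the behavior of $\eta$ (respectively $\xi$) at the corners where the $s$-boundary meets the Lagrangian $t$-boundary, and this is precisely what Lemma~\ref{lem:vanish_bdy} is designed to handle via its even/odd reflection trick, which is why the whole scheme works.
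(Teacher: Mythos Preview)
Your $i=1$ argument is correct and matches the paper's Step~1 essentially word for word: closed image (Lemma~\ref{lem:closed_img}) plus Lemma~\ref{lem:vanish_bdy} plus unique continuation (Lemma~\ref{lem:unq_con}) applied to $D^*$.

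Your $i=2$ bootstrap, however, has a genuine gap. The claim that any $\xi\in H^1_{bc}$ solving $D\xi=\eta\in H^1_{bc}$ lies in $H^2_{bc}$ is false in general, because $H^1_{bc}(I\times[0,1])$ imposes no condition at the free boundary $\partial I\times[0,1]$. Take the model operator $D_A=\partial_s+A$ with $A$ as in~(HA), and write $\xi=\sum_\lambda c_\lambda e^{-\lambda s}e_\lambda$ on $[0,1]\times[0,1]$ with $c_\lambda=0$ for $\lambda<0$. Then $D_A\xi=0$, and $\xi\in H^1_{bc}$ as soon as $\sum_\lambda \lambda|c_\lambda|^2<\infty$, whereas $\xi\in H^2_{bc}$ requires $\sum_\lambda \lambda^3|c_\lambda|^2<\infty$. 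Choosing $c_\lambda$ between these two conditions gives $D_A\xi=0\in H^1_{bc}$ with $\xi\notin H^2_{bc}$. So you cannot simply ``upgrade'' the $\xi$ produced by the $i=1$ case; the reflection trick of Lemma~\ref{lem:vanish_bdy} does not help here because there is nothing forcing good trace regularity of $\xi$ at $s=a,b$.

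The paper avoids this by passing through the subspaces $\cW^1_{\mp}(I)$ and $\cW^2_{\mp}(I)$ of~\eqref{eq:w1mp}--\eqref{eq:w2pm}, where the boundary conditions $\xi(a)\in E^-$, $\xi(b)\in E^+$ make the problem genuinely elliptic up to the corners. Concretely, it shows (Step~2) that $D:\cW^1_{\mp}\to L^2$ is Fredholm of index~$0$, so by Step~1 surjectivity of $D:H^1_{bc}\to L^2$ only finitely many smooth $\xi_1,\dots,\xi_m\in H^2_{bc}$ are needed to complement $\cW^1_{\mp}$; then (Step~3), for $\eta\in H^1_{bc}$, the component in $\cW^1_{\mp}$ is bootstrapped to $\cW^2_{\mp}$ via the regularity theorem~\ref{thm:reg1}(ii) (which uses the $E^\pm$ boundary conditions through Corollary~\ref{cor:bij_lin_op}), and the $\xi_i$ are already in $H^2_{bc}$. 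The missing idea in your argument is precisely this reduction to a subspace with $s$-boundary conditions on which the $W^{2,2}$ regularity theory is available.
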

  \begin{proof}
  
  {\bf Step 1.} Surjectivity of the operator 
  $$
  D :H^1_{bc} ( I \times [0,1] ) \to L^2 (I \times [0,1] ). 
  $$
  \medskip 
  Let $ \Omega = I \times [0,1]$ and let $\eta \in L^2 ( \Omega ) $ 
  be orthogonal to the image of $D$. Then we have that 
  $$
  \int_{\Omega} \inner{\p_s \xi + J_0 \p_t\xi + S\xi}{\eta}= 0
  $$
   holds for all $ \xi \in H^1_{bc} (\Omega) $. 
  Particularly this implies that for all 
  $\phi\in C^{\infty}_{c, bc} (\Omega) $ ( defined in \eqref{eq:cinf_bc})
  the following equality holds 
  $$ 
  \int_I \int_0^1 \inner {\p_s \phi + J_0 \p_t \phi} {\eta} ds dt 
  = -\int_I \int_0^1 \inner{S^T \eta}{\phi} 
  =-\int_I\int_0^1 \inner{\zeta}{\phi} ds dt. 
  $$
  It follows from Lemma \ref{lem:vanish_bdy} that 
  $ \eta \in H^1_{bc} ( I \times [0,1] ) $ 
  and that $ \left. \eta\right|_{\p I} = 0 $ and $ \eta $ 
  is a strong solution of the equation 
  $$
  -\p_s \eta + J_0 \p_t \eta = -\zeta = - S^T \eta.
  $$
  From Lemma \ref{lem:unq_con} it follows that $ \eta = 0 $. 
  Thus the image of the operator 
  $$
  D : H^1_{bc} ( \Omega ) \to L^2 ( \Omega ), \;\; 
  D\xi = \p_s \xi + J_0 \p_t \xi + S(s,t)\xi
  $$
  is dense and as its image is also closed (see Lemma \ref{lem:closed_img}) 
  we have that $D$ is surjective. \\
  {\bf Step 2.} Let $ \cW^1_{\mp} ( I) $ be defined as in 
  \eqref{eq:w1mp}. Then there exist smooth functions 
  $ \xi_i \in H^1_{bc} (I \times [0,1] ) , \; i=1, \cdots m$
  such that 
  $$ 
  D : \cW^1_{\mp} ( I ) \cup \text{Span}\{ \xi_1, \cdots, \xi_m\} \to L^2( I \times [0,1] ) 
  $$
  is surjective. \\
  \medskip
  Notice that the operator $ D  $ can be written in the form 
  $ D= D_A + R $, where $ R ( \xi ) $ is given as a multiplication 
  by some $ W^{1,2} $ matrix valued function. 
  As the operator $ D_A : \cW^1_{\mp}(I) \to L^2 $ 
  is bijective (Corollary \ref{cor:bij_lin_op}), we have that 
  the operator $ D: \cW^1_{\mp}(I) \to L^2  $ is Fredholm of index $0$, as a
  compact perturbation of the operator $D_A$. From Step 1 it 
  follows that there exist $ \xi_i, \;\; i=1, \cdots, m $ such that 
  the restriction of the operator $D$ to the $ \cW^1_{\mp} (I) \cup \text{Span}\{ \xi_1, \cdots, \xi_m\} $ 
  is surjective. Notice that each $ \xi_i $ can be approximated by smooth elements 
  $ \xi_i^k, \; k=1, \cdots , \infty $ which also satisfy the condition 
  $D_A \xi^k_i \in H^1_{bc} ( I \times [0,1] ) $.  Thus we have that 
  $\xi^k_i \in H^2_{bc} ( I \times [0,1] ) $. Thus for sufficiently 
  large $k$ we have that the restriction of the operator $D$ to 
  $\cW^1_{\mp} (I) \cup \text{Span} ( \xi^k_1, \cdots, \xi^k_m) $ 
  is surjective. Thus, we can assume w.l.o.g. that $ \xi_i \in H^2_{bc} ( I \times [0,1])$ are smooth. 
 \\
 {\bf Step 3.} Let $ \cW^2_{\mp} (I) $ be defined as in \eqref{eq:w2pm} and let 
 $ \xi_i, i=1, \cdots, m $ be as in Step 2. Then 
 $$
  D: \cW^2_{\mp} (I) \cup \text{Span} \{ \xi_1, \cdots, \xi_m\} \to H^1_{bc} ( I \times [0,1] ) 
 $$
 is surjective. \\
 Let $ \eta \in H^1_{bc} ( I \times [0,1] ) $. From Step 2 it follows that 
 there exist $ \xi \in \cW^1_{\mp} (I) $ and $ \alpha_i \in \R$ such that 
 $ D ( \xi + \alpha_i \xi_i) = \eta $. We prove that $ \xi \in \cW^2_{\mp} (I) $ 
 actually. First notice that 
 $$ D\xi = \eta - \sum\limits_i\alpha_i D \xi_i 
 = \eta - \sum\limits_i \eta_i  = \eta' \in H^1_{bc} ( I \times [0,1] ) 
 $$
 thus we have that 
 $$ 
 D \xi = D_A \xi + R \xi = \eta' \;\; \;\; \Rightarrow \;\;\;  D_A \xi = \eta' - R \xi = \teta
 $$
 As $ R $ is a $ W^{1,2} $ function and $ \xi $ as well, we have that their product 
 is an $ L^p $ function for any $ p < \infty $. Thus the function $ \teta \in L^p( I \times [0,1] ) $
 and it follows from Theorem \ref{thm:reg1} that $ \xi \in W^{1,p}_{bc} ( I \times [0,1] ) $
 for some $ p > 2 $. This implies that the product $ R \xi $ is actually a $ W^{1,2} $ function 
 and it also satisfies the right boundary condition and hence $ \teta \in H^1_{bc} ( I \times [0,1] ) $. 
 From Corollary \ref{cor:bij_lin_op} we have that $ \xi \in \cW^2_{\mp} (I) $. \\
 Steps 1-3 prove that the operator $D$ is surjective.  
  \end{proof}

  \section{Appendix}
 \subsection{Abstract interpolation theory}
Let $ H $ and $ W $ be Hilbert spaces and let the 
inclusion $W\hookrightarrow H $ be continuous and dense. 
We define the space $ \cW = \cW(0, + \infty ) $ as follows
\begin{align*}
 \cW= \cW(0, + \infty) &= \Big \{ x : x \in L^2((0, + \infty), W ) ),
 \; \frac{\partial u}{\partial s} \in L^2 ( (0 , +\infty), H ) \big \}  
\\      &= \Big \{  x \in L^2 ( (0, +\infty), W ) \cap W^{1,2} ( (0, +\infty), H) \Big \}
\end{align*}
with the norm 
\begin{align*}
 \|x\|_{\cW}^2 & = \|x\|^2_{L^2( (0, + \infty), W)} +
              \|\dot{x}\|^2_{L^2((0, +\infty), H)} 
= \int_0^{+ \infty} \Big ( \|x(s)\|^2_{W} + \|\dot{x}(s)\|^2_H\Big ) ds 
 \end{align*}

\begin{remark}
The space $ \cW$ is a Hilbert space and the space 
$C^{\infty}_c ( [0, +\infty), W ) $ is dense in $\cW $.
\end{remark}
\begin{definition}
The trace space $V$ of the space $ \cW $ is given by 
\begin{align*}
 V= Tr(\mathcal{W}) &:= \Big \{ \xi \in H :\;\; \exists x \in \mathcal{W}, \; x(0) = \xi  \Big \} \\
\|\xi\|_{1/2}= \|\xi\|^2_{V} &:= \inf\limits_{x\in \mathcal{W}, \; x(0) = \xi }
\int_0^{+\infty}  ( \|\dot{x} (s)\|^2_H + \|x(s)\|_W^2 ) ds 
\end{align*}
\end{definition}
\begin{remark}\rm
 One could also use finite interval $I= (0,1) $ or $I= \R $ 
 instead of the interval $(0, + \infty ) $ to define the space $ \cW (I ) $,
 but the trace space $V=Tr ( \cW) $ will always be the same.   
\end{remark}
It is easy to see that the norm $ \|\cdot\|_{1/2}$ is really a norm
and that with respect to this norm the space $V$ is a Banach space. 

\begin{definition}\label{ap_def1.4}
 Suppose that $H, W $ are Hilbert spaces with dense 
 and continuous inclusion $W\hookrightarrow H $.  
 Let $ A: W \rightarrow H $, be a linear operator 
that satisfies the following

\begin{itemize}
 \item[i)] A is self-adjoint with the domain $ D(A) = W $, i.e.
 \begin{align*}
 \langle Ax, y\rangle_H = \langle x, Ay \rangle_H ,\;
\end{align*}
for all $ x, y\in W = D(A) $. It follows from Hellinger-Toplitz theorem 
that $A$ is also continuous. 
\item[ii)] A is positive, $ \langle Ax, x \rangle \geq 0 $ for all $ x \in W $.
\item[iii)] Suppose also that $ A $ is bijective.  
Hence, there exists a positive constant $ c_0 $ such that 
 \begin{equation}\label{ap:eq_1}
  \frac{1}{c_0}  \|x\|_W  \leq \| Ax \|_H \leq c_0 \|x\|_W 
 \end{equation}
 The right inequality follows from continuity of $A$, and left from 
 open mapping theorem. 
\end{itemize}
 For $ \theta\in [0,1] $ we define the
 {\bf intermediate (interpolation) space }
 $$ 
 [W, H]_{1-\theta, A}= \text{Dom}( A^{\theta}). 
 $$
\end{definition}

 \begin{definition}
 We say that a self-adjoint operator $ A : W \rightarrow H $
 is a {\bf purely point operator} ( or {\bf has a purely point spectrum} ) 
 if the following holds: \\
 There exists an $ H-$orthogonal decomposition
 $ H = \bigoplus H_i $ , where each $ H_i = \langle e_i \rangle $,
 and $ e_i $ is an eigenvector of the operator $A$,
 i.e. $ A (e_i)= \lambda_i e_i $. \\
  A sufficient condition that a symmetric operator 
  has eigenvectors which form a Hilbert space basis 
  is that it has a compact inverse.
  \end{definition}
  
  \begin{remark}\label{ap_rem1.1}\rm
  If $ A $ is a purely point operator 
  which satisfies the requirements of the Definition 
  \ref{ap_def1.4}, then for $ H \ni \xi= \sum\limits_i a_i e_i $ we have
  $$[W,H]_{1-\theta, A} =  \text{Dom}(A^{\theta} ) =
  \Big \{ \xi \in H , \; \xi = \sum\limits_i a_i e_i : 
  \; \sum\limits_i \lambda_i^{2\theta} \abs{a_i}^2 < + \infty \Big \}  .$$
   One could do the same for an operator $ A $ 
   which is not necessarily positive (Thus the condition $ii) $
   in definition \ref{ap_def1.4} is superfluous. 
   Namely, the operator 
   $ \abs{A} $ is positive and self-adjoint
   and we can define 
    $$
    [W,H]_{1-\theta, A} =  \text{Dom}(\abs{A}^{\theta} ) = 
    \Big \{ \xi \in H , \; \xi = \sum\limits_i a_i e_i : 
    \; \sum\limits_i \abs{\lambda_i}^{2\theta} \abs{a_i}^2 < + \infty \Big \}  .
    $$
   Notice that the space $[W,H]_{1-\theta, A}$ is a Hilbert space with 
   the scalar product
   $$ 
   \xi= \sum\limits_ia_i e_i, \;\; \eta=\sum\limits_i b_i e_i, \;\;
   \langle \xi, \eta\rangle_{1-\theta, A} =\sum\limits_i a_i b_i \abs{\lambda_i}^{2\theta} 
   $$ 
  
 \end{remark}
 
 \begin{remark}\rm
In our intended applications of this theory the operator $ A $
will be the square root of the Laplacian,
$ A = \sqrt{-\triangle} = i \partial_t $ 
or some compact perturbation of $\sqrt{-\triangle} $. 
The spaces $ W $ and $H $ will be some 
$ H^k([0,1] )= W^{k,2}([0,1] ) $ spaces with certain boundary conditions. 
\end{remark}

In the next theorem we prove that the space $ [W,H]_{1/2, A} $
doesn't depend on the operator $ A $ and that 
it is the same as the trace space $ V= Tr(\cW) $. 

\begin{theorem}\label{apthm1}
Let $ A $ be an operator as in Remark \ref{ap_rem1.1}
and  let $\xi\in H$. Then $ \xi \in [W, H]_{1/2, A}= D( \sqrt{A} )$ 
if and only if $\xi \in V=Tr(\cW)  $ and 
there exists a constant  $c>0 $ such that 
for all $\xi \in D( \sqrt{A} ) $ 
$$\frac{1}{c} \|\xi\|_{1/2} \leq \|\xi\|_{1/2,A} \leq c \|\xi\|_{1/2} .$$
\end{theorem}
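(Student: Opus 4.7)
The plan is to prove the two inclusions separately using the eigenbasis $\{e_i\}$ of $A$ with eigenvalues $\lambda_i>0$. Throughout, write $\xi=\sum_i a_i e_i\in H$, so that $\xi\in D(\sqrt{A})$ is equivalent to $\sum_i \lambda_i |a_i|^2<+\infty$, and $\|\xi\|_{1/2,A}^2=\sum_i \lambda_i |a_i|^2$.

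\medskip\noindent\textbf{Step 1: $D(\sqrt{A})\hookrightarrow V$ with control of norms.} Given $\xi\in D(\sqrt{A})$, the natural candidate extension is
\[
x(s):=e^{-sA}\xi=\sum_i a_i e^{-s\lambda_i}e_i,\qquad s\ge 0.
\]
I would verify that $x\in\cW$ as follows. Using the eigenbasis expansion, the Parseval identity and Fubini give
\[
\int_0^{+\infty}\|\dot x(s)\|_H^2\,ds
=\sum_i |a_i|^2\lambda_i^2\int_0^{+\infty}e^{-2s\lambda_i}\,ds
=\tfrac12\sum_i \lambda_i|a_i|^2
=\tfrac12\|\xi\|_{1/2,A}^2.
\]
Since $A$ is bijective, the inequality \eqref{ap:eq_1} yields $\|\eta\|_W\le c_0\|A\eta\|_H$ for all $\eta\in W$, hence $\|x(s)\|_W\le c_0\|\dot x(s)\|_H$ pointwise in $s$. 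Consequently $x\in\cW$ and
\[
\|\xi\|_{1/2}^2\le\|x\|_{\cW}^2\le (1+c_0^2)\int_0^{+\infty}\|\dot x(s)\|_H^2\,ds
=\tfrac{1+c_0^2}{2}\,\|\xi\|_{1/2,A}^2.
\]

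\medskip\noindent\textbf{Step 2: $V\hookrightarrow D(\sqrt A)$ with control of norms.} Conversely, suppose $\xi\in V$ and let $x\in\cW$ be any extension with $x(0)=\xi$. Expand $x(s)=\sum_i x_i(s)e_i$, so that $a_i=x_i(0)$, and each scalar function $x_i$ lies in $W^{1,2}(0,+\infty)$. For every $i$, the fundamental theorem of calculus combined with Cauchy–Schwarz gives
\[
|a_i|^2=|x_i(0)|^2=-\int_0^{+\infty}\tfrac{d}{ds}|x_i(s)|^2\,ds
\le 2\int_0^{+\infty}|x_i(s)||\dot x_i(s)|\,ds.
\]
Multiplying by $\lambda_i$ and applying the weighted AM--GM inequality $2\lambda_i|x_i||\dot x_i|\le \lambda_i^2|x_i|^2+|\dot x_i|^2$, then summing in $i$ and using Parseval again, yields
\[
\sum_i\lambda_i|a_i|^2
\le \int_0^{+\infty}\!\!\big(\|Ax(s)\|_H^2+\|\dot x(s)\|_H^2\big)\,ds
\le c_0^2\int_0^{+\infty}\!\!\big(\|x(s)\|_W^2+\|\dot x(s)\|_H^2\big)\,ds
=c_0^2\|x\|_{\cW}^2,
\]
where \eqref{ap:eq_1} was used once more. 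Taking the infimum over all admissible $x$ gives $\|\xi\|_{1/2,A}^2\le c_0^2\|\xi\|_{1/2}^2$, so $\xi\in D(\sqrt A)$.

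\medskip\noindent\textbf{Expected obstacles.} The argument is essentially a spectral calculation, and the two delicate points are: (i) the legitimacy of the termwise integration in Step~1, which requires knowing that $\|x(s)\|_W^2$ is integrable---this is where the bijectivity of $A$ is essential, since otherwise one would need a separate estimate on the low-frequency part of $\xi$; and (ii) the termwise Fubini step in Step~2, which uses monotone convergence on nonnegative summands together with the fact that $x_i\in W^{1,2}(0,+\infty)$ forces $x_i$ to be continuous and to decay at infinity, so that the integration by parts producing $|x_i(0)|^2$ is unambiguous. Combining Steps~1 and 2 proves the equivalence of the two characterizations of $V$ together with equivalence of norms, with explicit constant $c=\max(\sqrt{(1+c_0^2)/2},\,c_0)$.
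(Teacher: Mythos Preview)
Your proof is correct and follows essentially the same strategy as the paper: the extension $x(s)=e^{-sA}\xi$ establishes $D(\sqrt{A})\hookrightarrow V$ in both versions (this is the paper's Step~3), and the reverse inclusion is obtained by a trace-type inequality in the eigenbasis.

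The one genuine difference is how you prove $V\hookrightarrow D(\sqrt{A})$. The paper introduces the auxiliary function $f=\dot x + Ax$, computes $\tfrac{d}{ds}\tfrac12\|x(s)\|_{1/2,A}^2$ in terms of $f$ and $\|Ax\|_H^2$, and then estimates via Cauchy--Schwarz in the $\langle\cdot,\cdot\rangle_{1/2,A}$ pairing; this is first done for $x\in C^\infty_c([0,\infty),W)$ and then extended by density. Your route is more direct: you work componentwise, use the scalar identity $|x_i(0)|^2=-\int_0^\infty\tfrac{d}{ds}|x_i|^2\,ds$ (justified since each $x_i\in W^{1,2}(0,\infty)$ vanishes at infinity), and apply weighted AM--GM. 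This avoids the density step entirely and makes the constants slightly more transparent. Both arguments rely on the same two structural ingredients---the eigenbasis of $A$ and the equivalence~\eqref{ap:eq_1}---so the difference is one of presentation rather than substance.
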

{\bf Proof:} We shall devide the proof of 
this theorem into three steps \\
{\bf Step 1:} For all $ x \in C^{\infty}_c ( [0, + \infty), W ) $ 
we have that 
\begin{equation}\label{apeq1}
 \|x(0)\|^2_{1/2,A} = \|\xi\|_{1/2,A} ^2
 \leq  c \int_0^{+\infty}  \Big (\|\dot{x}(s)\|^2_H + \|x(s)\|_W^2 \Big ) ds 
\end{equation}
\begin{proof}
 As $ W \subset [W,H]_{1/2, A} $ we have that
 $ x(s) \in [W,H]_{1/2, A} $, for all $s \in [0, + \infty) $.
 Let $ f= \dot{x}(s) + A(x(s)) $, then 
  \begin{align}\label{eq:ap_pom1}
  \frac{d}{ds} \frac{1}{2} \|x\|_{1/2,A}^2 &= \langle x, \dot{x}\rangle_{1/2,A} =\notag \\
   & = \langle x, f-A x\rangle_{1/2,A} = \langle x, f\rangle_{1/2,A} - \|Ax\|^2_H 
 \end{align}
 The last equality holds as
 $ \langle Ax, x \rangle_{1/2,A} =\sum_{i=1}^{+\infty} \lambda_i^2 \langle x, e_i \rangle^2 =\|Ax\|^2_H $. 
 Integrating the inequality \eqref{eq:ap_pom1} we obtain 
 \begin{align*}
 \|\xi\|^2_{1/2,A}= \|x(0)\|^2_{1/2,A } & = \int_0^{+\infty} \|Ax\|_H^2 -\int_0^{+\infty} \langle x, f\rangle_A ds \\
              &\leq \int_0^{+\infty}\|A(x(s))\|^2_H dt + \frac{1}{2} \int_0^{+\infty}\| A(x(s))\|^2_H ds + \frac{1}{2}\int_0^{+\infty} \|f\|^2_H ds \\
              &\leq \int_0^{+\infty} \Big ( \frac{1}{2} \|\dot{x}(s) + A(x(s))\|_H^2 + \frac{3}{2}\|A(x(s))\|^2_H \Big ) ds \\
              & \leq c_1^2 \int_0^1 \Big ( \|\dot{x}(s)\|^2_H + \|x(s)\|^2_W \Big )ds
 \end{align*}
 and the constant $ c_1= \max \left \{ 1, \sqrt{\frac{5}{2}}c_0 \right \}$, 
 where $ c_0 $ is the constant from the inequality \eqref{ap:eq_1}.
\end{proof}

 {\bf Step 2:} If $ x \in L^2 ([0,+ \infty), W) \cap W^{1,2} ([0,+\infty) , H)=\cW $, 
 then $\forall \tau \in [0,+ \infty ),  \; x(\tau) \in D( \sqrt{A} ) $. 
 \\ 
 \begin{proof}
For functions $x \in C^{\infty}_c([0,+\infty), W ) $ this obviously holds, 
as $ W\subset D( \sqrt{A} ) $.  As the set $ C^{\infty}_c ([0,+\infty), W)$ 
is dense in $ \cW $ choose sequence $x_k \longrightarrow x $.
The sequence $x_k $ is Cauchy w.r.t. the norm
$ \int_0^{+\infty} \Big ( \|\dot{x}(s)\|^2_H + \|x(s)\|^2_W \Big )dt$. 
Therefore this sequence is also Cauchy w.r.t. the norm $\|\cdot\|_{1/2,A} $ 
in $D( \sqrt{A} )$ (Remark that the inequality proved 
in the first step holds for every $\tau \in [0,+\infty) $, not just $\tau=0 $ ).
Therefore 
 $$ x(\tau) = \lim\limits_{k\rightarrow \infty} x_k (\tau) \in D( \sqrt{A} ). $$  
 \end{proof}
 {\bf Step 3:} There exists $\delta > 0 $ such that for all $\xi \in D( \sqrt{A} )$
 there exists $x\in \mathcal{V} $ with $x(0)=\xi $ and such that 
 \begin{equation}\label{apeq2}
 \|\xi\|^2_{1/2,A} \geq \delta \int_0^{+\infty} \Big ( \|\dot{x}(s)\|^2_H + \|x(s)\|^2_W \Big ) ds 
 \end{equation}
\begin{proof}
 Let $ \xi = \sum\limits_{i=1}^{+\infty} \xi_i ,\;x(s) = \sum\limits_{i=1}^{+\infty} x_i (s) $.
 As $\xi \in D ( \sqrt{A} )$ we have $ \sum\limits_i \lambda_i \xi_i^2 < +\infty $. 
 The solution of the following problem 
\begin{align}\label{ap:eq:int1}
 \dot{x}(s) + A(x(s))= 0, \;\; x(0) = \xi
 \end{align}
 is given by $ x= \sum\limits_i x_i(s) = \sum\limits_i \xi_i e^{-\lambda_i s } $. 
 We shall prove that $ x\in \mathcal{W} $ and that
  \begin{align}
  \|\xi\|^2_{1/2, A} \geq c \int_0^{+\infty} \Big ( \|\dot{x}(s)\|^2_H + \|A(x(s))\|^2_H \Big ) ds,
 \end{align}
for some positive constant $ c $. As $ x $ satisfies 
the equality (\ref{ap:eq:int1}) it is 
enough to prove that $\int\limits_0^{+\infty} \|A(x(s))\|_H ds < +\infty $. 
Notice that 
\begin{align}
 \int_0^{+\infty} \|A(x_i)\|_H^2 &= \int_0^{+\infty} \lambda_i^2 \xi_i^2 e^{-2\lambda_i s} ds  \\
                                  &= \lambda_i \xi_i^2 \int_0^{+\infty} \lambda_i e^{-2\lambda_i s} ds 
                                  = \frac{1}{2} \lambda_i \xi_i^2  
\end{align}
hence, 
$$ 
\int_0^{+\infty} \|A(x)\|_H^2 ds = 
\int_0^{+\infty} \sum\limits_i \|A(x_i)\|^2_H = \frac{1}{2} \|\xi\|_{1/2, A}^2 . 
$$
As $ \|A(x)\|_H \geq \frac{1}{c_0} \|x\|_W $ we have 
\begin{equation}
\|\xi\|^2_{1/2, A} = 
2 \int_0^{+\infty} \|A(x)\|_H^2 ds \geq
\delta \int_0^{+\infty} \Big ( \|\dot{x}(s)\|^2_H + \|x(s)\|^2_W \Big ) ds,
\end{equation}
where $ \delta$ behaves as $ \frac{1}{c_0^2} .$ 
 
\end{proof}

\subsection{ $L^p$ estimates}\label{sec:lp}

In section \ref{SEC:lin_est} we have considered 
the linearized operator $D_A $ given by 
\begin{equation*}
\begin{split}
D_A &: W^{2,2}_{bc} ( I \times [0,1] ) \to W^{1,2}_{bc} ( I \times [0,1] )\\
D_A &\xi = \p_s \xi + A \xi = \p_s \xi + J_0 \p_t \xi + S(t) \xi,
\end{split}
\end{equation*}
where the operator $ A $ satisfies ~ (HA). 
Now we consider the same operator, but as the domain we 
take $ W^{1,p}_{bc} ( \R\times [0,1] ) $ defined by 
\begin{equation}
 W^{1,p}_{bc} ( \R\times [0,1] ) := \left \{ \xi \in W^{1,p} ( \R \times [0,1] ) \Big|\xi(s,i) \in \R^n \times \{0\} \right \}. 
\end{equation}
The next lemma is an analogous of the Lemma 2.4 in 
\cite{S1} and the proof is almost verbatim taken 
from there. 
\begin{lemma}\label{lem:lp_est}
 Suppose that $ S: [0,1] \to R^{2n \times 2n} $ is a smooth function 
 and suppose that the operator $ A= J_0 \p_t + S(t) $ satisfies ~(HA). 
 Let $ 1<p< \infty $ and let $D_A$ be given by 
 \begin{equation*}
\begin{split}
D_A &: W^{1,p}_{bc} ( \R \times [0,1] ) \to L^p ( \R \times [0,1] )\\
D_A &\xi = \p_s \xi + A \xi = \p_s \xi + J_0 \p_t \xi + S(t) \xi,
\end{split}
\end{equation*}
is bijective and there exists a constant $c > 0 $ such that 
the following inequality 
\begin{equation}\label{eq:lp_est}
 \| \xi \|_{W^{1,p}} \leq c \| D_A \xi \|_{L^p},
\end{equation}
holds for all $ \xi \in W^{1,p}_{bc}(\R \times [0,1] ) $.
\begin{proof}
{\bf Step 1. } The claim holds in the case $ p= 2 $. \\
Notice that 
\begin{align*}
 \| D_A \xi \|_{L^2}^2 &= \int\limits_{-\infty}^{+\infty} \langle \p_s \xi + A \xi, \p_s \xi + A \xi \rangle_{L^2} ds \\
		      & = \int\limits_{-\infty}^{+\infty} \Big ( \| \p_s \xi \|^2_{L^2} + \| A \xi \|^2_{L^2} \Big ) + 
		      \int\limits_{-\infty}^{+\infty} \p_s \inner{\xi}{A\xi}ds \\
		      &= \int\limits_{-\infty}^{+\infty} \Big ( \| \p_s \xi \|^2_{L^2} + \| A \xi \|^2_{L^2} \Big )\\
		      & \geq c' \| \xi \|_{W^{1,2}( \R \times [0,1] )}. 
\end{align*}
Thus, we have proved that the operator $ D_A $ is injective 
and has a closed image. To prove that it is also 
surjective we can use eigenvector decomposition.  Let $ \eta \in L^2 ( \R \times [0,1] ) $. 
Write $ \eta(s,t) = \sum_{\lambda} \eta_{\lambda}(s,t) $, where $ \eta_{\lambda} $ 
are the eigenvectors of the operator $A$. Observe the equation 
$ D_A \xi = \p_s \xi + A \xi = \eta $. Then the solution $ \xi = \sum_{\lambda} \xi_{\lambda} $ 
of this equation is given by 
\begin{equation*}
 \begin{split}
  \xi_{\lambda}(s,t) = \int_{-\infty}^s e^{-\lambda ( s- \tau)} \eta_{\lambda} ( \tau, t) d \tau ,\;\; \lambda > 0\\
  \xi_{\lambda} (s,t) = -\int_s^{\infty} e^{-\lambda ( s- \tau)} \eta_{\lambda} ( \tau, t) d \tau, \;\; \lambda< 0 
 \end{split}
\end{equation*}
Notice that $ \xi = \int\limits_{-\infty}^{+\infty} K(s-\tau) \eta(\tau,t)d\tau $, 
where $ K$ decays exponentially. \\
{\bf Step 2.} Let $ p \geq 2 $. There exists a constant $ c_1 > 0 $ such that
\begin{equation}
 \| \xi \|_{W^{1,p} ( [0,1] \times [0,1] )} \leq c_1 \left ( \| D_A \xi \|_{L^p([-1,2]\times [0,1] )} + \| \xi\|_{L^2([-1,2] \times [0,1] )} \right) 
\end{equation}
holds for all $ \xi \in W^{1,p} ( [-1,2] \times [0,1] ) $. Moreover, if $ \xi \in W^{1,2} $ and 
$ D \xi \in L^p_{\loc} $ then $ \xi \in W^{1,p}_{\loc} $.\\
From Calderon-Zygmung inequality we have that 
\begin{align*}
 \| \nabla\xi \|_{L^p( [0,1] \times [0,1] ) }\leq c \| \dbar \xi \|_{L^p ( [- 1/2, 3/2] \times [0,1] )}. 
\end{align*}
This implies the above estimate with $L^2 $ norm on the right side replaced 
by $ L^p $ norm. Let $ \Om' = [- 1/2, 3/2] \times [0,1]  $ and $ \Om=[-1,2]\times[0,1] $ we have 
\begin{align*}
 \| \xi \|_{W^{1,p} ( [0,1] \times [0,1] )} &  \leq c \Big ( \| D_A \xi \|_{L^p ( \Om')} + \| \xi \|_{L^p ( \Om')} \Big ) \\
  & \leq  c \Big ( \| D_A \xi \|_{L^p ( \Om')} + \| \xi \|_{W^{1,2} ( \Om')} \Big ) \\
  &\leq c \Big (\| D_A \xi \|_{L^p ( \Om')} + \| D_A \xi \|_{L^2( \Om) }  + \| \xi\|_{L^2(\Om)} \Big )\\
  &\leq c \Big ( \| D_A \xi \|_{L^p ( \Om)} +  \| \xi\|_{L^2(\Om)} \Big )
\end{align*}
The second inequality follows from Sobolev embedding.\\
{\bf Step 3.} Consider the norm 
$$
\| \xi \|_{2,p} := \Big ( \int\limits_{-\infty}^{\infty} \| \xi(s,\cdot)\|^p_{L^2([0,1])}\Big )^{1/p}.
$$
There exists constants $ c_2, c_3 > o $ such that, if $ \xi \in W^{1,2}_{bc} ( \R\times[0,1] ) $ 
and $ D \xi \in L^p( \R \times [0,1] ) $, then $ \xi \in W^{1,p}_{bc} ( \R \times [0,1] ) $ and 
\begin{align*}
 \| \xi \|_{2,p} \leq c_2 \| D \xi \|_{L^p} , \;\; \| \xi \|_{W^{1,p}} \leq  c_3 ( \| D \xi \|_{L^p} + \| \xi \|_{2,p} ). 
\end{align*}
From Step 1 it follows that $ \xi\in W^{1,p}_{\loc} $, to prove that 
$ \xi \in W^{1,p} $ it is enough to prove the two estimates. The 
first inequality follows from Young's convolution inequality. 
From Step 1 we have that $ \xi = K * \eta $, where $ K(s) $ decays 
exponentially 
$$
\| \xi \|_{2,p} = \| K * \eta \|_{2,p} \leq \| K \|_{L^1 ( \R, L^2 ( 0,1))} \| \eta \|_{L^p ( \R, L^2(0,1) ) }\leq c_2 \| \eta\|_{L^p(\R \times [0,1] )}
$$
and the last inequality follows as $ \| \eta \|_{L^2( 0,1)} \leq  \| \eta \|_{L^p (0,1)} $
for $ p \geq 2 $. To prove the second inequality, we shall use the result from 
Step 1 and the inequality $ (a+b)^p \leq 2^p ( a^p + b^p) $. 
\begin{align*}
 \| \xi \|^p_{W^{1,p} ( [k,k+1]\times[0,1] )} &\leq 2^p c_1^p \Bigg ( \int\limits_{k-1}^{k+2} \| D \xi \|^p_{L^p([0,1])}ds + 
 \Big ( \int\limits_{k-1}^{k+2} \| \xi \|^2_{L^2([0,1])} ds \Big )^{p/2} \Bigg )\\
 &\leq 2^p c_1^p \Bigg (  \int\limits_{k-1}^{k+2} \| D \xi \|^p_{L^p([0,1])}ds + 3^{p/2-1} \int\limits_{k-1}^{k+2} \| \xi\|^p_{L^2([0,1])} \Bigg )\\
  &\leq 3^{p/2} 2^p c_1^p \int\limits_{k-1}^{k+2} \Big ( \| D\xi \|_{L^p}^p + \| \xi\|_{L^2}^p \Big )ds
\end{align*}
Taking the sum over all $ k \in \Z $ we obtain the desired inequality. \\
{\bf Step 4.} Proof of Lemma for $ p > 2 $.\\
From Step 3, we have that 
$$
\| \xi \|_{1,p} \leq c \| D\xi \|_{L^p}, 
$$
holds for all $ \xi \in C^{\infty}_{bc} ( \R \times [0,1] ) $ with compact 
support. Thus by density of such functions the above inequality holds 
for all $ \xi \in W^{1,p}_{bc} ( \R \times [0,1] ) $. Thus the operator 
$ D_A : W^{1,p}_{bc} \to L^p $ is injective and has closed image. Let 
$ \eta \in L^p( \R \times [0,1] ) \cap L^2 (  \R \times [0,1] ) $. 
By Step 1 there exists $ \xi \in W^{1,2}_{bc}(  \R \times [0,1] ) $
such that $ D \xi = \eta $. By Step 3 we have that $ \xi \in W^{1,p}_{bc} ( \R \times [0,1] ) $
and thus the image is dense and hence is onto. \\
{\bf Step 5.} Case $ 1<q<2 $. \\
By duality we have 
\begin{align*}
  \| D_A \xi \|_{L^q} &= \sup\limits_{\|\eta\|_{L^p} \leq 1} \left |  \int_{\Omega} \inner{D_A\xi}{\eta} ds dt \right| \\
 		    & \geq \sup\limits_{\|\eta\|_{W^{1,p}}\leq 1 } \left |  \int_{\Omega} \inner{D_A\xi}{\eta} ds dt \right|\\
 		    &= \sup\limits_{\|\eta\|_{W^{1,p}}\leq 1 }\left |  \int_{\Omega} \inner{\xi}{D^*_A\eta} ds dt \right|\\	    &= c \| \xi\|_{L^q} 
\end{align*}
The last equality follows from the bijectivity 
of the operator $ D_A* :W^{1,p}_{bc} \to L^p $, where $ D_A^* = - \p_s + J_0 \p_t + S(t) $. 
Particularly there exists $ \eta $ such that $ \| \eta \|_{W^{1,p}}\leq 1 $ 
and $ D_A^* \eta = \frac{c\abs{\xi}^{q-2} \overline{\xi}}{\|\xi\|^{q-1}_{L^q}} $. 
From the above inequality it follows that $D_A $ is injective and 
analogously as in Step 4 we have that its image is dense. 
\end{proof}

\end{lemma}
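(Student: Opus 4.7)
The plan is to establish the lemma in five stages, bootstrapping from the $L^2$ case to general $L^p$ via kernel estimates and duality.

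First, I would prove bijectivity at $p=2$. Injectivity together with the estimate $\|\xi\|_{W^{1,2}} \le c\|D_A\xi\|_{L^2}$ follows from the integration-by-parts identity
\begin{equation*}
\|D_A\xi\|_{L^2}^2 = \int_{\R}\bigl(\|\p_s\xi\|_{L^2}^2 + \|A\xi\|_{L^2}^2\bigr)\,ds + \int_{\R}\p_s\inner{\xi}{A\xi}\,ds,
\end{equation*}
where the boundary term vanishes since $\xi$ has integrable $W^{1,2}$ norm, together with the equivalence $\|A\xi\|_{H^0} \asymp \|\xi\|_{H^1}$ from (HA). For surjectivity, I would expand both $\xi$ and the right-hand side $\eta$ in the orthonormal basis $\{e_\lambda\}$ of eigenvectors of $A$; the equation $\p_s\xi_\lambda + \lambda\xi_\lambda = \eta_\lambda$ has the bounded solution
\begin{equation*}
\xi_\lambda(s) = \int_{-\infty}^{s}e^{-\lambda(s-\tau)}\eta_\lambda(\tau)\,d\tau\ (\lambda>0),\qquad \xi_\lambda(s) = -\int_{s}^{+\infty}e^{-\lambda(s-\tau)}\eta_\lambda(\tau)\,d\tau\ (\lambda<0),
\end{equation*}
so that $\xi = K*\eta$ where $K:\R\to\mathcal{L}(H^0)$ is an operator-valued kernel satisfying $\|K(s)\|_{H^0\to H^0} \lesssim e^{-\delta|s|}$ for $\delta = \min_\lambda |\lambda| > 0$.

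Next I would upgrade to $p\ge 2$ by a two-part estimate. The first part is a local Calder\'on-Zygmund estimate: applying the standard $\dbar$-regularity (with lower-order perturbation from $S(t)\xi$) on the strip $[-1,2]\times[0,1]$ and using Sobolev embedding $W^{1,2}\hookrightarrow L^p$ when combined with the $p=2$ control, gives
\begin{equation*}
\|\xi\|_{W^{1,p}([0,1]\times[0,1])} \le c_1\bigl(\|D_A\xi\|_{L^p([-1,2]\times[0,1])} + \|\xi\|_{L^2([-1,2]\times[0,1])}\bigr).
\end{equation*}
The second part is a mixed-norm estimate for $\|\xi\|_{2,p} := \bigl(\int\|\xi(s,\cdot)\|_{H^0}^p\,ds\bigr)^{1/p}$ obtained by applying Young's convolution inequality to $\xi = K*\eta$: since $s\mapsto \|K(s)\|_{H^0\to H^0}$ is in $L^1(\R)$, and $\|\eta(s,\cdot)\|_{H^0} \le \|\eta(s,\cdot)\|_{L^p([0,1])}$, this yields $\|\xi\|_{2,p} \le c_2\|D_A\xi\|_{L^p}$. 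Summing the local estimate over unit translates $[k,k+1]$ (with $\ell^p$ discrete sums) and using the inequality $(a+b)^p \le 2^p(a^p+b^p)$ to separate the two norms, one controls the $L^2$-in-$t$ term by the mixed norm, yielding the global bound $\|\xi\|_{W^{1,p}} \le c\|D_A\xi\|_{L^p}$. Density of $W^{1,2}\cap W^{1,p}$ in $W^{1,p}_{bc}$, together with the $p=2$ surjectivity applied to $L^p\cap L^2$ data and then passed to the full $L^p$ by approximation, gives bijectivity for $p\ge 2$.

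Finally for $1<q<2$, I would argue by duality. The formal adjoint $D_A^* = -\p_s + J_0\p_t + S(t)^T$ is of the same form (with the operator $A^* = J_0\p_t + S(t)^T$ also satisfying (HA) after a sign change in $s$), so it is bijective $W^{1,p}_{bc}\to L^p$ for $p = q/(q-1) > 2$ by the previous stage. For any $\xi \in W^{1,q}_{bc}$,
\begin{equation*}
\|D_A\xi\|_{L^q} \ge \sup_{\|\eta\|_{W^{1,p}}\le 1}\left|\int \inner{\xi}{D_A^*\eta}\,ds\,dt\right|
\end{equation*}
and taking $\eta$ with $D_A^*\eta = c|\xi|^{q-2}\overline{\xi}/\|\xi\|_{L^q}^{q-1}$ (the $L^p$ dual of $\xi$, which lies in $L^p$ and hence has a preimage in $W^{1,p}_{bc}$) gives $\|\xi\|_{L^q} \le c\|D_A\xi\|_{L^q}$; the full $W^{1,q}$ norm is then recovered by the same local Calder\'on-Zygmund argument as before. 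Density of the image follows from the same duality, and closedness from the inequality, yielding surjectivity.

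The main obstacle I expect is the passage from the $p=2$ kernel representation to the $L^p$ mixed-norm bound: one must verify carefully that the operator-valued kernel $K(s)$ really does have $\|K(s)\|_{H^0\to H^0}$ in $L^1(\R,ds)$ uniformly, which relies on the spectral gap of $A$ built into (HA) (bijectivity of $A$ is what ensures $\inf|\lambda| > 0$ and hence exponential decay rather than merely boundedness). The other delicate point is ensuring the Calder\'on-Zygmund estimate respects the Lagrangian boundary conditions; this can be handled by the same even/odd reflection trick used in Lemma \ref{lem:vanish_bdy} to reduce to an interior estimate on $[-1,2]\times[-1,2]$.
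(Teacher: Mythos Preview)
Your proposal is correct and follows essentially the same approach as the paper: the $p=2$ case via the integration-by-parts identity and eigenvector decomposition with kernel $K$, the local Calder\'on--Zygmund estimate combined with the mixed-norm Young inequality summed over unit translates for $p>2$, and duality against $D_A^*$ for $1<q<2$. Your outline is in fact slightly more explicit than the paper's in the $q<2$ step (you note that the full $W^{1,q}$ norm must be recovered from the $L^q$ bound via the local estimate), and your remarks on the spectral gap and the reflection trick for the boundary are apt.
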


\chapter{Hardy space approach to gluing}\label{chp:hardy}
 
Symplectic Floer homology was introduced by Floer in  
 \cite{F1,F2,F3,F4}.
 Floer Gluing theorem is one of the main  technical ingredients
 in the construction of the Floer homology. 
 Together with  the compactness and linear 
 elliptic Fredholm theory is used to define Floer homology and prove its various 
 properties.  
 
 In this chapter we introduce a new approach to gluing of perturbed 
 holomorphic curves with Lagrangian boundary conditions. The motivation 
 comes on one hand from the work of Salamon, Robbin and Ruan in 
 \cite{RS4, RRS}, where similar 
 techniques were used in integrable case, on the other hand form
 the work of Kronheimer and Mrowka \cite{KM} in the Seiberg-Witten setting. \\
 
 Perturbed holomorphic curves with Lagrangian boundary conditions 
 are solutions $ u : I \times [0,1] \to M $ of the Floer equation 
 \begin{equation}\label{eq:per_hol1}
  \p_s u + J_t(u) ( \p_t u - X_{H_t} ( u)) = 0, \qquad u(s,0) \in L_0, \;\; u(s,1) \in L_1,
 \end{equation}
 where $ L_i, \; i=0,1 $ are Lagrangian submanifolds of a symplectic manifold $M$, 
 $J_t $ is a smooth family of almost complex structures 
 and $ I \subset \R $ is an interval. 
 The perturbation here comes 
 in the form of a Hamiltonian vector field $ X_{H_t} $.
 We shall be interested in the cases when the interval $ I $ 
 is either a half infinite interval or $ I=[-T,T] $. We introduce the 
 moduli space $ \sM^{\infty} $ consisting of the pairs of half-infinite 
 perturbed holomorphic curves, 
 i.e. solutions of \eqref{eq:per_hol1}, and analogously we 
 introduce the moduli space $ \sM^T$ of finite strips $ u : [-T, T] \times [0,1] \to M $
 satisfying \eqref{eq:per_hol1}. The moduli space
 $ \sM^T $ is a Hilbert submanifolds of certain Hilbert manifold of strips
 $ \sB^T$ consisting of $ W^{2,2} $ 
maps $ u : I \times [0,1] \to M $ satisfying 
 the standard Lagrangian boundary condition 
 $$ 
 u(s,i) \in L_i , \;\; i=0,1 
 $$
 and the condition on the first derivative 
 $$ 
 J_i(u(s,i)) (\partial_t u (s,i) -X_{H_t} (u(s,i)) \in T_{u(s,i)} L_i, \; i=0,1 
 $$
 and analogous result holds for $ \sM^{\infty}\subset \sB^{\infty} $. 
 This setup is needed in order to establish the necessary estimates for 
 the nonlinear Hardy spaces. 
 
 In section \ref{SEC:hilmf}, we introduce the Hilbert manifolds of strips
 and the Hilbert manifold $ \sP^{3/2} $ consisting of 
 $ W^{3/2, 2} $ paths that satisfy Lagrangian boundary conditions 
 and a certain condition on the first derivative. 
 The path space $ \sP^{3/2} $ is actually the trace manifold 
 of the Hilbert manifold of strips $ \sB^T$, namely restricting 
 an element $ u \in \sB^T $ to the free boundary $ u(\pm T, \cdot) $ we obtain 
 an element of $ \sP^{3/2} $. 
 For a Hamiltonian path $ x \in \sP^{3/2} $, we denote by $ \sU $ 
 some small neighborhood of $ x$ within $ \sP^{3/2} $. 
 We focus our attention to some subsets of the moduli 
 spaces $ \sM^{\infty} $ and $ \sM^T$, which we shall denote 
 by $ \sM^{\infty}(\sU) $ and $ \sM^T(\sU) $. These subsets consist only 
 of those elements of the aforementioned moduli spaces which have sufficiently 
 small energy and are on the free boundary ( for example $ \{ \pm T \} \times [0,1] $)
 close to the given Hamiltonian path $x$. Thus we assume that $ u(\pm T, \cdot) \in \sU $, 
 where $ \sU $ is the aforementioned neighborhood of a Hamiltonian path.
 
 In Theorem \ref{thm:MON} we have proved monotonicity results for the 
 solutions of \eqref{eq:per_hol1}. These results guarantee that 
 small energy solutions of  \eqref{eq:per_hol1}, that at the ends 
 are close to the Hamiltonian path $ x$ are confined to a small 
 neighborhood of $ x$. This will imply that the elements of 
 $ \sM^{\infty}(\sU) $ and $ \sM^T(\sU) $ are contained in a 
 small neighborhood of $x$ and will allow us to work in suitable 
 coordinate charts, thus the main analysis can be done in the standard 
 model in Euclidean space using suitable coordinate charts.
 Consider the restriction maps  
 \begin{align*}
  i^T : \sM^T \to \sP^{3/2} \times \sP^{3/2}, \;\; i^T( u) = ( u(-T,\cdot), u(T, \cdot)) \\
  i^{\infty} : \sM^{\infty} \to \sP^{3/2} \times \sP^{3/2}, \;\; i^{\infty} (u^-,u^+) = ( u^- (0, \cdot), u^+(0, \cdot)). 
 \end{align*}
 We prove that these maps are injective immersions
 and the restrictions of $ i^{\infty} $ and $ i^T$ to 
 $ \sM^{\infty}(\sU) $ and $ \sM^T( \sU) $ are embeddings. 
 The images $ \sW^{\infty} = i^{\infty} ( \sM^{\infty}(\sU) ) $
 and $ \sW^T= i^T(\sM^T( \sU)) $ are the {\bf nonlinear
 Hardy spaces} of the title. In section \ref{SEC:conv}   we prove that 
 $ \sW^T $ converge to $ \sW^{\infty} $ in the $ C^1 $ topology. 
 This is the main result of this chapter. 
 
 This chapter is organized as follows: 
 \begin{itemize}
  \item [i)] In section \ref{SEC:Mainres} we explain the setup and state the main theorems. 
  \item[ii)] In section \ref{SEC:hilmf} we prove that the sets of 
  paths and strips, $ \sP^{3/2}, \sB^T $ and $ \sB^{\infty} $ 
  are indeed Hilbert manifolds and we explicitly 
  construct coordinate charts on these manifolds. 
  \item[iii)] Sections \ref{SEC:prof_main_thm3}, \ref{SEC:embedding} 
  and \ref{SEC:conv} contain the proofs of the main theorems. 
  They also rely on linear elliptic estimates from the previous chapter. 
  The hard of the proof is the convergence theorem, proved in section
  \ref{SEC:conv}. In the appendix \ref{SEC:app} we recall some properties 
  of Lions- Magenes interpolation which we use in various places within 
  the thesis. 
   \end{itemize}
\section{Main results}\label{SEC:Mainres}

In this section we explain the setup and state the main theorems. 
Let $ ( M, \om ) $ be a symplectic manifold without boundary 
and let 
$$
 \cL = \cL (M, \om ) 
$$
 be the set of all compact Lagrangian 
submanifolds $ L\subset M $ without boundary.
Throughout we abbreviate
$$
\R^+ := [0,\infty),\qquad\R^-:=(-\infty,0].
$$

\begin{PARA}[{\bf Hamiltonian Paths}]\label{para:Ham_path}\rm
Let $ L_0, L_1 \in \cL(M, \om) $. 
Denote by $\cH(M)$ the space of smooth functions 
$H:[0,1]\times M\to\R$
and by $\cJ(M,\om)$ the space of smooth families 
of $\om$-compatible almost complex structures
$J=\{J_t\}_{0\le t\le 1}$ on $M$. For $H\in\cH(M)$ 
denote $H_t:=H(t,\cdot)$ for $0\le t\le 1$ and let
$
[0,1]\to\Diff(M,\om):t\mapsto\phi_t
$ 
be the Hamiltonian isotopy generated by $H$ via 
\begin{equation}\label{eq:phiH}
\p_t\phi_t=X_{H_t}\circ\phi_t,\qquad
\phi_0=\id,\qquad \iota(X_{H_t})\om=dH_t.
\end{equation}
A Hamiltonian function $ H $ is called  {\bf regular} 
for $ (L_0,L_1 ) $ if the Lagrangian submanifolds $ L_0 $ and 
$ \phi_1^{-1} ( L_1) $ intersect transversally. 
The set of regular Hamiltonian functions will be denoted 
by $\Hreg(M,L_0,L_1)$.  Intersection points of $ L_0 $ and 
$ \phi_1^{-1} (L_1) $ correspond to solutions 
$x:[0,1]\to M$ 
of Hamilton's equation
\begin{equation}\label{eq:CRIT}
\dot x(t)=X_{H_t}(x(t)),\qquad
x(0)\in L_0,\qquad x(1)\in L_1.
\end{equation}
Denote the set of solutions of~\eqref{eq:CRIT} by 
\begin{equation}\label{eq:Ham_path}
\cC(L_0,L_1;H) := \bigl\{x:[0,1]\to M\,\big|\,
x\mbox{ satisfies }\eqref{eq:CRIT}\bigr\}.
\end{equation}
The set $\cC(L_0,L_1;H)$ can be also seen as the 
set of critical points of the perturbed symplectic action functional 
on the space $\sP$ of paths in $M$ connecting $L_0$ to $L_1$. 
\end{PARA}

\begin{PARA}[{\bf Floer Equation}]\label{para:floer}\rm
Fix a regular Hamiltonian function $ H= \{H_t\}_{0\le t\le 1} $ and 
a smooth family of almost complex structures
$J=\{J_t\}_{0\le t\le 1} \in \cJ(M, \om) $
and $ L_0, L_1 \in \cL( M, \om ) $. 
For a smooth map $ u : \R\times [0,1] \to M $ 
the {\bf Floer equation} has the form 
\begin{equation}\label{eq_FLOER}
\p_su+J_t(u)(\p_tu-X_{H_t}(u))=0,\qquad
u(s,0)\in L_0,\qquad u(s,1)\in L_1,
\end{equation}
The {\bf energy} of a solution $u$ of~\eqref{eq_FLOER} 
is defined by 
$$
E_H(u) 
:=
\frac12\int_{-\infty}^\infty\int_0^1
\Bigl(\abs{\p_su}_t^2+\abs{\p_tu-X_{H_t}(u)}_t^2
\Bigr)\,dtds.
$$
Here $\inner{\xi}{\eta}_t:=\om(\xi,J_t\eta)$
denotes the Riemannian metric determined by $\om$ and $J_t$.
If the energy is finite then the limits 
\begin{equation}\label{eq:limit}
x^\pm(t) := \lim_{s\to\pm\infty}u(s,t)
\end{equation}
exist and belong to $\cC(L_0,L_1;H)$ (see for example~\cite{RS3} and Proposition \ref{pr:exp-dec}).
The convergence is with all derivatives, uniform in $t$,
and exponential.
\end{PARA}

\begin{PARA}[{\bf Hilbert Manifold of Paths}]\label{para:hil_path}\rm
Let $ L_0, L_1 \in \cL(M, \om) $ and let
$ \sP^1_L $ denote the Hilbert manifold of paths with Lagrangian 
boundary conditions. More precisely 
\begin{equation}\label{eq:hilpath}
 \sP^1_L := \left \{ \gamma \in W^{1,2} ([0,1], M ) | \gamma(i) \in L_i, \; i=0,1 \right \}.
\end{equation}
We prove in section \ref{SEC:hilmf} that this set is a Hilbert manifold. 
Consider the following two Hilbert space bundles over $ \sP^1_L $. 
 \begin{equation}\label{eq:bundles}
\xymatrix    
@C=15pt    
@R=20pt    
{    
 \sE^1\ar[dr] \ar[rr] &   & \ar[dl]\sE^0 & \\
&\sP^1_L &
}
\end{equation}
with fibers
\begin{align*}
& \sE^0_{\gamma} = L^2 ( [0,1], \gamma^* TM ) \\
 & \sE^1_{\gamma} = \{ \xi \in W^{1,2}( [0,1], \gamma^* TM ) | \xi (i) \in T_{\gamma(i)} L_i, \; i=0,1 \}.
\end{align*}
Let $ \sE^{1/2}_{\gamma} $ be the following interpolation 
space 
$$
\sE^{1/2}_{\gamma} =  [\sE^1_{\gamma}, \sE^0_{\gamma}]_{1/2} 
$$
In the appendix \ref{SEC:app} we explain more about the interpolation 
theory relevant for our setting. 
The almost complex structure $ J \in \cJ(M, \om) $ and 
Hamiltonian $ H \in \cH(M) $ determine a section 
$ \sS : \sP^1_{L} \to \sE^0 $ via 
$$
\sS(\gamma)(t) = J_t(\gamma) ( \dot{\gamma}(t) - X_{H_t}(\gamma(t)))
$$
Denote by $ \sP^{3/2}(H,J) $ the following set 
\begin{equation}\label{eq:hil_path}
 \sP^{3/2}(H,J) = \left \{ \gamma\in \sP^1_L | \sS(\gamma) \in \sE^{1/2}_{\gamma}\right \}.
\end{equation}
We prove in \ref{para:hil_P3/2} that the set $ \sP^{3/2}(H,J) $ is 
a Hilbert manifold. We also give some more details about the 
interpolation space on which $ \sP^{3/2}(H,J) $ is modelled.

\end{PARA}

 \begin{PARA}[{\bf Hilbert Manifolds of Strips}]\label{para:hil_strips}\rm 
Fix two Lagrangian submanifolds $ L_0,L_1 \in \cL(M, \om ) $,
 fix regular Hamiltonian function $ H \in \cH_{\text{reg}} (M, L_0,L_1)$ 
(see \ref{para:Ham_path})
 and almost complex structure $ J \in \cJ(M, \om ) $. 
 Let $ x \in \cC(L_0,L_1;H) $ be a Hamiltonian path. 
 Observe the moduli space of infinite holomorphic strips, 
 i.e. stable and unstable manifold defined as follows
 \begin{equation}\label{eq:stab_uns}
\sM^{\pm}(x;H,J) = 
\left \{ u \in W^{2,2}_{loc} ( \R^{\pm} \times [0,1] , M )\bigg |
\begin{array}{lll} 
  u \text{ satisfies } \eqref{eq_FLOER},\\
  E_H ( u ) < +\infty\\
 \lim\limits_{s\rightarrow \pm \infty} u ( s,t) = x ( t)
\end{array}
 \right \},
\end{equation}
and the moduli space of finite strips 
\begin{align}\label{eq:mod_finite}
 \sM^T(H,J)= \{ u \in W^{2,2} ( [-T,T] \times [0,1] , M) \Big | \; u \text{ satisfies } \eqref{eq_FLOER} \}.
\end{align}
We shall prove that the moduli spaces 
$ \sM^{\pm} (x; H,J) $ and $ \sM^T(H,J ) $ 
are Hilbert manifolds. 
Their ambient manifolds are Hilbert 
manifolds of strips $ \sB^{\pm}(x) $ and $ \sB^T $ with the 
following boundary conditions 
\begin{equation}\label{eq:bound_data}
\begin{split}
  & u(s,i) \in L_i, \;\; i=0,1 \\
   & J_i(u(s,i)) (\partial_t u (s,i) -X_{H_t} (u(s,i)) \in T_{u(s,i)} L_i, \; i=0,1
  \end{split}
\end{equation}
Thus 
 \begin{equation*}
 \sB^{\pm}(x) = \left \{  u \in W^{2,2}_{loc} ( \R^{\pm} \times [0,1], M )\bigg|
 \begin{array}{l}
  u \text{ satisfies } \eqref{eq:bound_data}\\
  \lim\limits_{s\rightarrow \pm\infty} u(s,t) = x(t)
 \end{array} \right \} 
\end{equation*}
and analogously 
\begin{equation*}
 \sB^T = \left \{
 u \in W^{2,2} ( [-T,T]\times [0,1], M)|  u \text{ satisfies } \eqref{eq:bound_data}
 \right \}.
\end{equation*} 
 We prove in \ref{para:hilbert_strips} that $ \sB^T $ and $ \sB^{\pm} $ 
 are Hilbert manifolds and in  Section \ref{SEC:prof_main_thm3} we prove 
 the following theorem. 
\begin{theorem}\label{cor:main_thm3}

Let $ \sM^{\pm} ( H, J) $ and $ \sM^T (H, J ) $ be defined by \eqref{eq:stab_uns} 
and \eqref{eq:mod_finite} and let $ \sB^{\pm} $ and $ \sB^T $ be as above. 
\begin{itemize}
\item[a)] The sets $ \sM^{\pm} ( x;H,J) $ and $ \sM^T ( H,J ) $ are Hilbert 
 submanifolds of the Hilbert manifolds $ \sB^{\pm}(x) $ and $ \sB^T $ respectively. 
 \item[b)] The maps $ i^{\pm} $ and $ i^T $ defined by  
 \begin{equation}\label{eq:map_i}
 \begin{split}
 &i^{\pm} : \sM^{\pm}( x; H, J) \to \sP^{3/2}(H,J)\\
  & i^{\pm}(u) =  u(0,\cdot) \\
 & i^T : \sM^T( H, J)\rightarrow \sP^{3/2}(H,J) \times \sP^{3/2}(H,J)\\
  &  u \mapsto ( u( -T, \cdot), u(T, \cdot))
 \end{split}
\end{equation}
are injective immersions. 
\end{itemize}
\end{theorem}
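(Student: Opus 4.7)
\medskip

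The plan is to realize $\sM^T(H,J)$ and $\sM^{\pm}(x;H,J)$ as zero sets of Fredholm sections of suitable Hilbert space bundles over $\sB^T$ and $\sB^\pm(x)$, and then apply the implicit function theorem using the surjectivity result of Chapter~3 (Corollary~\ref{cor:surj}). Concretely, I will introduce a Hilbert bundle $\sE \to \sB^T$ whose fiber at $u$ is the completion of
$$
\bigl\{\eta \in W^{1,2}([-T,T]\times[0,1],u^*TM)\bigm|\eta(s,i)\in T_{u(s,i)}L_i,\ \nabla_s\eta(s,i)+ \text{(linearized bc)}\in T_{u(s,i)}L_i\bigr\},
$$
i.e. the analogue of $H^1_{bc}$ from Chapter~3 pulled back along $u$, and define the section
$
\sF(u) := \p_s u + J_t(u)\bigl(\p_t u - X_{H_t}(u)\bigr),
$
so that $\sM^T(H,J) = \sF^{-1}(0)$. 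The analogous setup works on $\sB^\pm(x)$.

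The next step is to compute the vertical derivative $d\sF(u):T_u\sB^T\to\sE_u$ in local Darboux-type coordinates around $u$ (of the kind constructed in Chapter~2, adapted to the Lagrangian boundary conditions). In these coordinates the boundary conditions~\eqref{eq:bound_data} flatten so that tangent vectors lie in $H^2_{bc}$ of Chapter~3, and the linearization takes the form
$$
d\sF(u)\xi = \p_s\xi + J_0\p_t\xi + S(s,t)\xi + (\text{lower order}),
$$
for some $W^{1,2}$ matrix function $S(s,t)$ encoding the derivatives of $J_t$, $H_t$, and the trivialization. This is exactly the operator $D$ analyzed in Chapter~3. For the half-infinite case, the exponential decay result (Proposition~\ref{pr:exp-dec}) together with the transversality condition for $H\in\Hreg$ at $x$ ensures that hypothesis~(H1) of~\ref{para:bij_lim} is satisfied, so that the symptotic operator $A^{\pm}$ is bijective and self-adjoint, i.e. fulfills~(HA).

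The main technical step is then to invoke Corollary~\ref{cor:surj}: in both the finite-strip case $I=[-T,T]$ and the half-infinite case $I=\R^\pm$, the operator $D$ is surjective as a map between the $H^2_{bc}$ and $H^1_{bc}$ Hilbert spaces. Combined with the standard argument that $\ker d\sF(u)$ is a complemented closed subspace (Lemma~\ref{lem:closed_img} plus the surjectivity), the implicit function theorem yields that $\sF^{-1}(0)$ is a Hilbert submanifold with tangent space $T_u\sM = \ker d\sF(u)$. I expect the main obstacle to be not the analysis itself, which is handed over to Chapter~3, but the bookkeeping required to verify that the nonlinear boundary condition in~\eqref{eq:bound_data} linearizes in local coordinates to precisely the condition $A\xi(s,i)\in\R^n\times\{0\}$ defining $H^2_{bc}$; this requires carefully choosing the coordinate chart so that $J_i$ is put in standard form along $L_i$.

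For part~(b), injectivity and the immersion property both reduce to unique continuation (Lemma~\ref{lem:unq_con}). For $i^T$: if $u_1,u_2\in\sM^T$ satisfy $u_1(\pm T,\cdot)=u_2(\pm T,\cdot)$, then in a local coordinate chart near one of the boundary strips $\{-T\}\times[0,1]$ the difference $v:=u_1-u_2$ satisfies a linearized Floer-type equation of the form $\p_s v + J_0\p_t v + R(s,t)v = 0$ with $v(-T,\cdot)=0$, hence vanishes identically by Lemma~\ref{lem:unq_con}; standard continuation arguments then give $u_1=u_2$ on the full strip. For the immersion property, a tangent vector $\xi\in T_u\sM^T=\ker d\sF(u)$ with $di^T(u)\xi=(\xi(-T,\cdot),\xi(T,\cdot))=0$ satisfies $D\xi=0$ and $\xi(\pm T,\cdot)=0$, and again Lemma~\ref{lem:unq_con} forces $\xi\equiv 0$. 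The arguments for $i^\pm$ are identical, using the boundary condition at $s=0$ together with the exponential decay at $s=\pm\infty$.
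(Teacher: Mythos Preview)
Your proposal is correct and follows essentially the same architecture as the paper: realize $\sM$ as the zero set of the section $\sF(u)=\dbar_{J,H}u$, trivialize $u^*TM$ so that the linearization becomes an operator of the form $\p_s+J_0\p_t+S(s,t)$ on $H^2_{bc}$, and invoke Corollary~\ref{cor:surj} for surjectivity.  The paper carries out exactly this program (see Lemma~\ref{lem:triv1}, Remark~\ref{rem:triv1}, Corollary~\ref{cor:conv}, and Theorem~\ref{thm:ver_diff}), with the trivialization of $u^*TM$ playing the role of your ``local Darboux-type coordinates''.

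One small difference worth noting for part~(b): for the immersion property you appeal directly to unique continuation (Lemma~\ref{lem:unq_con}) to kill a tangent vector $\xi\in\ker D_u$ with vanishing boundary trace, whereas the paper instead uses the quantitative estimate $\|\xi\|_{2,2}\le c\bigl(\|D_u\xi\|_{1,2}+\|\xi(0,\cdot)\|_{3/2}\bigr)$ from Theorem~\ref{thm:ver_diff}(b) (itself derived from Corollary~\ref{lem:inq_D}).  Your route gives injectivity of $di^\pm$; the paper's estimate additionally gives that $di^\pm$ is bounded below, hence has closed image---which is what ``immersion'' requires in infinite dimensions.  Since Corollary~\ref{lem:inq_D} is precisely unique continuation applied to remove the compact term from the basic elliptic estimate, the two arguments are equivalent in content, but you should record the estimate rather than only the vanishing conclusion.
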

\begin{proof}
See section \ref{SEC:prof_main_thm3}.
\end{proof}
Denote by $ \sM^{\infty} ( x; H,J ) $ 
the product of stable and unstable manifolds 
 \begin{equation}\label{eq:prod_mfld}
   \sM^{\infty} ( x; H,J )= \sM^+(x;H,J) \times \sM^- (x;H,J). 
 \end{equation}
and denote by $ i^{\infty} $ the product of the maps $ i^{\pm} $
\begin{equation}\label{eq:iinf}
\begin{split}
 i^{\infty} : \sM^{\infty} ( x; H,J )\to \sP^{3/2}(H,J) \times \sP^{3/2}(H,J)\\
 i^{\infty} ( u^+,u^-) = ( i^+ (u^+), i^-(u^-)) = ( u^+(0, \cdot), u^- (0, \cdot)).
 \end{split} 
\end{equation}
It follows from Theorem \ref{cor:main_thm3} that the mapping $ i^{\infty} $ 
is an injective immersion, as a product of injective immersions. 

 \end{PARA}
 \begin{PARA}[{\bf Convergence and embedding theorem }] \label{para:conv_thm}\rm
 Let the moduli spaces $ \sM^T (H,J) $ and $ \sM^{\infty} ( x; H,J ) $ be 
 defined by \eqref{eq:mod_finite} and \eqref{eq:prod_mfld}.  
We consider only those elements of these moduli spaces 
which have sufficiently small energy and which are sufficiently 
close on the boundary to a Hamiltonian path $x\in \cC( L_0,L_1;H) $. 
 To explain this more precise we fix some small 
 neighborhood $ U $ of a point 
 $ p = x(0)\in L_0 \cap\phi_1 ^ {-1} ( L_1)$
 that doesn't contain any other intersection points of
 $ L_0 \cap \phi_1 ^ {-1} ( L_1)$.
 We have assumed that the Hamiltonian 
 $ H \in \cH_{\text{reg}}( L_0,L_1;M) $
 thus the intersection $ L_0 \cap \phi_1 ^ {-1} ( L_1)$ is transverse
 and compact. Let $ U_t = \phi_t ( U ) $, where 
 $ \phi_t $ is Hamiltonian isotopy \eqref{eq:phiH}.
 Let $ V \subset \overline{V} \subset U $ be a neighborhood of $ x(0) $ 
 and let $ V_t = \phi_t ( V ) $. Suppose that on $ U_t $ we have
 constructed coordinate charts $ f_t : U_t \rightarrow \R^{2n} $. 
 We construct such family of coordinate charts in section \ref{SEC:hilmf}. 

 By monotonicity result, Theorem \ref{thm:MON}, 
 there exists a constant $ \hbar > 0 $ such that every solution 
 $ u: [-T, T]\times [0,1] \to M $ of \eqref{eq_FLOER} 
 with $ E(u) < \hbar $ and $ u ( \pm T, t) \in V_t $ 
 satisfies $ u ( s,t) \in U_t $ for every $ s,t $. 
 The analogous result holds for half-infinite holomorphic strips
 $ u \in \sM^{\pm} ( x;H,J ) $ with $ u(0,t)\in V_t $.
 
 Let $ \sU $ be a neighborhood of a Hamiltonian path 
 $ x \in \sP^{3/2}(H,J) $ in the Hilbert manifold 
 $ \sP^{3/2}(H,J) $ defined in \eqref{eq:hil_path}.
 Shrinking $ \sU $ if necessary we may assume that every 
 $ \gamma\in \sU $ satisfies $ \gamma(t) \in V_t $, 
 where $ V_t $ is as above. For $\hbar $ and $ \sU $ 
 as above we define the following sets 
 \begin{equation}\label{eq:mod_emb}
\begin{split}
  &\sM^{\infty}( x, \sU) = \{ ( u^+,u^-) \in \sM^{\infty}( x; H,J) | 
  \; u^{\pm} (0, \cdot)\in \sU, \; E( u^{\pm} ) < \hbar \},\\
  &\sM^T(\sU) = \{ u\in \sM^T( H,J) | \; u (\pm T, \cdot) \in \sU , \;\; E(u) < \hbar \}.
\end{split}
 \end{equation}
Here $ \sM^{\infty} ( x; H,J ) $ is defined by \eqref{eq:prod_mfld}
and $\sM^T(H,J) $ by \eqref{eq:mod_finite}. 
 All holomorphic curves $ u \in \sM^T(\sU) $
 are contained in coordinate charts $ U_t= \phi_t(U) $ 
 and the same holds for  $ ( u^+,u^-) \in \sM^{\infty}(x, \sU)$ too. 
 Thus, instead of working with holomorphic curves in $M $ we can work in local 
 coordinates in $ \R^{2n} $, which is much simpler for the analysis. 
 The main theorems are the following:

\begin{theorem}\label{thm:main_thm2}
 Let $ \sM^{\infty}(x, \sU)$ and $\sM^T ( \sU ) $ be defined by 
\eqref{eq:mod_emb}.
Let $ i^{\infty} $ and $ i^{T} $ be defined by \eqref{eq:map_i}
and \eqref{eq:iinf}. 
   There exists an open neighborhood $\sU $ of a Hamiltonian 
   path $ x $ such that the restrictions of the maps $ i^{\infty} $ 
   and $ i^T $ to $ \sM^{\infty}(x, \sU ) $ and $ \sM^T( \sU)$ are embeddings
   for all $ T \geq 1 $. 
\end{theorem}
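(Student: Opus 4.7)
By Theorem~\ref{cor:main_thm3} the maps $i^T$ and $i^\infty$ are already injective immersions, so the task is to upgrade them to embeddings on the restricted sets $\sM^T(\sU)$ and $\sM^\infty(x,\sU)$. An injective immersion of Hilbert manifolds is an embedding precisely when its inverse on the image is continuous, so the plan is to establish a uniform nonlinear estimate of the form
$$
\|u_1 - u_2\|_{W^{2,2}([-T,T]\times[0,1])} \le c\bigl(\|u_1(-T,\cdot)-u_2(-T,\cdot)\|_{3/2} + \|u_1(T,\cdot)-u_2(T,\cdot)\|_{3/2}\bigr),
$$
valid uniformly in $T\ge 1$ for any two $u_1, u_2 \in \sM^T(\sU)$ after passing to suitable coordinates, together with the analogous half-strip estimate for $\sM^\pm(x;H,J)$.

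The first step is to shrink $\sU$ and invoke Theorem~\ref{thm:MON} together with the energy bound $E(u)<\hbar$ to confine every curve in $\sM^T(\sU)$, as well as every half-strip entering $\sM^\infty(x,\sU)$, inside the coordinate neighborhoods $U_t = \phi_t(U)$. Pushing the pair $u_1,u_2$ through the charts $f_t$ converts the Floer equation into a perturbed Cauchy--Riemann equation in $\R^{2n}$ with Lagrangian boundary values in $\R^n\times\{0\}$, of exactly the form analyzed in Sections~\ref{SEC:lin_est}--\ref{SEC:ELreg}. After this reduction the difference $\xi := u_1 - u_2$ lies in $W^{2,2}_{bc}$ and satisfies $D\xi = Q(u_1,u_2;\xi)$, where $D$ is the linearization of the Floer operator at $u_1$ and $Q$ is a remainder which is quadratic in $\xi$ with coefficients controlled by $C^0$-norms of $u_1,u_2$.

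Applying Corollary~\ref{lem:inq_D} to $\xi$ yields
$$
\|\xi\|_{W^{2,2}} \le c\bigl(\|D\xi\|_{W^{1,2}} + \|\xi(-T,\cdot)\|_{3/2} + \|\xi(T,\cdot)\|_{3/2}\bigr),
$$
with $c$ independent of $T$ because the constants in that corollary depend only on the linearized operator associated to the Hamiltonian path $x$. Since $W^{2,2}$ embeds into $L^\infty$ on a two-dimensional domain, the remainder satisfies $\|Q\|_{W^{1,2}} \le c'\,\|\xi\|_{W^{2,2}}\bigl(\|u_1-x\|_{W^{2,2}} + \|u_2-x\|_{W^{2,2}} + \|\xi\|_{W^{2,2}}\bigr)$. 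Combining monotonicity with the mean value inequality and exponential decay from Chapter~\ref{chp:mon}, the $W^{2,2}$-norms of $u_1-x$ and $u_2-x$ in the chart can be made arbitrarily small by further shrinking $\sU$ and $\hbar$; the nonlinear term $\|D\xi\|_{W^{1,2}} \le \|Q\|_{W^{1,2}}$ then absorbs into the left-hand side and the desired estimate follows. The half-strip case proceeds identically using part~(ii) of Corollary~\ref{lem:inq_D}, whose limit operator at infinity is the Hessian of the action at the transverse intersection $x$ and hence satisfies hypothesis~(HA).

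The main obstacle will be verifying the hypotheses of the linear estimates uniformly in the base curve. For finite strips this amounts to showing that the $s$-dependent operator $J_0\p_t + S_u(s,t)$ obtained by linearizing at $u$ is, in the chart, a small $W^{1,2}$-perturbation of the $s$-independent operator $J_0\p_t + S_x(t)$ linearized at $x$; uniform $C^1$-closeness of $u$ to $x$ from monotonicity and Chapter~\ref{chp:mon} suffices, together with the fact that the estimate constants in Theorem~\ref{thm:main_inq1} are stable under compact perturbation. For half-strips one must further check hypothesis~(H1) of Theorem~\ref{thm:main_inq1}: the coefficient matrix $S_u(s,t)$ must converge in $C^1$ to $S_x(t)$ as $s\to\pm\infty$ with a rate uniform over the family, which follows from the exponential decay of Proposition~\ref{pr:exp-dec} applied to $u^\pm$.
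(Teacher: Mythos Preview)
Your route differs from the paper's. The paper does not prove a two-point Lipschitz estimate for the inverse of $i^T$; instead it proves a \emph{one-point} confinement estimate (Theorem~\ref{thm:bound_contr}): for a single solution $v$ in the chart, $\|v\|_{W^{2,2}(\R^\pm\times[0,1])}\le c\,\|v(0)\|_{3/2}$, with the $T$-uniform finite-strip analogue in Remark~\ref{rem:fin-str}. This confines every element of $\sM^\infty_\epsilon$ (resp.\ $\sM^T_\epsilon$) to a fixed small $W^{2,2}$-ball around the constant map $p$ in $\sB^\pm$ (resp.\ $\sB^T$), and then one simply invokes the general fact that an injective immersion of Hilbert manifolds is an embedding on any sufficiently small ball, with the radius $r_1$ uniform in $T$ because the constants in Theorem~\ref{thm:main_inq} are.

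Your plan is not incorrect, but it is circuitous and contains a gap in justification. To absorb the remainder $Q$ you need $\|u_i\|_{W^{2,2}}$ small on the full strip, uniformly in $T$ (the quadratic estimate~\eqref{eq_quad_est} needs the full $W^{2,2}$-norm of the base curve, and likewise the $T$-uniform version of Corollary~\ref{lem:inq_D} for $D_{u_1}$ relies on $D_{u_1}$ being a small perturbation of $D_0$ in this norm). You attribute this smallness to ``monotonicity, the mean value inequality and exponential decay from Chapter~\ref{chp:mon}'', but those results yield $C^0/C^1$ control and interior $C^k$ decay on $D_{r+r_0}(I)$, not $W^{2,2}$ control on the boundary layer near $\{\pm T\}\times[0,1]$. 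Obtaining that bound is precisely the content of Theorem~\ref{thm:bound_contr}, whose proof is a nontrivial bootstrap through $W^{1,p}$, $p<2$, and Sobolev embeddings (Steps~3--5 there). So your absorption argument already presupposes the paper's key lemma; once you grant it, the two-point difference estimate is redundant and the paper's one-line local-embedding argument finishes the proof.
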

\begin{proof}
See section \ref{SEC:embedding}.
\end{proof}

 \begin{theorem}\label{thm:main_thm3}
   Assume the notation as in Theorem \ref{thm:main_thm2}.
   Let 
   $$ 
   \sW^{\infty}(x, \sU) := i^{\infty} ( \sM^{\infty}(x, \sU) )
   \text{ and } \sW^T ( \sU) := i^T ( \sM^T( \sU) ). 
   $$
   Then after possibly shrinking the neighborhood $\sU $ 
   the manifolds $ \sW^{T} ( \sU)$ converge to $ \sW^{\infty} ( x,\sU) $ in the
   $ C^1- $ topology.
 \end{theorem}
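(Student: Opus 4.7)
The plan is to reduce the statement to a local problem in $\R^{2n}$ by using the family of coordinate charts $f_t:U_t\to\R^{2n}$ together with the monotonicity/confinement result of Theorem~\ref{thm:MON}. By the very choice of $\hbar$ and $\sU$, every $u\in \sM^T(\sU)$ and every pair $(u^+,u^-)\in\sM^{\infty}(x,\sU)$ takes values in $U_t$ along the slice $t$, so the whole problem can be reformulated as one about perturbed $J$-holomorphic strips in Euclidean space with transverse linear Lagrangian boundary conditions. In these coordinates the linearized Floer operator at $x$ is a compact perturbation of a time-independent self-adjoint operator $A=J_0\partial_t+S_x(t):H^1_{bc}([0,1])\to L^2([0,1])$ of the type covered by \ref{PARA:Cor_time_independent}, whose spectral decomposition drives the rest of the argument.

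First I would establish the $C^0$-half: for each $\bp=(u^+,u^-)\in\sM^{\infty}(x,\sU)$ I construct a unique $u_T\in\sM^T(\sU)$ for $T$ large such that $i^T(u_T)$ lies in a neighborhood of $i^{\infty}(\bp)$ in $\sP^{3/2}\times\sP^{3/2}$. The candidate is obtained by pregluing: translate $u^\pm$ by $\mp T$ and patch with a cut-off near $s=0$. The exponential decay in Proposition~\ref{pr:exp-dec} and Corollary~\ref{ch1_cor0.6} gives the pregluing error $\norm{\dbar_H u^{\mathrm{pre}}_T}_{H^1_{bc}}=O(e^{-\mu T})$. The linearized Floer operator on $[-T,T]\times[0,1]$ with boundary data in $\sP^{3/2}$ is surjective uniformly in $T$, with right inverse bounded independently of $T$; this is the content of Theorem~\ref{thm:main_inq1} together with Corollary~\ref{cor:surj}, where the uniformity is extracted from the fact that the dominant part of the operator is $\partial_s+A$ and the estimate of Theorem~\ref{thm:main_inq} has a $T$-independent constant. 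A Newton--Picard iteration then produces the desired $u_T$ and gives the quantitative bound
\begin{equation}\label{eq:plan1}
\dist_{\sP^{3/2}\times\sP^{3/2}}\bigl(i^T(u_T),i^{\infty}(\bp)\bigr)\le Ce^{-\mu T}.
\end{equation}
The reverse direction proceeds analogously: given $u_T\in\sM^T(\sU)$, small-energy monotonicity plus the Hardy estimates force $u_T$ to be $C^0$-close on its ends to the graph of some Hamiltonian path, and the implicit function theorem applied to $i^{\infty}$, using surjectivity of the linearized operator on the half-strips (again Corollary~\ref{cor:surj}), exhibits $(u^+,u^-)\in\sM^{\infty}(x,\sU)$ with $i^{\infty}(u^+,u^-)$ close to $i^T(u_T)$.

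Next I would promote this to $C^1$ convergence. Because $\sW^T$ and $\sW^{\infty}$ are the images under the embeddings of Theorem~\ref{thm:main_thm2} of Hilbert submanifolds, their tangent spaces at a base point are described as images of the traces at $s=\pm T$ (resp.\ $s=0$) of the kernel of the linearized Floer operator on the appropriate domain. Thus I need to compare, for each pregluing pair, the image in $\sE^{1/2}\times\sE^{1/2}$ of kernel elements on $[-T,T]\times[0,1]$ with the image of the corresponding kernel on $\R^+\cup\R^-$. This is a linear version of the pregluing argument: starting from a tangent vector $(\xi^+,\xi^-)$ at $\bp\in\sM^{\infty}$, exponential decay of $\xi^\pm$ (guaranteed by the eigenspace decomposition with respect to the asymptotic operator $A$, as in the decomposition $E^{3/2}=(E^+\cap E^{3/2})\oplus (E^-\cap E^{3/2})$ of \ref{rem:lines}) allows me to preglue and correct by the same uniformly bounded right inverse to obtain $\xi_T$ with $\norm{\xi_T|_{\{\pm T\}}-\xi^{\pm}(0)}_{3/2}=O(e^{-\mu T})\norm{(\xi^+,\xi^-)}$. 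Combined with~\eqref{eq:plan1} this is precisely the $C^1$-convergence statement.

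The main obstacle is the uniformity in $T$ of all these estimates. Concretely, one must guarantee that (i) the right inverse of the linearized operator $D_T$ on $[-T,T]\times[0,1]$ with boundary values of the form $\pi_A^+\xi(-T)=0$, $\pi_A^-\xi(T)=0$ has operator norm bounded independently of $T$, and (ii) the quadratic error terms in the Newton iteration (both for $u_T$ and for its tangent map) do not degenerate as $T\to\infty$. Both are ultimately consequences of the fact that the $s$-independent model operator $D_A$ is \emph{bijective} with an explicit inverse (Corollary~\ref{cor:bij_lin_op}), and that the difference $D_T-D_A$ is a compact perturbation of uniformly bounded norm coming from a $W^{1,2}$-matrix-valued function. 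The confinement to coordinate charts provided by Theorem~\ref{thm:MON} is what makes this global linear comparison possible; without it, the Hardy submanifolds $\sW^T$ and $\sW^{\infty}$ could not be treated inside a single path manifold model.
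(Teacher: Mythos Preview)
Your proposal is correct in its essential ingredients---the reduction to local coordinates via Theorem~\ref{thm:MON}, exponential decay, pregluing, and uniform-in-$T$ linear estimates from Theorem~\ref{thm:main_inq} and Corollary~\ref{cor:surj}---and these are precisely the tools the paper uses. The organizational route, however, is genuinely different.

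The paper does \emph{not} begin by constructing a correspondence $(u^+,u^-)\mapsto u_T$ and then comparing boundary traces. Instead it first parametrizes \emph{both} $\sW^\infty$ and $\sW^T$ as graphs over a common base: using the spectral projections $\pi^\pm$ of~\ref{rem:lines}, it defines maps $\cF^\infty$ and $\cF^T$ (equations~\eqref{eq:maps_f},~\eqref{eq:mapft}) whose zero sets, after projection, yield smooth functions $f^\infty,f^T:B_{\rho_0}(0)\subset E^+\times E^-\to E^-\times E^+$ such that $\cW^\infty=\mathrm{graph}(f^\infty)$ and $\cW^T=\mathrm{graph}(f^T)$. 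The $C^1$-convergence of submanifolds (Definition~\ref{def:conv}) then becomes the concrete statement $\|f^T-f^\infty\|_{C^1}\to 0$. Only \emph{after} this reduction does pregluing enter, in Lemma~\ref{lem_conv}, to build a third map $\cF^T_\xi$ encoding the difference $\eta^T=f^T-f^\infty$ directly; the inverse function theorem applied to $\cF^T_\xi$ at the preglued curve $v_T(\xi)$ gives $\|\eta^T\|\le ce^{-\mu T}$ and then $\|d\eta^T\|\le ce^{-\mu T}$.

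What the paper's organization buys is that the meaning of ``$C^1$-convergence of infinite-dimensional submanifolds'' is made unambiguous from the outset by the common graph parametrization, and the $C^1$-estimate becomes a single derivative bound on a map between fixed Banach spaces. Your approach, which constructs the correspondence first and compares tangent spaces afterward, is workable but leaves more to be said about why closeness of the map $(u^+,u^-)\mapsto i^T(u_T)$ to $i^\infty$ in $C^1$ \emph{as maps} implies $C^1$-closeness of the images \emph{as submanifolds}---in particular you would still need something like the graph representation to make the final statement precise. The quadratic estimates (Step~2 in~\ref{para:construction}) and the uniform invertibility (Lemma~\ref{ch5_lem5.8}) that you correctly flag as the crux are handled in the paper exactly as you anticipate.
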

 \begin{proof}
 See section \ref{SEC:conv}.
 \end{proof}
 
 \begin{remark}\label{rem:ham0}\rm
 It is enough to prove Theorems \ref{thm:main_thm2} and
\ref{thm:main_thm3} in the case that the Hamiltonian 
 is identically equal zero. 
 Let $ \phi_t $ be the Hamiltonian isotopy as in \eqref{eq:phiH}. 
 By naturality, we can consider the tuple 
 \begin{equation}
  \tu(s,t) = \phi_t^{-1} ( u(s,t)) ,\; \tL_0 = L_0, \;\tL_1 = \phi_1^{-1} ( L_1 ) , \;\; \tJ_t = \phi_t^* J_t.
  \end{equation}
If $ u $ is the solution of the equation \eqref{eq_FLOER}, then 
$\tu: \R^{\pm} \times [0,1]\to M $ satisfies 
unperturbed {\bf Floer} equation, i.e. $ \tu $ is  a $ \tJ $ 
holomorphic curve, 
\begin{equation}\label{eq:Floer1}
 \partial_s \tu + \tJ_t( \tu ) \partial_t \tu = 0, \;\; \tu(s,0) \in \tL_0, \; \tu(s,1) \in \tL_1.
\end{equation}
By naturality, to Hamiltonian paths $ x \in \cC( L_0, L_1; H ) $ correspond 
intersection points $ \tx = x(0) \in \tL_0\cap \tL_1 $.  
By assumption the Hamiltonian $H$ is regular, thus
the intersection $ \tL_0 \cap \tL_1 $ is transverse. 
Hence, every solution $ \tu: \R^{\pm} \times [0,1] \to M $ 
of the equation \eqref{eq_Floer1} with finite energy 
converges exponentially to an intersection point $ \tx \in \tL_0 \cap \tL_1 $
(see Proposition \ref{pr:exp-dec} for the case of tame almost complex structure and 
the clean intersection of Lagrangian submanifolds).

 \end{remark}

 \end{PARA}

\section{Hilbert manifold setup}\label{SEC:hilmf}

\begin{PARA}[{\bf Hilbert manifold of paths with Lagrangian boundary conditions}]\label{para:hil_paths}\rm
In this section we introduce a collection of Hilbert manifolds of paths with 
Lagrangian boundary conditions. We prove that 
they are infinite dimensional Hilbert manifolds and 
we explicitly construct coordinate charts on them.  
Fix $ L_0, L_1 \in \cL(M) $ and a regular Hamiltonian 
$H \in \cH_{\text{reg}} ( M, L_0, L_1 ) $. 
\begin{proposition}\label{prop:PLk}
{\bf (i)}
The set
\begin{equation}\label{eq:PL1}
\sP^1_L := \left\{\gamma\in W^{1,2} ( [0,1], M)\,|\,
\gamma(0)\in L_0,\,\gamma(1)\in L_1\right\}.
\end{equation}
is a smooth Hilbert submanifold of $W^{1,2} ( [0,1], M)$.

\smallskip\noindent{\bf (ii)}
The set
\begin{equation}\label{eq:PL2}
\sP^2_{L} := \left\{\gamma\in W^{2,2} ( [0,1], M)\,\Bigg|\,
\begin{array}{l}
\gamma(0)\in L_0,\gamma(1)\in L_1, \\
J_0(\gamma(0))(\dot\gamma(0)- X_{H_0}(\gamma(0)))\in T_{\gamma(0)}L_0 \\
J_1(\gamma(1))(\dot\gamma(1)- X_{H_1}(\gamma(1)))\in T_{\gamma(1)}L_1
\end{array}\right\}.
\end{equation}
is a smooth Hilbert submanifold of $W^{2,2} ( [0,1], M)$.
\end{proposition}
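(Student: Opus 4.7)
The plan is to realise each $\sP^k_L$ as the preimage of a finite-codimensional submanifold under a smooth submersion between Hilbert manifolds, and then invoke the Hilbert-manifold preimage theorem.

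For part~(i), I would use the endpoint evaluation
\[
\ev\colon W^{1,2}([0,1],M)\to M\times M,\qquad \ev(\gamma):=(\gamma(0),\gamma(1)),
\]
which is smooth because of the Sobolev embedding $W^{1,2}([0,1])\hookrightarrow C^0([0,1])$. Its differential at $\gamma$ is the boundary trace $\xi\mapsto(\xi(0),\xi(1))$ from $W^{1,2}([0,1],\gamma^*TM)$ to $T_{\gamma(0)}M\oplus T_{\gamma(1)}M$, and is surjective: for any pair $(v_0,v_1)$ a linear interpolation parallel-transported along $\gamma$ produces a smooth preimage. Thus $\ev$ is a submersion, $L_0\times L_1$ is a finite-codimensional submanifold of $M\times M$, and $\sP^1_L=\ev^{-1}(L_0\times L_1)$ is a smooth Hilbert submanifold.

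For part~(ii) I would proceed in two stages. The same argument at the $W^{2,2}$ level first shows that
\[
\sQ:=\{\gamma\in W^{2,2}([0,1],M):\gamma(0)\in L_0,\;\gamma(1)\in L_1\}
\]
is a smooth Hilbert submanifold of $W^{2,2}([0,1],M)$. The remaining derivative conditions are then cut out by the vanishing of a smooth section of a finite-rank vector bundle. For $p\in L_i$ let $\pi^T_i(p)\colon T_pM\to T_pL_i$ denote the projection along $J_i(p)T_pL_i$, and let $\sF\to\sQ$ be the pull-back, via the endpoint evaluation $\sQ\to L_0\times L_1$, of the bundle over $L_0\times L_1$ whose fibre at $(p_0,p_1)$ is $T_{p_0}L_0\oplus T_{p_1}L_1$. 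Define the smooth section
\[
s(\gamma):=\bigl(\pi^T_0(\gamma(0))(\dot\gamma(0)-X_{H_0}(\gamma(0))),\;\pi^T_1(\gamma(1))(\dot\gamma(1)-X_{H_1}(\gamma(1)))\bigr),
\]
so that $\sP^2_L=s^{-1}(0)$.

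The key step is to verify that $s$ is transverse to the zero section, i.e.\ that the vertical derivative $D_\gamma s\colon T_\gamma\sQ\to\sF_\gamma$ is surjective at every zero. Given target data $(w_0,w_1)\in T_{\gamma(0)}L_0\oplus T_{\gamma(1)}L_1$, I would exhibit $\xi\in T_\gamma\sQ$ with $\xi(0)=\xi(1)=0$ (so that the tangency requirement $\xi(i)\in T_{\gamma(i)}L_i$ is trivially met) and with $\nabla_t\xi(i)$ any prescribed vector in $T_{\gamma(i)}M$, built by a cut-off plus parallel-transport construction. Choosing $\nabla_t\xi(i)$ so that $\pi^T_i(\gamma(i))\nabla_t\xi(i)=w_i$ then yields surjectivity, because the extra terms in $D_\gamma s\cdot\xi$ coming from differentiating $\pi^T_i$ and $X_{H_i}$ in the $\gamma(i)$-direction involve only $\xi(i)=0$. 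The preimage theorem then identifies $\sP^2_L$ with a smooth Hilbert submanifold. There is no genuine analytical obstacle: the one-dimensional domain trivialises the Sobolev embeddings and the boundary constraints live in finite-dimensional spaces; the only technical bookkeeping is smoothness of $\sF$ and $s$, which reduces to smoothness of endpoint evaluation at the $W^{2,2}$ level and of the fibrewise projections $\pi^T_i$ along smoothly varying subspaces.
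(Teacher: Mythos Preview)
Your argument is correct. Part~(i) is identical to the paper's proof. For part~(ii) you take a genuinely different, though closely related, route: the paper handles all four boundary constraints in a single stroke by defining an evaluation map
\[
\ev_{TM}\colon W^{2,2}([0,1],M)\to TM\times TM,\qquad
\gamma\mapsto\bigl((\gamma(0),v_0),(\gamma(1),v_1)\bigr),
\]
with $v_i=J_i(\gamma(i))(\dot\gamma(i)-X_{H_i}(\gamma(i)))$, and then realises $\sP^2_L$ as $\ev_{TM}^{-1}(TL_0\times TL_1)$. Surjectivity of $d\ev_{TM}$ follows because $(\xi(0),\nabla_t\xi(0),\xi(1),\nabla_t\xi(1))$ can be prescribed arbitrarily. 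Your two-stage version (first cut out $\sQ$ by endpoint evaluation, then zero a section of a finite-rank bundle over $\sQ$) is equally valid and the transversality computation is essentially the same; the paper's single-map formulation is a bit slicker, while your version makes the role of the derivative condition as an independent finite-rank constraint more explicit. Either way the analytic content is identical.
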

\begin{proof}
Denote by $ \sP^{k} = W^{k,2} ( [0,1] , M )$. The set $ \sP^k $ is a smooth 
Hilbert manifold modeled on $ W^{k,2} ( \gamma^* TM) $ and local charts are given via 
exponential map. 
 Denote by 
$$
\ev_M:\sP^1\to M\times M
$$
the evaluation map defined by 
$$
\ev_M(\gamma):=(\gamma(0),\gamma(1)).
$$
This map is smooth
and its first derivative at 
$\gamma$ is the linear map
$$
d\ev_M(\gamma):W^{1,2}([0,1],\gamma^*TM)
\to T_{\gamma(0)}M\times T_{\gamma(1)}M,
$$
given by 
$$
d\ev_M(\gamma)\xi = (\xi(0),\xi(1)).
$$
This map is clearly surjective for every 
$\gamma\in\sP^1$.  Hence $\ev_M$ is a submersion
and hence the preimage 
$$
\sP^1_L=\ev_M^{-1}(L_0\times L_1)
$$
is a smooth submanifold of $\sP^1$ whose 
tangent space at $\gamma$ is the preimage of 
$T_{\gamma(0)}L_0\times T_{\gamma(1)}L_1$
under $d\ev_M(\gamma)$.  This proves~(i).

To prove~(ii) we define the evaluation map
$$
\ev_{TM}:\sP^2\to TM\times TM
$$
by 
$$
\ev_{TM}(\gamma)= \Big (\gamma(0),v_0,
,\gamma(1), v_1 \Big)
$$
where $ v_i =J_i(\gamma(i))(\dot\gamma(i)- X_{H_i}(\gamma(i))) , \; i=0,1. $
Then 
$$
\sP^k_{L} = \ev_{TM}^{-1} ( TL_0 \times TL_1 ). 
$$
We prove that the mapping $\ev_{TM} $ is also a submersion
and hence $ \sP^2_{L} $ is a Hilbert submanifold of $ \sP^2 $. 
Choose a Riemannian metric on $M$. 
Fix a point $p\in M$ and a tangent vector $v\in T_pM$.
The Riemannian metric determines an isomorphism
$$
T_{(p,v)}TM\cong T_pM\oplus T_pM
$$
as follows.  Think of a tangent vector of $TM$ at $(p,v)$
as an equivalence class of curves $\R\to TM:t\mapsto(\gamma(t),X(t))$
with $\gamma(0)=p$ and $X(0)=v$, where two curves are equivalent iff
they have the same derivative at $t=0$ in some (and hence every) 
coordinate chart on $TM$ containing the point $(p,v)$. 
The isomorphism 
$
T_{(p,v)}TM\to T_pM\oplus T_pM
$
is then given by
$$
[\gamma,X]\mapsto (\dot\gamma(0),\nabla X(0))
$$
where $\nabla X$ denotes the covariant 
derivative of a vector field along $\gamma$. 
With this understood the derivative of the 
map $\ev_{TM}$ is given by
$$
d\ev_{TM}(\gamma)\xi
= \left((\xi(0),\eta(0)),(\xi(1),\eta(1))\right),
$$
Denote for $ i=0,1 $ 
\begin{align*}
 \zeta_i & = \Big (\nabla_{\xi(i)}J_i(\gamma(i)\Big)\Big (\dot\gamma(i) - X_{H_i} ( \gamma(i))\Big) + 
 J_i(\gamma(i))\nabla\xi(i)\\
 \mu_i  & = J_i(\gamma(i))\Big (\nabla_{\xi(i)}X_{H_i}(\gamma(i))\Big )
\end{align*}
Then we have 
\begin{align*}
\eta(0) =  \zeta_0 - \mu_0, \qquad 
\eta(1) =  \zeta_1 - \mu_1 
\end{align*}
This map is surjective because $J_0(\gamma(0))$ and $J_1(\gamma(1))$
are isomorphisms of the respective tangent space of $M$.
This proves the proposition.
\end{proof}
Another way to prove that $ \sP^1_{L} $ and $ \sP^2_{L} $ 
are Hilbert manifolds is to construct explicitly coordinate charts on them. 
In the case of $ \sP^1_L $ one can use only the exponential map 
of an appropriately chosen metric $g_t $
which has the property that $ L_i $ are totally geodesic 
with respect to $ g_i $ for $ i=0,1 $ 
(see Lemma \ref{mon_lem7} for a construction of such metrics).
In this case we obtain coordinate charts which map $ \sP^1_L $ into 
its model space, i.e. 
\begin{align}\label{eq:w12bc}
 W^1_{bc} ( [0,1], \R^{2n} ) = \left \{ \xi \in W^{1,2} ( [0,1], \R^{2n} ) | \xi (i) \in \R^n \times \{0\} , \; i =0,1 \right\}. 
\end{align} 
In the case of $ \sP^2_{L} $ one cannot hope to find a metric 
such that the local charts are given only by its exponential map.
This is true because the definition of $ \sP^2_{L} $ involves 
two boundary conditions, one of these conditions includes the 
derivative of a curve $ \gamma $ and we don't have enough control 
of the derivative of the exponential map. 
Still, we can explicitly construct local charts that map $ \sP^2_{L} $ 
into its model space, i.e. into 
\begin{equation}\label{eq:w22bc}
W^{2,2}_{bc}([0,1]) = \left \{ \xi \in W^{k,2} ( [0,1] , \R^{2n} ) \bigg | 
\begin{array}{ll}
 \xi(i) \in \R^n \times \{0\},\; i=0,1 \\
 \partial_t \xi(i) \in \{0\} \times \R^n, \; i=0,1 
\end{array}
 \right \}.
 \end{equation}
We shall first construct local coordinate charts in the case 
that Hamiltonian $ H=0 $ and then we shall reduce the case $H\neq 0 $
to the case $ H=0 $ by naturality. 
\end{PARA}

\begin{lemma}\label{le:phi}
Let $ L_0, L_1 \in \cL(M) $, let $J\in \cJ(M, \om)$ and let
$\alpha :[0,1]\to M$ be a smooth path with Lagrangian
boundary conditions, $ \alpha(i) \in L_i , \; i=0,1 $.
There is an open set
$
\tU\subset [0,1]\times M
$
and a smooth map $ f:\tU\to\R^{2n}$ such that the following holds:
\begin{itemize}
 \item[i)] 
 $$
\alpha(t) \in \tU_t:=\left\{p\in M\,|\,(t,p)\in \tU\right\}.
$$
\item[ii)] The map $ f_t( \cdot):=f(t,\cdot):\tU_t\to f_t( \tU_t) = W_t $ 
is a diffeomorphism for every $t $ 
and it satisfies:
\begin{equation}\label{eq:loc_chart}
\begin{split}
f_{i}( L_i \cap \tU_i) &=  (\R^n \times \{0\}) \cap W_i,  \; i=0,1  \\
((f_{i}) _* J_i)(x,0) &= J_{std} , \;\; ( x,0) \in W_i , \;\; i=0,1 . 
\end{split}
\end{equation}
\item[iii)] $ \partial_t f_t(p)= 0 $  for $ t=0,1 $ for all $p \in \tU_t $. 
\item[iv)] 
 If $ \alpha (t) $ is a constant path $\alpha(t) = p $
 then $ \tU$ can be chosen such that $ \tU_t $ is independent of $t$. 

\end{itemize} 

\end{lemma}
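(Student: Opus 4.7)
The approach is to construct charts at the endpoints $t=0,1$ satisfying (ii), extend to a smooth family over $[0,1]$, and apply a reparametrization in $t$ to enforce (iii).

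\textbf{Step 1 (endpoint charts).} For $i \in \{0,1\}$, pick a Riemannian metric $g_i$ on $M$ that is $J_i$-compatible and makes $L_i$ totally geodesic (such a metric exists by Lemma~\ref{mon_lem7}). Let $\phi_i$ be a coordinate chart on $L_i$ near $\alpha(i)$ with $\phi_i(\alpha(i))=0$, and define
\[
\psi_i(x,y) := \exp^{g_i}_{\phi_i^{-1}(x)}\!\bigl(J_i\, d\phi_i^{-1}(x)\, y\bigr), \qquad f_i := \psi_i^{-1}.
\]
Since $L_i$ is totally geodesic, $\psi_i(\cdot,0)$ parametrizes $L_i$, so $f_i$ straightens $L_i$ to $\R^n \times \{0\}$. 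A direct computation of $d\psi_i$ at $(x,0)$, using that $J_i$ exchanges $TL_i$ with its $g_i$-orthogonal complement, yields $d\psi_i \circ J_{\mathrm{std}} = J_i \circ d\psi_i$ along $\R^n\times\{0\}$, establishing $(f_i)_* J_i = J_{\mathrm{std}}$ on the Lagrangian. Condition (ii) holds at $t=i$.

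\textbf{Step 2 (smooth extension).} Extend $f_0$ and $f_1$ to a smooth family of diffeomorphisms $g_t:V_t \to \R^{2n}$ with $\alpha(t)\in V_t$, $g_0=f_0$, and $g_1=f_1$. One constructs this using a tubular neighborhood of $\alpha$: choose a smooth family of linear isomorphisms $\Phi_t:\R^{2n}\to T_{\alpha(t)}M$ interpolating the differentials $(df_i^{-1})_0$ at the endpoints, a smooth family of metrics $h_t$ with $h_i=g_i$, and set $g_t$ to be the inverse of $v \mapsto \exp^{h_t}_{\alpha(t)}(\Phi_t v)$ on a sufficiently small ball, adjusted by a small isotopy near $t=0,1$ so that $g_0=f_0$ and $g_1=f_1$ on a neighborhood of the respective endpoint. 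This is possible because germs of diffeomorphisms at a point form a path-connected space, so any two chart germs at $\alpha(0)$ and $\alpha(1)$ can be smoothly interpolated.

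\textbf{Step 3 (reparametrization for (iii) and constant-path case).} Pick a smooth monotone $\rho:[0,1]\to[0,1]$ with $\rho\equiv 0$ on $[0,\eps]$ and $\rho\equiv 1$ on $[1-\eps,1]$. Define $f_t := g_{\rho(t)}$ on $\tU_t := V_{\rho(t)}$. Then
\[
\partial_t f_t = \rho'(t)\,(\partial_s g_s)\big|_{s=\rho(t)},
\]
which vanishes wherever $\rho'=0$, in particular at $t=0,1$; this proves (iii). Shrinking $\eps$ via continuity of $\alpha$ gives $\alpha([0,\eps])\subset V_0$ and $\alpha([1-\eps,1])\subset V_1$, so $\alpha(t)\in \tU_t$ throughout, proving (i). Condition (ii) at $t=0,1$ is unchanged since $f_0=g_0$ and $f_1=g_1$. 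For (iv), if $\alpha\equiv p$ is constant, the family in Step 2 can be chosen with $V_t$ equal to a fixed neighborhood $V$ of $p$, so $\tU_t=V$ independent of $t$.

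\textbf{Main obstacle.} The heart of the argument is Step 1, where one needs $(f_i)_* J_i = J_{\mathrm{std}}$ on the whole of $(\R^n\times\{0\})\cap W_i$ rather than merely at the origin. This is achieved by simultaneously exploiting the totally-geodesic property of $L_i$ (to guarantee that tangent-direction exponentials stay in $L_i$) and the $J_i$-compatibility of $g_i$ together with $J_iTL_i \perp TL_i$ (so that the $J_i$-image of an orthonormal tangent frame gives an orthonormal normal frame, matching the $J_{\mathrm{std}}$ picture in $\R^{2n}$ all along the Lagrangian).
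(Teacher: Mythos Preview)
Your proof is correct and takes a genuinely different route from the paper. The paper constructs the entire $t$-family in one pass: it first builds a unitary trivialization $e_\alpha:[0,1]\times\R^{2n}\to\alpha^*TM$ with the right Lagrangian endpoint behavior, then sets $\phi'_t := (\exp_{\alpha(t)}^{h_t}\circ e_\alpha(t))^{-1}$, which only achieves $(\phi'_t)_*J_t = J_{\std}$ at the origin; a further post-composition with the inverse of $\tPsi_t(x,y) = (x,0)^T + J'_t(x,0)(y,0)^T$ is needed to straighten $J$ along all of $\R^n\times\{0\}$. Your Step~1 is more efficient at the endpoints: by exponentiating from \emph{varying} basepoints $\phi_i^{-1}(x)\in L_i$ in the $J_i$-normal direction you obtain $(f_i)_*J_i = J_{\std}$ along the whole of $\R^n\times\{0\}$ in one stroke, collapsing the paper's Steps~2--3 into a single tubular-neighborhood construction. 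The price you pay is your Step~2, where two unrelated endpoint charts must be interpolated into a smooth $t$-family; this is doable (your sketch via matching 1-jets and an isotopy of germs is the right idea) but it is precisely the step the paper's global trivialization makes unnecessary. One small gap: in Step~3 you only verify $\alpha(t)\in\tU_t$ for $t\in[0,\eps]\cup[1-\eps,1]$; for $t\in(\eps,1-\eps)$ you need $\alpha(t)\in V_{\rho(t)}$, which holds because $|\rho(t)-t|=O(\eps)$, $\alpha$ is uniformly continuous, and the $V_s$ have a uniform size over the compact parameter interval --- but this should be stated.
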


\begin{proof} [Proof of Lemma \ref{le:phi}]
We devide the proof into four steps.\\

\medskip
{\bf Step 1. Construction of the trivialization of  $\alpha^* TM $:}\\ 
There exists a smooth map 
$$
e_{\alpha} : [0,1] \times \R^{2n} \to \alpha^* TM  
$$
such that 
\begin{itemize}
\item[i)] $ e_{\alpha} (t) : \R^{2n} \to T_{\alpha(t)} M $ is a vector space 
isomorphism for all $t$. 
\item[ii)] $ e_{\alpha} (t) J_{\text{std}} = J_t(\alpha(t))e_{\alpha(t)} $ for all $ t $, where
$ J_{\text{std}} $ denotes standard complex structure in $ \R^{2n} $. 
\item[iii)] $ \om ( e_{\alpha} (t) \cdot, e_{\alpha}(t) \cdot ) = \om_{\text{std}} (\cdot,\cdot), $
where $ \om_{\text{std}} $ denotes the standard symplectic form in $ \R^{2n} $. 
\item[iv)] $e_{\alpha}(i) : \R^n \times \{0\} \to T_{\alpha(i)} L_i, \;\; i=0,1 $

\end{itemize}

Let $ \{ e_i\}_{1\leq i \leq n }$ be the standard basis of $ \R^n \times \{0\} $,
then $ \{ e_i, J_{\std} e_i \}_{1\leq i \leq  n} $ is the standard basis 
of $ \R^{2n} $. We first construct a trivialization $ \te_{\alpha} $ of $\alpha^* TM $
which satisfies the conditions $ i) - iii) $. 
 Let $g_t$ be the metric obtained by pairing $ \om $ and $J_t $, 
 $ g_t (\cdot, \cdot)= \om ( \cdot, J_t (\cdot)) $. 
 
 Let $\{v_i\}_{1\leq i \leq n} $ be an orthonormal basis of $ T_{\alpha(0)}L_0 $
 with respect to $ g_0 $. We define 
\begin{align*}
\te_{\alpha}(0) e_i = v_i, \quad    \te_{\alpha}(0)(J_{\std} e_i) =  J_0( \alpha(0)) v_i, \;\;\text{for } i=1,\cdots n 
\end{align*}  
and extend $ \te_{\alpha}(0) $ linearly. 
We define $ A_t (p) : T_p M \to T_p M$ as follows 
\begin{align}\label{eq:At} 
 g_t(p)( A_t(p) \cdot, \cdot) = \frac{1}{2} (\partial_t g_t)(p) ( \cdot, \cdot ). 
\end{align}
We define 
 $$
 \te_{\alpha}(t) e_i:= v_i(t) , \quad i=1, \cdots , n
 $$
where $ v_i(t) $ is a solution of the following equation
\begin{align}\label{eq:un_fr}
  &\widetilde{\nabla}_t v_i(t) =
 -\frac{1}{2} A_t(\alpha) v_i(t) +\frac{1}{2} J_t (\alpha) A_t(\alpha) J_t(\alpha)v_i(t), \\
 & v_i(0) = v_i , \notag 
\end{align}
where $ \tnabla_t $ is given by 
$$ 
\widetilde{\nabla}_t v := 
\nabla_t v - \frac{1}{2} J_t 
\left ( (\nabla \dot{\alpha} J_t) v +  (\partial_t J_t) v \right ),
$$
$\nabla= \nabla^t$ is Levi-Civita connection of the metric $g_t$ 
and $A_t$ is defined by~\eqref{eq:At}. 
Notice that both left and right side of the equation (\ref{eq:un_fr}) 
commute with $J_t $. Thus if $ v(t) $ is a solution of (\ref{eq:un_fr}), 
then also $ J_t(\alpha(t)) v(t) $ is a solution of the same equation. 
Also if $ v(t) $ and $ w(t) $ are the solutions of the equation 
\eqref{eq:un_fr} then  
$$ 
0= \frac{d}{dt} \Big (g_t( v(t), w(t))+ g_t ( J_t( \alpha) v(t), J_t (\alpha) w(t)) \Big )
= 2 \frac{d}{dt} g_t ( v(t), w(t)).
$$
Thus the vectors $ \{v_i(t), J_t(\alpha(t)) v_i(t)\}_{i=1,n} $ form an
orthonormal (with respect to $g_t$ ) basis for all $ t\in [0,1] $. 
Thus we can define 
$$
\te_{\alpha}(t)  ( J_{\std} (e_i) ) := J_t(\alpha(t) v_i(t), \quad  i=1, \cdots n 
$$
and extend it by linearity. 
Notice that the trivialization $\te_{\alpha} $ 
satisfies the conditions $ i) - iii) $, whereas 
the condition $iv) $ doesn't have to be necessarily 
satisfied for $t=1 $. There exists some Lagrangian 
subspace $ V\subset \R^{2n} $ such that 
$$
 \te_{\alpha}(1) : V \to T_{\alpha(1)} L_1. 
$$
There exists a smooth path $ U(t) $ of unitary matrices 
with the  property $ U(0) = \Id $ and $ U(1) : \R^n \times \{0\} \to V $ 
(as $U(n) $ is connected ). Thus the trivialization 
$$
e_{\alpha} (t) := \te_{\alpha}(t) U(t) , 
$$
satisfies all the required properties. 
\medskip

{\bf Step 2. Construction of local coordinate charts}\\

Let $ h_t $ be a smooth family of metrics as in Lemma \ref{mon_lem7}. 
Let $ r_{\alpha(t),t} $ be the injectivity radius of the metric $ h_t $
at the point $ \alpha(t) $. Let 
\begin{equation}\label{eq:tneigh}
 W'_t =B_{r_{\alpha(t),t }}(0) \subset \R^{2n} 
\text{ and } U'_t= B_{r_{\alpha(t),t}} ( \alpha(t)) \subset M .  
\end{equation}
Let $ \psi'_t : W'_t \to U'_t $ be a smooth family of diffeomorphisms 
given by 
$$ 
\psi'_t ( \xi) = \exp_{\alpha(t),t} ( e_{\alpha}(t)\xi ) ,
$$
where $ e_{\alpha}(t) $ is the trivialization constructed in Step 1. 

The map 
\begin{equation}\label{eq:tphi_t}
 \phi'_t= (\psi'_t)^{-1}: U'_t \rightarrow W'_t 
\end{equation}
has the following properties:
\begin{align*}
\phi'_0( L_0 \cap U'_0)  = W'_0\cap ( \R^n \times \{0\})
 ,\;\phi'_1( L_1 \cap U'_1) = W'_1 \cap (\R^n \times \{0 \}).
\end{align*}
Denote with $ J_t' $ the push forward of
$ J_t $ via $ \phi'_t $, $ J'_t := (\phi'_t)_* J_t $.
Notice that as $ d\phi'_t ( \alpha(t))= e_{\alpha}(t)^{-1},$
the almost complex structure
$$
J'_t(0)  = d\phi'_t(\alpha(t)) J_t (\alpha(t))d\phi'_t(\alpha(t))^{-1}
$$
 satisfies 
$$ 
J'_t (0) = J_{std}, \;\; \text{ for all } t \in [0,1]  
$$
In the next step we make $ J'_t $ 
standard on the whole $ \R^n \times \{0\} $.

{\bf Step 3. Adapting the charts to the almost complex structure}\\

\medskip

There exist open sets $ V, \tV \subset [0,1] \times \R^{2n} $  
such that
\begin{align*}
 V_t= \{ p | (t,p) \in V \} , \;\; \tV_t = \{ p | ( t,p) \in \tV \}
\end{align*}
are open neighborhoods of $ 0 \in \R^{2n} $. 
There exist a smooth map 
$ \tPhi : \tV \to V $ such that 
$$ 
\tPhi_t = \tPhi( t, \cdot ) : \tV_t \rightarrow V_t ,
$$ 
is a diffeomorphism for every $ t $ and 
$ \widetilde{J}_t:= (\tPhi_t \circ \phi'_t)_* J_t $ satisfies
\begin{align*}
\widetilde{J}_t( x, 0) = J_{std}, \; \;\; \text{ for all } (x,0) \in V_t \cap (\R^n \times \{0\})
\end{align*}
and $ \phi'_t $ is a diffeomorphism from \eqref{eq:tphi_t}.

  Let $ \tPsi_t : \R^{2n} \rightarrow \R^{2n} $ be given by
  \[ \tPsi_t( x,y) = \left ( \begin{matrix}
                              x\\
                              0
                             \end{matrix}\right )
               + J'_t( x,0) \left ( \begin{matrix}
                                      y \\
                                      0
                                     \end{matrix}
                                        \right ),
                          \]
 where $ J'_t = (\phi'_t)_* J_t $ and $ \phi'_t $ is given by \eqref{eq:tphi_t}.
 Then 
  \[  d\tPsi_t(x,0) ( \hat x, \hat y)  = \left ( \begin{matrix}
                              \hat x\\
                              0
                             \end{matrix}\right )
               + J'_t( x,0) \left ( \begin{matrix}
                                     \hat  y \\
                                      0
                                     \end{matrix}
                                        \right )
                          \]
  and obviously $ d\tPsi_t( 0,0) = \one $ for all $t$.
  Hence there exist a smooth map $ t \mapsto r_t > 0 $ such that 
   $ \tPsi_t : V_t = B_{r_t} (0) \rightarrow \tV_t = \tPsi_t( V_t) $ 
  is a diffeomorphism  for every $t$. 
  If necessary shrink $ \tV_t $ so that $ \tV_t \subset W'_t $, 
  where $ W'_t $ are as in \eqref{eq:tneigh}.
  Notice that 
  \begin{align*}
   d\tPsi_t(x,0) \circ J_{std} = J'_t(x,0) d\tPsi_t (x,0)
  \end{align*}
  for all $ t \in [0,1] $. 
  The desired map $ \tPhi_t $ is the inverse of $\tPsi_t$. 
  The open set $ \tV= \{ ( t, x) \; | \; x \in \tV_t \} $ and 
  $ V $ is given analogously. 
 This proves the second step.\\
 \medskip 
 
{\bf Step 4:} {\bf Construction of the map $ f_t $} \\
 Let $ \tU_t = (\phi'_t)^{-1} ( \tV_t) $, where $ \tV_t $ are the 
 open sets as in Step 3 and $ \phi'_t $ the map \eqref{eq:tphi_t}. 
 The map 
 $ f_t: \tU_t \rightarrow V_t $  is given by 
 $$ 
   f_t = \tPhi_t\circ\phi'_t .
$$
Obviously, $f _t $ satisfies~\eqref{eq:loc_chart}.
 If $ \partial_t f_t \neq 0 $ for $ t=0,1 $ 
instead of $ f_t $ take $ f_{\beta(t)} $, 
where $ \beta(t) $ is a smooth cut-off function satisfying
$ \dot{\beta}(0) = \dot{\beta}(1) = 0 $. 

Finally  in the case $ \alpha(t ) = p $ if it is necessary shrink $ V_t $ such that 
$ f_t^{-1}( V_t) = U_p $, where $ U_p $ is some fixed neighborhood of the point $p$.  
\end{proof}

\begin{PARA}[{\bf Construction of Coordinate Charts on $ \sP^i_L$ }]\label{para:coordcharts}\rm
Now we are able using the map $ f $ constructed in Lemma \ref{le:phi} 
to construct coordinate charts on $ \sP^i_L, \; i=1,2 $. We shall
first construct these charts in the case that the Hamiltonian 
function $ H$ is identically equal zero, 
and then in the general case, when $H$ is an arbitrary function. 

\begin{corollary}[{\bf The case $ H=0$}]
 Assume that the Hamiltonian function $H= 0$ and let $ \sP^i_L, \; i=1,2 $ 
 be as in \eqref{eq:PL1} and \eqref{eq:PL2}. Let $ f_t : \tU_t \to \R^{2n} $ 
 and $ \alpha $ be as in Lemma \ref{le:phi}. 
 Define $\sU_{\alpha}^i \subset \sP^i_L, \; i=1,2 $ 
 and $ \Phi_{\alpha} : \sU_{\alpha}^i \to W^{i,2}_{bc} ( [0,1], \R^{2n} ) $
 by 
 $$ \sU_{\alpha}^i = \left \{ \gamma \in \sP^i_{L} | \gamma(t) \in \tU_t \right \} $$
 and 
 $$ 
 \Phi_{\alpha} ( \gamma) (t) = f_t (\gamma(t)) 
 $$
 Then $ \Phi_{\alpha} $ is a coordinate chart on $ \sP^{i}_L , \; i=1,2 $. 
\end{corollary}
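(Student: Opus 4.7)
The plan is to verify the three defining properties of a coordinate chart: that $\sU_\alpha^i$ is open in $\sP^i_L$, that $\Phi_\alpha$ takes values in the model space $W^{i,2}_{bc}([0,1],\R^{2n})$, and that $\Phi_\alpha$ is a diffeomorphism onto its image. Openness is immediate: the set $\tU = \{(t,p)\,:\,p\in\tU_t\}$ is open in $[0,1]\times M$, and for $i=1,2$ the evaluation map $\sP^i_L\to C^0([0,1],M)$, $\gamma\mapsto\gamma$, is continuous (in fact $W^{1,2}$ embeds into $C^0$ in one dimension), so the condition $\{t\mapsto(t,\gamma(t))\}\subset\tU$ cuts out an open subset of $\sP^i_L$.

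Next I would verify that $\Phi_\alpha(\gamma)$ satisfies the boundary conditions defining $W^{i,2}_{bc}([0,1],\R^{2n})$. For $i=1$ this is direct: since $\gamma(i)\in L_i\cap\tU_i$ and $f_i(L_i\cap\tU_i)\subset\R^n\times\{0\}$ by property (ii) of Lemma~\ref{le:phi}, we get $\Phi_\alpha(\gamma)(i)\in\R^n\times\{0\}$. For $i=2$ I additionally need $\partial_t\Phi_\alpha(\gamma)(i)\in\{0\}\times\R^n$. Differentiating in $t$ gives $\partial_t\Phi_\alpha(\gamma)(t)=(\partial_tf_t)(\gamma(t))+df_t(\gamma(t))\dot\gamma(t)$; by property (iii) of Lemma~\ref{le:phi}, the first term vanishes at $t=0,1$, leaving $df_i(\gamma(i))\dot\gamma(i)$. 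Since $H=0$, the defining condition of $\sP^2_L$ reduces to $J_i(\gamma(i))\dot\gamma(i)\in T_{\gamma(i)}L_i$; compatibility of $J_i$ with the Lagrangian decomposition forces $\dot\gamma(i)\in J_i(\gamma(i))\,T_{\gamma(i)}L_i$. Applying $df_i(\gamma(i))$ and using $(f_i)_*J_i(\Phi_\alpha(\gamma)(i))=J_{\std}$ on $\R^n\times\{0\}$ together with $df_i(L_i)\subset\R^n\times\{0\}$, we conclude $df_i(\gamma(i))\dot\gamma(i)\in J_{\std}(\R^n\times\{0\})=\{0\}\times\R^n$, as required.

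Finally, the candidate inverse $\Phi_\alpha^{-1}(\xi)(t)=f_t^{-1}(\xi(t))$ is well defined on an open subset of $W^{i,2}_{bc}([0,1],\R^{2n})$, by the same pointwise argument run in reverse (using that the image $\xi(t)$ of such a $W^{i,2}$ path lies in the fixed open set $\R^n\times\{0\}\cap W_i$ at $t=i$ and in the appropriate $J_{\std}$-subspace for the derivative). Both $\Phi_\alpha$ and $\Phi_\alpha^{-1}$ are pointwise-in-$t$ superpositions with a smooth fiber map, so smoothness in the Hilbert manifold category follows from the standard $\omega$-lemma for Sobolev maps into manifolds (see, e.g., the argument for the atlas on $W^{k,2}([0,1],M)$). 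Since $W^{2,2}([0,1])\hookrightarrow C^1([0,1])$ and $W^{1,2}([0,1])\hookrightarrow C^0([0,1])$ in one dimension, no subtlety with critical Sobolev exponents arises.

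The main obstacle among these steps is the derivative computation in the $i=2$ case: the coordinate chart $f_t$ had to be engineered in Lemma~\ref{le:phi} precisely so that both $\partial_tf_t\big|_{t=0,1}=0$ and $(f_i)_*J_i=J_{\std}$ on the Lagrangian are satisfied simultaneously, and these two properties conspire to turn the nonlinear boundary condition $J_i\dot\gamma(i)\in T_{\gamma(i)}L_i$ into the linear condition $\partial_t\xi(i)\in\{0\}\times\R^n$ in the model space. Once this compatibility is in place, the remainder of the proof is routine, and the general case (arbitrary $H$) will follow by the naturality reduction indicated in Remark~\ref{rem:ham0}, pulling back by the Hamiltonian isotopy $\phi_t$.
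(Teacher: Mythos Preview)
Your proposal is correct and in fact supplies more detail than the paper does: the paper states this corollary without proof, treating it as an immediate consequence of the properties of $f_t$ established in Lemma~\ref{le:phi}. Your verification of the $i=2$ derivative boundary condition---using $\partial_t f_t|_{t=0,1}=0$ together with $(f_i)_*J_i=J_{\std}$ on $\R^n\times\{0\}$ to linearize the nonlinear condition $J_i\dot\gamma(i)\in T_{\gamma(i)}L_i$---is exactly the mechanism the construction was designed for, and mirrors the analogous computation the paper carries out later in the proof of Lemma~\ref{lem:loc_path} for $\sP^{3/2}$.
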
 

\begin{corollary}[{\bf General Case}]\label{pr:chart} 
Let $ \sP^i_L, i=1,2 $ be as in \eqref{eq:PL1} and \eqref{eq:PL2}. 
Let $ f_t : \tU_t \to \R^{2n} $ and $ \alpha $ be as in 
Lemma \ref{le:phi}. Let $ \phi_t $ be the Hamiltonian isotopy 
defined by \eqref{eq:phiH} and denote $ U_t = \phi_t (\tU_t ) $. Define 
$\sU_{\alpha}^i \subset \sP^i_L$ and $ \Phi_{\alpha} : \sU^i_{\alpha} \to W^{i,2}_{bc} ( [0,1], \R^{2n}) $
by 
$$
\sU^i_{\alpha} = \{ \gamma \in \sP^{i}_L | \gamma (t) \in U_t \} 
$$
and 
$$ 
\Phi_{\alpha} (\gamma) (t) = f_t ( \phi_t^{-1} ( \gamma(t))) = F_t ( \gamma(t)). 
$$
Then $ \Phi_{\alpha} :\sU^i_{\alpha} \to W^{i,2}_{bc} ( [0,1], \R^{2n})$
is a coordinate chart on $ \sP^i_L , \; i=0,1$. 
\end{corollary}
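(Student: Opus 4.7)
The plan is to reduce Corollary \ref{pr:chart} to the preceding corollary (the case $H=0$) by naturality, in the spirit of Remark \ref{rem:ham0}. Set $\tL_0 := L_0$, $\tL_1 := \phi_1^{-1}(L_1)$, $\tJ_t := \phi_t^*J_t$, and interpret the path $\alpha$ and the map $f_t$ produced by Lemma \ref{le:phi} as being associated to the tilded data $(\tL_0,\tL_1,\tJ)$. Denote by $\widetilde{\sP}^i_{\tL}$ the path space \eqref{eq:PL1}--\eqref{eq:PL2} associated to this tuple with vanishing Hamiltonian, and consider the transformation
\begin{equation*}
\Psi: \sP^i_L \to \widetilde{\sP}^i_{\tL}, \qquad \Psi(\gamma)(t) := \phi_t^{-1}(\gamma(t)).
\end{equation*}
First I would establish that $\Psi$ is a diffeomorphism of Hilbert manifolds. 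Smoothness of $\Psi$ and $\Psi^{-1}$ is immediate from the fact that $(t,p)\mapsto\phi_t^{\pm 1}(p)$ is a smooth family of diffeomorphisms, so composition with $\phi_t^{\pm 1}$ preserves Sobolev classes.

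The key check is that $\Psi$ maps the boundary conditions defining $\sP^i_L$ to the boundary conditions defining $\widetilde{\sP}^i_{\tL}$. For $i=1$ this is elementary: $\phi_0=\id$ yields $\Psi(\gamma)(0)=\gamma(0)\in L_0=\tL_0$, and $\Psi(\gamma)(1)=\phi_1^{-1}(\gamma(1))\in\tL_1$ iff $\gamma(1)\in L_1$. For $i=2$, differentiating $\gamma(t)=\phi_t(\widetilde\gamma(t))$ and using $\p_t\phi_t=X_{H_t}\circ\phi_t$ gives
\begin{equation*}
\dot\gamma(t) - X_{H_t}(\gamma(t)) = d\phi_t(\widetilde\gamma(t))\,\dot{\widetilde\gamma}(t).
\end{equation*}
Combining this with $\tJ_t=\phi_t^*J_t$ yields $J_i(\gamma(i))(\dot\gamma(i)-X_{H_i}(\gamma(i)))=d\phi_i(\widetilde\gamma(i))\,\tJ_i(\widetilde\gamma(i))\,\dot{\widetilde\gamma}(i)$, and since $d\phi_i(\widetilde\gamma(i))$ maps $T_{\widetilde\gamma(i)}\tL_i$ bijectively onto $T_{\gamma(i)}L_i$, the derivative boundary condition in the definition of $\sP^2_L$ is equivalent to its $H=0$ counterpart in $\widetilde{\sP}^2_{\tL}$.

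Once $\Psi$ is identified as a diffeomorphism, the preceding corollary (the case $H=0$) applied to $(\tL_0,\tL_1,\tJ)$ and the path $\alpha$ produces a chart
\begin{equation*}
\Phi_{\alpha}^{\widetilde{\phantom{x}}} : \bigl\{\widetilde\gamma\in\widetilde{\sP}^i_{\tL}\;:\;\widetilde\gamma(t)\in\tU_t\bigr\} \to W^{i,2}_{bc}([0,1],\R^{2n}), \qquad \widetilde\gamma\mapsto\bigl(t\mapsto f_t(\widetilde\gamma(t))\bigr).
\end{equation*}
Composing with $\Psi$ yields precisely $\Phi_\alpha(\gamma)(t)=f_t(\phi_t^{-1}(\gamma(t)))=F_t(\gamma(t))$, defined on $\sU^i_\alpha=\Psi^{-1}(\{\widetilde\gamma:\widetilde\gamma(t)\in\tU_t\})=\{\gamma:\gamma(t)\in\phi_t(\tU_t)=U_t\}$, which is exactly the chart claimed. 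The main (and only nontrivial) obstacle is the derivative-level computation that verifies $\Psi$ intertwines the two boundary conditions for $i=2$; once that is done, the corollary follows by a direct composition argument.
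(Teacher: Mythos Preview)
Your proposal is correct and follows precisely the approach the paper intends: the paper states the $H=0$ case first and then announces that the general case is reduced to it ``by naturality,'' without spelling out the details. Your argument via the diffeomorphism $\Psi(\gamma)(t)=\phi_t^{-1}(\gamma(t))$ and the verification that it intertwines the boundary conditions is exactly this naturality reduction made explicit; the derivative computation you give for $i=2$ is the same one that reappears, in a slightly different guise, in the paper's proof of Lemma~\ref{lem:loc_path}.
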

\end{PARA}

\begin{PARA}[{\bf Hilbert space bundles $ \sE^0 $ and $ \sE^1 $ }] \label{para:hil_bundl}\rm
In the Proposition \ref{prop:PLk} we have introduced 
manifolds of paths $ \sP^1_{L} $ and $ \sP^2_{L}$,
but our main interest will be another Hilbert manifold, 
denoted by $ \sP^{3/2}=\sP^{3/2}( H,J) $,
which is in some sense an intermediate manifold 
between these two manifolds 
$$
\sP^2_{L} \subset \sP^{3/2} \subset \sP^1_L 
$$
Consider the following two Hilbert space bundles over 
$ \sP^1_L $ 
\begin{equation}\label{eq:bundles1}
\xymatrix    
@C=15pt    
@R=20pt    
{    
 \sE^1\ar[dr] \ar[rr] &   & \ar[dl]\sE^0 & \\
&\sP^1_L &
}
\end{equation}
with fibers 
\begin{equation}\label{eq:fib}
\begin{split}
 &\sE^0_{\gamma} = L^2 ([0,1], \gamma^*TM)\\
 &\sE^1_{ \gamma}= \left\{ \xi \in W^{1,2} ([0,1], \gamma^*TM) \;| 
\; \xi(i) \in T_{\gamma(i)}L_i, \; i=0,1 \right \}.
\end{split}
\end{equation}
Note that $ \sE^1_{\gamma} $ is a dense subset of $ \sE^0_{\gamma} $ 
and the inclusion of $ \sE^1_{\gamma} $  into  $ \sE^0_{\gamma} $
is a compact operator. The tangent bundle of $ \sP^1_L $ is $ \sE^1 $. 
The almost complex structures $ J\in \cJ(M, \om)$ and Hamiltonian 
$ H\in \cH_{\text{reg}}( M, L_0, L_1) $ determine a section 
$ \sS : \sP^1_{L} \rightarrow \sE^0 $ via 

\begin{equation}\label{eq:sec_s}
\sS(\gamma) (t) = J_t(\gamma) \Big (\dot{\gamma} (t)- X_{H_t} ( \gamma(t))\Big ) , \;\; 0 \leq t \leq 1 . 
\end{equation}
Notice that 
$$ 
\sP^2_{L}=\{ \gamma \in \sP^1_{L} \; | 
\sS(\gamma) \in \sE^1_{\Lambda} \} .
$$
 \end{PARA}
 
 \begin{PARA}[{\bf The interpolation subbundle $ \sE^{1/2}$}] \label{para:int_subb}\rm
 Let $ \sP^i_L , \;\; i=1,2$ be as in \eqref{eq:PL1} and \eqref{eq:PL2}
 and let $ \sE^i_{\gamma}, i=0,1 $ be as in \eqref{eq:fib}. 
Let 
$$ 
H :=  L^2 ( [0,1], \R^{2n}) \text{ and } W :=  W^{1,2}_{bc} ( [0,1], \R^{2n}) 
$$
be as in \eqref{eq:w12bc}.
Remember that the Hilbert manifold $ \sP^1_L $ is modelled 
on the Hilbert space $W$, whereas the Hilbert manifold $ \sP^2_L $
is modelled on the Hilbert space $ W^{2,2}_{bc} ( [0,1] ) $ defined 
by \eqref{eq:w22bc}. 
There exist an isomorphism 
\begin{align*}
 e_{\gamma} :  H\rightarrow \sE^0_{\gamma} \qquad \text{and} \qquad
e_{\gamma} :  W\rightarrow \sE^1_{\gamma}. 
\end{align*}
There are many different ways to construct such isomorphism. 
Any local chart on $ \sP^1_L $ gives us a trivialization of its 
tangent bundle $ \sE^1 $, as well as of $ \sE^0 $. 
Thus we can use charts constructed in \ref{para:coordcharts} or 
we can also use the trivialization from the Step 1 of Lemma \ref{le:phi}. 
Let  $ \sE^{1/2}_{\gamma} $ be the following interpolation 
space
\begin{equation}\label{eq:int_fib}
\sE^{1/2}_{\gamma}  = [ \sE^1_{\gamma} , \sE^0_{\gamma}]_{1/2}.
\end{equation}
It is characterized as the domain of the square root  $ A^{1/2} $
of any self-adjoint positive definite operator on $ \sE^0_{\gamma} $ 
with domain $ \sE^1_{\gamma}$. Alternatively, it can be defined as
the set of initial conditions $ \xi(0)\in\sE^0_{\gamma}  $ of $ L^2 $ 
functions $ \xi : [0,1] \rightarrow \sE^1_{\gamma}$ whose composition 
with inclusion $ \sE^1_{\gamma}\hookrightarrow \sE^0_{\gamma} $ is 
of class $ W^{1,2} $. 
The Hilbert space  $ \sE^{1/2}_{\gamma} $ is 
isomorphic via  $e_{\gamma} $  to the interpolation space
$$ 
V:= [W,H]_{1/2} . 
$$
More precisely, let $ \xi= (\xi^1, \xi^2 ) \in V $, 
where $ \xi^1 $ denotes the first $n $ coordinates 
and $ \xi^2 $ are the last $n$ coordinates. Then evidently
\begin{align}\label{eq:int_space}
  \xi^1 \in [H^1, L^2]_{1/2} = H^{1/2}([0,1], \R^{n}), \;\;
\xi^2 \in [H^1_0, L^2]_{1/2}= H^{1/2}_{00}([0,1], \R^n ),
\end{align}
where $ H^{1/2}_{00} $ is Lions-Magenes' space \cite{LM}. 
We explain more in the Appendix \ref{SEC:app} about the interpolation 
theory relevant for this setting.
Thus, the Hilbert interpolation space 
$V= [ W, H]_{1/2} $ is just the product
$ V= H^{1/2}([0,1], \R^{n})\times H^{1/2}_{00}([0,1], \R^n )) ,$
and 
$$ 
e_{\gamma} : V  \stackrel{\cong}{\longrightarrow}  \sE^{1/2}_{\gamma} .
$$
Let $ \sE^{1/2} $ be the bundle over $ \sP^1_L $
with the fiber $ \sE^{1/2}_{\gamma} $  over $ \gamma$.

\begin{equation}\label{eq:bundles2}
\xymatrix    
@C=15pt    
@R=20pt    
{    
 \sE^1\ar[dr] \ar[r] &  \sE^{1/2}\ar[d] \ar[r] & \ar[dl]\sE^0 & \\
&\sP^1_L &
}
\end{equation}
\end{PARA}

\begin{PARA}[{\bf The Hilbert manifold $ \sP^{3/2}$ }]\label{para:hil_P3/2}\rm
Let $ \sE^{1/2}_{\gamma } $ be as in \eqref{eq:int_fib}
and let $ \sS $ be defined \eqref{eq:sec_s}. 
Denote by $\sP^{3/2}= \sP^{3/2} ( H,J) $ 
the following set
\begin{equation}\label{eq:sp3/2}
 \sP^{3/2}=\sP^{3/2}(H,J) := \left \{ \gamma \in \sP^1_L \; | 
 \sS(\gamma) \in \sE^{1/2}_{\gamma} \right \}.
\end{equation}
We shall prove that the set $ \sP^{3/2} $ is a Hilbert manifold modelled 
on the following Hilbert space 
\begin{equation}\label{eq:h3.2bc} 
H^{3/2}_{bc} := [ W^{2,2}_{bc} ([0,1] ), W^{1,2}_{bc} ([0,1] )]_{1/2},
\end{equation}
where $ W^{i,2}_{bc} ( [0,1] )= W^{i,2}_{bc} ( [0,1], \R^{2n} ), \; i=1,2 $ 
are as in \eqref{eq:w12bc} and \eqref{eq:w22bc}. 
Notice that $ H^{3/2}_{bc}  $ can be written as the following space 
\begin{equation}\label{eq:h32bc}
  H^{3/2}_{bc} =
\left \{ ( \xi_1, \xi_2 ) \in H^1( [0,1], \R^n) \times H^1_0( [0,1], \R^n)\bigg |
\begin{array}{ll} 
\partial_t \xi_1 \in H^{1/2}_{00}, \\
\partial_t\xi_2 \in H^{1/2}
\end{array}
\right  \} .
\end{equation}
\end{PARA}

\begin{lemma}\label{lem:loc_path}
Let $ \sP^{3/2} $ be defined by \eqref{eq:sp3/2} 
and let $H^{3/2}_{bc} $ be as in \eqref{eq:h32bc}. 
The set $ \sP^{3/2} $ is a Hilbert manifold
modelled on the interpolation space $H^{3/2}_{bc} $. 
The Hilbert manifold structure is defined by the following construction. 
 Let $ \alpha \in \sP^2_L \subset \sP^{3/2} $ be a smooth path and let 
 $\{ U_t\}_{0\leq t \leq 1} $ be a smooth family of open sets 
as in Corollary \ref{pr:chart}.
We define $ \sU_{\alpha} $ and $ \Phi_{\alpha} : \sU_{\alpha} \to H^{3/2}_{bc} $ as follows 
\begin{align*} 
\sU_{\alpha}= \{ \gamma \in \sP^{3/2} | \gamma(t) \in U_t , \forall t \in[0,1] \}  
\end{align*} 
and 
\begin{equation}\label{eq:locch}
\Phi_{\alpha} ( \gamma)(t) = f_t(\phi_t^{-1} (\gamma(t))) = F_t ( \gamma(t)),
\end{equation}
where $ f_t $ is a family of diffeomorphisms constructed in Lemma \ref{le:phi}
and $ \phi_t $ is Hamiltonian isotopy \eqref{eq:phiH}.
Then $ \Phi_{\alpha} $ is a local chart on $ \sU_{\alpha} $. 
 \end{lemma}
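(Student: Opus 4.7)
The plan is to transfer the defining condition $\sS(\gamma)\in\sE^{1/2}_\gamma$ into a condition on $\xi := \Phi_\alpha(\gamma)$ via the local chart, and show this condition exactly cuts out an open subset of the model space $H^{3/2}_{bc}$. By naturality (Remark \ref{rem:ham0}) it suffices to treat $H=0$: the Hamiltonian isotopy $\phi_t$ is a diffeomorphism for each $t$, so pre-composing with $\phi_t^{-1}$ intertwines $\Phi_\alpha$ with the chart $\tilde\gamma \mapsto f_t(\tilde\gamma(t))$ for the perturbed problem with $\tilde L_1 = \phi_1^{-1}(L_1)$, $\tilde J_t = \phi_t^*J_t$, and converts the section $\sS$ into $\tilde\sS(\tilde\gamma)(t) = \tilde J_t(\tilde\gamma)\dot{\tilde\gamma}(t)$ (up to lower order terms involving derivatives of $\phi_t$ that are bounded and smooth). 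Thus we may assume $H=0$ and work with $\xi(t)=f_t(\gamma(t))$.

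First I would write $\sS$ in the chart: using that $df_t(\gamma(t))$ conjugates $J_t(\gamma(t))$ to a $t$-dependent almost complex structure $J_t^{\mathrm{loc}}$ on $\R^{2n}$, and using the chain rule $\dot\gamma = (df_t)^{-1}(\dot\xi - (\partial_t f_t)(\gamma))$, we get
\[
\Phi_\alpha(\sS(\gamma))(t) = J_t^{\mathrm{loc}}(\xi)\bigl(\dot\xi(t) - R_t(\xi)\bigr),
\]
where $R_t(\xi) = (\partial_t f_t)(f_t^{-1}(\xi))$ is smooth in $(t,\xi)$. The key property from Lemma \ref{le:phi} is that $J_t^{\mathrm{loc}}(\xi) = J_{\mathrm{std}}$ whenever $\xi\in\R^n\times\{0\}$ and $t\in\{0,1\}$, and that $\partial_t f_t=0$ at $t=0,1$. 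Consequently, on the boundary $\{0,1\}$ the map $\xi\mapsto \Phi_\alpha(\sS(\gamma))$ acts, to leading order in the normal directions of the Lagrangians, as $J_{\mathrm{std}}\partial_t$.

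The second step is the interpolation identification. The Hilbert bundles $\sE^0,\sE^1$ pull back under $\Phi_\alpha$ to the trivial bundles $L^2([0,1],\R^{2n})$ and $W^{1,2}_{bc}([0,1],\R^{2n})$ respectively; by functoriality of interpolation, $\sE^{1/2}_\gamma$ pulls back to the interpolation space $V=[W^{1,2}_{bc},L^2]_{1/2}$ described in \eqref{eq:int_space}. Since $\sP^{3/2}$ is cut out of $\sP^1_L$ by $\sS(\gamma)\in\sE^{1/2}_\gamma$, in the chart the condition becomes
\[
J_{\mathrm{std}}\dot\xi(t) + \bigl[J_t^{\mathrm{loc}}(\xi)-J_{\mathrm{std}}\bigr]\dot\xi(t) - J_t^{\mathrm{loc}}(\xi)R_t(\xi) \in V.
\]
The last two summands are multiplication operators with coefficients smooth in $(t,\xi)$ and vanishing on $\R^n\times\{0\}$ at $t=0,1$; by standard multiplier estimates on the Lions--Magenes spaces (using Appendix \ref{SEC:app}) they preserve $V$ whenever $\xi$ is already in $W^{1,2}_{bc}$. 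Hence the condition reduces to $J_{\mathrm{std}}\dot\xi \in V$, which, after swapping coordinates via $J_{\mathrm{std}}$, is exactly $\dot\xi_1\in H^{1/2}_{00}$ and $\dot\xi_2\in H^{1/2}$, i.e.\ $\xi\in H^{3/2}_{bc}$ by \eqref{eq:h32bc}.

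Finally, $\Phi_\alpha$ is a bijection from $\sU_\alpha\cap\sP^{3/2}$ onto an open subset of $H^{3/2}_{bc}$: injectivity and the homeomorphism property follow from the corresponding statements for the chart on $\sP^1_L$ (Corollary \ref{pr:chart}), and openness from the continuity of the section $\sS$ together with the equivalence just established. For two overlapping base paths $\alpha,\beta$, the transition $\Phi_\beta\circ\Phi_\alpha^{-1}$ is the composition with smooth fibre diffeomorphisms $F^\beta_t\circ(F^\alpha_t)^{-1}:\R^{2n}\to\R^{2n}$; this is smooth as a map $W^{1,2}_{bc}\to W^{1,2}_{bc}$ and $W^{2,2}_{bc}\to W^{2,2}_{bc}$ by the usual Sobolev multiplication and composition estimates, and by interpolation it is smooth as a map $H^{3/2}_{bc}\to H^{3/2}_{bc}$. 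The main obstacle in the whole argument is the interpolation identification in the second step: one must control the lower-order terms arising from the $\xi$-dependence of $J_t^{\mathrm{loc}}$ and the fact that they only cancel to sufficient order at the corners $(t,\xi)\in\{0,1\}\times(\R^n\times\{0\})$; this is where the boundary-adapted properties of $f_t$ from Lemma \ref{le:phi} and the precise structure of the Lions--Magenes space $H^{1/2}_{00}$ are essential.
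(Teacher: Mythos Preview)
Your overall strategy matches the paper's: push the defining condition $\sS(\gamma)\in\sE^{1/2}_\gamma$ through the chart $F_t=f_t\circ\phi_t^{-1}$, use the boundary-adapted properties of $f_t$ from Lemma~\ref{le:phi} to reduce to $J_{\std}\dot\xi\in V$, and read off the $H^{3/2}_{bc}$ condition. The reduction to $H=0$ and the transition-map argument are fine (the latter is in fact more detailed than the paper's one-line remark).

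There is, however, a genuine gap in your second step. You decompose
\[
J_t^{\mathrm{loc}}(\xi)\dot\xi \;=\; J_{\std}\dot\xi \;+\; \bigl[J_t^{\mathrm{loc}}(\xi)-J_{\std}\bigr]\dot\xi
\]
and assert that the bracketed term lies in $V$ by multiplier estimates. But at this stage $\dot\xi$ is only known to be in $L^2$, and multiplication by a matrix in $W^{1,2}_0([0,1])$ does \emph{not} map $L^2$ into $V=H^{1/2}\times H^{1/2}_{00}$: take the multiplier identically~$1$ on $[\tfrac14,\tfrac34]$ and any $\eta\in L^2\setminus H^{1/2}$ supported there. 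So the summand $[J_t^{\mathrm{loc}}(\xi)-J_{\std}]\dot\xi$ need not be in $V$, and your ``hence the condition reduces to $J_{\std}\dot\xi\in V$'' does not follow from what you wrote.

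The fix, which is what the paper does (somewhat tersely), is to apply the multiplier to something already in $V$ rather than to $\dot\xi$. First handle the $R_t$ term: since $\partial_t f_t|_{t=0,1}=0$, one has $J_t^{\mathrm{loc}}(\xi)R_t(\xi)\in W^{1,2}_0\subset V$, so $\eta:=J_t^{\mathrm{loc}}(\xi)\dot\xi\in V$. Now invert the almost complex structure and write
\[
J_{\std}\dot\xi \;=\; -J_{\std}J_t^{\mathrm{loc}}(\xi)\,\eta \;=\; (I+M)\eta,
\qquad M:=-J_{\std}J_t^{\mathrm{loc}}(\xi)-I.
\]
Because $J_t^{\mathrm{loc}}(\xi(i))=J_{\std}$ at $i=0,1$, the matrix $M$ lies in $W^{1,2}_0$, and multiplication by $M$ is bounded both on $W^{1,2}_{bc}$ (it sends the boundary values to zero) and on $L^2$; by interpolation it is bounded on $V$. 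Hence $J_{\std}\dot\xi=\eta+M\eta\in V$, which gives $\dot\xi_1\in H^{1/2}_{00}$ and $\dot\xi_2\in H^{1/2}$ as desired. The same manipulation run backwards shows the equivalence, so your chart does land in $H^{3/2}_{bc}$ and is a bijection onto an open set.
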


\begin{proof}
Let $ F_t = f_t \circ \phi_t^{-1} $ be as in equation \eqref{eq:locch}
and let $ \gamma \in \sU_{\alpha}$. 
The mapping $F_t^*= dF_t(\gamma(t))^{-1} $ induces the following isomorphisms
 \begin{align*}
  F_t^*=  H=L^2 ( [0,1], \R^{2n} )
  \rightarrow \sE^0_{\gamma} ,\;\; F_t^*: 
  W= W^{1,2}_{bc} ( [0,1], \R^{2n} ) \rightarrow \sE^1_{\gamma} 
 \end{align*}
Thus $ F_t^* $ induces the isomorphism 
\begin{align*}
 (F_t)^* : V=[ W,H]_{1/2} \rightarrow \sE^{1/2}_{\gamma}.
\end{align*}
Let $\hat{\gamma}(t)= F_t( \gamma(t)) = ( \xi_1(t), \xi_2 (t))  $, 
where $ \xi_1 $ denotes the first $n-$coordinates and $ \xi_2 $ the 
last $n-$coordinates. Then $ \hat{\gamma} $ satisfies 
$ \hat{\gamma}(i) \in \R^n \times \{0\}, \; i=0,1 $. 
Thus we have 
\begin{equation}\label{eq:xi12}
\xi_1 \in H^1([0,1], \R^n ), \;\; \xi_2 \in H^1_0( [0,1], \R^n ) 
\end{equation}
Denote by $ \hat{J}_t = (F_t)_* J_t $.
As $ \sS(\gamma) \in \sE^{1/2}_{\gamma} $ we have 
$ (F_t)_* ( J_t( \gamma) (\partial_t \gamma- X_{H_t} (\gamma)) \in V $. 
The following equalities hold
\begin{align}\label{eq:FT}
 &(F_t)_* \Big( J_t( \gamma) (\partial_t \gamma- X_{H_t} (\gamma)\Big) 
 = dF_t ( \gamma(t)) J_t ( \gamma(t)) \Big (\partial_t \gamma(t)- X_{H_t} ( \gamma)\Big) = \notag \\
 & (( F_t)_* J_t) ( \hat{\gamma}) \Big (dF_t ( \gamma) \partial_t \gamma- dF_t (\gamma) X_{H_t}\Big) = \notag \\
  &\hat{J}_t (\hat{\gamma}) \Big ( \partial_t \hat{\gamma} 
 - ( \partial_t F_t) ( \gamma) - df_t \cdot d\phi_t^{-1} \cdot X_{H_t} ( \gamma ) \Big)= \notag \\
 & \hat{J}_t ( \hat{\gamma}) 
 \Big ( \partial_t \hat{\gamma}  - ( \partial_t f_t )  \phi_t^{-1} (\gamma) -
 df_t ( \phi_t^{-1}(\gamma)) \partial_t \phi_t^{-1} ( \gamma) - df_t(\phi_t^{-1}(\gamma)) d\phi_t ^{-1}(\gamma) X_{H_t} (\gamma ) \Big )= \notag \\
 & \hat{J}_t ( \hat{\gamma}) \Big ( \partial_t \hat{\gamma} -( \partial_t f_t ) ( \phi_t^{-1} (\gamma) ) \Big ) \in V 
 \end{align}
 From Lemma \ref{le:phi} we have that $ \partial_t f_t = 0 $ for $ t=0,1 $, 
 this implies that 
 $$ 
 \hat{J}_t ( \hat{\gamma})( \partial_t f_t) ( \phi_t^{-1} (\gamma))\in H^1_0 ( [0,1],\R^{2n} ) \subset V 
 $$
 and from \eqref{eq:FT} it follows that 
 \begin{equation}\label{eq:loc}
  \hat{J}_t( \hat{\gamma}) \partial_t \hat{\gamma} \in [ W,H]_{1/2} = V= H^{1/2}( [0,1], \R^n ) \times H^{1/2}_{00} ( [0,1], \R^n )
 \end{equation}
  From Lemma \ref{le:phi} we have that 
 $ \hat{J}_t(x,0) = J_{\text{std}}, \;\; t=0,1 $ for $ x \in \R^n , \; \abs{x} < r $,
 together with \eqref{eq:loc} and \eqref{eq:int_space}
 this implies that 
$$
\partial_t \xi_1 \in [H^1_0, L^2]_{1/2}= H^{1/2}_{00}, \;\;\;
\partial_t \xi_2 \in [H^1, L^2]_{1/2}= H^{1/2} .
$$

Thus  we have $\hat{\gamma}\in  H^{3/2}_{bc}$. 
Notice also that if we consider smooth paths $ \alpha $ and $\beta$ 
and local charts $ \Phi_{\alpha} $ and $ \Phi_{\beta} $ given 
by \eqref{eq:locch}, then the transition map $\Phi_{\beta \alpha} = \Phi_{\beta} \circ \Phi_{\alpha}^{-1} $ 
is given by 
$$ 
\Phi_{\beta \alpha}(\xi)(t) = f_t' \circ f_t^{-1}(\xi(t)) 
$$
and $ f_t $ and $ f_t' $ are as in Lemma \ref{le:phi}. 
As these maps are diffeomorphisms it follows that the 
transition maps are also diffeomorphisms. 

\end{proof}
\medskip

\begin{PARA}[{\bf The Hilbert manifold of strips}]\label{para:hilbert_strips}\rm 
Let $ \sB^{\pm} (x) $ and $ \sB^T $ be defined as in 
 \ref{para:hil_strips}. We prove that they are infinite dimensional Hilbert 
 manifolds modelled on the following Hilbert spaces 
 \begin{equation}
  W^{2,2}_{bc} ( I \times [0,1]) = \left \{\xi \in W^{2,2} (I\times [0,1], \R^{2n})\bigg|
  \begin{array}{l}
   \xi(s,i) \in \R^n \times \{0\},\; i=0,1 \\
   \p_t \xi(s,i) \in \{0\} \times \R^n , \; i=0,1 
  \end{array}
\right\}
 \end{equation}
 where $ I= \R^{\pm} $ in the case of infinite strips
 and $ I= [-T,T] $ in the case of finite strips. 
 We also prove that restricting an element $ u \in \sB^{\pm} (x) $ to 
 the free (non Lagrangian) boundary we obtain an element of the Hilbert 
 manifold of paths $ \sP^{3/2} $. The next result extends Lemma 
 \ref{le:phi} to global strips.
 \end{PARA}
 \begin{lemma}\label{lem:chart_strips}
 Let $ u \in \sB^{+}(x) $ be a smooth strip
 such that $ u(s,t) = x(t) $ for $ s\geq s_0 $, for some large $s_0$. 
 There exits an open set $ U \subset \R^+ \times [0,1] \times M $ and 
 a smooth map $ f : U \to \R^{2n} $ such that the following holds: 
 \begin{itemize}
  \item [i)] $$ u(s,t) \in U_{s,t} := \{ p \in M | (s,t,p) \in U \} $$.
  \item[ii)] The mapping $ f_{s,t} = f(s,t,\cdot): U_{s,t} \to \R^{2n} $ is a diffeomorphism
  onto its image and it satisfies 
  \begin{equation}\label{eq:prop}
  \begin{split}
  & f_{s,i} ( U_{s,i} \cap L_i) \subset \R^n \times \{0\} , \;\; i =0,1 \\
   &df_{s,i} (q) J_i(q) = J_{std}\; df_{s,i}(q), \;\; q\in L_i\cap U_{s,i},\; i=0,1\\
   &\p_t f_{s,t} (p) + df_{s,t} (p) X_{H_t} (p) = 0, \;\; t\sim 0 \text{ and } t \sim 1 , \;\; p\in U_{s,t}
   \end{split}
  \end{equation}
   \item[iii)] Besides, the neighborhoods $ U_{s,t} $ and the mapping $ f_{s,t} $ 
 can be chosen to be $ s $ independent for large $s$. 
 \end{itemize}
 
\end{lemma}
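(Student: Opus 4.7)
The plan is to treat this as a parametrized version of Lemma \ref{le:phi}, using the additional parameter $s$, and to reduce to the case $H=0$ by naturality. Set $\tu(s,t) := \phi_t^{-1}(u(s,t))$, where $\phi_t$ is the Hamiltonian isotopy \eqref{eq:phiH}, and $\tL_0 := L_0$, $\tL_1:=\phi_1^{-1}(L_1)$, $\tJ_t := \phi_t^*J_t$. Because $x$ is a Hamiltonian path, $\phi_t^{-1}(x(t)) = x(0) =: p_0$ for every $t$, so the hypothesis $u(s,t) = x(t)$ for $s \geq s_0$ becomes the much stronger $\tu(s,t) \equiv p_0$ for $s \geq s_0$. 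The strategy is to first construct charts $g_{s,t}:\tU_{s,t}\to \R^{2n}$ for the Lagrangian pair $(\tL_0,\tL_1)$ satisfying the analogues of conditions (i)--(iii) of Lemma \ref{le:phi} but now with $s$ as an extra parameter, and then define $f_{s,t} := g_{s,t} \circ \phi_t^{-1}$.

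First I would carry out Step 1 of Lemma \ref{le:phi} in parametrized form: construct a smooth trivialization $e:\R^+\times[0,1]\times\R^{2n}\to \tu^*TM$, each $e_{s,t}$ being a $\tJ_t$-complex, symplectic, vector space isomorphism with $e_{s,i}(\R^n\times\{0\}) = T_{\tu(s,i)}\tL_i$ for $i=0,1$. To obtain smooth dependence on $s$, I would first trivialize along $s\mapsto \tu(s,0)$ by any smooth choice of unitary frames adapted to $T_{\tu(s,0)}\tL_0$, then propagate in $t$ at each fixed $s$ by the ODE \eqref{eq:un_fr} using a smooth family of metrics $h_{s,t}$ with $\tL_i$ totally geodesic for $h_{s,i}$, and finally multiply by a smooth $s$-family of unitary correction matrices $U_s(t)$ with $U_s(0)=\Id$ and $U_s(1)$ mapping $\R^n\times\{0\}$ onto $e_{s,1}^{-1}(T_{\tu(s,1)}\tL_1)$, available by connectedness of the unitary group. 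For $s\geq s_0$, since $\tu\equiv p_0$ is constant, every choice in the construction can be taken $s$-independent. Next, Steps 2 and 3 of Lemma \ref{le:phi} go through verbatim at each $s$ with smooth dependence on $s$: use the exponential map of $h_{s,t}$ and the map $\tPsi_{s,t}$ to produce charts $g_{s,t}:\tU_{s,t}\to \R^{2n}$ sending $\tL_i\cap \tU_{s,i}$ into $\R^n\times\{0\}$ and standardizing $\tJ_i$ there. Finally, precomposing with a cut-off in the $t$-variable (as in Step 4 of Lemma \ref{le:phi}) yields $\p_t g_{s,t}(q) = 0$ for $t$ in a neighborhood of $\{0,1\}$, and once $s\geq s_0$ the entire family is $s$-independent.

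It remains to define $f_{s,t}(p) := g_{s,t}(\phi_t^{-1}(p))$ on $U_{s,t} := \phi_t(\tU_{s,t})$ and verify (i)--(iii). Properties (i) and the first two bullets of (ii) follow directly from the corresponding properties of $g_{s,t}$ together with $\phi_t(\tL_1) = L_1$ at $t=1$ (and $\phi_0=\id$ at $t=0$) and $\phi_t^*J_t = \tJ_t$. For the Hamiltonian condition, the short calculation using $\p_t\phi_t^{-1}(p) = -d\phi_t^{-1}(p)X_{H_t}(p)$ (obtained by differentiating $\phi_t(\phi_t^{-1}(p))=p$) gives
\[
\p_t f_{s,t}(p)+df_{s,t}(p)X_{H_t}(p) = (\p_t g_{s,t})(\phi_t^{-1}(p)),
\]
which vanishes for $t$ near $0$ and near $1$ by the cut-off step above. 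Property (iii) then holds because the whole construction was arranged to be $s$-independent for $s\geq s_0$, and $\phi_t^{-1}$ does not depend on $s$.

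The one nontrivial point is the smooth and $s$-uniform construction of the trivialization in Step 1: the ODE \eqref{eq:un_fr} must be run in a way that is compatible with the fact that for $s\geq s_0$ the base point does not move, so that the $s$-independent solution on $[s_0,\infty)$ glues smoothly to the $s$-dependent solution on $[0,s_0]$. This is readily arranged by initial conditions at $s=0$ and by taking the unitary correction $U_s(t)$ to be constant in $s$ on $[s_0,\infty)$, which is possible because $T_{\tu(s,1)}\tL_1$ is constant there. This is the only step whose parametrized nature is not completely routine; everything else is an $s$-parametrized repetition of the arguments already given for the path case.
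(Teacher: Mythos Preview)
Your proposal is correct and follows the same overall strategy as the paper: reduce to $H=0$ by naturality via $\tu=\phi_t^{-1}\circ u$, build charts $g_{s,t}$ for the pair $(\tL_0,\tL_1)$ as an $s$-parametrized version of Lemma~\ref{le:phi}, and then set $f_{s,t}=g_{s,t}\circ\phi_t^{-1}$; the verification of the Hamiltonian condition via $\p_t f_{s,t}+df_{s,t}X_{H_t}=(\p_t g_{s,t})\circ\phi_t^{-1}$ is exactly the computation the paper uses elsewhere (cf.\ \eqref{eq:FT}).

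The one place where you and the paper differ is in how the trivialization of $\tu^*TM$ is produced. You trivialize first along $s\mapsto\tu(s,0)$ and then run the $t$-ODE \eqref{eq:un_fr} at each fixed $s$, adjusting by a smooth family $U_s(t)$; the paper instead builds the trivialization once over the constant region $s\ge s_0$ (where $\tu\equiv p_0$) using Lemma~\ref{le:phi} for the constant path, and then extends backward in $s$ by parallel transport along $\tu$ with respect to the complex connection $\tnabla=\nabla-\tfrac12\tJ_t\nabla\tJ_t$. Both are valid. The paper's order has the small advantage that $s$-independence for large $s$ and the smooth gluing at $s=s_0$ are automatic (you simply start from the $s$-independent data and transport), and the Lagrangian boundary condition at $t=0,1$ is preserved because $\tL_i$ are totally geodesic for $\tnabla^i$. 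Your order is closer to a literal ``Lemma~\ref{le:phi} with parameter'' and requires the bookkeeping you flag in your last paragraph (smooth $s$-dependence of $U_s$ and matching at $s_0$), which you handle correctly.
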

\begin{proof} 
The boundary conditions \eqref{eq:bound_data} include 
Hamiltonian vector field. We shall first use naturality to reduce this equation 
to an equation without the Hamiltonian term. Let $ \phi_t $ be the Hamiltonian isotopy 
\eqref{eq:phiH}. Let
\begin{align*}
 & \tu(s,t) = \phi_t^{-1} ( u(s,t) ),  \; \tJ_t = (\phi_t)^* J_t ,\\
 & \tL_i = \phi_i^{-1} ( L_i ),\; i=0,1 
\end{align*}
Notice that $ \tu(s,t) $ satisfies the following boundary conditions
\begin{align*}
 &\tu(s,i) \in \tL_i, \;\; i=0,1 \\
 & \tJ_i( \tu(s,i)) \p_t \tu(s,i) \in T_{\tu(s,i)} \tL_i , \; i=0,1.
\end{align*}
We construct an open set $ \tU \subset \R^+ \times [0,1] \times M $ 
such that 
$$
\tu(s,t) \in \tU_{s,t} := \{ p \in M | ( s,t,p) \in  \tU \} 
$$
and a smooth map $ \tf: \tU \to \R^{2n} $ such that 
$$
\tf_{s,t} (\cdot):= \tf(s,t, \cdot): \tU_{s,t} \to \R^{2n}
$$
is a diffeomorphism onto its image and it satisfies the 
following properties
\begin{equation}
 \begin{split}
 & \tf_{s,i} ( \tU_{s,i} \cap \tL_i ) \subset \R^n \times \{0\} , \\
 & (\tf_{s,i})_* \tJ_{s,i} ( x,0) = J_{\text{std}}, \;\; (x,0) \in \R^n \times \{0\} \cap \tf_{s,0} ( \tU_{s,0}) \\
 & \p_t \tf_{s,t} (p) = 0, \;\; \text{ for } \; t \sim 0 \text{ and } t \sim 1. 
 \end{split}
\end{equation}
The open neighborhoods  $ \tU_{s,t} $ can be chosen to be 
$ s$ independent for large $s$ as well as the mapping $ \tf_{s,t} $. 
The construction of the maps $ \tf_{s,t} $ satisfying the above properties 
is analogous to the construction of the map $ f_t $ in Lemma \ref{le:phi}. 
For this reason we shall only sketch the construction of the maps $ \tf_{s,t} $. 
We first construct a trivialization $ e_{\tu} $ of the bundle $ \tu^* TM $ such that 
$ e_{\tu} (s,t) : \R^{2n} \to T_{\tu(s,t)} M $ satisfies
\begin{equation}\label{eq:utriv}
 \begin{split}
  &e_{\tu} (s,i) ( \R^n \times \{0\} ) = T_{\tu(s,0)} \tL_i, \;\; i=0,1 \\
  &e_{\tu}(s,t) \circ J_{\std} = \tJ_{t} ( \tu(s,t)) \circ e_{\tu} ( s,t). 
 \end{split}
 \end{equation}
 Notice that the smooth curve $ \tu(s,t) = x(0), \; s\geq s_0 $.
 Construct as in Step 1 of Lemma \ref{le:phi}, 
 for a reference curve $ \alpha(t) = x(0) $, a trivialization 
 $$ 
 e_{\tu}(s,t) : \R^{2n} \to T_{x(0)} M = T_{\tu(s,t)} M \text{ for }  s\geq s_0 . 
 $$
 This trivialization satisfies properties \eqref{eq:utriv} for $ s\geq s_0 $
 and is $ s $ independent. 
 Next extend the trivialization by parallel transport along $ \tu(s,t) $. 
 More precisely we define 
 $$
 e_{\tu} (s,t) v := P_s( \tu(s,t), x(0)) e_{\tu} ( T_0,t) v ,
 $$
 where $ P_s( \tu(s,t), x(0)) $ denotes parallel transport in $ s $ direction 
 along $ \tu $ from the point $ x(0) = \tu(T_0,t) $ to the point $ \tu(s,t)$. 
 This parallel transport should be taken with respect to the connection 
 $ \widetilde{\nabla}: = \nabla - \frac{1}{2} \tJ_t \nabla \tJ_t $
 and $ \nabla=\nabla^t $ is a Levi-Civita connection of the metric $ h_t $
 as in Lemma \ref{mon_lem7}. Such parallel transport has the property
 $$
 P_s( \tu(s,t), x(0)) J_t(x(0))v = J_t( \tu(s,t)) P_s(\tu(s,t), x(0)) v, \;\;\; \forall v. 
 $$
 The trivialization $ e_{\tu} (s,t) $ obtained in this way satisfies 
 the requirements \eqref{eq:utriv}. 
 
Let $ r_{s,t} $ be the injectivity radius of the metric $ h_t $ at the 
point $ \tu(s,t) $. We define a mapping 
$ \psi_{s,t}' : B_{r_{s,t}}(0) \to B_{r_{s,t}} (\tu(s,t))= U_{s,t}'$
by 
$$ \psi_{s,t}' ( \xi ) = \text{exp}_{\tu(s,t)} ( e_{\tu}(s,t) \xi ) .$$
This mapping is obviously a diffeomorphism onto its image. Let $ \phi_{s,t}' $ 
be its inverse. Let $ J_{s,t}' = ( \phi_{s,t}')_* \tJ_t $. 
It follows from the properties \eqref{eq:utriv} of 
the trivialization and metric $ g_t $ that 
\begin{align*}
 \phi_{s,i}' ( \tL_i \cap U_{s,t}') \subset \R^n \times \{0\} , \;\; i=0,1 \\
  J_{s,t}'(0) = J_{std}.
\end{align*}
Analogously as in Lemma \ref{le:phi} in Step 3, we can 
make $ J_{s,t}' $ standard on $ \R^n \times \{0\} $
by composing with an appropriate isomorphism $ \tPhi_{s,t} $.
Thus, the rest of the construction is completely analogous to 
the construction of the maps $ f_t $ in Lemma \ref{le:phi}
and the mapping $ \tf_{s,t} $ is given as a composition 
of $ \phi_{s,t}' $ and $ \tPhi_{s,t} $. 
Finally the mapping $ f_{s,t} : U_{s,t} = \phi_t ( \tU_{s,t} )  \to \R^{2n} $
is given by 
$$ f_{s,t} ( p) = \tf_{s,t} ( \phi_t^{-1} (p)) $$
and it satisfies all the properties \eqref{eq:prop}. 
\end{proof}
\begin{definition}\label{cor:chart_strips}
Let $ u \in \sB^+(x) $ and $ f_{s,t}:U_{s,t} \to \R^{2n}$ 
be as in Lemma \ref{lem:chart_strips}.
Define $ \sU_u \subset \sB^+ (x) $ 
and $ \Phi_u : \sU_u \to W^{2,2}_{bc} ( \R^+ \times [0,1] ) $ by 
\begin{align*}
 &\sU_u = \{ v\in \sB^+ (x) |  v(s,t) \in U_{s,t} \} \\
 &\Phi_u (v) (s,t) = f_{s,t} (v(s,t))
\end{align*}
Then $ \Phi_u :\sU_u \to W^{2,2}_{bc} ( \R^+ \times [0,1] ) $ 
is a coordinate chart on $ \sB^+(x) $. This defines a Hilbert manifold structure 
on $ \sB^+(x) $. 
\end{definition}

\begin{proposition}\label{pr:rest_bdy}
Let $ \sB^+ (x) $ be defined as in \ref{para:hil_strips}
and let $ \sP^{3/2}(H,J) $ be as in \ref{para:hil_P3/2}. 
Observe the restriction to the non-Lagrangian boundary
$$ 
\sR: \sB^+(x) \to \sP^{3/2} (H,J), \qquad v \mapsto v(0,\cdot) .
$$
The mapping $ \sR $ is a smooth surjective submersion. 
\end{proposition}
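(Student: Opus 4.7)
The plan is to reduce the statement to the linear trace theorem between anisotropic Sobolev spaces via compatible coordinate charts on $\sB^+(x)$ and $\sP^{3/2}(H,J)$. Fix $u\in\sB^+(x)$ and set $\gamma:=u(0,\cdot)$. Using Lemma \ref{lem:chart_strips}, produce a family $f_{s,t}\colon U_{s,t}\to\R^{2n}$ giving the chart $\Phi_u\colon\sU_u\to W^{2,2}_{bc}(\R^+\times[0,1])$ from Definition \ref{cor:chart_strips}. Inspecting the three conditions in \eqref{eq:prop}, one checks that the restriction $t\mapsto f_{0,t}$ satisfies precisely the hypotheses required of the family $f_t$ in Lemma \ref{le:phi} with reference path $\alpha=\gamma$; hence it defines a chart $\Phi_\gamma\colon\sU_\gamma\to H^{3/2}_{bc}$ on $\sP^{3/2}(H,J)$ around $\gamma$. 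By construction these charts are compatible in the sense that $\Phi_\gamma(\sR(v))(t)=f_{0,t}(v(0,t))=\Phi_u(v)(0,t)$, so in local coordinates $\sR$ is represented by the linear trace operator
\begin{equation*}
\tau\colon W^{2,2}_{bc}(\R^+\times[0,1])\longrightarrow H^{3/2}_{bc},\qquad
\tau(\xi)(t):=\xi(0,t).
\end{equation*}

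For smoothness, note that under the identification of \ref{para:hil_striplike} we have $W^{2,2}_{bc}(\R^+\times[0,1])\cong\cW^2(\R^+)$ with $H^0=L^2([0,1])$ and $H^1=W^{1,2}_{bc}([0,1])$, while $H^{3/2}_{bc}$ is (by the very definition \eqref{eq:h3.2bc}) the $\tfrac12$--interpolation space between $W^{2,2}_{bc}([0,1])$ and $W^{1,2}_{bc}([0,1])$. The abstract trace theorem from the appendix (Theorem \ref{apthm1}) identifies the trace of $\cW^2(\R^+)$ at $s=0$ with exactly this interpolation space. Hence $\tau$ is bounded, and being bounded and linear it is automatically $C^\infty$; the transition computations in Lemma \ref{lem:loc_path} then show that $\sR$ is smooth as a map of Hilbert manifolds.

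It remains to see that $\tau$ (and therefore $d\sR(u)$) is a split surjection, and that $\sR$ itself is onto. Given $\eta\in H^{3/2}_{bc}$, the standard semigroup extension $\xi(s,\cdot):=\beta(s)\,e^{-s|A|}\eta$, where $A$ is a self-adjoint bijective realization of $J_0\p_t$ on $L^2([0,1],\R^{2n})$ with domain $W^{1,2}_{bc}([0,1])$ and $\Dom(A^2)=W^{2,2}_{bc}([0,1])$ and where $\beta\in C^\infty_c([0,\infty))$ with $\beta\equiv1$ near $0$, lies in $W^{2,2}_{bc}(\R^+\times[0,1])$ by Step~3 of the proof of Theorem \ref{apthm1}, has compact support in $s$, and satisfies $\tau(\xi)=\eta$. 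This realizes $\tau$ as a bounded split surjection, so $d\sR(u)$ is surjective with closed (hence complemented) kernel. For global surjectivity, given any $\gamma\in\sP^{3/2}(H,J)$ choose a smooth reference $u\in\sB^+(x)$ with $u(0,\cdot)$ close to $\gamma$ and $u\equiv x$ for $s\geq s_0$; applying the above extension to $\eta:=\Phi_\gamma(\gamma)$ produces a compactly $s$--supported $\xi$, and pulling back through $\Phi_u^{-1}$ yields $v\in\sB^+(x)$ with $v(0,\cdot)=\gamma$ and $v\equiv x$ outside a compact set, as required. The main obstacle is the identification of the trace space with $H^{3/2}_{bc}$ carrying the correct corner behavior at $(0,0)$ and $(0,1)$: the pointwise condition $\p_t\xi(s,i)\in\{0\}\times\R^n$ built into $W^{2,2}_{bc}$ must translate into the Lions--Magenes condition that the second half of $\p_t(\tau\xi)$ lies in $H^{1/2}$ while the first half lies in the non-self-dual space $H^{1/2}_{00}$, and conversely that extensions of elements in $H^{3/2}_{bc}$ automatically acquire this corner regularity. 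This is precisely what Lions--Magenes interpolation gives, and pinning down this correspondence is the technical heart of the proof.
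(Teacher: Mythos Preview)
Your proof is correct and follows essentially the same route as the paper: pass to the charts $\Phi_u$ on $\sB^+(x)$ and $\Phi_\gamma$ on $\sP^{3/2}$, so that $\sR$ becomes the linear trace map $W^{2,2}_{bc}(\R^+\times[0,1])\to H^{3/2}_{bc}$, and then invoke the trace theorem. The paper cites this trace result as Proposition~\ref{pr:trace} (a corollary of Lions--Magenes), whereas you spell out an explicit right inverse via the semigroup $e^{-s|A|}$; for global surjectivity the paper argues that the image of $\sR$ is open and contains the dense set of smooth paths, while you give a direct extension argument --- these are equivalent.

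Two small technical slips to clean up. First, the chart $\Phi_u$ from Definition~\ref{cor:chart_strips} is built around a \emph{smooth} $u$ with $u(s,\cdot)=x$ for large $s$, so when you write ``Fix $u\in\sB^+(x)$'' you should either take $u$ smooth from the start or note that such charts cover $\sB^+(x)$; since in coordinates $\sR$ is literally the linear map $\tau$ throughout the chart domain, $d\sR$ is surjective at every point of that domain, not just the center. Second, the operator $J_0\p_t$ with domain $W^{1,2}_{bc}([0,1])$ is \emph{not} bijective (its kernel is the $n$-dimensional space of constant paths in $\R^n\times\{0\}$), so it cannot serve as your $A$. The correct choice is the conjugated operator $A=\Psi^{-1}(i\p_t)\Psi$ from the appendix (\S\ref{subsec_h32}), which is bijective, self-adjoint for a suitable inner product, and has $\Dom(A)=W^{1,2}_{bc}$, $\Dom(A^2)=W^{2,2}_{bc}$; with that $A$ your semigroup extension works as written.
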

\begin{proof}
 Let $ u \in \sB^+(x), \; \Phi_u $ and $ \sU_u $ be as in Definition \ref{cor:chart_strips}. 
 Denote with $ \alpha $ the smooth path $ \alpha(t) = u(0,t) $. 
 In a neighborhood $ \sU_u $ the map $ \sR $ is given by 
 $$ 
 \sR = \Psi_{\alpha} \circ r \circ \Phi_u,
 $$
 where $ \Psi_{\alpha} $ is the inverse of the local chart 
 $ \Phi_{\alpha} $ constructed in Lemma \ref{lem:loc_path} 
 and 
 $$ 
 r : W^{2,2}_{bc} ( \R^+\times [0,1],\R^{2n} ) \to H^{3/2}_{bc} 
 $$
 is a restriction map $r(\xi) = \xi(0,\cdot) $. 
 As $ d\sR =  d\Psi_{\alpha} \circ r \circ d\Phi_u $
 and both $ d\Psi_{\alpha} $ and $d\Phi_u $ are 
 bijective it follows from Proposition \ref{pr:trace} that $\sR $ is submersion. 
 Surjectivity of the map $ \sR $ follows again from Proposition \ref{pr:trace}.
 As the mapping $ r$ is surjective and $ \Phi_u $ and $ \Psi_{\alpha} $ 
 are diffeomorphisms, it follows that the mapping $ \sR $ maps 
 a neighborhood of a smooth map $ u \in \sB^+(x) $ 
 onto a neighborhood of $ \alpha = u(0, \cdot) $. As the set of smooth 
 $ \alpha = u(0,t) $ is dense in $ \sP^{3/2} $ we have that the mapping $ \sR $
 is also surjective. 
\end{proof}

\section{Proof of the Theorem \ref{cor:main_thm3}}\label{SEC:prof_main_thm3}
  
In this section we prove Theorem \ref{cor:main_thm3}.
The proof is based on the study of the vertical differential 
and main ingredients of the proof are already contained in 
the previous chapter. 

\begin{PARA}[{\bf Vertical differential}]\label{para:ver_diff} \rm 
Let $ \sB= \sB^{\pm} (x) $ or $ \sB = \sB^T $, where 
$ \sB^{\pm} (x)$ and $ \sB^T $ are defined in \ref{para:hil_strips}. 
Let $ \sE $ be a Hilbert space bundle over $ \sB$ with 
the fiber over each $ v \in \sB $, $ \sE_v = W^{1,2}_{bc} ( v^* TM) $, where
$$
W^{1,2}_{bc} ( v^* TM)= \{ \xi \in W^{1,2}(v^* TM) |\xi(s,i) \in T_{v(s,i)} L_i, \;\; i=0,1 \}. 
$$
Observe a section $ \sS : \sB \to \sE $ 
of this bundle given by 
$$
\sS(v) = \dbar_{J_t,X_{H_t}} v = 
\p_s v + J_t(v) ( \p_t v - X_{H_t} (v)) .
$$
Given $ u \in \sS^{-1} (0) $ denote by $ D_u $ 
the {\bf vertical differential} 
 $$
 D_u = \pi_u \circ d\sS(u): T_u \sB \to \sE_u 
 $$
 where $\pi_u : T_{(u,0)} \sE=T_u \sB \oplus \sE_u \rightarrow \sE_u  $
 is the projection to the fiber. The vertical differential 
 is given by
 \begin{equation}
  D_u ( \hat{u}) = \nabla_s \hat{u} + J_t(u) ( \nabla_t \hat{u} - \nabla_{\hat{u}} X_{H_t} (u) )
  + (\nabla_{\hat{u}} J_t(u)) ( \p_t u - X_{H_t} (u))
 \end{equation}
 and it is independent of the choice of connection. 
 Notice that if we take $ \nabla= \nabla^t $ to be 
 the Levi-Civita connection of the metric $h_t $ 
 which has the property that $L_i, i=0,1 $ are totally 
 geodesic with respect to $h_i$, then the tangent space 
 $T_u \sB $ can be described as follows
 \begin{equation}\label{eq:tan_sp}
  T_u \sB := \left\{ \hat{u} \in W^{2,2} ( u^* TM) \Big|
  \begin{array}{l}
  \hat{u}(s,i) \in T_{u(s,i)} L_i, \;\; i=0,1 \\
   D_u ( \hat{u}(s,i)) \in T_{u(s,i)} L_i, \;\; i=0,1
  \end{array}\right\}.
 \end{equation}

\end{PARA}

\begin{PARA}[{\bf Linearized operator at infinity}]\label{linop_inf}\rm 
Let $ x $ be a solution of \eqref{eq:CRIT}, we observe 
the vertical differential at $x$ and we prove that it is symmetric. 
One can analogously as in Lemma 2.3 in \cite{RS3} 
prove that it is bijective. 
\begin{theorem}\label{thm:lin_infty}
 Denote with $ g_t $ a metric that we obtain by 
pairing $J_t $ and $ \omega $. 
$$ 
g_t(p)(v,w)= \om_p(v, J_t(p) w )= \langle v, w \rangle_t 
$$ 
Let $ x :[0,1] \to M $ be a solution of \eqref{eq:CRIT}. 
The operator 
$$ 
A : W^{1,2}_{bc} ( x^* TM) \to L^2 (x^* TM) 
$$ 
given by 
\begin{align*}
 A(\hx) = J_t(x) ( \nabla_t \hx - \nabla_{\hx} X_{H_t}) 
\end{align*}
is symmetric with respect to the following scalar product
$$
 \langle \xi, \eta \rangle_{L^2} = \int_0^1 g_t( \xi(t), \eta(t)) dt 
 =  \int_0^1 \langle \xi(t), \eta(t)\rangle_t dt. 
$$
\end{theorem}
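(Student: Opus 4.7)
The plan is to verify directly that
\begin{equation*}
\langle A\xi,\eta\rangle_{L^2}-\langle\xi,A\eta\rangle_{L^2}=0
\end{equation*}
for all $\xi,\eta\in W^{1,2}_{bc}(x^*TM)$, by splitting $A$ into its $\nabla_t$-part and its Hamiltonian part, and handling each separately. The computations become transparent once we exploit compatibility of $J_t$ with $\om$ (which gives $g_t(J_t v,w)=\om(v,w)$ and $J_t^2=-\one$), and choose a connection $\nabla$ on $TM$ preserving $\om$ (equivalently, trivialize $x^*TM$ by a parallel symplectic frame so that pulled-back $\om$ is constant and $\nabla_t$ reduces to $\p_t$).

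First I would deal with the Hamiltonian piece. A short computation using the compatibility identity $g_t(J_tv,w)=\om(v,w)$ reduces
\begin{equation*}
g_t(J_t\nabla_\xi X_{H_t},\eta)-g_t(\xi,J_t\nabla_\eta X_{H_t})
\end{equation*}
to $\om(\nabla_\xi X_{H_t},\eta)-\om(\nabla_\eta X_{H_t},\xi)$. Differentiating the defining identity $\iota(X_{H_t})\om=dH_t$ and using $\nabla\om=0$ shows
\begin{equation*}
\om(\nabla_v X_{H_t},w)=\mathrm{Hess}(H_t)(v,w),
\end{equation*}
which is symmetric in $v,w$, so this contribution vanishes pointwise.

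Next, for the $\nabla_t$-part, the same compatibility identities give
\begin{equation*}
g_t(J_t\nabla_t\xi,\eta)-g_t(\xi,J_t\nabla_t\eta)=\om(\nabla_t\xi,\eta)+\om(\xi,\nabla_t\eta)=\tfrac{d}{dt}\om(\xi(t),\eta(t)),
\end{equation*}
where the last equality uses that $\nabla$ preserves $\om$. Integrating over $[0,1]$ leaves only the boundary term $\om(\xi(1),\eta(1))-\om(\xi(0),\eta(0))$. By the defining boundary condition for $W^{1,2}_{bc}(x^*TM)$ we have $\xi(i),\eta(i)\in T_{x(i)}L_i$, and since $L_i$ is Lagrangian, $\om$ vanishes on $T_{x(i)}L_i\times T_{x(i)}L_i$, so the boundary term is zero. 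Adding the two contributions gives the symmetry claim.

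The only mildly delicate point is the choice of connection: the Levi-Civita connection of the time-dependent metric $g_t$ need not preserve $\om$, so I would either work with a symplectic connection on $TM$, or (equivalently and more concretely) trivialize $x^*TM$ by a unitary frame as in Step~1 of Lemma~\ref{le:phi} so that the pulled-back $\om$ becomes $\om_0$ on $\R^{2n}$, the pulled-back $J_t$ satisfies $J_t^T\om_0 J_t=\om_0$, and $\nabla_t$ reduces to $\p_t$ in the trivialization. In that model the statement becomes the standard fact that $A=J_0\p_t+S(t)$ with $S(t)$ symmetric (namely $S=-J_0\cdot\nabla X_{H_t}$, which the Hessian computation shows is $g_t$-symmetric) is self-adjoint on the domain $\{\xi\in W^{1,2}:\xi(i)\in\R^n\times\{0\}\}$, the boundary terms from integration by parts vanishing because $\R^n\times\{0\}$ is Lagrangian for $\om_0$. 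This is the only substantive step, and it is essentially a bookkeeping exercise once the trivialization is fixed.
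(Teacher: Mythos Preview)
Your argument is correct and takes a genuinely different route from the paper's proof.

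The paper works directly with the Levi--Civita connection of the $t$-dependent metric $g_t$ and integrates by parts in the $g_t$-inner product. This produces a string of cross terms involving $\dot g_t$, $\dot J_t$, and $\nabla_{\dot x}J_t$, $\nabla_{\hat x}J_t$, $\nabla_{\hat y}J_t$; the boundary terms vanish by the Lagrangian condition, and the remaining three $\nabla J$-terms are shown to cancel via the cyclic identity for $\nabla J$ that comes from $\om$-compatibility (the paper appeals to the appendix of \cite{MS} for this). Your approach instead exploits the freedom to change the torsion-free connection: since $\dot x=X_{H_t}(x)$, the two connection-dependent terms $\nabla_t\hat x$ and $\nabla_{\hat x}X_{H_t}$ change by the same symmetric expression $\Gamma(\dot x,\hat x)=\Gamma(\hat x,X_{H_t})$, so $A$ is independent of the choice of torsion-free connection. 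Choosing one with $\nabla\om=0$ then makes both halves transparent: the $\nabla_t$-part becomes $\tfrac{d}{dt}\om(\xi,\eta)$ and integrates to a Lagrangian boundary term, and the Hamiltonian part becomes the symmetric Hessian $\om(\nabla_v X_{H_t},w)=\mathrm{Hess}(H_t)(v,w)$.

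Your route is shorter and isolates the two mechanisms (Lagrangian boundary condition, Hessian symmetry) cleanly, at the cost of invoking the existence of a torsion-free symplectic connection and the connection-independence of $A$; the paper's route is self-contained with the given Levi--Civita connection but requires tracking and cancelling several extra terms. The one point you should make explicit rather than leave implicit is the connection-independence of $A$, since without it the switch to a symplectic connection is not justified; once that sentence is added, the proof is complete.
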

\begin{proof}
It is enough to prove that 
$$ 
\int_0^1 \langle \hy,J_t(x) ( \nabla_t \hx - \nabla_{\hx} X_{H_t})\rangle_t=
\int_0^1 \langle J_t(x) (\nabla_t \hy - \nabla_{\hy} X_{H_t} ) , \hx\rangle_t  
$$
for all vector fields $ \hx,\hy\in W^{1,2}_{bc} ( x^* TM)  $ . 
Denote with $ \dot{g}_t (p)(v,w) = \omega(p)(v, \dot{J}_t(p) w ) $, where 
$\dot{J}_t(p)=\frac{d}{dt}J_t(p)$. 
\begin{align*}
 I&= \int_0^1 \langle \hy, J_t(x) ( \nabla_t \hx - \nabla_{\hx} X_{H_t}) \rangle_t dt \\
  &=  \int_0^1 \langle \hy, J_t(x) \nabla_t \hx \rangle_t dt -
	\int_0^1 \langle \hy,J_t(x) \nabla_{\hx}X_{H_t}\rangle_t dt \\
  &= 	-\int_0^1\langle J_t(x) \hy, \nabla_t \hx \rangle_t  - 
      \int_0^1 \langle \hy,J_t(x) \nabla_{\hx}X_{H_t}\rangle_t dt \\
  &=
      \int_0^1\langle \nabla_t ( J_t(x) \hy), \hx \rangle_t -  
      \overbrace{\langle J_1(\hy(1)), \hx(1)\rangle_1}^{bc=0}\\
      & + \overbrace{\langle J_0(\hy(0)), \hx(0)\rangle_0}^{bc=0} + \int_0^1 \dot{g}_t ( J_t(x) \hy, \hx)
      -\int_0^1 \langle \hy,  J_t(x)\nabla_{\hx}X_{H_t}\rangle_t dt\\
  &= \int_0^1\langle \dot{J}_t(x) \hy, \hx \rangle_t dt +
      \int_0^1 \langle (\nabla_{\dot{x}} J_t)\hy, \hx\rangle_t dt  +
      \int_0^1 \langle J_t(x) \nabla_t \hy, \hx\rangle_t\\
      & + \int_0^1 \dot{g}_t ( J_t(x) \hy, \hx)
      -\int_0^1 \langle \hy, J_t(x) \nabla_{\hx}X_{H_t}\rangle_t dt
\end{align*}
 Notice that the first and the fourth term of 
 the previous equality cancel out. This follows by 
 differentiating the sum $ 0= \frac{d}{dt}(\om(w, J_t(p) v ) + \om ( v, J_t(p) w )) $
Thus we have 
\begin{align*}
 I&= \int_0^1 \langle (\nabla_{\dot{x}} J_t)\hy, \hx\rangle_t dt  +
 \overbrace{\int_0^1 \langle J_t(x) (\nabla_t \hy-\nabla_{\hy} X_{H_t}), \hx \rangle_t dt}^J
 \\&+ \int_0^1\langle J_t(x) \nabla_{\hy}X_{H_t}, \hx\rangle_t dt -
 \int_0^1 \inner{\hy}{J_t\nabla_{\hx} X_{H_t}}_t
\end{align*}
Write
$$
 \int_0^1 \langle J_t(x)\nabla_{\hy}X_{H_t}, \hx\rangle_t dt =
  \int_0^1 \langle \nabla_{\hy} ( J_t X_H) , \hx\rangle_t -
  \int_0^1 \langle (\nabla_{\hy} J_t) X_{H_t} , \hx\rangle,
  $$
and similarly 
$$\int_0^1 \langle \hy, J_t \nabla_{\hx} X_{H_t} \rangle_t dt = 
\int_0^1 \langle \hy, \nabla_{\hx} (J_t X_H) \rangle_t dt- \int_0^1 \langle \hy, (\nabla_{\hx} J_t) X_{H_t} \rangle_t .
$$
As $ J_t X_{H_t} = \nabla H $, we have that 
$$ 
\langle \nabla_{\hy} \nabla H, \hx \rangle_t= \langle \hy, \nabla_{\hx} \nabla H \rangle_t.
$$
Thus it follows that 
\begin{align*}
I= J + \int_0^1 \langle  (\nabla_{\dot{x}} J_t) \hy, \hx \rangle_t dt -
\int_0^1 \langle ( \nabla_{\hy} J_t ) X_{H_t}, \hx \rangle_t + \int_0^1 \langle \hy, (\nabla_{\hx} J_t) X_{H_t} \rangle_t dt 
\end{align*}
As $ \nabla_{\hy} J_t $ is skew symmetric it follows that
$$ 
-\int_0^1 \inner {(\nabla_{\hy} J_t)\dot{x} }{\hx}_t = \int_0^1 \inner {(\nabla_{\hy} J_t) \hx}{ \dot{x}}_t.
$$
Now the sum 
\begin{equation}
\int_0^1 \langle  (\nabla_{\dot{x}} J_t) \hy, \hx \rangle_t dt + \int_0^1 \inner {(\nabla_{\hy} J_t) \hx}{ \dot{x}}_t + \int_0^1 \langle (\nabla_{\hx} J_t) \dot{x}, \hy \rangle_t dt 
\end{equation}
is equal to zero and this follows from 
the compatibility of $\omega $ and $ J_t$.
 For more details we refer to the Appendix  in 
 \cite{MS}.
\end{proof}
\end{PARA}

\begin{lemma}[{\bf Unitary trivialization}]\label{lem:triv1}
Let $u\in\sB^+(x) $ be smooth such that $ u(s,t) = x(t), s\geq s_0 $. 
There exists an open set $ U \subset \R^+ \times [0,1] \times M $ 
such that 
$$
u(s,t) \in U_{s,t} = \{ p \in M | (s,t,p)\in V \} 
$$
and a smooth map 
$ \Phi: U \times \R^{2n} \to TM $ 
such that $ \Phi_{s,t} = \Phi( s,t,\cdot) $ 
has the following properties:
\begin{itemize}
 \item[i)] $\Phi_{s,t}(p) : \R^{2n} \to T_p M, \;\; p \in U_{s,t} $ 
	    is a vector space isomorphism. 
 \item[ii)] $\Phi_{s,t} (p) $ is complex, i.e. 
	  $$ 
	    \Phi_{s,t}(p) J_0 = J_t(p) \Phi_{s,t}(p) ,
	    \;\; p \in V_{s,t}.
	    $$
  \item[iii)] $ \Phi_{s,i}(q) : \R^n \times \{0\} \to T_q L_i, \;\; q \in L_i \cap U_{s,i}, \; i=0,1$
  \item[iv)] The mapping $ \Phi_{s,t} $ is $ s$ independent for 
     $s$ sufficiently large, thus  $\Phi_{s,t} = \Phi_t$ for $ s $ sufficiently large. 
   \item[v)] $\Phi_t(x(t)) $ is symplectic 
   $$
   \omega(\Phi_t(x(t))\cdot, \Phi_t(x(t)) \cdot)= \omega_0 ( \cdot, \cdot).  
   $$
\end{itemize}
\end{lemma}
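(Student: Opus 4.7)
The plan is to build $\Phi$ in three stages, obtaining first a unitary frame along the Hamiltonian orbit $x(t)$, then extending it along the strip $u$, and finally spreading it to an open neighborhood while preserving $J$-linearity.

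\textbf{Stage 1 (frame along $x$).} First I would apply Step~1 of the proof of Lemma~\ref{le:phi} to the reference path $\alpha(t)=x(t)$, producing a smooth family of isomorphisms $e_x(t):\R^{2n}\to T_{x(t)}M$ that is simultaneously $J$-linear (intertwining $J_0$ with $J_t(x(t))$), unitary with respect to the metric $g_t$ obtained from pairing $\omega$ with $J_t$ (hence symplectic), and that maps $\R^n\times\{0\}$ to $T_{x(i)}L_i$ for $i=0,1$. This gives a trivialization along $x$ satisfying properties (ii), (iii), (v) of the lemma.

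\textbf{Stage 2 (frame along $u$).} Since $u(s,t)=x(t)$ for $s\ge s_0$, I define $\Phi_{s,t}(u(s,t)):=e_x(t)$ on that region, which automatically yields the $s$-independence in~(iv). For $s<s_0$ I would extend in the $s$-direction by parallel transport along the curve $s\mapsto u(s,t)$ with respect to the Hermitian connection
$$
\widehat{\nabla}_s := \nabla^t_s - \tfrac{1}{2}J_t(\nabla^t_s J_t),
$$
where $\nabla^t$ is the Levi-Civita connection of the metric $h_t$ from Lemma~\ref{mon_lem7}, for which $L_i$ is totally geodesic at $t=i$. The connection $\widehat{\nabla}$ is $J_t$-linear (this is the standard calculation showing $\widehat{\nabla} J_t=0$), so property~(ii) is preserved along $u(s,t)$. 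At $t=i$ the curve $s\mapsto u(s,i)$ lies in $L_i$ and the connection preserves $T L_i$ (since $L_i$ is totally geodesic for $h_i$ and $J_i TL_i = (TL_i)^\perp$), so property~(iii) is preserved at the Lagrangian boundary.

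\textbf{Stage 3 (extension to a neighborhood).} Finally I would define, for $p$ in a small neighborhood $U_{s,t}$ of $u(s,t)$,
$$
\Phi_{s,t}(p) := P^{\widehat{\nabla}}\bigl(u(s,t)\to p\bigr)\circ\Phi_{s,t}(u(s,t)),
$$
where $P^{\widehat{\nabla}}$ denotes parallel transport along the $h_t$-geodesic from $u(s,t)$ to $p$ with respect to the Hermitian connection $\widehat{\nabla}$. Since $\widehat{\nabla}$ commutes with $J_t$, the transport is $J$-linear, which gives property~(ii) on all of $U_{s,t}$. For $q\in L_i\cap U_{s,i}$, the geodesic from $u(s,i)\in L_i$ to $q$ lies in $L_i$ (again by total geodesicity) and $\widehat{\nabla}$ preserves $TL_i$ along it, yielding (iii). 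Invertibility (i) is automatic for sufficiently small $U_{s,t}$ by continuity from Stage~2, and the $s$-independence (iv) persists because for $s\ge s_0$ nothing in the construction depends on $s$.

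\textbf{Main obstacle.} The delicate point is arranging the pointwise $J$-linearity (ii) on a full open set together with the Lagrangian condition (iii) on the boundary; naively trivializing through a coordinate chart would only make the pushed-forward $J_t$ standard on $\R^n\times\{0\}$, not pointwise intertwine it. The key device that overcomes this is using the Hermitian connection $\widehat{\nabla}$ for both the longitudinal (stage~2) and transverse (stage~3) extensions, which by construction is $J$-linear everywhere, combined with the metric of Lemma~\ref{mon_lem7} that keeps the Lagrangian submanifolds totally geodesic at the relevant times.
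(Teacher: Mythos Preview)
Your argument is correct and matches the paper's: construct a unitary frame along the asymptotic orbit (their Step~1 / your Stage~1), then extend along $u$ and to a tubular neighborhood by parallel transport with respect to the complex-linear connection $\widehat\nabla=\nabla-\tfrac12 J(\nabla J)$ built from the metric of Lemma~\ref{mon_lem7} (their Steps~2--4 / your Stages~2--3). The only organizational difference is that the paper first reduces to $H=0$ via naturality (replacing $u$ by $\phi_t^{-1}\circ u$ and $x(t)$ by the constant point $p_0=x(0)$), carries out the construction there, and then pulls back in a final Step~5; you work directly along the non-constant orbit $x(t)$, which is a harmless variant.
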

\begin{proof}
\medskip
In steps 1-4 we assume that the Hamiltonian term vanishes, thus 
$ u(s,t) = p_0 $ for $ s \geq s_0 $. In the last Step we reduce the 
case $H\neq 0 $ to the case $H=0 $ by naturality. \\
 {\bf Step 1.} {\bf Construction of the trivialization $ \Phi_{\infty}(t) $ of $T_{p_0} M  $ 
 satisfying the properties $i)-v)$.} \\
 \medskip
 This trivialization can be constructed analogously 
 as the trivialization $ e_{\alpha} $ in the 
  Step 1 of the proof of Lemma \ref{le:phi}, or in the following way. 
  There exists a smooth family of Lagrangian planes $T_{p_0} L_t $ connecting 
  $ T_{p_0} L_0 $ and $ T_{p_0} L_1 $. Let $ e_i(t) \in T_{p_0} L_t , \; i=1, \cdots, n$
  be orthonormal frame with respect the metric $  \om( \cdot, J_t(p_0) \cdot ) $. 
  Observe the unitary frame $ \{ e_i(t) J_t(p_0) e_i(t)\}_{i=1,\cdots,n} $. 
  Let $ \{ v_i, J_0 v_i\} _{i=1,n} $ be the standard basis of $ \R^{2n} $
  ( $ v_i $ is the orthonormal basis of $\R^n \times \{0\} $). 
  We define the trivialization 
  $$ \Phi_{\infty} :[0,1] \times \R^{2n} \to T_{p_0} M $$ 
  by 
  $$ \Phi_{\infty} (t) v_i = e_i(t), \;\; \Phi_{\infty} (t)J_0 v_i= J_t(p_0) e_i(t) .$$
  Notice that the the constructed trivialization maps $ \R^n \times \{0\} $ to $ T_{p_0} L_i, \; i =0,1 $
  and that is unitary. 
 \medskip
 
 {\bf Step 2.}{\bf Extension of the trivialization 
$ \Phi_{\infty}(t) $ to the neighborhood of $p_0$.}\\
\medskip
There exists an open set $ V \subset I \times M $ such that 
$$
p_0 \in V_t = \{ p \in M | (t, p) \in V \},
$$
and a smooth map $ \tPhi: V \times \R^{2n} \to TM$ such that 
$ \tPhi_t := \tPhi(t, \cdot) $ satisfies the following 
\begin{itemize}
 \item [1)] $\tPhi_t(p) : \R^{2n} \to T_p M $, $ p \in V_t $ is a vector space 
	    isomorphism for all $ p \in V_t $. 
 \item[2)] $ \tPhi_t(p) J_0 = J_t(p) \tPhi_t(p) $ 
	      for all $ p \in V_t $. 
 \item[3)] $\tPhi_i(p):\R^n \times \{0\} \to T_p L_i $ for all 
             $p \in L_i\cap V_i, \; i=0,1 $.
 \item[4)] $ \tPhi_t( x(t))= \Phi_{\infty}(t)$. 
\end{itemize}
Let $ g_t $ be a family of metrics as in Lemma \ref{mon_lem7}
Let $ \nabla^t $ be a Levi-Civita connection of the metric $g_t$ 
and let $ \tnabla^t $ be a complex linear connection 
associated to $ \nabla^t $ 
 $$
 \tnabla^t_{\lambda}v = \nabla^t_{\lambda} v - \frac{1}{2} J_t(\nabla_{\lambda}^t J_t) v. 
 $$
 For a point $ p $ in a geodesic neighborhood of $ p_0 $ we define 
the trivialization $ \tPhi_t(p) $ as follows 
$$
\tPhi_t(p) v := P_{\gamma}(p_0, p)\Phi_{\infty} (t) v, 
$$
where $ P_{\gamma} $ denotes parallel transport (with respect to $\tnabla^t$) 
along geodesic  $ \gamma(\lambda) $ connecting $ p_0$ and $p$. 
As $P_{\gamma} $ commutes with $ J_t $ it follows that 
$\tPhi_t $ satisfies $2) $. As $ L_i, \; i=0,1 $ are totally geodesic 
with respect to $ \tnabla^i $, we have that $3) $ is also satisfied. 
We can assume that $U_{s,t}:=V_t , s\geq s_0.$\\
 
\medskip
{\bf Step 3. Extension of the trivialization $\Phi_{\infty}(t) $ to the trivialization 
of $ u^*TM $.}\\
\medskip
Use parallel transport with respect to $ \tnabla^t $, defined as in Step 2, 
along $u(s,t) $ in the direction of $s$. In this way we construct a 
smooth mapping 
$$ 
\Phi: \R^+ \times [0,1] \times \R^{2n} \to u^* TM ,
$$
or equivalently we construct the trivialization 
$$
\Phi_{s,t}(u(s,t))=\Phi(s,t,u(s,t)) : \R^{2n} \to T_{u(s,t)}M.  
$$
defined by 
$$
 \Phi_{s,t}(u(s,t)) := P_{u}( u(s_0,\cdot), u(s,\cdot)) \Phi_{\infty}(t)v ,
 $$
 where $ P_{u} ( u(s_0,\cdot), u(s,\cdot)) $ denotes parallel transport 
 along $u$. \\
 \medskip
 
 {\bf Step 4. Extension of the trivialization $ \Phi_{s,t}(u(s,t)) $ to some neighborhoods
 $U_{s,t} $ of $u(s,t)$.}\\
 This can be done analogously as in Step 2, using 
 parallel transport along geodesics. Thus the neighborhoods 
 $U_{s,t} $ are just geodesic neighborhoods of $u(s,t) $.\\
 {\bf Step 5.} {\bf Reducing the general case to the case $H=0 $. }\\
 Let $ \tu(s,t)= \phi_t^{-1} (u(s,t)) $ and $ \tJ_t = \phi_t^* J_t $, 
 where $ \Phi_t $ is Hamiltonian isotopy \eqref{eq:phiH}. 
 Then $ \tu(s,t) = p_0 , s \geq s_0 $ and we can construct as in Steps 1-4 the trivialization 
 $ \tPhi $ satisfying the properties $ i) - v) $ applied to the curve $ \tu $ and the 
 almost complex structure $ \tJ_t $. Then the mapping 
 $ \Phi $ defined as follows 
 $$ \Phi_{s,t} (p) := d\phi_t ( \phi_t^{-1} (p)) \tPhi_{s,t} ( \phi_t^{-1} (p)) $$
 satisfies the properties $i) -v) $. 
 
\end{proof}

\begin{PARA}[{\bf Conjugate operator}]\label{rem:triv1}\rm 
Let $u$ be a solution of the equation \eqref{eq_FLOER}
and suppose that $u$ converges exponentially 
toward $x\in \cC(L_0,L_1;H)$. Notice that $u$ is just of $ W^{2,2} $ class, 
though it is smooth away from the non-Lagrangian boundary. 
Let $ D_u $ be the vertical differential as in \ref{para:ver_diff}
and let $ \Phi_{s,t} $ be as in Lemma \ref{lem:triv1}.
Without loss of generality we can assume that $ u(s,t) \in U_{s,t} $, 
where $ U_{s,t} $ are as in Lemma \ref{lem:triv1}.
We abbreviate $ \Phi= \Phi_{s,t} (u(s,t)) $. 
It follows from the properties of the trivialization $ \Phi $ 
that the conjugate operator
$ D = \Phi^{-1} \circ D_u\circ \Phi $ has the following form 
\begin{equation}\label{eq:conj_opD}
D\xi = \Phi^{-1}  D_u\circ (\Phi\xi ) = \p_s \xi + J_0 \p_t \xi + S(s,t) \xi  = \p_s \xi + A(s) \xi 
\end{equation}
The matrix valued function 
$ S \in W^{1,2}(\R^+\times [0,1], \R^{2n\times 2n} )$ 
is given by
$$
 S(v) =\overbrace{\Phi^{-1} \nabla_s (\Phi v)}^{C(v)} + 
J_0 \overbrace{\Phi^{-1} ( \nabla_t (\Phi v) - \nabla_{\Phi(v)} X_{H_t}  )}^{B(v)} +
 \overbrace{\Phi^{-1}(\nabla_{\Phi(v)}J)( \p_t u - X_{H_t})}^{E(v)}
$$
Define a smooth matrix valued function $ B_{\infty} : [0,1] \to \R^{2n\times 2n} $
by 
$$
  B_{\infty} (t) v = 
 \Phi_t(x(t))^{-1} \Big ( \nabla_t ( \Phi_t( x(t)) v ) - \nabla_{\Phi_t(x(t)) v } X_{H_t} ( x(t)) \Big ), 
$$
where $ \Phi_t $ is given as in part $ iv) $ of Lemma \ref{lem:triv1}. 
From exponential decay of $u$ it follows that 
the $ C,E $ converge exponentially to zero and the function $ B $ 
converges toward $  B_{\infty} $. One can also see that $ J_0 B_{\infty} (t) $ 
is symmetric for all $t$. Besides the matrix valued functions 
$C(s,i),E(s,i) $ and $ J_0 ( B(s,i)- B_{\infty}(i) ) $ map $ \R^n \times \{0\} $
into itself for $ i=0,1 $. These boundary properties will 
imply that if we observe the operators $ A(s) = J_0 \p_t + S(s,t) $ 
and we fix $ H^1= \Dom(A(s)) $, as in \eqref{eq:hilspaces}, 
then also $ H^2= \Dom(A(s)^2) $ will be independent of $s$. 
\end{PARA}

\begin{corollary}\label{cor:conv}
 Let $ H^i, \; i=0,1 $ be as in \eqref{eq:hilspaces} and let 
 $ C,B, E $ and $ B_{\infty} $ be as in \ref{rem:triv1}. 
 Then for  any $ k \in \N $ we have 
 \begin{align*}
 & \lim\limits_{s\rightarrow \infty} \| C \|_{C^k ( [ s, +\infty)\times [0,1]  )} = 0 
 , \lim\limits_{s\rightarrow \infty} \|E \|_{C^k ( [ s, +\infty)\times [0,1] )} = 0 \\
 & \lim\limits_{s\rightarrow \infty} \| B - B_{\infty} \|_{C^k ( [ s, +\infty)\times [0,1] )} = 0.  
 \end{align*}
Moreover 
\begin{itemize}
\item[i)] The operator $ J_0\p_t + J_0 B_{\infty}  : H^1 \to  H^0 $ is bijective and self-adjoint. 
\item[ii)] The functions $ C(s,i), E(s,i) $ and $ J_0 ( B(s,i) - B_{\infty}(i) ) $ map 
$ \R^n \times{0} $ to itself for $ i=0,1 $. 
\end{itemize}
\end{corollary}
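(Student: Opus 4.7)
The plan combines three ingredients: exponential decay of $\partial_s u$ in every $C^k$ norm from Proposition \ref{prop:mon3}, the explicit formulas for $C$, $E$, $B$ recorded in \ref{rem:triv1}, and the geometric properties of $\Phi$ from Lemma \ref{lem:triv1}. The key observation for the three convergence statements is that $\Phi_{s,t}(u(s,t))$ was defined in Step~3 of Lemma \ref{lem:triv1} by parallel transport along $u$ in the $s$-direction with respect to the complex-linear connection $\widetilde{\nabla} = \nabla - \tfrac{1}{2}J(\nabla J)$, so $\widetilde{\nabla}_s \Phi \equiv 0$ along $u$. This identity turns $C$ into an explicit multiple of $\Phi^{-1} J_t(\nabla_{\partial_s u}J_t)\Phi$, and rewriting the Floer equation as $\partial_t u - X_{H_t}(u) = -J_t(u)\partial_s u$ does the same for $E$. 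Both then decay exponentially in every $C^k$ norm via Proposition \ref{prop:mon3}, and their spatial derivatives are handled by differentiating and reapplying Proposition \ref{prop:mon3}. For $B - B_\infty$, I will write $B$ as a pointwise smooth function of $(u, \Phi, \partial_t u, \partial_t \Phi, dX_{H_t}(u))$ and observe that each of these converges exponentially in $C^k$ to its value at $x$, using $\partial_t u = J_t\partial_s u + X_{H_t}(u)$ to handle $\partial_t u \to \dot{x}$.

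For (i), conjugation by the pointwise unitary frame $\Phi_t(x(t))$ is an $L^2$-isometry that intertwines $J_0\partial_t + J_0 B_\infty$ with the operator $A_x(\hat{x}) = J_t(x)(\nabla_t\hat{x} - \nabla_{\hat{x}} X_{H_t})$ acting on $W^{1,2}_{bc}(x^*TM)$. Symmetry of $A_x$ is exactly Theorem \ref{thm:lin_infty}, and combined with the first-order ODE structure this gives self-adjointness on the dense domain $H^1$. For bijectivity I will identify the kernel with Jacobi fields $\hat{x}$ along $x$ satisfying $\nabla_t\hat{x} = \nabla_{\hat{x}}X_{H_t}$ and $\hat{x}(i) \in T_{x(i)}L_i$; any such $\hat{x}$ is determined by $\hat{x}(0) \in T_{x(0)}L_0$, and the condition $\hat{x}(1) \in T_{x(1)}L_1$ translates into $\hat{x}(0) \in T_{x(0)}L_0 \cap T_{x(0)}\phi_1^{-1}(L_1) = \{0\}$ by regularity of $H \in \cH_{\mathrm{reg}}(M,L_0,L_1)$. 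Surjectivity then follows from self-adjointness since the cokernel equals the kernel.

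Statement (ii) is the part I expect to be most delicate. By properties (iii) and (v) of Lemma \ref{lem:triv1}, the frame $\Phi_{s,i}(u(s,i))$ maps $\R^n \times \{0\}$ onto $T_{u(s,i)}L_i$, and complex linearity maps $\{0\}\times\R^n$ onto $J_i T_{u(s,i)}L_i$; hence preservation of $\R^n\times\{0\}$ by a matrix $M(s,i)$ is equivalent to preservation of $T_{u(s,i)}L_i$ by the conjugated endomorphism $\Phi M \Phi^{-1}$ of $T_{u(s,i)}M$. For $C(s,i)$ and $E(s,i)$ this reduces to showing that, for $\xi \in T_pL_i$ with $p\in L_i$, both $J_i(\nabla_\xi J_i)$ and $(\nabla_\xi J_i)J_i$ preserve $T_pL_i$; this follows from the totally geodesic property of $L_i$ under $g_i$, the orthogonal splitting $T_pM = T_pL_i \oplus J_iT_pL_i$, and the boundary fact $\partial_s u(s,i) \in T_{u(s,i)}L_i$ (which itself is the Floer boundary condition differentiated in $s$). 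For $J_0(B(s,i) - B_\infty(i))$ the claim reduces to proving that $J_0 B(s,i)$ and $J_0 B_\infty(i)$ individually preserve $\R^n\times\{0\}$, which is the infinitesimal form of the statement that the boundary condition $J_t(v)(\partial_t v - X_{H_t}(v)) \in T_{v(i)}L_i$ defining $\sB^+(x)$ is preserved under linearization at $v=u$ and $v=x$. The main obstacle will be organizing these boundary computations consistently with respect to the splitting $T_pM = T_pL_i \oplus J_iT_pL_i$; once that is fixed as the organizing principle, no new ideas beyond those already used to construct $\Phi$ are required.
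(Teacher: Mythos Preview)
Your treatment of the convergence statements, of part~(i), and of $C(s,i)$ and $E(s,i)$ in part~(ii) is correct and matches the paper's reasoning: exponential decay of $\p_su$ drives $C,E,B-B_\infty\to 0$ in $C^k$, conjugation by $\Phi_t(x(t))$ reduces (i) to Theorem~\ref{thm:lin_infty} plus regularity of $H$, and for $C,E$ the totally geodesic property of $L_i$ together with $(\nabla_\xi J_i):T_pL_i\to J_iT_pL_i$ (which you derive exactly as needed) gives the preservation of $\R^n\times\{0\}$.

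The gap is in your argument for $J_0(B(s,i)-B_\infty(i))$. You reduce to the stronger claim that $J_0B(s,i)$ and $J_0B_\infty(i)$ \emph{individually} preserve $\R^n\times\{0\}$, and justify this as ``the infinitesimal form of the boundary condition $J_t(v)(\p_tv-X_{H_t}(v))\in T_{v(i)}L_i$''. But that linearized boundary condition is precisely the \emph{additional} constraint $D_u\hat u(s,i)\in T_{u(s,i)}L_i$ that \emph{defines} $T_u\sB$; it is not a consequence of $\hat u(s,i)\in T_{u(s,i)}L_i$ alone, so you cannot invoke it to constrain $S(s,i)\xi(i)$ for arbitrary $\xi(i)\in\R^n\times\{0\}$. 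In fact the stronger claim is generally false: in the $H=0$ picture $B_\infty(i)v=\Phi_\infty(i)^{-1}\p_t\Phi_\infty(i)v$, and since $\Phi_\infty(t)v_k=e_k(t)\in T_{p_0}L_t$ for an arbitrary smooth family of Lagrangian planes, $\dot e_k(i)$ has no prescribed component in the splitting $T_{p_0}L_i\oplus J_iT_{p_0}L_i$. Only the \emph{difference} $B-B_\infty$ has the required boundary behaviour.

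The paper's route is genuinely different here: it shows $\p_sB(s,i):\R^n\times\{0\}\to\{0\}\times\R^n$ via a curvature identity. Using $\nabla_s(\Phi v)=0$ along $u$ (parallel transport in $s$) one gets $\Phi\,\p_sB\,v=R(\p_su,\p_tu)\Phi v$; then the Bianchi-type symmetry $\inner{R(X,Y)Z}{W}=\inner{R(Z,W)X}{Y}$ combined with $\p_su(s,i),\Phi v,w\in T_{u(s,i)}L_i$ and total geodesy of $L_i$ forces $R(\p_su,\p_tu)\Phi v\perp T_{u(s,i)}L_i$. Integrating $\p_sB$ from $s$ to $\infty$ and using $B\to B_\infty$ then yields the statement for the difference. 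You should replace your argument for this term by this curvature computation (or an equivalent one that works only with $B-B_\infty$, never with $B$ and $B_\infty$ separately).
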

\begin{proof}
 The fact that $ C, E $ and the difference $ B - B_{\infty} $ 
 converge to zero, follows from the fact that $u$ converges 
 exponentially to $x(t)$. Remember that $\nabla $ 
 in the definition of the matrix valued functions $B, C $ and $E$ 
 is the Levi-Civita connection of the metric $g_t $ such that 
 $L_i $ are totally geodesic with respect to $g_i,\; i=0,1 $.
 This implies that $ C $ and $E$ satisfy  $ii) $. 
 To prove that $ J_0 ( B(s,i) - B_{\infty}(i) ): \R^n \times \{0\} \to \R^n \times \{0\} $
 it is enough to prove that $ \p_s B(s,i) : \R^n \times \{0\}\to \{0\} \times \R^n $. 
 We prove this fact in the case that Hamiltonian term vanishes. Notice 
 that the trivialization was constructed such that the general case $ H \neq 0 $ 
 can be reduced to the case $H=0 $. The following equalities hold 
 \begin{equation}
  \begin{split}
   &\nabla_s ( \Phi_{s,t}(u) B(s,t) v ) = \nabla_s \nabla_t ( \Phi_{s,t}(u) v ) \\
   & \nabla_s ( \Phi_{s,t}(u)) B(s,t)v + \Phi_{s,t} (u) \p_s B(s,t) v = \nabla_t \nabla_s ( \Phi_{s,t} (u) v) + R ( \p_su, \p_t u) \Phi_{s,t}(u) v 
  \end{split}
 \end{equation}
 Where $ R $ denotes the Riemann curvature tensor. 
 Notice that the first term on both left and right side 
 of the upper equality vanishes, as $ \Phi_{s,t}(u) = P_s  \circ \Phi_t(x(t) $, 
 where $ P_s $ denotes parallel transport along $u$ in the direction of $s$. 
 Thus we have 
 \begin{equation*}
  \Phi_{s,t} (u) \p_s B(s,t) v= R ( \p_su, \p_t u) \Phi_{s,t}(u) v.
 \end{equation*}
 It is left to prove that $ \left. R(\p_s u, \p_t u ) \Phi_{s,t}(u) v\right|_{t=0} \perp T_{u(s,0)}L_0 $
 and analogously for $t=1 $. Thus it is enough to prove that 
 $$
 \langle\left. R(\p_s u, \p_t u ) \Phi_{s,t}(u)v\right|_{t=0},  w\rangle_0 = 0, \;\;\;\;\forall w \in T_{u(s,0)} L_0 
 $$
From the properties of the curvature $R$ we have 
\begin{equation}
 \inner{ R(\p_s u, \p_t u ) \Phi_{s,t}(u)v}{w}_0=\inner{R(\Phi_{s,t}(u)v,w)\p_su}{\p_t u}.
\end{equation}
As $ R(X,Y) Z = \nabla_x \nabla_y Z - \nabla_Y \nabla_X Z $ and all three 
vector fields $ \left. \Phi_{s,t}(u)v\right|_{t=0} $, $w $ and $ \p_s u $ are tangent to $L_0 $
and as $L_i $ are totally geodesic for $g_i $ we have that $ R(\Phi_{s,t}(u)v,w)\p_su $ 
is also tangent to $L_0 $. As $ \p_t u $ is orthogonal to $ T_{u(s,0)} L_0 $ we have 
that $  \inner{ R(\p_s u, \p_t u ) \Phi_{s,t}(u)v}{w}_0=0 $ for all $ w \in T_{u(s,0)} L_0 $.

\end{proof}

\begin{theorem}\label{thm:ver_diff}
Let $ \sB $ and $ u\in \sB $ be as in \ref{para:ver_diff} 
and let $ D_u : T_u \sB \to \sE_u $ be vertical differential 
as in \ref{para:ver_diff}. Then the following holds
\begin{itemize}
\item[a)] The operator $D_u $ is surjective. 
\item[b)] Suppose that $ \sB = \sB^+ $. Then there exists a constant 
$c> 0 $ such that the following inequality holds for all $ \hat{u} \in T_u \sB $

\begin{equation}\label{eq:inq_D}
\| \hat{u}\|_{2,2} \leq c \Big ( \| D_u \hat{u}\|_{1,2} + \| \hat{u}(0, \cdot)\|_{3/2} )
\end{equation}
Analogous results holds in the case $ \sB = \sB^T $. 
For every $ T > 0 $ there exists a constant $c$ such that 
 the following inequality holds for all $ \hat{u} \in T_u \sB^T $. 
\begin{equation}\label{eq:inq_D1} 
\| \hat{u}\|_{2,2} \leq c \Big ( \| D_u (\hat{u}) \|_{1,2} + \| \hat{u}(-T, \cdot)\|_{3/2} + \| \hat{u}(T, \cdot)\|_{3/2} \Big )
\end{equation}
\end{itemize}
\end{theorem}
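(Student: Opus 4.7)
The plan is to conjugate $D_u$ via the unitary trivialization supplied by Lemma \ref{lem:triv1} and reduce everything to the linear theory of Chapter~3. Fix a trivialization $\Phi = \Phi_{s,t}(u(s,t)):\R^{2n}\to T_{u(s,t)}M$ as in Lemma \ref{lem:triv1}, so that $\Phi$ is complex, sends $\R^n\times\{0\}$ to $T_{u(s,i)}L_i$, and is $s$-independent (equal to $\Phi_t(x(t))$) for large $|s|$. Setting $\xi := \Phi^{-1}\hat u$, the conjugate operator takes the form \eqref{eq:conj_opD},
\[
D\xi \;=\; \Phi^{-1}\circ D_u\circ \Phi(\xi) \;=\; \partial_s\xi + J_0\partial_t\xi + S(s,t)\xi \;=\; \partial_s\xi + A(s)\xi,
\]
and Corollary \ref{cor:conv} guarantees that $S(s,t)$ converges in every $C^k$ norm on the end to the limit $S^\pm(t) = J_0B_\infty(t)$, with $A^\pm = J_0\partial_t + S^\pm$ bijective and self-adjoint on $H^1_{bc}([0,1])$. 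This is exactly hypothesis~(H1) from \ref{para:bij_lim}, so the linear machinery of Section~\ref{SEC:lin_est}--\ref{SEC:ELreg} applies to $D$.

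Next I would verify that the trivialization identifies $T_u\sB$ (as described in \eqref{eq:tan_sp}) with $H^2_{bc}(I\times[0,1])$ of \eqref{eq:h2_bc}, and $\sE_u$ with $H^1_{bc}(I\times[0,1])$. The Lagrangian boundary condition $\hat u(s,i)\in T_{u(s,i)}L_i$ becomes $\xi(s,i)\in\R^n\times\{0\}$ because $\Phi_{s,i}$ maps $\R^n\times\{0\}$ onto $T_{u(s,i)}L_i$. The second boundary condition $D_u\hat u(s,i)\in T_{u(s,i)}L_i$ translates, after noting that $\partial_s\xi(s,i)\in \R^n\times\{0\}$ automatically, to $A(s)\xi(s,i)\in\R^n\times\{0\}$; and since $A(s) = A^\pm + R(s,t)$ where by part (ii) of Corollary \ref{cor:conv} the matrices $C(s,i)$, $E(s,i)$, and $J_0(B(s,i)-B_\infty(i))$ all preserve $\R^n\times\{0\}$ (and the self-adjoint limit operator $A^\pm$ does as well by construction of its domain $H^1_{bc}$), the condition $\Dom(A(s)^2) = H^2$ from~(HA) holds with an $s$-independent $H^2 = \Dom(A^\pm{}^2)$. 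Thus the bundle isomorphism induced by $\Phi$ provides a bi-continuous identification of $T_u\sB$ with $H^2_{bc}$ and of $\sE_u$ with $H^1_{bc}$.

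With these identifications, estimate \eqref{eq:inq_D} is an immediate consequence of Corollary \ref{lem:inq_D}(ii) applied to $D$, namely
\[
\|\xi\|_{2,2} \;\leq\; c\bigl(\|D\xi\|_{1,2} + \|\xi(0,\cdot)\|_{3/2}\bigr),
\]
and estimate \eqref{eq:inq_D1} follows from part~(i) of the same corollary applied to the finite interval $[-T,T]$ (with the two asymptotic operators in place of a single one, as permitted in Theorem~\ref{thm:main_inq1}(i)). Transporting both sides back via $\Phi$, which is a smooth bounded isomorphism with bounded inverse on the $W^{k,2}$ and trace norms, yields the stated inequalities in the intrinsic norms on $T_u\sB$ and $\sE_u$. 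Surjectivity in part~(a) is then Corollary \ref{cor:surj} pulled back through $\Phi$. The only subtle point I expect to be the main obstacle is the bookkeeping in the second step above: confirming that the derivative boundary condition defining $T_u\sB$ really corresponds to the boundary condition $A(s)\xi(s,i)\in\R^n\times\{0\}$ built into $H^2_{bc}$, since this is what allows the abstract hypothesis~(HA) to be used with an $s$-independent $H^2$. Once the properties of $S$ listed in Corollary~\ref{cor:conv}(ii) are invoked, this compatibility falls out, and the rest of the proof is mechanical.
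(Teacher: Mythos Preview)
Your proposal is correct and follows essentially the same route as the paper: conjugate $D_u$ by the trivialization $\Phi$ of Lemma~\ref{lem:triv1}, invoke Corollary~\ref{cor:conv} to verify that $\Dom(A(s)^2)$ is $s$-independent so that the conjugate operator lands in the framework of~\ref{para:bij_lim}, and then read off surjectivity from Corollary~\ref{cor:surj} and the estimates from Corollary~\ref{lem:inq_D}. The paper's proof is terser---it does not spell out the boundary-condition bookkeeping you flag as the ``subtle point''---but the argument is the same, and your expanded discussion of why the second boundary condition in $T_u\sB$ matches the $A(s)\xi(s,i)\in\R^n\times\{0\}$ condition is exactly what underlies the paper's one-line remark that $\Dom(A(s)^2)$ is $s$-independent.
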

\begin{proof}
$a)$ Let $ \Phi $ be the trivialization of $ u^*TM $ as in Lemma 
\ref{lem:triv1}. Let $ D$ be the conjugate operator 
as in \eqref{eq:conj_opD}, $D = \Phi^{-1} \circ D_u \circ \Phi  $.  
Conjugation by $\Phi $ identifies the tangent space $ T_u \sB $ with 
the Hilbert space $ H^2_{bc} ( I \times [0,1] ) $  given by 
\begin{equation*}
\begin{split}
 H^2_{bc} ( I \times [0,1] ) & = \left \{\xi\in W^{2,2} ( I \times [0,1] ) \Big | 
 \begin{array}{l}
 \xi(s,i) \in \R^n \times \{0\} , \;\; i=0,1 \\
   D\xi(s,i) \in \R^n \times \{0\}, \;\; i=0,1 
 \end{array}
 \right\}\\
 &= W^{2,2} ( I, L^2([0,1] )) \cap L^2 ( I, \Dom(A(s))^2 ) 
 \end{split}
\end{equation*}
notice that from \ref{rem:triv1} and \ref{cor:conv} we have that
$ \Dom(A(s)^2) $ is $s$ independent. 
It follows from Corollary  \ref{cor:surj} that $D_u $ is 
surjective. \\
\medskip
$b)$ Let $ \hat{u}(s,t) = \Phi_{s,t}(u(s,t))\xi(s,t) $, 
where $ \Phi$ is the trivialization constructed in Lemma \ref{lem:triv1}
and let $ D = \Phi^{-1} \circ D_u \circ \Phi  $ be as in \eqref{eq:conj_opD}. 
As the operator $D$ has the form \eqref{eq:opD}, 
the conclusion of Corollary \ref{lem:inq_D}
holds. Thus the inequality \eqref{eq:inq_D} follows from Corollary 
\ref{lem:inq_D} part $b)$, whereas the inequality \eqref{eq:inq_D1} 
follows from  the same Corollary part $a)$. 

\end{proof}

\begin{proof}[{\bf Proof of Theorem \ref{cor:main_thm3}}]
\bigskip
This is just an easy corollary of the previous Theorem. 
\noindent{\bf a)} We prove that $ \sM^+ \subset \sB^+ $ is a smooth Hilbert 
submanifold. The proof that $ \sM^T \subset \sB^T $ is a smooth submanifold is analogous. 
Let $ u \in \sM^+ $ and let $ D_u $ be the vertical differential as in \ref{para:ver_diff}. 
It is enough to prove that $ D_u : T_u \sB \to \sE_u $ is surjective, as this implies that 
the section $ \sS = \dbar_{J_t, X_t} $ is transverse to the $ 0-$section and hence 
the set $ \sM^+ = \sS^{-1} ( 0 ) $ is a smooth submanifold. This follows from Theorem 
\ref{thm:ver_diff} part $a)$. \\
\medskip
\noindent{\bf b)} To prove that the maps $ i^{\pm}:\sM^{\pm} \to \sP^{3/2} $ are immersions
notice that  $ T_u \sM^{\pm}\cong \text{Ker} (D_u)  $, thus it follows 
directly from the inequality \eqref{eq:inq_D}  that the  maps $ di^{\pm} (u) $
are injective.   
In the case of finite strips it follows from the inequality \eqref{eq:inq_D1} 
that the mapping $i^T $ is an immersion. 
That the maps $ i^{\pm} $ and $ i^T $ are also injective 
follows from the unique continuation of holomorphic curves. 
\end{proof}

\section{Embedding into the path space}\label{SEC:embedding}

In this section we prove Theorem \ref{thm:main_thm2}. 

\begin{PARA}[{\bf Zero Hamiltonian}]\label{para:zero_ham}\rm
 Let $ x \in \cC ( L_0, L_1,H) $ and let  $ \sM^{\pm} ( x; H,J ) $ and $\cM^T(H,J) $ 
be defined as in \ref{para:hil_strips}. We have proved in Theorem 
\ref{cor:main_thm3} that these moduli spaces are Hilbert manifolds and that 
they can be immersed into the Hilbert manifold of paths $ \sP^{3/2} (H,J) $.
In Remark \ref{rem:ham0} we have explained that it is enough 
to study these manifolds in the case that the Hamiltonian function $ H= 0 $. 
Let $ \tJ$, $ \tx$  and $ \tL_i, \; i=0,1 $ be as in Remark \ref{rem:ham0}. 
Notice that it follows from \ref{linop_inf} that the 
linearized operator 
$$ 
A : W^{1,2}_{bc} (\tx^* TM) \to L^2 (\tx^*TM) , \;\; A(\hat{x} ) := \tJ_t(\tx) \p_t \hat{x} 
$$
is bijective and self-adjoint.  This will be crucial for all the proofs. 
We abbreviate 
$ \sM^{\infty} = \sM^{\infty} ( \tilde{x}; 0, \tJ) $,
$ \sM^T = \sM^T ( 0, \tJ ) $ and $ \sP^{3/2} = \sP^{3/2} ( 0, \tJ ) $. 
Here we shall consider only those curves that 
are close enough to the constant curve 
$$
p:= \tx \in\tL_0 \cap \tL_1 
$$
 and have sufficiently small energy.
 We first explain when a path $ \alpha\in \sP^{3/2} $
 is in an $ \epsilon $ neighborhood of a constant path $p$. 
 Then we introduce subsets
$ \sM^{\infty}_{\epsilon} $ and $ \sM^T_{\epsilon} $
of the moduli space of holomorphic curves $ \sM^{\infty} $ and $ \sM^T $  
and we prove that they can be embedded, 
by taking the restriction to the boundary,
into the Hilbert manifold of paths $ \sP^{3/2} $.
To simplify the notation we omit $ \sim$. 
\end{PARA}

\begin{PARA}[{\bf $\eps-$neighborhood of  $p\in L_0 \cap L_1 $}]\label{ch5_rem1}\rm
 We assume that the Hamiltonian is $0$.
 Local chart in the neighborhood of $p$ within 
the Hilbert manifold $ \sP^{3/2} $ 
is given as in Lemma \ref{lem:loc_path}.
Remember that the local chart in the neighborhood 
of a constant path $ p \in \sP^{3/2} $, 
 $ \Phi_p: \sU_p \rightarrow \sW_p $, 
 is given by $ \Phi_p( \gamma ) = f_t(\gamma) $, where
 $f_t: U_p \rightarrow \R^{2n} $ is a smooth 
 family of maps, $ U_p\subset M $ is an open 
 neighborhood of $p$ that doesn't contain other intersection 
 points of $ L_0 \cap L_1 $ and $f_t $ has the following
 additional properties:

\begin{itemize}
 \item [1)] $ \;\; f_t : U_p \rightarrow f_t ( U_p)\subset \R^{2n} $ is
 a diffeomorphism for all $t$ and $f_t( p) =0$ for all $t$.

\item[2)] $ f_i( L_i\cap U_p) = (\R^n \times \{0\}) \cap f_i ( U_p), \;\;\; i=0,1 $, 
 and $\left. (\partial_t f_t)\right|_{t=0,1}= 0 $.     

\item[3)] If $ \tilde{J}_t = (f_t)_* J_t $, then $ \tilde{J}_t (x,0) = J_{std} $ 
 for all $ t \in [0,1] $ and for all $ (x,0) \in (\R^n \times \{0\}) \cap f_t( U_p)$.
\end{itemize}
We say that a curve $ \gamma \in \sP^{3/2}$ is in the $\epsilon $ neighborhood 
of $p\in \sP^{3/2} $ and we write $ \gamma \in \cU_{\eps}(p) $ 
iff  $\gamma \subset U_p $ and 
 $ \xi(t) := f_t(\gamma(t)) = \Phi_p(\gamma)(t)\subset H^{3/2} $ 
satisfies 
$$ 
 \| \xi \|_{3/2} < \epsilon.
$$
Notice that it makes sense to define $ \epsilon-$ neighborhood
of a constant path $p$ only in the case that $ \epsilon>0 $ 
is sufficiently small. \\
\medskip
We define analogously an $ \eps -$ neighborhood of 
a constant strip $ p \in \sB^{\pm} $ (or $ p\in \sB^T$). 
A strip $ u \in \sB^{\pm} $ is in an $ \eps -$neighborhood of $p$ if 
$ \text{Im}(u) \subset U_p $ and 
$ \xi(s,t) =f_t(u(s,t)) $ satisfies
$$ 
\| \xi \|_{2,2} < \epsilon, 
$$
where $f_t $ is a local chart as above. 
\end{PARA}

\begin{PARA}[{\bf Monotonicity}]\label{para:mon}\rm
 Let $ U_p $ be the neighborhood of a point $p$ 
as in Remark \ref{ch5_rem1}. 
We shall be interested only in those holomorphic curves 
that are contained in this neighborhood. 
In Theorems \ref{thm:mon1} and \ref{thm:mon2}, we prove that the 
energy of a holomorphic curve
$ u: I \times [0,1] \rightarrow N $ and 
$ \sup\limits_{t}\sup\limits_{s\in \partial I} d( u(s, t), p) $
control the distance 
$ \sup\limits_{s,t} d( u(s,t), p) $.
Let $ \hbar $  and $ \epsilon _0 > 0 $ be such that 
each holomorphic curve $ u $ which satisfies 
$$
E(u) < \hbar, \;\;\; \left. u \right|_{\partial I \times [0,1] }
\in \cU_{\epsilon_0} ( p ) 
$$
is contained in  $U_p $, i.e. 
$$
u ( s,t ) \in U_p, \;\; \forall ( s,t) \in I \times [0,1].
$$
Here $ \cU_{\epsilon_0}(p) $ denotes the $ \epsilon_0 $ 
neighborhood of a constant path $ p $ in the Hilbert 
manifold $ \sP^{3/2} $ as in definition \ref{ch5_rem1}. 
\end{PARA}
\begin{definition}\label{def:emb_mfld}
 Let $ \epsilon_0 $ and $ \hbar $ be as in \ref{para:mon}
 and let $ \sM^T $ and $ \sM^{\infty} $ be as in \ref{para:zero_ham} 
 For $ \epsilon \leq \epsilon_0 $ we
 define the following subsets of the moduli 
spaces of holomorphic strips:

\begin{align*}
 &\sM^{\infty}_{\epsilon} = \bigg \{ ( u^+, u^-) \in \sM^{\infty}
 \bigg | \;\; u^{\pm} (0, \cdot) \in \cU_{\epsilon} (p)
 , \;\;\; E(u^{\pm}) < \hbar \bigg \} \\ 
 &\sM^T_{\epsilon} = \bigg \{ u\in \sM^T \;\bigg | \;\; 
 u(\pm T, \cdot )\in \cU_{\epsilon} (p) ,
 \; E(u) < \hbar  \bigg \}. 
\end{align*}
Here $ \cU_{\epsilon} $ denotes $ \epsilon-$neighborhood of a constant path 
$p$ in the Hilbert manifold of paths $ \sP^{3/2} $ as in \ref{ch5_rem1}.
\end{definition}

\begin{PARA}[{\bf Local setup}]\label{para:loc_setup}\rm 
 Holomorphic curves $ u \in \sM^T_{\epsilon}
\;( ( u^+, u^- ) \in \sM^{\infty}_{\epsilon}) $ are
contained in the small neighborhood $U_p$ of the point $p\in M$, 
as explained in \ref{para:mon}. 
Thus instead of observing holomorphic curves on $M $ we can 
work in $\R^{2n}$.  
We reformulate the setup in local coordinates.\\
\medskip
Let $ v = \Phi_p(u) = f_t (u) $ be the image of a holomorphic curve 
$u$ contained in $U_p$. Here $ f_t $ is as in \ref{ch5_rem1}. 
As $ u $ is a $ J_t $ holomorphic curve it follows that 
$v = f_t ( u) $ satisfies the following equation 
 \begin{equation}\label{eq:per_hol}
 \overline{\partial}_{\tJ,\tX}v
 = \partial_s v + \tJ_t(v) ( \partial_t v- \tX_t(v) )=0,
 \end{equation}
 where $ \tX_t $ and $ \tJ_t $ have the following properties.
 \begin{itemize}
  \item[1)] $ \tX_t $ is a smooth vector field given by 
  $$ 
  \tX_t (x) = \partial_t f_t ( f_t^{-1} (x)) .
  $$
  \item[2)] $ \tX_t(x) = 0 $ for $ t=0,1 $ and for all $ x \in \R^{2n} $. 
  \item[3)] $ \tX_t(0) =0 $ for all $ t \in [0,1] $. 
   \item[4)] $ \tJ_t= (f_t)_* J_t $ is a smooth family 
   of almost complex structures with the properties
\begin{equation}\label{eq:prop_J}
 \tJ_t (x,0) = J_{\std}, \;t=0,1, \;\;  x \in \R^n. 
\end{equation}
  \end{itemize}
New boundary data are given by 
 \begin{align}
  v(s,0) \in \tL_0 = f_0 ( L_0 )\subset\R^n \times \{0\}, 
  \qquad v(s,1) \in \tL_1 = f_1 ( L_1) \subset \R^n \times \{0\}
 \end{align}
 Hence it follows that $ v \in H^2_{bc}( I \times [0,1], \R^{2n} )$,
 where $ H^2_{bc} ( I \times [0,1] ) $ is given by \eqref{eq:H2bc}. 
The condition that the intersection of 
Lagrangian submanifolds $ L_0\cap L_1 $ is transverse 
translates into the following condition in $ \R^{2n} $.
Let $ \widetilde{\psi}_t $ be the 
flow of the vector field $ \tX_t $
$$
\partial_t \tpsi_t( x ) = \tX_t ( \tpsi_t(x)), \;\; \tpsi_0 = \one.
$$
Notice that $ \tpsi_t = f_t \circ f_0^{-1} $. 
Then 
$$
L_0 \pitchfork L_1  \qquad \text{ if and only if } \qquad \tpsi_1 ( \tL_0) \pitchfork \tL_1 .
$$
 \end{PARA}
 \begin{PARA}[{\bf Linearized operator}]\label{para:lin_op}\rm
 Let $ \dbar_{\tJ,\tX} $ be as in \eqref{eq:per_hol} and let $ D_0 $ 
 be its linearization at the origin. 
 \begin{align*}
 D_0 \xi = \partial_s \xi + \tJ_t(0) ( \partial_t \xi - d\tX_t(0)\xi)= \partial_s \xi +  A\xi 
\end{align*}
Then the linear operator 
\begin{align*}
  &A: H^1_{bc}([0,1], \R^{2n})\rightarrow L^2 ( [0,1], \R^{2n} ), \;\;  \\
 & A = \tJ_t(0) ( \partial_t  - d\tX_t(0) ) 
\end{align*}
is bijective and self-adjoint, where 
\begin{align*}
 H^1_{bc} ( [0,1] ) = \left \{ \xi \in H^1 ([0,1], \R^{2n} ) \bigg |
 \; \xi(i) \in \R^n \times \{0\}, \; i=0,1  \right \}. 
 \end{align*}
The operator $ A$ 
is conjugate via $ df_t(p) $ to the operator  $ B= J_t(p) \partial_t $, 
$$
A \xi= df_t(p)J_t(p)\partial_t (df_t(p)^{-1}\xi ). 
$$
As the operator $ B= J_t(p) \partial_t$ is bijective and self-adjoint 
it follows that the operator $ A $ is also bijective and self-adjoint with
respect to the appropriately chosen metric. 
More precisely, let $ \xi, \eta \in L^2 ( [0,1], \R^{2n} ) $  we define the $ L^2 $ scalar 
product by 
\begin{equation}\label{eq:sc_prod}
\langle \xi, \eta \rangle := \int_0^1 \omega( df_t(p)^{-1} \xi(t) , J_t(p)df_t(p)^{-1}\eta(t) ) dt. 
\end{equation}
Remember that $ \om $ and $ J_t $ are compatible. 
For $ \xi, \eta \in H^1_{bc} ( [0,1] )$, we have 
\begin{align*}
 \langle \xi , A \eta \rangle &=  \int_0^1 \omega( df_t(p)^{-1} \xi(t) , -\partial_t (df_t(p)^{-1}\eta(t)) )dt \\
 &=  \int_0^1 \omega(\p_t(df_t(p)^{-1} \xi(t)), df_t(p)^{-1}\eta(t) ) dt \\
 &= \int_0^1 \omega( -J_t(p) df_t(p)^{-1} ( A \xi ),df_t(p)^{-1}\eta(t) ) dt \\
 &= \int_0^1 \omega ( df_t(p)^{-1}\eta(t), J_t(p) df_t(p)^{-1} ( A \xi ) ) dt \\
 &= \langle \eta, A \xi \rangle.
\end{align*}
The second equality follows by partial integration 
using the boundary conditions , i.e. 
$ df_i(p) : T_p L_i \to \R^n \times \{0\}, \; i=0,1 $.
Injectivity of the operator $ A$ is easy to prove. 
Notice the following 
\begin{align*}
 A \xi = 0 \Leftrightarrow \zeta(t) = df_t(p)^{-1} \xi(t)  = \text{const.} \in T_pM
\end{align*}
As $ \zeta(0) \in T_p L_0 $ and $ \zeta(1) \in T_p L_1 $ and 
the intersection $ T_p L_0 \cap T_p L_1 = \{ 0\} $, it 
follows that $ \zeta (t) = 0 $ and thus $ \xi \equiv 0 $.  

To simplify the notation we shall
write $ J_t $ and $ X_t $ instead of $ \tX_t $ and $ \tX_t $ further on. 
Let $ v $ be a solution of the equation \eqref{eq:per_hol}
and let $ D_v $ be the linearization of the same equation, then 
\begin{align}\label{eq:lindv}
 D_v \xi& = \partial_s \xi + J_t( v ) ( \partial_t \xi - dX_t(v) \xi ) 
 +  (dJ_t(v) \xi) (\p_t v - X_t(v)). \notag \\
 &= \partial_s \xi + J_t( v ) ( \partial_t \xi - dX_t(v) \xi ) 
 +  (dJ_t(v) \xi) J_t ( v )\p_s v. 
\end{align}
 \end{PARA}
\begin{PARA}[{\bf Proof of Theorem \ref{thm:main_thm2}}]\label{para:pr_thm2}\rm
 Let $ \sM^{\infty}_{\epsilon} $ and $ \sM^T_{\epsilon} $ be defined 
 as in \ref{def:emb_mfld}. In order to prove Theorem \ref{thm:main_thm2} 
 it is enough to prove that for $ \epsilon > 0 $ sufficiently small 
 the manifolds $ \sM^{\infty}_{\epsilon} $ and $ \sM^T_{\epsilon} $
 embed into path space by taking the restriction to the boundary. 
 In Theorem \ref{cor:main_thm3} we have proved 
 that the maps $ i^{\infty}=i^+ \times i^- $ 
 and $ i^T $ are injective immersions. 
 Locally each immersion is an embedding.
 There exists $ r_1 > 0 $ such that the restriction of the map 
 $ i^{\pm} $ ( analogously $ i^T$ ) 
 to the $ \cU_{r_1}(p)$ is an embedding, 
 where $\cU_{r_1}(p) \subset \sB^{\pm}$ (or $\cU_{r_1}(p) \subset \sB^T$ ) is
  $r_1 $ neighborhood 
 of $p$ as in \ref{ch5_rem1} .
  Now the proof for infinite strips follows directly from  
 the Theorem \ref{thm:bound_contr} and for finite strips 
 it follows from Remark \ref{rem:fin-str}. 
 More precisely it follows from Theorem \ref{thm:bound_contr} 
 that for $ \epsilon > 0 $ sufficiently small each 
 $ ( u^+,u^-) \in \sM^{\infty}_{\epsilon} $ satisfies 
 $$
 \| f_t ( u^{\pm} ) \|_{2,2} < r_1
 $$
 Thus $ u^{+} \in \cU_{r_1} (p) \subset \sB^+$ and $ u^- \in \cU_{r_1} (p) \subset \sB^- $.  
 $\hfill \square $ 
 \begin{theorem}\label{thm:bound_contr}
 There exist $ \epsilon > 0 $ and $ c > 0 $ 
 with the following significance. 
 Let $ u$ be an arbitrary half-infinite holomorphic curve as in 
 \ref{para:loc_setup} and let 
 $ v= f_t(u)\in H^2_{bc} ( \R^{\pm} \times [0,1] , \R^{2n} ) 
 $ be also as in  \ref{para:loc_setup} . If
 \begin{align}
  \| v(0)\|_{3/2} < \epsilon
 \end{align}
then 
 \begin{equation}\label{eq:main_inq}
 \|v\|_{W^{2,2} ( \R^{\pm} \times [0,1] ) }\leq c \| v(0)\|_{3/2}. 
 \end{equation}
 Here $ \| v \|_{3/2} $ denotes the norm of $ v $ in the interpolation 
 space $ H^{3/2}_{bc}$. 
\end{theorem}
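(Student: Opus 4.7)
The plan is to linearize the equation $\dbar_{\tJ,\widetilde{X}} v = 0$ at $v = 0$, apply the half-strip linear estimate of Corollary~\ref{lem:inq_D}(ii), and close the resulting nonlinear inequality by combining a Gagliardo--Nirenberg refinement with the pointwise monotonicity bounds of Theorem~\ref{thm:MON}. Set $D_0 \xi := \p_s \xi + A\xi$ with $A = \tJ_t(0)(\p_t - d\widetilde{X}_t(0))$ as in~\ref{para:lin_op}; this $A$ is bijective and self-adjoint and, after a unitary change of frame in $t$, is conjugate to an operator in the standard form \eqref{eq:opD}. Writing
$$0 = \dbar_{\tJ,\widetilde{X}} v = D_0 v + N(v),\qquad N(v) := [\tJ_t(v)-\tJ_t(0)](\p_t v - \widetilde{X}_t(v)) - \tJ_t(0)[\widetilde{X}_t(v) - d\widetilde{X}_t(0)v],$$
smoothness of $\tJ,\widetilde{X}$ together with $\widetilde{X}_t(0)=0$ give the pointwise bounds $|N(v)| \leq C(|v|^2 + |v||\nabla v|)$ and $|\nabla N(v)| \leq C(|v||\nabla v|+|v||\nabla^2 v|+|\nabla v|^2)$. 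Moreover, the Floer boundary conditions combined with $\widetilde{X}_t(\cdot,i)=0$ and $\tJ_i=J_{\mathrm{std}}$ on $\R^n\times\{0\}$ force $\p_t v(s,i)\in\{0\}\times\R^n$, so that $v$ lies in the Hilbert space $H^2_{bc}(\R^{\pm}\times[0,1])$ on which the linear estimate is formulated.

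Applying Corollary~\ref{lem:inq_D}(ii) to $D_0$ then gives
$$\|v\|_{2,2}\leq c\bigl(\|D_0 v\|_{1,2}+\|v(0,\cdot)\|_{3/2}\bigr)=c\bigl(\|N(v)\|_{1,2}+\|v(0,\cdot)\|_{3/2}\bigr).$$
Using the two-dimensional Sobolev embedding $H^2\hookrightarrow L^{\infty}$ together with the Gagliardo--Nirenberg inequality $\|\nabla v\|_{L^4}^2\leq C\|v\|_{L^{\infty}}\|\nabla^2 v\|_{L^2}$, I would bound each term arising in $\|N(v)\|_{1,2}$ to obtain
$$\|N(v)\|_{1,2}\leq C\|v\|_{L^{\infty}}\|v\|_{2,2}.$$
The decisive feature is that the small prefactor here is $\|v\|_{L^{\infty}}$ and not merely $\|v\|_{2,2}$, which is what will permit direct absorption. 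Finally, monotonicity turns the hypothesis $\|v(0,\cdot)\|_{3/2}<\eps$ into a pointwise bound $\|v\|_{L^{\infty}}\leq\delta(\eps)$ with $\delta(\eps)\to 0$ as $\eps\to 0$: the trace inclusion $H^{3/2}_{bc}\hookrightarrow C^0$ yields $\|v(0,\cdot)\|_{L^{\infty}}\leq C\eps$, and a local primitive $\lambda$ of $\om$ vanishing on $\tL_0\cup\tL_1$ (provided by Lemma~\ref{lem:dar-clean}) gives the energy bound $E(v)\leq C\|v(0,\cdot)\|_{3/2}^2$; exhausting $\R^{+}$ by intervals $[0,T]$ and using the decay of $v\in H^2_{bc}$ at infinity, Theorem~\ref{thm:MON} with $H=0$ (i.e.\ Theorem~\ref{thm:mon1}) forces $v(s,t)$ to remain in an $O(\eps)$-neighborhood of the origin. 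Choosing $\eps$ so small that $cC\delta(\eps)<\tfrac{1}{2}$, the displayed estimate absorbs into $\|v\|_{2,2}\leq 2c\|v(0,\cdot)\|_{3/2}$, which is \eqref{eq:main_inq}.

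The main obstacle is securing the $\|v\|_{L^{\infty}}$-factor (rather than a $\|v\|_{2,2}$-factor) in the nonlinear bound on $\|N(v)\|_{1,2}$: without the Gagliardo--Nirenberg refinement, the quadratic term $(\nabla v)^2$ only yields $\|(\nabla v)^2\|_{L^2}\leq C\|v\|_{2,2}^2$, producing a purely quadratic inequality in $\|v\|_{2,2}$ that would then have to be closed by a continuity/bootstrap argument in $\|v(0,\cdot)\|_{3/2}$ rather than by direct absorption. Coupling the sharper interpolation estimate with the pointwise smallness furnished by monotonicity is what converts the linear half-strip estimate of Chapter~3 into the nonlinear bound \eqref{eq:main_inq}.
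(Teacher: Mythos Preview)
Your argument is correct and is in fact cleaner than the route taken in the paper. The paper does not apply the $W^{2,2}$ estimate of Corollary~\ref{lem:inq_D}(ii) to $v$ directly. Instead it works one derivative lower: after establishing (via monotonicity and exponential decay) that $\|v\|_{L^{\infty}}$ and $\|v\|_{W^{1,2}}$ are controlled by $\|v(0)\|_{H^1}$, it applies the $W^{1,2}$ linear estimate (the case $i=1$ of Theorem~\ref{thm:main_inq}) to $\xi=\p_s v\in\ker D_v$. The dangerous quadratic term $\|\p_s v\|_{L^4}^2$ is then handled not by Gagliardo--Nirenberg but by an auxiliary $L^p$ estimate with $1<p<2$ (Steps~3--4): one proves $\|\xi\|_{W^{1,p}}\leq c(\|D_v\xi\|_{L^p}+\|\xi(0)\|_{1/2})$ by a cutoff splitting that exploits the exponential decay of $\p_s v$, and then uses the two--dimensional Sobolev embedding $W^{1,4/3}\hookrightarrow L^4$ with $p=4/3$. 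A separate argument then recovers $\|\p_t v\|_{W^{1,2}}$ from the equation.

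What you gain with the Gagliardo--Nirenberg interpolation $\|\nabla v\|_{L^4}^2\leq C\|v\|_{L^{\infty}}\|v\|_{W^{2,2}}$ is exactly the elimination of this two--step $L^p$ bootstrap and of the separate treatment of $\p_s v$ and $\p_t v$: the full $W^{2,2}$ norm is obtained in one absorption. The paper's approach, on the other hand, uses only the lower--order ($i=1$) linear estimate and makes the role of exponential decay explicit, which can be convenient in settings where the second boundary condition defining $H^2_{bc}$ (and hence the $i=2$ estimate) is less transparent. Both routes hinge on the same two external inputs: the half--strip linear estimate and the monotonicity bound $\|v\|_{L^{\infty}}\lesssim\|v(0)\|_{H^1}$.
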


\begin{proof}[Proof of Theorem \ref{thm:bound_contr}]
 We do the proof in the case of positive half-infinite strips. 
 The case of negative strips is analogous. 
 We first prove in Steps 1 and 2 that $ W^{1,2} $ norm of $ v $ is bounded 
 above by constant times $ H^1 $ norm at the boundary. 
 This follows by combining monotonicity results, 
 exponential decay and the fact that the curve 
 is contained in a local coordinate chart. \\
  \medskip
{\bf Step 1.} There exist positive constants $ c_1 $ and $ \epsilon_1 $ 
with the following significance. 
If $ \| v(0) \|_{H^1} < \epsilon_1 $ then 
$$
 \| v\|_{L^{\infty} ( \R^+ \times [0,1] ) }< c_1 \| v(0)\|_{H^1} .
$$
The claim follows from the following facts:
\begin{itemize}
 \item [i)] As $ v = f_t(u) $ we have that the $ L^{\infty} $ norm 
of a perturbed holomorphic curve $ v ,$
 $ \|v\|_{L^{\infty}(\R^+ \times [0,1] )}, $
and $ \sup\limits_{s,t} d(u(s,t), p ) $
are equivalent.
\item[ii)] In Theorems \ref{thm:mon1} and \ref{thm:mon2} we have proved that
the energy of $u, \; E(u) $ and $\sup\limits_{t\in [0,1]} d(u(0,t),p) $ 
control the distance $ \sup\limits_{s,t} d(u(s,t), p ) $.
In other words there 
exist $ \hbar $ and $ \delta $ such that the following holds:
If $ E(u) < \hbar $ and $ \sup\limits_t d(u(0,t), p) < \delta $
then 
  $$
  \sup\limits_{s,t} d(u(s,t), p) <  c \cdot  \sup\limits_t d ( u(0,t),p) 
  $$  
  \item[iii)] Both $ E(u) $ and $\sup\limits_t d(u(0,t), p) $ 
are controlled by $ \|v(0)\|_{H^1} $.
As $ u $ is contained in a local Darboux chart we have that 
\begin{align}
 \sqrt{E(u)} \leq c \| v(0)\|_{H^1([0,1])}.
\end{align}
and also $ \sup\limits_t d ( u(0,t), p) \leq c \|v(0)\|_{L^{\infty} } \leq c \| v(0)\|_{H^1} $. 
\end{itemize}
\medskip 

{\bf Step 2.} There exist $ \epsilon_2 $ and $ c_2 $ such that
every $ v $ as in the statement of the theorem with the 
property 
$ \| v(0)\|_{H^1} < \epsilon_2 $ satisfies 
$$ 
\| v\|_{W^{1,2} (\R^+ \times [0,1] )} \leq c_2\| v(0)\|_{H^1}  .
$$
\medskip 

As $ u $ decays exponentially it follows from Proposition \ref{pr:exp-dec} and 
Corollary \ref{ch1_cor0.6} that
 $ d(u(s,t), p) \leq c \sqrt{E(u)} e^{-\mu s } $ for all $ s\geq 1 $. 
 Thus it follows combining the facts from step 1 ,  $ i) $ and  $iii) $ that
 $$ 
 \| v\|_{L^{2}([1,+\infty) \times [0,1] ) }\leq c \| v(0)\|_{H^1}.
 $$
 On the compact piece $ [0,1] \times [0,1] $ we can estimate the $ L^ 2 $ 
norm of $ v $ by 
 its $ L^{\infty} $ norm, hence the claim of step 2 follows for 
 the $ L^2 $ norm of $v $ using the result of step 1). 
 The $ L^2 $ norm of $ \partial_s u $ and $ \partial_s v $ are equivalent, 
 thus using the fact $ iii) $ from Step 1 we have  
 $$ 
 \| \partial_s v \|_{L^2 ( \R^+ \times [0,1] )} 
 \leq c \sqrt{E(u)} \leq c \| v(0)\|_{H^1} .
 $$
As $ \partial_s v + J_t ( v ) (\partial_t v - X_t (v)) = 0 $ 
and $ X_t(0)=0 $ we have 
$$ 
\| \partial_t v \|_{L^2 ( \R^+ \times [0,1] )}
\leq c \Big ( \| v\|_{L^2 ( \R^+ \times [0,1] ) } + 
\| \partial_s v\|_{L^2( \R^+ \times [0,1] )} ( 1 + \|v \|_{L^{\infty}}) \Big ) .
$$
Thus $ \| v(0)\|_{H^1} $ controls also $ L^2 $ norm of $ \partial_t v $.  

In order to prove theorem \ref{thm:bound_contr} 
we still need to estimate 
$ \| \partial_s v \|_{W^{1,2}} $ and $ \| \partial_t v \|_{W^{1,2}} $. 
Notice that $ \partial_s v \in Ker ( D_v ) $,
where $ D_v $ is the linearized operator as in \eqref{eq:lindv}. \\

{\bf Step 3.}
Let $ H^1_{bc} ( \R^+ \times [0,1] ) $ be defined 
as in \eqref{eq:H1bc}.
Let $ D_0 $ be the linearization at $0 $, 
as in \ref{para:lin_op} and let $ 1 < p < 2 $. 
There exists a constant $ c_0> 0 $ such that 
every
$ 
\xi \in H^1_{bc} ( \R^+ \times [0,1], \R^{2n} )
\cap W^{1,p} (\R^+ \times [0,1] ) 
$ 
satisfies the following
\begin{equation}\label{eq:step2}
 \| \xi\|_{W^{1,p} ( \R^+ \times [0,1] )} 
 \leq c_0 \Big ( \| D_0 \xi\|_{L^p} + \| \xi(0)\|_{1/2} \Big ). 
\end{equation}

Define the space $ W^{1,p}_{bc} ( \R \times [0,1])$ as follows 
 \begin{align*}
  W^{1,p}_{bc} ( \R \times [0,1]) = \left  \{ \eta \in W^{1,p} ( \R \times [0,1], \R^{2n} ) \Big | 
 \eta ( s,i) \in \R^n \times \{0\} , \; i= 0,1 \right \} 
 \end{align*}
Then the following inequality follows 
 \begin{equation}\label{eq:st2_p1}
  \| \eta \|_{W^{1,p} ( \R \times [0,1] ) } \leq C \| D_0 \eta \|_{L^p (\R \times [0,1] ) }.
 \end{equation}
For the proof of \eqref{eq:st2_p1} have a look at the $ L^p $ 
estimates in section \ref{sec:lp}. 
Using the inequality \eqref{eq:st2_p1}, we are able to prove Step 3.
 Let $ \xi_0 = \xi(0,\cdot) 
\in [W^{1,2}_{bc}, L^2]_{1/2}$
and let $ \eta_0 (s,t) \in W^{1,2} ( \R^+\times [0,1] ) $ be the extension 
of $ \xi_0 (t) $, we can suppose w.l.o.g that $\eta_0 $ has compact support, 
thus $ \eta_0\in W^{1,p}( \R^+\times [0,1] ) $ and the following 
inequality holds 
\begin{align}\label{eq:st2_p2}
\| \eta_0\|_{W^{1,p} ( \R^+ \times [0,1] ) } 
\leq c \|\eta_0\|_{W^{1,2} ( \R^+ \times [0,1] )} \leq c \| \xi_0\|_{1/2} .
\end{align}
Let $\zeta_0(s,t)= \xi(s,t) - \eta_0(s,t) $. 
Then  $ \zeta_0 \in W^{1,p} ( \R^+ \times [0,1] ) $ and $ \zeta_0 (0, \cdot) = 0 .$
Extend $ \zeta_0 $  by $0 $ to the whole of $ \R \times [0,1] $, i.e. define
$ \zeta (s,t) $ by
\[
 \zeta(s,t) = \begin{cases}
                    \zeta_0 (s,t) , \;\;  s \geq 0 \\
                    0,\;\; s \leq 0 
             \end{cases}
\]
From the inequalities \eqref{eq:st2_p1} and \eqref{eq:st2_p2} 
we obtain 
\begin{align*}
 \| \xi\|_{W^{1,p} ( \R^+ \times [0,1] ) }&
 \leq \| \eta_0\|_{W^{1,p} ( \R^+ \times [0,1] )} + \| \zeta\|_{W^{1,p} ( \R^+ \times [0,1] )} \\
 &\leq \| \eta_0\|_{1,p} + C \| D_0 \zeta\|_{L^p} \\
 & \leq \| \eta_0\|_{1,p}  +  C\| D_0 \xi\|_{L^p} + C \| D_0 \eta_0 \|_{L^p} \\
 & \leq c_0 \Big ( \| D_0 \xi\|_{L^p ( \R^+ \times [0,1] ) } +  \| \xi(0)\|_{1/2} \Big )
\end{align*}
Here the last inequality follows as $ \| D_0 \eta_0\|_{L^p} \leq c \| \eta_0\|_{1,p} $ 
and using the inequality \eqref{eq:st2_p2}. 
\medskip

\noindent{\bf Step 4.} 
Let $ D_v $ be the linearized operator as in \eqref{eq:lindv}.  
There exists $ \delta> 0 $  and $ c_1 > 0 $ 
such that the following holds.
Assume that $ \| v (0)\|_{H^1 ( [0,1] ) } \leq \delta  $, 
then 
\begin{equation}\label{eq:step3}
 \| \xi\|_{W^{1,p} ( \R^+ \times [0,1] )} \leq c_1 
 \left ( \| D_v \xi \|_{L^p} + \| \xi(0)\|_{1/2} \right )
\end{equation}
holds for all 
$ \xi \in W^{1,2}_{bc} ( \R^+ \times [0,1] ) \cap W^{1,p} ( \R^+ \times [0,1] )$.\\
\medskip 
We will use the inequality \eqref{eq:step2} proved in Step 2
and we will prove that the norm of the 
difference $ \| (D_v- D_0) \xi \|_{L^p} $ 
is small provided that $ \| v(0)\|_{H^1([0,1])} $ 
is sufficiently small. Let 
\begin{align*}
 \| (D_v- D_0 ) \xi \|_{L^p} \leq  &\overbrace{\| ( J_t (v) - J_t (0)) \partial_t\xi\|_{L^p}}^{I}  \\
 &  + \overbrace{\| (J_t(v) dX_t(v) -J_t(0) dX_t(0)) \xi \|_{L^p}}^{II} \\ 
 &  +\overbrace{\| (dJ_t(v) \xi) J_t(v) \partial_s v\|_{L^p}}^{III}.     
\end{align*}
Obviously 
$ I, II \leq c \| v\|_{L^{\infty}} \| \xi\|_{W^{1,p}}$ 
and we have proved is Step 1
that $ \| v(0)\|_{H^1} $ controls the $ L^{\infty} $ norm of $v$.
Thus 
$$
I, II \leq c \| v(0)\|_{H^1} \| \xi \|_{W^{1,p}}.
$$
We estimate the $ III $ term in the following way. 
Remember that $ \| \partial_s v\|_{L^{\infty}} $ decays exponentially 
for $ s\geq 1 $, from Proposition \ref{pr:exp-dec} we have
\begin{align}\label{eq:pom3}
 \| \partial_s v \|_{L^{\infty} ( [s,\infty)\times[0,1])}\leq c \sqrt{E(u)} e^{-\mu s}
 \leq c \| v (0)\|_{H^1( [0,1] )} e^{-\mu s}, \;\; s \geq 1 
\end{align}
Let
$ \beta(s) $ be a smooth cut-off function with 
\[\beta(s) =\begin{cases}
             1, \;\; s\leq 1 \\
             0, \;\; s \geq 2
              \end{cases}
\]
then
\begin{align*}
 III &\leq c ( 1 +  \| v\|_{L^{\infty}}) \|( dJ_t(v)\xi) \partial_s v\|_{L^p( \R^+ \times [0,1] )} \\
 & \leq c \| (d J_t(v)\xi)  ( \beta \partial_s  v + 
 ( 1- \beta) \partial_s v) \|_{L^p( \R^+ \times [0,1] )} \\
 & \leq c \left ( \| \beta ( dJ_t(v)\xi) \partial_s v\|_{L^p} 
 + \|( d J_t(v)\xi) ( 1- \beta) \partial_s v) \|_{L^p}\right ) \\
 & \leq c  \left(  \| ( d J_t(v)\xi) \partial_s v\|_{L^p([0,2]\times [0,1] )} +
  c' ( 1 + \| v \|_{L^{\infty}} ) \| v (0)\|_{H^1( [0,1] )}
 \| \xi\|_{L^p} \right) \\ 
 &\leq c ( 1 +\| v\|_{L^{\infty}})  \left ( \| \xi\|_{L^q ( [0,2] \times [0,1] )}
 \| \partial_s v \|_{L^2 ( [0,2]\times [0,1] )}  + \| v (0)\|_{H^1( [0,1] )}
 \| \xi\|_{L^p} \right ) \\ 
 & \leq c ( 1 +\| v\|_{L^{\infty}})
 \| v(0)\|_{H^1} \| \xi\|_{W^{1,p} ( \R^+ \times [0,1] ) } 
\end{align*}
Here the second inequality follows from Step 1 and the assumption 
$ \| v(0)\|_{H^1} < \delta $.
In the penultimate inequality we have $ q = \frac{2p}{2-p} $ and the 
inequality follows from H\"older inequality whereas the last inequality 
is a corollary of the Sobolev embedding
$ W^{1,p} ( [0,2] \times [0,1] ) \hookrightarrow L^q ( [0,2] \times [0,1]) $
and Step 2. 
Thus for sufficiently small $ \delta $ and $ \| v(0)\|_{H^1} < \delta $ 
we have 
\begin{align}\label{eq:st3_pom}
 \| (D_v- D_0) \xi\|_{W^{1,p}( \R^+ \times [0,1] )} 
 \leq \frac{1}{2c_0} \| \xi\|_{W^{1,p} },
\end{align}
 where $ c_0 $ is the constant in \eqref{eq:step2}. 
 Substituting  \eqref{eq:st3_pom} in  \eqref{eq:step2}
we obtain
\begin{align}
 \| \xi\|_{W^{1,p} ( \R^+ \times [0,1] )} & \leq c_0 \left ( \| D_0 \xi\|_{L^p( \R^+ \times [0,1] )} + \| \xi(0)\|_{1/2} \right ) \notag  \\ 
 & \leq  c_0 \left ( \| D_v \xi \|_{L^p } + 
 \| (D_v - D_0)\xi \|_{L^p} +  \|\xi(0)\|_{1/2} \right ) \notag \\
 & \leq 2c_0 \left ( \| D_v \xi \|_{L^p } +  \|\xi(0)\|_{1/2} \right ).
\end{align}
Thus, we have proved Step 4. \\
\medskip 
\noindent {\bf Step 5.} 
There exist $ \epsilon_0> 0 $ and $ c_0 > 0 $ 
such that the following holds. 
If $ \| v(0) \|_{3/2} < \epsilon_0 $ then  
\begin{align}\label{eq:step5}
 \| \partial_s v \|_{W^{1,2} ( \R^+ \times [0,1] ) } < c_0 \| v(0)\|_{3/2} \notag \\
 \| \partial_t v \|_{W^{1,2} ( \R^+ \times [0,1] ) } < c_0 \| v(0)\|_{3/2} 
\end{align}
\begin{proof}
In Lemma \ref{thm:main_inq} we have proved
that the following holds for all
$ \xi \in W^{1,2}_{bc} ( \R^+ \times [0,1] ) $ 
\begin{align}\label{eq:vazna_inq}
 \| \xi\|_{W^{1,2}( \R^+ \times [0,1] ) } \leq 
 c \Big ( \| D_0 \xi \|_{L^2( \R^+ \times [0,1] )} +  \| \xi (0)\|_{1/2} \Big ). 
\end{align}
where $ D_0 $ is the linearization as in \ref{para:lin_op}. 
We want to estimate $ \| \partial_s v\|_{1,2} $. 
Notice that $ \partial_s v $ 
is an element of the kernel of the operator $ D_v .$ 
In order to estimate $ \| \partial_s v \|_{1,2} $ we 
need to estimate the difference 
$ \| (D_v - D_0) \partial_s v \|_{L^2( \R^+ \times [0,1])} $. 
We have 
\begin{align}
 \| (D_v - D_0) \partial_s v \|_{L^2 ( \R^+ \times [0,1] )} 
 \leq & \overbrace{\| ( J_t(v) - J_t(0) )\partial_s\partial_t v \|_{L^2( \R^+ \times [0,1])}}^{I}\notag   \\ 
 & + \overbrace{\| ( J_t (v) dX_t(v) - J_t(0) dX_t(0)) \partial_s v\|_{L^2}}^{II} \notag  \\
 & + \overbrace{\| ( dJ_t(v)\partial_s v) J_t(v) \partial_s v\|_{L^2 ( \R^+ \times [0,1] )}}^{III} 
\end{align}
Obviously it follows from Steps 1 and 2 that
\begin{equation}\label{eq:estI&II}
\begin{split}
I &\leq c \| v\|_{L^{\infty} } \| \partial_s v\|_{W^{1,2} ( \R^+\times [0,1] ) }
\leq c  \| v(0)\|_{H^1}\| \partial_s v\|_{W^{1,2}}\\ 
II &\leq c \| v \|_{L^{\infty} } \|\partial_s v \|_{L^2 ( \R^+ \times [0,1] )} \leq c \| v(0)\|_{H^1}^2 
\end{split}
\end{equation}
whereas  
\begin{align*}
 III \leq c  ( 1 +\| v\|_{L^{\infty} } )^2 \| \partial_s v \|^2 _{L^4 ( \R^+ \times [0,1] ) }. 
\end{align*}
Let $ \beta(s) $ be a smooth cut-off function as 
in Step 4, then 
\begin{align*}
 \| ( 1 - \beta) \partial_s v \|^2_{L^4( \R^+ \times [0,1] )} 
 \leq c\| v(0)\|_{H^1([0,1] )}^2,
\end{align*}
as $ \abs{ \partial_s v(s,t)} \leq c \| v(0)\|_{H^1( [0,1] ) } e^{-\mu s } $
for $ s\geq 1 $. 
On the other hand, from the Sobolev embedding,
$ W^{1, 4/3} ( [0,2] \times [0,1]  ) \subset L^4  ( [0,2] \times [0,1] )$,
we have 
$$
\|\beta \partial_s v \|_{L^4} \leq c \| \beta\partial_s v \|_{W^{1,4/3}} ,
$$
for some positive constant $ c $. 
Suppose that $ \epsilon_0$ is chosen such that $ \| v(0)\|_{H^1} < \delta $,
where $ \delta $ is the constant in the claim of Step 4. 
Substituting $ \beta \partial_s v $ in the 
inequality \eqref{eq:step3}  with $ p= 4/3$ we obtain
\begin{align*}
 \| \beta \partial_s v \|_{W^{1,4/3} ( \R^+ \times [0,1] ) } 
 &\leq c \Big ( \| D_v ( \beta \partial_s v ) \|_{L^{4/3}} + \| \partial_s v (0)\|_{1/2} \Big ) \\
 &\leq c \Big ( \| \beta D_v ( \partial_s v ) \|_{L^{4/3}} +  \| \dot{\beta} \partial_s v \|_{L^{4/3}} + 
 \| \partial_s v (0)\|_{1/2} \Big )\\
  &\leq c \Big ( \| \partial_s v \|_{L^{2}( [0,2] \times [0,1] )} + \| \partial_s v(0) \|_{1/2} \Big ) 
\end{align*}
Now we can estimate the $ III $ term with
\begin{align*}
 III  \leq c &  ( 1 +\| v\|_{L^{\infty}})^2 
 \cdot \| ( \beta + ( 1-\beta)) \partial_s v \|^2_{L^4} 
  \\   \leq c  &( 1 +  \| v \|_{L^{\infty}})^2 \Big ( \| \beta\partial_s v \|^2_{L^4}   + \| ( 1-\beta)\partial_s v\|_{L^4} \Big ) \\
  \leq c &  ( 1 + \| v\|_{L^{\infty}})^2 \Big ( \| \partial_s v \|_{L^{2}( [0,2] \times [0,1] )}^2 + 
  \| \partial_s v(0) \|^2_{1/2} + \| v(0)\|_{H^1([0,1] )}^2 \Big )\\
 \leq c  & ( 1 + \| v\|_{L^{\infty}})^2 \Big ( \| v(0)\|_{H^1}^2 + \| \partial_s v(0) \|^2_{1/2}  \Big ). 
\end{align*}
Finally we estimate $\| \partial_s v(0) \|_{1/2}$. As $ v $ is a solution of 
the equation \eqref{eq:per_hol} we have: 
\begin{equation}\label{eq:parus}
\begin{split}
 \| \partial_s v (0) \|_{1/2} =&\| J_t ( v(0)) ( \partial_t v(0) - X_t (v(0))) \|_{1/2} 
\\
 \leq & \overbrace{\| J_t(v(0)) \partial_t v(0) \|_{1/2}}^a +
 \overbrace{\|J_t(v(0))X_t( v(0)) \|_{1/2}}^b 
\end{split}
\end{equation}
To estimate the terms $ a$ and $ b $ we shall use the fact that
\begin{align*}
 \| fg \|_{1/2} \leq c \| f\|_{H^1} \cdot \| g\|_{1/2}
\end{align*}
for $ f \in H^1( [0,1] , \R) $ and $ g \in [W^{1,2}_{bc}, L^2]_{1/2}$. 
This holds 
as multiplication by $f $ is a continuous linear map on both
$ W^{1,2}_{bc} $ and $ L^2 $, hence it is also continuous on $ [W^{1,2}_{bc}, L^2]_{1/2} $.  
Thus, as $ X_t (0) = 0 $ it follows that 
\begin{align*}
& a \leq  c ( 1 + \| v(0)\|_{H^1} ) \| v(0)\|_{3/2} \\
& b \leq c ( 1 + \|v(0)\|_{H^1} ) \| v(0)\|_{H^1}
\end{align*}
As we can w.l.o.g. assume that $ \|v(0)\|_{H^1} $ is small 
we obtain from the previous inequality and \eqref{eq:parus}
\begin{equation}\label{eq:pom4}
 \| \partial_s v (0)\|_{1/2} \leq c \| v(0)\|_{3/2}.
\end{equation}
Substituting $ \p_s v $ in the inequality \eqref{eq:vazna_inq} and using 
the estimates for $ I,II $ and $ III $ term we obtain 
\begin{align}\label{eq:est_dsv}
 \| \partial_s v \|_{W^{1,2} ( \R^+ \times [0,1] )}
  &\leq c \Big ( \| D_0 \partial_s v \|_{L^2} + \| \partial_s v (0)\|_{1/2} \Big ) \notag \\
 & \leq c \Big ( \| D_v \partial_s v \|_{L^2} + \| ( D_v - D_0)\partial_s v\|_{L^2} +
 \| v(0)\|_{3/2} \Big )\notag  \\
 & \leq c \Big ( I + II + III + \| v(0)\|_{3/2} \Big ) \notag \\
 & \leq c  \Big ( ( 1 + \| v\|_{L^{\infty}})^2 \| v(0)\|_{H^1} + \| v(0)\|_{3/2} \Big ). 
\end{align}
Here the second inequality follows from \eqref{eq:pom4}.
From the inequality \eqref{eq:est_dsv} 
and Step 1 follows 
the inequality \eqref{eq:step5} for $ \partial_s v $. 
As $ v $ is the solution of \eqref{eq:per_hol}, we have 
\begin{align*}
 \| \partial_t v \|_{W^{1,2} ( \R^+\times [0,1] ) } \leq 
 \| X_t( v)\|_{W^{1,2}( \R^+\times [0,1] )  } +
 \| J_t(v)\partial_s v \|_{W^{1,2} ( \R^+ \times [0,1] ) }.
\end{align*}
As $ X_t(0) = 0 $ and $ X_t $ is smooth it 
follows that $ \| X_t(v)\|_{1,2} \leq c  ( 1 + \| v \|_{L^{\infty}} )\| v\|_{1,2} $. 
Also,
\begin{align*}
 \| J_t(v) \partial_s v \|_{W^{1,2}}
 \leq  &\| J_t(v)\partial_s v \|_{L^2} +  \| \partial_t J_t(v) \partial_s v \|_{L^2 } \\
 & +
 \| (dJ_t(v) \partial_t v )\partial_s v \|_{L^2}   +\| ( dJ_t(v) \partial_s v ) \partial_s v \|_{L^2}  
   \\ & + \| J_t(v) \partial^2_s v \|_{L^2} + 
 \| J_t ( v ) \partial_s \partial_t v \|_{L^2 } \\
 \leq & c ( 1 + \| v\|_{L^{\infty} }) \| \partial_s v \|_{W^{1,2} ( \R^+ \times [0,1] )} +  \| (dJ_t(v) \partial_t v )\partial_s v \|_{L^2} \\
  & + \| ( dJ_t(v) \partial_s v ) \partial_s v \|_{L^2}  
\end{align*}
Notice that 
$$
\| (dJ_t(v) \partial_t v )\partial_s v \|_{L^2}
\leq \| (dJ_t(v) X_t(v)) \partial_s v \|_{L^2}  + \|  ( dJ_t(v) J_t(v) \partial_s v ) \partial_s v \|_{L^2} 
.
$$
Using the same type of estimates as for the  $ III $ term we obtain
the analog of \eqref{eq:est_dsv} for $ \partial_t v $, i.e. 
\begin{align}
 \|\partial_t v\|_{W^{1,2} (\R^+ \times [0,1] )} \leq  c  \Big ( ( 1 +\| v\|_{L^{\infty}} )^2
 \| v(0)\|_{H^1} + \| v(0)\|_{3/2} \Big ).
\end{align}

\end{proof}
The steps 1-5 prove the theorem \ref{thm:bound_contr}.
\end{proof}
\begin{remark}\label{rem:fin-str}\rm
 The analogous statement as in Theorem \ref{thm:bound_contr} holds for 
 finite strips. Namely, 
 there exist $ \epsilon > 0 , c> 0 $ 
 such that the following holds for every $ T $.      
 If $ v =f_t(u) \in H^2_{bc}( [-T,T] \times [0,1], \R^{2n} ), $
  as in local setup \ref{para:loc_setup} satisfies
 $$
 \|v(-T)\|_{3/2} + \| v(T)\|_{3/2} < \epsilon ,
 $$ 
 then 
  $$ 
  \| v\|_{W^{2,2} ( [-T,T]\times [0,1])}  <  c ( \|v(-T)\|_{3/2} + \| v(T)\|_{3/2} ) 
  $$
  The proof is analog to the proof of proposition \ref{thm:bound_contr} 
  and we shall not repeat it. 
\end{remark}

\end{PARA}
\section{Convergence theorem}\label{SEC:conv} 
In this section we prove Theorem \ref{thm:main_thm3}.

\begin{PARA}[{\bf Hardy submanifolds}]\label{para:hardy}\rm 
In Definition \ref{def:emb_mfld} we have introduced 
manifolds $ \sM^{\infty}_{\epsilon} $ and $ \sM^T_{\epsilon} $ and we 
have proved in \ref{para:pr_thm2} that these manifolds are embedded submanifolds 
of the Hilbert manifolds of paths $ \sP^{3/2} \times \sP^{3/2} $. Denote with 
$ \sW^{\infty}_{\epsilon} $ and $ \sW^T_{\epsilon} 
$ the images of $ \sM^{\infty}_{\epsilon} $ and 
$ \sM^T_{\epsilon} $  via the maps $ i^{\infty} $ and $ i^{T} $. 
\begin{align}\label{eq:maps_i}
 i^T : \sM^T_{\epsilon} \rightarrow \sW^T_{\epsilon} 
 \subset \sP^{3/2} \times \sP^{3/2}, \;\; i^T ( u) = ( u(-T, \cdot), u(T,\cdot)) \notag \\
 i^{\infty} : \sM^{\infty}_{\epsilon} \rightarrow \sW^{\infty}_{\epsilon}\subset \sP^{3/2} \times \sP^{3/2} , \;\; ( u^-, u^+ ) \mapsto (u^- (0, \cdot), u^+ (0, \cdot)) 
\end{align}

These are {\bf Hardy} submanifolds of the path space 
and we can think of them as of those paths that extend holomorphically 
to the corresponding strips. 

\begin{align*}
 \sW_{\epsilon}^{\infty} = \Big \{ 
 (\gamma^-, \gamma^+)\in \cU_{\epsilon}(p) \times \cU_{\epsilon}(p) \; 
 \Big |& \;\; \exists  (u^+, u^- )\in \sM^{\infty}_{\epsilon}, \;\;
   u^{\pm} (0,\cdot) = \gamma^{\mp} (\cdot) \Big \} \\
 = \Big \{ (\gamma^-, \gamma^+)\in \cU_{\epsilon}(p) \times \cU_{\epsilon}(p) 
 \;\Big| & \exists u^{\pm} \in H^2_{loc} ( \R^{\pm} \times [0,1] , N ) \\ & 
 u(s,i) \in L_i, \;\; i=0,1, \; \forall s \\
 & \overline{\partial}_{J_t} u^{\pm} =0, \;\; E(u^{\pm} ) < \hbar \\ & 
 u^{\pm} (0, \cdot ) = \gamma^{\mp} (\cdot) \Big \} 
 \end{align*}
and similarly 
\begin{align*}
 \sW_{\epsilon}^{T} = \Big \{ 
 (\gamma^-, \gamma^+)\in \cU_{\epsilon}(p) \times \cU_{\epsilon}(p) \; 
 \Big |& \;\; \exists  u \in \sM^{T}_{\epsilon} ,
 \;\;  u({\pm} T,\cdot) = \gamma^{\pm} (\cdot) \Big \}.
 \end{align*}
 We prove that 
 $ \sW^T_{\epsilon} $ converge to $\sW^{\infty}_{\epsilon} $ 
 in $ C^1 $ topology. 
 
\end{PARA}
\begin{PARA}[{\bf Notion of convergence of Hilbert manifolds}]\label{para:conv}\rm
We first explain the notion of convergence of certain 
Hilbert manifolds.
Let $ W^T $ and $ W^{\infty} $ be some Hilbert manifolds.
 There are different ways one can think about $ C^1 $
 convergence $ W^T \rightarrow W^{\infty} $. 
 One way is to think of $W^T $ as of the sections
 of the normal bundle of $ W^{\infty} $ and to 
 prove that these sections converge to the zero section.
 Another way is to think of representation of 
 $ W^T $ and $ W^{\infty} $ in a local chart and to prove that locally
 $ W^{T} $ converges to $ W^{\infty} $. 
 We formulate this more precisely in the next definition.
  
\begin{definition}\label{def:conv}
 Let $ P $ be a Hilbert manifold modeled 
 on Hilbert space $ H $ and let $ W^{\infty} $ 
 and $ W^T $ be its submanifolds. 
 We say that $ W^T \rightarrow W^{\infty} , \; T\rightarrow + \infty $ 
 if for all $ x_0 \in W^{\infty} $ there exist the following:
 
\begin{itemize}
 \item[1)] A splitting $ H = H_0 \oplus H_1 $, 
 where $ H_0 $ and $ H_1 $ are closed subspaces of $H$.  
  \item[2)] A local coordinate chart $ \phi: U \rightarrow H $, 
  where $ U \subset P $ is an open neighborhood 
  of $ x_0 $, such that $ \phi(x_0) = 0 $ and
  $$ \phi(U) = \{ \xi_0 + \xi_1 : \;\; \xi_0 \in U_0 , \; \xi_1 \in U_1 \} ,$$
  where $ U_0\subset H_0 $ and $ U_1\subset H_1 $
  are open neighborhoods of $ 0 $. 
\item[3)] A smooth map $ f^{\infty} : U_0 \rightarrow H^1$ such that
  $$ \phi( U \cap W^{\infty} ) =
  \{ \xi + f^{\infty} ( \xi) : \;\; \xi\in U_0 \} .$$  
 A family of smooth maps $ f^T: U_0 \rightarrow H^1 $
 such that  for all $ T\geq T_0 $ 
 $$ \phi(U \cap W^T) = \{ \xi + f^T (\xi ): \; \xi\in U_0 \} .$$
\item[4)] The limits $\lim\limits_{T\rightarrow \infty} \|f^T - f^{\infty} \|_{C^1} = 0 $
\end{itemize}

 \end{definition}
 
  In the proof of the Theorem \ref{thm:main_thm3} we shall 
 often use the inverse function theorem, 
 we state it in the form that we shall use in the proof. 
 
 \begin{lemma}[{\bf Inverse function theorem}] \label{chlem1}
 
 Let X and Y be Banach spaces, $B_{r}(x_0) \subset X $ 
 a ball of radius $r$ and $f: B_{r}(x_0)\rightarrow Y $ 
 a continuously differentiable function 
 that satisfies the following:
 
\begin{itemize}
\item[1)] $df(x_0) $ is bijective and the operator norm 
$\|df(x_0)^{-1}\| \leq c $.
\item[2)]  $\|df(x)- df(x_0)\|\leq \frac{1}{2c}$, for all $x\in B_{r}(x_0) $.
\end{itemize}

 Then $f: B_{r}(x_0) \stackrel{diff}{\longrightarrow}f(B_{r}(x_0))\subset  Y $ 
 and $B_{r / 2c}(f(x_0))\subset f(B_{r}(x_0)) $.  
\end{lemma}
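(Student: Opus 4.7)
The plan is to reduce this to the Banach fixed point theorem by rewriting $f(x)=y$ as a fixed point equation and exploiting hypotheses (1) and (2) to produce a contraction.

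First, I would introduce, for each $y\in Y$, the auxiliary map
\[
\Phi_y : B_r(x_0)\to X,\qquad \Phi_y(x) := x - df(x_0)^{-1}\bigl(f(x)-y\bigr),
\]
so that fixed points of $\Phi_y$ are exactly solutions of $f(x)=y$. The point of this choice is that $d\Phi_y(x) = \mathrm{Id}-df(x_0)^{-1}df(x) = df(x_0)^{-1}\bigl(df(x_0)-df(x)\bigr)$, so by hypotheses (1) and (2)
\[
\|d\Phi_y(x)\|\leq c\cdot\tfrac{1}{2c}=\tfrac{1}{2}\qquad\forall\,x\in B_r(x_0).
\]
By the mean value inequality this gives the contraction estimate $\|\Phi_y(x)-\Phi_y(x')\|\leq\tfrac12\|x-x'\|$ for all $x,x'\in B_r(x_0)$.

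Next I would check self-mapping. For $y\in B_{r/(2c)}(f(x_0))$ and $x\in \overline{B_{r/2}(x_0)}$, one has
\[
\|\Phi_y(x)-x_0\|\leq \|\Phi_y(x)-\Phi_y(x_0)\|+\|\Phi_y(x_0)-x_0\|
\leq \tfrac12\|x-x_0\|+c\|f(x_0)-y\|<\tfrac{r}{4}+\tfrac{r}{2}<r,
\]
so $\Phi_y$ maps the complete metric space $\overline{B_{r/2}(x_0)}$ (or a similar closed ball contained in $B_r(x_0)$; the exact radius needs a small bookkeeping adjustment) into itself. Banach's fixed point theorem then produces a unique fixed point $x=g(y)\in B_r(x_0)$, which gives surjectivity of $f$ onto $B_{r/(2c)}(f(x_0))$. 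Injectivity of $f$ on the preimage follows from the contraction estimate: if $f(x)=f(x')$ then $\Phi_{f(x)}(x)=x$ and $\Phi_{f(x)}(x')=x'$, and uniqueness of fixed points forces $x=x'$.

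Finally I would establish that $g=f^{-1}$ is continuously differentiable. Continuity of $g$ comes from the identity $g(y)-g(y')=\Phi_y(g(y))-\Phi_{y'}(g(y'))+df(x_0)^{-1}(y-y')$ combined with the $\tfrac12$-contraction bound, yielding $\|g(y)-g(y')\|\leq 2c\|y-y'\|$. Once $g$ is continuous, differentiability with $dg(y)=df(g(y))^{-1}$ follows by the standard argument: hypothesis (2) ensures that $df(x)$ remains invertible on $B_r(x_0)$ (since $\|\mathrm{Id}-df(x_0)^{-1}df(x)\|\leq 1/2$ and a Neumann series argument), and the expansion $f(g(y)+h)-f(g(y))=df(g(y))h+o(h)$ can be inverted. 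I expect the main obstacle to be only cosmetic: choosing the right sub-ball on which to apply the contraction principle so that the solution $x$ genuinely lies in the open ball $B_r(x_0)$ and the stated inclusion $B_{r/(2c)}(f(x_0))\subset f(B_r(x_0))$ holds on the nose rather than up to a factor.
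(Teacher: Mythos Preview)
The paper does not prove this lemma; it merely states it as the form of the inverse function theorem to be used, treating it as a standard result. Your proposed proof via the Newton--type contraction $\Phi_y(x)=x-df(x_0)^{-1}(f(x)-y)$ and the Banach fixed point theorem is exactly the standard argument and is correct. Your own remark about the bookkeeping is apt: for a given $y$ with $\|y-f(x_0)\|<r/(2c)$ pick $\rho<r$ with $\|y-f(x_0)\|<\rho/(2c)$ and apply the contraction on $\overline{B_\rho(x_0)}\subset B_r(x_0)$; then $\|\Phi_y(x)-x_0\|\le \tfrac12\rho+c\|y-f(x_0)\|<\rho$, and the fixed point lands in the open ball $B_r(x_0)$ as required.
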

 
\end{PARA}

\begin{proof}[{\bf Proof of Theorem \ref{thm:main_thm3}}]
Let  $ \sW^{\infty}_{\epsilon} $ and $ \sW^T_{\epsilon} $ 
be as in \ref{para:hardy}. Remember that they are embedded 
submanifolds of the Hilbert manifold 
of paths $ \sP^{3/2} \times \sP^{3/2} $. 
Let  
\begin{align}\label{man_loc}
 &\cW^{\infty}_{\epsilon} = (\Phi_p \times \Phi_p)( \sW^{\infty}_{\epsilon})\notag \\
 &\cW^{T}_{\epsilon} = (\Phi_p \times \Phi_p) ( \sW^T_{\epsilon} ) 
 \end{align}
 where 
 $ 
 \Phi_p : \sU_p \rightarrow \sW_p \subset H^{3/2}_{bc} = E
 $
 is a local chart as in  \ref{ch5_rem1}. 
 We devide the proof in two parts.
 In the first part, i.e. in \ref{para:construction},
 we shall construct maps
 $$ 
f^{\infty}, f^T : B_{\rho_0}(0) \rightarrow  E , 
$$ 
which are diffeomorphisms onto their images and  
$ B_{\rho_0}(0),$ 
is an open ball of radius $ \rho_0 $ centered at 
$ 0 $ in $ E $. The set
$ \cW^{\infty}_{\epsilon}  $ is an open subset of 
$ \text{ graph } ( f^{\infty}) $ and 
also $ \cW^T_{\epsilon} $ is an open subset of $\text{graph} ( f^T) $. 
In the  second part i.e. in \ref{para:convergence}
we prove the convergence 
$$ 
f^T \stackrel{C^1}{\rightarrow} f^{\infty}, 
\;\;T \rightarrow \infty 
$$ 
on some smaller neighborhood $B_{\rho}(0)\subset B_{\rho_0}(0) $.
 And finally the convergence of maps 
$ f^T \rightarrow f^{\infty} $ will imply the convergence of submanifolds
$\cW^{T}_{\epsilon}\rightarrow \cW^{\infty}_{\epsilon} $.  
\end{proof}

\begin{PARA}[{\bf Construction of the maps $ f^{T} $ and $ f^{\infty} $ }]\label{para:construction}\rm
 
Let $ D_0 $ be the linearization at $ 0 $ 
as in \ref{para:lin_op}. 
Then $ D_0 = \partial_s + A $ 
and the operator 
$$ 
A : H^1_{bc} ( [0,1] ) \rightarrow L^2 ( [0,1] ), 
\; A \xi =  J_t(0)\partial_t \xi -J_t(0)dX_t(0)\xi 
$$ 
satisfies ~(HA) in \ref{para:bij_lin} as it is self adjoint 
with resepct to the scalar product given by \eqref{eq:sc_prod}. 
Let $ E= H^{3/2}_{bc} $, $ E^{\pm} $ be as in Remark \ref{rem:lines}, 
corresponding to 
the operator $A$, and let $ \pi^{\pm} $ 
be as in \eqref{eq:proj}.  
Abbreviate
$$ Z^{\pm} = \R^{\pm}\times [0,1], \qquad Z^T = [-T, T]\times [0,1] . $$
Let $ H^i_{bc} ( Z^{\pm} ) $ and $ H^i_{bc}( Z^T ), i=1,2 $ be defined as in \eqref{eq:H1bc}
and \eqref{eq:H2bc}.
We define maps $ \mathcal{F}^{\infty} $ and $ \mathcal{F}^ T$ as follows 
\begin{align}\label{eq:maps_f}
 & \mathcal{F}^{\infty} : H^{2}_{bc} ( Z^+) \times H^{2}_{bc} ( Z^- ) 
\longrightarrow H^{1}_{bc} ( Z^+ )
\times H^{1}_{bc} (Z^- ) \times E^+\cap E\times E^-\cap E \notag \\
 &\mathcal{F}^{\infty} (u^-, u^+)= 
 \Big (\overline{\partial}_{J_t,X_t}  u^- ,
 \overline{\partial}_{J_t, X_t}  u^+,  \pi^+(u^-(0,\cdot)),  \pi^-(u^+(0,\cdot)) \Big)
 \end{align}
 \begin{align}\label{eq:mapft}
 & \mathcal{F}^ T :H^{2}_{bc} ( Z^T) \longrightarrow H^{1}_{bc} ( Z^T )
 \times E^+ \times E^-  \notag \\
& \mathcal{F}^ T ( u) = 
\Big ( \overline{\partial}_{J_t, X_t}u, \pi^+(u(-T,\cdot)), \pi^-(u(T,\cdot)) \Big ).
\end{align}
Here $ \overline{\partial}_{J_t,X_t} $ is as in \eqref{eq:per_hol}. 

{\bf Step 1: } 
\emph{ Let $\cF^{\infty} $ and $ \cF^T $ be as in \eqref{eq:maps_f} and 
\eqref{eq:mapft} 
There exist $ c_0> 0 $ such that
the maps $ d\mathcal{F}^{\infty}(0) $ and 
$ d\mathcal{F}^ T(0)$  
are bijective and have uniformly bounded inverses }

\begin{align}\label{eq:est_inv}
  \| d\mathcal{F}^{\infty} (0)^{-1}\| \leq c_0 , \;\;\; \|d\mathcal{F}^T(0)^{-1}\| \leq c_0 ,
\end{align}
\begin{proof}
Notice that the function
 $ \mathcal{F}^{\infty} = \mathcal{F}^+ \times \mathcal{F}^-, $  
 where 
\begin{align*}
 &\mathcal{F}^{\pm} : H^{2}_{bc} ( Z^{\pm} )
 \rightarrow H^{1}_{bc} ( Z^{\pm}) \times E^{\pm}\cap E  \\
& \mathcal{F}^{\pm} ( u) = 
(\overline{ \partial}_{J_t, X_t} u, \pi^{\pm} ( u(0, \cdot) )  
\end{align*}
 The linearizations of 
 $ \cF^{\infty} $ and $ \cF^T $ at $0$ are given by:
\begin{align*}
 &d\mathcal{F}^{\infty} (0)(\hat{u}^-, \hat{u}^+ ) =
 ( D_0 \hat{u}^-, D_0 \hat{u}^+ , 
 \pi^+ (\hat{u}^-(0, \cdot)), \pi^- (\hat{u}^+ (0, \cdot))\\
 &d\mathcal{F}^T(0) ( \hat{u} ) = \Big ( D_0 \hat{u}, 
 \pi^+ (\hat{u}(-T, \cdot)), \pi^- ( \hat{u} (T, \cdot)) \Big ).
\end{align*}
The fact that the 
maps $ d\cF^{\infty}(0) $ and $ d\cF^T(0) $
are bijective and the inequality \eqref{eq:est_inv} follow 
directly from Theorem \ref{thm:main_inq}
\end{proof}

{\bf Step 2: [Quadratic estimates]} 
\emph{Let $ c_0 $ be  the constant as in Step 1. 
There exist $ r_0>0 $ such that for all $ (u^-, u^+) 
\in H^{2}_{bc} ( Z^+) \times H^{2}_{bc} ( Z^- ) $ and all 
$ u\in   H^{2}_{bc} ( Z^T),$ which satisfy 
 \begin{align}\label{eq:w22est}
  \|u^-\|_{2,2} + \|u^+\|_{2,2} < r_0, \;\; \|u\|_{2,2} < r_0
 \end{align}
the following holds.}
\begin{align}\label{eq:dif_der}
 \| d\cF^{\infty} ( u^-, u^+) - d\cF^{\infty} (0,0) \|
  \leq \frac{1}{2c_0},\;\; \;\; \| d\cF^{T}(u) -  d\cF^{T}(0) \|
  \leq \frac{1}{2c_0},
\end{align}

\begin{proof}
Notice that 
\begin{align*}
 \|d\mathcal{F}^T ( u) (\hat{u} )   - d\mathcal{F}^T ( 0) (\hat{u} ) \|
  = &  \| (D_u- D_0) \hat{u}\|_{1,2}\\
\| (d\cF^{\infty}( u^+, u^-) -  d\cF^{\infty}(0))(\hat{u}^+,\hat{u}^-)\| = & \|(D_{u^+} -D_0)(\hat{u}^+)\|_{1,2}  \\
& 
+ \|( D_{u^-} - D_0)(\hat{u}^-)\|_{1,2},
\end{align*}
Hence, we need to estimate the difference $ \| (D_u- D_0) \hat{u}\|_{1,2} $, for 
$ u \in H^2_{bc} ( I \times [0,1] )$, where $ D_u $ is the linearized operator
as in \eqref{eq:lindv}, i.e. 
$$ 
D_u \hat{u} = \partial_s \hat{u} + J_t(u) (\partial_t \hat{u} - dX_t(u) \hat{u}) +
( dJ_t(u) \hat{u} ) (\partial_t u-  X_t(u)).  
$$ 
We prove that for all 
$ u, \; \hat{u} \in H^2_{bc} ( \R^+ \times [0,1] ) $ 
with 
$ \|u\|_{W^{2,2} ( \R^+ \times [0,1] ) }\leq 1 $ 
we have 
\begin{equation}\label{eq_quad_est}
 \| (D_u - D_0) \hat{u}\|_{1,2} \leq c \| u\|_{2,2} \|\hat{u}\|_{2,2}
\end{equation}
for some positive constant $ c $. 
The same inequality holds for finite strips and the proof
is analogous.\\

\indent Let $ \beta_i: \R\rightarrow [0,1], 
\;\; i\geq 0 $ be 
smooth cut-off functions, with the properties
\begin{itemize}
 \item [a)] 
 $\; \text{sup}(\beta_i) \subset [i,i+2], \; i\geq 1, $  
 and $ \beta_0(s) = 0$ for $ s\geq 2$
 
 \item[b)] 
 $ \sum\limits_{i=0}^{+\infty} \beta_i(s) = 1 $ 
 and 
 $ \|\dot{\beta}_i (s)\|_{C^0} \leq 1 , \;i \geq 0$. 
\end{itemize}
Let 
$ \hat{u} = \sum\limits_{i=0}^{\infty} \beta_i 
\hat{u} = \sum\limits_{i=0}^{+\infty} \hat{u}_i $.
We have 
\begin{align*}
 \| ( D_u - D_0) (\sum\limits_i \hat{u}_i) \|_{W^{1,2} ( [0, +\infty) \times [0,1] )} 
&  = \| \sum\limits_{i} ( D_u - D_0) \hat{u}_i \|_{W^{1,2} ( [0,+\infty) \times [0,1])} \\
 & \leq \sum\limits_{i} \| ( D_u - D_0) \hat{u}_i \|_{W^{1,2} ( [i,i+2]\times [0,1])} 
\end{align*}
Suppose that the inequality 
\begin{equation}\label{eq_Si_est}
 \| ( D_u - D_0) \hat{u}_i \|_{W^{1,2}([i, i+2] \times[0,1])} 
 \leq c \|u\|_{W^{2,2}( [ i, i+2] \times [0,1] ) }
 \cdot \|\hat{u}_i\|_{W^{2,2}( [i,i+2] \times [0,1] )} .
\end{equation}
holds for some positive constant $ c $ and for all $i\geq 0$. 
With this assumption we have
\begin{align}
 \| ( D_u - D_0) \hat{u} \|_{2,2} &
 \leq \sum\limits_{i} \| ( D_u - D_0) \hat{u}_i \|_{W^{1,2} ( [i,i+2]\times [0,1])} \notag \\
 & \leq c \sum\limits_i  \|u\|_{W^{2,2}( [ i, i+2] \times [0,1] ) }
 \cdot \|\hat{u}_i\|_{W^{2,2}( [i,i+2] \times [0,1] )}  \notag \\ 
 & \leq  c\sqrt{\sum\limits_{i\geq 0}\|u\|^2_{W^{2,2}( [ i, i+2] \times [0,1] ) }}
 \sqrt{\sum\limits_{i\geq 0}\|\hat{u}_i\|^2_{W^{2,2}( [i,i+2] \times [0,1] )}} \notag \\
 & \leq c' \| u\|_{W^{2,2} ( [0, + \infty) \times [0,1] ) } 
 \| \hat{u}\|_{W^{2,2} ( [0, + \infty) \times [0,1] ) }
\end{align}
It remains to prove the inequality \eqref{eq_Si_est}
under the assumption 
$ \| u\|_{W^{2,2} ( \R^+ \times [0,1] )} \leq 1 $. 
Let $ \Omega = [ i, i+2] \times [0,1] $, then

\begin{align}\label{eq:dif_sob}
 \| (D_u  - D_0) \hat{u}\|_{W^{1,2}( \Omega)} \leq &
 \overbrace{\|(J_t(u) - J_t(0))\partial_t \hat{u}\|_{1,2}}^{I}  \notag \\
 & + \overbrace{\|(J_t(u)dX_t(u) - J_t(0)dX_t(0))\hat{u}\|_{1,2}}^{II} \notag\\ 
 & + \overbrace{\|(dJ_t(u)\hat{u}) \partial_t u\|_{1,2}}^{III} + 
 \overbrace{\|(dJ_t(u)\hat{u})X_t(u)\|_{1,2}}^{IV}
\end{align}

 We shall show how to estimate terms $ I $ and $ III $, 
 and analogously one can estimate the $ II $ and $ IV $ 
 term of \eqref{eq:dif_sob}. 
 
 \begin{align}
  I \leq & \overbrace{\|( J_t(u) - J_t(0)) \partial_t \hat{u} \|_{L^2}}^{A} + 
  \overbrace{\| (\partial_tJ_t(u) - \partial_tJ_t(0)) \hat{u} \|_{L^2}}^{B} 
   +  \overbrace{\|( dJ_t(u)\partial_t u) \partial_t \hat{u} \|_{L^2}}^{C} 
  \notag \\ & +
   \overbrace{\|( J_t(u) - J_t(0)) \partial^2_t \hat{u} \|_{L^2}}^{D} 
  + \overbrace{\|( dJ_t(u) \partial_s u) \partial_t \hat{u} \|_{L^2}}^{E} + 
  \overbrace{\|( J_t(u) - J_t(0)) \partial_t\partial_s \hat{u} \|_{L^2}}^{F}
\end{align}

The terms $ A,B $ $D$ and $F $ can be estimated by 
$$ 
c \| u\|_{L^{\infty} ( \Omega)} \|\hat{u}\|_{W^{2,2} (\Omega)} 
\leq c \| u\|_{W^{2,2} (\Omega)} \| \hat{u}\|_{W^{2,2} ( \Omega ) } .
$$
whereas the terms $ C $ and $ E $ can be estimated as follows 
\begin{align*}
 C &\stackrel{cs}{\leq} c ( 1 + \| u\|_{L^{\infty} } )\|\partial_t u \|_{L^4} 
 \| \partial_t \hat{u} \|_{L^4} \leq
 c' \|\partial_t u \|_{W^{1,2} ( \Omega )} \| \partial_t \hat{u} \|_{W^{1,2} ( \Omega ) } \\
    & \leq c' \| u\|_{W^{2,2} ( \Omega ) } \| \hat{u} \|_{W^{2,2} ( \Omega ) }.
\end{align*}
and the second inequality follows from 
the Sobolev embedding 
$ W^{1,2}(\Omega ) \hookrightarrow L^4( \Omega ) $. 
Thus we have proved the desired inequality for the term $I$
$$ I \leq c \| u\|_{W^{2,2}( \Omega)} \|\hat{u}\|_{W^{2,2}(\Omega)} .$$
We prove an analog inequality for $III$. 
\begin{align*}
 \| ( d J_t(u) \hat{u} ) \partial_t u \|_{W^{1,2} ( \Omega )} & \leq \overbrace{\| ( d J_t(u) \hat{u} ) \partial_t u\|_{L^2}}^a + 
 \overbrace{\| (\partial_t  d J_t(u) \hat{u} ) \partial_t u \|_{L^2}}^b  \\ 
 &+\overbrace{\|( d^2 J_t(u) \partial_t u ) \hat{u} ) \partial_t u\|_{L^2}}^d + \overbrace{\| (( d J_t(u)\partial_s u) \hat{u} ) \partial_t u \|_{L^2}}^e\\ & \overbrace{ \|( d J_t(u) \partial_t\hat{u} ) \partial_t u \|_{L^2}}^f  + \overbrace{ \|( d J_t(u) \partial_s\hat{u} ) \partial_t u \|_{L^2}}^g + \\ & + \overbrace{ \|( d J_t(u) \hat{u} ) \partial^2_t u \|_{L^2}}^h + \overbrace{ \|( d J_t(u) \hat{u} ) \partial_t\partial_s u \|_{L^2}}^k
\end{align*}
We have 
\begin{align*}
 a,b \leq c' ( 1 + \| u \|_{L^{\infty} } ) \| \hat{u} \|_{L^{\infty} } \| \partial_t u \|_{L^2} \leq c \| \hat{u} \|_{W^{2,2}( \Omega )} \| u \|_{W^{2,2} ( \Omega ) }. 
\end{align*}
and also 
\begin{align*}
  d &\leq c ( 1 + \| u \|_{L^{\infty}} ) \| \hat{u} \|_{L^{\infty}} \| \partial_t u \|_{L^4} \leq c \| u \|_{W^{2,2} ( \Omega ) } \|\hat{u}\|_{W^{2,2}( \Omega )}. \\
  e &\leq c ( 1 + \| u \|_{L^{\infty}} )\|\hat{u}\|_{L^{\infty}} \| \partial_s u \|_{L^4} \| \partial_t u \|_{L^4} \leq c \| u \|_{W^{2,2} ( \Omega ) } \| \hat{u} \|_{W^{2,2} ( \Omega )}.
\end{align*}
The terms $ f$ and $ g $ can be estimated in the same way as the term $ C $, 
and finally 
$$ h,k \leq c ( 1 +\|u\|_{L^{\infty}} ) \|\hat{u}\|_{L^{\infty}( \Omega) } \|u\|_{W^{2,2}( \Omega)} \leq c \| u \|_{W^{2,2} ( \Omega ) } \| \hat{u} \|_{W^{2,2} ( \Omega )} .$$ 
Thus, we have proved the inequality \eqref{eq_quad_est}. Take 
$ r_0 = \frac{1}{2c_0 c} $, where $ c $ is the constant from  \eqref{eq_quad_est}
and $ c_0 $ as in Step 1. For such $ r_0 $ the inequality \eqref{eq:dif_der} is fulfilled.
\end{proof}

{\bf Step 3: Constructions of the maps 
 $ f^T $ and $ f^{\infty} $  } \\
  
  In Steps 1 and 2 we have proved that the maps $ \cF^{\infty} $ and 
 $ \cF^T$ satisfy properties $ 1) $ and $ 2) $ 
 of the inverse function theorem, i.e. Lemma \ref{chlem1}.
 Let $ \rho_0 = \frac{r_0}{2c_0} $, where $ r_0 $ and $ c_0 $ 
 are the constants as in Step 2. 
 For $ \xi = (\xi^+, \xi^- ) \in B_{\rho_0} (0) $, let 
 $$ 
 u^T =  (\mathcal{F}^T ) ^{-1} (0,\xi), \;\;\; 
 (u^-, u^+) = (\mathcal{F}^{\infty})^{-1} ( 0,0,\xi^+, \xi^-)
 $$
 We define maps $ f^{\infty} $ and $ f^T$ as follows
 \begin{equation}\label{eq:maps}
  \begin{split}
   & f^{\infty} : B_{\rho_0} (0) \rightarrow  E \;\; \\
   & f^{\infty } ( \xi^+, \xi^-) =  ( f^- ( \xi^+), f^+ ( \xi^-) )= 
   (\pi^-(u^-(0,\cdot)),  \pi^+(u^+(0,\cdot)) )   \\
 & f^{T} : B_{\rho_0}(0) \rightarrow  E \;\; \\
  & f^T ( \xi^+, \xi^-) = ( f^T_- ( \xi^+, \xi^-), f^T_+ ( \xi^+,\xi^-) ) = 
 ( \pi^- ( u^T(-T, \cdot)), \pi^+ (u^T(T, \cdot)) )
  \end{split}
\end{equation}
The maps $ f^{\infty} $ and $ f^T $ are diffeomorphisms 
onto their images. The sets
$ \cW^{\infty}_{\rho_0} $ and $ \cW^T_{\rho_0} $ 
are open subsets of $ \text{graph} ( f^{\infty} ) $ and 
$ \text{graph} ( f^T ) $. 
\end{PARA}

\begin{PARA}[{\bf Convergence $ f^T \stackrel{C^1}{\rightarrow}f^{\infty} $.}]\label{para:convergence}\rm

 We have constructed maps $ f^{\pm} $   
 such that locally the stable and unstable manifolds 
 are graphs of these functions. One can think of the graph of 
 the map $ f^T $ for some fixed $T $ as of the darkened line in
 figure \ref{fig6}. This is a bit misleading as $graph(f^T) \subset E\times E $
 and the whole picture lies in $E$. Still the picture gives us 
 good intuition about the convergence phenomenon. 
 We actually have to prove that the difference of the maps $ f^T $ and 
 $ f^{\infty} $, denoted by $ \eta^T_{\pm} $ in the Figure \ref{fig6} 
 converges to $0$. 
 
\begin{figure}
\centering
\scalebox{0.5}{\input{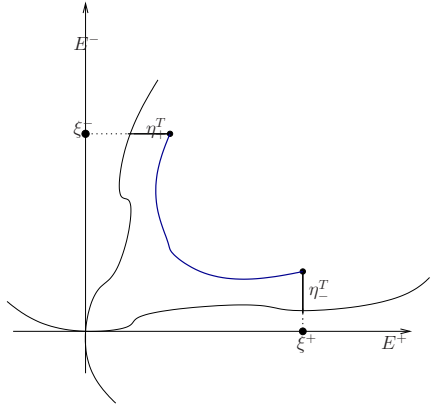}}
\caption{Convergence of submanifolds $ W^T$ to $ W^{\infty} $}
\label{fig6}
\end{figure}

We have used the inverse function theorem to 
find the maps $ f^T $ and $ f^{\infty} $
and we cannot explicitly say what are these functions. 
We construct a map that gives their difference  
$ \eta^T = f^T - f^{\infty}$.
Let $B_{\rho_0} (0)\subset E $ be the neighborhood as in Step 3, 
i.e. such that the maps $ f^{\infty} $ and $ f^T $ are defined 
on $ B_{\rho_0} (0) $. Let

\begin{align*}
  \mathcal{F}^T: B_{\rho_0}(0) \times H^{2}_{bc}(Z^T) \times E^- \times E^+  
 \longrightarrow    H^{1}(Z^T)\times E \times E  &\\
\mathcal{F}^T(\xi, u,\eta^-,\eta^+ ) = \Big( \bar{\partial}_{J_t,X_t} u ,
\xi^+ + f^+(\xi^+) + \eta^- - u(-T, \cdot), &\\ 
\xi^- + f^-(\xi^-) + \eta^+ -u(T,\cdot) \Big ),&
\end{align*}
where $ f^{\pm} $ are as in \eqref{eq:maps}.
Denote by
$ \cF^T_{\xi} := \cF^T( \xi , \cdot )  $. 
We shall prove in Lemma \ref{ch5_lem5.8} that the mapping 
$ \mathcal{F}^T_{\xi}$ is a local diffeomorphism. Then in 
Lemma \ref{lem_conv} we construct some possibly smaller neighborhood
$ B_{\rho}(0)\subset B_{\rho_0}(0)\subset E $ such that for every
$ \xi \in B_{\rho}(0) $ we can find unique $ ( u^T, \eta^T_-, \eta^T,+ ) $
such that $ \cF^T_{\xi}  ( u^T, \eta^T_-, \eta^T_+ )= 0 $ and we prove 
that $ \|\eta^T_-\|_{C^1} + \|\eta^T_+\|_{C^1} \rightarrow 0 $. 
Thus we prove that $ f^T \rightarrow f^{\infty} $ as $ T \rightarrow + \infty $. 
\end{PARA}

\begin{lemma}\label{ch5_lem5.8}
Let $ \rho_0 $ be as in Step 3.  
There exist positive constants $ C_1, r_1$ such that
 for all 
 $ u\in H^{2}_{bc}(Z^T)\; $  with  
  $$ 
  \| u\|_{2,2} < r_1 , \;\; 
  $$ 
  and all $\xi = ( \xi^+, \xi^-) \in B_{\rho_0}(0)$
  the following holds
\begin{itemize}
 \item[a)] The operator $ d\mathcal{F}^T_{\xi} (u,0,0) $ 
 is bijective and $ \|d\mathcal{F}^T_{\xi} (u,0,0)^{-1}  \|\leq C_1  $.
\item[b)] For all $ v $, $ \|v-u\| < r_1 $ 
 and for all $ \eta^{\pm} \in E^{\pm} $ we have 
  $$ 
  \|d\mathcal{F}^T_{\xi} (u,0,0) - d\mathcal{F}^T_{\xi} (v,\eta^-, \eta^+ )\| <\frac{1}{2C_1} 
  $$ 
\end{itemize}
 
\end{lemma}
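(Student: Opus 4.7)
The plan is to reduce both statements to facts already established in the proof of Step 1 and Step 2 of \ref{para:construction}, together with the standard perturbation argument for bijective bounded linear operators. The key observation is that the derivative
\[
d\mathcal{F}^T_\xi(u,\eta^-,\eta^+)(\hat u,\hat\eta^-,\hat\eta^+) = \bigl(D_u\hat u,\ \hat\eta^- - \hat u(-T,\cdot),\ \hat\eta^+ - \hat u(T,\cdot)\bigr)
\]
is independent of $\xi$ and of $(\eta^-,\eta^+)$, because $\xi^\pm+f^\pm(\xi^\pm)$ is a constant in the fiber variables and $\eta^\pm$ enter linearly. Hence (b) amounts to controlling $\|(D_u-D_v)\hat u\|_{1,2}$ alone, and (a) amounts to controlling the bijectivity of the above map as $u$ ranges over a small $H^2$-neighborhood of $0$.

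First I would handle the base case $u=0$. Given $(g,\alpha^-,\alpha^+)\in H^1_{bc}(Z^T)\times E\times E$, I decompose $\alpha^\pm = \pi^+\alpha^\pm+\pi^-\alpha^\pm$ along the splitting of $E$. Requiring $\hat\eta^-\in E^-$ and $\hat\eta^+\in E^+$ forces, by projecting the last two components,
\[
\pi^+(\hat u(-T,\cdot)) = -\pi^+\alpha^-,\qquad \pi^-(\hat u(T,\cdot)) = -\pi^-\alpha^+,
\]
after which $\hat\eta^- = \pi^-\alpha^- + \pi^-(\hat u(-T,\cdot))$ and $\hat\eta^+ = \pi^+\alpha^+ + \pi^+(\hat u(T,\cdot))$ are uniquely determined by $\hat u$. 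Thus the problem is equivalent to solving
\[
D_0\hat u = g,\qquad \pi^+(\hat u(-T,\cdot)) = -\pi^+\alpha^-,\qquad \pi^-(\hat u(T,\cdot)) = -\pi^-\alpha^+,
\]
which is exactly the bijective boundary-value problem from Theorem~\ref{thm:main_inq}~i) applied to $D_0=\partial_s+A$. The inverse is bounded by some constant $c_0$ uniformly in $T$, and chasing the formulas for $\hat\eta^\pm$ gives a uniform bound $\|d\mathcal{F}^T_\xi(0,0,0)^{-1}\|\le c_0'$ for some new constant $c_0'$.

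Next I would run the perturbation step. The only $u$-dependence is in the first component, and by the quadratic estimate \eqref{eq_quad_est} established in Step~2 of~\ref{para:construction},
\[
\|(D_u-D_0)\hat u\|_{1,2}\le c\,\|u\|_{2,2}\,\|\hat u\|_{2,2},
\]
and analogously $\|(D_u-D_v)\hat u\|_{1,2}\le c\,\|u-v\|_{2,2}\,\|\hat u\|_{2,2}$. I then choose $r_1>0$ so small that $c\cdot r_1 < \tfrac{1}{4c_0'}$. For $\|u\|_{2,2}<r_1$ this gives $\|d\mathcal{F}^T_\xi(u,0,0)-d\mathcal{F}^T_\xi(0,0,0)\|\le \tfrac{1}{2c_0'}$, so by the standard Neumann-series perturbation lemma $d\mathcal{F}^T_\xi(u,0,0)$ is bijective with inverse bounded by $2c_0'=:C_1$. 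Shrinking $r_1$ further if necessary so that $c\cdot r_1\le \tfrac{1}{2C_1}$, the same quadratic estimate gives part~(b) for all $v$ with $\|v-u\|_{2,2}<r_1$ and all $\eta^\pm\in E^\pm$, since $\eta^\pm$ drop out of the derivative entirely.

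I expect the only subtle point to be bookkeeping: verifying that the equivalence of $d\mathcal{F}^T_\xi(0,0,0)$ with the boundary-value operator of Theorem~\ref{thm:main_inq}~i) really is an isomorphism of Banach spaces with $T$-independent norm of the identification. Once that is in hand, the rest is a direct application of the quadratic estimates and the Neumann series, with the only nontrivial content already absorbed into Corollary~\ref{cor:bij_lin_op} and the Step~2 estimate of~\ref{para:construction}; there is no new analytic input.
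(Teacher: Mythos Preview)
Your proposal is correct and follows essentially the same approach as the paper: reduce the base case $u=0$ to the bijectivity of the boundary-value operator in Theorem~\ref{thm:main_inq}~(i), then use the quadratic estimate~\eqref{eq_quad_est} and a Neumann-series perturbation to handle general small $u$. The only cosmetic difference is that the paper establishes bijectivity at $u=0$ by separately deriving an injectivity estimate and then checking surjectivity via an orthogonal-complement argument, whereas you write down the explicit inverse by projecting onto $E^\pm$; both arguments rest on the same Theorem~\ref{thm:main_inq} and the same $T$-uniform trace inequality you flag at the end.
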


{\bf Proof of Lemma \ref{ch5_lem5.8} :}

The derivative of the operator $ \mathcal{F}^T_{\xi} $ 
at the point $ ( u, \eta^-, \eta^+ )\in H^2_{bc} ( Z^T) \times E^- \times E^+ $ is given by
\begin{equation}
 d\mathcal{F}^T_{\xi} (u, \eta^-,\eta^+)
 \big (\hat{u}, \;\hat{\eta}^-,\;\hat{\eta}^+\big) 
 =\Big (D_u \hat{u},\; \hat{\eta}^- - \hat{u}(-T, \cdot),
 \;\hat{\eta}^+ - \hat{u}(T,\cdot)\Big),
\end{equation}
where the operator $D_u $ is as in \eqref{eq:lindv}. 
Notice that the linearization $ d \cF^T_{\xi} (u, \eta^-, \eta^+) $
doesn't 
depend at all on $\xi^{\pm}$ and $ \eta^{\pm} $. \\

{\bf Step A:}
 \textit{For all $ T>0,\; \xi\in B_{\rho_0}(0)$ 
 the mapping $d\mathcal{F}^T_{\xi}(0)$ is bijective. 
 Moreover, there exist a constant $c_1> 0$ such that 
\begin{equation}\label{nej_stA}
\| d\mathcal{F}^T_{\xi}\big(0\big)^{-1}\| \leq c_1
\end{equation} }

\begin{proof}
Let $ D_0 $ be the linearization of $  \bar{\partial}_{J_t,X_t} $ in $0$, i.e.
$D_0 $ is given as in \ref{para:lin_op}. 
 From Theorem \ref{thm:main_inq} we have that for all $ T$
 \begin{align}\label{nej0}
 \| \hat{u} \|_{2,2} & \leq c \Big
 ( \|D_0 \hat{u}\|_{1,2} + \| \pi^+( \hat{u} ( -T,\cdot) ) \|_{3/2} +
 \| \pi^-( \hat{u} ( T,\cdot) ) \|_{3/2} \Big ),
 \end{align}
and also 
\begin{align}\label{nej1}
 \| \hat{\eta}^- \|_{3/2} & 
 \leq \|\pi^-( \hat{\eta}^- - \hat{u} ( - T,\cdot) ) \|_{3/2}
 + \| \pi^- ( \hat{u} ( - T,\cdot) ) \|_{3/2} ,\notag \\
   \|\hat{\eta}^+\|_{3/2} &
   \leq \| \pi^+ ( \hat{\eta}^+ - \hat{u} (T, \cdot))\|_{3/2} +\|\pi^+ (\hat{u}(T, \cdot))\|_{3/2} .
\end{align}
Notice also that
\begin{equation}\label{eq:nej2}
 \begin{split}
   \| \pi^+ ( \hat{u} ( -T, \cdot) )\|_{3/2} & =
 \| \pi^+ ( \hat{u}( -T, \cdot ) - \hat{\eta}^- ) \|_{3/2}  \\
  \|\pi^-(\hat{u} (T, \cdot))\|_{3/2} &= \|\pi^-( \hat{u} (T, \cdot) - \hat{\eta}^+)\|_{3/2} 
 \end{split}
\end{equation}

Let 
$$ 
L= \| \hat{u} \|_{2,2}  + \| \hat{\eta}^-\|_{3/2} + \| \hat{\eta}^+\|_{3/2},
$$
then 
\begin{equation}\label{nej}
\begin{split}
 L  &\leq   c \Big 
 ( \|D_0 \hat{u}\|_{1,2} + \|\hat{\eta}^- - \hat{u}(-T, \cdot)\|_{3/2} 
  + \|\hat{\eta}^+ - \hat{u}(T, \cdot)\|_{3/2}  \notag \\
  & +  \|  \hat{u} ( -T,\cdot)  \|_{3/2} +
  \| \hat{u} ( T,\cdot)  \|_{3/2} \Big ) \notag \\
  & \leq c_1 ( \|D_0 \hat{u}\|_{1,2} + \|\hat{\eta}^- - \hat{u}(-T, \cdot)\|_{3/2}  + 
  \|\hat{\eta}^+ - \hat{u}(T, \cdot)\|_{3/2} \Big )
  \end{split}
\end{equation}
The first inequality in \eqref{nej} follows by 
summing \eqref{nej0} and \eqref{nej1}. 
The second inequality follows from trace inequality 
$$
  \|  \hat{u} ( -T,\cdot)  \|_{3/2} +
  \| \hat{u} ( T,\cdot)  \|_{3/2} \leq c \| \hat{u} \|_{2,2}
$$
and inequalities \eqref{nej0} and  \eqref{eq:nej2}. 
Thus we have proved that $ d\mathcal{F}^T_{\xi} (0) $ 
is injective and has closed range as it satisfies the inequality 
\eqref{nej}. We prove that it is surjective. 
Suppose that there exist a vector 
$ (  \hat{v}, \zeta^+, \zeta^-)$ orthogonal to 
$Im( d\mathcal{F}^T_{\xi} (0))$. 
Taking $ \hat{u}=0$  and varying $ \hat{\eta}^{\pm} $ 
we get $ \zeta^+\in E^+ ,\; \zeta^-  \in E^- $. 
It follows from Theorem \ref{thm:main_inq} that the operator
$$ 
\hat{u} \mapsto \Big ( D_0 \hat{u}, \pi^+ ( \hat{u}(-T, \cdot)), \pi^- ( \hat{u}(T, \cdot))  \Big ) 
$$
is bijective. Hence, for given $ \zeta^{\pm} $ there exists unique $ \hat{u} \in Ker ( D_0 )$
such that $ \pi^{\pm} ( \hat{u}( \mp T, \cdot)= \zeta^{\pm} $. 
For such $ \hat{u} $ as
 $(\hat{v}, \zeta^+, \zeta^- )$ is orthogonal to 
$ d\mathcal{F}^T_{\xi} (0)(\hat{u}, \hat{\eta}^-, \hat{\eta}^+ )$ it follows 
\begin{align*}
\langle \hat{u} ( -T, \cdot), \zeta^+ \rangle + 
\langle \hat{u} (T, \cdot),\zeta^- \rangle =
\| \zeta^+ \|^2_{3/2} + \| \zeta^- \|^2_{3/2} = 0. 
\end{align*}
Hence $ \zeta^+ = \zeta^- = 0 $. 
From the surjectivity of the operator $ D_0$ it follows that $ \hat{v} = 0 $. 
\end{proof}

{\bf Step B:} 
\textit{There exists a constant $ \tilde{r}_1 $ 
such that for all $ T$, $ \xi\in B_{\rho_0}(0) $ ,  
for all $\eta^{\pm} \in E^{\pm} $  and   
$ u \in H^2_{bc} ( Z^T) $ with:
\begin{equation}
 \|u\|_{2,2} < \tilde{r}_1
\end{equation} 
the operator $ d\mathcal{F}^T_{\xi} (u, \eta^-,\eta^+) $  
is bijective and 
\begin{equation}
\| d\mathcal{F}^T_{\xi} (u,\eta^-,\eta^+)^{-1}  \|\leq C_1= 
\frac{6c_1}{5},
\end{equation}
where $ c_1 $ is the constant from step A.} \\
\begin{proof}
\begin{align}\label{eq_nej4}
  & \Big \|\Big ( d\mathcal{F}^T_{\xi} (u,\eta^-,\eta^+)  -
  d\mathcal{F}^T_{\xi} (0)\Big )\Big ( \hat{\eta}^-, \hat{\eta}^+,\hat{u} \Big ) \Big\|
  = \Big \| \Big ( D_u \hat{u} - D_0\hat{u}, 0, 0  \Big ) \Big \| \notag \\ 
  &\leq c \| u\|_{2,2} \| \hat{u}\|_{2,2} \notag \\
  & \leq  c \| u\|_{2,2}   ( \|\hat{u}\|_{2,2} + \|\hat{\eta}^-\|_{3/2} + \|\hat{\eta}^+\|_{3/2}) 
\end{align}
The first inequality in \eqref{eq_nej4}
follows from the inequality \eqref{eq_quad_est}.
Let $ \tilde{r}_1 = \frac{1}{6c_1 c} $, where $ c_1 $ 
is the constant of the step A and $ c $ as in \eqref {eq_nej4}, 
and suppose that $ \|u\|_{2,2} < \tilde{r}_1 $. 
We have that 
 $  
 \|d\mathcal{F}^T_{\xi} (u,\eta^-,\eta^+)- d\mathcal{F}^T_{\xi} (0)\|\leq \frac{1}{6c_1} 
 $
 and hence
\begin{align}\label{eq:nej_5}
\|d\mathcal{F}^T_{\xi} (u,\eta^-,\eta^+)\cdot d\mathcal{F}^T_{\xi} (0)^{-1} - 1\|
\leq \frac{1}{6}\; , \;\;\;\forall T\geq T_0 
\end{align}
From \eqref{eq:nej_5} it follows that 
$d\mathcal{F}^T_{\xi} (u,\eta^-,\eta^+)\cdot 
d\mathcal{F}^T_{\xi} (0)^{-1}$
is invertible and thus also 
$d\mathcal{F}^T_{\xi} (u,\eta^-,\eta^+)$. 
Let  $ L=\|\hat{u}\|_{2,2} + \|\hat{\eta}^-\|_{3/2} + \|\hat{\eta}^+\|_{3/2}$, we have 
\begin{align}\label{nej4}
L  \leq  c_1 & \|d\mathcal{F}^T_{\xi} (0)(\hat{u}, \hat{\eta}^-, \hat{\eta}^+)\| \notag \\
\leq  c_1 & \Big( \|d\mathcal{F}^T_{\xi} (u,\eta^-,\eta^+)(\hat{u},\hat{\eta}^-, \hat{\eta}^+)\| 
\notag  + \|(d\mathcal{F}^T_{\xi} (u,\eta^-,\eta^+)  - 
 d\mathcal{F}^T_{\xi} (0))(\hat{u}, \hat{\eta}^-, \hat{\eta}^+)\| \Big )\notag \\
\leq c_1 & \Big( \|d\mathcal{F}^T_{\xi} (u,\eta^-,\eta^+)
(\hat{u}, \hat{\eta}^-, \hat{\eta}^+)\| +
\frac{1}{6c_1} L \Big ) \notag
\end{align}
From the previous inequality we get
\begin{equation}\label{nej5}
\|\hat{u}\|_{2,2} + \|\hat{\eta}^-\|_{3/2} + \|\hat{\eta}^+\|_{3/2} 
\leq  \frac{6c_1}{5} \|d\mathcal{F}^T_{\xi} (u,\eta^-,\eta^+)(\hat{u}, \hat{\eta}^-, \hat{\eta}^+)\|.
\end{equation}
\end{proof}

\medskip

{\bf Step C:}
\textit{The requirements $ 1) $ and $ 2) $ of Lemma \ref{ch5_lem5.8} 
are satisfied for $ r_1 = \frac{\tilde{r}_1}{2} $ and $ C_1 $ as in step B.} 
\begin{proof}
  If $ \| u\| < r_1$ we have that 
 \begin{itemize}
 \item  
 $ d\mathcal{F}^T_{ \xi} (u,0,0) $ is bijective and 
 $ \|d\mathcal{F}^T_{ \xi} (u,0,0)^{-1}\|\leq C_1= \frac{6c_1}{5} $
 \item For all $v$ such that 
 $\|u-v\|_{2,2} < r_1 $ and for all $ \eta^-\in E^-, \; \eta^+\in E^+ $ 
 we have
 \begin{align*}
  \|d\mathcal{F}^T_{ \xi} (u,0,0)-d\mathcal{F}^T_{ \xi} (v,\eta^-,\eta^+)\|
  & \leq \Big ( \|d\mathcal{F}^T_{ \xi} (u,0,0) -d\mathcal{F}^T_{ \xi} (0) \| \\ 
  & + \|d\mathcal{F}^T_{ \xi} (v,\eta^-,\eta^+) -d\mathcal{F}^T_{ \xi} (0) \|  \Big ) \\
  & \leq \frac{1}{6c_1} + \frac{1}{6c_1} =\frac{1}{3c_1} < \frac{1}{2C_1}. 
 \end{align*}
  \end{itemize}
\end{proof}
 
 \medskip
 \medskip
 
 \begin{lemma}\label{lem_conv}
 There exists $ \rho> 0 $  and $ T_1 > 0 $ such that 
 for every $ T \geq T_1$ the following holds
 \begin{itemize}
  \item [i)] For every $ \xi = ( \xi^+, \xi^-) \in B_{\rho}(0) $ 
  there exists unique 
  $ ( u^T(\xi),\; \eta^T_-(\xi), \;\eta^T_+(\xi)) \in H^2_{bc} ( Z^T ) \times E^- \times E^+ $ 
  such that
 $$ 
 \mathcal{F}^T_{\xi} ( u^T, \eta^T_-, \eta^T_+ ) = 0 .
 $$
 \item[ii)] The maps $ \eta^T= (\eta^T_-, \eta^T_+ )  $ 
 converge exponentially to $0 $ 
 $$
 \|\eta^T(\xi) \|\leq c e^{-\mu T } , \;\;
 $$
 for all $ \xi \in B_{\rho}(0) $. 
 \item[iii)] The maps $ \eta^T $ converge also in $ C^1 $ 
 norm exponentially to $0$ , i.e. for all $ \xi \in B_{\rho}(0) $ 
 the following inequality holds
 \begin{equation}\label{ch5_eq2.32}
\| d\eta^T(\xi)\|\leq c e^{-\mu T} ,  
\end{equation}
where 
$ \|d\eta^T(\xi)\|:= \sup\limits_{\|\hat{\xi}\|_{3/2} \leq 1 }
\|d\eta^T(\xi)\hat{\xi}\|_{3/2} $ .
 
 \end{itemize}

\end{lemma}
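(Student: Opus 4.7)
All three parts follow from a single quantitative inverse function theorem argument applied to $\mathcal{F}^T_\xi$. Since $(u^-,u^+)=(0,0)$ solves $\mathcal{F}^\infty=0$, we have $f^\pm(0)=0$, so $(u,\eta^-,\eta^+)=(0,0,0)$ solves $\mathcal{F}^T_0=0$. Lemma~\ref{ch5_lem5.8} provides a uniform-in-$T$ bound $C_1$ on $\|d_{(u,\eta)}\mathcal{F}^T_\xi(u^T,\eta^T)^{-1}\|$ and Lipschitz radius $r_1$ for the derivative. For nonzero $\xi\in B_{\rho_0}(0)$ smoothness of $f^\pm$ and the vanishing $f^\pm(0)=0$ yield $\|\mathcal{F}^T_\xi(0,0,0)\|\leq c\|\xi\|_{3/2}$; choosing $\rho\leq\min\{\rho_0,\,r_1/(2cC_1)\}$ (uniform in $T$), Lemma~\ref{chlem1} produces the unique solution claimed in~(i).

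\textbf{Part (ii).} For $\xi\in B_\rho(0)$ with associated infinite solutions $u^\pm$, let $\beta_\pm:[-T,T]\to[0,1]$ be smooth cut-offs with $\beta_-+\beta_+\equiv 1$, $\beta_-\equiv 1$ on $[-T,-1]$, $\beta_+\equiv 1$ on $[1,T]$, and set
\begin{equation*}
v^T(s,t)\;:=\;\beta_-(s)\,u^-(s-T,t)\;+\;\beta_+(s)\,u^+(s+T,t),
\end{equation*}
with the shifts and labels chosen so that $v^T(\mp T,\cdot)$ reproduces the boundary values $\xi^\pm+f^\mp(\xi^\pm)$ appearing in $\mathcal{F}^T_\xi$ when $\eta^\pm=0$. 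The last two components of $\mathcal{F}^T_\xi(v^T,0,0)$ vanish identically, while a direct computation using $\bar\partial u^\pm=0$ shows
\begin{equation*}
\bar\partial_{J_t,X_t}v^T\;=\;\dot\beta_-(u^--u^+)\;+\;\beta_-\bigl(J_t(v^T)-J_t(u^-)\bigr)\p_tu^-\;+\;\beta_+\bigl(J_t(v^T)-J_t(u^+)\bigr)\p_tu^+,
\end{equation*}
which is supported on $[-1,1]$. Proposition~\ref{prop:mon3} gives exponential $C^k$-decay of $u^\pm(\cdot\pm T,\cdot)$ on this interval, hence $\|\bar\partial v^T\|_{H^1}\leq c\,e^{-\mu T}\|\xi\|_{3/2}$; Theorem~\ref{thm:bound_contr} ensures $\|v^T\|_{2,2}\leq c\|\xi\|_{3/2}$ lies in the IFT ball. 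By uniqueness in~(i), the error estimate in Lemma~\ref{chlem1} gives
\begin{equation*}
\|u^T-v^T\|_{2,2}+\|\eta^T_-\|_{3/2}+\|\eta^T_+\|_{3/2}\;\leq\;2C_1\,\|\mathcal{F}^T_\xi(v^T,0,0)\|\;\leq\;c\,e^{-\mu T},
\end{equation*}
proving~(ii).

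\textbf{Part (iii) and main obstacle.} Differentiating the identity $\mathcal{F}^T_\xi(u^T(\xi),\eta^T(\xi))\equiv 0$ in $\xi$ produces a linear equation for $(du^T\hat\xi,\,d\eta^T\hat\xi)$ whose operator $d_{(u,\eta)}\mathcal{F}^T_\xi(u^T,\eta^T)$ is uniformly invertible with constant $C_1$. Let $\hat u^\pm:=du^\pm(\xi^\pm)\hat\xi^\pm$ be the linearized infinite solutions, form the linearized pregluing $\hat v^T$ from $\hat u^\pm$ using the same cut-offs $\beta_\pm$, and write $du^T\hat\xi=\hat v^T+\hat w$, $d\eta^T\hat\xi=\hat\zeta$. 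The correction $(\hat w,\hat\zeta)$ satisfies
\begin{equation*}
d_{(u,\eta)}\mathcal{F}^T_\xi(u^T,\eta^T)\,(\hat w,\hat\zeta)\;=\;(-D_{u^T}\hat v^T,\,0,\,0),
\end{equation*}
so $\|\hat\zeta\|_{3/2}\leq C_1\,\|D_{u^T}\hat v^T\|_{H^1}$. The heart of the matter is therefore the estimate
\begin{equation*}
\|D_{u^T}\hat v^T\|_{H^1([-T,T]\times[0,1])}\;\leq\;c\,e^{-\mu T}\,\|\hat\xi\|_{3/2},
\end{equation*}
which I expect to be the principal obstacle. Its proof requires three separate ingredients: (a)~uniform $C^1$ exponential decay of $\hat u^\pm$, obtained by applying Proposition~\ref{prop:mon3} and the linear estimates of Section~\ref{SEC:lin_est} to $D_{u^\pm}\hat u^\pm=0$; (b)~control of the mismatch $(D_{u^T}-D_{u^\pm(\cdot\pm T)})\hat v^T$ via the pointwise bound $\|u^T-u^\pm(\cdot\pm T)\|_{L^\infty}\leq ce^{-\mu T}$ from~(ii) combined with the quadratic estimate~\eqref{eq_quad_est}; and (c)~control of the cut-off error $\dot\beta_\pm\hat u^\pm$ on the overlap $[-1,1]$ in the $H^1$-norm (not merely $L^2$), where $\hat u^\pm$ and its first derivative are exponentially small. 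Once these three contributions are each bounded by $ce^{-\mu T}\|\hat\xi\|_{3/2}$, inequality~\eqref{ch5_eq2.32} follows, completing the proof.
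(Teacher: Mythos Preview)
Your proposal is correct and follows essentially the same route as the paper. The paper combines (i) and (ii) into a single application of the inverse function theorem centered at the pregluing $v_T$, whereas you separate them by first centering at $(0,0,0)$ for (i) and only introducing $v_T$ for the exponential estimate in (ii); both are valid. For (iii) the paper obtains $\|\hat\eta^T\|\le c\|D_{u^T}\hat v_T\|_{1,2}$ exactly as you do, then splits $D_{u^T}\hat v_T$ into $D_{v_T}\hat v_T$ (supported on $[-1,1]$ and controlled by exponential decay of the kernel elements $w_\pm$ via \cite{RS3}) plus $(D_{u^T}-D_{v_T})\hat v_T$ (controlled by the quadratic estimate and $\|u^T-v_T\|\le ce^{-\mu T}$); your three-term split (a)--(c) comparing instead to $D_{u^\pm(\cdot\pm T)}$ on each half is an equivalent reorganization. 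One small point: your ingredient (b) invokes $\|u^T-u^\pm(\cdot\pm T)\|_{L^\infty}\le ce^{-\mu T}$ ``from (ii)'', but (ii) literally gives $\|u^T-v^T\|\le ce^{-\mu T}$; the passage to $u^\pm$ on each half needs the extra remark that $v^T-u^\pm(\cdot\pm T)$ is either zero (outside $[-1,1]$) or bounded by the exponentially small $u^\mp$ (inside $[-1,1]$).
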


{\bf Step I) Proof of parts $ i) $ and $ ii) $ of Lemma  \ref{lem_conv} } 

\begin{proof}

Let $ \rho= \min \{\frac{r_1}{4 c_0} , \rho_0\}$
where $ r_1 $ is the constant of Lemma \ref{ch5_lem5.8}, 
$ c_0 $ the constant of \eqref{eq:est_inv} in
Step 1 and $ \rho_0 $ is the constant from Step 3. 
Let $ (\xi^+, \xi^-)\in B_{\rho}(0)\subset E $. 
Then there exist holomorphic curves 
$ u_{\mp} \in H^2_{bc} ( Z^{\pm} ) $ such that 
\begin{align*}
   & u_- (0, \cdot)= \xi^+ + f^-(\xi^+ ) , \;\;\; 
  u_+(0, \cdot)= \xi^- + f^+(\xi^- )  
\end{align*}
It follows from Step 1 and 2 in \ref{para:construction}, as the mapping 
$ \cF^{\infty} $ satisfies the requirements of the inverse function theorem,
that $ (0,0,\xi^+, \xi^-) \in B_{\rho} ( \cF^{\infty}(0,0)) \subset \cF^{\infty} ( B_{2c_0\rho} ( 0,0)) $, 
thus 
$$ 
\|u_+ \|_{2,2} + \| u_- \|_{2,2} < 2c_0 \rho \leq \frac{r_1}{2}.
$$

We construct a pregluing map $ v_T $ as follows. 
 Let $ \beta(s): \R \rightarrow [0,1] $ be a smooth cut-off 
 function such that 
\[ \beta(s)= \left\{ \begin{array}{ll}
															1 & \mbox{ if $s\leq -1$}\\
															0 & \mbox{if $ s\geq 1$}
															\end{array}
													\right. 
\]
Suppose that $ \| \beta\|_{C^2} < 2 $. 
Take 
$$
v_T= v_T(\xi) = 
\beta(s)u_-(s+T,t) + (1-\beta(s))u_+(s-T,t).
$$ 
The pregluing map $ v_T $ satisfies the following:
\begin{align*}
\|v_T\|_{2,2} &= \|\beta(s)u_+(s+T,t) + (1-\beta(s))u_-(s-T,t)\|_{2,2}\\
&\leq \|\beta(s)u_+(s+T,t)\|_{2,2} + \|(1-\beta(s))u_-(s-T,t)\|_{2,2} \\
&\leq \|\beta\|_{C^2} ( \|u_+\|_{2,2} + \|u_-\|_{2,2} ) 
< r_1
\end{align*}
and also
$$ 
v_T(\mp T,\cdot)=\xi^{\pm}(\cdot) + f^{\mp} ( \xi^{\pm}  ) .$$ 
Thus we have  
\begin{equation}
\mathcal{F}^T_{\xi} (v_T,0,0)= (w_T,0,0),
\end{equation}
where 
$w_T(s,t)=\bar{\partial}_{J_t, X_t}v_T$. 
As $u_+$ and $u_-$ are the solutions of 
$\bar{\partial}_{J_t, X_t}u=0 $ we have 
\[ w_T(s,t)= \left\{ \begin{array}{ll}
															0 & \mbox{ if $s\leq -1$}\\
														  0 & \mbox{if $ s\geq 1.$}
															\end{array}
													\right. 
\]
Hence, $w_T$ is nonzero only on the interval $[-1,1]$, 
but from exponential decay of holomorphic curves 
we know that $ u^{\pm} $ and all of their derivatives
decay exponentially, thus
\begin{equation}
\|w_T\|_{1,2} \leq c e^{-\mu (T-1)}.
\end{equation} 

The constructed pregluing map $v_T$ 
satisfies the assumptions $a) $  and $b)$ of Lemma \ref{ch5_lem5.8}.
As $ \mathcal{F}^T_{\xi} (v_T,0,0)= (w_T,0,0)$ and 
$\|w_T\|_{1,2}$ decays exponentially for some $ T_1 $ sufficiently large 
and $T\geq T_1$ we have 
$$ 
\|\mathcal{F}^T_{\xi} (v_T,0,0)\|= \|w_T\|_{1,2} \leq \frac{r_1}{2C_1},
$$ 
where $ C_1 $ is the constant in Lemma \ref{ch5_lem5.8}. 
From the inverse function theorem we have 
$$ 
B_{\frac{r_1}{2C_1}} ( \mathcal{F}^T_{\xi} (v_T,0,0) ) \subset 
\mathcal{F}^T_{\xi} ( B_{r_1} ( v_T,0,0) ) .
$$
Therefore there exist unique 
$(\eta_T^-(\xi),\eta_T^+(\xi),u_T(\xi)) \in B_{r_1} ( v_T,0,0)$ 
such that 
\begin{equation}
\mathcal{F}^T_{\xi} (u^T,\eta^T_-,\eta^T_+) =(0,0,0).
\end{equation}
The last equality proves part $ i) $ of Lemma \ref{lem_conv}. 
From the inverse function theorem it follows also 
\begin{align}\label{eq_exp2}
\|u^T-v_T\|_{2,2} + \|\eta^T_-\|_{3/2} + \|\eta^T_+\|_{3/2} 
\leq 2C_1 \|w_T\|_{1,2}\leq  c e^{-\mu T}.
\end{align}
Hence we have 
\begin{align}\label{ch5_eq2.31}
 \|\eta_T^-(\xi)\|_{3/2} + \|\eta_T^+(\xi)\|_{3/2} &\leq c e^{-\mu T}. 
\end{align}

\medskip 
\end{proof}

\medskip

{\bf Step II) Proof of $ iii) $ } 

\begin{proof}
We finish the proof of Lemma \ref{lem_conv} by proving 
part $ iii) $. Let $ v_T (\xi) $ be the pregluing map. 
From the definition of the map $ \mathcal{F}^T $ we have 
\begin{align}\label{ch5_eq1}
 E^- \ni \eta^T_-(\xi) = (u^T(\xi) - v_T(\xi))(-T, \cdot),\;\;
 E^+ \ni \eta^T_+(\xi) = (u^T(\xi) - v_T(\xi))(T,\cdot) 
\end{align}
Let 
 $$
 \hat{u}_T = du^T(\xi)(\hat{\xi}), \;\
 \hat{v}_T = dv_T(\xi)(\hat{\xi}),
 \;\; \hat{\eta}^T= d\eta^T(\xi)(\hat{\xi}).  
 $$
 Then 
\begin{align*}
  \hat{\eta}^T_-= \pi^- (  \hat{\eta}^T) = (\hat{u}_T  -  \hat{v}_T) (-T, \cdot) ,\;\;\;\;
\hat{\eta}^T_+= \pi^+(  \hat{\eta}^T) = (\hat{u}_T - \hat{v}_T) (T, \cdot)
\end{align*}
As $ \|u_T\|_{2,2} $ is sufficiently small, the following 
inequality holds for all $ \xi \in H^2_{bc} ( Z^T ) $
\begin{align}\label{eq_st2_c1_1}
 \|\xi\|_{2,2}  \leq c' \Big ( \|D_{u^T} (\xi) \|_{1,2} + 
 \|\pi^+(\xi(-T, \cdot))\|_{3/2} + \|\pi^-(\xi(T, \cdot))\|_{3/2} \Big ).
\end{align}
The previous inequality follows from Steps 1 and 2 in \ref{para:construction}. 
 Substituting  $ \xi =\hat{u}_T - \hat{v}_T $
 in \eqref{eq_st2_c1_1} we get 
 
\begin{align}\label{ch5_eq2.35}
\|\hat{u}_T-\hat{v}_T \|_{2,2} &\leq c'\Big 
( \|D_{u^T}(\hat{u}_T-\hat{v}_T )\|_{1,2} + 
\|\pi^+( \hat{\eta}^T_-)\|_{3/2} + \|\pi^-(\hat{\eta}^T_+)\|_{3/2} \Big ) \notag \\
&= c'\|D_{u^T}(\hat{u}_T-\hat{v}_T )\|_{1,2} \notag \\
& = c'\|D_{u^T}(\hat{v}_T )\|_{1,2},
\end{align}
where the last equality holds as $ u^T $ satisfies 
$ \overline{\partial}_{J_t, X_t} u^T = 0 $ and hence
$D_{u^T}(\hat{u}_T )= 0 .$ 
From the trace inequality and \eqref{ch5_eq2.35} we have

\begin{align}\label{eq_c1est}
 \|\hat{\eta}^T_-\|_{3/2} + \| \hat{\eta}^T_+\|_{3/2}
 \leq c \|D_{u^T}(\hat{v}_T )\|_{1,2}.
\end{align}

We shall prove that the right hand side of \eqref{eq_c1est}
decays exponentially. Notice that
\begin{align*}
\|D_{u^T} \hat{v}_T \|_{1,2} 
\leq \Big ( \overbrace{\|D_{v_T} \hat{v}_T\|_{1,2}}^I +
\overbrace{\|(D_{u^T}  -D_{v_T}) \hat{v}_T\|_{1,2}}^{II}\Big ) 
\end{align*}
 Then
\begin{align}\label{ch5_eq2.37}
II=\|(D_{u^T}  -D_{v_T}) \hat{v}_T\|_{1,2} 
\leq c \|u^T- v_T\|_{2,2} \|\hat{v}_T\|_{2,2} 
\leq c e^{-\delta T} \|\hat{v}_T\|_{2,2},
\end{align}
the penultimate inequality follows from 
\eqref{eq_quad_est} and the last inequality follows from 
the inequality \eqref{eq_exp2}. 
As $ \hat{v}_T=\beta du_+(\xi^+)\hat{\xi}^+ + 
(1-\beta) du_-(\xi^-)\hat{\xi}^-, $ we have  
\begin{align}\label{ch5_eq2.38}
\|\hat{v}_T\|_{2,2} 
\leq 2 \Big ( \|\overbrace{du_+(\xi^+)\hat{\xi}^+}^{w_+}\|_{2,2} +
\|\overbrace{du_-(\xi^-)\hat{\xi}^-}^{w_-}\|_{2,2} \Big )
\end{align}
Both $ w_{\pm} $ are in the kernels 
of the operators $ D_{u^{\pm} } $ and they satisfy 
\begin{align}\label{ch5_eq2.39}
 \|w_{\pm}\|_{2,2} & \leq c' \Big ( \|D_{u_{\pm}} w_{\pm} \|_{1,2} + 
 \|\pi^{\pm} ( w_{\pm}(0, \cdot )) \|_{3/2} \Big )\notag  \\
  & \leq c' \|\pi^{\pm} ( w_{\pm}(0, \cdot )) \|_{3/2} \notag \\
  & \leq c \| \hat{\xi}^{\pm} \|_{3/2} .
\end{align}

From the inequalities \eqref{ch5_eq2.37}, \eqref{ch5_eq2.38}, \eqref{ch5_eq2.39}
we have 
\begin{equation}\label{ch5_eq2.43}
 II\leq c e^{-\mu T} \big ( \|\hat{\xi}^+\|_{3/2} + \|\hat{\xi}^-\|_{3/2} \big ).
\end{equation}
For the $ I $ term we have 
\begin{align*}
  I =\|D_{v_T} \hat{v}_T \|_{H^{1}([-T,T]\times [0,1])}  
  = \|D_{v_T} \hat{v}_T\|_{H^{1}([-1,1]\times [0,1])}
  \leq c \|\hat{v}_T\|_{H^2([-1,1]\times [0,1])} 
\end{align*} 
and the penultimate inequality holds as 
\[ \hat{v}_T(s,t)=\begin{cases}
                   du_+(\xi)(\hat{\xi}^+)= 
                    w_+ ( s+ T, t) , \;\; (s,t) \in [-T, -1) \times [0,1] \\
                   du_-(\xi)(\hat{\xi}^-)=
                   w^-(s-T, t) , \;\; (s,t) \in (1,T] \times [0,1]
                  \end{cases}\]
and $ D_{u_{\pm}} w^{\pm} = 0 $, 
hence $ D_{v_T} \hat{v}_T= 0 $ on the $ ([-T, -1)\cup (1, T]) \times [0,1] $.
As $ w^{\pm} $ are in the kernel of the operators
$ D_{u_{\pm}} $ they will decay exponentially. 
In \cite{RS3} Lemma 3.1 was proved that 
$$
\| w^+ ( s, \cdot)\|_{L^2 ( (0,1))} \leq c \| w^+ ( 0, \cdot)\|_{L^2 ( (0,1)) } e^{-\mu s } . 
$$
Thus it follows that $ \| w^+ \|_{L^2 ( [ s , + \infty) \times [0,1] )} $ decays 
exponentially. The exponential decay of $ \|w^+ \|_{W^{2,2} ( [s, + \infty) \times [0,1] ) } $ 
follows from the following inequality 
\begin{align*}
 \|w^+\|_{W^{k,2} ( [s,+\infty) \times [0,1] )} \leq c \Big ( \| D_{u^+} ( w^+ ) \|_{W^{k-1,2} ( [s-1, + \infty) \times [0,1] )} + \| w^+ \|_{W^{k-1,2} ( [s-1, + \infty) \times [0,1] ) } \Big ). 
\end{align*}
This inequality follows from Lemma C.1 in \cite{RS3}. 

\end{proof}

\section{Appendix}\label{SEC:app}

\subsection{The spaces $ H^s  $ and $ H^s_0  $ and $ H^{1/2}_{00} $ }

\medskip

We shall mention some important properties of these interpolation spaces. 
For more detailed exposition of these facts we refer to \cite{LM}. 
Let $ I=[0,1] $ we define the space $ H^s (I) $ 
as the interpolation space
 $$ 
 H^s ( I)= [ H^k (I), L^2 ( I) ]_{\theta} , \;\; k ( 1-\theta) = s .
 $$  
Analogously are defined the interpolation spaces 
$ H^s(\R^n) $ . For $0<s <1 $ a function $ u \in H^s ( \R^n )= [H^1(\R^n), L^2(\R^n)]_{1-s} $ is 
 characterized by the property that $ u \in L^2 ( \R^n ) $ and 
 \begin{equation}\label{ap:eq2}
  \iint\limits_{\R^n \times \R^n } \frac{\abs{u(x) - u(y)}^2}{\abs{x-y}^{n+ 2s }} dx dy < + \infty .
  \end{equation}
  This can be proved using the Fourier transform. 
 The analogous holds for functions $ u \in H^s ( I ), \; 0 < s < 1 $.
 Namely, a function $ u \in H^s (I) $ 
 if $ u \in L^2 ( I) $ and 
 \begin{equation}\label{eq:hsomega}
  \displaystyle \iint\limits_{I\times I} 
 \frac{\abs{u(x)- u(y)}^2}{\abs{x-y}^{n+2s}} dx dy < + \infty .
 \end{equation}
 This follows from the fact that the restriction map 
 $ \text{Rest} : H^s ( \R ) \rightarrow  H^s ( I ) $ 
 is linear, bounded surjective 
 map and it has bounded right inverse. 
 Its inverse, i.e. the extension map 
 \begin{align*}
  Ext : H^s ( I ) \rightarrow H^s ( \R ) 
 \end{align*}
is given by reflection in local coordinate charts. 
  
 \begin{remark}\rm
The extension by $ 0 $ is not a bounded linear map on $ H^{1/2} ( I) $. 
Take for example $ u = 1 $. 
This function is an element of $ H^{1/2} (0,1) $ as the integral 
(\ref{eq:hsomega}) is $ 0 $. But the extended function 
\[\tilde{u}(x) =\begin{cases}  1, & \;\; x \; \in (0,1)\\
                    0, \;\; & x \notin (0,1)
                    \end{cases}
                    \]
 is not an element of $H^{1/2} ( \R), $ as the integral (\ref{ap:eq2}) diverges. 
\end{remark}

\begin{definition}
 The space $ H^s_0 (I) $ is defined as the closure
 of $ C^{\infty}_c (I ) $ in the $  H^s ( I ) -$ norm.
\end{definition}

\begin{theorem}{\bf (Extension by 0)} 
The map 
$$
u \mapsto \tilde{u} = \text{extension} \;\; \text{of} \;u \;\text{by} \;\;0 \;\; \text{outside  of} \; I
$$ 
is a continuous mapping of $ H^s (I) \rightarrow H^s (\R) $ iff $ 0 \leq  s < 1/2$. 
The same mapping is a continuous mapping of 
 $$ H^s_0(I) \rightarrow H^s (\R) $$
 if $ s > 1/2 $ and 
 $ s \neq  \;\; \text{integer} + 1/2 $. 
 \end{theorem}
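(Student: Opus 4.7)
The plan is to exploit the integral characterization~\eqref{eq:hsomega}: for $0<s<1$, a function $v\in L^2(\R)$ lies in $H^s(\R)$ iff
\[
 [v]_s^2:=\iint_{\R\times\R}\frac{|v(x)-v(y)|^2}{|x-y|^{1+2s}}\,dx\,dy<\infty,
\]
and similarly with $I\times I$ in place of $\R\times\R$ for $H^s(I)$. Writing $\tilde u$ for the extension by zero, one splits the double integral into the piece over $I\times I$ (which equals $[u]_{s;I}^2$), and the two symmetric pieces over $I\times(\R\setminus I)$, on which the integrand reduces to $|u(x)|^2/|x-y|^{1+2s}$. Integrating in $y$ first gives
\[
 \int_{\R\setminus I}\frac{dy}{|x-y|^{1+2s}}=\frac{1}{2s}\Bigl(\frac{1}{x^{2s}}+\frac{1}{(1-x)^{2s}}\Bigr)
\]
for $s>0$ (and $0$-order for $s=0$), so the boundedness of $u\mapsto\tilde u$ from $H^s(I)$ to $H^s(\R)$ reduces to the weighted estimate
\[
 \int_I\frac{|u(x)|^2}{d(x,\partial I)^{2s}}\,dx\le C\|u\|_{H^s(I)}^2.
\]
For $0\le s<1/2$ this is exactly a Hardy-type inequality for $H^s$, which I would obtain either by unwinding the interpolation definition $H^s=[H^1,L^2]_{1-s}$ and applying the classical $H^1$-Hardy inequality $\int_I|u|^2/d(x,\partial I)^2\,dx\le C\|u\|_{H^1}^2$ together with the trivial $s=0$ bound, or directly from~\eqref{eq:hsomega} by estimating $|u(x)|^2\le 2\bigl(|u(x)-u(y)|^2+|u(y)|^2\bigr)$ and averaging $y$ over a small ball.

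For the converse in the first clause, i.e.\ that the map fails to be continuous at $s=1/2$, I would exhibit the standard counterexample $u\equiv 1$. Then $u\in H^{1/2}(I)$ since the seminorm integral vanishes, but $\tilde u$ is the characteristic function of $I$, whose fractional seminorm $[\tilde u]_{1/2}^2$ contains $\int_I\int_{\R\setminus I}|x-y|^{-2}\,dy\,dx$, which diverges logarithmically at the endpoints; hence $\tilde u\notin H^{1/2}(\R)$, and by the closed graph theorem the extension cannot be continuous at this endpoint. A rescaling/monotonicity argument then rules out all $s\ge 1/2$ for the untwisted space $H^s(I)$.

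For the second clause ($s>1/2$, $s\notin\Z+\tfrac12$, with domain $H^s_0(I)$), I would use density: $C_c^\infty(I)$ is dense in $H^s_0(I)$ by definition, and for $\phi\in C_c^\infty(I)$ the extension $\tilde\phi$ lies in $C_c^\infty(\R)\subset H^s(\R)$, so it suffices to prove the bound $\|\tilde\phi\|_{H^s(\R)}\le C\|\phi\|_{H^s(I)}$ uniformly on $C_c^\infty(I)$. This can be done by interpolation between integer orders: for $s\in(k-\tfrac12,k+\tfrac12)$ with $k\ge 1$, extension by zero is bounded $H^k_0(I)\to H^k(\R)$ (functions whose first $k-1$ derivatives vanish at the boundary extend in $W^{k,2}$) and bounded $L^2(I)\to L^2(\R)$, and the Lions--Magenes interpolation identity $[H^k_0,L^2]_{1-s/k}=H^s_0$ at non-half-integer $s$ (excluding the half-integers is exactly where this identity fails and where one must replace $H^{k-1/2}_0$ by the smaller space $H^{k-1/2}_{00}$) yields the claim.

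The main obstacle is the Hardy-type weighted estimate in the first paragraph and, parallel to it, the exclusion of half-integer $s$ in the interpolation step: both reflect the same defect, namely that at $s=k+\tfrac12$ the trace of $u^{(k)}$ at $\partial I$ is not controlled in any trace space by $H^s$-norms, so neither the weighted Hardy bound nor the clean interpolation $[H^k_0,L^2]_{1-s/k}=H^s_0$ survives. This is precisely the gap that motivates the introduction of the Lions--Magenes space $H^{1/2}_{00}$ later in the thesis.
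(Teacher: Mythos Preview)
The paper does not supply a proof of this theorem; it is quoted as a classical result from Lions--Magenes~\cite{LM}, and only the counterexample $u\equiv 1$ at $s=1/2$ (which you also use) is spelled out in the preceding Remark. So there is no ``paper's proof'' to compare against; your sketch is the standard argument and is broadly correct.

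One point deserves care. In your first proposed route to the fractional Hardy estimate
\[
\int_I\frac{|u(x)|^2}{d(x,\partial I)^{2s}}\,dx\le C\|u\|_{H^s(I)}^2\qquad(0\le s<1/2),
\]
you suggest interpolating between the trivial $s=0$ bound and ``the classical $H^1$-Hardy inequality $\int_I|u|^2/d(x,\partial I)^2\,dx\le C\|u\|_{H^1}^2$''. That inequality is \emph{false} on $H^1(I)$: the same counterexample $u\equiv 1$ makes the left side diverge while the right side is finite. The Hardy inequality with weight $d(\cdot,\partial I)^{-2}$ holds only on $H^1_0(I)$, so this endpoint is not available for interpolation on the full space $H^s(I)$. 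Your second route---working directly from the Gagliardo seminorm~\eqref{eq:hsomega}---is the one that actually goes through for $s<1/2$, but the na\"ive averaging you indicate needs to be organized so that the $L^2$-remainder term closes (e.g.\ via a dyadic decomposition near $\partial I$, or by reducing to the multiplier statement that $\chi_I$ acts boundedly on $H^s(\R)$ for $|s|<1/2$). This is routine once set up correctly, but it is the genuine content of the first clause and should not be glossed over.

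Your treatment of the second clause (density of $C_c^\infty$ in $H^s_0$, boundedness at integer endpoints, and the Lions--Magenes interpolation identity $[H^k_0,L^2]_{1-s/k}=H^s_0$ away from half-integers) is exactly the standard argument and correctly identifies why the half-integer values must be excluded.
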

  From the previous theorem we see that the spaces $ H^s_0 $ 
  change their behavior exactly for $ s= n + \frac{1}{2} , \; n\in \N_0 $. 
  In the next proposition we prove that the spaces $ H^{1/2}_0(0,1) $ and $ H^{1/2}(0,1) $ are 
  the same. 
 \begin{proposition}
 The space $ C^{\infty}_c ((0,1)) $ is dense in $ H^{1/2} ( (0,1)) $, hence 
 $$ 
 H^{1/2}_0((0,1)) = H^{1/2} ((0,1)) .
 $$
 \end{proposition}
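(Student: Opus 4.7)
The goal is, given $u\in H^{1/2}((0,1))$, to produce a sequence $u_n\in C^\infty_c((0,1))$ with $u_n\to u$ in $H^{1/2}((0,1))$.

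\textbf{Step 1 (Reduction to smooth functions on the closed interval).} First I would establish that $C^\infty([0,1])$ is dense in $H^{1/2}((0,1))$. A bounded extension operator $E\colon H^{1/2}((0,1))\to H^{1/2}(\R)$ exists (realized by reflection in local charts across each endpoint and then truncating), because such an $E$ works on $L^2$ and $H^1$ and interpolates. Standard mollification $v\mapsto\eta_\delta*v$ converges in $H^{1/2}(\R)$, so restricting $\eta_\delta*Eu$ to $(0,1)$ gives a $C^\infty([0,1])$ approximation of $u$. Hence it suffices to approximate every $u\in C^\infty([0,1])$ by elements of $C^\infty_c((0,1))$.

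\textbf{Step 2 (Reduction to cutting off the constant $1$).} Multiplication by a fixed $u\in C^\infty([0,1])$ is a bounded operator on both $L^2((0,1))$ (with norm $\|u\|_\infty$) and $H^1((0,1))$ (with norm controlled by $\|u\|_{C^1}$); by interpolation it is bounded on $H^{1/2}((0,1))$. Therefore, if I can produce $\psi_\eps\in C^\infty_c((0,1))$ with $\psi_\eps\to 1$ in $H^{1/2}((0,1))$, then $\psi_\eps u\to u$ in $H^{1/2}((0,1))$ and $\psi_\eps u\in C^\infty_c((0,1))$, finishing the proof.

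\textbf{Step 3 (Logarithmic cutoffs).} For small $\eps>0$ let $\tilde\psi_\eps$ be the continuous piecewise-logarithmic function equal to $1$ on $[\eps,1-\eps]$, equal to $0$ on $[0,\eps^2]\cup[1-\eps^2,1]$, and equal to $\log(x/\eps^2)/\log(1/\eps)$ on $[\eps^2,\eps]$ (symmetrically at the right endpoint). A slight mollification gives $\psi_\eps\in C^\infty_c((0,1))$ with $0\le\psi_\eps\le 1$ and the same qualitative behavior.

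\textbf{The main obstacle is the seminorm estimate.} Since constants lie in the kernel of the $H^{1/2}$ seminorm,
\[
\|\psi_\eps-1\|_{H^{1/2}}^2=\|\psi_\eps-1\|_{L^2}^2+\iint_{(0,1)^2}\frac{|\psi_\eps(x)-\psi_\eps(y)|^2}{|x-y|^2}\,dx\,dy.
\]
The $L^2$ term is $O(\eps)$, trivially. The double integral is the hard part. By symmetry and the support properties of $\psi_\eps-1$, it reduces to contributions from the left transition interval $(\eps^2,\eps)$ (self-interaction) and from its interaction with the bulk where $\psi_\eps=1$. The computation becomes transparent after the change of variable $x=\eps^2 e^t$, $t\in(0,\log(1/\eps))$: in these coordinates $\tilde\psi_\eps(x)=t/\log(1/\eps)$ and $dx/x=dt$, so the transition becomes a strip of length $\log(1/\eps)$ on which $\psi_\eps$ varies by $1$, and the seminorm integrand acquires a prefactor $1/\log(1/\eps)^2$ that exactly beats the logarithmic divergence. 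I expect this is the only delicate calculation in the proof; it expresses the fact that $s=1/2$ is precisely the borderline case where the trace operator fails to be continuous, so the logarithmic cutoff succeeds in approximating even functions that do not vanish at the endpoints, forcing $H^{1/2}_0((0,1))=H^{1/2}((0,1))$.
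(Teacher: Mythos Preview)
Your proposal is correct and shares the essential ingredient with the paper's proof: a logarithmic cutoff $\psi_\eps$ that equals $1$ on $[\eps,1-\eps]$, vanishes on $[0,\eps^2]\cup[1-\eps^2,1]$, and interpolates logarithmically in between, together with the reduction to approximating the constant function $1$ (the paper reduces to monomials $x^m$, which amounts to the same multiplication argument you give in Step~2).

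The genuine difference is in how the $H^{1/2}$ smallness of $1-\psi_\eps$ is established. You propose to estimate the Gagliardo double integral directly in one dimension; this works, and your change of variables $x=\eps^2 e^t$ does make the self-interaction term tractable (it becomes $\frac{1}{L^2}\int_0^L\int_0^L \frac{(s-t)^2 e^{s+t}}{(e^s-e^t)^2}\,ds\,dt$ with $L=\log(1/\eps)$, which is $O(1/L)$), but the interaction terms with the bulk and the zero region still require some care. The paper instead lifts the computation to $\R^2$: it builds a radially symmetric function $f_\eps\in C^\infty(\R^2)$ with $f_\eps(0)=1$, supported in $B_\eps(0)$, and with $\|f_\eps\|_{H^1(\R^2)}\to 0$ (here the norm is just $\int|\nabla f_\eps|^2 = 2\pi/\log(1/\eps)$, a one-line polar-coordinate computation). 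Then the trace inequality $\|f_\eps(\cdot,0)\|_{H^{1/2}(\R)}\le C\|f_\eps\|_{H^1(\R^2)}$ immediately gives a one-dimensional bump $\tilde g_\eps = f_\eps(\cdot,0)$ with $\tilde g_\eps\to 0$ in $H^{1/2}$ and $\tilde g_\eps(0)=1$, and $g_\eps := 1-\tilde g_\eps$ is the desired cutoff near one endpoint. This trade --- replacing a borderline fractional seminorm estimate by an easy $H^1(\R^2)$ estimate plus the trace theorem --- is the main technical economy in the paper's argument; your route is more self-contained but has to confront the double integral head-on.
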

 \begin{proof}
  The proof follows from the following facts:\\
{\bf Fact A:}
 There exist a sequence of smooth functions $f_n : \R^2 \rightarrow \R $ 
 with $ f_n = 1$ on some neighborhood of $ 0 $ , 
 but 
 $ \| f_n \|_{H^1 ( \R^2 ) } \rightarrow 0 , \; n \rightarrow \infty $.  
\\
{\bf Fact B:} 
Let $ g(x) = 1  $ on $ (-1, 1 ) $. 
 There exists a sequence of smooth functions $ g_n $ such that 
 \begin{itemize}
  \item $g_n (x) = 0 $ on some open neighborhood $ U_n $ of $0 $.
  \item $\|g_n (x ) - 1 \|_{H^{1/2} ( -1, 1)} \stackrel{n\rightarrow \infty }{\longrightarrow} 0 $
 \end{itemize}
{\bf Fact C:} 
Any polynomial $ P_k (x) = a_k x^k + \cdots a_1 x + a_0 , \; a_i \in \R, \; i=0, \cdots k $ 
can be approximated in $ H^{1/2} (0,1) $ norm by $ C^{\infty}_c (0,1) $ functions. 
\\
{\bf Proof of Fact A :} Let $ 0 < \delta < \epsilon $ ,
 define a function $ f : \R^2 \rightarrow \R $ as 
\[ f(z) = \begin{cases}
           \frac{\ln\epsilon - \ln \abs{z}}{\ln \epsilon - \ln \delta }, & \delta \leq \abs{z} \leq \epsilon \\
           1, & \abs{z} < \delta \\
           0, & \abs{z} > \epsilon.
          \end{cases}
\]
Then obviously $ f \in C( \R^2 ) $ and $ supp(f) \subset B_{\epsilon} (0) $
 and $ \abs{f'(z)} = \frac{1}{\abs{z} \ln( \epsilon/ \delta) } $ on $ \delta \leq |z| \leq \eps$. 
 We want to estimate $ \| f\|_{H^1 ( \R^2 )} $. First we have 
\begin{align*}
I_{\epsilon, \delta} = & \int\limits_{\R^2 } \abs{f'}^2 dx dy 
      = \int\limits_{\delta \leq \abs{z} \leq \epsilon} \abs{f'}^2 dx dy \\
                      =& \int\limits_{\delta \leq \abs{z} \leq \epsilon}
            \frac{dx dy}{\abs{z}^2 \ln^2( \epsilon / \delta )}=
          \int\limits_{\delta}^{\epsilon}\int\limits_0^{2\pi} \frac{rd\phi d r}{r^2 \ln^2( \epsilon/ \delta ) }
           = & \frac{2 \pi}{\ln^2 (\epsilon/ \delta) }\int\limits_{\delta}^{\epsilon} \frac{dr}{r}
        = \frac{2\pi}{\ln(\epsilon/ \delta)}
\end{align*}
Thus, if $ \delta \rightarrow 0 $ much faster than
 $ \epsilon $ we have that $ I_{\epsilon, \delta } \rightarrow 0 $. 
To obtain$ \|f\|_{L^2 } \rightarrow 0 $ we need to take
 $ \epsilon \rightarrow 0 $. Take $\epsilon_n = \frac{1}{n} $ 
and $ \delta_n =\frac{1}{n^2 } $ and take the corresponding functions $ f_n $,
 constructed as above. This is a sequence of continuous functions which satisfy 
\begin{itemize}
 \item $f_n(0) = 1$
 \item $ f_n \stackrel{H^{1} ( \R^2 )}{\longrightarrow} 0, \; n \rightarrow \infty  .$  
\end{itemize}
To get a sequence of smooth functions that satisfy the same, use molifiers to smoothen $f_n $. \\
{\bf Proof of Fact B:} It is enough to take
$ \tilde{g}_n = f_n( x,0) $, where $ f_n $ is the sequence of smooth functions as
in the proof of Fact A. 
Let $ g_n(x) = 1 - \tilde{g}_n(x) $. Then 
\begin{align*}
 \|1 - g_n (x)\|_{H^{1/2} (-1,1)} = \| \tilde{g}_n(x)\|_{H^{1/2} (-1,1)} \stackrel{tr}{\leq }\|f_n \|_{H^1 ( \R^2 ) } ,
\end{align*}
as $ \lim\limits_{n\rightarrow \infty} \|f_n\|_{H^1(\R^2 )} = 0 $ we get 
\begin{align}
 \lim\limits_{n \rightarrow \infty} \|1-g_n(x) \|_{H^{1/2} (0,1)} = 0 .
\end{align}
{\bf Proof of Fact C: } 
Let $ \left. h_n(x) = g_n( x) \cdot g_n(1-x)\right|_{(0,1)} \in  C^{\infty}_c ( 0,1) $,
where $ g_n $ is the sequence as in the proof of Fact B.  
The sequence $ h_n(x) \stackrel{H^{1/2}}{\longrightarrow }1, \; n \rightarrow \infty $.
  One can easily check that for every $ m \in \N $ 
\begin{align}
 C^{\infty}_c ( 0,1) \ni x^m h_n (x) \stackrel{H^{1/2} }{\longrightarrow } x^m , \; n \rightarrow \infty
\end{align}
As polynomials are dense in $ H^{1/2} ((0,1) $ ( $ C(\overline{\Omega} ) $ 
is dense in $ H^{1/2} (\Omega ) $ ) we get that $ C^{\infty}_c ( 0,1) $ 
is dense in $ H^{1/2} ( 0,1) $. 
 \end{proof}
Thus, we see that there is no difference between $ H^{1/2} $ and $ H^{1/2}_0 $
and that the extension by $0 $ is not a bounded linear map.
There are $ 1/2 $ interpolation spaces that allow the extension by $0 $ and
they are called Lions- Magenes spaces. These spaces are exactly 
those spaces on which our Hilbert manifold of paths $ \sP^{3/2} $ 
, defined in \eqref{eq:sp3/2}, is modeled. 
We discuss them in more details. 
\medskip
\subsection{Lions- Magenes interpolations spaces $ H^{1/2} _{00} $}

 For $ W = H^1_0 ( (0,1)) = \{ \xi \in H^1((0,1)): \xi(i)=0, \; i=0,1 \} $ 
and $ H= L^2 ((0,1)) $ we define \emph{ Lions- Magenes}  interpolation space 
\begin{align}
  H^{1/2}_{00}((0,1)) := [W, H]_{1/2} 
\end{align}
 We have seen in Theorem \ref{apthm1} that
$ [W, H]_{1/2} $ can be seen as the trace space of 
some Hilbert space or the domain $ \text{Dom}(\sqrt{A} ) $,
 where $ A $ is the operator as in the definition \ref{ap_def1.4}.
To define the space $ H^{1/2}_{00}((0,1)) $ 
one can take for example $ A = \partial_t : H^{1/2}_0 \rightarrow L^2 $.
 In the next proposition we give another equivalent 
interpretation of this space.
 \begin{proposition}\label{ap_prop3.3}
  A function $ u \in H^{1/2} ((0,1)) $ is an element 
of the space $ H^{1/2}_{00} ((0,1))= [W, H]_{1/2} $ 
if and only if 
  \begin{equation}\label{ap:eq:lm1}
   \int\limits_0^1 \frac{u^2(y)}{d(y, \partial I)} dy = 
\int\limits_0^{1/2} \frac{u^2}{y}dy + \int\limits_{1/2}^1 \frac{u^2(y)}{1-y}dy < + \infty
  \end{equation}
  and the norm 
  \begin{equation}
  \| u\|_{H^{1/2}_{00}} =
 \Big ( \|u\|^2_{H^{1/2}} + \int\limits_0^1 \frac{u^2(y)}{d(y, \partial I )} dy \Big )^{1/2}
  \end{equation}
  is equivalent to the interpolation norm. 
\end{proposition}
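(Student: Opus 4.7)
The plan is to identify $H^{1/2}_{00}((0,1))$ with the subspace of $H^{1/2}((0,1))$ whose extension by zero belongs to $H^{1/2}(\R)$, and then to unfold the double-integral formula \eqref{ap:eq2} for the $H^{1/2}(\R)$ norm to read off the weighted integrability condition~\eqref{ap:eq:lm1}. Let $Eu = \tu$ denote the extension by zero to $\R$, and $R$ the restriction from $\R$ to $(0,1)$.

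First I would show that $u \in H^{1/2}_{00}((0,1))$ if and only if $\tu = Eu \in H^{1/2}(\R)$, with norm equivalence. For $u \in H^1_0((0,1))$, a direct computation (using only that $u$ vanishes at the endpoints) gives $\|\tu\|_{H^1(\R)} = \|u\|_{H^1_0((0,1))}$, so $E : H^1_0((0,1)) \to H^1(\R)$ is an isometric inclusion and $R : H^1(\R) \to H^1_0((0,1))$ is bounded on its range. Clearly $E : L^2((0,1)) \to L^2(\R)$ and $R : L^2(\R) \to L^2((0,1))$ are bounded. By functoriality of real or complex interpolation applied to the pair $(H^1_0,L^2)$ on $(0,1)$ versus $(H^1,L^2)$ on $\R$, the map $E$ extends to a bounded map $H^{1/2}_{00}((0,1)) \to H^{1/2}(\R)$, and conversely if $\tu \in H^{1/2}(\R)$ is supported in $[0,1]$ then $R\tu$ lies in $H^{1/2}_{00}((0,1))$ with equivalent norms; the density of $C^\infty_c((0,1))$ in the former characterization matches the density of its image in the latter.

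Second, I would use the Gagliardo representation
\[
\|\tu\|_{H^{1/2}(\R)}^2 = \|u\|_{L^2((0,1))}^2 + \iint_{\R\times\R} \frac{|\tu(x)-\tu(y)|^2}{|x-y|^2}\,dx\,dy
\]
and split the integration domain into $(0,1)\times(0,1)$, the two mixed pieces $(0,1)\times\R\setminus(0,1)$ and $\R\setminus(0,1)\times(0,1)$, and the piece outside $(0,1)\times(0,1)$ where $\tu$ vanishes identically. The $(0,1)\times(0,1)$ contribution is the Gagliardo semi-norm defining $H^{1/2}((0,1))$ via \eqref{eq:hsomega}. The mixed piece with $x \in (0,1)$, $y < 0$ evaluates by Fubini to
\[
\int_0^1 u^2(x) \int_{-\infty}^0 \frac{dy}{(x-y)^2}\,dx
= \int_0^1 \frac{u^2(x)}{x}\,dx,
\]
and the symmetric mixed piece with $y > 1$ gives $\int_0^1 u^2(x)/(1-x)\,dx$. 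Combining all four contributions and absorbing universal constants gives
\[
\|\tu\|_{H^{1/2}(\R)}^2 \;\sim\; \|u\|_{H^{1/2}((0,1))}^2 + \int_0^1 \frac{u^2(y)}{d(y,\partial I)}\,dy,
\]
which together with the first step proves both directions of the equivalence and identifies the norm.

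The main obstacle is justifying the extension-by-zero characterization of $H^{1/2}_{00}$ without circularity, since we cannot quote the weighted-norm description we are trying to prove. I would handle this by using the operator $A^{1/2}$ definition of $\mathrm{Dom}(A^{1/2}) = [H^1_0,L^2]_{1/2}$ from Remark~\ref{ap_rem1.1} together with the trace/interpolation results of the appendix (applied to the pairs on $(0,1)$ and on $\R$), so that the two interpolation spaces are intrinsically characterized by the existence of $W^{1,2}$ time-lifts whose initial conditions match. Once $E$ and $R$ are known to interchange such lifts (which is immediate since a lift on $(0,1)$ extends by zero to a lift on $\R$, and a lift on $\R$ supported in $[0,1]\times[0,\infty)$ restricts to one on $(0,1)$), the equivalence $H^{1/2}_{00}((0,1)) \cong E^{-1}(H^{1/2}(\R))$ follows and the computation above closes the proof.
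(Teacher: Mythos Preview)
Your forward direction and the Gagliardo splitting are exactly what the paper does: interpolate the extension-by-zero operator between $H^1_0\to H^1(\R)$ and $L^2\to L^2(\R)$ to get boundedness $H^{1/2}_{00}\to H^{1/2}(\R)$, then split the double integral over $\R\times\R$ into the $(0,1)\times(0,1)$ piece (giving $\|u\|_{H^{1/2}}$) and the two mixed strips, each of which evaluates by Fubini to one of the weighted integrals $\int u^2/x$ and $\int u^2/(1-x)$.

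The gap is in your reverse implication. You assert that ``a lift on $\R$ supported in $[0,1]\times[0,\infty)$ restricts to one on $(0,1)$'', which is true, but you never produce such a lift. Knowing only that $\tilde u\in H^{1/2}(\R)$ is supported in $[0,1]$, the trace theorem hands you \emph{some} $v\in H^1(\R^2_+)$ with $v(0,\cdot)=\tilde u$, but $v$ has no reason to be supported over $[0,1]$ (the Poisson or heat extension, for instance, spreads instantly). Simply restricting $v$ to the half-strip fails because $v(s,0)$ and $v(s,1)$ need not vanish, so the restriction does not land in $L^2([0,\infty),H^1_0((0,1)))$.

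The paper closes this gap by a different device: it takes an arbitrary compactly supported lift $v\in H^1(\R^2_+)$ of $\tilde u$ and pulls it back along an explicit locally bi-Lipschitz homeomorphism $\Psi:[0,\infty)\times[0,1]\to\R^2_+$ that sends the two vertical sides of the half-strip onto the boundary rays $\{0\}\times(-\infty,0]$ and $\{0\}\times[1,\infty)$. Since $\tilde u$ vanishes on those rays, the composition $w=v\circ\Psi$ satisfies $w(s,0)=w(s,1)=0$, hence $w\in L^2([0,\infty),H^1_0((0,1)))\cap W^{1,2}([0,\infty),L^2((0,1)))$ with $w(0,\cdot)=u$, and the trace characterization (Theorem~\ref{apthm1}) gives $u\in H^{1/2}_{00}$. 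If you want to salvage your lift-based argument, this is the missing ingredient.
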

\begin{proof}
 Let $ u \in H^{1/2}_{00} = [W, H]_{1/2} $. The extension by $ 0 $ 
is a bounded linear operator 
\begin{align*}
 Ext_0 : H^1_0((0,1)) \rightarrow H^1 (\R) \\
Ext_0 : L^2 ((0,1)) \rightarrow L^2 ( \R).
\end{align*}
From the interpolation theorem it follows that 
the extension by $0 $ is a bounded linear operator 
$$ Ext_0 : H^{1/2}_{00} ( (0,1)) \rightarrow H^{1/2} ( \R) .$$
Let $ \tilde{u}(x) $ be the extended function i.e.
\[\tilde{u}(x)=
  \begin{cases}
    u(x), \;\; x \in (0,1) \\
    0, \;\; x\in \R \setminus ( 0,1)    
  \end{cases}
 \]
As $ \| \tilde{u}\|_{H^{1/2}(\R)} \leq c \|u\|_{H^{1/2}_{00}} $ we have 
that
\begin{align}\label{ap:eq:lm2}
 \int\limits_{\R} \int\limits_{\R} \frac{\abs{\tu(x)- \tu(y)}^2}{\abs{x-y}^2} dx dy  = 
\int\limits_{(0,1)}\int\limits_{(0,1)}  & \frac{\abs{\tu(x)- \tu(y)}^2}{\abs{x-y}^2} dx dy  + \notag \\
   + 2 \underbrace{\int\limits_{-\infty}^0 \int\limits_0^1  \frac{\abs{\tu(x)- \tu(y)}^2}{\abs{x-y}^2} dx dy}_{I}  &
+ 2 \underbrace{\int\limits_{1}^{+\infty}\int\limits_{0}^1 \frac{\abs{\tu(x)- \tu(y)}^2}{\abs{x-y}^2} dx dy}_{II}
\end{align}
Notice that 
\begin{align}\label{ap:eq:lm3}
 I &= \int\limits_{-\infty}^0 
\int\limits_0^1 \frac{\abs{u(x)}^2}{\abs{x-y}^2}dx dy  = \notag\\
   &=  \int\limits_0^1 \abs{u(x)}^2 dx\int\limits_{-\infty}^0 \frac{dy}{\abs{x-y}^2 }  = 
 \int\limits_0^1 \abs{u(x)}^2 dx\int\limits_0^{+\infty} \frac{dy}{(x+ y)^2 } \notag \\
  &=\int\limits_0^1 \abs{u(x)}^2 dx\int\limits_x^{+\infty} \frac{dt}{ t^2 } \notag \\
  & = \int\limits_0^1 \frac{\abs{u(x)}^2}{x} dx.
\end{align}
Similarly we get
\begin{align}\label{ap:eq:lm4}
 II = \int\limits_0^1 \frac{\abs{u(x)}^2}{ 1-x} dx.
\end{align}
From the equations (\ref{ap:eq:lm2}), (\ref{ap:eq:lm3}) and (\ref{ap:eq:lm4})
follows that the $ u \in H^{1/2}((0,1) $ and that the expression 
(\ref{ap:eq:lm1}) is finite.

Assume now that the function $ u\in H^{1/2} ((0,1)) $ 
satisfies (\ref{ap:eq:lm1}) and define $\tilde{u}(y) $ as follows
\[\tilde{u}(y) =\begin{cases} u(y), & \; y \; \in (0,1) \\
                              0, & \;\; y \in \R \setminus (0,1)
   
  \end{cases}\]
  From the above observation it follows that 
   $ \tilde{u} $ is an element of $ H^{1/2} ( \R ) $ 
  and thus there exists a function $ v \in H^1 ( \R^2_+ ) $
   such that $ Tr(v) = v(0,y) = \tilde{u}(y) $.
   We can suppose that $ v(x,y) = 0 $ for $ \|(x,y)\| $ big enough, 
  otherwise take a function $ \beta v $ , where $ \beta $ is a suitable cut-off 
     function. \\

  There exists a locally Lipschitz homeomorphism
  $ \Psi : [0, +\infty) \times [0,1] \rightarrow \R^2_+ $,
  such that 
  $$
  \Psi ( 0\times [0,1] ) = 0\times [0,1] ,
    \;\;\Psi ( [0, + \infty) \times 0 ) = 0 \times ( - \infty , 0] 
    $$
   and 
   $ \Psi ( [0 + \infty) \times 1 ) = 0\times [1, + \infty) $.
   Let  $ \Psi_1 $ be the mapping that stretches the strip,
   i.e. maps $ [0, + \infty) \times [0,1] \mapsto [0, +\infty) \times [-1, 1],
      \;\; \Psi_1 (x,y) = ( x, 2(y-1/2)) $.
    Let $\Psi_2: [0, + \infty) \times [-1,1] \rightarrow [0, +\infty ) \times \R $
  be a mapping that maps vertical 
 lines $(x, y) , \; 0 \leq y \leq 1$  
 on the line through $ A= ( 0, 1+x) $ and $ B= (0, x) $,
 fixing $B$, and similarly maps segment 
 $ (x,y), \; -1\leq y \leq 0 $ on 
 the line through $ B= (0,x) $ and 
 $ C= ( 0,- (1+x)) $, and it is given by
 \[\Psi_2 (x,y) = \begin{cases}
                    (1-y) ( x, 0 ) + y (0, 1+x) = ( (1-y) x, y ( 1+x)) , & \;\;0 \leq y \leq 1 \\
                    (1+y) (x,0) + y ( 0, 1+x) = ( (1+y)x, y(1+x)) , & \;\;-1 \leq  y \leq 0 
                   \end{cases} \]
  
Now $ \Psi= \Psi_2 \circ \Psi_1 $ is the desired map.
 The function $ w= v\circ  \Psi : [0, +\infty) \times[0,1] \rightarrow \R $
 and $ w \in H^1 ( [0, +\infty) \times [0,1] )$ .
 As $ v(y) =0 $ for $ y\in (-\infty,0) \cup( 1, +\infty ) $ 
we have that $ w \in L^2 ( [0,+\infty), H^1_0((0,1)) ) $,
 and the function $ u $ satisfies
\begin{align*}
 u= w|_{0\times[0,1]} \equiv u \in [L^2(0,1), H^1_0((0,1))]_{1/2}= H^{1/2}_{00} .
\end{align*}

\end{proof}
\begin{remark}\rm
 Notice that the space $ H^{1/2}_{00} ((0,1)) $ isn't closed 
in $ H^{1/2}((0,1)) = H^{1/2}_0 ((0,1) $ and it has strictly 
finer topology!
\end{remark}

\subsection{The space $H^{3/2}_{bc} $} \label{subsec_h32} 

In this section we discuss the interpolation spaces 
which are relevant for our Hilbert manifold of paths 
$ \sP^{3/2} $ introduced in  \ref{para:hil_P3/2}. 
Let 
\begin{equation*}
W= W^{2,2}_{bc}([0,1]) := \left \{ \xi \in W^{2,2} ( [0,1] , \R^{2n} ) \bigg | 
\begin{array}{ll}
 \xi(i) \in \R^n \times \{0\},\; i=0,1 \\
 \partial_t \xi(i) \in \{0\} \times \R^n, \; i=0,1 
\end{array}
 \right \}.
 \end{equation*}
and let 
\begin{equation*}
V= W^{1,2}_{bc}([0,1]) := \left \{ \xi \in W^{1,2} ( [0,1] , \R^{2n} ) \bigg | 
\begin{array}{l}
 \xi(i) \in \R^n \times \{0\}
\end{array}
 \right \}.
 \end{equation*}
 and let
$ H = L^2( [0,1], \R^{2n} )$.
Notice that 
$$
V = [W,H]_{1/2} . 
$$
We explain this fact in more details. 
Observe the following Hilbert space: 
$$
V_1 = \{ \xi \in W^{1,2} ( [0,1] , \R^{2n} ) \bigg | \xi(0) \in \R^n \times \{0 \}, \xi(1) \in \{0\} \times \R^n \}.
$$
The operator $ A_1 = i \p_t : V_1 \to H $ satisfies 
all the requirements of the Remark \ref{ap_rem1.1}. 
Let $ \Psi(t)\in U(n) = O(2n) \cap GL(n, \C) \cap Sp(2n, \R) $ be a smooth family 
such that $\Psi(0) = \Id $ and $\Psi(1) : \R^n \times \{0\} \to \{0\} \times \R^n $. 
As $ U(n) $ is connected such $ \Psi(t) $ exists. 
Notice that multiplication by $ \Psi $ defines an isometry 
between $ V $ and $ V_1 $. As the operator $ A_1 : V_1 \to H $
is bijective and self adjoint, then also its conjugate 
$$
A := \Psi^{-1} A_1 \Psi= i ( \p_t  + \Psi^{-1} \p_t \Psi ): V \to H  
$$
is bijective and self adjoint with respect to the following 
scalar product 
$$
\inner\xi\eta_{H}:= \int_0^1 \inner{\Psi(t)\xi(t)}{\Psi(t)\eta(t)}dt.  
$$

Notice that $ \text{Dom}(A) = V $ and $ \text{Dom}(A^2) = W$. 
Thus we have 
$$
[W,H]_{1/2} = \Dom( A)= V. 
$$
We define the Hilbert spaces $ H^{3/2}_{bc} $ as the following 
interpolation space 
$$ 
H^{3/2}_{bc} = [ W, V]_{1/2}.  
$$
From the previous discussion we have that $ H^{3/2}_{bc} = [W,V]_{1/2} = [W,H]_{1/4} = \Dom( \abs{A}^{3/2} ) $,
or analogously it can be defined as the set of all $ \xi \in V$ such that 
\begin{equation*}
\begin{split}
A \xi \in [V,H]_{1/2} &=[ H^1([0,1]) \times H^1_0([0,1]), L^2([0,1]) \times L^2([0,1])]_{1/2} \\
		      &= H^{1/2} ( [0,1], \R^n) \times H^{1/2}_{00} ( [0,1], \R^n) 
\end{split}
\end{equation*}
If we write $ \xi= ( \xi_1, \xi_2) ,$ where $ \xi_1 $ and $ \xi_2 $ 
denote the first and last $n $ coordinates of $\xi $, then 
$ A \xi \in [V,H]_{1/2} $ implies that $ \p_t\xi_1 \in H^{1/2}_{00}([0,1], \R^n)$
and $\p_t \xi_2 \in H^{1/2}( [0,1], \R^n) $. 
Thus the Hilbert space $ H^{3/2}_{bc} $ can be also given as 
\begin{equation*}
  H^{3/2}_{bc} =
\left \{ ( \xi_1, \xi_2 ) \in H^1( [0,1], \R^n) \times H^1_0( [0,1], \R^n)\bigg |
\begin{array}{ll} 
\partial_t \xi_1 \in H^{1/2}_{00}, \\
\partial_t\xi_2 \in H^{1/2}
\end{array}
\right  \} .
\end{equation*}

Let $ I$ be an interval in $ \overline{R} $ and denote by $ \cW(I) $ the following space: 
$$ 
\cW(I) =\{ u | u \in L^2 ( I, W) , \;\; \frac{\p^2 u}{\p s^2} \in L^2 ( I, H) \} 
$$
provided with the norm 
$$
\| u \|_{\cW(I)}^2 = \Big ( \| u \|^2_{L^2(I, W)} + \norm{\frac{\p^2 u}{\p s^2}}^2_{L^2(I,H)} \Big ). 
$$
The space $ \cW(I) $ is a Hilbert space and the space 
of smooth functions $ \C^{\infty}_c ( \overline{I}, W) $ 
is dense in $ \cW(I) $. 
It follows from intermediate derivative theorem ( Theorem 2.3 in \cite{LM})
that any function $ u \in \cW(I) $ satisfies $ \p_s u \in L^2 (I, V) $
and that the mapping 
$$
\cW(I)\rightarrow L^2 ( I, V) , \;\; u \mapsto \p_s u 
$$
is continuous linear mapping. 
Thus we have that the norm $ \| u\|_{\cW(I)} $ is equivalent to the 
following norm 
$$
\|u\|_{\sW^(I)}^2 := \Big ( \| u\|^2_{\cW(I)} + \|\p_s u\|^2_{L^2(I,V)} \Big ).
$$
With this norm, the Hilbert space $ \cW(I) $ is isometric with the 
following Hilbert space 
\begin{equation*}
W^{2,2}_{bc} ( I \times [0,1] ) :=
\left\{\xi \in W^{2,2} ( I \times [0,1], \R^{2n} ) \,\bigg|\,
\begin{array}{ll}
\xi(s,i) \in \R^n \times \{0\} , i=0,1 \\
\partial_t \xi(s,i) \in \{ 0\} \times \R^n, \; i=0,1 
\end{array}\right\}.
\end{equation*}
It follows from Trace theorem (Theorem 3.2 in \cite{LM}), that 
for $ u\in \cW(I) $ and for some $s_0 \in I $ we have 
$$
u(s_0) \in [ W, H]_{1/4} = [W,V]_{1/2}. 
$$
Thus the trace space of the Hilbert space $ \cW(I) $ is 
the Hilbert space $ H^{3/2}_{bc} $. 
The following proposition is a corollary of the 
Theorem 8.3 in \cite{LM}. 

\begin{proposition}\label{pr:trace}
Let $ W^{2,2}_{bc} ( I \times [0,1] ) $ and $ H^{3/2}_{bc} $ 
be defined as above. 
Suppose that $ s_0 \in I $ and denote with 
$r$ the restriction map 
 \begin{align}
  r: W^{2,2}_{bc} ( I \times [0,1] ) \rightarrow H^{3/2}_{bc}, \;\; r( \xi(s,t)) = \xi(s_0,t).
 \end{align}
 The linear map $ r $ is surjective and it has a continuous 
 right inverse, i.e. there exists a continuous extension operator
 \begin{align}
 \text{Ext} : H^{3/2}_{bc} \rightarrow W^{2,2}_{bc} ( I \times [0,1] ). 
 \end{align}
\end{proposition}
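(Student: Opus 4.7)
\textbf{Proof proposal for Proposition \ref{pr:trace}.}

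The plan is to follow the Lions--Magenes trace theorem specialized to our setting, using the spectral decomposition of the operator $A$ constructed in Subsection~\ref{subsec_h32}. First I would identify $W^{2,2}_{bc}(I\times[0,1])$ isometrically with the abstract space
$$
\cW(I) \;=\; L^2(I,W)\cap W^{2,2}(I,H),
\qquad W=W^{2,2}_{bc}([0,1]),\quad H=L^2([0,1]),
$$
as already noted in the appendix. By the intermediate derivative theorem (Theorem~2.3 in~\cite{LM}) the norm on $\cW(I)$ is equivalent to $\|u\|_{L^2(I,W)}^2+\|\partial_s u\|_{L^2(I,V)}^2+\|\partial_s^2 u\|_{L^2(I,H)}^2$ with $V=[W,H]_{1/2}=W^{1,2}_{bc}([0,1])$. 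Recall also that $H^{3/2}_{bc}=[W,V]_{1/2}=\Dom(|A|^{3/2})$, where $A$ is the bijective self-adjoint operator from~\ref{subsec_h32} with $\Dom(A)=V$ and $\Dom(A^2)=W$.

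Let $\{e_{\lambda_i}\}$ be an orthonormal basis of $H$ of eigenvectors of $A$ with eigenvalues $\lambda_i$; since $e_{\lambda_i}\in\Dom(A^2)=W$, each $e_{\lambda_i}$ satisfies the boundary conditions defining $W^{2,2}_{bc}$. For $u\in\cW(I)$ expand $u(s,t)=\sum_i a_i(s)e_{\lambda_i}(t)$. The first step is \emph{continuity of the restriction}. Writing
$$
|a_i(s_0)|^2 \;\le\; 2\int_I |a_i(s)|\,|a_i'(s)|\,ds
\;\le\; |\lambda_i|\int_I|a_i(s)|^2\,ds+|\lambda_i|^{-1}\int_I|a_i'(s)|^2\,ds
$$
(using a cut-off if $I$ is unbounded on one side of $s_0$) and multiplying by $|\lambda_i|^3$ yields
$$
\|r(u)\|_{H^{3/2}_{bc}}^2=\sum_i|\lambda_i|^3|a_i(s_0)|^2
\;\le\;\|u\|_{L^2(I,W)}^2+\|\partial_s u\|_{L^2(I,V)}^2
\;\lesssim\;\|u\|_{\cW(I)}^2.
$$
This gives continuity of $r$, after approximating a general $u$ by the dense subspace $C^\infty_c(\overline{I},W)$.

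The second step is to construct a \emph{continuous right inverse}. Given $\xi=\sum_i a_ie_{\lambda_i}\in H^{3/2}_{bc}$ define, for $s\in I$,
$$
\mathrm{Ext}(\xi)(s,t) \;:=\; \sum_i a_i\,e^{-|\lambda_i|\,|s-s_0|}\,e_{\lambda_i}(t).
$$
Since each $e_{\lambda_i}$ lies in $W$ and the boundary conditions are preserved by scalar multiplication, $\mathrm{Ext}(\xi)(s,\cdot)\in W$ for every $s$, so the correct boundary conditions on $W^{2,2}_{bc}(I\times[0,1])$ are automatic. Using $\int_{\R}e^{-2|\lambda_i|\,|s-s_0|}\,ds=|\lambda_i|^{-1}$ (and the trivial upper bound $\int_I\le\int_\R$) gives
$$
\int_I\|\mathrm{Ext}(\xi)(s)\|_W^2\,ds \;\sim\; \sum_i|\lambda_i|^4\,\tfrac{|a_i|^2}{|\lambda_i|}=\sum_i|\lambda_i|^3|a_i|^2=\|\xi\|_{H^{3/2}_{bc}}^2,
$$
and the same computation controls $\int_I\|\partial_s\mathrm{Ext}(\xi)\|_V^2$ and $\int_I\|\partial_s^2\mathrm{Ext}(\xi)\|_H^2$ by $\|\xi\|_{H^{3/2}_{bc}}^2$. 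Hence $\mathrm{Ext}:H^{3/2}_{bc}\to\cW(I)$ is a bounded linear map, and by construction $r\circ\mathrm{Ext}=\mathrm{Id}$, which also proves surjectivity of $r$.

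The only real obstacle is the bookkeeping involved in passing from $\cW(I)$ back to $W^{2,2}_{bc}(I\times[0,1])$ uniformly in the choice of $I$ (compact interval, half-line or line) and in handling the case $s_0\in\partial I$ versus $s_0$ interior: the explicit extension above works without change when $I=\R$ or $s_0$ is an endpoint, and in the remaining cases one splits $I$ at $s_0$ and applies the construction on each half. All other ingredients reduce to elementary estimates for sums of exponentials against the weights $|\lambda_i|^k$, which is why the whole statement is an instance of the general trace theorem of Lions--Magenes.
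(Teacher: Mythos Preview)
The paper does not actually prove this proposition; it simply records that it is a corollary of Theorem~8.3 in Lions--Magenes~\cite{LM}. Your argument unwinds that proof via the spectral decomposition of $A$, which is the right idea and more informative than the bare citation.

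There is, however, a genuine gap in your extension operator when $s_0$ is an interior point of $I$ (in particular for $I=\R$, contrary to what you claim). The profile $s\mapsto e^{-|\lambda_i|\,|s-s_0|}$ has a corner at $s=s_0$: its first derivative jumps from $+|\lambda_i|$ to $-|\lambda_i|$ there, so its distributional second derivative is
\[
|\lambda_i|^2 e^{-|\lambda_i|\,|s-s_0|}-2|\lambda_i|\,\delta_{s_0},
\]
and hence $\mathrm{Ext}(\xi)$ lies only in $W^{1,2}(I,H)$, not in $W^{2,2}(I,H)$. Your proposed remedy of splitting $I$ at $s_0$ and applying the construction on each half does not help: the one-sided first $s$-derivatives at $s_0$ are $\mp|\lambda_i|a_i$ and do not match, so the glued function is again only $W^{1,2}$ across $s_0$.

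The fix is easy: replace the profile by one that is $C^1$ across $s_0$, for instance
\[
\phi_i(s)\;=\;(1+|\lambda_i|\,|s-s_0|)\,e^{-|\lambda_i|\,|s-s_0|},
\]
which satisfies $\phi_i(s_0)=1$, $\phi_i'(s_0)=0$, has $\phi_i''\in L^2$, and still obeys $\int_\R|\phi_i^{(k)}|^2\,ds\lesssim |\lambda_i|^{2k-1}$ for $k=0,1,2$. With this modified profile your computation goes through verbatim and yields the bounded right inverse. Your construction as written is correct only when $s_0\in\p I$.
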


\section{Appendix-The Hessian of the symplectic action} 


This appendix explains why in symplectic Floer theory 
it's necessary to work with compatible (rather than tame) 
almost complex structure at least near the critical points. 
The asymptotic analysis requires that the Hessian of the 
symplectic action is self-adjoint operator for a suitable $ L^2 $
inner product. Theorem \ref{thm:comp} below shows that this is only the 
case when the almost complex structure is chosen compatible 
with the symplectic form. 

\begin{PARA}[\bf Vector space setup]\label{lin-setup}
Let $V$ be an even dimensional vector space 
and let a smooth family $ J(t) \in \End(V) $  satisfy $ J(t)^2 = -\Id $, \; $ t \in[0,1] $.
 Let $ \Lambda_0, \Lambda_1 $ be half dimensional 
 subspaces and suppose that $A(t) \in \Aut(V) $ is also a smooth 
 family. Let 
 $$ 
 W^{1,2}_{\Lambda} ( [0,1], V)
  := \{ \xi\in W^{1,2} ( [0,1], V) | \xi(i )\in \Lambda_i; \; i=0,1 \} 
 $$
 Suppose that $ \inner \cdot \cdot _t $ is a smooth family of inner 
 products on $ V$ and denote with 
 $$
 \inner \xi \eta _{L^2} = \int_0^1 \inner {\xi(t)} {\eta(t)} _t dt
 $$
 
 Observe the linear operator 
 \begin{equation}\label{eq:symop}
 \begin{split}
 &D: W^{1,2}_{\Lambda} ( [0,1], V) \to L^2 ( [0,1], V) \\
  &(D\xi) := J(t) \dot{\xi}(t) + A(t) \xi(t)
 \end{split}
 \end{equation}

 \end{PARA}
 \begin{theorem}\label{thm:comp}
 Let $ V, \Lambda_0, \Lambda_1 , D , \inner \cdot \cdot_t$
 be as in \ref{lin-setup}. 
 The operator $D$ is self adjoint for $ \inner \cdot \cdot _{L^2} $ 
 iff the following are satisfied 
\begin{itemize}
\item[i)]$J(t) $ is compatible with $ \langle \cdot, \cdot \rangle_t $, i.e.
$ \om_t := \langle J(t) \cdot, \cdot \rangle_t $ is a non degenerate skew symmetric form, i.e.
 $ \inner {J(t) \cdot}{\cdot}_t = - \inner {\cdot}{J(t)\cdot}_t $. 
\item[ii)] If $ \Phi $ is a solution of 
$$ 
J \dot{\Phi} + A \Phi =0, \Phi(0) = \Id
$$
then 
$ \Phi(t)^*\omega_t= \om_t( \Phi(t) \cdot, \Phi(t) \cdot)=\om_0 $
 for all $ t \in [0,1] $, where 
\item[iii)] $ \Lambda_0 $ is Lagrangian for $ \om_0 $ and $\Lambda_1 $ is 
Lagrangian for $ \omega_1 $. 
\end{itemize}
 \end{theorem}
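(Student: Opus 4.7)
The plan is to compute the deficiency $B(\xi,\eta):=\langle D\xi,\eta\rangle_{L^2}-\langle \xi,D\eta\rangle_{L^2}$ explicitly via one integration by parts and then extract the three conditions by choosing test functions carefully. The key identity I will use is
\[\int_0^1\bigl(\langle J\dot\xi,\eta\rangle_t+\langle J\xi,\dot\eta\rangle_t\bigr)\,dt = \bigl[\langle J\xi,\eta\rangle_t\bigr]_0^1 - \int_0^1\bigl(\langle \dot J\xi,\eta\rangle_t+\dot g_t(J\xi,\eta)\bigr)\,dt,\]
where $\dot g_t$ denotes the $t$-derivative of the metric. Rearranging will yield a decomposition $B(\xi,\eta) = [\langle J\xi,\eta\rangle_t]_0^1 + (\text{interior integral})$, where the interior piece naturally collects a term proportional to $(J+J^*_t)\xi$ paired with $\dot\eta$ together with a pointwise bilinear term involving $\dot J$, $\dot g_t$, and $A-A^*_t$.

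For the necessity direction I plan to test with $\xi(t)=\alpha(t)v$ and $\eta(t)=\beta(t)w$ for $\alpha,\beta\in C^\infty_c((0,1))$ and fixed $v,w\in V$, so that the boundary contribution drops out. After a second integration by parts to isolate $\alpha\beta$ and $\dot\alpha\beta$ as independent distributional coefficients, the vanishing of $B$ splits into two pointwise identities: first, $\langle Jv,w\rangle_t+\langle v,Jw\rangle_t=0$, which is $J+J^*_t=0$ and yields condition (i); and second, once (i) is installed so that $\omega_t:=\langle J\cdot,\cdot\rangle_t$ is a genuine 2-form,
\[\dot\omega_t(v,w)=\langle (A-A^*_t)v,w\rangle_t\qquad\forall v,w\in V,\]
where $A^*_t$ denotes the $\langle\cdot,\cdot\rangle_t$-adjoint. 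Call this identity ($\ast$). With (i) and ($\ast$) in force the interior integral in $B(\xi,\eta)$ vanishes for \emph{all} $\xi,\eta$, so self-adjointness reduces to $\omega_1(\xi(1),\eta(1))-\omega_0(\xi(0),\eta(0))=0$; independence of the boundary values in $\Lambda_0\times\Lambda_1$ together with $\dim\Lambda_i=\tfrac12\dim V$ then forces $\omega_i|_{\Lambda_i}=0$, which is condition (iii).

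The final step is to identify ($\ast$) with the infinitesimal form of (ii). From $J\dot\Phi+A\Phi=0$ and $J^2=-\mathrm{Id}$ we get $\dot\Phi=JA\Phi$, and compatibility (i) gives $\langle Ju,Jv\rangle_t=\langle u,v\rangle_t$. A direct differentiation then produces
\[\tfrac{d}{dt}\bigl[\Phi^*\omega_t\bigr](\xi,\eta)=\dot\omega_t(\Phi\xi,\Phi\eta)-\langle (A-A^*_t)\Phi\xi,\Phi\eta\rangle_t,\]
and since $\Phi(t)$ is bijective this vanishes for all $\xi,\eta$ precisely when ($\ast$) holds. Combined with $\Phi(0)=\mathrm{Id}$ this is equivalent to $\Phi(t)^*\omega_t=\omega_0$, i.e.\ condition (ii). The sufficiency direction amounts to running the same computation backwards: assuming (i)--(iii) the interior integrand vanishes pointwise by ($\ast$) and the boundary term vanishes on $\Lambda_0\times\Lambda_1$, so $B\equiv 0$.

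The main obstacle I expect is the localisation step that extracts both (i) and ($\ast$) as pointwise identities simultaneously. The delicate point is that the coefficients of $\dot\alpha\beta$ and $\alpha\beta$ must be recognised as genuinely independent distributional pairings on $C^\infty_c((0,1))^2$; a naive bump-function limit is not enough, which is why the second integration by parts (yielding, after fixing $\beta$ and varying $\alpha$, the equation $-\dot\beta F-\beta\dot F+\beta G=0$ with $F$ and $G$ the two integrand factors) is needed to cleanly decouple the two conditions.
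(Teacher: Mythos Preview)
Your proposal is correct and takes a genuinely different route from the paper. The paper first proves the special case $A=0$ (where the operator is simply $D\xi=J(t)\dot\xi$) and then reduces the general case to this one by conjugating with the fundamental solution $\Phi$: since $J\dot\Phi+A\Phi=0$, the operator $\tilde D:=\Phi^{-1}D\Phi$ equals $\tilde J\,\p_t$ with $\tilde J:=\Phi^{-1}J\Phi$, and self-adjointness for the pulled-back inner product $\langle\Phi\cdot,\Phi\cdot\rangle_t$ is analysed via the $A=0$ case; in that case the interior condition is simply that $\omega_t$ be \emph{constant}, and condition~(ii) enters only when one unwinds the conjugation by setting $\omega_t:=\omega(\Phi(t)^{-1}\cdot,\Phi(t)^{-1}\cdot)$.

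You instead attack the general case head-on and extract the differential identity $(\ast)$ $\dot\omega_t(v,w)=\langle(A-A_t^*)v,w\rangle_t$ directly as the second pointwise interior condition, then recognise it as the infinitesimal form of $\Phi^*\omega_t\equiv\omega_0$ via the computation of $\tfrac{d}{dt}\Phi^*\omega_t$. This is cleaner in one respect: it makes transparent \emph{why} condition~(ii) arises, namely as the integrated version of the algebraic constraint on $A$ that the interior pairing forces. The paper's reduction is a slick trick, but it hides this mechanism inside the conjugation. Your localisation step (fix $\beta$, vary $\alpha$ to kill the integral pointwise, then use that $\beta(t_0)$ and $\dot\beta(t_0)$ can be prescribed independently) is also more transparent than the paper's scaling-of-bump-functions argument in its Step~1, though both accomplish the same decoupling. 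Either approach works; yours avoids introducing $\Phi$ until the very end, at the cost of carrying the $A$-term through the integration by parts.
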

 \begin{proof}
 In the special case $A=0 $ this theorem is equivalent 
 to the following statement.
 Define the operator $D $ by 
 $$ 
 D \xi:= J(t)\xi  
 $$
 Then the operator $D$ is self adjoint for $ \inner {\cdot}{\cdot}_{L^2} $ 
 if and only if there exists a skew symmetric form 
 $\om: V \times V \to \R $ such that  
  \begin{itemize}
 \item[a)] $\langle \cdot, \cdot \rangle_t = \om (\cdot, J(t) (\cdot)) $ for all $t$. 
 \item[b)] $\Lambda_0, \Lambda_1 $ are Lagrangian for $ \omega $. 
 \end{itemize}
 We prove this special case in the next three steps. 
 Assume without loss of generality $ V = \R^{2n} $ and 
 $ \inner \xi \eta _t = \xi^T Q(t) \eta $ for all $ t$. 
 Here $ Q(t) $ are symmetric matrices.  The operator $D$ is self adjoint 
 if and only if
 $$ 
 0= \langle \xi, D\eta \rangle_{L^2}- \langle D\xi, \eta \rangle_{L^2} 
 $$
 Then for all  $\xi(t), \eta(t) \in V $ 
 with $ \xi(i), \eta(i)\in \Lambda_i, \; i=0,1 $ we have
 \begin{align}\label{eq:jed1}
 0&= \int_0^1 \left (\langle \xi, Q J \dot{\eta}\rangle - \langle J \dot{\xi} , Q \eta \rangle  \right) dt \notag \\
   & = \int_0^1\langle \xi, Q J \dot{\eta} + \frac{d}{dt} ( J^T Q \eta)\rangle dt - 
           \overbrace{\langle  J(1) \xi(1), Q(1) \eta(1) \rangle + \langle  J(0) \xi(0), Q(0) \eta(0) \rangle}^{boundary \;cond.} \notag \\
   &= \int_0^1 \langle \xi, ( QJ + J^T Q ) \dot\eta \rangle + \int_0^1 \langle \frac{d}{dt}( QJ) \xi, \eta \rangle + boundary \;cond.
    \end{align}
    The second equality follows by partial integration.
   We prove that the equality \eqref{eq:jed1} implies the following
  \begin{itemize}
  \item[1.] $QJ +J^T Q=0$. 
  \item[2.] $QJ$ is constant. 
   \item[3.] $ Q(0) J(0) \Lambda_0 \perp \Lambda_0 $ and $ Q(1) J(1) \Lambda_1 \perp \Lambda_1 $
  \end{itemize}
  {\bf Step 1.} 
  \begin{equation}\label{eq:st1}
  Q(t) J(t) + J^T(t) Q(t) = 0 , \;\; \forall t .
  \end{equation}
  \begin{proof}
  Suppose that $ S(t_0) = ( QJ+ J^TQ)(t_0) \neq 0 $ 
  for some $ t_0 $ with $ 0< t_0 <1 $. 
  Choose $ \xi_0, \eta_0 $ such that 
  $$
  \langle \xi_0, S(t_0) \eta_0 \rangle=1
  $$
  Choose smooth functions $ \alpha $ and $ \beta $ with 
  compact support in $ [t_0-2\eps^2, t_0+2\eps^2] $ such that 
  $ \beta ( t_0 + t) = \frac{t}{\epsilon}, \;\; \abs{t}< \eps^2 $ and $ \alpha( t_0+t) = 1 $ 
  for $ \abs{t}< \epsilon^2 $.  
  Let $ \xi = \alpha \xi_0 $ and $ \eta= \beta \eta_0 $. 
  Thus it follows that 
  $$
   \int_0^1 \langle \partial_t ( QJ) \xi, \eta \rangle_t \leq c \eps^3 
   $$
   and 
   $$
   \int_0^1 \langle \xi, S \dot{\eta} \rangle \geq \delta \epsilon^2 
   $$
   Thus it follows that the right hand side of \eqref{eq:jed1} is different from zero, 
   what is a contradiction. 
   \end{proof}
\noindent {\bf Step 2.} By Step 1 we have 
    \begin{align}
    0 &= \langle \xi, D \eta \rangle_{L^2}  -  \langle D \xi, \eta \rangle_{L^2} \notag \\
      &= \langle Q(0) J(0) \xi(0), \eta(0)\rangle - \langle Q(1) J(1) \xi(1), \eta(1) \rangle +
       \int_0^1 \langle (\partial_t ( QJ)) \xi , \eta\rangle dt 
    \end{align}
 and the previous equality holds for all $ \xi, \eta $ with $ \xi(i), \eta(i) \in \Lambda_i $, 
 thus obviously we have 
 \begin{itemize}
 \item[i)] $\p_t ( QJ)= 0 $
 \item[ii)] $ Q(0) J(0) \Lambda_0 \perp \Lambda_0 $ and $ Q(1) \Lambda_1 \perp \Lambda_1 $. 
 \end{itemize}
 
 {\bf Step 3.} In Steps 1 and 2 we have proved that 
 \begin{itemize}
 \item[(a)] $ QJ= - (QJ)^T $ ( Step 1) \\
 \item[(b)] $ QJ\equiv \text{constant} $ ( Step 2)\\
 \item[(c)] $ Q(0) J(0) \Lambda_0 \perp \Lambda_0 $ and $ Q(1) \Lambda_1 \perp \Lambda_1 $. 
 Define 
 $$
 \om : \R^{2n} \times \R^{2n} \to \R 
 $$
 by 
 $$ 
 \om( \xi, \eta ):= \langle QJ \xi, \eta \rangle
 $$
 \end{itemize}
 This is independent of $t$ by $(b)$ 
 and skew symmetric by $(a)$ and non degenerate 
 by assumption. Moreover 
 \begin{itemize}
 \item 
 \begin{align*}
  \om( \xi, J(t) \eta ) &= \langle Q(t) J(t) \xi(t), J(t) \eta(t)\rangle \stackrel{(a)}{=} - \langle J(t)^T Q(t) \xi(t),  J(t)\eta(t) \rangle \\
  &= - \langle Q(t) \xi(t), J(t)^2 \eta(t) \rangle = \langle Q(t) \xi(t), \eta(t) \rangle = \langle \xi(t), \eta(t) \rangle_t 
  \end{align*}
  \item $\om ( \xi_0, \eta_0 ) = \langle Q(0) J(0) \xi_0, \eta_0 \rangle =0 $ for all $ \xi_0, \eta_0 \in \Lambda_0 $ and similarly 
  $\om( \xi_1, \eta_1 ) = \langle Q(1) J(1) \xi_1, \eta_1 \rangle =0 $ for all $ \xi_1, \eta_1 \in \Lambda_1 $.
   \end{itemize} 
   \end{proof}
    {\bf Step 4.} {Reducing the general case to the case $A=0$.} \\
 Conjugating the operator $ D$ with $ \Phi $, we 
 reduce the general case to the case that $ A=0 $. 
 Then the proof follows from the first three steps. 
 More precisely let $ \xi = \Phi \txi $. Then we have 
 \begin{align*}
 \tD \txi &= \Phi^{-1} D \Phi \txi \\
 	    &=\Phi^{-} ( J \Phi \dot \txi + J \dot \Phi \txi + A \Phi \txi )\\
	    &= \tJ \dot \txi + \underbrace{( \phi^{-1} J \dot{\Phi} + \Phi^{-1} A \Phi )}_{\tA} \txi
 \end{align*}
 Assume that $\tA=0$. Thus we have that the operator 
 $$
 \tD\txi= \tJ \dot{\txi} 
 $$
 and it is self adjoint with respect to 
 $$ 
 \int_0^1 \inner {\Phi\txi}{\Phi \teta}_t dt
 $$
 It follows from the first three steps that there exist a
 two form $ \om : V \times V \to \R $ skew symmetric and 
 non degenerate such that 
 \begin{align*}
 \om( \txi, \tJ(t) \teta ) &= \inner {\Phi(t) \txi}{\Phi(t) \teta}_t\\
   \om(\txi, \Phi^{-1} J \Phi \teta)&= \inner {\Phi \txi}{ \Phi \teta}_t                       
 \end{align*}
 Thus we have that 
 $\om ( \Phi^{-1} (t) \xi, \Phi^{-1}J(t) \eta ) = \inner {\xi}{\eta} _t $ 
and the smooth family of $2-$forms $ \omega_t:= \om ( \Phi(t)^{-1} \cdot, \Phi(t)^{-1} \cdot ) $ satisfies 
$$
\om_t ( \cdot, J(t) \cdot ) = \inner {\cdot}{\cdot}_t  
$$
We also have that 
\begin{itemize}
\item[1)] $ \om_t := \langle J(t) \cdot, \cdot \rangle_t = - \langle \cdot, J(t) \cdot \rangle_t$. \\
\item[2)] $ \Lambda_0 $ is Lagrangian for $ \om_0 $ and $ \Lambda_1 $ is Lagrangian for 
$ \om_1 $. 
\item[3)] If $J\dot{\Phi} + A \Phi=0 $ , $ \Phi(0) = \Id $ then 
 $$
  \om_t ( \Phi(t) \xi, \Phi(t) \eta ) = \om_0 ( \xi, \eta) 
 $$
\end{itemize}
 
 \begin{PARA} [{\bf Question}] 
 Given $ \Lambda_0 $ and $ \Lambda_1 \subset V $ and 
 $ J(t) \in \sJ(V) $ such that $ J(0) \Lambda_0 \pitchfork \Lambda_0 $ 
 and $ J(1) \Lambda_1  \pitchfork \Lambda_1$ and such that there 
 exist a non degenerate skew form $ \om_0 $ such that $J(t) $ are tame with $ \om_0 $
 for all time $t$. 
 Does there exist a non degenerate skew form $ \om : V \times V \to \R $ such that 
 \begin{itemize}
 \item[1)]  $\Lambda_0, \Lambda_1 $ are Lagrangian for $ \om$
 \item[2)] $ J(t) $ are compatible with $ \om $ for all $t$. 
 \end{itemize}
 \end{PARA}  
   The answer to this question is no.
   More precisely there exists $ \Lambda_0, \Lambda_1 $ 
   and $ J(t) $ which satisfy the above condition  and a non degenerate 
   skew form $ \om_0 $ such that $ J(t) $ 
   are tamed by $ \om_0 $ for all $t$, but there doesn't exist 
   $ \om $ such that $ J(t) $ are compatible with $\om $ 
   for all $t$ and such that $ \Lambda_i , \; i=0,1$ are Lagrangian for $ \om$. 
  
\begin{PARA}[{\bf Counterexample.}] \label{para:counterexm}\rm
   Obviously, we cannot look for counterexample in the dimension $2$, 
   as here compatibility is the same as the tame condition. Thus we 
   can suppose that $ V= \R^4$. We take $ \Lambda_0 = \R^2\times \{0\} $ 
   and $ \Lambda_1 = \{0\} \times \R^2$. The standard symplectic form 
   $ \om_0= \sum\limits_{i} dx_i \wedge dy_i $ is given by 
   $$ 
   \om_0 ( z, z') = \inner x {y'} - \inner y{x'} 
   $$ 
   Let $ B \in \R^{2\times 2} $ be any matrix with $ \text{Det}(B) \neq 0 $.
    Observe the $2-$form 
    $$ 
    \om_B ( z, z'):= \inner{x}{By'} - \inner{y}{Bx'}
    $$
    Notice that $ \om_B = \sum\limits_{i,j} b_{ij} dx_i \wedge dy_j $. 
   Any symplectic form in $ \R^4$ can be written in the form 
   $$
   \om= a\; dx_1\wedge dx_2 + c\; dy_1 \wedge dy_2 + \om_B 
   $$
   Notice that $ \Lambda_0= \R^2 \times \{0\} $ is Lagrangian 
   if and only if $ a=0 $ and similarly $ \Lambda_1 $ is Lagrangian 
   if and only if $ c=0 $. Thus both $ \Lambda_i , \; i=0,1$ are Lagrangian 
   if and only if $ \om = \om_B $. 
   
   Let $ A(t) \in \R^{2\times 2} $ with $ \text{Det}(A(t)) \neq 0 $. 
   Observe the following smooth family of $J(t) $. 
   \[J(t): = \left( \begin{array}{cc}
	 0 & - A(t)^{-1}  \\
	A(t) & 0  
\end{array} \right)\] 
Then $ J(t) $ are compatible with $ \om_0 $ if and only if $ A(t) = A(t)^T > 0 $ 
and $ J(t) $ are tame with $ \om_0 $ if and only if $ A(t) + A(t)^T > 0 $. Similarly
we have that $ J(t) $ are compatible with $ \om_B $ if and only if $ BA =  (BA)^T = A^T B^T>0$.
Consider the following three matrices  $ A_1 = \Id $
\[ A_2 = \left ( \begin{array}{cc}
				1 & 0 \\
				0 & 1+ \eps
			\end{array}
			\right) \hspace{1cm}
			A_3 = \left ( \begin{array}{cc}
				1 & \eps\\
				\eps & 1
			\end{array}
			\right) \]
These three matrices all satisfy the condition $ A= A^T$ and they are all positive definite 
for small $ \epsilon $. Observe the fourth matrix $ A_4 $ given by 
\[A_4 = \left ( \begin{array}{cc}
				1 & \epsilon\\
				0 & 1
			\end{array}
			\right)
			\]
			
Then such matrix $ A_4 $ is obviously not symmetric but it 
satisfies the condition $ A_4 + A_4^T > 0 $. 
\begin{lemma}\label{lem:aux}
Let $ A_i, \;\; i=1,2,3 $ be as above and let $ B\in \R^{2\times 2 } $ with $ \text{det}(B)\neq 0 $
be such that 
\begin{equation}\label{eq:cond}
BA_i = {A_i}^T B^T. 
\end{equation}
Then there exists a constant $ \lambda $ such that $ B = \lambda \Id $. 
\end{lemma}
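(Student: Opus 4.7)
\medskip

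The plan is to extract the three conditions $BA_i = A_i^T B^T$ one at a time and observe that each successively constrains the four entries of $B$ until only scalar multiples of the identity remain. Write $B = \begin{pmatrix} a & b \\ c & d \end{pmatrix}$.

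First, specializing the condition \eqref{eq:cond} to $A_1 = \Id$ gives $B = B^T$, so $B$ is symmetric; hence $c = b$ and we may write $B = \begin{pmatrix} a & b \\ b & d \end{pmatrix}$. Next, since $A_2$ and $A_3$ are both symmetric, the equation $BA_i = A_i^T B^T$ for $i=2,3$ simplifies, using $B = B^T$ and $A_i^T = A_i$, to the commutation relation $BA_i = A_i B$. So it remains to show that a symmetric $2\times 2$ matrix which commutes with both $A_2$ and $A_3$ must be a scalar multiple of the identity.

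Commutation with $A_2 = \mathrm{diag}(1,1+\eps)$ is a direct computation: $BA_2 = \begin{pmatrix} a & (1+\eps)b \\ b & (1+\eps)d \end{pmatrix}$ while $A_2 B = \begin{pmatrix} a & b \\ (1+\eps)b & (1+\eps)d\end{pmatrix}$, so equality forces $\eps b = 0$ and hence $b = 0$ (for $\eps \neq 0$, which we may assume, shrinking $\eps$ if needed). Thus $B = \mathrm{diag}(a,d)$. Now commutation with $A_3 = \begin{pmatrix} 1 & \eps \\ \eps & 1\end{pmatrix}$ with $B$ diagonal gives $BA_3 = \begin{pmatrix} a & a\eps \\ d\eps & d\end{pmatrix}$ and $A_3 B = \begin{pmatrix} a & d\eps \\ a\eps & d\end{pmatrix}$, so the off-diagonal entries yield $a\eps = d\eps$, i.e.\ $a = d$. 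Setting $\lambda := a$, we conclude $B = \lambda \Id$, as required.

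There is essentially no technical obstacle here — the argument is pure $2\times 2$ linear algebra, and the role of the three matrices $A_1, A_2, A_3$ is exactly to give enough rigidity to pin down $B$ as a scalar: $A_1$ enforces symmetry, $A_2$ kills the off-diagonal, and $A_3$ equates the diagonal entries. The only thing to remark is that the statement implicitly uses $\eps \neq 0$, which is consistent with the use of this lemma in the counterexample of \ref{para:counterexm}, where $\eps$ is a small positive parameter.
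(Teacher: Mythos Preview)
Your proof is correct and follows essentially the same approach as the paper. The only cosmetic difference is that the paper exploits linearity of the condition in $A$ to pass to the difference matrices $M_1=\tfrac{1}{\eps}(A_2-A_1)$ and $M_2=\tfrac{1}{\eps}(A_3-A_1)$ before computing, whereas you observe that for symmetric $A_i$ and symmetric $B$ the condition becomes commutation $BA_i=A_iB$ and compute directly with $A_2,A_3$; the resulting linear algebra is identical.
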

\begin{proof}
Notice first that in the case of $ A_1 = \Id $ the above condition is equivalent to
$$
B=  B^T= \left ( \begin{array}{cc}
				b_{11} & b_{12}\\
				b_{12} & b_{22}
			\end{array}
			\right)
$$ 
If the equality \eqref{eq:cond} is satisfied for $ A_1 $ and $ A_2 $ 
then it also holds for their difference, as well 
as for  $ M_1=\frac{1}{\eps}( A_2 - A_1) $.
From the equality $ B M_1 = M_1^T B^T $ we obtain that  $b_{12}=0 $. 
Similarly we have that the equality \eqref{eq:cond} 
also holds for the matrix $ M_2= \frac{1}{\eps} (A_3 - A_1) $.
From the equality $ B M_2 = M_2^T B^T $ we obtain $ b_{11}= b_{22} =\lambda$.  

\end{proof}

Now we can finish the counterexample.
 Take a path  $A(t) $ going through all these matrices ( $ A_i, \; i=1, \cdots , 4$)
 and such that $ A(t) + A(t)^T > 0 $. For example we can take $ A(0) = A_1 $, 
 $ A(\frac{1}{3})= A_2 $ , $ A(\frac{2}{3})= A_3 $ and $ A(1)= A_4$. 
Then $J(t) $ is tamed by $ \om_0 $ for all $t$, but there doesn't exist 
a $2-$form $ \omega $ such that $J(t) $ are compatible with $\om $ 
and that $ \Lambda_i , \;\; i=1,2 $ are Lagrangian subspaces. 
That follows from Lemma \ref{lem:aux}. If it would exit such $\om$, 
then we would have that $ \om = \om_B $ and the matrix $B $ 
satisfies $BA= A^T B^T $ for all $t\in [0,1] $. Particularly the matrix 
$B$ satisfies that condition for $ t= \frac{i}{3} , \; i=0,1,2 $. 
Then it follows from Lemma \ref{lem:aux} that $ B = \lambda \Id $, 
but the fourth matrix  $A_4$ isn't symmetric and this is the contradiction! 
\end{PARA}

\chapter{Applications in Lagrangian Floer homology}\label{ch:hardy_floer}

One of the three main technical ingredients in Lagrangian
Floer theory is the Floer gluing theorem. (The other two 
are Floer--Gromov compactness and the linear elliptic 
Fredholm theory, including the
Fredholm-index-equals-Maslov-index theorem).
In chapter \ref{chp:hardy} we introduce a new 
approach to Lagrangian Floer gluing using nonlinear 
Hardy spaces.  The purpose of the present chapter 
is to explain how the main result in chapter \ref{chp:hardy} 
implies the relevant gluing theorems in Lagrangian 
Floer theory (see Floer~\cite{F1,F2} and Oh~\cite{OH}) 
via intersection theory in a path space.  
More precisely, we prove the following results.

\smallskip\noindent{\bf I: Boundary Map.}
The mod two count of solutions of the Floer equation 
for regular Floer data defines a map
with square zero (Theorem~\ref{thm:bo}).

\smallskip\noindent{\bf II: Chain Map.}
The mod two count of solutions of the time dependent 
Floer equation, associated to a regular homotopy of Floer data,
defines a homomorphism that intertwines the Floer boundary 
operators (Theorem~\ref{thm:isotopy1}).

\smallskip\noindent{\bf III: Chain Homotopy Equivalence.}
The induced morphism on Floer homology in~II is 
independent of the choice of the homotopy
(Theorem~\ref{thm:isotopy2}).

\smallskip\noindent{\bf IV: Catenation.}
Two composable morphisms on Floer homology as in~III 
satisfy the composition rule under catenation of homotopies 
(Theorem~\ref{thm:isotopy3}).

The exposition here is restricted to monotone 
Lagrangian submanifolds with minimal Maslov 
numbers at least three. 
On the other hand we do not impose any restrictions
on the fundamental groups or on the monotonicity factors
of the Lagrangian submanifolds and hence it is necessary 
to work with Novikov rings.  
This chapter represents joint work with Prof. D. Salamon. 

\section{Floer Homology} \label{sec:FLOER}  

This section is of expository nature.  
It discusses the basic setup of Lagrangian 
Floer theory in the monotone case (see~\cite{F1,F2,OH}).  
More precisely, we consider the following setting.

\begin{description}
\item[(H)]
{\it $(M,\om)$ is a compact symplectic manifold without 
boundary and 
$$
L_0,L_1\subset M
$$ 
are compact Lagrangian submanifolds without boundary.  
For $i=0,1$ the pair $(M,L_i)$ is monotone with minimal 
Maslov number at least three, i.e.\ for every
smooth map $u:(\D,\p\D)\to(M,L_i)$
the Maslov number $\mu(u)$ has absolute value at least three, 
and there is a constant $\tau_i>0$ such that 
$$
\int_\D u^*\om = \tau_i\mu(u)
$$
every smooth map $u:(\D,\p\D)\to(M,L_i)$.}
\end{description}

\begin{PARA}[{\bf The Floer Equation and the Energy Identity}]\label{para:FE}\rm
Fix a regular Hamiltonian function 
$H=\{H_t\}_{0\le t\le 1}\in\Hreg(M,L_0,L_1)$ 
and a smooth family of $\om$-tame almost complex structures
$J=\{J_t\}_{0\le t\le 1}\in\cJ(M,\om)$.  
The {\bf Floer equation} has the form
\begin{equation}\label{eq:FLOER}
\p_su+J_t(u)(\p_tu-X_{H_t}(u))=0,\qquad
u(s,0)\in L_0,\qquad u(s,1)\in L_1,
\end{equation}
for a smooth map ${u:\R\times[0,1]\to M}$.
The {\bf energy} of a solution $u$ of~\eqref{eq:FLOER} 
is defined by 
$$
E_H(u) 
:=
\frac12\int_{-\infty}^\infty\int_0^1
\Bigl(\abs{\p_su}_t^2+\abs{\p_tu-X_{H_t}(u)}_t^2
\Bigr)\,dtds.
$$
Here $\inner{\xi}{\eta}_t:=\tfrac12(\om(\xi,J_t\eta)+\om(\eta,J_t\xi))$
denotes the Riemannian metric determined by $\om$ and $J_t$.
If the energy is finite then the limits 
\begin{equation}\label{eq:LIMIT}
x^\pm(t) := \lim_{s\to\pm\infty}u(s,t)\in L_0\cap L_1
\end{equation}
exist and belong to $\cC(L_0,L_1;H)$ (see for example~\cite{RS3}).
The convergence is with all derivatives, uniform in $t$,
and exponential. For two solutions $x^\pm$ of~\eqref{eq:CRIT} 
denote the space of finite energy solutions of~\eqref{eq:FLOER}
and~\eqref{eq:LIMIT} by
$$
\cM(x^-,x^+;H,J) := \left\{u:\R\times[0,1]\to M\,\big|\,
\eqref{eq:FLOER},\,\eqref{eq:LIMIT},\,E_H(u)<\infty
\right\}.
$$
(When $H=0$ we abbreviate $\cM(x^-,x^+;J):=\cM(x^-,x^+;H,J)$.)
Thus $\cM(x^-,x^+;H,J)$ is the space of Floer trajectories 
from $x^-$ to $x^+$.
Every finite energy solution of~\eqref{eq:FLOER} 
and~\eqref{eq:LIMIT} satisfies the {\bf energy identity} 
\begin{equation}\label{eq:ENERGY}
E_H(u) = \int_{\R\times[0,1]} u^*\om 
- \int_0^1H_t(x^-(t))\,dt +  \int_0^1H_t(x^+(t))\,dt.
\end{equation}
\end{PARA}

\begin{PARA}[{\bf Regular Pairs}]\label{para:HJ}\rm
A family $J\in\cJ(M,\om)$ is called 
{\bf regular for $L_0$, $L_1$, $H$} if every finite 
energy solution $u:\R\times[0,1]\to M$ of~\eqref{eq:FLOER} 
is regular in the sense that the linearized operator $D_u$ 
is surjective. The set of regular families $J\in\cJ(M,\om)$ 
will be denoted by $\Jreg(M,L_0,L_1,H)$.  It is a residual subset 
of the space of $\cJ(M,\om)$ (see Floer~\cite{F1,F2}).
A pair $(H,J)\in\cH(M)\times\cJ(M,\om)$ is called
a {\bf regular pair for $(L_0,L_1)$}
if $H\in\Hreg(M,L_0,L_1)$ and $J\in\Jreg(M,L_0,L_1,H)$.
The set of regular pairs for $(L_0,L_1)$ is a residual 
subset of $\cH(M)\times\cJ(M,\om)$ and will be 
denoted by 
$$
\HJreg(M,L_0,L_1)
:=\left\{(H,J)\,\big|\,
\begin{array}{l}
H\in\Hreg(M,L_0,L_1),\\
J\in\Jreg(M,L_0,L_1,H)
\end{array}
\right\}. 
$$
When $L_0\ti L_1$ and $H=0$
we write $\Jreg(M,L_0,L_1):=\Jreg(M,L_0,L_1,H)$.
\end{PARA}

\begin{PARA}[{\bf Novikov Rings}]\label{para:Novikov}\rm
Denote the {\bf universal Novikov ring} with 
$\Z_2$ coefficients by
$$
\Lambda := \left\{
\sum_{\eps\in\R}\lambda_\eps e^{-\eps}\,\Big|\,
\lambda_\eps\in\Z_2,\,
\#\{\eps\le c\,|\,\lambda_\eps\ne 0\}<\infty
\;\forall\,c\in\R\right\}.
$$ 
Each element of $\Lambda$ can be thought of as a function 
$\R\to\Z_2:\eps\mapsto\lambda_\eps$ with finite support over
each half infinite interval $(-\infty,c]$.  The universal
Novikov ring is a field with multiplication 
$
\lambda\lambda':=\sum_\eps\sum_\delta
\lambda_\delta\lambda'_{\eps-\delta}e^{-\eps}.
$
\end{PARA}

\begin{PARA}[{\bf The Floer Chain Complex}]\label{para:Floer}\rm
Assume $(M,L_0,L_1)$ satisfy~(H) and let 
$(H,J)\in\HJreg(M,L_0,L_1)$.
The {\bf Floer chain complex} of $L_0$, $L_1$, $H$
is the vector space over $\Lambda$ generated by the
solutions of~\eqref{eq:CRIT}. It is denoted by
\begin{equation}\label{eq:CF}
\CF_*(L_0,L_1;H) := \bigotimes_{x\in\cC(L_0,L_1;H)}\Lambda x.
\end{equation}
When $H=0$ this chain complex is generated 
by the intersection points of $L_0$ and $L_1$.  
Under our assumptions the space $\cM(x^-,x^+;H,J)$ 
of Floer trajectories is a smooth manifold 
whose local dimension near $u\in\cM(x^-,x^+;H,J)$
is the Viterbo--Maslov index $\mu_H(u)$ 
(see~\cite{F1,F2,RS1,RS2,VITERBO}).
For every integer $k\ge 0$ and every constant $\eps>0$ 
denote the space of Floer trajectories from $x^-$ to $x^+$ 
with Viterbo--Maslov index $k$ and energy $\eps$ by
$$
\cM^k_\eps(x^-,x^+;H,J) := \left\{u\in\cM(x^-,x^+;H,J)\,|\,
\mu_H(u)=k,\,E_H(u)=\eps\right\}.
$$
This space is a $k$-dimensional manifold and (for $k>0$)
it carries a free and proper action of $\R$ by translation.
The quotient 
$$
\widehat{\cM}_\eps^k(x^-,x^+;H,J):=\cM_\eps^k(x^-,x^+;H,J)/\R
$$ 
is a manifold of dimension $k-1$.  
It follows from our hypotheses that
\begin{equation}\label{eq:FINITE}
\sum_{\eps\le c}\#\widehat{\cM}^1_\eps(x^-,x^+;H,J)<\infty
\qquad\forall\;c>0.
\end{equation}
Define the operator 
$
\p=\p^{H,J}:\CF_*(L_0,L_1;H)\to\CF_*(L_0,L_1;H)
$
by
\begin{equation}\label{eq:dCF}
\p^{H,J} x := \sum_{y\in\cC(L_0,L_1;H)}\sum_\eps
\#\widehat{\cM}^1_\eps(x,y;H,J)e^{-\eps}y
\end{equation}
for $x\in\cC(L_0,L_1;H)$.  The following theorems
assert that~\eqref{eq:dCF} is indeed a boundary operator 
and that the resulting Floer homology groups are invariant 
under Hamiltonian isotopy.  The original proof by 
Floer~\cite{F1,F2} was carried out under the assumption 
$\pi_2(M,L_i)=0$. Floer's results were later extended 
to the monotone setting by Oh~\cite{OH}. 
\end{PARA}

\begin{theorem}[{\bf Boundary Operator}]\label{thm:bo}
Assume~(H) and let $(H,J)$ be a regular pair for $(L_0,L_1)$.
Let $\p^{H,J}:\CF_*(L_0,L_1;H)\to\CF_*(L_0,L_1;H)$
be defined by~\eqref{eq:dCF}. Then
$
\p^{H,J}\circ\p^{H,J} =0.
$
\end{theorem}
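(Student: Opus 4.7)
The plan is to show that $\partial^2 x = 0$ for each generator $x \in \cC(L_0,L_1;H)$ by identifying, for each target $z$ and each total energy $\eps$, the coefficient of $e^{-\eps}z$ in $\partial^{H,J}\partial^{H,J}x$ with the mod two count of boundary points of a compact one-manifold. That coefficient is, by~\eqref{eq:dCF}, the sum
\[
\sum_{y}\sum_{\eps_1+\eps_2=\eps}
\#\widehat{\cM}^1_{\eps_1}(x,y;H,J)\cdot
\#\widehat{\cM}^1_{\eps_2}(y,z;H,J)
\pmod 2,
\]
where $y$ ranges over $\cC(L_0,L_1;H)$. The finiteness~\eqref{eq:FINITE} ensures that only finitely many $(y,\eps_1,\eps_2)$ contribute for each fixed $\eps$, so there is no issue with the Novikov sum.

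To realise this count as the boundary of a compact one-manifold I would assemble the moduli space $\widehat{\cM}^2_\eps(x,z;H,J)$ of index-two Floer trajectories from $x$ to $z$ with energy $\eps$, modulo translation. The regularity hypothesis $(H,J)\in\HJreg(M,L_0,L_1)$ makes this a smooth one-dimensional manifold. Floer--Gromov compactness together with the monotonicity assumption~(H) (minimal Maslov number at least three forces any non-trivial disc or sphere bubble to carry Maslov index $\ge 3$, which is forbidden by the index count in dimension one) implies that every sequence in $\widehat{\cM}^2_\eps(x,z;H,J)$ either subconverges, or converges modulo translation to a broken trajectory $(u,v)$ with $u\in\widehat{\cM}^1_{\eps_1}(x,y;H,J)$, $v\in\widehat{\cM}^1_{\eps_2}(y,z;H,J)$ and $\eps_1+\eps_2=\eps$ for some intermediate $y$. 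Thus the ends of $\widehat{\cM}^2_\eps(x,z;H,J)$ inject into the broken trajectory set enumerated above.

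The gluing step, where the Hardy space framework of Chapter~\ref{chp:hardy} enters, reverses this correspondence. Fix a broken trajectory $(u,v)$ with intermediate critical point $y$ and choose a neighbourhood $\sU$ of $y$ in $\sP^{3/2}(H,J)$ as in~\ref{para:conv_thm}. Inside $\sP^{3/2}\times\sP^{3/2}$ consider the two Hardy submanifolds
\[
\sW^T(\sU)=i^T\bigl(\sM^T(\sU)\bigr),\qquad
\sW^\infty(y,\sU)=i^\infty\bigl(\sM^\infty(y,\sU)\bigr),
\]
together with the submanifold
\[
\sN(x,z):=i^-\bigl(\sM^-(x;H,J)\bigr)\times i^+\bigl(\sM^+(z;H,J)\bigr).
\]
The index-one regularity of $u$ and $v$ translates, via the embeddings $i^\pm$, into the assertion that the pair $(u(0,\cdot),v(0,\cdot))$ is a transverse intersection point of $\sW^\infty(y,\sU)$ with $\sN(x,z)$; here the Viterbo--Maslov index $\mu_H(u)+\mu_H(v)=2$ matches the dimensions appropriately. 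Theorem~\ref{thm:main_thm3} now asserts that $\sW^T(\sU)\to\sW^\infty(y,\sU)$ in $C^1$, so by an application of the implicit function theorem near this transverse intersection there is, for each sufficiently large $T\ge T_0$, a unique intersection point $(\alpha_T,\beta_T)$ of $\sW^T(\sU)$ with $\sN(x,z)$ close to the limit. Pulling back through $i^T$ and matching with the half-infinite curves in $\sM^\pm$ produces a smooth one-parameter family $T\mapsto u_T\in\cM^2(x,z;H,J)$ whose image lies on the end of $\widehat{\cM}^2_{\eps}$ corresponding to $(u,v)$.

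The remaining task is to prove that $T\mapsto[u_T]$ is a diffeomorphism from $(T_0,\infty)$ onto an open end of $\widehat{\cM}^2_\eps(x,z;H,J)$, so that adjoining the broken trajectory $(u,v)$ at $T=\infty$ compactifies this end; injectivity, surjectivity onto the end, and properness all follow from the uniqueness clause in the Hardy space intersection picture combined with the exponential decay and monotonicity results of Chapter~\ref{chp:mon}. Since every end of the compactified one-manifold $\overline{\widehat{\cM}^2_\eps(x,z;H,J)}$ is then labelled by a broken trajectory, and conversely every broken trajectory labels exactly one end, the mod two count of broken trajectories equals the number of boundary points of a compact one-manifold. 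This number is even, so the coefficient of $e^{-\eps}z$ in $\partial^2 x$ vanishes in $\Z_2$. Summing over $\eps$ and $z$ yields $\partial^{H,J}\circ\partial^{H,J}=0$.

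The main obstacle is the gluing step: identifying each end of $\widehat{\cM}^2_\eps$ with a half-line $(T_0,\infty)$ and, crucially, showing that no other sequences of index-two trajectories can escape to infinity without producing a broken trajectory of the prescribed form. The former is handled by the transverse intersection argument combined with the $C^1$ convergence $\sW^T\to\sW^\infty$ from Theorem~\ref{thm:main_thm3}; the latter requires Floer--Gromov compactness together with the absence of bubbles guaranteed by the monotonicity hypothesis~(H). A subsidiary difficulty is the presence of the Novikov ring $\Lambda$, but because only finitely many broken trajectories contribute to each fixed total energy $\eps$ by~\eqref{eq:FINITE}, the above argument applies level by level and assembles into the identity $\partial^2=0$ in $\End_\Lambda(\CF_*(L_0,L_1;H))$.
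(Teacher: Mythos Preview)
Your overall strategy matches the paper's precisely: compactify $\widehat{\cM}^2_\eps(x,z;H,J)$ by broken trajectories using Floer--Gromov compactness plus monotonicity, construct the gluing map via the Hardy space intersection picture (Theorem~\ref{thm:main_thm3}), and conclude by parity of boundary points. However, there is a genuine gap in your gluing step.

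You assert that $(u(0,\cdot),v(0,\cdot))$ is an \emph{isolated} transverse intersection of $\sW^\infty(y,\sU)$ with $\sN(x,z)=i^-(\sM^-(x;H,J))\times i^+(\sM^+(z;H,J))$. This is false. An intersection point corresponds to a pair $(\gamma_1,\gamma_2)$ where $\gamma_1$ extends holomorphically both backward to $x$ and forward to $y$, hence comes from a full trajectory in $\cM(x,y)$, and similarly $\gamma_2$ comes from $\cM(y,z)$. Near $(u,v)$ with $\mu_H(u)=\mu_H(v)=1$ the intersection is therefore locally diffeomorphic to $\cM^1(x,y)\times\cM^1(y,z)$, which is two-dimensional, not zero-dimensional. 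The Maslov indices add to two precisely because this intersection has dimension two, not because it is isolated. Consequently your claim of a \emph{unique} intersection point of $\sW^T$ with $\sN(x,z)$ for each large $T$ fails, and the map $T\mapsto u_T$ is not well-defined as written.

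The paper resolves this by replacing $\sM^\pm$ with the codimension-one slices
\[
\sM^-(x,\eps^-)=\{w^-\in\sM^-(x):E_H(w^-)=\eps^-\},\qquad
\sM^+(z,\eps^+)=\{w^+\in\sM^+(z):E_H(w^+)=\eps^+\},
\]
for suitably chosen $\eps^\pm$ (depending on $u,v$). This cuts two dimensions and makes the intersection with $\sW^\infty$ genuinely isolated with Fredholm index zero; the $C^1$ convergence $\sW^T\to\sW^\infty$ then yields a unique nearby intersection with $\sW^T$, and the gluing map $T\mapsto u_T$ is well-defined. The same energy normalization furnishes the smooth time function $T(w)$ of Lemma~\ref{le:TIME}, which is what actually proves that $T\mapsto[u_T]$ is a diffeomorphism onto its image in $\widehat{\cM}^2_\eps(x,z)$ --- a point you defer to ``the uniqueness clause'' but which requires exactly this slice. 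Once you insert the fixed-energy condition, your argument goes through and is the paper's proof.
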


\begin{proof}
See Section~\ref{sec:GLUING}.
\end{proof}

\begin{PARA}[{\bf Floer Homology}]\label{para:HF}\rm
The homology 
$$
\HF_*(L_0,L_1;H,J) 
:= \frac{\ker\,\p^{H,J}}{\im\,\p^{H,J}}
$$
of the chain complex in Theorem~\ref{thm:bo}
is called the {\bf Floer homology group of $(L_0,L_1)$} 
associated to the regular pair $(H,J)$. The Floer homology 
of $(L_0,L_1)$ is independent of the choice of the
regular pair $(H,J)$ up to canonical isomorphism. 
\end{PARA}
\begin{theorem}[{\bf Invariance}]\label{thm:HF}
Assume~(H). There is a collection of isomorphisms
$
\Phi^{\beta\alpha}:\HF_*(L_0,L_1;H^\alpha,J^\alpha)
\to  \HF_*(L_0,L_1;H^\beta,J^\beta),
$
one for any two regular pairs $(H^\alpha,J^\alpha)$
and $(H^\beta,J^\beta)$, satisfying
$$
\Phi^{\gamma\beta}\circ\Phi^{\beta\alpha}
= \Phi^{\gamma\alpha},\qquad
\Phi^{\alpha\alpha}=\id
$$
for all $(H^\alpha,J^\alpha), (H^\beta,J^\beta), 
(H^\gamma,J^\gamma)\in\HJreg(M,L_0,L_1)$.
\end{theorem}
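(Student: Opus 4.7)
The plan is to construct the isomorphisms $\Phi^{\beta\alpha}$ by counting solutions of a time-dependent Floer equation associated to a homotopy between the two regular pairs, then verify the two required properties using the Hardy space gluing theorem of Chapter~\ref{chp:hardy}. More precisely, given regular pairs $(H^\alpha,J^\alpha)$ and $(H^\beta,J^\beta)$, I would first choose a smooth homotopy $(H^{\beta\alpha}_s,J^{\beta\alpha}_s)_{s\in\R}$ interpolating between them, constant outside a compact $s$-interval, and consider the $s$-dependent Floer equation
$$
\p_su+J^{\beta\alpha}_{s,t}(u)\bigl(\p_tu-X_{H^{\beta\alpha}_{s,t}}(u)\bigr)=0,\qquad
u(s,0)\in L_0,\;\; u(s,1)\in L_1,
$$
with limits $x^\pm\in\cC(L_0,L_1;H^{\alpha/\beta})$. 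For generic choices this cuts out a smooth moduli space whose zero-dimensional component is finite modulo energy truncation (by the monotonicity assumption (H) and the usual Floer–Gromov compactness), and I would define
$$
\Phi^{\beta\alpha}x := \sum_{y}\sum_\eps
\#\cM^0_\eps(x,y;H^{\beta\alpha},J^{\beta\alpha})\,e^{-\eps}\,y,
$$
viewed as a $\Lambda$-linear map $\CF_*(L_0,L_1;H^\alpha)\to\CF_*(L_0,L_1;H^\beta)$.

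Next I would prove that $\Phi^{\beta\alpha}$ is a chain map, i.e.\
$\p^{H^\beta,J^\beta}\circ\Phi^{\beta\alpha}=\Phi^{\beta\alpha}\circ\p^{H^\alpha,J^\alpha}$, by the standard analysis of ends of the $1$-dimensional component of the same moduli space (Theorem~\ref{thm:isotopy1} referenced in the Introduction). The boundary of the compactified $1$-dimensional moduli space consists of pairs (Floer trajectory)+(solution of the homotopy equation), and by the Hardy-space gluing theorem (Theorem~\ref{thm:main_thm3}) every such broken configuration arises as a unique endpoint of a one-parameter family inside the moduli space. Thus the signed (mod~$2$) count of those boundaries equals zero, which is exactly the chain-map identity. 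To prove independence from the homotopy, I would connect two homotopies $(H^{\beta\alpha}_0,J^{\beta\alpha}_0)$ and $(H^{\beta\alpha}_1,J^{\beta\alpha}_1)$ by a two-parameter family and count the ends of the corresponding $1$-dimensional moduli space: besides the two $\Phi$-morphisms at the two parameter endpoints, broken configurations again appear, this time assembling into a $\Lambda$-linear chain homotopy $K^{\beta\alpha}$ with $\Phi^{\beta\alpha}_1-\Phi^{\beta\alpha}_0=\p K+K\p$. In particular $\Phi^{\alpha\alpha}=\id$ at the level of homology is obtained by taking the constant homotopy, whose moduli space reduces to constant strips at critical points.

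The main obstacle is the composition rule $\Phi^{\gamma\beta}\circ\Phi^{\beta\alpha}=\Phi^{\gamma\alpha}$ in the presence of the Novikov ring $\Lambda$. In the classical (energy-bounded) setting one argues by catenating a homotopy $(H^{\beta\alpha},J^{\beta\alpha})$ with $(H^{\gamma\beta},J^{\gamma\beta})$ in a parameter $T\to\infty$ and using gluing to identify the catenated moduli space, for $T$ large, with the fibered product modelling the composition $\Phi^{\gamma\beta}\circ\Phi^{\beta\alpha}$. With Novikov coefficients this naive catenation fails because for every cutoff energy $\eps$ the relevant $T(\eps)$ tends to infinity as $\eps\to\infty$, and no single $T$ works for all terms simultaneously. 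My plan, following the outline in the Introduction, is to construct a sequence of morphisms $\Phi^{\gamma\alpha}_\nu$, one for each $\nu$, corresponding to catenations with gluing parameter $T_\nu\to\infty$; by the convergence result Theorem~\ref{thm:main_thm3} consecutive morphisms differ by a chain homotopy (constructed as above from a one-parameter family of homotopies), so all $\Phi^{\gamma\alpha}_\nu$ induce the same map on homology. Using that $\Phi^{\gamma\alpha}=\Phi^{\gamma\alpha}_0$ and that, up to $\Lambda$-adic order $\nu$, $\Phi^{\gamma\alpha}_\nu$ agrees with $\Phi^{\gamma\beta}\circ\Phi^{\beta\alpha}$ (because the Hardy-space gluing produces all configurations of energy $\le\nu$), one obtains the identity $\Phi^{\gamma\beta}\circ\Phi^{\beta\alpha}=\Phi^{\gamma\alpha}$ on homology in the inverse limit. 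Combining the three results yields that the $\Phi^{\beta\alpha}$ form a system of isomorphisms with $\Phi^{\alpha\alpha}=\id$ and the transitivity relation, which is the statement of Theorem~\ref{thm:HF}.
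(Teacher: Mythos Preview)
Your proposal is correct and follows essentially the same approach as the paper. The paper's proof of Theorem~\ref{thm:HF} is a three-line reduction to Theorems~\ref{thm:isotopy1} (chain map), \ref{thm:isotopy2} (chain homotopy equivalence), and~\ref{thm:isotopy3} (catenation), whose proofs in turn are exactly what you outline: in particular, your handling of the Novikov-ring difficulty in the composition rule---constructing a sequence $\Phi^{\gamma\alpha}_\nu$ via catenations with $T_\nu\to\infty$, showing consecutive ones are chain homotopic with homotopies supported in increasing $\Lambda$-adic order, and passing to the limit---matches the four-step argument the paper gives for Theorem~\ref{thm:cat} (the string-cobordism generalization of Theorem~\ref{thm:isotopy3}).
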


\begin{PARA}[{\bf Naturality}]\label{para:NAT}\rm
Assume~(H) and let $(H,J)\in\HJreg(M,L_0,L_1)$. Let 
$
[0,1]\to\Diff(M,\om):t\mapsto\psi_t
$ 
be a Hamiltonian isotopy with corresponding family of Hamiltonian
functions $K_t:M\to\R$, $0\le t\le 1$, so that
\begin{equation}\label{eq:psiK}
\p_t\psi_t = Y_t\circ\psi_t,\qquad \iota(Y_t)\om=dK_t.
\end{equation}
We do not assume that $\psi_0$ is the identity.
Let $u:\R\times[0,1]\to M$ be a solution of~\eqref{eq:FLOER}
and define
$$
\tu(s,t) := \psi_t^{-1}(u(s,t)),\qquad
\tL_0 := \psi_0^{-1}(L_0),\qquad \tL_1:=\psi_1^{-1}(L_1),
$$
and
$$
\tH_t := (H_t-K_t)\circ\psi_t,\qquad
\tX_t := \psi_t^*(X_t-Y_t),\qquad 
\tJ_t:=\psi_t^*J_t.
$$
Then $\tH_t$ generates the Hamiltonian isotopy
$\widetilde{\phi}_t:=\psi_t^{-1}\circ\phi_t\circ\psi_0$
and $\tu$ satisfies the Floer equation
\begin{equation}\label{eq:tFLOER}
\p_s\tu + \tJ_t(\tu)(\p_t\tu-\tX_t(\tu)) = 0,\qquad
\tu(s,0)\in\tL_0,\qquad \tu(s,1)\in\tL_1.
\end{equation}
Thus pullback by $\psi_t$ induces an isomorphism
of Floer homology groups
$$
\psi^*:\HF_*(L_0,L_1;H,J)\to\HF_*(\tL_0,\tL_1;\tH,\tJ).
$$
\end{PARA}

\begin{PARA}[{\bf Regular Floer Data and Naturality}]
\label{para:FloerData}\rm
Let $(M,\om)$ be a compact symplectic manifold without boundary 
and denote by $\Freg=\Freg(M,\om)$ the set of {\bf regular Floer data} 
$(L_0,L_1,H,J)$, where ${L_0,L_1\subset M}$ are Lagrangian 
submanifolds satisfying~(H) and $(H,J)$ is regular pair
for $(L_0,L_1)$. The group $\sG=\sG(M,\om)$ of Hamiltonian 
isotopies $\psi=\{\psi_t\}_{0\le t\le1}$ of $(M,\om)$ 
(starting at any symplectomorphism) acts 
contravariantly on $\Freg$ via 
$$
\psi^*(L_0,L_1,H,J) 
:= (\psi_0^{-1}(L_0),\psi_1^{-1}(L_1),\psi^*H,\psi^*J),
$$
where 
$$
(\psi^*H)_t := (H_t-K_t)\circ\psi_t,\qquad
(\psi^*J)_t:=\psi_t^*J_t,
$$
and $K_t$ is a family of Hamiltonian 
functions generating $\psi_t$ via~\eqref{eq:psiK}.
More precisely, the homomorphism 
$
\pi_2(M,L_i)\to\pi_2(M,\psi_i^{-1}(L_i))
:u\mapsto\psi_i^{-1}\circ u
$
preserves the Maslov index and the symplectic area
for $i=0,1$. Hence the pair $(\psi_0^{-1}(L_0),\psi_1^{-1}(L_1))$
satisfies~(H) whenever $(L_0,L_1)$ does. 
Second, it follows from~\ref{para:NAT} that
$(\psi^*H,\psi^*J)\in\HJreg(M,\psi_0^{-1}(L_0),\psi_1^{-1}(L_1))$
whenever $(H,J)\in\HJreg(M,L_0,L_1)$.
\end{PARA}

\begin{theorem}[{\bf Lagrangian Seidel Homomorphism}]\label{thm:seidel}
Fix a compact symplectic manifold $(M,\om)$ without boundary. 
There is a collection of isomorphisms
$$
\psi^*:\HF_*(L_0,L_1;H,J)\to 
\HF_*(\psi_0^{-1}(L_0),\psi_1^{-1}(L_1);\psi^*H,\psi^*J),
$$
one for every $(L_0,L_1,H,J)\in\Freg$ and 
every $\psi\in\sG$, satisfying the following.

\smallskip\noindent{\bf (Functoriality)}
For all $(L_0,L_1,H,J)\in\Freg$ and $\phi,\psi\in\sG$ we have
$$
(\psi\phi)^*=\phi^*\circ\psi^*:\HF_*(L_0,L_1)\to 
\HF_*(\phi_0^{-1}(\psi_0^{-1}(L_0)),\phi_1^{-1}(\psi_1^{-1}(L_1))).
$$

\smallskip\noindent{\bf (Naturality)}
Let $(L_0,L_1)$ satisfy~(H), suppose 
$(H^\alpha,J^\alpha),(H^\beta,J^\beta)$ are regular pairs 
for $(L_0,L_1)$, and let $\psi\in\sG$.
Then the following diagram commutes
\begin{equation}\label{eq:diagiso1}
\xymatrix    
@C=20pt    
@R=20pt    
{    
\HF_*(L_0,L_1;H^\alpha,J^\alpha) 
\ar[r]^{\psi^*\qquad\quad} \ar[d]_{\Phi^{\beta\alpha}} 
& \HF_*(\psi_0^{-1}(L_0),\psi_1^{-1}(L_1);\psi^*H^\alpha,\psi^*J^\alpha)
\ar[d]^{\Phi^{\beta\alpha}}  \\ 
\HF_*(L_0,L_1;H^\beta,J^\beta) 
\ar[r]^{\psi^*\qquad\quad}
& \HF_*(\psi_0^{-1}(L_0),\psi_1^{-1}(L_1);\psi^*H^\beta,\psi^*J^\beta)
}.
\end{equation}

\smallskip\noindent{\bf (Isotopy)}
Let $(L_0,L_1,H^\alpha,J^\alpha)\in\Freg$ and  
$\phi,\psi\in\sG$ such that 
$$
\phi_0^{-1}(L_0)=\psi_0^{-1}(L_0)=:\tL_0,\qquad
\phi_1^{-1}(L_1)=\psi_1^{-1}(L_1)=:\tL_1.
$$
Define 
$$
(\tH^\beta,\tJ^\beta):=(\phi^*H^\alpha,\phi^*J^\alpha),\qquad
(\tH^\gamma,\tJ^\gamma):=(\psi^*H^\alpha,\psi^*J^\alpha).
$$
Suppose $\phi$ is isotopic to $\psi$ by a Hamiltonian isotopy 
$\{\psi^\lambda_t\}_{0\le t,\lambda\le1}$ that satisfies
$\psi^\lambda_0(L_0)=\tL_0$ and $\psi^\lambda_1(L_1)=\tL_1$
for all $\lambda$.  Then the following diagram commutes
\begin{equation}\label{eq:seidelisotopy}
\xymatrix    
@C=15pt    
@R=20pt    
{    
& \HF_*(L_0,L_1;H,J) \ar[dl]_{\phi^*} \ar[dr]^{\psi^*} & \\
\HF_*(\tL_0,\tL_1;\tH^\beta,\tJ^\beta) \ar[rr]^{\Phi^{\gamma\beta}}  
& & \HF_*(\tL_0,\tL_1;\tH^\gamma,\tJ^\gamma)  
}
\end{equation}
Here $\Phi^{\gamma\beta}$ is the isomorphism of 
Theorem~\ref{thm:HF}.
\end{theorem}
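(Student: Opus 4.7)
The plan is to construct $\psi^*$ at the chain level from the naturality observation in~\ref{para:NAT}, and then to reduce each of the three properties to an identification of moduli spaces that is essentially tautological once the correct Hamiltonian data are pulled back. First I would define the chain map
$$
\psi^*:\CF_*(L_0,L_1;H)\to\CF_*(\psi_0^{-1}(L_0),\psi_1^{-1}(L_1);\psi^*H)
$$
on generators by $\psi^*x:=\psi^{-1}\cdot x$, where $(\psi^{-1}\cdot x)(t):=\psi_t^{-1}(x(t))$. A direct computation shows that $x\in\cC(L_0,L_1;H)$ if and only if $\psi^{-1}\cdot x\in\cC(\psi_0^{-1}(L_0),\psi_1^{-1}(L_1);\psi^*H)$, so $\psi^*$ is a $\Lambda$-linear bijection on generators. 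The same substitution $\tu(s,t):=\psi_t^{-1}(u(s,t))$ from~\ref{para:NAT} identifies, for each $\eps>0$, the moduli space $\widehat\cM^1_\eps(x^-,x^+;H,J)$ with $\widehat\cM^1_\eps(\psi^{-1}\cdot x^-,\psi^{-1}\cdot x^+;\psi^*H,\psi^*J)$ (the Viterbo--Maslov index and the symplectic area in~\eqref{eq:ENERGY} are preserved since $\psi_t$ is symplectic). Consequently $\psi^*\circ\p^{H,J}=\p^{\psi^*H,\psi^*J}\circ\psi^*$, and $\psi^*$ descends to the required isomorphism on homology.

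Functoriality then follows by unpacking the definitions: $(\psi\phi)_t^{-1}=\phi_t^{-1}\circ\psi_t^{-1}$, and one checks directly that $(\psi\phi)^*H=\phi^*(\psi^*H)$ and $(\psi\phi)^*J=\phi^*(\psi^*J)$, so that $(\psi\phi)^*x=\phi^*(\psi^*x)$ on generators; the same identity propagates to homology. For naturality, I would pick any regular homotopy $(H^{\beta\alpha}_\lambda,J^{\beta\alpha}_\lambda)_{\lambda\in\R}$ connecting $(H^\alpha,J^\alpha)$ to $(H^\beta,J^\beta)$ whose mod-two count of solutions to the $s$-dependent Floer equation defines $\Phi^{\beta\alpha}$ (cf.\ Theorem~\ref{thm:isotopy1}). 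Pulling the homotopy back by $\psi$ gives a regular homotopy from $(\psi^*H^\alpha,\psi^*J^\alpha)$ to $(\psi^*H^\beta,\psi^*J^\beta)$, and the substitution $\tu(s,t)=\psi_t^{-1}(u(s,t))$ again gives a bijection of the zero-dimensional moduli spaces whose counts define the two continuation maps in~\eqref{eq:diagiso1}. Hence the square commutes on the chain level and \emph{a fortiori} on homology.

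The isotopy statement is the main obstacle. Given the Hamiltonian isotopy $\{\psi^\lambda_t\}$ with $\psi^0=\phi$, $\psi^1=\psi$, and $\psi^\lambda_i(L_i)=\widetilde L_i$ for all $\lambda$ and $i=0,1$, I would use the $\lambda$-family to build an \emph{explicit} homotopy of Floer data interpolating between $\phi^*(H^\alpha,J^\alpha)$ and $\psi^*(H^\alpha,J^\alpha)$. Choose a smooth cutoff $\rho:\R\to[0,1]$ with $\rho(s)=0$ for $s\le -1$ and $\rho(s)=1$ for $s\ge 1$, and define
$$
(\widehat H_s,\widehat J_s) := (\psi^{\rho(s)})^*(H^\alpha,J^\alpha).
$$
This is a homotopy from $\tH^\beta=\phi^*H^\alpha$ to $\tH^\gamma=\psi^*H^\alpha$ (and similarly for $J$), and by the hypothesis on $\psi^\lambda$ the Lagrangian boundary conditions $\widetilde L_0$ and $\widetilde L_1$ are preserved along the entire homotopy. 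Substituting $\tu(s,t):=(\psi^{\rho(s)}_t)^{-1}(u(s,t))$ converts each finite-energy solution $u$ of the \emph{stationary} Floer equation for $(L_0,L_1,H^\alpha,J^\alpha)$ into a solution of the $s$-dependent Floer equation for $(\widetilde L_0,\widetilde L_1,\widehat H,\widehat J)$ whose limits are $\phi^*x^-$ and $\psi^*x^+$. This bijection of moduli spaces shows that the continuation map associated with $(\widehat H,\widehat J)$ agrees on generators with $\psi^*\circ(\phi^*)^{-1}$, i.e.\ $\Phi^{\gamma\beta}\circ\phi^*=\psi^*$, which is precisely the commutativity of~\eqref{eq:seidelisotopy}. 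The hard point here is to verify that $(\widehat H,\widehat J)$ can be chosen (or perturbed in a small compactly supported way) to be a \emph{regular} homotopy so that its count defines $\Phi^{\gamma\beta}$; regularity along a pulled-back homotopy is not automatic, and one has to invoke the independence of $\Phi^{\gamma\beta}$ on the choice of regular homotopy (Theorem~\ref{thm:isotopy2}) together with a transversality argument relative to the boundary data constraint $\psi^\lambda_i(L_i)=\widetilde L_i$. The gluing input needed for both Theorem~\ref{thm:isotopy1} and Theorem~\ref{thm:isotopy2} is exactly the Hardy-space convergence result of Chapter~\ref{chp:hardy}, so no new analytic tools are required beyond what is already established.
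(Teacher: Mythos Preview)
Your proposal is essentially correct and follows the same strategy as the paper: define $\psi^*$ at the chain level via the naturality substitution of~\ref{para:NAT}, get Functoriality from the definitions, get Naturality by pulling back a regular homotopy (this is exactly~\ref{para:naturalFG} with $\psi_{s,t}=\psi_t$, $A=0$, $B=-K_t$), and prove Isotopy by turning the family $\psi^\lambda$ into an explicit $s$-dependent homotopy whose continuation map is computed by the bijection with stationary solutions.

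One organizational difference: the paper does not treat the general Isotopy statement directly. It first proves the special case $\phi=\id$ (this is Corollary~\ref{cor:isotopy}), and then deduces the general case from that special case together with Functoriality, by replacing the pair $(\phi,\psi)$ with $(\id,\phi^{-1}\psi)$. Your direct argument works too and is in fact the content of the proof of Corollary~\ref{cor:isotopy}, just applied to $(\phi,\psi)$ rather than $(\id,\psi)$; the paper's reduction is slightly cleaner bookkeeping but neither approach gains real generality over the other.

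One genuine point to fix in your Isotopy argument: the substitution $\tu(s,t)=(\psi^{\rho(s)}_t)^{-1}(u(s,t))$ does \emph{not} produce a solution of an $s$-dependent Floer equation with data $(\widehat H_s,\widehat J_s)=(\psi^{\rho(s)})^*(H^\alpha,J^\alpha)$ alone. Because $\psi^{\rho(s)}_t$ depends on $s$, the chain rule introduces an additional Hamiltonian term in the $\p_s$-direction. In the notation of~\ref{para:Homotopy} and~\ref{para:naturalFG} (with $\psi_{s,t}:=\psi^{\rho(s)}_t$ and generators $A_{s,t},B_{s,t}$), the pulled-back equation has $\tF_{s,t}=-A_{s,t}\circ\psi_{s,t}\ne 0$ and $\tG_{s,t}=(-H^\alpha_t-B_{s,t})\circ\psi_{s,t}$, so the correct homotopy is a triple $(\tF,\tG,\tJ)$, not merely a one-parameter family of Hamiltonians. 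Once you write this down, your bijection with index-$0$ stationary trajectories (which are exactly the constant ones $u(s,t)=x(t)$, with action zero preserved under pullback by~\ref{para:naturalFG}) shows the associated chain map is $\psi^*\circ(\phi^*)^{-1}$ on generators, and then independence of the regular homotopy (Theorem~\ref{thm:isotopy2}) identifies it with $\Phi^{\gamma\beta}$ as you say.
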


\begin{proof}
See Section~\ref{sec:ISOTOPY}.
\end{proof}

\begin{remark}\label{rmk:floer}\rm

{\bf (i)}
The Floer homology group $\HF_*(L_0,L_1)$ is a 
{\bf connected simple system} in the sense of Conley, 
i.e.\ a small category with precisely one (iso)morphism 
between any two objects.  The objects are regular 
pairs $(H,J)$ and the morphisms are the isomorphisms 
$\Phi^{\beta\alpha}$ of Theorem~\ref{thm:HF}.
More precisely one can define $\HF_*(L_0,L_1)$ 
as the set of all tuples $\{\xi^\alpha\}_\alpha$
of elements 
$$
\xi^\alpha\in\HF(L_0,L_1;H^\alpha,J^\alpha),
$$
one for each regular pair $(H^\alpha,J^\alpha)\in\HJreg(M,L_0,L_1)$,
that satisfy 
$$
\xi^\beta=\Phi^{\beta\alpha}\xi^\alpha
$$
for all $\alpha,\beta$. This is a 
vector space over the universal Novikov ring $\Lambda$.

\smallskip\noindent{\bf (ii)}
Theorem~\ref{thm:seidel} shows that Floer homology 
is a contravariant functor from the category of Lagrangian pairs 
$L_0,L_1\subset M$ satisfying~(H), where the morphisms 
from $(L_0,L_1)$ to $(\tL_0,\tL_1)$ are homotopy classes
of Hamiltonian isotopies $\{\psi_t\}_{0\le t\le1}$ of $(M,\om)$ 
satisfying 
$$
\psi_0^{-1}(L_0)=\tL_0,\qquad \psi_1^{-1}(L_1)=\tL_1,
$$
to the category of vector spaces over $\Lambda$. 

\smallskip\noindent{\bf (iii)}
The Floer chain complex is not equipped with a natural grading. 
A grading can be introduced via Seidel's notion of graded
Lagrangian submanifolds, but we shall not discuss this here.
If the Lagrangian submanifolds are oriented the Floer homology
groups are graded modulo two by the intersection indices.

\smallskip\noindent{\bf (iv)}
In favorable cases the moduli space $\cM(x,y;H,J)$ 
of Floer trajectories are orientable and the Floer 
homology groups can be defined over the integers.
However, we shall not discuss this here.

\smallskip\noindent{\bf (v)}
The definition of the Floer homology groups of pairs 
of monotone Lagrangian submanifolds can sometimes be extended 
to the case where the minimal Maslov number is two.
In this case, for $i=0,1$ and a generic almost complex 
structures $J_i$, there is a finite number of $J_i$-holomorphic
Maslov index two discs in $M$ with boundary in $L_i$,
passing through a generic point in $L_i$. 
The parity $\eps_i\in\{0,1\}$ of this number 
is independent of $J_i$ and of the chosen point in $L_i$.
The Floer homology groups can still be defined 
if either a) $\eps_0=\eps_1=0$ or b) $\eps_0+\eps_1=0$
and the factors $\tau_0=\tau_1$ in the 
definition of monotonicity agree.

\smallskip\noindent{\bf (vi)}
One can define the Floer homology groups with 
coefficients in $\Z_2$ if in~(H2) we have 
$\tau_0=\tau_1=:\tau$ and in addition the fundamental 
group of the space $\sP$ of paths from $L_0$ to 
$L_1$ is generated, modulo torsion, by $\pi_2(M,L_0)$
and $\pi_2(M,L_1)$. This the case whenever the image 
of the homomorphism 
$
\pi_1(L_i)\to\pi_1(M)
$
consists of torsion classes for $i=0,1$ (see Oh~\cite{OH}).
Here we do not make these assumptions and 
instead work with the Novikov ring~$\Lambda$.

\smallskip\noindent{\bf (vii)}
Let $(S,\sigma)$ be a compact monotone symplectic manifold
without boundary and let $\phi:S\to S$ be a symplectomorphism.
Choose 
$
M:=S\times S
$
with the product symplectic form 
$
\om:=\pi_2^*\sigma-\pi_1^*\sigma
$ 
and let $L_0:=\Delta$ (the diagonal in $M\times M$)
and $L_1:=\mathrm{graph}(\phi)$. 
Then there is a natural isomorphism
$$
\HF_*(\Delta,\mathrm{graph}(\phi))\cong\HF_*(\phi).
$$
(See~\cite{DS} for the definition of $\HF(\phi)$
and~\cite{BPS} for the isomorphism.)
\end{remark}
\begin{PARA}[{\bf Outline}]\label{para:Outline}\rm
The proof of the identity 
$
\p^2=0
$
in Theorem~\ref{thm:bo}
is based on the study of the moduli space $\cM^2(x,z;H,J)$ of 
index-$2$ solutions of the Floer equation~\eqref{eq:FLOER}
for two solutions $x,z$ of~\eqref{eq:CRIT}. 
The $1$-dimensional quotient space $\widehat{\cM}^2(x,z;H,J)$ 
will in general not be compact. It can be compactified by 
including the zero dimensional product spaces 
$\widehat{\cM}^1(x,y;H,J)\times\widehat{\cM}^1(y,z;H,J)$ 
over all solutions $y$ of~\eqref{eq:CRIT}.  
This is the content of the Floer gluing theorem
(Section~\ref{sec:GLUING}). In the present chapter
we reduce Floer gluing to intersection theory in a
path space (Theorem~\ref{thm:main_thm3}).  
This requires a monotonicity result 
for $J$-holomorphic curves with Lagrangian 
boundary conditions and a 
convergence theorem for a suitable family of nonlinear 
Hardy spaces.   
\end{PARA}

\section{Invariance} \label{sec:ISOTOPY}  

\begin{PARA}[{\bf Homotopy of Floer Data}]\label{para:Homotopy}\rm
Assume~(H) and let 
$$
(H^\alpha,J^\alpha), (H^\beta,J^\beta)\in\HJreg(M,L_0,L_1)
$$ 
be two regular pairs for $(L_0,L_1)$ (see~\ref{para:HJ}).
Choose two Hamiltonian functions
$
F,G:\R\times[0,1]\times M\to\R
$
and a smooth family $J=\{J_{s,t}\}_{(s,t)\in\R\times[0,1]}$ 
of $\om$-tame almost complex structures on $M$ 
such that
\begin{equation}\label{eq:F01}
F_{s,0}|_{L_0} = \mathrm{constant},\qquad
F_{s,1}|_{L_1} = \mathrm{constant}
\end{equation}
for every $s\in\R$ and, for some $T>0$,
$F_{s,t}=0$ for $\Abs{s}\ge T$ and
\begin{equation}\label{eq:GJends}
G_{s,t} = \left\{\begin{array}{ll}
-H^\alpha_t,&\mbox{if }s\le -T,\\
-H^\beta_t,&\mbox{if }s\ge T,
\end{array}\right.\qquad
J_{s,t} = \left\{\begin{array}{ll}
J^\alpha_t,&\mbox{if }s\le -T,\\
J^\beta_t,&\mbox{if }s\ge T.
\end{array}\right.
\end{equation}
Here we denote $F_{s,t}:=F(s,t,\cdot)$ and $G_{s,t}:=G(s,t,\cdot)$.
Such a triple $(F,G,J)$ is called a 
{\bf homotopy from $(H^\alpha,J^\alpha)$ to $(H^\beta,J^\beta)$}.
Condition~\eqref{eq:F01} guarantees that 
$L_0$ is invariant under the Hamiltonian 
isotopy generated by $F_{s,0}$ and 
$L_1$ is invariant under the Hamiltonian 
isotopy generated by $F_{s,1}$.
Equivalently, $\tL_0:=\R\times L_0$ and 
$\tL_1:=R\times L_1$ are Lagrangian
submanifolds of the symplectic manifold 
$\tM:=\R\times[0,1]\times M$ with the symplectic form 
$\tom:=\om-d^{\tM}(F\,ds+G\,dt)+cds\wedge dt$ 
(where $c>\max(\p_sG-\p_tF+\{F,G\}$).

Associated to every such homotopy is the 
{\bf time dependent Floer equation}
for a smooth map $u:\R\times[0,1]\to M$.
It has the form
\begin{equation}\label{eq:FLOERFG}
\begin{split}
&\p_su + X_{F_{s,t}}(u) + J_{s,t}(u)\left(\p_tu+X_{G_{s,t}}(u)\right)=0,\\
&u(s,0)\in L_0,\qquad u(s,1)\in L_1.
\end{split}
\end{equation}
For $s\in\R$ and $t\in[0,1]$ denote by
$
\abs{\xi}_{s,t}:=\sqrt{\om(\xi,J_{s,t}\xi)}
$ 
the Riemannian metric associated to $\om$ and $J_{s,t}$. 
The {\bf energy} of a solution $u$ of~\eqref{eq:FLOERFG}
is defined by
\begin{equation*}
\begin{split}
E_{F,G}(u) 
&:= 
\frac12\int_{-\infty}^\infty\int_0^1
\left(\Abs{\p_su+X_{F_{s,t}}(u)}_{s,t}^2
+ \Abs{\p_tu+X_{G_{s,t}}(u)}_{s,t}^2
\right)\,dtds \\
&\;= 
\int_{-\infty}^\infty\int_0^1
\om\left(\p_su+X_{F_{s,t}}(u),
\p_tu+X_{G_{s,t}}(u)\right)\,dtds.
\end{split}
\end{equation*}
If $u$ is a solution of~\eqref{eq:FLOERFG} 
with finite energy, then the limits
\begin{equation}\label{eq:LIMITFG}
x^\alpha(t) = \lim_{s\to-\infty}u(s,t),\qquad
x^\beta(t) =  \lim_{s\to\infty}u(s,t)
\end{equation}
exist and $x^\alpha\in\cC(L_0,L_1;H^\alpha)$,
$x^\beta\in\cC(L_0,L_1;H^\beta)$. The convergence 
is uniform in $t$, with all derivatives, and exponential. 
\end{PARA}

\begin{PARA}[{\bf Relative Symplectic Action}]\label{para:relAction}\rm
The {\bf relative symplectic action} of a finite energy solution
$u$ of~\eqref{eq:FLOERFG} and~\eqref{eq:LIMITFG} 
is the topological invariant $\cA_H(u)\in\R$, 
defined by 
\begin{equation}\label{eq:ACTIONFG}
\cA_H(u) := \int_{\R\times[0,1]}u^*\om
- \int_0^1H^\alpha_t(x^\alpha(t))\,dt
+ \int_0^1H^\beta_t(x^\beta(t))\,dt.
\end{equation}
It is related to the energy by the formula
\begin{equation}\label{eq:ENERGYFG}
\begin{split}
E_{F,G}(u) 
&= \cA_H(u) + \int_{-\infty}^\infty\int_0^1
\Bigl(\p_sG-\p_tF+\left\{F,G\right\}\Bigr)(u)\,dtds \\
&\quad\;
+ \int_{-\infty}^\infty\Bigl(F_{s,1}(u(s,1))
- F_{s,0}(u(s,0))\Bigr)\,ds.
\end{split}
\end{equation}
The two integrals on the right satisfy a uniform bound, 
independent of $u$.  Note that the energy agrees with the relative
symplectic action whenever $F_{s,t}=0$ and $G_{s,t}=-H_t$
for all $s$ and $t$.  
\end{PARA}

\begin{PARA}[{\bf Regular Homotopies}]\label{para:regHomotopy}\rm
A homotopy $(F,G,J)$ of Floer data from $(H^\alpha,J^\alpha)$ 
to $(H^\beta,J^\beta)$ as in~\ref{para:Homotopy} is called {\bf regular} 
if every finite energy solution of~\eqref{eq:FLOERFG} is regular 
in the sense that the linearized operator is surjective. 
The existence of a regular homotopy follows from standard
transversality theory for the Floer equation (see for example~\cite{FHS}).
Fix a regular homotopy $(F,G,J)$ from 
$(H^\alpha,J^\alpha)$ to $(H^\beta,J^\beta)$.
Then the space
$$
\cM(x^\alpha,x^\beta;F,G,J)
:= \left\{u:\R\times[0,1]\to M\,|\,
\eqref{eq:FLOERFG},\,\eqref{eq:LIMITFG},\,
\cA_H(u)<\infty\right\}
$$
of all smooth finite energy solutions of~\eqref{eq:FLOERFG}
and~\eqref{eq:LIMITFG} is a smooth manifold whose local
dimension near $u$ is given by a suitable Maslov index 
$\mu_H(u)$. The $k$-dimensional part of 
$\cM(x^\alpha,x^\beta;F,G,J)$ with action 
equal to $\eps$ is denoted 
$$
\cM^k_\eps(x^\alpha,x^\beta;F,G,J)
:= \left\{u\in\cM(x^\alpha,x^\beta;F,G,J)\,\bigg|\,
\begin{array}{l}
\mu_H(u)=k\\
\cA_H(u)=\eps
\end{array}\right\}.
$$
In the regular case the Floer--Gromov compactness theorem asserts 
that the union of the spaces $\cM^0_\eps(x^\alpha,x^\beta;F,G,J)$
over all $\eps\le c$ is a finite set for all 
$x^\alpha\in\cC(L_0,L_1;H^\alpha)$, $x^\beta\in\cC(L_0,L_1;H^\beta)$
and $c>0$.  Thus a regular homotopy $(F,G,J)$ from 
$(H^\alpha,J^\alpha)$ to $(H^\beta,J^\beta)$ determines 
a linear operator
$
\Phi^{\beta\alpha}_{F,G,J}:\CF_*(L_0,L_1;H^\alpha)
\to\CF_*(L_0,L_1;H^\beta),
$
defined by 
\begin{equation}\label{eq:PHI}
\Phi^{\beta\alpha}_{F,G,J}x^\alpha
:= \sum_{x^\beta\in\cC(L_0,L_1;H^\beta)}
\sum_{\eps>0}\#_2\cM_\eps^0(x^\alpha,x^\beta;F,G,J)
e^{-\eps}x^\beta
\end{equation}
for $x^\alpha\in\cC(L_0,L_1;H^\alpha)$.
\end{PARA}

\begin{theorem}[{\bf Chain Map}]\label{thm:isotopy1}
Assume~(H). Let $(H^\alpha,J^\alpha),\,(H^\beta,J^\beta)$
be regular pairs for $(L_0,L_1)$ and let $(F,G,J)$
be a regular homotopy from $(H^\alpha,J^\alpha)$ 
to $(H^\beta,J^\beta)$. Then 
$
\p^{H^\beta,J^\beta}\circ\Phi^{\beta\alpha}_{F,G,J}
= \Phi^{\beta\alpha}_{F,G,J}\circ\p^{H^\alpha,J^\alpha}.
$
\end{theorem}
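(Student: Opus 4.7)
The plan is to mimic the argument used for Theorem~\ref{thm:bo} ($\p^2=0$), but replacing one factor of the Floer trajectory moduli space by the moduli space of homotopy solutions. Concretely, I would fix generators $x^\alpha\in\cC(L_0,L_1;H^\alpha)$ and $x^\beta\in\cC(L_0,L_1;H^\beta)$ and study the $1$-dimensional moduli space
\[
\cM^1(x^\alpha,x^\beta;F,G,J)
 = \bigsqcup_{\eps}\cM^1_\eps(x^\alpha,x^\beta;F,G,J).
\]
Here the $\R$-action is absent because the homotopy $(F,G,J)$ is genuinely $s$-dependent, so each $\eps$-stratum is already a $1$-manifold. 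The goal is to identify the coefficient of $e^{-\eps}x^\beta$ in $(\p^{H^\beta,J^\beta}\circ\Phi^{\beta\alpha}_{F,G,J} + \Phi^{\beta\alpha}_{F,G,J}\circ\p^{H^\alpha,J^\alpha})x^\alpha$ with the $\bmod\,2$ count of boundary points of a compact $1$-manifold with boundary, and hence to conclude that it vanishes.

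First I would establish the relevant compactness statement: for each $\eps>0$, the $1$-manifold $\cM^1_\eps(x^\alpha,x^\beta;F,G,J)$ can be compactified by adding two kinds of broken trajectories, namely pairs
\[
(u,v)\in\widehat{\cM}^1_{\eps_1}(x^\alpha,y^\alpha;H^\alpha,J^\alpha)
 \times\cM^0_{\eps_2}(y^\alpha,x^\beta;F,G,J),\qquad \eps_1+\eps_2=\eps,
\]
with $y^\alpha\in\cC(L_0,L_1;H^\alpha)$, and analogously pairs
\[
(u,v)\in\cM^0_{\eps_1}(x^\alpha,y^\beta;F,G,J)
 \times\widehat{\cM}^1_{\eps_2}(y^\beta,x^\beta;H^\beta,J^\beta),\qquad \eps_1+\eps_2=\eps,
\]
with $y^\beta\in\cC(L_0,L_1;H^\beta)$. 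Under the standing monotonicity and minimal-Maslov hypotheses~(H) (and regularity), bubbling is excluded in these dimensions, so Floer--Gromov compactness forces every sequence in $\cM^1_\eps(x^\alpha,x^\beta;F,G,J)$ without a convergent subsequence to break in exactly one of these two ways. The action bound $\cA_H(u)=\eps$ together with~\eqref{eq:ENERGYFG} gives the uniform energy bound required for Floer--Gromov compactness, and the finiteness statement~\eqref{eq:FINITE} (applied to each piece) ensures the set of such broken configurations is finite for each $\eps$.

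The hard step, and the one to which the Hardy space approach of Chapter~\ref{chp:hardy} is tailored, is the gluing theorem: for every broken configuration of either type there exists a constant $T_0$ and an embedding of $(T_0,\infty)$ into $\cM^1_\eps(x^\alpha,x^\beta;F,G,J)$ whose image exhausts a neighborhood of that broken configuration in the compactification, and whose two competing types of ends cover a neighborhood of every broken trajectory exactly once. Using Theorem~\ref{thm:main_thm3} together with Theorem~\ref{thm:main_thm2}, this is reduced to the transverse intersection, in the path space $\sP^{3/2}\times\sP^{3/2}$, of the nonlinear Hardy submanifold $\sW^T$ (coming from the finite-strip moduli space for the homotopy equation, glued to a half-infinite Floer strip at the relevant intermediate Hamiltonian orbit $y^\alpha$ or $y^\beta$) with the product of the half-infinite moduli spaces $\sM^-(x^\alpha)\times\sM^+(x^\beta)$; the $C^1$-convergence $\sW^T\to\sW^\infty$ then upgrades the transverse intersection at the broken limit into a unique nearby intersection for all sufficiently large $T$, giving the parametrization of the end by~$T$. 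One runs this construction separately at each end (breaking at $y^\alpha$ corresponds to gluing at the left end, at $y^\beta$ to gluing at the right end), and the same monotonicity results of Chapter~\ref{chp:mon} ensure that the glued curves remain in the neighborhoods where the Hardy space setup applies.

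Once compactness and gluing are both established, the conclusion is formal. The compactified moduli space $\overline{\cM^1_\eps(x^\alpha,x^\beta;F,G,J)}$ is a compact $1$-manifold with boundary, whose boundary is in bijection with the disjoint union, over $y^\alpha$ and $y^\beta$ and over all splittings $\eps=\eps_1+\eps_2$, of the two product moduli spaces displayed above. The $\bmod\,2$ count of boundary points of a compact $1$-manifold is zero, and rewriting this cancellation coefficient by coefficient yields
\[
\sum_{y^\alpha,\eps_1+\eps_2=\eps}
 \#\widehat\cM^1_{\eps_1}(x^\alpha,y^\alpha;H^\alpha,J^\alpha)\cdot
 \#\cM^0_{\eps_2}(y^\alpha,x^\beta;F,G,J)
\]
\[
 =
\sum_{y^\beta,\eps_1+\eps_2=\eps}
 \#\cM^0_{\eps_1}(x^\alpha,y^\beta;F,G,J)\cdot
 \#\widehat\cM^1_{\eps_2}(y^\beta,x^\beta;H^\beta,J^\beta)\pmod 2
\]
for every $\eps>0$. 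Multiplying by $e^{-\eps}x^\beta$ and summing over $\eps$ and $x^\beta$ gives precisely the chain map identity $\p^{H^\beta,J^\beta}\Phi^{\beta\alpha}_{F,G,J}x^\alpha=\Phi^{\beta\alpha}_{F,G,J}\p^{H^\alpha,J^\alpha}x^\alpha$, which holds on all generators and hence, by $\Lambda$-linearity, on the whole complex. The main obstacle is, as in the proof of $\p^2=0$, the gluing step; everything else is standard compactness bookkeeping and the algebraic cancellation that results.
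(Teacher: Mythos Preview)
Your overall strategy is exactly the paper's: Theorem~\ref{thm:isotopy1} is obtained there as the special case $\Sigma=\R\times[0,1]$ of the string-cobordism chain-map Theorem~\ref{thm:cm}, whose proof follows precisely your outline---study $\cM^1_\eps(x^\alpha,x^\beta)$, identify its ends via a gluing theorem (Theorem~\ref{thm:CM}) based on the Hardy-space convergence, combine with Floer--Gromov compactness to compactify to a $1$-manifold with boundary, and read off the identity from the parity of the boundary count.

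One point in your gluing description needs correcting, and it matters for the analysis. The nonlinear Hardy submanifold $\sW^T$ is \emph{not} built from solutions of the homotopy equation; it comes from finite strips solving the \emph{autonomous} Floer equation for the data $(H^\beta,J^\beta)$ (respectively $(H^\alpha,J^\alpha)$) near the intermediate orbit $y^\beta$ (respectively $y^\alpha$). The homotopy data $(F,G,J)$ are absorbed into the truncated-domain moduli space $\sM^-$ (solutions on $(-\infty,s_0]\times[0,1]$ of~\eqref{eq:FLOERFG} with limit $x^\alpha$), whose restriction map $\iota^-$ is paired with the restriction map $\iota^+$ from the genuine half-infinite Floer moduli space $\sM^+$ (with limit $x^\beta$), and it is the image of $\iota^-\times\iota^+$ that is intersected with $\sW^T$ in $\sP\times\sP$. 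This decomposition is forced: the convergence result Theorem~\ref{thm:main_thm3} and the underlying exponential-decay estimates are proved only for the autonomous equation near a nondegenerate orbit, so the $s$-dependent region must sit entirely inside $\sM^-$. With that adjustment your plan goes through verbatim.
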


\begin{proof}
See Section~\ref{sec:QFT} for a generalization.
\end{proof}

\begin{theorem}[{\bf Chain Homotopy Equivalence}]\label{thm:isotopy2}
Assume~(H) and let $(H^\alpha,J^\alpha)$, $(H^\beta,J^\beta)$,
$(F,G,J)$ be as in Theorem~\ref{thm:isotopy1}.
Then the induced operator 
$
\Phi^{\beta\alpha}:\HF_*(L_0,L_1;H^\alpha,J^\alpha)
\to\HF_*(L_0,L_1;H^\beta,J^\beta)
$
on Floer homology is independent of the choice of the regular 
homotopy $(F,G,J)$ from $(H^\alpha,J^\alpha)$ 
to $(H^\beta,J^\beta)$, used to define it.
\end{theorem}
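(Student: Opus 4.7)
The plan is to follow the standard chain homotopy argument, but with the convergence issues arising from Novikov coefficients handled via the nonlinear Hardy space approach from Chapter~\ref{chp:hardy}. Given two regular homotopies $(F^0,G^0,J^0)$ and $(F^1,G^1,J^1)$ from $(H^\alpha,J^\alpha)$ to $(H^\beta,J^\beta)$, I would first connect them by a smooth one-parameter family $(F^\lambda,G^\lambda,J^\lambda)_{\lambda\in[0,1]}$ of homotopies of Floer data satisfying the conditions~\eqref{eq:F01}, \eqref{eq:GJends} uniformly in $\lambda$. By an application of the Sard--Smale theorem in the style of~\cite{FHS}, I would choose this family so that the parameterized universal moduli space is cut out transversally; in particular the space
\begin{equation*}
\cM^{-1}_{\mathrm{par}}(x^\alpha,x^\beta):=\left\{(\lambda,u)\,\bigg|\,
\begin{array}{l}\lambda\in[0,1],\;u\text{ solves~\eqref{eq:FLOERFG} for }(F^\lambda,G^\lambda,J^\lambda),\\
u\text{ has limits }x^\alpha,x^\beta\text{ and }\mu_H(u)=-1\end{array}\right\}
\end{equation*}
is a smooth $0$-dimensional manifold whose $\eps$-level sets $\cM^{-1}_{\mathrm{par},\eps}$ are finite for each $\eps>0$, with only finitely many $\eps\le c$ contributing for each $c$. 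I would then define the candidate chain homotopy operator
$$
K^{\beta\alpha}x^\alpha:=\sum_{x^\beta}\sum_\eps\#_2\cM^{-1}_{\mathrm{par},\eps}(x^\alpha,x^\beta)\,e^{-\eps}\,x^\beta.
$$

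Next I would study the $1$-dimensional parameterized moduli space $\cM^{0}_{\mathrm{par},\eps}(x^\alpha,x^\beta)$ and its Floer--Gromov compactification. The oriented boundary of this $1$-manifold, analyzed separately at each action level $\eps$, should consist of four types of configurations: (i) solutions at $\lambda=0$, contributing $\Phi^{\beta\alpha}_{F^0,G^0,J^0}$, (ii) solutions at $\lambda=1$, contributing $\Phi^{\beta\alpha}_{F^1,G^1,J^1}$, (iii) broken trajectories $(v,u_\lambda)$ with an index-$1$ Floer trajectory $v$ at the $-\infty$ end followed by a parameterized index $-1$ solution $u_\lambda$, contributing $K^{\beta\alpha}\circ\partial^{H^\alpha,J^\alpha}$, and (iv) broken trajectories $(u_\lambda,w)$ with a parameterized index $-1$ solution followed by an index-$1$ trajectory at $+\infty$, contributing $\partial^{H^\beta,J^\beta}\circ K^{\beta\alpha}$. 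Counting the boundary mod two then yields the chain homotopy identity
$$
\Phi^{\beta\alpha}_{F^1,G^1,J^1}-\Phi^{\beta\alpha}_{F^0,G^0,J^0}
=\partial^{H^\beta,J^\beta}\circ K^{\beta\alpha}+K^{\beta\alpha}\circ\partial^{H^\alpha,J^\alpha},
$$
which immediately implies that the two chain maps induce the same morphism on homology.

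The hard part is the bijective correspondence between the ends of $\cM^{0}_{\mathrm{par},\eps}(x^\alpha,x^\beta)$ and the broken configurations of types (iii) and (iv); this is exactly a gluing theorem for the parameterized equation. This is where the nonlinear Hardy-space machinery of Chapter~\ref{chp:hardy} enters. For a broken configuration $(v,u_\lambda)$ as in (iii), at the breaking Hamiltonian path $y\in\cC(L_0,L_1;H^\alpha)$, I would realize $v$ as an element of $\sM^-(y)$ and $u_\lambda$ as an element of a parameterized analogue $\sM^+(y;F^\cdot,G^\cdot,J^\cdot)$, and construct glued solutions $(T,\lambda(T),w_T)$ with $T\in(T_0,\infty)$ by exhibiting them as the unique intersection points of the nonlinear Hardy submanifolds $\sW^T\subset\sP^{3/2}\times\sP^{3/2}$ with the parameterized Hardy submanifold for the pair $(v,\{u_\lambda\})$, using Theorem~\ref{thm:main_thm3}. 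The $C^1$-convergence $\sW^T\to\sW^\infty$ proved there guarantees both that a unique intersection point exists for all large $T$ and that this gives a local diffeomorphism onto the relevant end of $\cM^{0}_{\mathrm{par},\eps}$. The catch with Novikov coefficients, as in the proof of $\partial^2=0$, is to ensure that the counting defining $K^{\beta\alpha}$ gives a well-defined element of the completed Novikov module; here the uniform action bounds on the parameterized moduli space (following from the relation~\eqref{eq:ENERGYFG} between energy and relative action, applied uniformly in $\lambda\in[0,1]$ since the family is compactly parameterized) let me conclude, exactly as in~\ref{para:Floer}, that only finitely many action levels contribute below any bound, so the sum defining $K^{\beta\alpha}$ converges in the Novikov topology and the level-by-level boundary analysis assembles into the desired chain homotopy identity.
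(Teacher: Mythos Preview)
Your proposal is correct and follows essentially the same approach as the paper: the paper proves the more general Theorem~\ref{thm:che} (for string cobordisms) via exactly this parameterized chain-homotopy argument, with the gluing for the broken ends supplied by Theorem~\ref{thm:CHE} using the Hardy-space convergence of Theorem~\ref{thm:main_thm3}, and then obtains Theorem~\ref{thm:isotopy2} as the special case $\Sigma=\R\times[0,1]$ (Corollary~\ref{cor:che}). Your operator $K^{\beta\alpha}$ is the paper's $\Psi^{\beta\alpha}$, and your description of the four boundary types and the role of the parameterized moduli space $\sM^-$ of pairs $(\lambda,w^-)$ matches the paper's proof of Theorem~\ref{thm:CHE} exactly.
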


\begin{proof}
See Section~\ref{sec:QFT} for a generalization.
\end{proof}
\begin{theorem}[{\bf Catenation}]\label{thm:isotopy3}
Assume~(H) and let $(H^\alpha,J^\alpha)$,
$(H^\beta,J^\beta)$, $(H^\gamma,J^\gamma)$
be regular pairs for $(L_0,L_1)$.
Let $\Phi^{\beta\alpha}$, $\Phi^{\gamma\beta}$, $\Phi^{\gamma\alpha}$, 
$\Phi^{\alpha\alpha}$ be the operators on Floer homology defined in 
Theorems~\ref{thm:isotopy1} and~\ref{thm:isotopy2}. Then
$\Phi^{\alpha\alpha} = \id$ and 
$\Phi^{\gamma\beta}\circ\Phi^{\beta\alpha}=\Phi^{\gamma\alpha}$.
\end{theorem}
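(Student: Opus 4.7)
The plan is to handle the two assertions separately: the identity formula $\Phi^{\alpha\alpha} = \id$ is essentially a direct computation once one chooses the right homotopy, while the composition rule requires a neck-stretching / catenation argument that, because of the Novikov coefficients, must be carried out level by level in the action filtration.

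For $\Phi^{\alpha\alpha} = \id$ I would invoke Theorem~\ref{thm:isotopy2}: the induced map on homology is independent of the regular homotopy from $(H^\alpha,J^\alpha)$ to itself used to define it, so it suffices to exhibit one convenient homotopy. Take the constant homotopy $F \equiv 0$, $G_{s,t} \equiv -H^\alpha_t$, $J_{s,t} \equiv J^\alpha_t$ (perturbing by an $s$-dependent term of compact support if necessary to achieve regularity). Every $s$-independent solution of~\eqref{eq:FLOERFG} is an element of $\cC(L_0,L_1;H^\alpha)$ and contributes a diagonal entry with weight $e^0 = 1$. The energy identity~\eqref{eq:ENERGYFG} shows that for this homotopy $E_{F,G}(u) = \cA_H(u)$, so any non-constant finite-energy solution has strictly positive action and (by the index formula together with the $\R$-action by translation on non-constant solutions) cannot contribute to an index-$0$ count. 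Thus $\Phi^{\alpha\alpha}_{F,G,J} = \id$ on the chain level, and hence also on homology.

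For the composition rule, fix regular homotopies $(F^{\beta\alpha},G^{\beta\alpha},J^{\beta\alpha})$ and $(F^{\gamma\beta},G^{\gamma\beta},J^{\gamma\beta})$, and for each $R>0$ form the catenated homotopy $(F_R,G_R,J_R)$ by shifting the first by $-R$ and the second by $+R$ and smoothly interpolating in a bounded window around $s=0$. I would then apply the Hardy-space gluing result (Theorem~\ref{thm:main_thm3}) at each $y \in \cC(L_0,L_1;H^\beta)$: for $R$ sufficiently large, an isolated intersection of the finite-$R$ Hardy manifold $\sW^R$ with the product $\cM^-(x^\alpha) \times \cM^+(x^\gamma)$ of moduli spaces for the two individual homotopies produces a unique index-$0$ solution of the catenated equation near each broken trajectory $(u,v) \in \cM_\eps^0(x^\alpha,y;F^{\beta\alpha},G^{\beta\alpha},J^{\beta\alpha}) \times \cM_{\eps'}^0(y,x^\gamma;F^{\gamma\beta},G^{\gamma\beta},J^{\gamma\beta})$, and conversely every index-$0$ solution of the catenated equation with action below a fixed threshold arises in this way. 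Counting with the $e^{-\eps}$ weights, this identifies $\Phi^{\gamma\alpha}_{F_R,G_R,J_R}$ with $\Phi^{\gamma\beta}_{F^{\gamma\beta},G^{\gamma\beta},J^{\gamma\beta}} \circ \Phi^{\beta\alpha}_{F^{\beta\alpha},G^{\beta\alpha},J^{\beta\alpha}}$ \emph{up to an action-dependent cut-off}.

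The main obstacle, and the reason the proof is more delicate than in the classical $\Z_2$-coefficient case, is that a single neck-length $R$ suffices to identify the two sides only up to some finite action level $c(R)$; higher-action broken trajectories require longer necks to glue, and there are infinitely many such broken trajectories because we work over the universal Novikov ring $\Lambda$. I would therefore, as indicated in the introduction, choose an increasing sequence $R_\nu \to \infty$ and action thresholds $c_\nu \to \infty$ and set $\Phi^{\gamma\alpha}_\nu := \Phi^{\gamma\alpha}_{F_{R_\nu},G_{R_\nu},J_{R_\nu}}$, with $\Phi^{\gamma\alpha}_0 := \Phi^{\gamma\alpha}$. Theorem~\ref{thm:isotopy2} guarantees that consecutive morphisms $\Phi^{\gamma\alpha}_\nu$ and $\Phi^{\gamma\alpha}_{\nu+1}$ are chain-homotopic (use the obvious homotopy of homotopies between $R_\nu$ and $R_{\nu+1}$), so all the $\Phi^{\gamma\alpha}_\nu$ represent the same homology-level map $\Phi^{\gamma\alpha}$. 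On the other hand, the level-$\nu$ gluing identification shows that on every matrix coefficient, $\Phi^{\gamma\alpha}_\nu$ agrees with $\Phi^{\gamma\beta} \circ \Phi^{\beta\alpha}$ on the part of the Novikov expansion with action $\le c_\nu$. Because elements of $\Lambda$ have finite support on each half-line $(-\infty,c]$, the coefficient-wise limit $\lim_{\nu\to\infty} \Phi^{\gamma\alpha}_\nu$ is well-defined and equal to $\Phi^{\gamma\beta} \circ \Phi^{\beta\alpha}$, which combined with the chain-homotopy invariance gives $\Phi^{\gamma\alpha} = \Phi^{\gamma\beta} \circ \Phi^{\beta\alpha}$ on homology. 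The technical heart of the argument is the action-truncated gluing bijection, and this is exactly where the convergence $\sW^T \to \sW^\infty$ in the $C^1$ topology of chapter~\ref{chp:hardy} replaces the more traditional pregluing plus Newton iteration.
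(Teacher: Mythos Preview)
Your overall architecture matches the paper's: the identity $\Phi^{\alpha\alpha}=\id$ via the constant homotopy, and the composition rule via catenated homotopies with neck lengths $R_\nu\to\infty$ and action thresholds $c_\nu\to\infty$, so that $\Phi^{\gamma\alpha}_\nu$ agrees with $\Phi^{\gamma\beta}\circ\Phi^{\beta\alpha}$ on the chain level for action $\le c_\nu$. (One small remark: for $\Phi^{\alpha\alpha}=\id$ the constant homotopy is already regular because $(H^\alpha,J^\alpha)$ is a regular pair and the equations literally coincide; perturbing would spoil the chain-level identity, so drop that parenthetical.)

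There is, however, a genuine gap in your final limiting step. You conclude from ``all $\Phi^{\gamma\alpha}_\nu$ induce the same homology map'' and ``$\Phi^{\gamma\alpha}_\nu\to\Phi^{\gamma\beta}\circ\Phi^{\beta\alpha}$ coefficient-wise'' that the composition induces $\Phi^{\gamma\alpha}$ on homology. This does not follow: a coefficient-wise limit of mutually chain-homotopic chain maps over $\Lambda$ need not be chain-homotopic to them, because the individual chain homotopies $\Psi^{\gamma\alpha}_\nu$ supplied abstractly by Theorem~\ref{thm:isotopy2} could all have contributions at bounded action, so that $\sum_\nu\Psi^{\gamma\alpha}_\nu$ fails to be a well-defined operator. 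The paper closes this by proving an additional emptiness statement: with the sequences chosen appropriately, the index-$(-1)$ moduli spaces for the catenated data with parameters in $[T_\nu,T_{\nu+1}]$ (and small perturbation $h\in\cH_\nu$) are empty for action $\le c_\nu$; this is a compactness argument, since any such solution would limit as $T\to\infty$ to a broken configuration of total index $\le -1$, contradicting transversality. Building $\Psi^{\gamma\alpha}_\nu$ from the parametrized equation along such a path then forces all its Novikov coefficients to lie in action $>c_\nu$, so $\Psi^{\gamma\alpha}:=\sum_\nu\Psi^{\gamma\alpha}_\nu$ converges and furnishes a single chain homotopy between $\Phi^{\gamma\alpha}_0$ and $\Phi^{\gamma\beta}\circ\Phi^{\beta\alpha}$. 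This action control on the chain homotopies, not just on the chain maps, is the missing ingredient in your sketch.
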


\begin{proof}
See Section~\ref{sec:QFT} for a generalization.
\end{proof}

\begin{PARA}[{\bf Naturality}]\label{para:naturalFG}\rm
Let $F,G,J$ be as in Theorem~\ref{thm:isotopy1} and suppose that
$u:\R\times[0,1]\to M$ is a finite energy solution of~\eqref{eq:FLOERFG}.
Let $\R\times[0,1]\to\Diff(M,\om):(s,t)\mapsto\psi_{s,t}$ be a smooth family
of Hamiltonian symplectomorphisms such that $\p_s\psi_{s,t}=0$ for 
$\abs{s}$ sufficiently large and the Lagrangian submanifolds
$$
\tL_0 := \psi_{s,0}^{-1}(L_0),\qquad \tL_1:=\psi_{s,1}^{-1}(L_1)
$$
are independent of $s$.  Then there exist smooth families of 
Hamiltonian functions $A_{s,t},B_{s,t}:M\to\R$ for $s\in\R$ and $0\le t\le1$
such that $A_{s,0}|_{L_0}=0$, $A_{s,1}|_{L_1}=0$, $A$ has compact 
support, and
$$
\p_s\psi_{s,t}+X_{A_{s,t}}\circ\psi_{s,t}=0,\qquad
\p_t\psi_{s,t}+X_{B_{s,t}}\circ\psi_{s,t}=0.
$$
It follows that the function 
$\kappa(s,t):=\p_sB_{s,t}-\p_tA_{s,t}+\{A_{s,t},B_{s,t}\}$
on $M$ is constant for all $s$ and $t$ and vanishes 
for $\Abs{s}$ sufficiently large.  Define 
$$
\tu(s,t) := \psi_{s,t}^{-1}(u(s,t)),\qquad \tJ_{s,t}:=\psi_{s,t}^*J_{s,t},
$$
$$
\tF_{s,t} := (F_{s,t}-A_{s,t})\circ\psi_{s,t},\qquad
\tG_{s,t} := (G_{s,t}-B_{s,t})\circ\psi_{s,t}.
$$
Then $\tu$ is a solution of~\eqref{eq:FLOERFG} with $F,G,J$ replaced
by $\tF,\tG,\tJ$. Moreover, we have
$$
\p_s\tG-\p_t\tF+\{\tF,\tG\} 
= (\p_sG-\p_tF+\{F,G\})\circ\psi - \kappa.
$$ 
Hence the action of $\tu$ agrees with the action of $u$ 
and the energy of $\tu$ agrees with the energy of $u$ 
up to a global additive constant.
\end{PARA}

\begin{corollary}\label{cor:isotopy}
Assume~(H), let $(H^\alpha,J^\alpha)$
be a regular pair for $(L_0,L_1)$, and let
$\{\psi_t\}$ be a Hamiltonian isotopy such that 
\begin{equation}\label{eq:psitbc}
\psi_0(L_0)=L_0,\qquad \psi_1(L_1)=L_1.
\end{equation} 
Let $K_t$ be a family of Hamiltonian 
functions generating $\psi_t$ via~\eqref{eq:psiK} 
and define
$$
H^\beta_t := (H^\alpha_t-K_t)\circ\psi_t,\qquad
J^\beta_t := \psi_t^*J^\alpha_t.
$$  
Then $(H^\beta,J^\beta)$ is a regular pair for $(L_0,L_1)$.
If $\{\psi_t\}_{0\le t\le 1}$ is Hamiltonian isotopic to the 
constant path $\psi_{0,t}=\id$ subject to~\eqref{eq:psitbc} then
$$
\Phi^{\beta\alpha} = \psi^*:
\HF(L_0,L_1;H^\alpha,J^\alpha)\to\HF(L_0,L_1;H^\beta,J^\beta).
$$
\end{corollary}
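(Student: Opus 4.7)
The strategy is to deduce the corollary directly from the Isotopy property of Theorem~\ref{thm:seidel}, with essentially no new analytic work beyond what is already packaged into the Seidel homomorphism and the invariance isomorphisms.

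First I would establish that $(H^\beta,J^\beta)=(\psi^*H^\alpha,\psi^*J^\alpha)$ lies in $\HJreg(M,L_0,L_1)$. This follows from the naturality construction of paragraph~\ref{para:NAT}: under the hypotheses~\eqref{eq:psitbc} one has $\psi_0^{-1}(L_0)=L_0$ and $\psi_1^{-1}(L_1)=L_1$, so the pullback $u\mapsto\tu:=\psi_t^{-1}\circ u$ is a bijection between finite-energy solutions of~\eqref{eq:FLOER} for $(L_0,L_1,H^\alpha,J^\alpha)$ and finite-energy solutions of~\eqref{eq:tFLOER} for $(L_0,L_1,H^\beta,J^\beta)$. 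Conjugation by $d\psi_t$ carries the transversality of $L_0\cap(\phi^\alpha_1)^{-1}(L_1)$ over to that of $L_0\cap(\phi^\beta_1)^{-1}(L_1)$ and intertwines the vertical differentials $D_u$ and $D_{\tu}$, so surjectivity is preserved. Hence $(H^\beta,J^\beta)$ is a regular pair.

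Next I would apply the Isotopy property of Theorem~\ref{thm:seidel} with $\phi$ chosen to be the constant identity path $\phi_t\equiv\id$, and with $\psi$ the isotopy given in the corollary. Then $\phi_0^{-1}(L_0)=L_0=\psi_0^{-1}(L_0)$ and $\phi_1^{-1}(L_1)=L_1=\psi_1^{-1}(L_1)$ by~\eqref{eq:psitbc}, so with $\tL_0=L_0$ and $\tL_1=L_1$ we have $(\tH^\beta,\tJ^\beta)=(\phi^*H^\alpha,\phi^*J^\alpha)=(H^\alpha,J^\alpha)$ and $(\tH^\gamma,\tJ^\gamma)=(\psi^*H^\alpha,\psi^*J^\alpha)=(H^\beta,J^\beta)$. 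The two-parameter family $\{\psi^\lambda_t\}$ required by Theorem~\ref{thm:seidel} is precisely the Hamiltonian isotopy between $\phi$ and $\psi$ assumed in the corollary, so diagram~\eqref{eq:seidelisotopy} collapses to the identity
$$
\Phi^{\beta\alpha}\circ\phi^*=\psi^*
$$
as maps $\HF_*(L_0,L_1;H^\alpha,J^\alpha)\to\HF_*(L_0,L_1;H^\beta,J^\beta)$, where $\Phi^{\beta\alpha}$ is the invariance isomorphism of Theorem~\ref{thm:HF}.

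To conclude it remains to verify that $\phi^*=\id$ when $\phi_t\equiv\id$. This is the one step that must be checked by unwinding definitions rather than by citation: for the constant identity path, the naturality recipe of~\ref{para:NAT} associates to $u$ the trajectory $\tu(s,t)=\phi_t^{-1}(u(s,t))=u(s,t)$, the pulled-back data $(\phi^*H^\alpha,\phi^*J^\alpha)$ equal $(H^\alpha,J^\alpha)$ on the nose, and the induced chain-level map is literally the identity; hence $\phi^*=\id$ on Floer homology. Combining this with the displayed identity yields $\Phi^{\beta\alpha}=\psi^*$, which proves the corollary. The only conceptually nontrivial point in this plan is the verification that $\phi^*=\id$ for the constant identity path, since the Isotopy property is usually stated for genuine isotopies of Floer data rather than for the trivially constant one; everything else is a direct appeal to Theorem~\ref{thm:seidel} and the naturality observation of~\ref{para:NAT}.
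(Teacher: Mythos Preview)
Your argument is circular. You invoke the Isotopy property of Theorem~\ref{thm:seidel} to prove Corollary~\ref{cor:isotopy}, but if you look at the proof of Theorem~\ref{thm:seidel} (which appears after the corollary), the Isotopy property with $\phi_t=\id$ is established precisely by appealing to Corollary~\ref{cor:isotopy}, and the general Isotopy property is then reduced to this special case via Functoriality. So the logical order in the paper is: Theorems~\ref{thm:isotopy1} and~\ref{thm:isotopy2} $\Rightarrow$ Corollary~\ref{cor:isotopy} $\Rightarrow$ Isotopy in Theorem~\ref{thm:seidel}. Your route reverses the last implication.

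The paper's direct proof avoids this by constructing an explicit regular homotopy $(\tF,\tG,\tJ)$ from $(H^\alpha,J^\alpha)$ to $(H^\beta,J^\beta)$: one takes a two-parameter Hamiltonian isotopy $\psi_{s,t}$ interpolating between $\id$ and $\psi_t$ while preserving~\eqref{eq:psitbc}, sets $\tF_{s,t}:=-A_{s,t}\circ\psi_{s,t}$, $\tG_{s,t}:=(-H^\alpha_t-B_{s,t})\circ\psi_{s,t}$, $\tJ_{s,t}:=\psi_{s,t}^*J^\alpha_t$ (with $A,B$ generating $\psi_{s,t}$), and observes via the naturality of~\ref{para:naturalFG} that $u\mapsto\psi_{s,t}^{-1}(u(s,t))$ is a bijection between solutions of~\eqref{eq:FLOER} for $(H^\alpha,J^\alpha)$ and solutions of~\eqref{eq:FLOERFG} for $(\tF,\tG,\tJ)$. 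This shows both that the homotopy is regular and that the associated chain map $\Phi^{\beta\alpha}_{\tF,\tG,\tJ}$ equals $\psi^*$ on the chain level; Theorems~\ref{thm:isotopy1} and~\ref{thm:isotopy2} then finish. Your observation that $(H^\beta,J^\beta)$ is regular is correct and uses the same naturality idea, but the second half of your argument needs to be replaced by this construction.
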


\begin{proof}
By assumption, there exists a Hamiltonian isotopy
$$
\R\times[0,1]\times M\to M:(s,t,p)\mapsto\psi_{s,t}(p)
$$
such that 
$$
\psi_{s,t} = \left\{\begin{array}{ll}
\id,&\mbox{if }s\le 0,\\
\psi_t,&\mbox{if }s\ge1,
\end{array}\right.\qquad
\psi_{s,0}(L_0)=L_0,\qquad
\psi_{s,1}(L_1)=L_1.
$$
Choose Hamiltonian functions $A_{s,t},B_{s,t}:M\to\R$
such that $A$ has compact support, $A_{s,0}|_{L_0}=0$,
$A_{s,1}|_{L_1}=0$, $B_{s,t}=-K_t$ for $s\ge 1$, and
$$
\p_s\psi_{s,t}+X_{A_{s,t}}\circ\psi_{s,t}=0,\qquad
\p_t\psi_{s,t}+X_{B_{s,t}}\circ\psi_{s,t}=0.
$$
Define 
$$
\tF_{s,t}:=-A_{s,t}\circ\psi_{s,t},\quad
\tG_{s,t}:=(-H^\alpha_t-B_{s,t})\circ\psi_{s,t},\quad
\tJ_{s,t}:=\psi_{s,t}^*J^\alpha_t.
$$
Then $(\tF,\tG,\tJ)$ is a homotopy from $(H^\alpha,J^\alpha)$
to $(H^\beta,J^\beta)$ as in~\ref{para:Homotopy},
satisfying~\eqref{eq:F01} and~\eqref{eq:GJends}. 
Moreover, a smooth function $u:\R\times[0,1]\to M$ 
is a solution of~\eqref{eq:FLOER} with $H=H^\alpha$ 
and $J=J^\alpha$ if and only if the function
$$
\tu(s,t):=\psi_{s,t}^{-1}(u(s,t))
$$
is a solution of~\eqref{eq:FLOERFG} with $F,G,J$ replaced 
by $\tF,\tG,\tJ$ (see~\ref{para:naturalFG}).  
Since $(H^\alpha,J^\alpha)$ is a regular pair 
for $(L_0,L_1)$, this implies that $(\tF,\tG,\tJ)$ 
is a regular homotopy from $(H^\alpha,J^\alpha)$
to $(H^\beta,J^\beta)$.   It also implies that the 
homomorphism on the Floer chain complex, 
induced by $(\tF,\tG,\tJ)$ agrees with $\psi^*$:
$$
\Phi^{\beta\alpha}_{\tF,\tG,\tJ} = \psi^*:
\CF_*(L_0,L_1;H^\alpha)\to \CF_*(L_0,L_1;H^\beta).
$$
Hence the assertion of Corollary~\ref{cor:isotopy}
follows from Theorems~\ref{thm:isotopy1} 
and~\ref{thm:isotopy2}.
\end{proof}

\begin{proof}[Proof of Theorem~\ref{thm:HF}]
Let $(H^\alpha,J^\alpha)$ and $(H^\beta,J^\beta)$ 
be as in Theorem~\ref{thm:isotopy1}.  
By Theorems~\ref{thm:isotopy1} and~\ref{thm:isotopy2}
there is a unique operator
$$
\Phi^{\beta\alpha}:\HF_*(L_0,L_1;H^\alpha,J^\alpha)
\to\HF_*(L_0,L_1;H^\beta,J^\beta).
$$
By Theorem~\ref{thm:isotopy3} this is an isomorphism
with inverse $\Phi^{\alpha\beta}$ and these operators 
satisfy the requirements of Theorem~\ref{thm:HF}.
\end{proof}

\begin{proof}[Proof of Theorem~\ref{thm:seidel}]
Suppose $M,L_0,L_1$ satisfy~(H),
let $(H,J)$ be a regular pair for $(L_0,L_1)$, 
and let 
$$
\psi=\{\psi_t\}_{0\le t\le1}
$$ 
be a Hamiltonian isotopy
of $(M,\om)$, starting at any symplectomorphism. 
By~\ref{para:NAT}, pullback defines an isomorphism 
$$
\psi^*:\HF_*(L_0,L_1;H,J)\to
\HF_*(\psi_0^{-1}(L_0),\psi_1^{-1}(L_1);\psi^*H,\psi^*J).
$$
That these isomorphisms satisfy the {\it (Functoriality)} 
condition is obvious from the definitions.
That they satisfy the {\it (Naturality)} condition 
follows from~\ref{para:naturalFG} with 
$$
\psi_{s,t}:=\psi_t,\qquad A_{s,t}:=0,\qquad B_{s,t}:=-K_t. 
$$
That they satisfy the {\it  (Iso\-topy)} condition with $\phi_t=\id$
follows from Corollary~\ref{cor:isotopy}.  That they satisfy the 
{\it (Isotopy)} condition in general follows from the special case 
$\phi_t=\id$ and the {\it (Functoriality)} condition.
This proves Theorem~\ref{thm:seidel}.
\end{proof}

\section{Field Theory} \label{sec:QFT}  

It is convenient to extend the discussion of the previous section 
to a more general class of surfaces $\Sigma$ with cylindrical ends 
(replacing the strip).  In our exposition we follow the discussion 
in Seidel~\cite[pages~100-112]{SEIDELBOOK}. The complex structure~$j$ 
on the surface will be fixed. Throughout we abbreviate
$$
\R^+ := [0,\infty),\qquad\R^-:=(-\infty,0].
$$
Let $(M,\om)$ be a compact symplectic manifold without boundary
and 
$$
\cL=\cL(M,\om)
$$ 
be the set of all compact Lagrangian 
submanifolds $L\subset M$ without boundary such that $(M,L)$ 
is monotone with minimal Maslov number at least three.
Associated to $(M,\om)$ is a category $\sL=\sL(M,\om)$ 
defined as follows.

\begin{definition}[{\bf The Category of Lagrangian pairs}]
\label{def:LPAIR}\rm \hfill

\smallskip\textsc{(Objects)\,} 
An {\bf object} of $\sL$ is a map
$
I\to \cL\times\cL:i\mapsto(L_{i0},L_{i1}),
$
defined on a finite set $I$.

\smallskip\textsc{(String Cobordisms)\,}
Fix two objects 
$
(L^\alpha_0,L^\alpha_1)
=\{L^\alpha_{i0},L^\alpha_{i1}\}_{i\in I^\alpha}
$
and
$
(L^\beta_0,L^\beta_1)
=\{L^\beta_{i0},L^\beta_{i1}\}_{i\in I^\beta}
$
in $\sL(M,\om)$. A {\bf string cobordism} from 
$(L^\alpha_0,L^\alpha_1)$ to $(L^\beta_0,L^\beta_1)$ 
is a tuple 
$$
(\Sigma,L) = \bigl(
\Sigma,\{L_z\}_{z\in\p\Sigma},
\{\iota^{\alpha,-}_i\}_{i\in I^\alpha},
\{\iota^{\beta,+}_i\}_{i\in I^\beta}
\bigr)
$$
consisting of an oriented $2$-manifold $\Sigma$ with boundary,
a locally constant map $\p\Sigma\to\cL:z\mapsto L_z$, 
and orientation preserving embeddings
$$
\bigl\{\iota_i^{\alpha,-}:
\R^-\times[0,1]\to\Sigma\bigr\}_{i\in I^\alpha},\qquad
\bigl\{\iota_i^{\beta,+}:
\R^+\times[0,1]\to\Sigma\bigr\}_{i\in I^\beta},
$$ 
that satisfy the following conditions.

\smallskip\noindent{\bf (a)}
The images of the embeddings $\iota^{\alpha,-}_i$, $i\in I^\alpha$,
and  $\iota^{\beta,+}_i$, $i\in I^\beta$, are pairwise disjoint
and their complement has compact closure.

\smallskip\noindent{\bf (b)}
For $i\in I^\alpha$ and $s\le 0$ we have
$L_{\iota_i^{\alpha,-}(s,0)} = L^\alpha_{i0}$ and
$L_{\iota_i^{\alpha,-}(s,1)} = L^\alpha_{i1}$.

\smallskip\noindent{\bf (c)}
For $i\in I^\beta$ and $s\ge0$ we have
$L_{\iota_i^{\beta,+}(s,0)} = L^\beta_{i0}$ and
$L_{\iota_i^{\beta,+}(s,1)} = L^\beta_{i1}$.

\smallskip\textsc{(Morphisms)\,}
Let 
$
(\Sigma',L')
= (\Sigma',\{L'_{z'}\}_{z'\in\p\Sigma'},
\{{\iota'}^{\alpha,-}_i\}_{i\in I^\alpha},
\{{\iota'}^{\beta,+}_i\}_{i\in I^\beta})
$
be another string cobordism from 
$(L^\alpha_0,L^\alpha_1)$ to $(L^\beta_0,L^\beta_1)$. 
The string cobordisms $(\Sigma,L)$ and $(\Sigma',L')$ 
are called {\bf equivalent} if there is an 
orientation preserving diffeomorphism 
$\phi:\Sigma\to\Sigma'$ such that 
$$
L'_{\phi(z)}=L_z,\qquad 
{\iota'}^{\alpha,-}_i=\phi\circ\iota^{\alpha,-}_i,\qquad
{\iota'}^{\beta,+}_j=\phi\circ\iota^{\beta,+}_j
$$  
for $z\in\p\Sigma$, $i\in I^\alpha$, $j\in I^\beta$.
A {\bf morphism in $\sL$ from 
$(L^\alpha_0,L^\alpha_1)$ to $(L^\beta_0,L^\beta_1)$}
is an equivalence class $[\Sigma,L]$ of string cobordisms.

\smallskip\textsc{(Catenation)\,}
Let 
\begin{equation}\label{eq:Sialbe}
(\Sigma^{\alpha\beta},L^{\alpha\beta})
=\left(
\Sigma^{\alpha\beta},
\{L_z^{\alpha\beta}\}_{z\in\p\Sigma^{\alpha\beta}},
\{\iota^{\alpha,-}_i\}_{i\in I^\alpha},
\{\iota^{\beta,+}_i\}_{i\in I^\beta}
\right)
\end{equation}
be a string cobordism from 
$(L^\alpha_0,L^\alpha_1)$ to $(L^\beta_0,L^\beta_1)$
and
\begin{equation}\label{eq:Sibega}
(\Sigma^{\beta\gamma},L^{\beta\gamma}) 
= \left(\Sigma^{\beta\gamma},
\{L^{\beta\gamma}_z\}_{z\in\p\Sigma^{\beta\gamma}},
\{{\iota}^{\beta,-}_i\}_{i\in I^\beta},
\{{\iota}^{\gamma,+}_i\}_{i\in I^\gamma}
\right)
\end{equation}
be a string cobordism from 
$(L^\beta_0,L^\beta_1)$ to $(L^\gamma_0,L^\gamma_1)$.
For $T>0$ the {\bf $T$-ca\-te\-na\-tion} of these 
string cobordisms is the string cobordism 
$$
(\Sigma^{\alpha\beta},L^{\alpha\beta})
\#_T(\Sigma^{\beta\gamma},L^{\beta\gamma})
= \Bigl(
\Sigma^{\alpha\gamma}_T,
\{L^{\alpha\gamma}_z\}_{z\in\p\Sigma^{\alpha\gamma}_T},
\{\iota^{\alpha,-}_i\}_{i\in I^\alpha},
\{\iota^{\gamma,+}_i\}_{i\in I^\gamma}
\Bigr)
$$
from $\{L^\alpha_{i0},L^\alpha_{i1}\}_{i\in I^\alpha}$ to
$\{L^\gamma_{i0},L^\gamma_{i1}\}_{i\in I^\gamma}$, defined 
as follows.  The $2$-manifold $\Sigma^{\alpha\gamma}_T$
is defined as the quotient 
\begin{equation}\label{eq:SigmaT}
\begin{split}
\Sigma^{\alpha\gamma}_T 
&:= 
\frac{\Sigma^{\alpha\beta}_{2T}\sqcup\Sigma^{\beta\gamma}_{2T}}{\equiv},\\
\Sigma^{\alpha\beta}_{2T}
&:=\Sigma^{\alpha\beta}\setminus
\bigcup_{i\in I^\beta}\iota^{\beta,+}_i([2T,\infty)\times[0,1]),\\
\Sigma^{\beta\gamma}_{2T}
&:=\Sigma^{\beta\gamma}\setminus
\bigcup_{i\in I^\beta}{\iota'}^{\beta,-}_i((-\infty,-2T]\times[0,1]),
\end{split}
\end{equation}
and the equivalence relation is given by
$
\iota^{\beta,+}_i(s,t)\equiv\iota^{\beta,-}_i(s-2T,t)
$
for $i\in I^\beta$, $0<s<2T$, $0\le t\le1$.  
The Lagrangian submanifolds 
$L^{\alpha\gamma}_z\subset M$ are 
\begin{equation}\label{eq:LT}
L^{\alpha\gamma}_z:=\left\{\begin{array}{ll}
L^{\alpha\beta}_z,&\mbox{for }
z\in\p\Sigma^{\alpha\beta}\cap\Sigma^{\alpha\beta}_{2T},\\
L^{\beta\gamma}_z,&\mbox{for }
z\in\p\Sigma^{\beta\gamma}\cap\Sigma^{\beta\gamma}_{2T}.
\end{array}\right.
\end{equation}
The equivalence class of 
$(\Sigma^{\alpha\beta},L^{\alpha\beta})
\#_T(\Sigma^{\beta\gamma},L^{\beta\gamma})$
is independent of~$T$ and depends only on the 
equivalence classes of $(\Sigma^{\alpha\beta},L^{\alpha\beta})$ 
and $(\Sigma^{\beta\gamma},L^{\beta\gamma})$.

\smallskip\textsc{(Composition)\,}
The {\bf composition} in $\sL$ is defined by 
$$
[\Sigma^{\beta\gamma},L^{\beta\gamma}]
\circ[\Sigma^{\alpha\beta},L^{\alpha\beta}] 
:= [(\Sigma^{\alpha\beta},L^{\alpha\beta})
\#_T(\Sigma^{\beta\gamma},L^{\beta\gamma})].
$$
\end{definition}

\begin{PARA}[{\bf Floer Homology}]\label{para:StringHF}\rm
Let $(L^\alpha_0,L^\alpha_1)
=\{L^\alpha_{i0},L^\alpha_{i1}\}_{i\in I^\alpha}$
be an object in $\sL$ and fix a corresponding collection 
$(H^\alpha,J^\alpha) =\{H^\alpha_i,J^\alpha_i\}_{i\in I^\alpha}$
of regular pairs. Associated to these data is the Floer chain complex
$$
\CF_*(L^\alpha_0,L^\alpha_1;H^\alpha)
:= \bigotimes_{i\in I^\alpha}
\CF_*(L^\alpha_{i0},L^\alpha_{i1};H_i^\alpha)
$$
This is the vector space over $\Lambda$ generated 
by the tuples $x^\alpha=\{x^\alpha_i\}_{i\in I^\alpha}$ with
$x_i^\alpha\in\cC(L^\alpha_{i0},L^\alpha_{i1};H^\alpha_i)$.
The boundary operator 
$$
\p^\alpha:=\p^{H^\alpha,J^\alpha}:
\bigotimes_{i\in I^\alpha}\CF_*(L^\alpha_{i0},L^\alpha_{i1};H_i^\alpha)
\to \bigotimes_{i\in I^\alpha}\CF_*(L^\alpha_{i0},L^\alpha_{i1};H_i^\alpha)
$$
is induced by the boundary operators
$$
\p^{H^\alpha_i,J^\alpha_i}:\CF_*(L^\alpha_{i0},L^\alpha_{i1};H_i^\alpha)
\to \CF_*(L^\alpha_{i0},L^\alpha_{i1};H_i^\alpha)
$$
of Section~\ref{sec:FLOER}. Its Floer homology group
is denoted
$$
\HF_*(L^\alpha_0,L^\alpha_0;H^\alpha,J^\alpha)
:=  \frac{\ker\,\p^\alpha}{\im\,\p^\alpha}
= \bigotimes_{i\in I^\alpha}
\HF_*(L^\alpha_{i0},L^\alpha_{i1};H_i^\alpha,J^\alpha_i).
$$
\end{PARA}

\begin{PARA}[{\bf Floer Data on String Cobordisms}]
\label{para:StringFloerData}\rm
Fix a string cobordism $(\Sigma,L)$ from the object 
$(L^\alpha_0,L^\alpha_1)=\{L^\alpha_{i0},L^\alpha_{i1}\}_{i\in I^\alpha}$
in $\sL$ to $(L^\beta_0,L^\beta_1)=\{L^\beta_{i0},L^\beta_{i1}\}_{i\in I^\beta}$.
{\bf A set of Floer data on $\Sigma$} is a triple $(j,H,J)$ 
consisting of a complex structure $j$ on $\Sigma$,
a $1$-form ${H:T\Sigma\to\Om^0(M)}$, and a smooth 
family of $\om$-tame almost complex 
structures $J=\{J_z\}_{z\in\Sigma}$,
satisfying the following conditions.

\smallskip\noindent{\bf (a)}
$\iota^{\alpha,-}_i$ is holomorphic for $i\in I^\alpha$ and
$\iota^{\beta,+}_i$ is holomorphic for $i\in I^\beta$.

\smallskip\noindent{\bf (b)}
For $i\in I^\alpha$ there is a regular pair 
$(H^\alpha_i,J^\alpha_i)$ for $(L^\alpha_{i0},L^\alpha_{i1})$ 
such that
$$
(\iota^{\alpha,-}_i)^*H = -H^\alpha_{it}dt,\quad
J_{\iota^{\alpha,-}_i(s,t)} = J^\alpha_{it}\qquad
\mbox{for}\; s\le 0,\;0\le t\le1.
$$

\smallskip\noindent{\bf (c)}
For $i\in I^\beta$ there is a regular pair 
$(H^\beta_i,J^\beta_i)$ for $(L^\beta_{i0},L^\beta_{i1})$ 
such that
$$
(\iota^{\beta,+}_i)^*H = -H^\beta_{it}dt,\quad
J_{\iota^{\beta,+}_i(s,t)} = J^\beta_{it}\qquad
\mbox{for}\; s\le 0,\;0\le t\le1.
$$

\smallskip\noindent{\bf (d)}
The restriction $H_{z,\widehat{z}}|_{L_z}$ is constant 
for $z\in\p\Sigma$ and $\widehat{z}\in T_Z\p\Sigma$.

\smallskip\noindent
The Floer data $(j,H,J)$ are said to {\bf connect 
$(H^\alpha,J^\alpha)=\{H^\alpha_i,J^\alpha_i\}_{i\in I^\alpha}$ 
to $(H^\beta,J^\beta)=\{H^\beta_i,J^\beta_i\}_{i\in I^\beta}$}.
The tuple 
$
\cS := (\Sigma,L,j,H,J)
$
is called a {\bf framed string cobordism} from the 
tuple $\cL^\alpha=(L^\alpha_0,L^\alpha_1,H^\alpha,J^\alpha)$
to the tuple $\cL^\beta=(L^\beta_0,L^\beta_1,H^\beta,J^\beta)$.
\end{PARA}

\begin{PARA}[{\bf The Floer Equation on String Cobordisms}]
\label{para:StringFloer}\rm
Let 
$
(L^\alpha_0,L^\alpha_1)
$
and
$
(L^\beta_0,L^\beta_1)
$
be two objects in $\sL$ and $(H^\alpha,J^\alpha)$
and $(H^\beta,J^\beta)$ be two corresponding collections of regular pairs.
Fix a string cobordism $(\Sigma,L)$ from 
$(L^\alpha_0,L^\alpha_1)$ to $(L^\beta_0,L^\beta_1)$,
and a set of Floer data $(j,H,J)$ on $\Sigma$
from $(H^\alpha,J^\alpha)$ to $(H^\beta,J^\beta)$.  
Associated to these data is the Floer equation
\begin{equation}\label{eq:FLOERsigma}
\bar\p_{J,H}(u) := \tfrac12
\Bigl(d_Hu + J\circ(d_Hu)\circ j\Bigr) = 0,\qquad
u(z)\in L_z\mbox{ for }z\in\p\Sigma,
\end{equation}
for smooth maps $u:\Sigma\to M$.  
Here $d_Hu\in\Om^1(\Sigma,u^*TM)$ denotes the $1$-form
on $\Sigma$ with values in the pullback tangent bundle of $M$, 
defined by 
$$
d_Hu(z)\widehat{z} 
:= du(z)\widehat{z} + X_{H_{z,\widehat{z}}}(u(z))
$$
for $\widehat{z}\in T_z\Sigma$ and we abbreviate 
$
(J\circ(d_Hu)\circ j)(z)\widehat z 
:= J_z(u(z))d_Hu(z)j(z)\widehat z.
$
If $u:\Sigma\to M$ is a solution of~\eqref{eq:FLOERsigma}
then the functions 
$$
u^{\alpha,-}_i:=u\circ\iota^{\alpha,-}_i:
\R^-\times[0,1]\to M,\qquad i\in I^\alpha,
$$ 
satisfy the usual Floer equation~\eqref{eq:FLOER} for the 
quadruple $(L^\alpha_{i0},L^\alpha_{i1},H^\alpha_i,J^\alpha_i)$.
Similarly, the functions 
$$
u^{\beta,+}_i:=u\circ\iota^{\beta,+}_i:
\R^+\times[0,1]\to M,\qquad i\in I_\beta,
$$ 
satisfy~\eqref{eq:FLOER} for the 
quadruple $(L^\beta_{i0},L^\beta_{i1},H^\beta_i,J^\beta_i)$.
\end{PARA}

\begin{PARA}[{\bf The Energy Identity for String Cobordisms}]
\label{para:StringEnergy}\rm
The {\bf energy} of a solution $u:\Sigma\to M$
of~\eqref{eq:FLOERsigma} is defined by 
$$
E_H(u) 
:= \frac12\int_\Sigma\Abs{d_Hu}_z^2\dvol_\Sigma \\
$$
Here $\dvol_\Sigma\in\Om^2(\Sigma)$ is a volume form
compatible with the orientation and 
$\inner{\cdot}{\cdot}:=\dvol_\Sigma(\cdot,j\cdot)$
is the associated Riemannian metric on $\Sigma$. 
The term on the right in~\eqref{eq:ENERGYsigma}
is the integral of the function 
$\Sigma\to\R:z\mapsto\Abs{d_Hu(z)}_z^2$,
where $\Abs{d_Hu(z)}_z$ denotes the operator norm 
of $d_Hu(z):T_z\Sigma\to T_{u(z)}M$ 
with respect to the above metric on $\Sigma$ and the Riemannian
metric on $M$ determined by~$J_z$ and $\om$. 
This integral is independent of the choice of $\dvol_\Sigma$.  

If a solution of~\eqref{eq:FLOERsigma} has finite energy
$E_H(u)<\infty$ then the limits
\begin{equation}\label{eq:LIMITsigma}
x^\alpha_i(t) = \lim_{s\to-\infty}u^{\alpha,-}_i(s,t),\qquad
x^\beta_i(t) = \lim_{s\to\infty}u^{\beta,+}_i(s,t),
\end{equation}
exist for $i\in I^\alpha$, respectively $i\in I^\beta$, 
the convergence in~\eqref{eq:LIMITsigma} is uniform 
in~$t$, with all derivatives, and exponential, and
$x_i^\alpha\in\cC(L^\alpha_{i0},L^\alpha_{i1};H^\alpha_i)$
for $i\in I^\alpha$ and 
$x_i^\beta\in\cC(L^\beta_{i0},L^\beta_{i1};H^\beta_i)$
for $i\in I^\alpha$. The {\bf relative symplectic action} 
of a solution $u:\Sigma\to M$ of~\eqref{eq:FLOERsigma} 
and~\eqref{eq:LIMITsigma} is the number
\begin{equation}\label{eq:ACTIONsigma}
\cA_H(u)
:= \int_\Sigma u^*\om 
- \sum_{i\in I^\alpha}\int_0^1H^\alpha_{it}(x^\alpha_i(t))\,dt
+ \sum_{i\in I^\beta}\int_0^1H^\beta_{it}(x^\beta_i(t))\,dt.
\end{equation}
It is related to the energy by
\begin{equation}\label{eq:ENERGYsigma}
E_H(u) 
= \cA_H(u) + \int_\Sigma u^*\Om_H
- \int_{\p\Sigma}u^*H.
\end{equation}
The $2$-form $\Om_H\in\Om^2(\Sigma,\Om^0(M))$ is the
curvature of $H$, defined by 
$$
\Om_{H,z}(\widehat{z}_1,\widehat{z}_2)
:= dH_z(\widehat{z}_1,\widehat{z}_2) 
+ \left\{H_{z,\widehat{z}_1},H_{z,\widehat{z}_2}\right\}
\in\Om^0(M),\qquad
\widehat{z}_1,\widehat{z}_2\in T_z\Sigma.
$$ 
The value of the first term on the right at a point $p\in M$ denotes 
the differential of the $1$-form 
$$
T\Sigma\to\R:(z,\widehat{z})\mapsto H_{z,\widehat{z}}(p)
$$
and 
$
\{F,G\}:=\om(X_F,X_G)
$ 
is the Poisson bracket.
The scalar differential forms $u^*\Om_H\in\Om^2(\Sigma)$
and $u^*H\in\Om^1(\Sigma)$ are defined by evaluating at $u(z)$. 
Note that~\eqref{eq:ENERGYFG} is a special case 
of~\eqref{eq:ENERGYsigma}.  
\end{PARA}

\begin{PARA}[{\bf Regular Floer Data on String Cobordisms}]
\label{para:StringRegFloerData}\rm
The Floer data $(j,H,J)$ are called 
{\bf regular for $\Sigma,L$} if every finite energy solution 
of~\eqref{eq:FLOERsigma} is regular in the sense 
that the linearized operator is surjective. 
In this case the tuple 
$$
\cS := (\Sigma,L;j,H,J)
$$
is called a {\bf regular framed string cobordism} from 
$\cL^\alpha=(L^\alpha_0,L^\alpha_1;H^\alpha,J^\alpha)$
to $\cL^\beta=(L^\beta_0,L^\beta_1;H^\beta,J^\beta)$.
The existence of regular Floer data on any string cobordism
$(\Sigma,L)$ and for any fixed complex structure $j$ on $\Sigma$
follows from the standard transversality arguments in Floer theory
(see for example~\cite{FHS} and also~\cite{MS}).  Moreover,
it suffices to perturb $H$ in $U\times M$ for any given fixed nonempty
open subset $U\subset\Sigma$ to achieve transversality.
\end{PARA}

\begin{PARA}[{\bf Morphisms on Floer Homology}]
\label{para:StringHFmorphism}\rm
Fix a regular set of Floer data $(j,H,J)$
connecting $(H^\alpha,J^\alpha)$ to $(H^\beta,J^\beta)$. 
Fix two tuples of critical points 
$$
x^\alpha=\{x^\alpha_i\}_{i\in I^\alpha},\qquad
x^\beta=\{x^\beta_i\}_{i\in I^\beta}
$$
with 
$$
x_i^\alpha\in\cC(L^\alpha_{i0},L^\alpha_{i1};H^\alpha_i),\qquad
x_i^\beta\in\cC(L^\beta_{i0},L^\beta_{i1};H^\beta_i).
$$
Define
$$
\cM(x^\alpha,x^\beta;j,H,J)
:= \left\{u:\Sigma\to M\,\Big|\,
\eqref{eq:FLOERsigma},\,\eqref{eq:LIMITsigma},\,
\cA_H(u)<\infty
\right\}.
$$
This space is a smooth manifold whose local dimension 
near $u$ is given by a suitable Maslov index~$\mu_H(u)$. 
The $k$-dimensional part of $\cM(x^\alpha,x^\beta;j,H,J)$
with relative symplectic action $\eps$ is denoted 
\begin{equation}\label{eq:Msigma}
\cM^k_\eps(x^\alpha,x^\beta;j,H,J)
:= \left\{u\in\cM(x^\alpha,x^\beta;j,H,J)\,\Big|\,
\begin{array}{l}
\mu_H(u)=k,\\
\cA_H(u)=\eps
\end{array}\right\}.
\end{equation}
In the regular case the Floer--Gromov compactness theorem asserts 
that the union of the spaces $\cM^0_\eps(x^\alpha,x^\beta;j,H,J)$
over all $\eps\le c$ is a finite set for all 
$x^\alpha$ and $x^\beta$ and $c>0$.  
Thus $(j,H,J)$ determines a linear operator
$$
\Phi^{\beta\alpha}_{\Sigma,L;j,H,J}:
\CF_*(L^\alpha_0,L^\alpha_1;H^\alpha)
\to \CF_*(L^\beta_0,L^\beta_1;H^\beta),
$$
defined by 
\begin{equation}\label{eq:PHIsigma}
\Phi^{\beta\alpha}_{\Sigma,L;j,H,J}x^\alpha
:= \sum_{x^\beta}\sum_\eps
\#_2\cM_\eps^0(x^\alpha,x^\beta;j,H,J)
e^{-\eps}x^\beta.
\end{equation}
\end{PARA}

\begin{theorem}[{\bf Chain Map}]\label{thm:cm}
Let 
$
\cS := (\Sigma,L;j,H,J)
$
be a regular framed string cobordism from 
$\cL^\alpha=(L^\alpha_0,L^\alpha_1;H^\alpha,J^\alpha)$
to $\cL^\beta=(L^\beta_0,L^\beta_1;H^\beta,J^\beta)$.
Then 
$$
\p^\beta\circ\Phi^{\beta\alpha}_{\Sigma,L;j,H,J}
= \Phi^{\beta\alpha}_{\Sigma,L;j,H,J}\circ\p^\alpha.
$$
\end{theorem}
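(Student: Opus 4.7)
The plan is to follow the standard strategy for proving chain map identities in Floer theory: analyze the ends of a one-dimensional moduli space and count boundary points, with the necessary gluing analysis implemented via the Hardy space convergence result of Theorem \ref{thm:main_thm3}. More precisely, for generators $x^\alpha = \{x^\alpha_i\}_{i \in I^\alpha}$ and $z^\beta = \{z^\beta_i\}_{i \in I^\beta}$ I would consider, for each action value $\eps > 0$, the one-dimensional moduli space $\cM^1_\eps(x^\alpha, z^\beta; j, H, J)$ of regular index-one finite-energy solutions of~\eqref{eq:FLOERsigma}. Unlike the Floer cylinder case there is no $\R$-action on $\Sigma$ to quotient by, so these manifolds are themselves one-dimensional.

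Next I would compactify $\cM^1_\eps$ by adding once-broken configurations. Floer--Gromov compactness together with the monotonicity hypothesis~(H) (minimal Maslov number at least three, which excludes disc and sphere bubbling in strata of dimension at most one) shows that the ends of $\cM^1_\eps$ are of two types: either (a) a Floer trajectory $v^-_i \in \widehat{\cM}^1(x^\alpha_i, y^\alpha_i; H^\alpha_i, J^\alpha_i)$ of index one breaks off the $i$-th incoming end for some $i \in I^\alpha$, leaving behind an index-zero $u \in \cM^0(x^\alpha[i \mapsto y^\alpha_i], z^\beta; j, H, J)$; or (b) symmetrically an index-one trajectory $v^+_i \in \widehat{\cM}^1(y^\beta_i, z^\beta_i; H^\beta_i, J^\beta_i)$ breaks off an outgoing end. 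Action splits additively, so each broken configuration carries a pair of actions summing to $\eps$.

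The analytical heart is the gluing statement: each broken pair $(v^-_i, u)$ (respectively $(u, v^+_i)$) is the unique limit of exactly one end of $\cM^1_\eps$. To establish this I would apply Theorem~\ref{thm:main_thm3} locally at each node. Near the limiting critical point $y$ of the broken trajectory one chooses Darboux-type charts as in Lemma~\ref{le:phi} around the Hamiltonian path; the restriction of $u$ and $v^\mp$ to short boundary paths then defines an isolated intersection point of the Hardy submanifold $\sW^\infty$ with the product stable-unstable submanifold $\cM^-(x) \times \cM^+(z)$ inside $\sP^{3/2} \times \sP^{3/2}$. The $C^1$-convergence $\sW^T \to \sW^\infty$ furnishes, for every sufficiently large gluing parameter $T$, a unique nearby intersection point of $\sW^T$ with the same product, which by Theorem~\ref{thm:main_thm2} corresponds to a unique glued curve $u_T \in \cM^1_\eps$. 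Because the analysis of chapter~\ref{chp:hardy} is entirely local around an asymptotic critical point, it transplants without change to any strip-like end of $\Sigma$; this reduction to the strip setting is where I expect the bookkeeping — rather than any genuinely new estimate — to require the most care, especially in keeping track of which end of $\Sigma$ is being glued while leaving the remaining ends fixed.

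Once the gluing bijection between ends of $\cM^1_\eps$ and once-broken configurations is in place, the identity follows from the fact that a compact one-manifold with boundary has an even number of boundary points. Weighting each broken pair by the Novikov factor $e^{-\eps}$ and summing over all $x^\alpha, z^\beta$ and $\eps$, the ends of type~(a) assemble into the matrix coefficients of $\Phi^{\beta\alpha}_{\Sigma, L; j, H, J} \circ \p^\alpha$ while the ends of type~(b) assemble into $\p^\beta \circ \Phi^{\beta\alpha}_{\Sigma, L; j, H, J}$; equality modulo two of these two counts is the asserted chain map property. The Novikov finiteness~\eqref{eq:FINITE} on each end of $\Sigma$, combined with the analogous action-finiteness of $\cM^0_\eps(\cdot,\cdot; j, H, J)$, guarantees that all sums appearing converge in $\Lambda$.
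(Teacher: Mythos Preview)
Your proposal is correct and follows essentially the same approach as the paper: the paper packages the gluing step into a separate statement (Theorem~\ref{thm:CM}, proved via the Hardy-space intersection argument in Section~\ref{sec:PROOF} together with the truncated-surface analysis of Section~\ref{sec:truc_cob}), and then derives Theorem~\ref{thm:cm} from the resulting compactification of $\cM^1_\eps(x^\alpha,z^\beta;j,H,J)$ by exactly the boundary-count argument you describe. Your identification of the two boundary types, the role of monotonicity in excluding bubbling, and the local nature of the Hardy-space analysis at each strip-like end all match the paper's treatment.
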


\begin{proof}
See Section~\ref{sec:GLUING}.
\end{proof}

\begin{PARA}[{\bf Homotopy of Floer Data}]\label{para:CHE}\rm
Let $(L^\alpha_0,L^\alpha_1)$ and $(L^\beta_0,L^\beta_1)$ be two objects 
in $\sL$ and $(H^\alpha,J^\alpha)$ and $(H^\beta,J^\beta)$ be 
two corresponding collections of regular pairs.  
Let $(\Sigma,L)$ be a string cobordism from 
$(L^\alpha_0,L^\alpha_1)$ and $(L^\beta_0,L^\beta_1)$
and let $(j_0,H_0,J_0)$ and $(j_1,H_1,J_1)$ be two 
regular sets of Floer data on $\Sigma$ from 
$(H^\alpha,J^\alpha)$ to $(H^\beta,J^\beta)$.
Denote the corresponding chain homomorphism 
between the Floer complexes by 
$$
\Phi^{\beta\alpha}_0,\Phi^{\beta\alpha}_1:
\CF_*(L^\alpha_0,L^\alpha_1;H^\alpha)
\to \CF_*(L^\beta_0,L^\beta_1;H^\beta).
$$
Choose a smooth homotopy 
$$
\{j_\lambda,H_\lambda,J_\lambda\}_{0\le\lambda\le1}
$$
of Floer data from $(j_0,H_0,J_0)$ to $(j_1,H_1,J_1)$, 
for each $\lambda$ connecting 
$(H^\alpha,J^\alpha)$ to $(H^\beta,J^\beta)$.
The homotopy can be chosen {\bf regular} in the 
sense that the linearized operator for the 
one parameter Floer equation 
\begin{equation}\label{eq:FLOERsigmaCHT}
\bar\p_{J_\lambda,H_\lambda}(u)=0,\qquad
u(z)\in L_z\mbox{ for }z\in\p\Sigma,
\end{equation}
is surjective. In this case the moduli space 
\begin{equation}\label{eq:Msila}
\cM^k_\eps(x^\alpha,x^\beta;\{j_\lambda,H_\lambda,J_\lambda\}_\lambda)
:= \left\{(\lambda,u)\,\Bigg|\,\begin{array}{l}
0\le\lambda\le 1,\,u:\Sigma\to M,\\
\eqref{eq:FLOERsigmaCHT},\,
\eqref{eq:LIMITsigma},\mu_H(u)=k,\\
\cA_{H_\lambda}(u)=\eps
\end{array}\right\}
\end{equation}
is a smooth manifold of dimension $k+1$ for every tuple 
$x^\alpha=\{x^\alpha_i\}_{i\in I^\alpha}$ with
$x_i^\alpha\in\cC(L^\alpha_{i0},L^\alpha_{i1};H^\alpha_i)$
and every $x^\beta=\{x^\beta_i\}_{i\in I^\beta}$ with
$x_i^\beta\in\cC(L^\beta_{i0},L^\beta_{i1};H^\beta_i)$.
Moreover, the usual Floer--Gromov compactness theorem 
asserts that for $k=-1$ the union of the moduli spaces
$\cM^k_\eps(x^\alpha,x^\beta;\{j_\lambda,H_\lambda,J_\lambda\}_\lambda)$
over all $\eps\le c$ is a finite set for all $c>0$ 
and all $x^\alpha,x^\beta$. Hence there is an operator 
$$
\Psi^{\beta\alpha}:
\CF_*(L^\alpha_0,L^\alpha_1;H^\alpha)
\to \CF_*(L^\beta_0,L^\beta_1;H^\beta)
$$
defined by 
\begin{equation}\label{eq:che}
\Psi^{\beta\alpha}x^\alpha 
:= \sum_{x^\beta}\sum_\eps
\#\cM^{-1}_\eps(x^\alpha,x^\beta;\{j_\lambda,H_\lambda,J_\lambda\}_\lambda)
e^{-\eps}x^\beta.
\end{equation}
\end{PARA}

\begin{theorem}[{\bf Chain Homotopy Equivalence}]\label{thm:che}
The above operators $\Phi^{\beta\alpha}_0$, $\Phi^{\beta\alpha}_1$,
and $\Psi^{\beta\alpha}$ satisfy the equation
$$
\Phi^{\beta\alpha}_1-\Phi^{\beta\alpha}_0 
= \p^\beta\circ\Psi^{\beta\alpha} 
+ \Psi^{\beta\alpha}\circ \p^\alpha.
$$
\end{theorem}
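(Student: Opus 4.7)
The plan is to analyze the compactified one-parameter moduli space
\[
\widehat{\cM}(x^\alpha,x^\beta) := \bigcup_{\eps\le c}\cM^0_\eps\bigl(x^\alpha,x^\beta;\{j_\lambda,H_\lambda,J_\lambda\}_\lambda\bigr),
\]
which by the regularity assumption on the homotopy is a smooth $1$-manifold, parametrized in part by $\lambda\in[0,1]$. Fixing an action cutoff $c>0$ and then summing over $c$ (absorbing growth into the Novikov variable) is harmless because, by Floer--Gromov compactness, only finitely many $\eps\le c$ contribute. The strategy, standard in Floer theory, is to identify the boundary of the compactification of this $1$-manifold and observe that the total mod~$2$ count of boundary points is zero.

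First I would enumerate the boundary strata. The interior endpoints occur at $\lambda=0$ and $\lambda=1$, contributing respectively the matrix coefficients of $\Phi^{\beta\alpha}_0$ and $\Phi^{\beta\alpha}_1$ (with appropriate Novikov weight $e^{-\eps}$). The remaining boundary points correspond to broken configurations of two types: at some $\lambda_*\in(0,1)$ an index~$1$ Floer trajectory for $(H^\alpha,J^\alpha)$ on one of the negative ends $\iota^{\alpha,-}_i$ splits off, or symmetrically an index~$1$ Floer trajectory for $(H^\beta,J^\beta)$ splits off on one of the positive ends $\iota^{\beta,+}_i$. These two families of broken configurations are in bijection with the pairs appearing in the matrix coefficients of $\Psi^{\beta\alpha}\circ\p^\alpha$ and $\p^\beta\circ\Psi^{\beta\alpha}$ respectively. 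The action identity $\cA_{H_{\lambda_*}}(u) = \cA_{H^\alpha}(u_1)+\cA_{H_{\lambda_*}}(u_2)$ (respectively on the $\beta$ side) ensures that the Novikov weights multiply correctly.

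The nontrivial content is to show that each such broken configuration is the limit of a unique end of $\widehat\cM(x^\alpha,x^\beta)$ (gluing) and conversely that every end of $\widehat\cM(x^\alpha,x^\beta)$ arises this way (compactness). Compactness is the usual Floer--Gromov argument combined with the exponential convergence result of Proposition~\ref{pr:exp-dec}: a sequence $(\lambda_n,u_n)$ with $\lambda_n\to\lambda_*$ and energy densities escaping to an end must break off a finite energy Floer cylinder for the asymptotic pair. Gluing is where the Hardy space approach of chapter~\ref{chp:hardy} is used: one realizes the moduli space of index~$0$ solutions of~\eqref{eq:FLOERsigmaCHT} near a broken configuration $(u_1,\lambda_*,u_2)$ as the transverse intersection in $\sP^{3/2}\times\sP^{3/2}$ of the Hardy submanifold $\sW^T_\epsilon$ associated to the finite truncated piece of $\Sigma$ (with the parameter $\lambda$ living in an extra factor) with the product $\sM^-\times\sM^+$ of stable/unstable images. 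Theorem~\ref{thm:main_thm3} ($C^1$ convergence $\sW^T_\epsilon\to\sW^\infty_\epsilon$) upgrades the transverse intersection at $T=\infty$ to a unique, smoothly parametrized intersection point for all large $T$, giving the required diffeomorphism from $(T_0,\infty)$ onto an end of the $1$-manifold $\widehat\cM(x^\alpha,x^\beta)$. The parametric nature of the problem (the extra $\lambda$-direction) is handled exactly as in the catenation setup of Theorem~\ref{thm:isotopy3}, where one Hardy space carries an additional real parameter; the same Fredholm and inverse function theorem arguments apply, since the added parameter is finite-dimensional.

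With both compactness and gluing in hand, counting $\p\widehat\cM(x^\alpha,x^\beta)$ modulo~$2$ and weighting each boundary point by $e^{-\eps}$ yields the identity
\[
\Phi^{\beta\alpha}_1-\Phi^{\beta\alpha}_0
= \p^\beta\circ\Psi^{\beta\alpha}+\Psi^{\beta\alpha}\circ\p^\alpha
\]
coefficient by coefficient on each generator $x^\alpha$. I expect the hard part to be the gluing analysis in the parametric setting: one must verify that the $C^1$-convergence of nonlinear Hardy spaces is uniform in the extra parameter $\lambda$ near $\lambda_*$, so that the inverse function theorem produces a smooth $1$-parameter family of glued solutions whose endpoints recover the broken configuration. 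Everything else reduces to bookkeeping of Novikov weights and to the already established compactness, transversality, and unique continuation results.
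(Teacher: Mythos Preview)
Your proposal is correct and follows essentially the same approach as the paper: you analyze the ends of the one-dimensional parametrized moduli space $\cM^0_\eps(x^\alpha,z^\beta;\{j_\lambda,H_\lambda,J_\lambda\}_\lambda)$, identify its compactification's boundary as the union of the $\lambda=0,1$ slices together with the broken configurations on the $\alpha$- and $\beta$-ends, and invoke the Hardy-space gluing (Theorem~\ref{thm:main_thm3}) with the extra $\lambda$-parameter absorbed into one of the Hilbert manifolds. The paper packages the parametric gluing step as Theorem~\ref{thm:CHE}, whose proof is verbatim that of Theorem~\ref{thm:CM} with $\sM^-$ replaced by the set of pairs $(\lambda,w^-)$; your reference to the catenation setup for handling the extra parameter is slightly off, but the underlying mechanism you describe is the right one.
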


\begin{proof}
See Section~\ref{sec:GLUING}.
\end{proof}

\begin{corollary}\label{cor:che}
Let $(\Sigma,L)$, $(j,H,J)$, and $\Phi^{\beta\alpha}_{\Sigma,L;j,H,J}$  
be as in Theorem~\ref{thm:cm}. Then the induced homomorphism 
\begin{equation}\label{eq:PHIHFsigma}
\Phi^{\beta\alpha}_{\Sigma,L}:
\HF_*(L^\alpha_0,L^\alpha_1;H^\alpha,J^\alpha)
\to \HF_*(L^\beta_0,L^\beta_1;H^\beta,J^\beta)
\end{equation}
on Floer homology is independent of the choice of the regular Floer 
data from $(H^\alpha,J^\alpha)$ to $(H^\beta,J^\beta)$,
used to define it.
\end{corollary}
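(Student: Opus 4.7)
The plan is to deduce the corollary directly from Theorem~\ref{thm:che}, which supplies a chain homotopy between any two chain-level morphisms coming from regular Floer data on the fixed string cobordism $(\Sigma,L)$ with the same asymptotic regular pairs $(H^\alpha,J^\alpha)$ and $(H^\beta,J^\beta)$. First, I would fix two regular choices of Floer data $(j_0,H_0,J_0)$ and $(j_1,H_1,J_1)$ on $(\Sigma,L)$ connecting $(H^\alpha,J^\alpha)$ to $(H^\beta,J^\beta)$ in the sense of~\ref{para:StringFloerData}, and let
$$
\Phi^{\beta\alpha}_0 := \Phi^{\beta\alpha}_{\Sigma,L;j_0,H_0,J_0},\qquad
\Phi^{\beta\alpha}_1 := \Phi^{\beta\alpha}_{\Sigma,L;j_1,H_1,J_1}
$$
be the corresponding chain maps defined by~\eqref{eq:PHIsigma}. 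By Theorem~\ref{thm:cm} both are chain maps.

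Next I would choose a smooth path $\{(j_\lambda,H_\lambda,J_\lambda)\}_{0\le\lambda\le1}$ of Floer data on $(\Sigma,L)$ interpolating between the two endpoints, subject to the constraint that on the cylindrical ends each $(H_\lambda,J_\lambda)$ restricts to the fixed asymptotic pairs $(H^\alpha,J^\alpha)$ and $(H^\beta,J^\beta)$, so the endpoints of the Floer trajectories are unchanged along the homotopy. Standard transversality arguments for the parametrized Floer equation (as recalled in~\ref{para:CHE}, cf.~\cite{FHS}), applied to compactly supported perturbations of $H_\lambda$ on any open subset of $\Sigma$ disjoint from the cylindrical ends, allow us to assume after a generic perturbation of the interior of the homotopy that the path is regular. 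For such a regular homotopy the moduli spaces~\eqref{eq:Msila} are smooth manifolds of the expected dimension, and the standard Floer--Gromov compactness gives finiteness of the zero-dimensional moduli spaces with bounded action, so formula~\eqref{eq:che} defines a $\Lambda$-linear operator
$$
\Psi^{\beta\alpha}:\CF_*(L^\alpha_0,L^\alpha_1;H^\alpha)\to\CF_*(L^\beta_0,L^\beta_1;H^\beta).
$$

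Applying Theorem~\ref{thm:che} to this homotopy yields the chain homotopy identity
$$
\Phi^{\beta\alpha}_1-\Phi^{\beta\alpha}_0
=\p^\beta\circ\Psi^{\beta\alpha}+\Psi^{\beta\alpha}\circ\p^\alpha,
$$
and a standard homological algebra argument then shows that $\Phi^{\beta\alpha}_0$ and $\Phi^{\beta\alpha}_1$ induce the same homomorphism on Floer homology. Since any two regular choices of Floer data can be joined by such a regular homotopy, the induced map $\Phi^{\beta\alpha}_{\Sigma,L}$ depends only on the string cobordism $(\Sigma,L)$ and the asymptotic regular pairs.

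The only subtle point is the existence of the regular interpolating homotopy with the prescribed cylindrical end behavior: one must ensure that transversality can be achieved by perturbing the Floer data only in a compact region of $\Sigma$, so that the asymptotic pairs $(H^\alpha,J^\alpha)$ and $(H^\beta,J^\beta)$ are preserved throughout the homotopy. This is the main technical point, but it follows from the usual Sard--Smale argument applied to a Banach manifold of compactly supported perturbations, exactly in the spirit of~\cite{FHS}, and poses no difficulty once the linearized parametrized operator is shown to be surjective on the universal moduli space.
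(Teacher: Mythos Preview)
Your proposal is correct and follows exactly the same approach as the paper, which simply states ``This follows immediately from Theorem~\ref{thm:che}.'' You have spelled out in detail what the paper leaves implicit (and what is already set up in~\ref{para:CHE}): pick two regular data, connect them by a regular homotopy, apply Theorem~\ref{thm:che} to get the chain homotopy identity, and conclude equality on homology.
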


\begin{proof}
This follows immediately from Theorem~\ref{thm:che}.
\end{proof}

\begin{theorem}[{\bf Catenation}]\label{thm:cat}
Fix a string cobordism $(\Sigma^{\alpha\beta},L^{\alpha\beta})$ 
from the object $(L^\alpha_0,L^\alpha_1)$ to $(L^\beta_0,L^\beta_1)$
and a string cobordism $(\Sigma^{\beta\gamma},L^{\beta\gamma})$ 
from $(L^\beta_0,L^\beta_1)$ to $(L^\gamma_0,L^\gamma_1)$
and denote by $(\Sigma^{\alpha\gamma},L^{\alpha\gamma})$ 
their $T$-catenation (for any $T>0$).
Let $\Phi^{\beta\alpha}$, $\Phi^{\gamma\beta}$, 
$\Phi^{\gamma\alpha}$ be the operators on Floer homology 
associated to these cobordisms via Theorem~\ref{thm:cm} 
and Corollary~\ref{cor:che}. Then
\begin{equation}\label{eq:catenation}
\Phi^{\gamma\beta}\circ\Phi^{\beta\alpha} = \Phi^{\gamma\alpha}.
\end{equation}
\end{theorem}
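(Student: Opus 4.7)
The plan is to realize $\Phi^{\gamma\alpha}$ at the chain level by Floer data on the $T$\nobreakdash-catenation and then use the Hardy space gluing from Chapter~\ref{chp:hardy} to identify, up to any prescribed energy bound, the relevant zero dimensional moduli spaces on $\Sigma^{\alpha\gamma}_T$ with pairs of moduli spaces on $\Sigma^{\alpha\beta}$ and $\Sigma^{\beta\gamma}$.  First I would fix regular Floer data $(j^{\alpha\beta},H^{\alpha\beta},J^{\alpha\beta})$ on $\Sigma^{\alpha\beta}$ and $(j^{\beta\gamma},H^{\beta\gamma},J^{\beta\gamma})$ on $\Sigma^{\beta\gamma}$ connecting the chosen regular pairs, and for every $T>0$ assemble them into Floer data $(j_T,H_T,J_T)$ on the $T$\nobreakdash-catenation $\Sigma^{\alpha\gamma}_T$, using~\eqref{eq:SigmaT}--\eqref{eq:LT} and the agreement of the data with $(H^\beta_i,J^\beta_i)$ on the glued strip-like necks.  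By Corollary~\ref{cor:che}, the homology map $\Phi^{\gamma\alpha}$ is independent of both $T$ and the chosen data, so it is enough to prove the identity at the level of homology using this family.

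The second step is the central gluing assertion: for every constant $c>0$ there exists $T_0(c)$ such that for all $T\ge T_0(c)$ the data $(j_T,H_T,J_T)$ are regular, and for all critical tuples $x^\alpha,x^\gamma$ and all $\eps\le c$ there is a bijection
$$
\cM^0_\eps(x^\alpha,x^\gamma;j_T,H_T,J_T)
\;\cong\;
\bigsqcup_{x^\beta}\;\bigsqcup_{\eps_1+\eps_2=\eps}
\cM^0_{\eps_1}(x^\alpha,x^\beta;j^{\alpha\beta},H^{\alpha\beta},J^{\alpha\beta})
\times
\cM^0_{\eps_2}(x^\beta,x^\gamma;j^{\beta\gamma},H^{\beta\gamma},J^{\beta\gamma}).
$$
The injective direction is the gluing map: given a pair of transverse, zero dimensional solutions $(u^{\alpha\beta},u^{\beta\gamma})$ with matching asymptote $x^\beta$ at each $i\in I^\beta$, restriction to the free boundary at the neck produces an isolated intersection point of $\sW^\infty$ with $\sM^-(x^\beta)\times\sM^+(x^\beta)$ in the path space $\sP^{3/2}\times\sP^{3/2}$.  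Theorem~\ref{thm:main_thm3} asserts that $\sW^T$ converges to $\sW^\infty$ in the $C^1$ topology, so for $T$ large this intersection persists as a unique transverse intersection of $\sW^T$ with $\sM^-(x^\beta)\times\sM^+(x^\beta)$ at each $i\in I^\beta$, and the resulting paths extend holomorphically to a unique element $u_T\in\cM^0_{\eps_1+\eps_2}(x^\alpha,x^\gamma;j_T,H_T,J_T)$.  The surjective direction uses Floer--Gromov compactness: any sequence $u_{T_k}\in\cM^0_{\eps_k}$ with $\eps_k\le c$ and $T_k\to\infty$ subconverges, after a suitable rescaling and translation, to a broken trajectory consisting of a pair $(u^{\alpha\beta},u^{\beta\gamma})$ through some intermediate $x^\beta$; since only finitely many combinations $(x^\beta,\eps_1,\eps_2)$ with $\eps_1+\eps_2\le c$ contribute, choosing $T_0(c)$ larger than the gluing thresholds of all of them makes the gluing map exhaust $\cM^0_\eps$ for $\eps\le c$.

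With the bijection in hand, I would pick a sequence $T_\nu\to\infty$ and set $\Phi^{\gamma\alpha}_\nu:=\Phi^{\gamma\alpha}_{\Sigma^{\alpha\gamma}_{T_\nu},L^{\alpha\gamma};j_{T_\nu},H_{T_\nu},J_{T_\nu}}$.  Two consecutive choices are connected by a regular homotopy of Floer data on $\Sigma^{\alpha\gamma}_{T_\nu}$ (interpolating the necks from length $T_\nu$ to $T_{\nu+1}$), so Theorem~\ref{thm:che} gives chain homotopies $\Psi_\nu$ with $\Phi^{\gamma\alpha}_{\nu+1}-\Phi^{\gamma\alpha}_\nu=\p^\gamma\Psi_\nu+\Psi_\nu\p^\alpha$, and each $\Phi^{\gamma\alpha}_\nu$ descends to the same homology map $\Phi^{\gamma\alpha}$.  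The bijection of Step~2 combined with the action identity $\cA_H(u_T)=\cA_H(u^{\alpha\beta})+\cA_H(u^{\beta\gamma})$ (which follows from~\eqref{eq:ENERGYsigma} and the vanishing of the curvature contribution across the neck, since on the neck the data are $s$-independent and equal to those defining $(H^\beta,J^\beta)$) gives
$$
\Phi^{\gamma\alpha}_\nu x^\alpha
-\bigl(\Phi^{\gamma\beta}\circ\Phi^{\beta\alpha}\bigr)x^\alpha
\in e^{-c(\nu)}\CF_*(L^\gamma_0,L^\gamma_1;H^\gamma)[[e^{-\eps}]]
$$
for every $x^\alpha$, where $c(\nu)\to\infty$ as $\nu\to\infty$.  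Since the Novikov topology on $\CF_*$ is complete and exhaustive, it follows that $\Phi^{\gamma\alpha}_\nu\to\Phi^{\gamma\beta}\circ\Phi^{\beta\alpha}$ as chain maps, and therefore $\Phi^{\gamma\alpha}=\Phi^{\gamma\beta}\circ\Phi^{\beta\alpha}$ on Floer homology.

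The main obstacle is the gluing/compactness bijection in Step~2: one has to show not only that the Hardy space intersection produces a solution on $\Sigma^{\alpha\gamma}_T$ (which is the content of Theorem~\ref{thm:main_thm3}), but also that every zero dimensional solution with action $\le c$ on the catenation arises this way for $T$ large enough.  This requires combining the quantitative $C^1$\nobreakdash-convergence of the Hardy submanifolds with a uniform exponential decay statement on the neck together with the monotonicity lemma of Chapter~\ref{chp:mon} to localize each neck restriction into a neighborhood of a critical point $x^\beta$, and then invoking the injectivity on the transverse intersection to recover $(u^{\alpha\beta},u^{\beta\gamma})$.  The remaining step, namely $\Phi^{\alpha\alpha}=\id$, is the same argument applied to the trivial cobordism, where the unique intersection in the Hardy space at $T=0$ already identifies the moduli space with $\cC(L^\alpha_0,L^\alpha_1;H^\alpha)$.
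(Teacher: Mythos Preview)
Your overall architecture matches the paper's: establish a bijection of zero-dimensional moduli spaces on the catenation with pairs of solutions on the pieces, valid below each energy level $c$, then pass to a limit over a sequence $T_\nu\to\infty$ using chain homotopies and the Novikov filtration. The Hardy-space intersection argument and the compactness-for-surjectivity argument you sketch are exactly what the paper packages into Theorem~\ref{thm:CAT} and Step~1 of the proof.

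There is, however, a genuine gap at the regularity step. You assert that for $T\ge T_0(c)$ the assembled data $(j_T,H_T,J_T)$ are \emph{regular}, i.e.\ every finite-energy solution on $\Sigma^{\alpha\gamma}_T$ has surjective linearized operator. The gluing/compactness argument only shows this for solutions with action $\le c$: at high energy there may well be irregular solutions, and then the chain map $\Phi^{\gamma\alpha}_\nu$ of~\eqref{eq:PHIsigma} is not even defined (the sum runs over all $\eps$). The paper handles this by introducing an auxiliary Hamiltonian perturbation $h_\nu$ in a small convex neighborhood $\cH_\nu\subset\cH^{\alpha\beta}\times\cH^{\beta\gamma}$ of zero: Theorem~\ref{thm:CAT} is proved uniformly in $h\in\cH_0$, and then in Step~2 one chooses $h_\nu\in\cH_\nu$ so that the perturbed catenated data are genuinely regular, while the bijection with pairs still holds for $\eps\le c_\nu$ because $h_\nu\in\cH_\nu$.

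A second, smaller gap is the final passage to homology. ``$\Phi^{\gamma\alpha}_\nu\to\Phi^{\gamma\beta}\circ\Phi^{\beta\alpha}$ in the Novikov topology, hence equality on homology'' does not follow without control on the chain homotopies: a limit of chain-homotopic maps need not be chain-homotopic to them. What you need (and what the paper proves in Step~3) is that the $\Psi_\nu$ have coefficients vanishing for $\eps\le c_\nu$, so that $\Psi:=\sum_\nu\Psi_\nu$ is a well-defined operator and $\Phi^{\gamma\beta}\Phi^{\beta\alpha}-\Phi^{\gamma\alpha}_0=\p^\gamma\Psi+\Psi\p^\alpha$ holds termwise. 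This in turn requires knowing that the index $-1$ moduli spaces on the homotopy are empty below the threshold (condition~(d) in the paper's Step~1), which is a separate compactness argument you have not addressed.
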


\begin{proof}
See Section~\ref{sec:GLUING}.
\end{proof}

\begin{proof}[Proof of Theorems~\ref{thm:isotopy1}, 
\ref{thm:isotopy2}]
Choosing 
$$
\Sigma:=\R\times[0,1]
$$ 
we find that Theorem~\ref{thm:isotopy1} 
is a special case of Theorem~\ref{thm:cm}, and 
Theorem~\ref{thm:isotopy2} 
is a special case of Corollary~\ref{cor:che}.
\end{proof}

\begin{PARA}[{\bf The Donaldson Category}]\label{para:DON}\rm
Take 
$
\Sigma=\Delta
$ 
to be a connected genus zero surface with three cylindrical ends. 
Then the operator $\Phi^{\beta\alpha}_{\Sigma,L}$ 
in Theorem~\ref{thm:cm} and Corollary~\ref{cor:che} 
defines a homomorphism
$$
\HF_*(L_0,L_1)\otimes\HF_*(L_1,L_2)\to\HF_*(L_0,L_2).
$$
This is the {\bf Donaldson triangle product}. Associativity 
follows by splitting a genus zero surface with four cylindrical ends.  
This determines a category, where the objects are the
monotone Lagrangian submanifolds $L\in\cL(M,\om)$
with minimal Maslov number at least three and
the set of morphisms from $L_0$ to $L_1$ 
is the Floer homology group $\HF_*(L_0,L_1)$.  
Composition is given by the Donaldson triangle product.
\end{PARA}

\section{Floer Gluing} \label{sec:GLUING}  

\subsection*{Boundary Operator}

In this section we prove Theorem~\ref{thm:bo}.
The proof relies on the following version of the Floer gluing theorem.

\begin{theorem}[{\bf Floer Gluing/Boundary Operator}]
\label{thm:BO}
Assume~(H) and let $(H,J)$ be a regular pair for $(L_0,L_1)$.
Choose three Hamiltonian paths $x,y,z\in\cC(L_0,L_1;H)$ and
two Floer trajectories $u\in\cM^1(x,y;H,J)$ 
and $v\in\cM^1(y,z;H,J)$. Fix two real numbers $s_u,s_v\in\R$.
Then there exist constants $T_0>0$ and $\delta_0>0$ 
and a smooth map
\begin{equation}\label{eq:gluing}
(T_0,\infty)\to\cM^2(x,z;H,J):T\mapsto u_T
\end{equation}
satisfying the following conditions.

\smallskip\noindent{\bf (i)}
The composition of~\eqref{eq:gluing} with the 
projection 
$$
\cM^2(x,z;H,J)\to\widehat{\cM}^2(x,z;H,J)
$$
is a diffeomorphism onto its image.

\smallskip\noindent{\bf (ii)}
The functions $(s,t)\mapsto u_T(s-T,t)$ converge to $u$
and the functions $(s,t)\mapsto u_T(s+T,t)$ converge to $v$
as $T$ tends to infinity; in both cases the convergence 
is uniform with all derivatives on every compact subset 
of $\R\times[0,1]$. Moreover, 
$$
\lim_{T\to\infty}\sup_t
\left(\sup_{s\le0} d(u_T(s-T,t),u(s,t))
+ \sup_{s\ge0}d(u_T(s+T,t),v(s,t))\right)
= 0.
$$

\smallskip\noindent{\bf (iii)}
For every $T>T_0$ we have 
$$
E_H(u_T) = E_H(u) + E_H(v).
$$

\smallskip\noindent{\bf (iv)}
If $u'\in\cM^2(x,z;H,J)$ satisfies 
$$
E_H(u')=E_H(u)+E_H(v)
$$ 
and
\begin{equation}\label{eq:glueps}
\inf_{s\in\R}\sup_{0\le t\le 1}d(u'(s,t),u(s_u,t)) < \delta_0,\qquad
\inf_{s\in\R}\sup_{0\le t\le 1}d(u'(s,t),v(s_v,t)) < \delta_0,
\end{equation}
then $u'$ agrees with $u_T$ up to time shift for some $T>T_0$.
\end{theorem}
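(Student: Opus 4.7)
The plan is to reduce the gluing construction to intersection theory of nonlinear Hardy submanifolds in the path space $\sP^{3/2}(H,J) \times \sP^{3/2}(H,J)$, in the spirit of Chapter~\ref{chp:hardy}. First I reduce to $H = 0$ by naturality (Remark~\ref{rem:ham0}) and split each of the index-one Floer trajectories at its intermediate asymptote: the pieces $u|_{(-\infty,s_u]}$ and $u|_{[s_u,\infty)}$ belong to $\sM^-(x)$ and $\sM^+(y)$ respectively, and similarly for $v$ at the splitting point $s_v$. By the exponential decay of Proposition~\ref{pr:exp-dec}, for $s_u \gg 0$ and $s_v \ll 0$ the two middle pieces have arbitrarily small energy and boundary values in any prescribed neighborhood $\sU$ of the constant path at $y$; they therefore lie in the moduli space $\sM^\infty(y,\sU)$ of~\ref{para:conv_thm}. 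Consequently the pair
\begin{equation*}
\gamma_\infty := (u(s_u,\cdot),\,v(s_v,\cdot)) \in \sP^{3/2} \times \sP^{3/2}
\end{equation*}
lies simultaneously in the infinite Hardy submanifold $\sW^\infty(y,\sU) = i^\infty(\sM^\infty(y,\sU))$ and in the submanifold $\sV := (i^- \times i^+)(\sM^-(x)\times\sM^+(z))$ produced by Theorem~\ref{cor:main_thm3}.

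The key analytic step will be to verify that $\gamma_\infty$ is a transverse isolated intersection of $\sW^\infty(y,\sU)$ with $\sV$, modulo the two commuting translation actions on $u$ and $v$. Both submanifolds factor as products over the two endpoints, so the intersection factors as the product of $\sW^+(y)\cap i^-(\sM^-(x))$ and $\sW^-(y)\cap i^+(\sM^+(z))$, each of which corresponds under the Hardy space embeddings to the moduli spaces of full Floer trajectories $\cM(x,y;H,J)$ and $\cM(y,z;H,J)$ joined at $y$. The assumption that $u$ and $v$ are regular of Viterbo--Maslov index one then identifies each factor near $\gamma_\infty$ with a one-dimensional translation orbit. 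I then invoke Theorem~\ref{thm:main_thm3}: after possibly shrinking $\sU$, the finite Hardy submanifold $\sW^T(\sU)$ converges to $\sW^\infty(y,\sU)$ in the $C^1$-topology. Combined with the transversality above, an implicit function theorem applied to the graph parametrizations of Definition~\ref{def:conv} yields, for every sufficiently large $T$, a unique nearby intersection point $\gamma^T \in \sW^T(\sU) \cap \sV$ modulo the diagonal $\R$-translation. This intersection lifts to a triple in $\sM^-(x)\times\sM^T(y,\sU)\times\sM^+(z)$ whose concatenation---smooth by elliptic regularity---is the desired curve $u_T \in \cM^2(x,z;H,J)$.

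Properties (i), (ii), (iii) will follow from the implicit function theorem output: smoothness of $T \mapsto u_T$ and the diffeomorphism property of $T \mapsto [u_T] \in \widehat{\cM}^2$ come from the smooth dependence of $\sW^T$ on $T$; the limit statement in (ii) is a consequence of exponential decay (Corollary~\ref{ch1_cor0.6}) together with the $C^1$-convergence of $\sW^T$; and the energy identity in (iii) follows from~\eqref{eq:ENERGY} and additivity of the symplectic area under the gluing of the three pieces. I expect the main obstacle to be (iv), the local uniqueness. Given a candidate $u'$ satisfying~\eqref{eq:glueps}, the two proximity conditions locate two interior slices of $u'$ in $\sU$, between which $u'$ must have small energy once one subtracts the energy contributions from the outer half-strips; by the monotonicity Theorem~\ref{thm:MON} the intermediate portion of $u'$ lies in $\sM^{T}(y,\sU)$ for some $T = T(u')$, and its boundary values realize an intersection point of $\sW^T(\sU)$ with $\sV$ close to $\gamma_\infty$. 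The local uniqueness part of the implicit function theorem then forces this point to coincide with $\gamma^T$, so that $u' = u_T$ after an $s$-translation. This final step is the one that truly synthesizes all the preceding ingredients---the monotonicity estimates of Chapter~\ref{chp:mon}, the exponential decay bounds, and the quantitative convergence of nonlinear Hardy spaces---into a single argument, and is where the delicate estimates will be concentrated.
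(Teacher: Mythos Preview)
Your overall strategy matches the paper's: construct $u_T$ as the intersection of the finite-strip Hardy manifold $\sW^T(y,\sU)$ with the image of a product of half-strip moduli spaces in $\sP\times\sP$, and deduce everything from the $C^1$-convergence of Theorem~\ref{thm:main_thm3}. However, there is a genuine gap in your dimension count that prevents the map $T\mapsto u_T$ from being well defined.

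You take $\sV=(i^-\times i^+)\bigl(\sM^-(x)\times\sM^+(z)\bigr)$ with \emph{no} energy constraint on the half-strips. As you correctly observe, the intersection $\sW^\infty(y,\sU)\cap\sV$ near $\gamma_\infty$ is then two-dimensional, parametrized by the two independent translation actions on $u$ and $v$. By $C^1$-convergence the intersection $\sW^T(\sU)\cap\sV$ is also two-dimensional for large $T$. But the $\R^2$-action does \emph{not} preserve $\sW^T$ (only the diagonal $\R$ does, since shifting the two boundary slices by different amounts changes the length of the middle strip). So after quotienting by the diagonal translation you are left with a one-parameter family for each $T$, not a single point, and these families overlap for different values of $T$. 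Your claim of a ``unique nearby intersection point $\gamma^T\in\sW^T(\sU)\cap\sV$ modulo the diagonal $\R$-translation'' is therefore off by one dimension, and without a well-defined $u_T$ the proofs of (i) and (iv) cannot proceed as written.

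The paper resolves this by replacing $\sM^-(x)$ and $\sM^+(z)$ with the codimension-one slices
\[
\sM^-(x,\eps^-)\ :=\ \{w^-\in\sM^-(x)\mid E_H(w^-)=\eps^-\},\qquad
\sM^+(z,\eps^+)\ :=\ \{w^+\in\sM^+(z)\mid E_H(w^+)=\eps^+\},
\]
where $\eps^-=E_H(u|_{\R^-\times[0,1]})$ and $\eps^+=E_H(v|_{\R^+\times[0,1]})$ after a preliminary shift (the analogue of your choice $s_u\gg0$, $s_v\ll0$). These energy constraints kill exactly the two translation directions, so the intersection with $\sW^\infty$ becomes an isolated transverse point of Fredholm index zero, and the implicit function theorem produces a \emph{unique} $u_T$ for each $T$. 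This normalization is also what makes (i) immediate: the inverse to $T\mapsto[u_T]$ is given explicitly by the smooth energy function of Lemma~\ref{le:TIME}, since by construction the energy of $u_T$ on $(-\infty,-T]\times[0,1]$ equals $\eps^-$ and on $[T,\infty)\times[0,1]$ equals $\eps^+$. Once you insert this energy slice, the rest of your outline (including the argument for (iv), which the paper carries out by contradiction via Floer--Gromov compactness rather than a direct monotonicity estimate) goes through essentially as you describe.
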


\begin{proof}
See Section~\ref{sec:PROOF}.
\end{proof}

\bigbreak

\begin{proof}[Proof of Theorem~\ref{thm:bo}]
Fix a pair $x,z\in\cC(L_0,L_1;H)$ and a constant $\eps>0$.  
By Theorem~\ref{thm:BO} and the standard compactness 
theorem for Floer trajectories, the $1$-dimensional moduli space 
$$
\widehat{\cM}^2_\eps(x,z;H,J)
:= \left\{[u]\in\widehat{\cM}^2_\eps(x,z;J)\,\big|\,E_H(u)=\eps\right\}
$$ 
admits a compactification to a compact $1$-manifold 
$\overline{\widehat{\cM}}^2_\eps(x,z;H,J)$ with boundary
$$
\p\overline{\widehat{\cM}}^2_\eps(x,z;H,J)
= \bigcup_{y\in\cC(L_0,L_1;H)}\bigcup_{0<\delta<\eps}
\widehat{\cM}^1_\delta(x,y;H,J)
\times\widehat{\cM}^1_{\eps-\delta}(y,z;H,J).
$$
Since every compact $1$-manifold has an even number 
of boundary points, this implies
$$
\sum_{y\in\cC(L_0,L_1;H)}
\sum_{0<\delta<\eps}
\#\widehat{\cM}^1_\delta(x,y;H,J)\cdot
\#\widehat{\cM}^1_{\eps-\delta}(y,z;H,J)
\in 2\Z
$$
for every $\eps>0$ and every pair of intersection 
points $x,z\in L_0\cap L_1$. This is equivalent 
to the formula 
$$
\p^{H,J}\circ\p^{H,J}=0
$$
and proves Theorem~\ref{thm:bo}.
\end{proof}

\subsection*{Chain Map}

In this section we prove Theorem~\ref{thm:cm}.

\begin{PARA}\label{para:cm}\rm
Let $(M,\om)$ be a compact symplectic manifold.
Fix two objects  
$$
(L^\alpha_0,L^\alpha_1)
=\bigl\{L^\alpha_{i0},L^\alpha_{i1}
\bigr\}_{i\in I^\alpha},\qquad
(L^\beta_0,L^\beta_1)
=\bigl\{L^\beta_{i0},L^\beta_{i1}\bigr\}_{i\in I^\beta}
$$
in $\sL(M,\om)$ and two collections
$$
(H^\alpha,J^\alpha)
=\bigl\{H^\alpha_i,J^\alpha_i
\bigr\}_{i\in I^\alpha},\qquad
(H^\beta,J^\beta)
=\bigl\{H^\beta_i,J^\beta_i
\bigr\}_{i\in I^\beta}
$$
such that $(H^\alpha_i,J^\alpha_i)$ is a regular pair
for $(L^\alpha_{i0},L^\alpha_{i1})$ when $i\in I^\alpha$ and
$(H^\beta_i,J^\beta_i)$ is a regular pair
for $(L^\beta_{i0},L^\beta_{i1})$ when $i\in I^\beta$.
Let
$$
(\Sigma,L) = \left(
\Sigma,\{L_z\}_{z\in\p\Sigma},
\{\iota^{\alpha,-}_i\}_{i\in I^\alpha},
\{\iota^{\beta,+}_i\}_{i\in I^\beta}
\right)
$$
be a string cobordism from $(L^\alpha_0,L^\alpha_1)$ to
$(L^\beta_0,L^\beta_1)$ and let $(j,H,J)$ be a regular
set of Floer data on $\Sigma$ from $(H^\alpha,J^\alpha)$
to $(H^\beta,J^\beta)$.   
\end{PARA}

\begin{theorem}[{\bf Floer Gluing/Chain Map}]\label{thm:CM}
Let $(\Sigma,L)$ and $(j,H,J)$ be as in~\ref{para:cm}.  
Fix three tuples 
$$
x^\alpha=\{x^\alpha_i\}_{i\in I^\alpha},\qquad
y^\beta=\{y^\beta_i\}_{i\in I^\beta},\qquad
z^\beta=\{z^\beta_i\}_{i\in I^\beta},
$$
with $x^\alpha_i\in\cC(L^\alpha_{i0},L^\alpha_{i1};H^\alpha_i)$
and $y^\beta_i,z^\beta_i\in\cC(L^\beta_{i0},L^\beta_{i1};H^\beta_i)$, 
and let 
$$
u\in\cM^0(x^\alpha,y^\beta;j,H,J),\qquad
v\in\cM^1(y^\beta,z^\beta;H^\beta,J^\beta).
$$
Thus $v=\{v_i\}_{i\in I^\beta}$ with 
$v_i\in\cM(y_i^\beta,z_i^\beta;H_i^\beta,J_i^\beta)$
and there is an index $i_0\in I^\beta$ such that 
$v_i(s,t)=y^\beta_i(t)=z^\beta_i(t)$ for $i\ne i_0$
and 
$$
\mu_H(v_{i_0})=1.
$$ 
Fix any nonempty open set 
$W_0\subset\Sigma\setminus\im\,\iota^{\beta,+}_{i_0}$
and a real number $s_v\in\R$.
Then there exist constants $T_0>0$ and $\delta_0>0$ 
and a smooth map
\begin{equation}\label{eq:SIGMA1}
(T_0,\infty)\to\cM^1(x^\alpha,z^\beta;j,H,J):
T\mapsto u_T
\end{equation}
satisfying the following conditions.

\smallskip\noindent{\bf (i)}
The map~\eqref{eq:SIGMA1} is a diffeomorphism onto its image.

\smallskip\noindent{\bf (ii)}
The maps $u_T$ converge to $u$ as $T$ tends to infinity and
$$
v_{i_0}(s,t) = \lim_{T\to\infty}u_T\left(\iota^{\beta,+}_{i_0}(s+T,t)\right)
$$ 
for $s\in\R$ and $0\le t\le1$.
In both cases the convergence is uniform with all 
derivatives on every compact subset of $\Sigma$, 
respectively $\R\times[0,1]$. Moreover, 
$$
\lim_{T\to\infty}
\left(\sup_{\Sigma\setminus\im\,\iota_{i_0}^{\beta,+}}d(u_T,u)
+ \sup_{s\ge0}\sup_{0\le t\le1}
d(u_T\circ\iota_{i_0}^{\beta,+}(s+T,t),v_{i_0}(s,t))\right)
= 0.
$$

\smallskip\noindent{\bf (iii)}
$\cA_H(u_T) = \cA_H(u) + \cA_H(v_{i_0})$ for every $T>T_0$.

\smallskip\noindent{\bf (iv)}
If $u'\in\cM^1(x^\alpha,z^\beta;j,H,J)$ satisfies
$$
\cA_H(u') = \cA_H(u) + \cA_H(v_{i_0})
$$
and
\begin{equation}\label{eq:SIGMA1eps}
\sup_{W_0}d(u',u) < \delta_0,\qquad
\inf_{s\ge0}\sup_{0\le t\le 1}
d(u'\circ\iota^{\beta,+}_{i_0}(s,t),v_{i_0}(s_v,t)) < \delta_0,
\end{equation}
then $u'=u_T$ for some $T>T_0$.
\end{theorem}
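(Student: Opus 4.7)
The plan is to reduce the construction of $u_T$ to an intersection problem in the path manifold $\sP^{3/2}$ at $y:=y^\beta_{i_0}$, and then to invoke the convergence Theorem~\ref{thm:main_thm3}. By naturality (Remark~\ref{rem:ham0}) together with the strip-like coordinates $\iota^{\beta,+}_{i_0}$, the analysis takes place in a fixed Darboux chart around $y$; the monotonicity Theorem~\ref{thm:MON} and the exponential decay of Proposition~\ref{pr:exp-dec} guarantee that for small enough energy all relevant perturbed holomorphic curves are confined to this chart, so that the Hilbert-manifold and Hardy-space machinery of Chapter~\ref{chp:hardy} is directly applicable.

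Concretely, introduce the moduli space $\sM_\Sigma^-$ of finite-energy perturbed holomorphic maps on the truncated surface
\[
\Sigma^-:=\Sigma\setminus\iota^{\beta,+}_{i_0}((0,\infty)\times[0,1]),
\]
with the prescribed asymptotic behaviour at all ends other than $i_0$ and the Lagrangian boundary conditions on $\p\Sigma^-$, together with the stable manifold $\sM^+(z^\beta_{i_0})$ of~\eqref{eq:stab_uns}. An extension of Theorem~\ref{cor:main_thm3} to truncated surfaces with corners realises both as Hilbert manifolds whose evaluation at the cut $\iota^{\beta,+}_{i_0}(\{0\}\times[0,1])$, respectively at $\{0\}\times[0,1]$, is an injective immersion into $\sP^{3/2}$. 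Their product cuts out a submanifold $\sN\subset\sP^{3/2}\times\sP^{3/2}$ containing the pair of paths $p_0$ determined by the broken trajectory $(u,v_{i_0})$, and on the other side the finite-strip Hardy manifold $\sW^T(\sU)$ of Theorem~\ref{thm:main_thm2} and its limit $\sW^\infty(y,\sU)$ live in the same product space. By construction $p_0\in\sN\cap\sW^\infty(y,\sU)$; this intersection is transverse because the Fredholm regularity of $u$ and $v_{i_0}$, combined with the index identity $\mu_H(u)+\mu_H(v_{i_0})=1$, leaves exactly a one-dimensional translation mode for $v_{i_0}$ which will be absorbed by the gluing parameter $T$. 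Theorem~\ref{thm:main_thm3} then gives $C^1$-convergence $\sW^T(\sU)\to\sW^\infty(y,\sU)$, and an application of the inverse function theorem (Lemma~\ref{chlem1}) produces, for every $T>T_0$, a unique intersection point $p_T\in\sN\cap\sW^T(\sU)$ near $p_0$. The curve $u_T$ is assembled from $p_T$ in the obvious way, and the claims~(i)--(iii) follow from the explicit form of the intersection and from the energy identity~\eqref{eq:ENERGYsigma}; the uniqueness statement~(iv) reduces to the local uniqueness of the intersection point once Corollary~\ref{cor:exp_dec} and the monotonicity result are applied to any competitor $u'$ satisfying~\eqref{eq:SIGMA1eps} in order to extract from it an intersection point of $\sN$ with $\sW^T(\sU)$ close to $p_0$.

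The main obstacle is the extension of Theorem~\ref{cor:main_thm3} from strips to the truncated surface~$\Sigma^-$: the linearised Floer operator on $\Sigma^-$ acquires an additional boundary component at the cut and is therefore no longer Fredholm, so its surjectivity (which is what is needed to realise $\sM_\Sigma^-$ as a Hilbert manifold and to deduce that the cut-evaluation is an immersion into $\sP^{3/2}$) must be established by the elliptic-regularity and unique-continuation arguments of Section~\ref{SEC:ELreg}, suitably adapted to the corners of $\Sigma^-$ and to the Lions--Magenes-type function spaces underlying $\sP^{3/2}$. Once this technical extension is in place, Theorems~\ref{thm:main_thm2} and~\ref{thm:main_thm3} deliver the remainder of the gluing construction essentially verbatim, with the role previously played by the half-strip moduli space $\sM^-(x)$ in the $\p^2=0$ setting now played by the surface moduli space $\sM_\Sigma^-$.
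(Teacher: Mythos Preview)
Your overall strategy matches the paper's proof: reduce to an intersection problem in $\sP^{3/2}\times\sP^{3/2}$ between a Hardy manifold $\sW^T(\sU)$ and the product of the evaluation images from a truncated-surface moduli space and a half-strip moduli space, then invoke the $C^1$-convergence of Theorem~\ref{thm:main_thm3}. The extension of Theorem~\ref{cor:main_thm3} to truncated surfaces that you flag as the main technical hurdle is indeed supplied separately in the paper (Section~\ref{sec:truc_cob}).

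There is, however, a genuine gap in how you handle the $\R$-translation on $v_{i_0}$. You take $\sM^+(z^\beta_{i_0})$ from~\eqref{eq:stab_uns} with no energy constraint, and correctly note that the index identity leaves a one-dimensional translation mode. But you then assert that the inverse function theorem yields a \emph{unique} point $p_T\in\sN\cap\sW^T(\sU)$. These two statements are inconsistent: with your $\sN$, the intersection $\sN\cap\sW^\infty(y,\sU)$ is a one-dimensional curve through $p_0$ (tangent to $(0,\p_sv_{i_0}(0,\cdot))$), and $C^1$-convergence then gives a one-dimensional curve $\sN\cap\sW^T(\sU)$ for each $T$, not an isolated point. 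The phrase ``absorbed by the gluing parameter $T$'' does not repair this, since varying $T$ and translating $v_{i_0}$ are independent degrees of freedom in your setup, and the resulting two-parameter family cannot inject into the one-manifold $\cM^1(x^\alpha,z^\beta;j,H,J)$.

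The paper kills the translation mode by replacing the full stable manifold with the energy slice
\[
\sM^+ := \Bigl\{w^+:[-s_0,\infty)\times[0,1]\to M \;\Big|\;\dbar_{J^\beta_{i_0},H^\beta_{i_0}}w^+=0,\;\;E_H(w^+)=\eps_v,\;\;\lim_{s\to\infty}w^+=z^\beta_{i_0}\Bigr\},
\]
where $s_0>0$ is first chosen large enough that the tails of $u$ and $v_{i_0}$ have energy below $\hbar/2$ and land in $\sU$, and $\eps_v$ is the energy of $v_{i_0}$ on $[-s_0,\infty)$. This makes $\sN\cap\sW^\infty$ zero-dimensional at $p_0$, so Theorem~\ref{thm:main_thm3} plus the inverse function theorem give a single $p_T$ for each large $T$. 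The diffeomorphism statement~(i) is then proved by constructing an explicit smooth inverse $T(\cdot)$ on an open subset of $\cM^1(x^\alpha,z^\beta)$ via Lemma~\ref{le:TIME}, and~(iv) is established by a contradiction argument combining Floer--Gromov compactness, unique continuation, and careful energy bookkeeping to force any counterexample sequence back into the image of the gluing map; your appeal to ``local uniqueness'' captures the endgame but skips the compactness step needed to bring the competitor $u'$ into the regime where the intersection picture applies.
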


\begin{proof}
See Section~\ref{sec:PROOF}.
\end{proof}

\bigbreak

\begin{proof}[Proof of Theorem~\ref{thm:cm}]
Abbreviate
$$
\Phi^{\beta\alpha}:
\CF_*(L_0^\alpha,L_1^\alpha)
\to \CF_*(L_0^\beta,L_1^\beta)
$$
for the Floer chain map associated to the Floer data
$(j,H,J)$ via~\eqref{eq:PHIsigma}.
Fix two tuples $x^\alpha,z^\beta$ of critical points
and consider the $1$-dimensional manifold
$\cM^1(x^\alpha,z^\beta;j,H,J)$.  
Suppose $y^\beta$ is another tuple of critical points
and 
$$
u\in\cM^0(x^\alpha,y^\beta;j,H,J),\qquad
v\in\cM^1(y^\beta,z^\beta;H^\beta,J^\beta).
$$
are solutions of the relevant Floer equation as in 
Theorem~\ref{thm:CM}.  Then the image of the gluing
map $T\mapsto u_T$ in Theorem~\ref{thm:CM} 
is an end of the $1$-manifold $\cM^1(x^\alpha,z^\beta;j,H,J)$.  
An analogous result shows that every pair of solutions
$v\in\cM^1(x^\alpha,y^\alpha;H^\alpha,J^\alpha)$
and $u\in\cM^0(y^\alpha,z^\beta;j,H,J)$ also determines 
an end of the $1$-manifold $\cM^1(x^\alpha,z^\beta;j,H,J)$. 
Combining Theorem~\ref{thm:CM} with the standard
compactness theorem in Floer theory and our transversality
assumptions, we find that every sequence
in $\cM^1_\eps(x^\alpha,z^\beta;j,H,J)$, that does not have 
a convergent subsequence, must be contained (after 
eliminating finitely elements of the sequence) 
in the union of the images of these gluing maps.
This shows that the $1$-dimensional manifold 
$\cM^1_\eps(x^\alpha,z^\beta;j,H,J)$ admits a compactification
$\overline{\cM}^1_\eps(x^\alpha,z^\beta;j,H,J)$, which is a compact 
$1$-manifold with boundary
\begin{equation*}
\begin{split}
&
\p\overline{\cM}^1_\eps(x^\alpha,z^\beta;j,H,J) \\
&= \bigcup_{y^\alpha}
\bigcup_\delta
\widehat{\cM}^1_\delta(x^\alpha,y^\beta;H^\alpha,J^\alpha)
\times\cM^0_{\eps-\delta}(y^\beta,z^\beta;j,H,J) \\
&\qquad\cup \bigcup_{y^\beta}
\bigcup_\delta
\cM^0_\delta(x^\alpha,y^\beta;j,H,J)
\times\widehat{\cM}^1_{\eps-\delta}(y^\beta,z^\beta;H^\beta,J^\beta).
\end{split}
\end{equation*}
Since every compact $1$-manifold has an even number 
of boundary points, this implies
\begin{equation*}
\begin{split}
&\sum_{y^\alpha}\sum_\delta
\#\widehat{\cM}^1_\delta(x^\alpha,y^\alpha;H^\alpha,J^\alpha)
\cdot\#\cM^0_{\eps-\delta}(y^\alpha,z^\beta;j,H,J) \\
&- \sum_{y^\beta}\sum_\delta
\#\cM^0_\delta(x^\alpha,y^\beta;j,H,J)
\cdot\#\widehat{\cM}^1_{\eps-\delta}(y^\beta,z^\beta;H^\beta,J^\beta) \\
&\in 2\Z
\end{split}
\end{equation*}
for all $\eps$ and all $x^\alpha$, $z^\beta$. This is equivalent 
to the formula 
$$
\p^\beta\circ\Phi^{\beta\alpha}
=\Phi^{\beta\alpha}\circ\p^\alpha
$$
and proves Theorem~\ref{thm:cm}.
\end{proof}

\subsection*{Chain Homotopy Equivalence}

In this section we prove Theorem~\ref{thm:che}.

\begin{PARA}\label{para:che}\rm
Let 
$$
\cL^\alpha=(L^\alpha_0,L^\alpha_1,H^\alpha,J^\alpha),\qquad
\cL^\beta=(L^\beta_0,L^\beta_1,H^\beta,J^\beta)
$$
be as in~\ref{para:cm} and let $(\Sigma,L)$ 
be a string cobordism from $(L^\alpha_0,L^\alpha_1)$ 
to $(L^\beta_0,L^\beta_1)$. Choose two regular sets
of Floer data $(j_0,H_0,J_0)$ and $(j_1,H_1,J_1)$
on $\Sigma$ from $(H^\alpha,J^\alpha)$ to $(H^\beta,J^\beta)$,
and let $\{j_\lambda,H_\lambda,J_\lambda\}_{0\le\lambda\le1}$
be a regular homotopy of Floer data from
$(j_0,H_0,J_0)$ to $(j_1,H_1,J_1)$ as in~\ref{para:CHE}.
\end{PARA}

\begin{theorem}[{\bf Floer Gluing/Chain Homotopy Equivalence}]
\label{thm:CHE}
Let $(\Sigma,L)$ and $(j_\lambda,H_\lambda,J_\lambda)$ 
be as in~\ref{para:che}. Fix three tuples 
$$
x^\alpha=\{x^\alpha_i\}_{i\in I^\alpha},\qquad
y^\beta=\{y^\beta_i\}_{i\in I^\beta},\qquad
z^\beta=\{z^\beta_i\}_{i\in I^\beta}
$$ 
with $x^\alpha_i\in\cC(L^\alpha_{i0},L^\alpha_{i1};H^\alpha_i)$
and $y^\beta_i,z^\beta_i\in\cC(L^\beta_{i0},L^\beta_{i1};H^\beta_i)$.
Let
$$
0<\lambda<1,\quad
u\in\cM^{-1}(x^\alpha,y^\beta;j_\lambda,H_\lambda,J_\lambda),\quad
v\in\cM^1(y^\beta,z^\beta;H^\beta,J^\beta),
$$
such that $v_i(s,t)=y^\beta_i(t)=z^\beta_i(t)$ for $i\ne i_0$ 
and $\mu_H(v_{i_0})=1$. Fix any nonempty open set 
$W_0\subset\Sigma\setminus\im\iota^{\beta,+}_{i_0}$
and a real number $s_v\in\R$. Then there exist constants 
$T_0>0$ and $\delta_0>0$ and a smooth map
\begin{equation}\label{eq:CHE}
(T_0,\infty)\to\cM^0(x^\alpha,z^\beta;\{j_\lambda,H_\lambda,J_\lambda\}):
T\mapsto(\lambda_T,u_T)
\end{equation}
satisfying the following conditions.

\smallskip\noindent{\bf (i)}
The map~\eqref{eq:CHE} is a diffeomorphism onto its image.

\smallskip\noindent{\bf (ii)}
$u_T$ converges to $u$,
the maps $(s,t)\mapsto u_T\circ\iota^{\beta,+}_{i_0}(s+T,t)$ 
converge to $v_{i_0}$, and $\lambda_T$ converges to $\lambda$ 
as $T$ tends to infinity; in the first two cases cases the convergence 
is uniform with all derivatives on every compact subset 
of $\Sigma$, respectively $\R\times[0,1]$. Moreover,
$$
\lim_{T\to\infty}
\left(\sup_{\Sigma\setminus\im\,\iota_{i_0}^{\beta,+}}d(u_T,u)
+ \sup_{s\ge0}\sup_{0\le t\le1}
  d(u_T\circ\iota_{i_0}^{\beta,+}(s+T,t),v_{i_0}(s,t))\right)
= 0.
$$

\smallskip\noindent{\bf (iii)}
$\cA_H(u_T) = \cA_H(u) + \cA_H(v_{i_0})$ for every $T>T_0$.

\smallskip\noindent{\bf (iv)}
If $0<\lambda'<1$ and 
$u'\in\cM^0(x^\alpha,z^\beta;j_{\lambda'},H_{\lambda'},J_{\lambda'})$ 
satisfies
\begin{equation}\label{eq:CHEeps}
\begin{split}
&\Abs{\lambda'-\lambda}<\delta_0,\qquad
\cA_H(u') = \cA_H(u) + \cA_H(v_{i_0}),\\
&\sup_{W_0}d(u',u) < \delta_0,\qquad
\inf_{s\ge0}\sup_{0\le t\le 1}
d(u'\circ\iota^{\beta,+}_{i_0}(s,t),v_{i_0}(s_v,t)) < \delta_0,
\end{split}
\end{equation}
then $(\lambda',u')=(\lambda_T,u_T)$ for some $T>T_0$.
\end{theorem}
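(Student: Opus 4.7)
The proof will follow the same strategy as the proofs of Theorems \ref{thm:BO} and \ref{thm:CM} (as sketched in Section \ref{sec:PROOF}), reducing Floer gluing to intersection theory of nonlinear Hardy submanifolds in a Hilbert path space via Theorem \ref{thm:main_thm3}. The new feature here is that the homotopy parameter $\lambda$ is treated as an additional unknown to be solved for simultaneously with the glued map. Thus the gluing map lands in the parametrized moduli space $\cM^0(x^\alpha, z^\beta; \{j_\lambda, H_\lambda, J_\lambda\})$ of dimension $-1+1+1=1$.

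First I would localize near the intermediate Hamiltonian path $y := y^\beta_{i_0}$. Since $v_{i_0}$ decays exponentially as $s \to -\infty$ to $y^\beta_{i_0}$ and $u\circ\iota^{\beta,+}_{i_0}$ decays exponentially as $s\to +\infty$ to the same path, Proposition \ref{pr:exp-dec} and Theorem \ref{thm:MON} let me trap the relevant pieces of both trajectories in a prescribed $\sP^{3/2}$-neighborhood $\sU$ of $y$. Working in a Darboux--Weinstein chart adapted to $y$ as in chapter \ref{chp:hardy}, I set up two families of submanifolds of $\sP^{3/2}\times\sP^{3/2}$:
\begin{itemize}
\item a family $\sW^T_\lambda$ of nonlinear Hardy submanifolds coming from finite-strip solutions of the $\lambda$-dependent Floer equation on the truncated end $\iota^{\beta,+}_{i_0}([-T,T]\times[0,1])$, parametrized by $(T,\lambda)$;
\item the ``outer'' submanifold $\sN$ given by the product of the unstable piece (solutions of \eqref{eq:FLOERsigmaCHT} on $\Sigma\setminus\iota^{\beta,+}_{i_0}([-T,\infty)\times[0,1])$ ending at $y$, with $\lambda$ free) and the stable piece (half-infinite $(H^\beta_{i_0},J^\beta_{i_0})$-holomorphic strips converging to $z^\beta_{i_0}$).
\end{itemize}
Assertion (b) of Theorem \ref{cor:main_thm3} and its analogue for $\sN$ (which requires the index computation including the $\p_\lambda$ direction) identify both sets with embedded Hilbert submanifolds via restriction to the cut $\{\pm T\}\times[0,1]$.

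The broken trajectory $(u,v_{i_0})$ corresponds to an intersection point of $\sW^\infty_\lambda$ with $\sN$ at parameter value $\lambda$. Here is where the index arithmetic enters: because $\mu_H(u)=-1$, the ordinary vertical differential $D_u$ is not surjective, but the regularity of the homotopy $\{j_\lambda,H_\lambda,J_\lambda\}$ forces the augmented operator $(D_u,\partial_\lambda)$ to be surjective with one-dimensional kernel. Translated into the Hilbert setting, this says that $\bigcup_\lambda\{\lambda\}\times\sN$ meets $\bigcup_\lambda\{\lambda\}\times\sW^\infty_\lambda$ transversally in a zero-dimensional slice inside $(0,1)\times\sP^{3/2}\times\sP^{3/2}$, with the isolated intersection point being $(\lambda,u,v_{i_0})(0,\cdot)$. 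By Theorem \ref{thm:main_thm3} applied uniformly in $\lambda$ on a compact neighborhood of the given $\lambda$, the family $\sW^T_\lambda$ converges to $\sW^\infty_\lambda$ in the $C^1$-topology, so the transverse intersection persists for $T$ sufficiently large and produces a unique nearby intersection point $(\lambda_T, *)$. This defines the map $T\mapsto(\lambda_T,u_T)$. Property (i) follows because varying $T$ moves the intersection point in a direction transverse to the ambient slices; property (ii) follows from $C^1$-convergence of the Hardy submanifolds together with exponential decay of $u$ and $v_{i_0}$; property (iii) is the energy/action identity for the ends; and (iv) is the uniqueness statement from the inverse function theorem applied in the Hardy--space intersection setup, exactly as in Theorems \ref{thm:BO}(iv) and \ref{thm:CM}(iv).

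The main obstacle will be the parametric transversality/surjectivity analysis that underpins the whole construction: one must verify that the tangent spaces of $\sN$ and of the one-parameter family of Hardy submanifolds $\bigcup_\lambda\{\lambda\}\times\sW^\infty_\lambda$ together span the ambient tangent space at the broken configuration, despite $D_u$ alone having a one-dimensional cokernel. This reduces, by the standard identification of the tangent space of a Hardy submanifold with boundary values of kernel elements, to showing that every element of $\coker D_u$ is detected by the $\partial_\lambda$-variation of the homotopy; this is guaranteed by the regularity of $\{j_\lambda,H_\lambda,J_\lambda\}$. Once this linear statement is in hand, the rest of the argument is a mechanical adaptation of the Hardy--space gluing scheme of chapter \ref{chp:hardy}, uniform in the parameter $\lambda$.
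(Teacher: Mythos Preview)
Your overall strategy is correct and matches the paper's, but you have misplaced the $\lambda$-dependence in a way that overcomplicates the argument. The Floer data on the strip-like end $\iota^{\beta,+}_{i_0}$ is the fixed regular pair $(H^\beta_{i_0},J^\beta_{i_0})$, independent of $\lambda$ (see condition~(c) in~\ref{para:StringFloerData}, which holds for each $\lambda$). Consequently the finite- and infinite-strip Hardy submanifolds $\sW^T$ and $\sW^\infty$ near $y^\beta_{i_0}$ are \emph{exactly} the ones used in the proof of Theorem~\ref{thm:CM}; they do not depend on $\lambda$, and there is no need to invoke Theorem~\ref{thm:main_thm3} ``uniformly in $\lambda$.''

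The paper exploits this by absorbing $\lambda$ into the outer piece: one simply redefines $\sM^-$ to consist of pairs $(\lambda,w^-)$ with $\lambda\in[0,1]$ and $w^-:\Sigma_0\to M$ a solution of~\eqref{eq:FLOERsigma} for $(j_\lambda,H_\lambda,J_\lambda)$, and sets $\iota^-(\lambda,w^-):=w^-(\iota^{\beta,+}_{i_0}(s_0,\cdot))$. The extra tangent direction $\p_\lambda$ lives entirely in $T\sM^-$, and regularity of the homotopy says precisely that the augmented map $\iota^-\times\iota^+:\sM^-\times\sM^+\to\sP\times\sP$ is transverse to $\sW^\infty$ at the broken configuration, with zero-dimensional intersection. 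After this single modification the proof is verbatim that of Theorem~\ref{thm:CM}. Your setup in $(0,1)\times\sP\times\sP$ with a product $(0,1)\times\sW^T$ is equivalent once you recognize $\sW^T_\lambda=\sW^T$, but the paper's framing makes the reduction to Theorem~\ref{thm:CM} transparent and avoids the parametric-convergence issue you flagged as the main obstacle. (Also note a minor domain slip: $\iota^{\beta,+}_{i_0}$ is defined on $\R^+\times[0,1]$, so expressions like $\iota^{\beta,+}_{i_0}([-T,T]\times[0,1])$ need adjusting.)
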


\begin{proof}
See Section~\ref{sec:PROOF}.
\end{proof}

\bigbreak

\begin{proof}[Proof of Theorem~\ref{thm:che}]
Fix two tuples $x^\alpha,z^\beta$ of critical points
and consider the $1$-dimensional manifold
$\cM^0(x^\alpha,z^\beta;\{j_\lambda,H_\lambda,J_\lambda\}_\lambda)$.  
Suppose $y^\beta$ is another tuple of critical points
and 
$$
0<\lambda<1,\quad
u\in\cM^{-1}(x^\alpha,y^\beta;j_\lambda,H_\lambda,J_\lambda),\quad
v\in\cM^1(y^\beta,z^\beta;H^\beta,J^\beta).
$$
are solutions of the relevant Floer equation as in 
Theorem~\ref{thm:CHE}.  Then the image of the gluing
map $T\mapsto u_T$ in Theorem~\ref{thm:CHE} 
is an end of the $1$-manifold 
$\cM^0(x^\alpha,z^\beta;\{j_\lambda,H_\lambda,J_\lambda\}_\lambda)$.  
In a similar way, every pair of solutions 
${v\in\cM^1(x^\alpha,y^\alpha;H^\alpha,J^\alpha)}$ and 
${u\in\cM^{-1}(y^\alpha,z^\beta;j_\lambda,H_\lambda,J_\lambda)}$ 
with $0<\lambda<1$ also determines an end of the $1$-manifold 
$\cM^0(x^\alpha,z^\beta;\{j_\lambda,H_\lambda,J_\lambda\}_\lambda)$.  
Combining Theorem~\ref{thm:CHE} with the standard
compactness theorem in Floer theory and our transversality
assumptions, we find that every sequence in 
$\cM^0_\eps(x^\alpha,z^\beta;\{j_\lambda,H_\lambda,J_\lambda\}_\lambda)$
that does not have a convergent subsequence, is 
contained (after eliminating finitely elements of the sequence) 
in the union of the images of these gluing maps.
This shows that the $1$-dimensional manifold 
$\cM^0_\eps(x^\alpha,z^\beta;
\{j_\lambda,H_\lambda,J_\lambda\}_\lambda)$
admits a compactification which is a compact $1$-manifold 
with boundary, denoted by $\overline{\cM}^0_\eps(x^\alpha,z^\beta;
\{j_\lambda,H_\lambda,J_\lambda\}_\lambda)$, whose boundary 
is given by
\begin{equation*}
\begin{split}
&
\p\overline{\cM}^0_\eps(x^\alpha,z^\beta;
\{j_\lambda,H_\lambda,J_\lambda\}_\lambda)\\
&=
\cM^0_\eps(x^\alpha,z^\beta;j_0,H_0,J_0)
\cup \cM^0_\eps(x^\alpha,z^\beta;j_1,H_1,J_1) \\
&\qquad
\cup\bigcup_{y^\alpha}\bigcup_\delta
\widehat{\cM}^1_\delta(x^\alpha,y^\alpha;H^\alpha,J^\alpha)
\times\cM^{-1}_{\eps-\delta}
(y^\alpha,z^\beta;\{j_\lambda,H_\lambda,J_\lambda\}_\lambda) \\
&\qquad
\cup\bigcup_{y^\beta}\bigcup_\delta
\cM^{-1}_\delta(x^\alpha,y^\beta;\{j_\lambda,H_\lambda,J_\lambda\}_\lambda)
\times\widehat{\cM}^1_{\eps-\delta}(y^\beta,z^\beta;H^\beta,J^\beta).
\end{split}
\end{equation*}
Since every compact $1$-manifold has an even number 
of boundary points, this implies
\begin{equation*}
\begin{split}
&\#\cM^0_\eps(x^\alpha,z^\beta;j_1,H_1,J_1)
- \#\cM^0_\eps(x^\alpha,z^\beta;j_0,H_0,J_0) \\
&- \sum_{y^\alpha}\sum_\delta
\#\widehat{\cM}^1_\delta(x^\alpha,y^\alpha;H^\alpha,J^\alpha)
\cdot\#\cM^{-1}_{\eps-\delta}(y^\alpha,z^\beta;
\{j_\lambda,H_\lambda,J_\lambda\}_\lambda) \\
&- \sum_{y^\beta}\sum_\delta
\#\cM^{-1}_\delta(x^\alpha,y^\beta;
\{j_\lambda,H_\lambda,J_\lambda\}_\lambda)
\cdot\#\widehat{\cM}^1_{\eps-\delta}(y^\beta,z^\beta;H^\beta,J^\beta) \\
&\in 2\Z
\end{split}
\end{equation*}
for all $\eps$ and all $x^\alpha$, $z^\beta$. This is equivalent 
to the formula 
$$
\Phi^{\beta\alpha}_1-\Phi^{\beta\alpha}_0
=\p^\beta\circ\Psi^{\beta\alpha}
+\Psi^{\beta\alpha}\circ\p^\alpha
$$
and proves Theorem~\ref{thm:che}.
\end{proof}

\subsection*{Catenation}

In this section we prove Theorem~\ref{thm:cat}.

\begin{PARA}\label{para:cat1}\rm
Let $(M,\om)$ be a compact symplectic manifold.
Fix three objects  
$$
(L^\nu_0,L^\nu_1)
=\bigl\{L^\nu_{i0},L^\nu_{i1}
\bigr\}_{i\in I^\nu},\qquad
\nu=\alpha,\beta,\gamma,
$$
in $\sL(M,\om)$ and three collections
$$
(H^\nu,J^\nu)
=\bigl\{H^\nu_i,J^\nu_i\bigr\}_{i\in I^\nu},\qquad
\nu=\alpha,\beta,\gamma,
$$
such that $(H^\nu_i,J^\nu_i)$ is a regular pair
for $(L^\nu_{i0},L^\nu_{i1})$ when $i\in I^\nu$. 
Choose regular framed string cobordisms 
$$
\cS^{\alpha\beta}=(\Sigma^{\alpha\beta},L^{\alpha\beta},
j^{\alpha\beta},H^{\alpha\beta},J^{\alpha\beta})
$$ 
from
$(L^\alpha_0,L^\alpha_1,H^\alpha,J^\alpha)$
to $(L^\beta_0,L^\beta_1,H^\beta,J^\beta)$
and 
$$
\cS^{\beta\gamma}=(\Sigma^{\beta\gamma},L^{\beta\gamma},
j^{\beta\gamma},H^{\beta\gamma},J^{\beta\gamma})
$$ 
from
$(L^\beta_0,L^\beta_1,H^\beta,J^\beta)$ to
$(L^\gamma_0,L^\gamma_1,H^\gamma,J^\gamma)$
(see~\ref{para:StringFloerData} 
and~\ref{para:StringRegFloerData}).
For $T>0$ denote by 
$
(\Sigma^{\alpha\gamma}_T,L^{\alpha\gamma}_T)
$
the $T$-catenation of the string cobordisms 
$(\Sigma^{\alpha\beta},L^{\alpha\beta})$ and 
$(\Sigma^{\beta\gamma},L^{\beta\gamma})$
(see Definition~\ref{def:LPAIR}).
This catenation is equipped with Floer data
$$
(j^{\alpha\gamma}_T,H^{\alpha\gamma}_T,J^{\alpha\gamma}_T),
$$
defined by restricting the Floer data in 
$(j^{\alpha\beta},H^{\alpha\beta},J^{\alpha\beta})$
to $(\Sigma^{\alpha\beta}_{2T},L^{\alpha\beta}_{2T})$
and restricting the Floer data
$(j^{\beta\gamma},H^{\beta\gamma},J^{\beta\gamma})$
to $(\Sigma^{\beta\gamma}_{2T},L^{\beta\gamma}_{2T})$
(see equation~\eqref{eq:SigmaT}).
\end{PARA}

\begin{PARA}\label{para:cat2}\rm
Next we formulate a gluing theorem that holds 
uniformly for Hamiltonian perturbations in 
neighborhoods of $H^{\alpha\beta}$ and $H^{\beta\gamma}$.
Thus we denote by $\cH^{\alpha\beta}$ the space of 
smooth $1$-forms
$$
h^{\alpha\beta}:T\Sigma^{\alpha\beta}\to\Om^0(M)
$$
with support in the compact set
$$
\Sigma^{\alpha\beta}\setminus
(\bigcup_{i\in I^\alpha}\im\,\iota_i^{\alpha,-}
\cup\bigcup_{i\in I^\beta}\im\,\iota_i^{\beta,+}).
$$
Likewise, we denote by $\cH^{\beta\gamma}$ the space of 
smooth $1$-forms
$$
h^{\beta\gamma}:T\Sigma^{\beta\gamma}\to\Om^0(M)
$$
with support in the compact set
$$
\Sigma^{\beta\gamma}\setminus
(\bigcup_{i\in I^\beta}\im\,\iota_i^{\beta,-}
\cup\bigcup_{i\in I^\gamma}\im\,\iota_i^{\gamma,+}).
$$
\end{PARA}

\begin{theorem}[{\bf Floer Gluing/Catenation}]\label{thm:CAT}
Let $(\Sigma^{\alpha\beta},L^{\alpha\beta})$,
$(\Sigma^{\beta\gamma},L^{\beta\gamma})$,
and $(j^{\alpha\beta},H^{\alpha\beta},J^{\alpha\beta})$,
$(j^{\beta\gamma},H^{\beta\gamma},J^{\beta\gamma})$
be as in~\ref{para:cat1}. Fix three tuples
$$
x^\alpha=\{x^\alpha_i\}_{i\in I^\alpha},\qquad
x^\beta=\{x^\beta_i\}_{i\in I^\beta},\qquad
x^\gamma=\{x^\gamma_i\}_{i\in I^\gamma}
$$ 
with $x^\nu_i\in\cC(L^\nu_{i0},L^\nu_{i1};H^\nu_i)$
for $i\in I^\nu$ and $\nu=\alpha,\beta,\gamma$, and let 
$$
u^{\alpha\beta}\in\cM^0(x^\alpha,x^\beta;
j^{\alpha\beta},H^{\alpha\beta},J^{\alpha\beta}),\qquad
u^{\beta\gamma}\in\cM^0(x^\beta,x^\gamma;
j^{\beta\gamma},H^{\beta\gamma},J^{\beta\gamma}).
$$
Fix two nonempty open sets
$$
W^{\alpha\beta}\subset\Sigma^{\alpha\beta}
\setminus\im\,\iota^{\beta,+},\qquad
W^{\beta\gamma}\subset\Sigma^{\beta\gamma}
\setminus\im\,\iota^{\beta,-},
$$
where $\im\,\iota^{\beta,\pm}
:=\bigcup_{i\in I^\beta}\im\,\iota^{\beta,\pm}_i$.
Then there exist a convex open neighborhood 
$\cH_0\subset\cH^{\alpha\beta}\times\cH^{\beta\gamma}$
of the origin, constants $T_0>0$ and $\delta_0>0$,
smooth families
\begin{equation*}
\begin{split}
&u^{\alpha\beta}_h\in\cM^0(x^\alpha,x^\beta;
j^{\alpha\beta},H^{\alpha\beta}+h^{\alpha\beta},J^{\alpha\beta}),\\
&u^{\beta\gamma}_h\in\cM^0(x^\beta,x^\gamma;
j^{\beta\gamma},H^{\beta\gamma}+h^{\beta\gamma},J^{\beta\gamma}),
\end{split}
\end{equation*}
(parametrized by $h=(h^{\alpha\beta},h^{\beta\gamma})\in\cH_0$),
and a smooth family
\begin{equation}\label{eq:CAT}
u_{h,T}\in\cM^0(x^\alpha,x^\gamma;
j^{\alpha\gamma}_T,(H+h)^{\alpha\gamma}_T,J^{\alpha\gamma}_T),\qquad 
h\in\cH_0,\;T > T_0,
\end{equation}
satisfying the following conditions.

\smallskip\noindent{\bf (i)}
For $h\in\cH_0$ and $T\ge T_0$ the solutions 
$u^{\alpha\beta}_h:\Sigma^{\alpha\beta}\to M$, 
$u^{\beta\gamma}_h:\Sigma^{\beta\gamma}\to M$ and
$u_{h,T}:\Sigma^{\alpha\gamma}_T\to M$ 
of the Floer equation are regular in the sense that
the linearized operators are bijective.
Moreover, $u^{\alpha\beta}_0=u^{\alpha\beta}$ 
and $u^{\beta\gamma}_0=u^{\beta\gamma}$. 

\smallskip\noindent{\bf (ii)}
For every $h\in\cH_0$ the maps $u^{\alpha\gamma}_{h,T}$ 
converge to $u^{\alpha\beta}_h$, uniformly with all 
derivatives on every compact subset of $\Sigma^{\alpha\beta}$, 
and they converge to $u^{\beta\gamma}_h$,
uniformly with all derivatives on every compact 
subset of $\Sigma^{\beta\gamma}$ (as $T$ tends to infinity). 
Moreover, 
$$
\lim_{T\to\infty}\sup_{h\in\cH_0}
\left(
\sup_{\Sigma^{\alpha\beta}\setminus\im\,\iota^{\beta,+}}
d(u^{\alpha\gamma}_{h,T},u^{\alpha\beta}_h)
+ \sup_{\Sigma^{\beta\gamma}\setminus\im\,\iota^{\beta,-}}
d(u^{\alpha\gamma}_{h,T},u^{\beta\gamma}_h)
\right)
=0.
$$

\smallskip\noindent{\bf (iii)}
For $h\in\cH_0$ and $T\ge T_0$ we have
$$
\cA_H(u^{\alpha\gamma}_{h,T}) 
= \cA_H(u^{\alpha\beta}_h) 
+ \cA_H(u^{\beta\gamma}_h).
$$ 

\smallskip\noindent{\bf (iv)}
If $h\in\cH_0$, $T\ge T_0$, and $u'\in\cM^0(x^\alpha,x^\gamma;
j^{\alpha\gamma}_T,(H+h)^{\alpha\gamma}_T,J^{\alpha\gamma}_T)$
satisfy
\begin{equation}\label{eq:CATeps}
\begin{split}
&\cA_H(u') = \cA_H(u^{\alpha\beta}_h) + \cA_H(u^{\beta\gamma}_h), \\
&\sup_{W^{\alpha\beta}}d(u',u^{\alpha\beta}_h) < \delta_0,\qquad
\sup_{W^{\beta\gamma}}d(u',u^{\beta\gamma}_h) < \delta_0,
\end{split}
\end{equation}
then $u'=u^{\alpha\gamma}_{h,T}$.
\end{theorem}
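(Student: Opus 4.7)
The plan is to adapt the Hardy-space intersection argument used for Theorems~\ref{thm:BO}, \ref{thm:CM}, and~\ref{thm:CHE} (whose proofs occupy Section~\ref{sec:PROOF}) to a gluing problem with several intermediate ends indexed by $I^\beta$ and an auxiliary perturbation parameter $h = (h^{\alpha\beta}, h^{\beta\gamma})$. By naturality (Remark~\ref{rem:ham0}) I reduce each intermediate end to the case of a zero Hamiltonian, so that each $x^\beta_i$ becomes a constant path at a transverse intersection point $p_i \in L^\beta_{i0} \cap L^\beta_{i1}$. The monotonicity Theorem~\ref{thm:MON} together with the exponential decay of Proposition~\ref{pr:exp-dec} then confines small-energy Floer solutions close to $p_i$ to a fixed local Darboux chart, so that the main analysis can be carried out in $\R^{2n}$ using the framework of Chapter~\ref{chp:hardy}.

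First I construct the perturbed solutions $u^{\alpha\beta}_h, u^{\beta\gamma}_h$. Since $u^{\alpha\beta}$ and $u^{\beta\gamma}$ lie in regular zero-dimensional moduli spaces, their linearized Floer operators are invertible on the appropriate Hilbert spaces of $W^{2,2}$ maps with prescribed exponential asymptotics at the strip-like ends. The implicit function theorem provides a convex neighborhood $\cH_0$ of the origin and smooth families $h \mapsto u^{\alpha\beta}_h, u^{\beta\gamma}_h$ satisfying property~(i); after possibly shrinking $\cH_0$, the exponential decay rates at each $\beta$-end are uniform in $h \in \cH_0$.

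For each $i \in I^\beta$, Theorems~\ref{thm:main_thm2} and~\ref{thm:main_thm3} furnish Hardy submanifolds $\sW^\infty(p_i, \sU_i)$ and $\sW^T(\sU_i)$ of $\sP^{3/2} \times \sP^{3/2}$, together with the $C^1$ convergence $\sW^T(\sU_i) \to \sW^\infty(p_i, \sU_i)$ as $T \to \infty$. On the opposite side, restriction of $u^{\alpha\beta}_h$ and $u^{\beta\gamma}_h$ to the $\beta$-ends (via Theorem~\ref{cor:main_thm3}, part~(b)) gives a smooth, finite-dimensional family parametrized by $h \in \cH_0$ inside $\prod_{i \in I^\beta}(\sP^{3/2} \times \sP^{3/2})$, tracing out a submanifold $\sN$. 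At $T = \infty$ the point corresponding to $h$ lies in $\sN \cap \prod_i \sW^\infty(p_i, \sU_i)$; transversality of this intersection is exactly the statement that the linearized operators at $u^{\alpha\beta}_h$ and $u^{\beta\gamma}_h$ are surjective, which holds by regularity and the implicit function theorem step. The $C^1$ convergence $\sW^T(\sU_i) \to \sW^\infty(p_i, \sU_i)$, combined with the inverse function theorem applied uniformly in the two parameters $(h, T)$, then yields for every $T > T_0$ a unique nearby intersection point. This point encodes the $\beta$-end boundary data of a Floer solution $u^{\alpha\gamma}_{h,T}$ on $\Sigma^{\alpha\gamma}_T$; the full solution is reconstructed by elliptic regularity from its restrictions to $\Sigma^{\alpha\beta}_{2T}$ and $\Sigma^{\beta\gamma}_{2T}$. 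Property~(iii) follows from the additivity of $\int u^*\om$ over the catenated pieces together with the vanishing of the Hamiltonian on each $\beta$-end.

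The main obstacle is obtaining the $C^1$ convergence of Hardy manifolds and the quadratic estimates of Chapter~\ref{chp:hardy} \emph{uniformly} in both parameters $(h, T) \in \cH_0 \times (T_0, \infty)$. Uniformity in $T$ is precisely the content of Theorem~\ref{thm:main_thm3}, while uniformity in $h$ follows from the smooth dependence of $u^{\alpha\beta}_h, u^{\beta\gamma}_h$ on $h$ together with compactness of a slightly smaller neighborhood $\overline{\cH_0'} \subset \cH_0$ (over which one re-runs the argument). The uniqueness statement~(iv) I would prove by contradiction: a sequence of counterexamples $u'_n$ with parameters $(h_n, T_n)$, $T_n \to \infty$, satisfying~\eqref{eq:CATeps} but not equal to $u^{\alpha\gamma}_{h_n, T_n}$, would by monotonicity and exponential decay produce, after passing to a subsequence, a second intersection point in $\sN \cap \prod_i \sW^{T_n}(\sU_i)$ near the one defining $u^{\alpha\gamma}_{h_n, T_n}$, in contradiction with the local uniqueness given by the inverse function theorem.
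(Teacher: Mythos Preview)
There is a genuine gap in your construction of the glued solution. Your submanifold $\sN\subset\prod_i(\sP^{3/2}\times\sP^{3/2})$, defined as the image of $h\mapsto(u^{\alpha\beta}_h|_{\beta\text{-ends}},\,u^{\beta\gamma}_h|_{\beta\text{-ends}})$, is the wrong object to intersect with the Hardy submanifolds. For every $h$ the boundary paths $u^{\alpha\beta}_h\circ\iota^{\beta,+}_i(s_0,\cdot)$ and $u^{\beta\gamma}_h\circ\iota^{\beta,-}_i(-s_0,\cdot)$ already extend holomorphically to the half-infinite strips (by $u^{\alpha\beta}_h,\,u^{\beta\gamma}_h$ themselves, which converge to $x^\beta_i$), so in fact $\sN\subset\sW^\infty$. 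Hence ``$\sN\cap\sW^\infty$'' is all of $\sN$ and carries no transversality content, and $\sN\cap\sW^T$ does not produce a glued solution: for fixed $h$ the map $u^{\alpha\gamma}_{h,T}$ restricted to the truncated piece $\Sigma^{\alpha\beta}_0$ is \emph{not} $u^{\alpha\beta}_h|_{\Sigma^{\alpha\beta}_0}$ but a genuinely different nearby Floer solution on $\Sigma^{\alpha\beta}_0$ with free boundary at the $\beta$-end. (Also, $\cH_0$ is infinite-dimensional, so $\sN$ is not a finite-dimensional family.)

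The missing ingredient is exactly the content of Section~\ref{sec:truc_cob}. For each fixed $h$ one must introduce the \emph{infinite-dimensional} Hilbert manifolds $\sM^{\alpha\beta}_h$ and $\sM^{\beta\gamma}_h$ of Floer solutions on the truncated surfaces $\Sigma^{\alpha\beta}_0,\,\Sigma^{\beta\gamma}_0$ with \emph{free} (non-Lagrangian) boundary along the $\beta$-cuts and prescribed asymptotics at all other ends, together with the restriction immersions $\iota^{\alpha\beta}_h\times\iota^{\beta\gamma}_h:\sM^{\alpha\beta}_h\times\sM^{\beta\gamma}_h\to\sP\times\sP$. It is this infinite-dimensional image (not contained in $\sW^\infty$) that intersects $\sW^\infty$ transversally in the isolated Fredholm-index-zero point corresponding to $(u^{\alpha\beta}_h,u^{\beta\gamma}_h)$; Theorem~\ref{thm:main_thm3} then yields a unique nearby transverse intersection with $\sW^T$, and the resulting truncated pieces together with the finite-strip extensions supplied by $\sW^T$ assemble into $u^{\alpha\gamma}_{h,T}$. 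The perturbation $h$ enters only as an external parameter over which the implicit function theorem is applied uniformly after shrinking $\cH_0$. Your outline for~(iii) and the contradiction argument for~(iv) are otherwise on the right track.
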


\begin{proof}
See Section~\ref{sec:PROOF}.
\end{proof}

\bigbreak

\begin{proof}[Proof of Theorem~\ref{thm:cat}]
On the chain level the composition 
$$
\Phi^{\gamma\beta}\circ\Phi^{\beta\alpha}:
\CF_*(L_0^\alpha,L_1^\alpha;H^\alpha)\to 
\CF_*(L_0^\gamma,L_1^\gamma;H^\gamma)
$$
is given by
$$
\Phi^{\gamma\beta}\Phi^{\beta\alpha}x^\alpha
= \sum_{x^\gamma}\sum_{\eps}n^{\alpha\gamma}_\eps(x^\alpha,x^\gamma)
e^{-\eps}x^\gamma,
$$
where the number 
$n^{\alpha\gamma}_\eps(x^\alpha,x^\gamma)\in\Z/2\Z$ is defined by
\begin{equation}\label{eq:Phialga}
\begin{split}
n^{\alpha\gamma}_\eps(x^\alpha,x^\gamma)
&:= \sum_\delta\sum_{x^\beta}n^{\alpha\beta}_\delta(x^\alpha,x^\beta)
n^{\beta\gamma}_{\eps-\delta}(x^\beta,x^\gamma),\\
n^{\alpha\beta}_\delta(x^\alpha,x^\beta)
&:=
\#_2\cM^0_\delta(x^\alpha,x^\beta;
j^{\alpha\beta},H^{\alpha\beta},J^{\alpha\beta}),\\
n^{\beta\gamma}_{\eps-\delta}(x^\beta,x^\gamma)
&:=
\#_2\cM^0_{\eps-\delta}(x^\beta,x^\gamma;
j^{\beta\gamma},H^{\beta\gamma},J^{\beta\gamma}).
\end{split}
\end{equation}
We prove in four steps that 
$\Phi^{\gamma\beta}\circ\Phi^{\beta\alpha}$
is chain homotopy equivalent to the Floer chain map
$\Phi^{\gamma\alpha}$ associated to regular Floer data 
on the catenation of the string cobordisms $\Sigma^{\alpha\beta}$
and $\Sigma^{\beta\gamma}$. 

\medskip\noindent{\bf Step~1.}
{\it Fix a sequence of real numbers
$
c_0<c_1<c_2<\cdots
$
diverging to $\infty$. Then there exist sequences of real numbers $T_\nu>0$
and $\delta_\nu>0$ and a sequence of convex open neighborhoods 
$\cH_\nu\subset\cH^{\alpha\beta}\times\cH^{\beta\gamma}$
of the origin satisfying the following conditions.}

\smallskip\noindent{\bf (a)}
{\it For every $\nu\in\N_0$ we have $T_\nu<T_{\nu+1}$, 
$\delta_{\nu+1}<\delta_\nu$, and $\cH_{\nu+1}\subset\cH_\nu$. 
Moreover, $T_\nu$ diverges to infinity.}

\smallskip\noindent{\bf (b)}
{\it The assertions of Theorem~\ref{thm:CAT} hold 
with $\cH_0,T_0,\delta_0$ replaced by $\cH_\nu,T_\nu,\delta_\nu$
for all $x^\alpha,x^\beta,x^\gamma$ and all 
$u^{\alpha\beta}$ and $u^{\beta\gamma}$
such that}
$$
\cA_H(u^{\alpha\beta})+\cA_H(u^{\beta\gamma})\le c_\nu.\
$$

\smallskip\noindent{\bf (c)}
{\it For all $x^\alpha,x^\gamma$, all $\eps\le c_\nu$,
all $h\in\cH_\nu$, and all $T\ge T_\nu$ the map
\begin{equation}\label{eq:CATh}
\begin{split}
&\bigcup_\delta\bigcup_{x^\beta}
\cM^0_\delta(x^\alpha,x^\beta;
j^{\alpha\beta},H^{\alpha\beta},J^{\alpha\beta})
\times\cM^0_{\eps-\delta}(x^\beta,x^\gamma;
j^{\beta\gamma},H^{\beta\gamma},J^{\beta\gamma}) \\
&\to 
\cM^0_\eps(x^\alpha,x^\gamma;
j^{\alpha\gamma}_T,(H+h)^{\alpha\gamma}_T,
J^{\alpha\gamma}_T):
(u^{\alpha\beta},u^{\beta\gamma})\mapsto u^{\alpha\gamma}_{h,T}
\end{split}
\end{equation}
of Theorem~\ref{thm:CAT} is bijective.}

\smallskip\noindent{\bf (d)}
{\it For all $x^\alpha,x^\gamma$, all $\eps\le c_\nu$,
all $h\in\cH_\nu$, and all $T\ge T_\nu$ we have}
$$
\cM^{-1}_\eps(x^\alpha,x^\gamma;
j^{\alpha\gamma}_T,(H+h)^{\alpha\gamma}_T,J^{\alpha\gamma}_T)
= \emptyset.
$$

\medskip\noindent
The proof is by induction on $\nu$.  For $\nu=0$ it follows from
Theorem~\ref{thm:CAT} that $\cH_0,T_0,\delta_0$ can be chosen
such that~(b) holds (because there are only finitely 
many pairs $(u^{\alpha\beta},u^{\beta\gamma})$ as in Theorem~\ref{thm:CAT}
with $\cA_H(u^{\alpha\beta})+\cA_H(u^{\beta\gamma})\le c_0$).  
After shrinking $\cH_0$ and increasing $T_0$, if necessary, 
assertion~(b) continues to be valid and we claim 
that~(c) and~(d) hold as well.  

We prove this first for~(d). Suppose otherwise.  
Then there exist critical points $x^\alpha,x^\gamma$, 
a sequence of Hamiltonian perturbations $h_k\in \cH_0$, 
and a sequence of real numbers $T_k>T_0$, 
such that $h_k$ converges to zero in the $\Cinf$-topology, 
$T_k$ diverges to $\infty$, and for every $k$
$$
\bigcup_{\eps\le c_0}\cM^{-1}_\eps(x^\alpha,x^\gamma;
j^{\alpha\gamma}_{T_k},(H+h_k)^{\alpha\gamma}_{T_k},
J^{\alpha\gamma}_{T_k})
\ne \emptyset.
$$
Choose a sequence of pairs $(\eps_k,u_k)$ such that
$$
u^{\alpha\gamma}_k\in \cM^{-1}_{\eps_k}(x^\alpha,x^\gamma;
j^{\alpha\gamma}_{T_k},(H+h_k)^{\alpha\gamma}_{T_k},
J^{\alpha\gamma}_{T_k}),\qquad \eps_k\le c_0.
$$
Then the standard Floer--Gromov compactness theorem 
asserts that a suitable subsequence of of $u^{\alpha\gamma}_k$
converges, modulo bubbling, to a catenation of 
finitely many Floer trajectories for $(H^\alpha,J^\alpha)$ 
running from $x^\alpha$ to some critical point $y^\alpha$,
an element of $\cM(y^\alpha,y^\beta;
j^{\alpha\beta},H^{\alpha\beta},J^{\alpha\beta})$
for some critical point $y^\beta$,
finitely many Floer trajectories for $(H^\beta,J^\beta)$ 
running from $y^\beta$ to some critical point $z^\beta$,
an element of $\cM(z^\beta,z^\gamma;
j^{\beta\gamma},H^{\beta\gamma},J^{\beta\gamma})$
for some critical point $z^\gamma$, and finitely 
many Floer trajectories for $(H^\gamma,J^\gamma)$ 
running from $z^\gamma$ to $x^\gamma$.  By monotonicity,
the total Fredholm index of this catenation must be
less than or equal to minus one.  On the other hand, 
by our transversality hypotheses, the the total Fredholm 
index of this catenation must be bigger than or equal to zero. 
This contradiction shows that~(d) holds after 
shrinking $\cH_0$ and increasing $T_0$, if necessary.

Next we prove that~(c) holds for a suitable pair $\cH_0,T_0$.
It follows directly from Theorem~\ref{thm:CAT}~(ii) that the
map~\eqref{eq:CATh} is injective for $h$ sufficiently close 
to zero and $T$ sufficiently large. Assume, by contradiction,
that there is a pair of critical points $x^\alpha,x^\gamma$, 
a sequence of Hamiltonian perturbations $h_k\in\cH_0$
converging to zero in the $\Cinf$ topology, a sequence 
$T_k\to\infty$, a sequence $\eps_k\le c_0$, and a sequence 
$$
u^{\alpha\gamma}_k\in \cM^0_{\eps_k}(x^\alpha,x^\gamma;
j^{\alpha\gamma}_{T_k},(H+h_k)^{\alpha\gamma}_{T_k},
J^{\alpha\gamma}_{T_k})
$$
that does not belong to the image of the map~\eqref{eq:CATh}
for $(h,T)=(h_k,T_k)$. Then it follows from monotonicity, 
transversality, and the same compactness argument as in the 
proof of~(d) that a suitable subsequence of $u^{\alpha\gamma}_k$
converges to a catenation of 
an element $u^{\alpha\beta}\in\cM(x^\alpha,x^\beta;
j^{\alpha\beta},H^{\alpha\beta},J^{\alpha\beta})$
and an element $u^{\beta\gamma}\in\cM(x^\beta,x^\gamma;
j^{\beta\gamma},H^{\beta\gamma},J^{\beta\gamma})$
for some $\delta$ and some $x^\beta$.  
Moreover there cannot be any bubbling. This means that,
for $k$ sufficiently large, the map $u^{\alpha\gamma}_k$
satisfies the assumptions of Theorem~\ref{thm:CAT}~(iv)
and hence does after all belong to the image of the 
map~\eqref{eq:CATh}, a contradiction.
Thus we have proved Step~1 for $\nu=0$. 

Now let $\nu\ge 1$ and suppose that $\cH_{\nu-1},T_{\nu-1},\delta_{\nu-1}$ 
have been constructed. Using Theorem~\ref{thm:CAT} and the above
compactness argument, we find a convex open neighborhood
$\cH_\nu\subset\cH_{\nu-1}$ of zero and real numbers 
$T_\nu>\max\{\nu,T_{\nu-1}\}$, $0<\delta_\nu<\min\{1/\nu,\delta_{\nu-1}\}$ 
such that~(b), (c), and~(d) hold. This completes the induction 
argument and the sequences satisfy~(a) by construction.  
Thus we have proved Step~1.  

\medskip\noindent
{\bf Step~2.}
{\it Let $c_\nu,T_\nu,\delta_\nu,\cH_\nu$ be as in Step~1.
Choose $h_\nu\in\cH_\nu$ such that the triple
$(j^{\alpha\gamma}_{T_\nu},(H+h_\nu)^{\alpha\gamma}_{T_\nu},J^{\alpha\gamma}_{T_\nu})$ 
is regular and denote by
$$
\Phi^{\gamma\alpha}_\nu:
\CF_*(L_0^\alpha,L_1^\alpha;H^\alpha)\to 
\CF_*(L_0^\gamma,L_1^\gamma;H^\gamma)
$$ 
the associated homomorphism on the Floer chain complex.
Then the coefficients of $\Phi^{\gamma\alpha}_\nu$ agree with 
those of $\Phi^{\gamma\beta}\Phi^{\beta\alpha}$ for 
action values $\cA_{H+h_\nu}(u)\le c_\nu$.}

\medskip\noindent
The operator $\Phi^{\alpha,\gamma}_\nu$ is given by
$$
\Phi^{\gamma\alpha}_\nu x^\alpha
= \sum_{x^\gamma}\sum_{\eps}n^{\alpha\gamma}_{\nu,\eps}(x^\alpha,x^\gamma)
e^{-\eps}x^\gamma,
$$
where the number 
$$
n^{\alpha\gamma}_{\nu,\eps}(x^\alpha,x^\gamma)
:= \#_2\cM^0_\eps(x^\alpha,x^\gamma;
j^{\alpha\gamma}_{T_\nu},
(H+h_\nu)^{\alpha\gamma}_{T_\nu},
J^{\alpha\gamma}_{T_\nu}).
$$
Since $h_\nu\in\cH_\nu$, it follows from condition~(c) in 
Step~1 that
\begin{equation*}
\begin{split}
n^{\alpha\gamma}_{\nu,\eps}(x^\alpha,x^\gamma)
&= \sum_\delta\sum_{x^\beta}
n^{\alpha\beta}_\delta(x^\alpha,x^\beta)
n^{\beta\gamma}_{\eps-\delta}(x^\beta,x^\gamma) \\
&= n^{\alpha\gamma}_\eps(x^\alpha,x^\gamma)
\end{split}
\end{equation*}
for $\eps\le c_\nu$. 
(For the last step see equation~\eqref{eq:Phialga}.)
This proves Step~2.

\medskip\noindent
{\bf Step~3.}
{\it Let $c_\nu,T_\nu,\delta_\nu,\cH_\nu$ be as in Step~1
and let $h_\nu,\Phi^{\gamma\alpha}_\nu$ be as in Step~2. 
Then there exists a sequence of chain homotopy equivalences
$\Psi_\nu^{\gamma\alpha}$ so that
$$
\Phi^{\gamma\alpha}_{\nu+1}-\Phi^{\gamma\alpha}_\nu 
= \p^\gamma\circ\Psi_\nu^{\gamma\alpha}
+ \Psi_\nu^{\gamma\alpha}\circ\p^\alpha
$$
and all coefficients of $\Psi_\nu^{\gamma\alpha}$
have the form $\lambda=\sum_{\eps>c_\nu}\lambda_\eps e^{-\eps}$.}

\medskip\noindent
Fix a string cobordism $(\Sigma^{\alpha\gamma},L^{\alpha\gamma})$ 
from $(L_0^\alpha,L_1^\alpha)$ to $(L_0^\gamma,L_1^\gamma)$ 
that is equivalent to the catenation 
$(\Sigma^{\alpha\gamma}_T,L^{\alpha\gamma}_T)
= (\Sigma^{\alpha\gamma},L^{\alpha\beta})\#_T
(\Sigma^{\beta\gamma},L^{\beta\gamma})$ 
in Definition~\ref{def:LPAIR} for all $T>0$. 
Choose a family of diffeomorphisms 
$$
\phi_T:\Sigma^{\alpha\gamma}
\to \Sigma^{\alpha\gamma}_T
= \Sigma^{\alpha\beta}\#_T\Sigma^{\beta\gamma},\qquad T\ge 1,
$$
as follows. Our fixed string cobordism is 
$\Sigma^{\alpha\gamma}:=\Sigma^{\alpha\gamma}_1$
associated to $T=1$.
Now choose a smooth function $\rho:\R\to[-1,1]$
such that $\rho'\ge 0$ and
$$
\rho(s)=\left\{\begin{array}{rl}
-1,&\mbox{for }s\le -1/2,\\
1,&\mbox{for }s\ge 1/2.
\end{array}\right.
$$
For $T\ge1$ define $\rho_T:[-1,1]\to[-T,T]$ by  
$$
\rho_T(s):=s + (T-1)\rho(s)
$$
Then $\rho_T:[-1,1]\to[-T,T]$ is a diffeomorphism 
with $\rho_T(\pm 1)=\pm T$ and $\rho_T'(s)=1$ 
for $\Abs{s}\ge 1/2$. Now define $\phi_T$ by
\begin{equation*}
\begin{split}
\phi_T(\iota_i^{\beta,+}(s+1,t))
&:= \iota_i^{\beta,+}(\rho_T(s)+T,t)),\\
\phi_T(\iota_i^{\beta,-}(s-1,t))
&:= \iota_i^{\beta,-}(\rho_T(s)-T,t))
\end{split}
\end{equation*}
for $\Abs{s}\le 1$, and by the identity on 
$\Sigma^{\alpha\beta}\setminus\bigcup_{i\in I^\beta}\im\iota^{\beta,+}_i$
and 
$\Sigma^{\beta\gamma}\setminus\bigcup_{i\in I^\beta}\im\iota^{\beta,-}_i$.
Now it follows from standard transversality arguments in Floer 
theory that there exists a smooth homotopy $\{h_T\}_{T_\nu\le T\le T_{\nu+1}}$
in $\cH_\nu$ form $h_\nu$ to $h_{\nu+1}$ such that the pullbacks
$\phi_T^*(j^{\alpha\gamma}_T,(H+h_T)^{\alpha\gamma}_T,J^{\alpha\gamma}_T)$
of the resulting Floer data on $\Sigma^{\alpha\gamma}_T$ 
to $\Sigma^{\alpha\gamma}$ under $\phi_T$ define a 
regular homotopy of Floer data as in~\ref{para:CHE}.
Denote by 
$
\Psi^{\gamma\alpha}_\nu:\CF_*(L^\alpha_0,L^\alpha_1;H^\alpha)
\to \CF_*(L^\gamma_0,L^\gamma_1;H^\gamma)
$
the homomorphism associated to this homotopy via~\eqref{eq:che}.
Then, by Theorem~\ref{thm:che}, we have 
$$
\Phi^{\gamma\alpha}_{\nu+1}-\Phi^{\gamma\alpha}_\nu
= \p^\gamma\circ\Psi^{\gamma\alpha}_\nu
+ \Psi^{\gamma\alpha}_\nu\circ\p^\alpha.
$$
By equation~\eqref{eq:che} the operator $\Psi^{\gamma\alpha}_\nu$
is given by
$$
\Psi^{\gamma\alpha}_\nu x^\alpha
= \sum_{x^\beta}\sum_\eps
N^{\alpha\gamma}_{\nu,\eps}(x^\alpha,x^\gamma)e^{-\eps}x^\gamma,
$$
where
\begin{equation}\label{eq:algaps}
N^{\alpha\gamma}_{\nu,\eps}(x^\alpha,x^\gamma)
:= \sum_{T_\nu\le T\le T_{\nu+1}}
\#_2\cM^{-1}_\eps\left(x^\alpha,x^\gamma;
j^{\alpha\gamma}_T,
(H+h_T)^{\alpha\gamma}_T,
J^{\alpha\gamma}_T\right).
\end{equation}
Since $h_T\in\cH_\nu$ and $T\ge T_\nu$ it follows from~(d) in Step~1
that the moduli space $\cM^{-1}_\eps\left(x^\alpha,x^\gamma;
j^{\alpha\gamma}_T,(H+h_T)^{\alpha\gamma}_T,J^{\alpha\gamma}_T\right)$
is empty, and hence $N^{\alpha\gamma}_{\nu,\eps}(x^\alpha,x^\gamma)=0$, 
whenever $\eps\le c_\nu$.  This proves Step~3.

\bigbreak

\medskip\noindent
{\bf Step~4.}
{\it Let $c_\nu,T_\nu,\delta_\nu,\cH_\nu$ be as in Step~1,
let $h_\nu,\Phi^{\gamma\alpha}_\nu$ be as in Step~2,
and let $\Psi^{\gamma\alpha}_\nu$ be as in Step~3.
Then the infinite sum
$$
\Psi^{\gamma\alpha} 
:= \Psi^{\gamma\alpha}_0 
+ \Psi^{\gamma\alpha}_1
+ \Psi^{\gamma\alpha}_2 
+ \cdots
$$
defines a homomorphism from
$\CF_*(L_0^\alpha,L_1^\alpha;H^\alpha)$
to $\CF_*(L_0^\gamma,L_1^\gamma;H^\gamma)$ and}
$$
\Phi^{\gamma\beta}\Phi^{\beta\alpha}
-\Phi^{\gamma\alpha}_0
= \p^\gamma\circ\Psi^{\gamma\alpha}
+ \Psi^{\gamma\alpha}\circ\p^\alpha.
$$

\medskip\noindent
For $x^\alpha$, $x^\gamma$, and $\eps$ define
$$
N^{\alpha\gamma}_\eps(x^\alpha,x^\gamma) 
:= \sum_{\nu=0}^\infty 
N^{\alpha\gamma}_{\nu,\eps}(x^\alpha,x^\gamma),
$$
where the numbers $N^{\alpha\gamma}_{\nu,\eps}(x^\alpha,x^\gamma)$
are given by~\eqref{eq:algaps}. This sum is finite and 
$\sum_\eps N^{\alpha\gamma}_\eps(x^\alpha,x^\gamma)e^{-\eps}\in\Lambda$
because $N^{\alpha\gamma}_{\nu,\eps}(x^\alpha,x^\gamma)=0$
whenever $c_\nu\ge c\ge \eps$. Hence the infinite sum
$$
\Psi^{\alpha\gamma}
:=\sum_{\nu=0}^\infty\Psi^{\alpha\gamma}_\nu:
\CF_*(L_0^\alpha,L_1^\alpha;H^\alpha)
\to \CF_*(L_0^\gamma,L_1^\gamma;H^\gamma)
$$
is well defined and given by 
$$
\Psi^{\gamma\alpha}x^\alpha
= \sum_{x^\gamma}\sum_\eps
N^{\alpha\gamma}_\eps(x^\alpha,x^\gamma)e^{-\eps}x^\gamma.
$$
Denote 
$$
n^\alpha_\eps(x^\alpha,y^\alpha)
:=\#_2\widehat{\cM}^1_\eps(x^\alpha,y^\alpha;H^\alpha,J^\alpha)
$$
and similarly for $\beta$ and $\gamma$.  Then
\begin{eqnarray*}
n_\eps^{\alpha\gamma}(x^\alpha,x^\gamma)
- n_{0,\eps}^{\alpha\gamma}(x^\alpha,x^\gamma) 
&=&
\sum_{\nu=0}^\infty
\left(n_{\nu+1,\eps}^{\alpha\gamma}(x^\alpha,x^\gamma)
- n_{\nu,\eps}^{\alpha\gamma}(x^\alpha,x^\gamma)\right) \\
&=&
\sum_{\nu=0}^\infty\sum_{y^\gamma}\sum_\delta
N^{\alpha\gamma}_{\nu,\delta}(x^\alpha,y^\gamma)
n^\gamma_{\eps-\delta}(y^\gamma,x^\gamma) \\
&&
+ \sum_{\nu=0}^\infty\sum_{y^\alpha}\sum_\delta
n^\alpha_\delta(x^\alpha,y^\alpha)
N^{\alpha\gamma}_{\nu,\eps-\delta}(y^\alpha,x^\gamma) \\
&=&
\sum_{y^\gamma}\sum_\delta
N^{\alpha\gamma}_\delta(x^\alpha,y^\gamma)
n^\gamma_{\eps-\delta}(y^\gamma,x^\gamma) \\
&&
+ \sum_{y^\alpha}\sum_\delta
n^\alpha_\delta(x^\alpha,y^\alpha)
N^{\alpha\gamma}_{\eps-\delta}(y^\alpha,x^\gamma).
\end{eqnarray*}
Here the finiteness of the first sum on the right
follows from Step~2 and the finiteness of the next 
two sums follows from Step~3.   This proves
Step~4 and Theorem~\ref{thm:cat}.
\end{proof}

\section{Truncated String Cobordism}\label{sec:truc_cob}
 In this section we carry over the results of 
 Theorem \ref{cor:main_thm3} to the case of 
 truncated surfaces. 
Fix two objects  
$$
(L^\alpha_0,L^\alpha_1)
=\bigl\{L^\alpha_{i0},L^\alpha_{i1}
\bigr\}_{i\in I^\alpha},\qquad
(L^\beta_0,L^\beta_1)
=\bigl\{L^\beta_{i0},L^\beta_{i1}\bigr\}_{i\in I^\beta}
$$
in $\sL(M,\om)$ and two collections
$$
(H^\alpha,J^\alpha)
=\bigl\{H^\alpha_i,J^\alpha_i
\bigr\}_{i\in I^\alpha},\qquad
(H^\beta,J^\beta)
=\bigl\{H^\beta_i,J^\beta_i
\bigr\}_{i\in I^\beta}
$$
such that $(H^\alpha_i,J^\alpha_i)$ is a regular pair
for $(L^\alpha_{i0},L^\alpha_{i1})$ when $i\in I^\alpha$ and
$(H^\beta_i,J^\beta_i)$ is a regular pair
for $(L^\beta_{i0},L^\beta_{i1})$ when $i\in I^\beta$.
Let
$$
(\Sigma,L) = \left(
\Sigma,\{L_z\}_{z\in\p\Sigma},
\{\iota^{\alpha,-}_i\}_{i\in I^\alpha},
\{\iota^{\beta,+}_i\}_{i\in I^\beta}
\right)
$$
be a string cobordism from $(L^\alpha_0,L^\alpha_1)$ to
$(L^\beta_0,L^\beta_1)$ and let $(j,H,J)$ be a regular
set of Floer data on $\Sigma$ from $(H^\alpha,J^\alpha)$
to $(H^\beta,J^\beta)$. 
Fix two tuples $ x^{\alpha} = \{ x_i^{\alpha}\}_{i\in I^{\alpha}} $ 
and $ y^{\beta} = \{ y_i^{\beta}\}_{i\in I^{\beta}\setminus \{i_0\}} $
with $ x^{\alpha}_i \in \cC( L_{i0}^{\alpha} , L_{i1}^{\alpha}; H^{\alpha}_i) $ 
and $ y^{\beta}_i \in \cC( L_{i0}^{\beta} , L_{i1}^{\beta}; H^{\beta}_i)$. 
Denote with $ \Sigma_0 $ a truncated surface
$$
\Sigma_0:=\Sigma\setminus
\iota^{\beta,+}_{i_0}((s_0,\infty)\times[0,1]).
$$
We define the moduli space of perturbed holomorphic curves 
whose domain is truncated surface $ \Sigma_0 $ and 
we prove that it is an infinite dimensional manifold and 
that the restriction to the non-Lagrangian boundary 
is injective immersion. 

\begin{equation*}
\begin{split}
\sM
&:=\left\{u\in W^{2,2}_\loc
(\Sigma_0,M)
\,\Bigg|\,
\begin{array}{l}
u \mbox{ satisfies }\eqref{eq:FLOERsigma},\;
E_H(u)<\infty,\\
\lim\limits_{s\to-\infty}u(\iota^{\alpha,-}_i(s,t))
=x^\alpha_i(t), i\in I^\alpha\\
\lim\limits_{s\to\infty}u(\iota^{\beta,+}_i(s,t))
=y^\beta_i(t), i\in I^\beta\setminus\{i_0\}
\end{array}\right\}.
\end{split}
\end{equation*}
\begin{theorem}
The moduli space $ \sM $ defined above
is a Hilbert manifold and the restriction map 
$$ i : \sM \to \sP, \;\; i(u) := u ( \iota^{\beta,+}_{i_0} ( s_0, \cdot)) $$ 
is an injective immersion. 
\end{theorem}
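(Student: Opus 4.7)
The plan is to adapt the proof of Theorem \ref{cor:main_thm3} to the mixed setting of a truncated surface $\Sigma_0$ whose ends are half-infinite strips at all punctures except $\iota^{\beta,+}_{i_0}$, where we have a finite boundary at $s=s_0$. First I would construct an ambient Hilbert manifold $\sB$ of $W^{2,2}_{\loc}$ maps $u:\Sigma_0 \to M$ satisfying the standard Lagrangian boundary condition $u(z)\in L_z$ for $z\in\partial\Sigma$, together with the compatibility condition on $\partial_t u - X_{H}(u)$ from \eqref{eq:bound_data} at the Lagrangian boundary, and the required asymptotic convergence at the half-infinite ends to the prescribed Hamiltonian paths $x^\alpha_i$, $y^\beta_i$. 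Coordinate charts on $\sB$ are produced exactly as in Lemma \ref{lem:chart_strips}, using a family of diffeomorphisms $f_{s,t}$ adapted to the asymptotic paths near each end and to $u$ itself on the compact part and near the truncation; the crucial property $\partial_t f_{s,t} + df_{s,t} X_{H_t} = 0$ near $t=0,1$ ensures the pushforward operator has the standard form from Section \ref{sec:cor_setup}.

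Next I would define the Hilbert space bundle $\sE \to \sB$ with fiber $\sE_u = W^{1,2}_{bc}(u^*TM)$ and the section $\sS(u) = \bar{\partial}_{J,H}(u)$, so that $\sM = \sS^{-1}(0)$. To prove that $\sM$ is a Hilbert submanifold one must show that the vertical differential $D_u: T_u\sB \to \sE_u$ is surjective at every $u\in\sM$. Using the unitary trivialization of Lemma \ref{lem:triv1} near each half-infinite end and parallel transport on the compact interior, conjugation reduces $D_u$ to an operator of the form $\partial_s \xi + A(s)\xi$ on each piece, with self-adjoint bijective limit operators $A^\pm$ at the half-infinite ends (by regularity of the pairs $(H^\alpha_i,J^\alpha_i)$ and $(H^\beta_i, J^\beta_i)$) and free Dirichlet-type behavior at the truncated end $\{s_0\}\times[0,1]$. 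Combining Theorem \ref{thm:main_inq1}, applied with a partition of unity over the ends, with the regularity hypothesis on the Floer data $(j,H,J)$ (which gives surjectivity of the conjugate operator at the $L^2$ level), and then bootstrapping via the elliptic regularity results of Theorem \ref{thm:reg1} and Corollary \ref{cor:surj}, yields surjectivity of $D_u$ onto $W^{1,2}_{bc}$.

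For the immersion property, note that $T_u\sM = \ker D_u$, and one must show that no nonzero $\hat{u}\in\ker D_u$ vanishes on $\iota^{\beta,+}_{i_0}(\{s_0\}\times[0,1])$. In the trivialization near the truncated boundary, such an $\hat{u}$ becomes an element $\xi\in W^{2,2}_{bc}$ of the kernel of an operator of the form $\partial_s + A(s)$ with $\xi(s_0,\cdot)=0$; applying the unique continuation result of Lemma \ref{lem:unq_con} backwards from $s=s_0$ forces $\xi\equiv 0$ on a neighborhood of the truncation, and then the standard unique continuation for the linearized Cauchy-Riemann operator on the connected surface $\Sigma_0$ forces $\hat{u}\equiv 0$. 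Injectivity of $i$ itself follows by the analogous argument applied to the nonlinear equation: two elements of $\sM$ with the same boundary value at $s=s_0$ must agree on a neighborhood of that boundary by unique continuation for perturbed holomorphic curves, and hence on all of $\Sigma_0$.

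The main obstacle will be Step 3, the surjectivity of $D_u$ in the correct pair of Hilbert spaces. The regularity assumption on $(j,H,J)$ provides surjectivity only at the $L^2$ level, but the Hilbert manifold structure requires surjectivity of $D_u : W^{2,2}_{bc}(\Sigma_0,u^*TM) \to W^{1,2}_{bc}(\Sigma_0,u^*TM)$, where the boundary conditions at the corner produced by the truncation must be handled simultaneously with the exponential asymptotics at the other ends. The technical machinery for this bootstrap is already developed in Section \ref{SEC:ELreg}; the point is to patch the local model results together on $\Sigma_0$ via a careful choice of cutoff functions supported near each end and in the interior, exactly as in the proof of Theorem \ref{thm:main_inq1}.
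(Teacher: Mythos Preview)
Your overall architecture (build an ambient $\sB$, define $\sS=\bar\p_{J,H}$, show $D_u$ is onto, then establish the immersion) matches the paper, but there is a genuine gap in your surjectivity argument. You invoke the regularity hypothesis on the Floer data $(j,H,J)$ to get surjectivity of the linearized operator ``at the $L^2$ level''. That hypothesis, however, concerns finite-energy solutions on the \emph{full} surface $\Sigma$; an element $u\in\sM$ is a solution on the truncated surface $\Sigma_0$, and there is no reason it extends to a solution on $\Sigma$. So the regularity hypothesis gives you nothing about $D_u$ here. The paper's route avoids this entirely: it covers $\Sigma_0$ by charts each diffeomorphic to a half-strip, a finite strip, or a (half-)disc, and in each such chart the conjugated operator has the standard form $\p_s+J_0\p_t+S$; then Corollary~\ref{cor:surj} gives local surjectivity directly in the $W^{2,2}_{bc}\to W^{1,2}_{bc}$ setting, with no appeal to any regularity hypothesis. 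Given $\eta$, one writes $\eta=\sum_i\beta_i\eta$ for a partition of unity, solves $D_u\xi_i=\beta_i\eta$ on each chart, extends by zero, and sums. The point is that surjectivity on a truncated domain with a free boundary is automatic from the local theory; it does not use, and cannot be reduced to, transversality on $\Sigma$.

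A second, smaller gap is in your immersion argument. Showing via unique continuation that no nonzero $\hat u\in\ker D_u$ has $\hat u(\iota^{\beta,+}_{i_0}(s_0,\cdot))=0$ establishes injectivity of $di(u)$, but in infinite dimensions this is not enough for an immersion: you need $di(u)$ to have closed complemented image. The paper obtains this by proving the global estimate
\[
\|\xi\|_{2,2}\le c\Bigl(\|D_u\xi\|_{1,2}+\|\xi\circ\iota^{\beta,+}_{i_0}(s_0,\cdot)\|_{3/2}\Bigr),
\]
again via the same partition of unity: on each chart one has the local estimate with a compact error term $\|K_i\xi\|_{1,2}$ (from Theorem~\ref{thm:main_inq1} and Corollary~\ref{lem:inq_D}), summing gives a global estimate with a compact term, and then your unique continuation argument together with the open mapping theorem removes the compact term. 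So your unique continuation step is used, but only after the estimate is in place; by itself it does not close the argument.
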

\begin{proof}
 As in the proof of theorem \ref{cor:main_thm3} 
 we consider some base manifold $ \sB $ which 
 consists of $ W^{2,2}( \Sigma_0, M ) $ maps and which satisfy 
 the corresponding two Lagrangian boundary conditions and a Hilbert 
 space bundle $ \sE $ over $\sB $. The only difference is that 
 this time the fibers consist of $ (0,1) $ forms. The set $ \sM $ 
 can be seen as the zero set of the sections $ \dbar_{J,H}= \sS$
 of the bundle $ \sE $. 
 Thus it is enough to prove that this section is transverse to zero 
 section or equivalently that the vertical differential $ D_u $ 
 is surjective. 
 Here 
 \begin{equation}
  \begin{split}
  D_u : W^{2,2}_{bc} ( \Sigma_0 , u^* TM) \to W^{1,2}_{bc} ( \Sigma_0, \Lambda^{0,1} \otimes_J u^* TM) \\
  D_u(\xi) = \frac{1}{2} \Big( \nabla \xi + \nabla_{\xi} X_H + J\circ( \nabla \xi + \nabla_{\xi} X_H) \circ j + ( \nabla_{\xi} J ) \circ d_H u \circ j \Big), 
 \end{split}
 \end{equation}
 where $ X_H(u) \in \Omega^1 ( \Sigma_0, u^* TM) $ and 
 $d_H u(z)\in\Omega^1 ( \Sigma_0, u^* TM) $ is given by $ d_H u(z)(\hat{z}) = du(z)(\hat{z}) + X_{H}(z)(\hat{z}) $.
 Let $ \eta \in W^{1,2}_{bc} ( \Sigma_0,  \Lambda^{0,1} \otimes_J u^* TM)$. We show 
 that there exists $ \xi \in  W^{2,2}_{bc} ( \Sigma_0 , u^* TM)$ such that $ D_u \xi = \eta$. 
 Let $ U_i, \; i=1, \cdots, m $ be an open cover of $ \Sigma_0 $ such that each $ U_i $ 
 is diffeomorphic to one of the following 
\begin{itemize}
 \item[1)] Strip of the form $ (-\infty,0) \times(0,1) $ or $(0,1] \times(0,1) $. 
 \item[2)] Open disc or half disk. 
\end{itemize}
Let $ \beta_i, \; i=1, \cdots, m $ be a partition of unity subordinate to the cover $ U_i $, 
$\text{supp}(\beta_i )\subset U_i $. We can assume w.l.o.g. that on each $ U_i $ we have 
local con formal coordinates $ z= s+ \i t $. In these local coordinates the $ (0,1) $ form $ \eta $ 
can be written as $\eta_i= \beta_i \eta = \eta_1^i ds + J \eta_1^i dt $, where $ \eta_1 $ 
is a vector field along u with support in $U_i $. In each local chart the operator $ D_u $ 
can be identified with the operator 
$$ 
D_i \xi :=( D_u \xi ) (\frac{\p}{\p s}) : W^{2,2}_{bc} ( U_i, u^* TM) \to W^{1,2}_{bc} ( U_i, u^* TM). 
$$
In local coordinates on $U_i $ the form $ X_H(u) = X_F(u) ds + X_G(u) dt $, where 
$ X_F(u), X_G(u) $ are vector fields along $u$. Thus, the operator $D_i $ has 
the form 
$$
D_i \xi = \nabla_s \xi + \nabla_{\xi} X_F + ( \nabla_{\xi} J) ( \p_t u + X_G) + J(u) ( \nabla_t \xi + \nabla_{\xi} X_G) 
$$
Analogously as in the proof of Theorem \ref{cor:main_thm3} we can 
construct an adequate trivialization $ \Phi $ of $ \left. u^*TM\right|_{U_i},$
which transform the operator $D_i $ to an operator of the form \eqref{eq:opD}.
In the case that $ U_i $ is a disk or half disk, we extend first 
the functions $ X_F $ and $ X_G $ in such a way that we obtain an operator on 
the strip. Thus, the same argument as in Corollary \ref{cor:surj} 
proves that each $ D_i $ is surjective. 
Particularly, this means that for each $ \eta_i = \beta_i \eta $ there exist $ \xi_i $ such 
that $\text{supp}(\xi)\subset U_i $ and $ D_u \xi_i = \eta_i $. Extend $ \xi_i $ 
by zero to entire $ \Sigma_0 $. We have
$$ D_u ( \sum_i \xi_i) = \sum_i \eta_i = \sum_i \beta_i \eta = \eta .$$
Thus for $ \xi = \sum_i \xi_i $, we have $ D_u \xi = \eta $. 
In order to prove that the mapping $ i $ is injective immersion 
we need to prove the analogous estimate as in \eqref{eq:inq_D}, 
more precisely we prove that each $ \xi \in W^{2,2}_{bc} ( \Sigma_0, u^* TM) $ 
satisfies the inequality 
\begin{equation}\label{eq:opDs}
 \| \xi \|_{2,2} \leq c \Big ( \| D_u \xi \|_{1,2} + \| \xi \circ \iota_{i_0}^{\beta,+} ( s_0, \cdot)\|_{3/2} \Big ). 
\end{equation}
Let $ \beta_i $ be the partition of unity as above 
and let $ \xi_i = \beta_i \xi $. 
As in the proof of the inequality \eqref{eq:inq_D}
( have a look at Lemma \ref{lem:inq_D}) we have that each $ \xi_i $ 
satisfies the following inequality 
$$ 
\| \xi_i \|_{2,2} \leq c \Big ( \| D_u \xi_i \|_{1,2} + \| K_i \xi \|_{1,2} + \| \xi_i \circ \iota^{\beta,+}_{i_0} ( s_0, \cdot)) \|_{3/2} \Big ), 
$$
 where $ K_i $ are some compact operators ( for example just a restriction 
 operator to some compact set). Summing these inequalities for all $ i$ we 
 obtain 
 \begin{align*}
  \| \xi \|_{2,2}& \leq c \Big ( \sum\limits_i \| D_u ( \beta_i \xi) \|_{1,2} + \sum\limits_i \| K_i \xi_i\|_{1,2} + \| \xi \circ \iota^{\beta,+}_{i_0} ( s_0, \cdot)) \|_{3/2} \Big )\\
  & \leq c \Big (  \sum\limits_i  \| \beta_i D_u(\xi)\|_{1,2}  + \sum\limits_i (\|\dot{\beta}_i\xi\|_{1,2}+ \| K_i \xi_i\|_{1,2}) + \| \xi \circ \iota^{\beta,+}_{i_0} ( s_0, \cdot)) \|_{3/2} \Big )\\
  &\leq c \Big ( \| D_u \xi \|_{1,2} + \| K\xi \|_{1,2} +  \| \xi \circ \iota^{\beta,+}_{i_0} ( s_0, \cdot)) \|_{3/2} \Big ),  
 \end{align*}
 where $ K$ is some compact operator. From unique continuation
 we have that the mapping $ \xi \mapsto ( D_u \xi,  \xi \circ \iota^{\beta,+}_{i_0} ( s_0, \cdot))$ 
 is injective and as the above inequality holds its 
 image is closed, thus from the open mapping theorem 
 its inverse is also bounded and we can omit the middle term from the 
 above inequality. Thus, we have proved the inequality 
 \eqref{eq:opDs}. This inequality implies that the mapping $ i$ 
 is an immersion, and from unique continuation of 
 perturbed holomorphic curves follows that $i$ is also injective. 

\end{proof}

\section{Proof of the Floer Gluing Theorems}\label{sec:PROOF}  

\subsection*{Boundary Operator}

In this section we prove Theorem~\ref{thm:BO}. 

\begin{lemma}\label{le:TIME}
Assume~(H) and let $(H,J)$ be a regular pair for $(L_0,L_1)$.
Let $x,y\in\cC(L_0,L_1;H)$ and fix a real number $\delta>0$.
Then, for every Floer trajectory $u\in\cM(x,y;H,J)$ with 
$
E_H(u)>\delta, 
$
there is a unique real number $T_\delta(u)$ such that 
$$
\int_{-\infty}^{T_\delta(u)}\int_0^1\Abs{\p_su}_t^2\,dtds
= \delta.
$$
Moreover, the map 
$$
T_\delta:\left\{u\in\cM(x,y;H,J)\,|\,E_H(u)>\delta\right\}\to\R
$$
is smooth.
\end{lemma}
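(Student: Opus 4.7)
The strategy is elementary once the uniqueness part is reduced to positivity of a derivative, so the work splits into a one-variable analysis on each trajectory and then a global smoothness argument via the implicit function theorem.

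First I would fix $u\in\cM(x,y;H,J)$ with $E_H(u)>\delta$ and study the function
\begin{equation*}
F_u:\R\to[0,E_H(u)],\qquad
F_u(T):=\int_{-\infty}^{T}\int_{0}^{1}\Abs{\p_s u}_t^2\,dt\,ds.
\end{equation*}
This is well defined and continuous since $\p_su\in L^2(\R\times[0,1],u^*TM)$, with $\lim_{T\to-\infty}F_u(T)=0$ and $\lim_{T\to\infty}F_u(T)=E_H(u)$ (here I use that the energy equals the $L^2$-norm of $\p_su$ because $u$ solves~\eqref{eq_FLOER}). By Fubini and dominated convergence, $F_u$ is of class $C^1$ with derivative
\begin{equation*}
F_u'(T)=\int_0^1\Abs{\p_su(T,t)}_t^2\,dt\ge 0.
\end{equation*}
Existence of $T_\delta(u)$ then follows from the intermediate value theorem applied to $F_u-\delta$ on~$\R$.

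The key step for uniqueness is to show that $F_u'(T)>0$ for every $T\in\R$, so that $F_u$ is strictly increasing. If $F_u'(T_0)=0$ for some $T_0$, then $\p_su(T_0,\cdot)\equiv 0$ on $[0,1]$. The vector field $\xi:=\p_su$ lies in the kernel of the linearized Floer operator $D_u$, and near $T_0$ a unitary trivialization along $u$ conjugates $D_u$ to an operator of the form $\p_s+J_0\p_t+S(s,t)$ covered by the unique continuation results for such operators (Lemma~\ref{lem:unq_con} applied forwards and backwards from $s=T_0$). Hence $\xi\equiv 0$, contradicting $E_H(u)>\delta>0$. This gives the required strict positivity and completes the uniqueness part.

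Finally, smoothness of $T_\delta$ follows from the implicit function theorem on the ambient Hilbert manifold $\sB$ from~\ref{para:hil_strips}, inside which $\cM(x,y;H,J)$ sits as a smooth submanifold (Theorem~\ref{cor:main_thm3}). The assignment $(u,T)\mapsto F_u(T)-\delta$ is smooth on the open subset $\{u:E_H(u)>\delta\}\times\R$ because $\p_su$ depends smoothly on $u$ in $L^2$ and the $L^2$-norm squared is a smooth function on the total space; its partial derivative in $T$ at $(u,T_\delta(u))$ equals $F_u'(T_\delta(u))$, which we have just shown to be strictly positive. The implicit function theorem then produces a smooth local solution $u\mapsto T_\delta(u)$, and uniqueness upgrades the local solutions into a single smooth global map. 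The main technical obstacle is the strict positivity of $F_u'$, for which unique continuation for the linearized Floer equation on a single $t$-slice is essential; everything else is routine once the Hilbert manifold setup of chapter~\ref{chp:hardy} is in place.
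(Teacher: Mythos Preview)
Your proof is correct and follows essentially the same route as the paper: define the partial energy as a function of $(u,T)$, obtain existence from the intermediate value theorem, deduce strict monotonicity from unique continuation applied to $\p_su\in\ker D_u$, and conclude smoothness from the implicit function theorem. The paper states these steps more tersely (invoking unique continuation in one line rather than spelling out the trivialization and Lemma~\ref{lem:unq_con}), but the argument is the same.
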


\begin{proof}
Define the map 
$
E_H:\cM(x,y;H,J)\times\R\to(0,\infty)
$
by 
$$
E_H(u,T) := \int_{-\infty}^{T}\int_0^1\Abs{\p_su}_t^2\,dtds.
$$
This map is smooth and
$$
\frac{\p}{\p T}E_H(u,T) = \int_0^1\Abs{\p_su(T,t)}_t^2\,dt
$$
for every $u\in\cM(x,y;H,J)$ and every $T\in\R$.  
By unique continuation we have 
$\p_TE_H(u,T)>0$ for every for every $u\in\cM(x,y;H,J)$ 
and every $T\in\R$ such that $E_H(u,T)>0$. 
Hence the assertions of Lemma~\ref{le:TIME} follow from 
the intermediate value theorem (existence), the fact that 
the map ${T\mapsto E_H(u,T)}$ is strictly monotone 
unless $\p_su\equiv0$ (uniqueness), and the 
implicit function theorem (smoothness).
\end{proof}

\begin{PARA}\label{para:bo-proof2}\rm
Let $\sP=\sP^{3/2}$ be the path space defined in 
equation~\eqref{eq:hil_path}. Choose open neighborhoods 
$U,V\subset M$ of $y(0)$ as in~\ref{para:conv_thm},
choose a constant $\hbar>0$ such that the assertion of 
Theorem~\ref{thm:MON} holds with $U,V,\Lambda=\{y(0)\}$,
and choose a neighborhood $\sU\subset\sP$ of $y$ 
and a constant $T_0>0$ such that the assertions of 
Theorem~\ref{thm:main_thm3}  are satisfied with $x$ replaced by~$y$.  
Here the Hilbert manifolds $\sM^\infty(y,\sU)$ and $\sM^T(y,\sU)$
are defined by~\eqref{eq:mod_emb}
and the embeddings 
$$
\iota^\infty:\sM^\infty(y,\sU)\to\sP\times\sP,\qquad
\iota^T:\sM^T(y,\sU)\to\sP\times\sP
$$ 
for $T\ge T_0$ are defined by~\eqref{eq:iinf} and \eqref{eq:map_i}.   
\end{PARA}

\begin{proof}[Proof of Theorem~\ref{thm:BO}]
The proof has five steps. 

\medskip\noindent{\bf Step~1.}
{\it Abbreviate 
$$
u^\pm := u|_{\R^\pm\times[0,1]},\qquad
v^\pm := v|_{\R^\pm\times[0,1]}.
$$
We may assume without loss of generality that}
\begin{equation}\label{eq:deltasU}
\delta_u:=E_H(u^+)<\hbar/2,\quad 
\delta_v:=E_H(v^-)<\hbar/2,\quad
u(0,\cdot),v(0,\cdot)\in\sU. 
\end{equation}

\medskip\noindent
Choose $s_0>0$ so large that the solutions
$\tu,\tv:\R\times[0,1]\to M$
of the Floer equations, defined by
\begin{equation*}
\begin{split}
\tu(s,t) &:= u(s+s_0,t),\qquad 
\ts_u := s_u-s_0,\\
\tv(s,t) &:= v(s-s_0,t),\qquad
\ts_v := s_v+s_0,
\end{split}
\end{equation*}
satisfy the conditions in~\eqref{eq:deltasU}. 
Assume that Theorem~\ref{thm:BO}
holds for the quadruple $(\tu,\tv,\ts_u,\ts_v)$ and denote 
the resulting glued solutions of the Floer equation by 
$\tu_T\in\cM^2(x,z;H,J)$ for $T>\tT_0$. 
Then the functions 
$$
u_T(s,t) := \tu_{T-s_0}(s,t),\qquad T>T_0:=\tT_0+s_0
$$
satisfy the requirements of Theorem~\ref{thm:BO}
for the quadruple $(u,v,s_u,s_v)$. 

\medskip\noindent{\bf Step~2.}
{\it Construction of the map
$(T_0,\infty)\to\cM^2(x,z;H,J):T\mapsto u_T$.}

\medskip\noindent
By Step~1, we have $(u^+,v^-)\in\sM^\infty(y,\sU)$ and
$$
(u(0,\cdot),v(0,\cdot))
= \iota^\infty(u^+,v^-) \in\sW^\infty(y,\sU).
$$
Now define 
$$
\eps^- := E_H(u^-) = E_H(u)-\delta_u,\qquad
\eps^+:= E_H(v^+) = E_H(v)-\delta_v.
$$
Then the spaces
\begin{equation*}
\begin{split}
\sM^-(x,\eps^-)
&:=\left\{w^-\in W^{2,2}_\loc(\R^-\times[0,1],M)\,\Bigg|\,
\begin{array}{l}
w^-\mbox{ satisfies }\eqref{eq:FLOER},\\
E_H(w^-)=\eps^-,\\
\lim\limits_{s\to-\infty}w^-(s,t)=x(t)
\end{array}\right\},\\
\sM^+(z,\eps^+)
&:=\left\{w^+\in W^{2,2}_\loc(\R^-\times[0,1],M)\,\Bigg|\,
\begin{array}{l}
w^+\mbox{ satisfies }\eqref{eq:FLOER},\\
E_H(w^+)=\eps^+,\\
\lim\limits_{s\to\infty}w^+(s,t)=z(t)
\end{array}\right\}
\end{split}
\end{equation*}
are Hilbert manifolds and the restriction maps 
$$
\iota^-:\sM^-(x,\eps^-)\to\sP,\qquad
\iota^+:\sM^+(z,\eps^+)\to\sP
$$
defined by $\iota^-(w^-):=w^-(0,\cdot)$ and $\iota^+(w^+):=w^+(0,\cdot)$
are injective immersions.  The transversality condition asserts 
that the map
\begin{equation}\label{eq:iotapm}
\iota^-\times\iota^+:
\sM^-(x,\eps^-)\times\sM^+(z,\eps^+)\to\sP\times\sP
\end{equation}
intersects the submanifold 
$$
\sW^\infty(y,\sU) = \iota^\infty(\sM^\infty(y,\sU))\subset\sP\times\sP
$$
transversally in the point 
$(u(0,\cdot),v(0,\cdot))
= \left(\iota^-(u^-),\iota^+(v^+)\right)$.
Moreover, the Fredholm index is zero, so the intersection point is isolated.
Hence, by Theorem~\ref{thm:main_thm3}, the map~\eqref{eq:iotapm} 
is also transverse to the submanifold 
$$
\sW^T(y,\sU)=\iota^T(\sM^T(y,\sU))\subset\sP\times\sP
$$ 
for $T$ sufficiently large. Hence it follows from the infinite 
dimensional inverse function theorem that there is a $T_0>0$ 
such that, for every $T>T_0$, there exists a unique 
pair $(u_T^-,v_T^+)\in\sM^-(x,\eps^-)\times\sM^+(z,\eps^+)$ 
near the pair $(u^-,v^+)$
such that
$
\left(\iota^-(u_T^-),\iota^+(v_T^+)\right)\in\sW^T(y,\sU).
$
Thus, for $T>T_0$, there is a unique element 
$w_T\in\sM^T(y,\sU)$ such that 
$$
w_T(-T,t)=u_T^-(0,t),\qquad
w_T(T,t) = v_T^+(0,t)
$$
for every $t\in[0,1]$.  Now define $u_T:\R\times[0,1]\to M$ by
\begin{equation}\label{eq:uT}
u_T(s,t)
:= \left\{\begin{array}{ll}
u_T^-(s+T,t),&\mbox{if }s\le -T,\\
w_T(s,t),&\mbox{if }\Abs{s}\le T,\\
v_T^+(s-T,t),&\mbox{if }s\ge T.
\end{array}\right.
\end{equation}
Then $u_T\in\cM^2(x,z;H,J)$ for every $T\ge T_0$.
This proves Step~2.

\medskip\noindent{\bf Step~3.}
{\it The map $T\mapsto u_T$ satisfies~(i) and~(iii).}

\medskip\noindent
For $w\in\cM^2(x,z;H,J)$ denote its equivalence class
under time shift by $[w]$.  Define the map 
$T:\cM^2(x,z;H,J)\to(0,\infty)$ by 
$$
T(w) := \frac{T_{\eps^-+\delta_u+\delta_v}(w)-T_{\eps^-}(w)}{2}
$$
for $w\in\cM^2(x,z;H,J)$.  This map is smooth, by Lemma~\ref{le:TIME},
and is invariant under time shift.  Hence 
the map descends to the quotient 
$\widehat{\cM}^2(x,z;H,J)$.  
Moreover, by construction 
\begin{equation}\label{eq:energy-uT}
E_H(u_T) = E_H(u)+E_H(v) = \eps^-+\delta_u + \delta_v +\eps^+.
\end{equation}
The energy of $u_T$ on $(-\infty,T]\times[0,1]$ is equal to $\eps^-$
and the energy of $u_T$ on $[T,\infty)\times[0,1]$ is equal to $\eps^+$.
Hence $T_{\eps^-+\delta_u+\delta_v}(u_T)=T$ and $T_{\eps^-}(u_T)=-T$,
and hence $T(u_T)=T$ for every $T>T_0$.  This shows that the map 
$$
(T_0,\infty)\to\widehat{\cM}^2(x,z;H,J):T\mapsto[u_T]
$$ 
is injective, its image is an open subset of the $1$-manifold
$\widehat{\cM}^2(x,z;H,J)$, and its inverse is smooth 
by Lemma~\ref{le:TIME}.  Thus we have proved Step~3.

\medskip\noindent{\bf Step~4.}
{\it The map $T\mapsto u_T$ satisfies~(ii).}

\medskip\noindent
By construction in Step~2, 
the functions $u_T^-$ converge to $u^-:=u|_{\R^-\times[0,1]}$ 
in the Hilbert manifold $\sM^-(x,\eps^-)$ as $T$ tends to infinity,
and the functions $v_T^+$ converge to $v^+:=v|_{\R^+\times[0,1]}$ 
in the Hilbert manifold $\sM^+(z,\eps^+)$ as $T$ tends to infinity.
(See the proof of Step~2.) 
(It follows also from the construction that $u_T([-T,T]\times\{t\})$
is contained in the neighborhood $U_t=\phi_t(U)$ of $y(t)$ for 
every $t\in[0,1]$ and every $T>T_0$.)
Hence $u_T(s-T,t)=u_T^-(s,t)$ converges to $u(s,t)$ in the 
$W^{2,2}$ topology on $\R^-\times[0,1]$ and 
$u_T(s+T,t)=u_T^+(s,t)$ converges to $v(s,t)$ in the 
$W^{2,2}$ topology on $\R^+\times[0,1]$.
By elliptic bootstrapping it follows 
that the functions $u_T(-T+\cdot,\cdot)$ converges to $u$ 
uniformly with all derivatives on every 
compact subset of $(-\infty,0)\times[0,1]$.
Likewise, the functions $u_T(T+\cdot,\cdot)$ 
converge to $v$ uniformly with all derivatives on every 
compact subset of $(0,\infty)\times[0,1]$.
Since no energy is lost, by~(ii), one can now use the 
standard compactness theorem for Floer trajectories
to exclude bubbling and prove in both cases
that the convergence is in the $\Cinf$ topology on every
compact subset of $\R\times[0,1]$.  
The above convergence statement in the Hilbert manifold
$\sM^-(x,\eps^-)$ also implies that $u_T(s-T,t)$ converges 
to $u(s,t)$ uniformly on $\R^-\times[0,1]$ and hence on 
every subset of the form $(-\infty,b]\times[0,1]$.
Likewise, $u_T(s+T,t)$ converges to $v(s,t)$ uniformly 
on every subset of the form $[a,\infty)\times[0,1]$.

\smallbreak

\medskip\noindent{\bf Step~5.}
{\it The map $T\mapsto u_T$ satisfies~(iv).}

\medskip\noindent
Assume, by contradiction, that~(iv) does not hold.
Then there are sequences $w_i\in\cM^2(x,z;H,J)$ 
and $s^-_i<s_i^+$ such that
\begin{description}
\item[(a)]
$[w_i]\ne[u_T]$ for every $i$ and every $T>T_0$, 
\item[(b)]
$E_H(w_i)=E_H(u)+E_H(v)$ for every $i$, and
\item[(c)]
For $0\le t\le 1$ we have
$$
\lim\limits_{i\to\infty}w_i(s_i^-,t) = u(s_u,t),\qquad
\lim\limits_{i\to\infty}w_i(s_i^+,t) = v(s_v,t).
$$
The convergence is uniform in $t$.
\end{description}
By the standard elliptic bootstrapping, bubbling, and removal 
of singularities argument, we may assume, passing to a 
subsequence if necessary, that the sequence 
$w_i(s_i^\pm+\cdot,\cdot)$ converges, uniformly with all derivatives
on every compact subset of the complement of a finite set
in $\R\times[0,1]$, to a smooth finite energy solution 
$
u^\pm:\R\times[0,1]\to M
$ 
of~\eqref{eq:FLOER}. (See~\cite[Chapter~4]{MS}.)
By~(c) we have 
$$
u^-(0,t) = u(s_u,t),\qquad u^+(0,t)=v(s_v,t)
$$
for every $t\in[0,1]$.  Hence it follows from unique continuation 
that 
$$
u^-(s,t)=u(s_u+s,t),\qquad
u^+(s,t)=v(s_v+s,t)
$$
for all $s$ and $t$.  Since $u$ and $v$ are not related by time shift, 
it follows that $s_i^+-s_i^-$ diverges to $\infty$.  

Moreover, it follows from~(b) that there is no loss of energy.
Hence there is no bubbling and 
\begin{equation}\label{eq:uvwi}
\begin{split}
u(s,t) = \lim_{i\to\infty}w_i(r_i^-+s,t),\qquad
r_i^-:=s_i^--s_u,\\
v(s,t) =  \lim_{i\to\infty}w_i(r_i^++s,t),\qquad
r_i^+:=s_i^++s_v.
\end{split}
\end{equation}
The convergence is uniform with all derivatives on every
compact subset of $\R\times[0,1]$. This implies that, for every $T>0$,
\begin{equation*}
\begin{split}
\eps_T
&:=
E_H(u) + E_H(v) 
- \int_{-T}^T\int_0^1\Abs{\p_su}_t^2
- \int_{-T}^T\int_0^1\Abs{\p_sv}_t^2 \\
&\;= 
\lim_{i\to\infty}\left(E_H(w_i) 
- \int_{r_i^--T}^{r_i^-+T}\int_0^1\Abs{\p_sw_i}_t^2
- \int_{r_i^+-T}^{r_i^++T}\int_0^1\Abs{\p_sw_i}_t^2
\right) \\
&\;=
\lim_{i\to\infty}\left(
\int_{-\infty}^{r_i^--T}\int_0^1\Abs{\p_sw_i}_t^2
+ \int_{r_i^-+T}^{r_i^+-T}\int_0^1\Abs{\p_sw_i}_t^2
+ \int_{r_i^++T}^\infty\int_0^1\Abs{\p_sw_i}_t^2
\right).
\end{split}
\end{equation*}
Here we have used~(b).
Taking the limit $T\to\infty$, we deduce that
all three limits on the right converge to zero
as $T$ tends to infinity.  Hence
\begin{equation*}
\begin{split}
\eps^-
&= \int_{-\infty}^0\int_0^1\Abs{\p_su}_t^2 
= \lim_{T\to\infty}\int_{-T}^0\int_0^1\Abs{\p_su}_t^2 
= \lim_{T\to\infty}\lim_{i\to\infty}
\int_{r_i^--T}^{r_i^-}\int_0^1\Abs{\p_sw_i}_t^2 \\
&= \lim_{T\to\infty}\lim_{i\to\infty}
\int_{r_i^--T}^{r_i^-}\int_0^1\Abs{\p_sw_i}_t^2 
+ \lim_{T\to\infty}\lim_{i\to\infty}
\int_{-\infty}^{r_i^--T}\int_0^1\Abs{\p_sw_i}_t^2 \\
&= \lim_{i\to\infty}
\int_{-\infty}^{r_i^-}\int_0^1\Abs{\p_sw_i}_t^2,
\end{split}
\end{equation*}
and similarly
$
\eps^+
= \lim_{i\to\infty}
\int_{r_i^+}^\infty\int_0^1\Abs{\p_sw_i}_t^2.
$
Adding to $r^\pm_i$ a sequence converging to zero, if necessary, 
we may assume w.l.o.g.\ that, for every $i$, 
\begin{equation}\label{eq:sipm}
\eps^- = \int_{-\infty}^{r_i^-}\int_0^1\Abs{\p_sw_i}_t^2,\qquad
\eps^+ = \int_{r_i^+}^\infty\int_0^1\Abs{\p_sw_i}_t^2.
\end{equation}
We claim that, for large $i$,
\begin{equation}\label{eq:wiuTi}
w_i\left(T_i+\cdot,\cdot\right) = u_{T_i},\qquad
T_i:=\tfrac12\left(r_i^+-r_i^-\right)
\end{equation}
for $i$ sufficiently large, in contradiction to~(a).
To see this, note that by~\eqref{eq:sipm},
\begin{equation*}
\begin{split}
u_i^- &:=w_i(r_i^-+\cdot,\cdot)|_{\R^-\times[0,1]}\in\sM^-(x,\eps^-),\\
v_i^+ &:=w_i(r_i^++\cdot,\cdot)|_{\R^+\times[0,1]}\in\sM^+(z,\eps^+).
\end{split}
\end{equation*}
Then, by~\eqref{eq:uvwi} and~\eqref{eq:sipm}, the sequence 
$u_i^-$ converges to $u^-$ in $\sM^-(x,\eps^-)$ and the sequence 
$v_i^+$ converges to $v^+$ in $\sM^+(z,\eps^+)$.
Moreover, by~(b) and~\eqref{eq:sipm}, we have
$
\int_{r_i^-}^{r_i^+}\int_0^1\Abs{\p_sw_i}_t^2 
= \delta_u+\delta_v < \hbar
$
for every $i$.  Since 
$$
\lim_{i\to\infty}w_i(r_i^-,\cdot)=u(0,\cdot)\in\sU,\qquad
\lim_{i\to\infty}w_i(r_i^+,\cdot)=v(0,\cdot)\in\sU,
$$
where the convergence is in the topology of $\sP=\sP^{3/2}$,
it follows from the definition of $\hbar$ in the proof of Step~2, 
that
$$
(\iota^-(u_i^-),\iota^+(v_i^+))
= (w_i(r_i^-,\cdot),w_i(r_i^+,\cdot))
\in\sW^{T_i}(y,\sU)
$$
for $i$ sufficiently large.  Hence 
it follows from the definition of $u_T$ in~\eqref{eq:uT} 
that
$$
u_{T_i}(s,t)
:= 
\left\{\begin{array}{ll}
u_i^-(s+T_i,t),&\mbox{if }s\le -T_i,\\
w_i(\frac{r_i^++r_i^-}{2}+s,t),&\mbox{if }\Abs{s}\le T_i,\\
v_i^+(s-T_i,t),&\mbox{if }s\ge T_i,
\end{array}\right\}  
=
w_i\left(T_i+s,t\right).
$$
This proves equation~\eqref{eq:wiuTi}
and Theorem~\ref{thm:BO}.
\end{proof}

\subsection*{Chain Map}

In this section we prove Theorem~\ref{thm:CM}.

\begin{PARA}\label{para:cm-proof1}\rm
Let $(\Sigma,L)$ be a string cobordism in $\sL(M,\om)$
from $(L^\alpha_0,L^\alpha_1)$ to $(L^\beta_0,L^\beta_1)$,
$(j,H,J)$ be regular set of Floer data on $(\Sigma,L)$ 
from $(H^\alpha,J^\alpha)$ to $(H^\beta,J^\beta)$, and
$$
u\in\cM^0(x^\alpha,y^\beta;j,H,J),\qquad
v\in\cM^1(y^\beta,z^\beta;H^\beta,J^\beta),
$$
as in the assumptions of Theorem~\ref{thm:CM}.
Thus 
$$
v=\{v_i\}_{i\in I^\beta},\qquad
v_i\in\cM(y_i^\beta,z_i^\beta;H_i^\beta,J_i^\beta),
$$
and there is an index $i_0\in I^\beta$ such that 
$v_i(s,t)=y^\beta_i(t)=z^\beta_i(t)$ for $i\ne i_0$
and $\mu_H(v_{i_0})=1$. 
\end{PARA}

\begin{PARA}\label{para:cm-proof2}\rm
As in~\ref{para:bo-proof2}, 
let $\sP=\sP^{3/2}$ be the path space defined in 
equation~\eqref{eq:sp3/2}. Choose open neighborhoods 
$U,V\subset M$ of $y^\beta_{i_0}(0)$ as in~\ref{para:conv_thm},
choose a constant $\hbar>0$ such that the assertion of 
Theorem~\ref{thm:MON} holds with $U,V,\Lambda=\{y^\beta_{i_0}(0)\}$,
and choose a neighborhood $\sU\subset\sP$ of $y^\beta_{i_0}$ 
and a constant $T_0>0$ such that the assertions of 
Theorem~\ref{thm:main_thm3} are satisfied with $x$ replaced 
by~$y^\beta_{i_0}$.  Here the Hilbert manifolds 
$\sM^\infty(y^\beta_{i_0},\sU)$ and $\sM^T(y^\beta_{i_0},\sU)$
are defined by~\eqref{eq:mod_emb} 
and the embeddings 

$$
\iota^\infty:\sM^\infty(y^\beta_{i_0},\sU)\to\sP\times\sP,\qquad
\iota^T:\sM^T(y^\beta_{i_0},\sU)\to\sP\times\sP
$$ 
for $T\ge T_0$ are defined by ~\eqref{eq:iinf} and \eqref{eq:map_i}.   
Denote their images by 
$$
\sW^\infty(y^\beta_{i_0},\sU) 
:= \iota^\infty(\sM^\infty(y^\beta_{i_0},\sU)),\qquad
\sW^T(y^\beta_{i_0},\sU) 
:= \iota^T(\sM^T(y^\beta_{i_0},\sU)).
$$
\end{PARA}

\begin{proof}[Proof of Theorem~\ref{thm:CM}]
The proof has three steps.

\medskip\noindent{\bf Step~1.}
{\it Construction of the map
$(T_0,\infty)\to\cM^1(x^\alpha,z^\beta;j,H,J):T\mapsto u_T$.}

\medskip\noindent
Choose $s_0>0$ so large that 
\begin{equation}\label{eq:s0hbar}
\begin{split}
&\int_{s_0}^\infty\int_0^1
\Abs{\p_su(\iota^{\beta,+}_{i_0}(s,t))}^2_t\,dtds
< \hbar/2,\\
&\delta_v := \int_{-\infty}^{-s_0}\int_0^1
\Abs{\p_sv(s,t)}^2_t\,dtds
< \hbar/2,\\
&u(\iota^{\beta,+}_{i_0}(s_0,\cdot))\in\sU,\qquad
v_{i_0}(-s_0,\cdot)\in\sU.
\end{split}
\end{equation}
Then
$$
(u(\iota^{\beta,+}_{i_0}(s_0,\cdot)),v_{i_0}(-s_0,\cdot))
\in\sW^\infty(y^\beta_{i_0},\sU).
$$
Denote 
$$
\Sigma_0:=\Sigma\setminus
\iota^{\beta,+}_{i_0}((s_0,\infty)\times[0,1]),\qquad
\eps_v := E_H(v)-\delta_v.
$$
Then the spaces
\begin{equation*}
\begin{split}
\sM^-
&:=\left\{w^-\in W^{2,2}_\loc
(\Sigma_0,M)
\,\Bigg|\,
\begin{array}{l}
w^-\mbox{ satisfies }\eqref{eq:FLOERsigma},\;
E_H(w^-)<\infty,\\
\lim\limits_{s\to-\infty}w^-(\iota^{\alpha,-}_i(s,t))
=x^\alpha_i(t), i\in I^\alpha\\
\lim\limits_{s\to\infty}w^-(\iota^{\beta,+}_i(s,t))
=y^\beta_i(t), i\in I^\beta\setminus\{i_0\}
\end{array}\right\},\\
\sM^+
&:=\left\{w^+\in W^{2,2}_\loc([-s_0,\infty)\times[0,1],M)\,\Bigg|\,
\begin{array}{l}
w^+\mbox{ satisfies }\eqref{eq:FLOER},\\
E_H(w^+)=\eps_v,\\
\lim\limits_{s\to\infty}w^+(s,t)=z^\beta_{i_0}(t)
\end{array}\right\}
\end{split}
\end{equation*}
are Hilbert manifolds and the restriction maps
$\iota^\pm:\sM^\pm\to\sP$, defined by 
$$
\iota^-(w^-):=w^-(\iota^{\beta,+}_{i_0}(s_0,\cdot)),\qquad
\iota^+(w^+):=w^+(-s_0,\cdot),
$$
are injective immersions (see~\ref{cor:main_thm3}).  
By transversality, the map
\begin{equation}\label{eq:iotapm1}
\iota^-\times\iota^+:
\sM^-\times\sM^+\to\sP\times\sP
\end{equation}
intersects the submanifold $\sW^\infty(y^\beta_{i_0},\sU)$
transversally in the point 
$$
(u(\iota^{\beta,+}_{i_0}(s_0,\cdot)),v(-s_0,\cdot))
= \left(\iota^-(u|_{\Sigma_0}),
\iota^+(v|_{[-s_0,\infty)\times[0,1]})\right).
$$
Moreover, the Fredholm index is zero, 
so the intersection point is isolated.
Hence, by Theorem~\ref{thm:main_thm3}, the map~\eqref{eq:iotapm1} 
is also transverse to the submanifold $\sW^T(y^\beta_{i_0},\sU)$ 
for $T$ sufficiently large. Hence it follows from the infinite 
dimensional inverse function theorem that there is a $T_0>0$ 
such that, for every $T>T_0$, there exists a unique 
pair $(u_T^-,v_T^+)\in\sM^-\times\sM^+$ near the pair 
$(u|_{\Sigma_0},v|_{[-s_0,\infty)\times[0,1]})$
such that
$$
\left(\iota^-(u_T^-),\iota^+(v_T^+)\right)\in\sW^T(y,\sU).
$$
Thus, for $T>T_0$, there is a unique element 
$w_T\in\sM^T(y^\beta_{i_0},\sU)$ such that 
$$
w_T(-T,t) = u_T^-(\iota^{\beta,+}_{i_0}(s_0,t)),\qquad
w_T(T,t) = v_T^+(-s_0,t)
$$
for every $t\in[0,1]$.  Now define $u_T:\R\times[0,1]\to M$ by
\begin{equation}\label{eq:uT1}
\left\{\begin{array}{ll}
u_T(z) := u_T^-(z),&\mbox{if } z\in\Sigma_0,\\
u_T(\iota^{\beta,+}_{i_0}(s_0+s,t))
:=w_T(-T+s,t),&\mbox{if }\Abs{s}\le T,\\
u_T(\iota^{\beta,+}_{i_0}(s_0+2T+s,t))
:=v_T^+(-s_0+s,t),&\mbox{if }s\ge 0.
\end{array}\right.
\end{equation}
Then $u_T\in\sM^1(x^\alpha,z^\beta;j,H,J)$ for every $T\ge T_0$.
This proves Step~1.

\bigbreak

\medskip\noindent{\bf Step~2.}
{\it The map $T\mapsto u_T$ satisfies~(i), (ii), and~(iii).}

\medskip\noindent
It follows directly from the construction and the homotopy invariance
of the integral of the pullback of $\om$ under maps with Lagrangian 
boundary conditions that
$$
\int_\Sigma u_T^*\om=\int_\Sigma u^*\om + \int_{\R\times[0,1]}v^*\om
$$
for every $T>T_0$. Hence 
$
\cA_H(u_T) = \cA_H(u) + \cA_H(v)
$
for every $T\ge T_0$ and this proves~(iii).
Consider the open subset
$$
\sM_{\eps_v}
:= \left\{w\in\sM^1(x^\alpha,z^\beta;j,H,J)\,\bigg|\,
\int_{s_0}^\infty\int_0^1
\Abs{w(\iota^{\beta,+}_{i_0}(s,t)}_t^2\,dtds>\eps_v
\right\}.
$$
Then, for each $w\in\sM_{\eps_v}$, there is a unique $T=T(w)>0$
such that 
$$
\int_{s_0+2T}^\infty\int_0^1
\Abs{w(\iota^{\beta,+}_{i_0}(s,t)}_t^2\,dtds=\eps_v
$$
By Lemma~\ref{le:TIME} the map $\sM_{\eps_v}\to(0,\infty):w\mapsto T(w)$
is smooth. Moreover, by construction we have $u_T\in\sM_{\eps_v}$
and $T(u_T)=T$ for every $T\ge T_0$. Hence the map $T\mapsto u_T$
is a diffeomorphism onto its image and this proves~(i).
The proof that the map $T\mapsto u_T$ satisfies~(ii)
is almost verbatim the same as the proof of Step~4 
in the proof of Theorem~\ref{thm:BO} and will be omitted.
Thus we have proved Step~2.

\medskip\noindent{\bf Step~3.}
{\it The map $T\mapsto u_T$ satisfies~(iv).}

\medskip\noindent
Assume, by contradiction, that~(iv) does not hold.
Then there are sequences $w_i\in\cM^1(x^\alpha,z^\beta;j,H,J)$ 
and $s_i\ge0$ such that
\begin{description}
\item[(a)]
$w_i\ne u_T$ for every $i$ and every $T>T_0$, 
\item[(b)]
$\cA_H(w_i)=\cA_H(u)+\cA_H(v)$ for every $i$, and
\item[(c)]
$\lim\limits_{i\to\infty}\sup\limits_{W_0}d(w_i,u) = 0$
and
$\lim\limits_{i\to\infty}\sup\limits_{0\le t\le1}
d(w_i(\iota^{\beta,+}_{i_0}(s_i,t),v(s_v,t)) = 0$.
\end{description}
By the standard elliptic bootstrapping, bubbling, and removal 
of singularities argument, we may assume, passing to a 
subsequence if necessary, that the sequence 
$w_i$ converges uniformly with all derivatives 
on every compact subset of the complement 
of a finite set in $\Sigma$ to a smooth finite 
energy solution $\tu:\Sigma\to M$ of~\eqref{eq:FLOERsigma},
and the sequence
$w_i(\iota^{\beta,+}_{i_0}(s_i+\cdot,\cdot))$ converges, 
uniformly with all derivatives on every compact subset 
of the complement of a finite set in $\R\times[0,1]$, 
to a smooth finite energy solution 
$
\tv:\R\times[0,1]\to M
$ 
of~\eqref{eq:FLOER} 
(See~\cite[Chapter~4]{MS}.)
By~(c) we have $\tu(z)=u(z)$ for every $z\in W_0$ and
$\tv(0,t)=v(s_v,t)$ for every $t\in[0,1]$.  
Hence it follows from unique continuation that 
$\tu=u$ and $\tv(s,t)=v(s_v+s,t)$ for all $s$ and $t$.  

\bigbreak

Next we claim that $s_i$ diverges to infinity.
Otherwise, by passing to a further subsequence,
it would follow that $s_i\to s^*$ and hence 
$$
u(\iota^{\beta,+}_{i_0}(s^*,t))
= \lim_{i\to\infty}w_i(\iota^{\beta,+}_{i_0}(s_i,t))
= v(s_v,t).
$$
By unique continuation it would follow
that $u(\iota^{\beta,+}_{i_0}(s^*+s,t))=v(s_v+s,t)$ 
for every $s\ge 0$ contradicting the fact 
that
$$
\lim_{s\to\infty}u(\iota^{\beta,+}_{i_0}(s^*+s,\cdot))
= y^\beta_{i_0}
\ne z^\beta_{i_0}
= \lim_{s\to\infty}v(s_v+s,\cdot).
$$
This shows that $s_i\to\infty$, as claimed.

Next we prove that 
\begin{equation}\label{eq:Ewi}
\lim_{i\to\infty}\int_{r_i}^\infty\int_0^1
\Abs{w_i(\iota^{\beta,+}_{i_0}(s,t))}_t^2\,dtds 
= \eps_v,\qquad
r_i := s_i-s_v+s_0.
\end{equation}
First, it follows from~(b) that there is no loss of energy.
Hence there is no bubbling and 
\begin{equation}\label{eq:uvwi1}
\begin{split}
u(z) = \lim_{i\to\infty}w_i(z),\qquad
v(s_0+s,t) =  \lim_{i\to\infty}w_i(\iota^{\beta,+}_{i_0}(r_i+s,t))
\end{split}
\end{equation}
The convergence is uniform with all derivatives on every
compact subset of $\Sigma$, respectively $\R\times[0,1]$. 
This implies that, for every $T>0$,
$$
\lim_{i\to\infty}\int_{r_i}^{r_i+T}\int_0^1
\Abs{w_i(\iota^{\beta,+}_{i_0}(s,t))}_t^2\,dtds 
= \int_0^T\int_0^1\Abs{v(s_0+s,t))}_t^2\,dtds.
$$
Taking the limit $T\to\infty$ we obtain
\begin{eqnarray*}
\eps_v
&=&
\lim_{T\to\infty}
\int_0^T\int_0^1\Abs{v(s_0+s,t))}_t^2\,dtds \\
&=&
\lim_{T\to\infty}
\lim_{i\to\infty}\int_{r_i}^{r_i+T}\int_0^1
\Abs{w_i(\iota^{\beta,+}_{i_0}(s,t))}_t^2\,dtds \\
&\le &
\lim_{i\to\infty}\int_{r_i}^\infty\int_0^1
\Abs{w_i(\iota^{\beta,+}_{i_0}(s,t))}_t^2\,dtds.
\end{eqnarray*}
If the inequality is strict it would follow
that $\cA_H(w_i)>\cA_H(u)+\cA_H(v)$ for large $i$, 
contradicting~(b). Thus we have proved~\eqref{eq:Ewi}. 
It follows from~\eqref{eq:uvwi1} that, 
for $i$ sufficiently large, we have
$$
z^\beta_{i_0}(t) = \lim_{s\to\infty}
w_i(\iota^{\beta,+}_{i_0}(s,t)).
$$
Moreover, passing to a further subsequence and
adding to $r_i$ a sequence converging to zero, 
if necessary, we may assume w.l.o.g.\ that
\begin{equation}\label{eq:sipm1}
\int_{r_i}^\infty\int_0^1
\Abs{w_i(\iota^{\beta,+}_{i_0}(s,t))}_t^2\,dtds 
= \eps_v
\end{equation}
for every $i$. 

Next we claim that, for large $i$,
\begin{equation}\label{eq:wiuTi1}
w_i = u_{T_i},\qquad
T_i:=\tfrac12\left(r_i-s_0\right)
\end{equation}
for $i$ sufficiently large, in contradiction to~(a).
To see this, note that by~\eqref{eq:sipm1},
\begin{equation*}
\begin{split}
u_i^- &:=w_i|_{\Sigma_0}\in\sM^-,\\
v_i^+ &:=w_i\circ\iota^{\beta,+}_{i_0}
(s_0+r_i+\cdot,\cdot)|_{[-s_0,\infty)\times[0,1]}\in\sM^+.
\end{split}
\end{equation*}
Then, by~\eqref{eq:uvwi1} and~\eqref{eq:sipm1}, the sequence 
$u_i^-$ converges to $u^-$ in $\sM^-$ and the sequence 
$v_i^+$ converges to $v^+$ in $\sM^+$.
Moreover, by~(b) and~\eqref{eq:sipm1}, we have
\begin{eqnarray*}
\int_{s_0}^{r_i}\int_0^1\Abs{\p_sw_i}_t^2 
&=& 
\cA_H(w_i)-\cA_H(w_i|_{\Sigma_0})-\int_{r_i}^\infty\int_0^1
\Abs{w_i(\iota^{\beta,+}_{i_0}(s,t))}_t^2\,dtds \\
&=& 
\cA_H(w_i)-\cA_H(w_i|_{\Sigma_0}) - \eps_v \\
&=& 
\cA_H(u)+\cA_H(v)-\cA_H(w_i|_{\Sigma_0}) - \eps_v \\
&\to&
\cA_H(u)-\cA_H(u|_{\Sigma_0}) +\cA_H(v)-\eps_v \\
&=&
\int_{s_0}^\infty\int_0^1
\Abs{u(\iota^{\beta,+}_{i_0}(s,t))}_t^2\,dtds
+ \delta_v  \\
&<& \hbar.
\end{eqnarray*}
Here the arrow denotes the limit $i\to\infty$
and the last inequality follows from~\eqref{eq:s0hbar}.
By~\eqref{eq:uvwi1}, we have
\begin{equation*}
\begin{split}
\lim_{i\to\infty}w_i(\iota^{\beta,+}_{i_0}(s_0,\cdot))
&=u(\iota^{\beta,+}_{i_0}(s_0,\cdot))\in\sU,\\
\lim_{i\to\infty}w_i(\iota^{\beta,+}_{i_0}(r_i,\cdot))
&=v(-s_0,\cdot)\in\sU.
\end{split}
\end{equation*}
Here the convergence is in the $\Cinf$ topology 
and hence also in the topology of $\sP=\sP^{3/2}$.
Hence it follows from the definition of $\hbar$ that
$$
(\iota^-(u_i^-),\iota^+(v_i^+))
= \left(w_i(\iota^{\beta,+}_{i_0}(s_0,\cdot)),
w_i(\iota^{\beta,+}_{i_0}(r_i,\cdot))\right)
\in\sW^{T_i}(y^\beta_{i_0},\sU)
$$
for $i$ sufficiently large.  Hence 
it follows from the definition of $u_T$ in~\eqref{eq:uT1} 
that $w_i=u_{T_i}$ for $i$ sufficiently large.
This proves~\eqref{eq:wiuTi1}
and Theorem~\ref{thm:CM}.
\end{proof}

\subsection*{Chain Homotopy Equivalence}

In this section we prove Theorem~\ref{thm:CHE}.

\begin{PARA}\label{para:che-proof}\rm
Let $(\Sigma,L)$ be a string cobordism in $\sL(M,\om)$
from $(L^\alpha_0,L^\alpha_1)$ to $(L^\beta_0,L^\beta_1)$,
$(j_0,H_0,J_0)$ and $(j_1,H_1,J_1)$ be two regular sets 
of Floer data on $(\Sigma,L)$ from 
$(H^\alpha,J^\alpha)$ to $(H^\beta,J^\beta)$, 
and let $\{j_\lambda,H_\lambda,J_\lambda\}_{0\le\lambda\le1}$
be a regular homotopy of Floer data from 
$(j_0,H_0,J_0)$ to $(j_1,H_1,J_1)$.  Let
$$
0<\lambda_\infty<1,\qquad
u\in\cM^{-1}(x^\alpha,y^\beta;
j_{\lambda_\infty},H_{\lambda_\infty},J_{\lambda_\infty})
$$
and
$$
v\in\cM^1(y^\beta,z^\beta;H^\beta,J^\beta)
$$
be as in the assumptions of Theorem~\ref{thm:CHE}.
Thus there exists an index $i_0\in I^\beta$ such that
$$
v_i(s,t)=y_i^\beta(t)=z_i^\beta(t)
$$ 
for $i\ne i_0$ and 
$$
\mu_H(v_{i_0})=1.
$$ 
Choose neighborhoods $U,V$ of $y^\beta_{i_0}(0)$, 
the constant $\hbar>0$, and the neighborhood 
$\sU\subset\sP=\sP^{3/2}$ as in~\ref{para:cm-proof2}.
\end{PARA}

\begin{proof}[Proof of Theorem~\ref{thm:CHE}]
The proof is almost verbatim the same as that of 
Theorem~\ref{thm:CM}.  In Step~1 we must construct 
the required map
$$
(T_0,\infty)\to \cM^0(x^\alpha,y^\beta;
\{j_\lambda,H_\lambda,J_\lambda\}_\lambda):
T\mapsto (\lambda_T,u_T).
$$
The only difference to the proof of Step~1 in Theorem~\ref{thm:CM}
is that $\sM^-$ is now a set of pairs $(\lambda,w^-)$,
where $\lambda\in[0,1]$ and $w^-\in W^{2,2}(\Sigma_0,M)$ 
satisfies the same conditions as before with $(j,H,J)$ 
replaced by $(j_\lambda,H_\lambda,J_\lambda)$. The map
$\iota^-:\sM^-\to\sP$ is then given by 
$$
\iota^-(\lambda,w^-) := w^-(\iota^\beta_{i_0}(s_0,\cdot)).
$$
The remainder of the proof of Step~1 is verbatim the same 
as in the proof of Theorem~\ref{thm:CM}.
The proof that the resulting map $T\mapsto(\lambda_T,u_T)$
satisfies the assertions~(i), (ii), (iii), and~(iv) 
of Theorem~\ref{thm:CHE} can also be carried over 
word by word from the proof of Theorem~\ref{thm:CM}.
The details can be safely left to the reader.
This proves Theorem~\ref{thm:CHE}.
\end{proof}

\subsection*{Catenation}

In this section we prove Theorem~\ref{thm:CAT}.

\begin{PARA}\label{para:cat-proof1}\rm
Let $(\Sigma^{\alpha\beta},L^{\alpha\beta})$,
$(\Sigma^{\beta\gamma},L^{\beta\gamma})$,
and $(j^{\alpha\beta},H^{\alpha\beta},J^{\alpha\beta})$,
$(j^{\beta\gamma},H^{\beta\gamma},J^{\beta\gamma})$
be as in~\ref{para:cat1}, and let
$$
u^{\alpha\beta}\in\cM^0(x^\alpha,x^\beta;
j^{\alpha\beta},H^{\alpha\beta},J^{\alpha\beta}),\qquad
u^{\beta\gamma}\in\cM^0(x^\beta,x^\gamma;
j^{\beta\gamma},H^{\beta\gamma},J^{\beta\gamma})
$$
and $W^{\alpha\beta}\subset\Sigma^{\alpha\beta}
\setminus\im\,\iota^{\beta,+}$,
$W^{\beta\gamma}\subset\Sigma^{\beta\gamma}
\setminus\im\,\iota^{\beta,-}$
be as in the assumptions of Theorem~\ref{thm:CAT}.
Let $\cH^{\alpha\beta}$ and $\cH^{\beta\gamma}$ 
be as in~\ref{para:cat2}.
\end{PARA}

\begin{PARA}\label{para:cat-proof2}\rm
For $i\in I^\beta$ let 
$$
\sP_i:=\sP^{3/2}(H^\beta_i,J^\beta_i)
$$ 
be the path space associated to the pair
$(H^\beta_i,J^\beta_i)$ as in ~\ref{para:hil_P3/2}. 
The proof of Theorem~\ref{thm:CAT} involves the product 
space
$$
\sP := \prod_{i\in I^\beta}\sP_i
$$
Thus the elements of $\sP$ are tuples 
$\gamma=\{\gamma_i\}_{i\in I^\beta}$ with $\gamma_i\in\sP_i$. 

For each $i\in I^\beta$ choose open neighborhoods 
$U_i,V_i\subset M$ of $x^\beta_i(0)$ 
as in ~\ref{para:conv_thm}, choose a constant $\hbar>0$ such that 
the assertion of Theorem~\ref{thm:MON} holds for each $i$
with $U=U_i$, $V=V_i$, $\Lambda=\{x^\beta_i(0)\}$,
and choose neighborhoods $\sU_i\subset\sP_i$ of $x^\beta_i$ 
and a constant $T_0>0$ such that the assertions of 
Theorem~\ref{thm:main_thm3}  are satisfied with $\sU=\sU_i$ and
$x=x^\beta_i$.  The Hilbert manifolds 
$\sM^\infty(x^\beta_i,\sU_i)$ and $\sM^T(x^\beta_i,\sU_i)$
are defined by~\eqref{eq:mod_emb}
and the embeddings 
$$
\iota_i^\infty:\sM^\infty(x^\beta_i,\sU_i)\to\sP_i\times\sP_i,\qquad
\iota_i^T:\sM^T(x^\beta_i,\sU_i)\to\sP_i\times\sP_i
$$ 
for $T\ge T_0$ and $i\in I^\beta$ are defined by~\eqref{eq:map_i}.
Denote their images by 
$$
\sW^\infty_i
:= \iota^\infty(\sM^\infty(x^\beta_i,\sU_i)),\qquad
\sW^T_i
:= \iota^T(\sM^T(x^\beta_i,\sU_i)).
$$
The products 
$$
\sW^\infty := \prod_{i\in I^\beta}\sW^\infty_i,\qquad
\sW^T := \prod_{i\in I^\beta}\sW^T_i
$$
are submanifolds of $\sP\times\sP$
(of infinite dimension and infinite codimension).
By Theorem~\ref{thm:main_thm3} the submanifolds $\sW^T$ 
converge to $\sW^\infty$ in the $C^1$ topology as
$T$ tends to infinity.
\end{PARA}

\begin{proof}[Proof of Theorem~\ref{thm:CAT}]
The proof has three steps.

\medskip\noindent{\bf Step~1.}
{\it Construction of the solutions 
\begin{equation*}
\begin{split}
&u^{\alpha\beta}_h\in\cM^0(x^\alpha,x^\beta;
j^{\alpha\beta},H^{\alpha\beta}+h^{\alpha\beta},J^{\alpha\beta}),\\
&u^{\beta\gamma}_h\in\cM^0(x^\beta,x^\gamma;
j^{\beta\gamma},H^{\beta\gamma}+h^{\beta\gamma},J^{\beta\gamma}),\\
&u_{h,T}\in\cM^0(x^\alpha,x^\gamma;
j^{\alpha\gamma}_T,(H+h)^{\alpha\gamma}_T,J^{\alpha\gamma}_T)
\end{split}
\end{equation*}
of the Floer equation for 
$h\in\cH_0\subset\cH^{\alpha\beta}\times\cH^{\beta\gamma}$ 
and $T\ge T_0$.}

\medskip\noindent
Choose $s_0>0$ so large that
\begin{equation}\label{eq:T0hbar}
\begin{split}
&\int_{s_0}^\infty\int_0^1
\Abs{\p_su^{\alpha\beta}(\iota^{\beta,+}_i(s,t))}^2_t\,dtds
< \hbar/2,\\
&\int_{-\infty}^{-s_0}\int_0^1
\Abs{\p_su^{\beta,\gamma}(\iota^{\beta,-}_i(s,t))}^2_t\,dtds
< \hbar/2,\\
&u^{\alpha\beta}(\iota^{\beta,+}_i(s_0,\cdot))\in\sU_i,\qquad
u^{\beta\gamma}_i((\iota^{\beta,-}_i(-s_0,\cdot))\in\sU_i,\qquad
i\in I^\beta.
\end{split}
\end{equation}
Then
$$
\bigl(u^{\alpha\beta}(\iota^{\beta,+}_i(s_0,\cdot)),
u^{\beta\gamma}_i((\iota^{\beta,-}_i(-s_0,\cdot))\bigr)
\in\sM^\infty(x^\beta_i,\sU_i),\qquad i\in I^\beta.
$$
Define
\begin{equation*}
\begin{split}
\Sigma^{\alpha\beta}_0
&:=\Sigma^{\alpha\beta}\setminus
\bigcup_{i\in I^\beta}\iota^{\beta,+}_i((s_0,\infty)\times[0,1]),\\
\Sigma^{\beta\gamma}_0
&:=\Sigma^{\beta\gamma}\setminus
\bigcup_{i\in I^\beta}\iota^{\beta,-}_i((-\infty,-s_0)\times[0,1]).
\end{split}
\end{equation*}
(See equation~\eqref{eq:SigmaT}.)
For $h=(h^{\alpha\beta},h^{\beta\gamma})
\in\cH^{\alpha\beta}\times\cH^{\beta\gamma}$
define
\begin{equation*}
\begin{split}
\sM^{\alpha\beta}_h
&:=\left\{w^{\alpha\beta}\in W^{2,2}_\loc
(\Sigma^{\alpha\beta}_0,M)
\,\Bigg|\,
\begin{array}{l}
w^{\alpha\beta}\mbox{ satisfies }\eqref{eq:FLOERsigma}
\mbox{ for }H^{\alpha\beta}+h^{\alpha\beta},\\
E_H(w^{\alpha\beta})<\infty,\\
\lim\limits_{s\to-\infty}w^{\alpha\beta}(\iota^{\alpha,-}_i(s,t))
=x^\alpha_i(t), i\in I^\alpha
\end{array}\right\},\\
\sM^{\beta\gamma}_h
&:=\left\{w^{\beta\gamma}\in W^{2,2}_\loc
(\Sigma^{\beta\gamma}_0,M)
\,\Bigg|\,
\begin{array}{l}
w^{\beta\gamma}\mbox{ satisfies }\eqref{eq:FLOERsigma}
\mbox{ for }H^{\beta\gamma}+h^{\beta\gamma},\\
E_H(w^{\beta\gamma})<\infty,\\
\lim\limits_{s\to\infty}w^{\beta\gamma}(\iota^{\gamma,+}_i(s,t))
=x^\gamma_i(t), i\in I^\gamma
\end{array}\right\}.
\end{split}
\end{equation*}
These spaces are are Hilbert manifolds.  
The restriction maps
$$
\iota^{\alpha\beta}_h:\sM^{\alpha\beta}_h\to\sP,\qquad
\iota^{\beta\gamma}_h:\sM^{\beta\gamma}_h\to\sP, 
$$
defined by 
$$
\iota^{\alpha\beta}_h(w^{\alpha\beta})
:=\left\{w^{\alpha\beta}(\iota^{\beta,+}_i(s_0,\cdot))
\right\}_{i\in I^\beta},\quad
\iota^{\beta\gamma}_h(w^{\beta\gamma})
:=\left\{w^{\beta\gamma}(\iota^{\beta,-}_i(-s_0,\cdot))
\right\}_{i\in I^\beta}
$$
are injective immersions (see  section \ref{sec:truc_cob}). 

\bigbreak

By our transversality hypothesis, the map
\begin{equation}\label{eq:albega0}
\iota^{\alpha\beta}_0\times\iota^{\beta\gamma}_0:
\sM^{\alpha\beta}_0\times\sM^{\beta\gamma}_0\to\sP\times\sP
\end{equation}
(associated to $h=0$) intersects the submanifold $\sW^\infty$
transversally in the point 
$$
\left\{(u^{\alpha\beta}(\iota^{\beta,+}_i(s_0,\cdot)),
u^{\beta\gamma}(\iota^{\beta,-}_i(-s_0,\cdot))\right\}_{i\in I^\beta}
= \left(\iota^{\alpha\beta}_0(u^{\alpha\beta}|_{\Sigma^{\alpha\beta}_0}),
\iota^{\beta\gamma}_0(u^{\beta\gamma}|_{\Sigma^{\beta\gamma}_0})\right).
$$
Moreover, the Fredholm index is zero, 
so the intersection point is isolated.
Hence, by Theorem~\ref{thm:main_thm3} and the implicit function
theorem, there exists a number $T_1>0$ and a
convex neighborhood $\cH_0\subset\cH^{\alpha\beta}\times\cH^{\beta\gamma}$
of zero (open in the $C^2$-topology) such that,
for every $T\ge T_1$ and every $h\in\cH_0$, the map
\begin{equation}\label{eq:albegah}
\iota^{\alpha\beta}_h\times\iota^{\beta\gamma}_h:
\sM^{\alpha\beta}_h\times\sM^{\beta\gamma}_h\to\sP\times\sP
\end{equation}
is transverse to $\sW^\infty$ and $\sW^T$ in a neighborhood of the pair
\begin{equation}\label{eq:ualbega0}
(u^{\alpha\beta}|_{\Sigma^{\alpha\beta}_0},
u^{\beta\gamma}|_{\Sigma^{\beta\gamma}_0})
\in \sM^{\alpha\beta}_0\times\sM^{\beta\gamma}_0
\end{equation}
and has a unique intersection point with each of these submanifolds 
in that neighborhood. In other words, the following holds.

\smallskip\noindent{\bf (I)}
For every $h\in\cH_0$ there exists a unique pair 
$(u_h^{\alpha\beta},u_h^{\beta\gamma})
\in\sM^{\alpha\beta}_h\times\sM^{\beta\gamma}_h$ 
near the pair~\eqref{eq:ualbega0}
such that
$$
\left(\iota^{\alpha\beta}_h(u^{\alpha\beta}_h),
\iota^{\beta\gamma}_h(u^{\beta\gamma}_h)\right)
\in\sW^\infty.
$$
Thus there is a unique tuple
$(w^+_{i,h},w^-_{i,h})\in\sM^\infty(x^\beta_i,\sU_i)$, $i\in I^\beta$, 
such that, for every $t\in[0,1]$ and every $i\in I^\beta$,
$$
w^+_{i,h}(0,t) = u_h^{\alpha\beta}(\iota^{\beta,+}_i(s_0,t)),\qquad
w^-_{i,h}(0,t) = u_h^{\beta\gamma}(\iota^{\beta,-}_i(-s_0,t)). 
$$

\smallskip\noindent{\bf (II)}
For every $h\in\cH_0$ and every $T\ge T_1$ 
there exists a unique pair 
$$
(u_{h,T}^{\alpha\beta},u_{h,T}^{\beta\gamma})
\in\sM^{\alpha\beta}_h\times\sM^{\beta\gamma}_h
$$ 
near the pair~\eqref{eq:ualbega0}
such that
$$
\left(\iota^{\alpha\beta}_h(u^{\alpha\beta}_{h,T}),
\iota^{\beta\gamma}_h(u^{\beta\gamma}_{h,T})\right)
\in\sW^T.
$$
Thus there is a unique tuple
$w_{i,h,T}\in\sM^T(x^\beta_i,\sU_i)$, $i\in I^\beta$, 
such that, for every $t\in[0,1]$ and every $i\in I^\beta$,
$$
w_{i,h,T}(-T,t) = u_{h,T}^{\alpha\beta}(\iota^{\beta,+}_i(s_0,t)),\qquad
w_{i,h,T}(T,t) = u_{h,T}^{\beta\gamma}(\iota^{\beta,-}_i(-s_0,t)).
$$

\bigbreak

\medskip
For $h\in\cH_0$ define the maps
$u^{\alpha\beta}_h:\Sigma^{\alpha\beta}\to M$ 
and $u^{\beta\gamma}_h:\Sigma^{\beta\gamma}\to M$ 
by
\begin{equation}\label{eq:ualbega}
\begin{split}
u^{\alpha\beta}_h(z) 
&:= 
\left\{\begin{array}{ll}
u^{\alpha\beta}_h(z),&\mbox{if } 
z\in\Sigma^{\alpha\beta}_0,\\
w_{i,h}^+(s,t),&\mbox{if }
z=\iota^{\beta,+}_i(s+s_0,t),\,i\in I^\beta,
\end{array}\right. \\
u^{\beta\gamma}_h(z) 
&:= 
\left\{\begin{array}{ll}
u^{\beta\gamma}_h(z),&\mbox{if } 
z\in\Sigma^{\beta\gamma}_0,\\
w_{i,h}^-(s,t),&\mbox{if }
z=\iota^{\beta,-}_i(s-s_0,t),\,i\in I^\beta.
\end{array}\right.
\end{split}
\end{equation}
For $h\in\cH_0$ and $T\ge T_0:=s_0+T_1$ define the map 
$u^{\alpha\gamma}_{h,T}:\Sigma^{\alpha\gamma}_T\to M$ by
\begin{equation}\label{eq:ualgaT}
u^{\alpha\gamma}_{h,T}(z) := 
\left\{\begin{array}{ll}
u^{\alpha\beta}_{h,T-s_0}(z),&\mbox{if } 
z\in\Sigma^{\alpha\beta}_0,\\
u^{\beta\gamma}_{h,T-s_0}(z),&\mbox{if } 
z\in\Sigma^{\beta\gamma}_0,\\
w_{i,h,T-s_0}(s,t),&\mbox{if }
z=\iota^{\beta,+}_i(s+T,t)
\cong\iota^{\beta,-}(s-T,t)\\
&\mbox{and }
\abs{s}\le T-s_0,\; i\in I^\beta.
\end{array}\right.
\end{equation}
Then 
\begin{equation*}
\begin{split}
&u^{\alpha\beta}_h\in\cM^0(x^\alpha,x^\beta;
j^{\alpha\beta},H^{\alpha\beta}+h^{\alpha\beta},J^{\alpha\beta}),\\
&u^{\beta\gamma}_h\in\cM^0(x^\beta,x^\gamma;
j^{\beta\gamma},H^{\beta\gamma}+h^{\beta\gamma},J^{\beta\gamma}),\\
&u_{h,T}\in\cM^0(x^\alpha,x^\gamma;
j^{\alpha\gamma}_T,(H+h)^{\alpha\gamma}_T,J^{\alpha\gamma}_T)
\end{split}
\end{equation*}
for every $h\in\cH_0$ and every $T\ge T_0$.  
This proves Step~1.

\medskip\noindent{\bf Step~2.}
{\it The maps $u_{h,T}$ constructed in Step~1 satisfy
conditions~(i), (ii), and (iii) in Theorem~\ref{thm:CAT}.}

\medskip\noindent
That the solutions constructed in Step~1 are regular 
(i.e.\ the linearized operators are bijective) follows 
directly from transversality in the path space $\cP\times\cP$. 
This shows that the functions $u^{\alpha\gamma}_h$ and
$u^{\alpha\gamma}_{h,T}$ satisfy condition~(i) 
in Theorem~\ref{thm:CAT}. The convergence statement 
in~(ii) follows immediately from the construction. 
The action identity in~(iii) follows from the fact 
that the catenation of $u^{\alpha\beta}$ and $u^{\beta\gamma}$ 
is homotopic to $u^{\alpha\gamma}_{h,T}$ for all $h$ and $T$. 
This proves Step~2.

\medskip\noindent{\bf Step~3.}
{\it The maps $u_{h,T}$ constructed in Step~1 satisfy
condition~(iv) in Theorem~\ref{thm:CAT}, after shrinking 
$\cH_0$ and increasing $T_0$, if necessary, 
and for $\delta_0>0$ sufficiently small.}

\medskip\noindent
Suppose, by contradiction, that the 
solutions $u^{\alpha\gamma}_{h,T}$ of the Floer equation,
constructed in Step~1, do not satisfy assertion~(iv)
in Theorem~\ref{thm:CAT} for any triple $(\delta_0,\cH_0,T_0)$. 
Then there exists a sequence of perturbations 
$$
h_i=(h_i^{\alpha\beta},h_i^{\beta\gamma})
\in\cH^{\alpha\beta}\times\cH^{\beta\gamma}
$$
converging to zero in the $C^2$ topology,
a sequence $T_i\to\infty$, and a sequence 
$$
u_i\in\cM^0(x^\alpha,x^\gamma;
j^{\alpha\gamma}_{T_i},(H+h_i)^{\alpha\gamma}_{T_i},
J^{\alpha\gamma}_{T_i})
$$
such that, for every $i\in\N$, the following holds.

\smallskip\noindent{\bf (a)}
$u_i \ne u^{\alpha\gamma}_{h_i,T_i}$.

\smallskip\noindent{\bf (b)}
$\cA_H(u_i) = \cA_H(u^{\alpha\beta}) + \cA_H(u^{\beta\gamma})$.

\smallskip\noindent{\bf (c)}
$\lim_{i\to\infty}\sup_{W^{\alpha\beta}}d(u_i,u^{\alpha\beta}_h) = 0$
and $\lim_{i\to\infty}\sup_{W^{\beta\gamma}}d(u_i,u^{\beta\gamma}_h) = 0$.

\medskip\noindent
By the standard elliptic bootstrapping, 
bubbling, and removal of sin\-gu\-la\-rities argument,
we may assume as before, after passing to a subsequence
if necessary, that the restriction of $u_i$ to
$\Sigma^{\alpha\beta}$, respectively $\Sigma^{\beta\gamma}$, 
converges in $W^{2,2}_\loc$ to a smooth finite energy
solution $v^{\alpha\beta}$, respectively $v^{\beta\gamma}$, 
of the Floer equation for 
$(j^{\alpha\beta},H^{\alpha\beta},J^{\alpha\beta})$,
respectively
$(j^{\beta\gamma},H^{\beta\gamma},J^{\beta\gamma})$.
By~(c) and unique continuation, we must have $v^{\alpha\beta}=u^{\alpha\beta}$
and $v^{\beta\gamma}=u^{\beta\gamma}$.  By~(b) there is no loss 
of energy in the limit and hence no bubbling or splitting off 
of Floer trajectories can occur.  Hence the sequence 
$(h_i,u_i|_{\Sigma^{\alpha\beta}_0})$ converges in the topology 
of the fiber bundle
$$
\cM^{\alpha\beta} := \bigcup_{h\in\cH_0}\{h\}\times\cM^{\alpha\beta}_h\to\cH_0
$$
to the pair $(0,u^{\alpha\beta}|_{\Sigma^{\alpha\beta}_0})$.
Likewise, the sequence $(h_i,u_i|_{\Sigma^{\beta\gamma}_0})$
converges in the topology of 
$\cM^{\beta\gamma} := \bigcup_{h\in\cH_0}\{h\}\times\cM^{\beta\gamma}_h$
to the pair $(0,u^{\beta\gamma}|_{\Sigma^{\beta\gamma}_0})$.
Hence it follows from the construction in Step~1 
and the implicit function theorem in the path space $\sP\times\sP$
that $u_i=u^{\alpha\gamma}_{h_i,T_i}$ for $i$ sufficiently large,
in contradiction to~(a). This contradiction proves Step~3 
and Theorem~\ref{thm:CAT}.
\end{proof}



\clearpage
\addcontentsline{toc}{chapter}{Bibliography}

\bibliographystyle{plain}



\end{document}